\title{An Additive-Noise Approximation to {K}eller--{S}egel--{D}ean--{K}awasaki Dynamics: {L}ocal Well-Posedness of Paracontrolled Solutions}
\author{Adrian Martini}
\affil{Department of Statistics, University of Oxford;\protect\\ \texttt{martini@stats.ox.ac.uk}, ORCID iD: 0000-0001-9350-1338}
\author{Avi Mayorcas}
\affil{Department of Mathematical Sciences, University of Bath;\protect\\ \texttt{am2735@bath.ac.uk}, ORCID iD: 0000-0003-4133-9740}
\date{\today}
\definecolor{detailscolor}{RGB}{0, 153, 136} 
	\definecolor{vectorcolor}{RGB}{238, 51, 119} 
	\definecolor{rootcolor}{RGB}{0, 119, 187} 
	\definecolor{vectorcolor}{RGB}{180, 180, 180} 
	\definecolor{rootcolor}{RGB}{0, 0, 0} 
\numberwithin{equation}{section}
\newtheorem{theorem}{Theorem}[section]
\newtheorem{lemma}[theorem]{Lemma}
\newtheorem{proposition}[theorem]{Proposition}
\newtheorem{definition}[theorem]{Definition} 
\theoremstyle{remark}
\newtheorem{remark}[theorem]{Remark}
\newtheorem{example}[theorem]{Example}
\let\oldproofname=\proofname
\renewcommand{\proofname}{\rm\bf{\oldproofname}}
\newcommand{\dd}{\mathop{}\!\mathrm{d}\mkern0.5mu} 
\newcommand{\euler}{\mathrm{e}\mkern0.7mu} 
\newcommand{\upi}{\mathrm{i}\mkern0.7mu} 
\newcommand{\uppi}{\pi}
\newcommand{\mcB}{\mathcal{B}}
\newcommand{\mcC}{\mathcal{C}}
\newcommand{\mcF}{\mathcal{F}}
\newcommand{\mcH}{\mathcal{H}}
\newcommand{\mcI}{\mathcal{I}}
\newcommand{\mcS}{\mathcal{S}}
\newcommand{\mcX}{\mathcal{X}}
\newcommand{\mbC}{\mathbb{C}}
\newcommand{\mbE}{\mathbb{E}}
\newcommand{\mbN}{\mathbb{N}}
\newcommand{\mbP}{\mathbb{P}}
\newcommand{\mbR}{\mathbb{R}}
\newcommand{\mbT}{\mathbb{T}}
\newcommand{\mbX}{\mathbb{X}}
\newcommand{\mbY}{\mathbb{Y}}
\newcommand{\mbZ}{\mathbb{Z}}
\newcommand{\bxi}{\boldsymbol{\xi}}
\newcommand{\mfB}{\mathfrak{B}}
\newcommand{\msC}{\mathscr{C}}
\newcommand{\msD}{\mathscr{D}}
\newcommand{\msF}{\mathscr{F}}
\newcommand{\msG}{\mathscr{G}}
\newcommand{\msH}{\mathscr{H}}
\newcommand{\msL}{\mathscr{L}}
\newcommand{\msP}{\mathscr{P}}
\DeclarePairedDelimiter{\floor}{\lfloor}{\rfloor}
\DeclarePairedDelimiter{\abs}{\lvert}{\rvert}
\DeclarePairedDelimiter{\norm}{\lVert}{\rVert}
\newcommand{\inner}[2]{\langle#1,#2\rangle}
\DeclareMathOperator{\trace}{Tr}
\newcommand{\supp}{\text{supp}}
\newcommand{\eps}{\varepsilon}
\newcommand{\tzero}{|_{t=0}}
\newcommand{\bigtzero}{\big|_{t=0}}
\newcommand{\from}{\colon}
\newcommand{\defeq}{\coloneq} 
\newcommand{\embed}{\hookrightarrow}
\newcommand{\per}{\textnormal{per}}
\newcommand{\pa}{\mathbin{\varolessthan}}
\newcommand{\re}{\mathbin{\varodot}}
\newcommand{\vdiv}{\nabla\cdot}
\newcommand{\om}{\omega}
\let\oldhat\hat
\renewcommand{\hat}{\widehat}
\newcommand{\rksnoise}[2]{\mcX^{#1,#2}}
\newcommand{\rdet}{\rho_{\textnormal{det}}}
\newcommand{\het}{\sigma}
\newcommand{\srdet}{\sqrt{\rdet}}
\newcommand{\can}{\textnormal {can}}
\newcommand{\chem}{\chi}
	\newenvironment{details}%
	{\color{detailscolor}\textbullet\textbf{Details: }}%
	{\ignorespacesafterend}%
	\newenvironment{details}%
	{\comment}%
	{\endcomment}%
\newcommand{\multiquad}[1][1]{\hspace*{#1em}\ignorespaces}
\newcommand{\place}{\,\cdot\,}
\newcommand{\shift}[1]{{#1}+\place}
\newcommand{\Tmax}{T_{\textnormal{max}}}
\let\oldskull\skull
\def\skull{\mathord{\oldskull}}
\newcommand{\cem}{\skull}
\newcommand{\target}{F} 
\newcommand{\sol}{\textnormal{sol}}
\newcommand{\wt}{\textnormal{wt}}
\newcommand{\weight}{\textnormal{-}\wt}
\newcommand{\mytextsize}{\f@size pt}
\newlength\RSu 
\newlength\RSwidth 
\newlength\RSbaseline 
		\newcommand{\Cherry}[1]{
			\togglefalse{isoption}
			\ifnumequal{#1}{1}{\toggletrue{isoption}\,\includegraphics[page = 1, width = 0.41cm*\ratio{\mytextsize}{10.95 pt}]{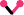}\,}{}
			\ifnumequal{#1}{10}{\toggletrue{isoption}\,\includegraphics[page = 2, width = 0.44cm*\ratio{\mytextsize}{10.95 pt}]{diagrams_coloured.pdf}\,}{}
			\iftoggle{isoption}{}{\PackageError{Cherry}{Undefined option to Cherry: #1}{}}
		}%
		\newcommand{\Ypsilon}[1]{
			\togglefalse{isoption}
			\ifnumequal{#1}{1}{\toggletrue{isoption}\raisebox{-0.4ex}{\includegraphics[page = 3, width = 0.37cm*\ratio{\mytextsize}{10.95 pt}]{diagrams_coloured.pdf}}}{}
			\ifnumequal{#1}{10}{\toggletrue{isoption}\raisebox{-0.4ex}{\includegraphics[page = 4, width = 0.4cm*\ratio{\mytextsize}{10.95 pt}]{diagrams_coloured.pdf}}}{}
			\iftoggle{isoption}{}{\PackageError{Ypsilon}{Undefined option to Ypsilon: #1}{}}
		}%
		\newcommand{\PreThreeloop}[1]{
			\togglefalse{isoption}
			\ifnumequal{#1}{1}{\toggletrue{isoption}\raisebox{-0.7ex}{\includegraphics[page = 6, width = 0.48cm*\ratio{\mytextsize}{10.95 pt}]{diagrams_coloured.pdf}\,}}{}
			\ifnumequal{#1}{10}{\toggletrue{isoption}\raisebox{-0.7ex}{\includegraphics[page = 7, width = 0.49cm*\ratio{\mytextsize}{10.95 pt}]{diagrams_coloured.pdf}\,}}{}
			\ifnumequal{#1}{2}{\toggletrue{isoption}\raisebox{-0.7ex}{\includegraphics[page = 8, width = 0.48cm*\ratio{\mytextsize}{10.95 pt}]{diagrams_coloured.pdf}\,}}{}
			\ifnumequal{#1}{20}{\toggletrue{isoption}\raisebox{-0.7ex}{\includegraphics[page = 9, width = 0.49cm*\ratio{\mytextsize}{10.95 pt}]{diagrams_coloured.pdf}\,}}{}
			\iftoggle{isoption}{}{\PackageError{PreThreeloop}{Undefined option to PreThreeloop: #1}{}}
		}%
		\newcommand{\Checkmark}[1]{
			\togglefalse{isoption}
			\ifnumequal{#1}{1}{\toggletrue{isoption}\raisebox{-0.7ex}{\includegraphics[page = 10, width = 0.54cm*\ratio{\mytextsize}{10.95 pt}]{diagrams_coloured.pdf}\,}}{}
			\ifnumequal{#1}{10}{\toggletrue{isoption}\raisebox{-0.7ex}{\includegraphics[page = 11, width = 0.58cm*\ratio{\mytextsize}{10.95 pt}]{diagrams_coloured.pdf}\,}}{}
			\ifnumequal{#1}{2}{\toggletrue{isoption}\raisebox{-0.7ex}{\includegraphics[page = 12, width = 0.54cm*\ratio{\mytextsize}{10.95 pt}]{diagrams_coloured.pdf}\,}}{}
			\ifnumequal{#1}{20}{\toggletrue{isoption}\raisebox{-0.7ex}{\includegraphics[page = 13, width = 0.58cm*\ratio{\mytextsize}{10.95 pt}]{diagrams_coloured.pdf}\,}}{}
			\iftoggle{isoption}{}{\PackageError{Checkmark}{Undefined option to Checkmark: #1}{}}
		}%
		\newcommand{\Triangle}[1]{
			\togglefalse{isoption}
			\ifnumequal{#1}{1}{\toggletrue{isoption}\raisebox{-0.165ex}{\,\includegraphics[page = 14, width = 0.33cm*\ratio{\mytextsize}{10.95 pt}]{diagrams_coloured.pdf}\,}}{}
			\ifnumequal{#1}{2}{\toggletrue{isoption}\raisebox{-0.165ex}{\,\includegraphics[page = 15, width = 0.33cm*\ratio{\mytextsize}{10.95 pt}]{diagrams_coloured.pdf}\,}}{}
			\ifnumequal{#1}{3}{\toggletrue{isoption}\raisebox{-0.3ex}{\,\includegraphics[page = 16, width = 0.33cm*\ratio{\mytextsize}{10.95 pt}]{diagrams_coloured.pdf}\,}}{}
			\iftoggle{isoption}{}{\PackageError{Triangle}{Undefined option to Triangle: #1}{}}
		}%
		\newcommand{\Loop}{\,\includegraphics[page = 17, width = 0.27cm*\ratio{\mytextsize}{10.95 pt}]{diagrams_coloured.pdf}\,}
		\newcommand{\PreCocktail}[1]{
			\togglefalse{isoption}
			\ifnumequal{#1}{1}{\toggletrue{isoption}\raisebox{-0.7ex}{\includegraphics[page = 18, width = 0.51cm*\ratio{\mytextsize}{10.95 pt}]{diagrams_coloured.pdf}\,}}{}
			\ifnumequal{#1}{10}{\toggletrue{isoption}\raisebox{-0.7ex}{\includegraphics[page = 19, width = 0.52cm*\ratio{\mytextsize}{10.95 pt}]{diagrams_coloured.pdf}\,}}{}
			\ifnumequal{#1}{2}{\toggletrue{isoption}\raisebox{-0.7ex}{\includegraphics[page = 20, width = 0.51cm*\ratio{\mytextsize}{10.95 pt}]{diagrams_coloured.pdf}\,}}{}
			\ifnumequal{#1}{20}{\toggletrue{isoption}\raisebox{-0.7ex}{\includegraphics[page = 21, width = 0.52cm*\ratio{\mytextsize}{10.95 pt}]{diagrams_coloured.pdf}\,}}{}
			\ifnumequal{#1}{3}{\toggletrue{isoption}\raisebox{-0.7ex}{\includegraphics[page = 22, width = 0.51cm*\ratio{\mytextsize}{10.95 pt}]{diagrams_coloured.pdf}\,}}{}
			\ifnumequal{#1}{30}{\toggletrue{isoption}\raisebox{-0.7ex}{\includegraphics[page = 23, width = 0.52cm*\ratio{\mytextsize}{10.95 pt}]{diagrams_coloured.pdf}\,}}{}
			\ifnumequal{#1}{4}{\toggletrue{isoption}\raisebox{-0.7ex}{\includegraphics[page = 24, width = 0.51cm*\ratio{\mytextsize}{10.95 pt}]{diagrams_coloured.pdf}\,}}{}
			\ifnumequal{#1}{40}{\toggletrue{isoption}\raisebox{-0.7ex}{\includegraphics[page = 25, width = 0.51cm*\ratio{\mytextsize}{10.95 pt}]{diagrams_coloured.pdf}\,}}{}
			\ifnumequal{#1}{5}{\toggletrue{isoption}\raisebox{-0.7ex}{\includegraphics[page = 26, width = 0.52cm*\ratio{\mytextsize}{10.95 pt}]{diagrams_coloured.pdf}\,}}{}
			\ifnumequal{#1}{50}{\toggletrue{isoption}\raisebox{-0.835ex}{\includegraphics[page = 27, width = 0.52cm*\ratio{\mytextsize}{10.95 pt}]{diagrams_coloured.pdf}\,}}{}
			\iftoggle{isoption}{}{\PackageError{PreCocktail}{Undefined option to PreCocktail: #1}{}}
		}%
		\newcommand{\PreThree}[1]{
			\togglefalse{isoption}
			\ifnumequal{#1}{1}{\toggletrue{isoption}\raisebox{-0.7ex}{\includegraphics[page = 28, width = 0.54cm*\ratio{\mytextsize}{10.95 pt}]{diagrams_coloured.pdf}\,}}{}
			\ifnumequal{#1}{10}{\toggletrue{isoption}\raisebox{-0.7ex}{\includegraphics[page = 29, width = 0.58cm*\ratio{\mytextsize}{10.95 pt}]{diagrams_coloured.pdf}\,}}{}
			\ifnumequal{#1}{2}{\toggletrue{isoption}\raisebox{-0.7ex}{\includegraphics[page = 30, width = 0.54cm*\ratio{\mytextsize}{10.95 pt}]{diagrams_coloured.pdf}\,}}{}
			\ifnumequal{#1}{20}{\toggletrue{isoption}\raisebox{-0.7ex}{\includegraphics[page = 31, width = 0.58cm*\ratio{\mytextsize}{10.95 pt}]{diagrams_coloured.pdf}\,}}{}
			\iftoggle{isoption}{}{\PackageError{PreThree}{Undefined option to PreThree: #1}{}}
		}%
		\newcommand{\ti}{\mathchoice
			{\,\includegraphics[page = 32, width = 0.1cm*\ratio{\mytextsize}{10.95 pt}]{diagrams_coloured.pdf}\,}
			{\,\includegraphics[page = 32, width = 0.1cm*\ratio{\mytextsize}{10.95 pt}]{diagrams_coloured.pdf}\,}
			{\scalebox{0.8}{\,\includegraphics[page = 32, width = 0.1cm*\ratio{\mytextsize}{10.95 pt}]{diagrams_coloured.pdf}\,}}
			{\scalebox{0.64}{\,\includegraphics[page = 32, width = 0.1cm*\ratio{\mytextsize}{10.95 pt}]{diagrams_coloured.pdf}\,}}
		}%
		\newcommand{\ty}{\mathchoice
			{\,\includegraphics[page = 33, width = 0.25cm*\ratio{\mytextsize}{10.95 pt}]{diagrams_coloured.pdf}\,}
			{\,\includegraphics[page = 33, width = 0.25cm*\ratio{\mytextsize}{10.95 pt}]{diagrams_coloured.pdf}\,}
			{\scalebox{0.8}{\,\includegraphics[page = 33, width = 0.25cm*\ratio{\mytextsize}{10.95 pt}]{diagrams_coloured.pdf}\,}}
			{\scalebox{0.64}{\,\includegraphics[page = 33, width = 0.25cm*\ratio{\mytextsize}{10.95 pt}]{diagrams_coloured.pdf}\,}}
		}%
		\newcommand{\tp}{\mathchoice
			{\,\includegraphics[page = 34, width = 0.36cm*\ratio{\mytextsize}{10.95 pt}]{diagrams_coloured.pdf}\,}
			{\,\includegraphics[page = 34, width = 0.36cm*\ratio{\mytextsize}{10.95 pt}]{diagrams_coloured.pdf}\,}
			{\scalebox{0.8}{\,\includegraphics[page = 34, width = 0.36cm*\ratio{\mytextsize}{10.95 pt}]{diagrams_coloured.pdf}\,}}
			{\scalebox{0.64}{\,\includegraphics[page = 34, width = 0.36cm*\ratio{\mytextsize}{10.95 pt}]{diagrams_coloured.pdf}\,}}
		}%
		\newcommand{\tc}{\mathchoice
			{\,\includegraphics[page = 35, width = 0.36cm*\ratio{\mytextsize}{10.95 pt}]{diagrams_coloured.pdf}\,}
			{\,\includegraphics[page = 35, width = 0.36cm*\ratio{\mytextsize}{10.95 pt}]{diagrams_coloured.pdf}\,}
			{\scalebox{0.8}{\,\includegraphics[page = 35, width = 0.36cm*\ratio{\mytextsize}{10.95 pt}]{diagrams_coloured.pdf}\,}}
			{\scalebox{0.64}{\,\includegraphics[page = 35, width = 0.36cm*\ratio{\mytextsize}{10.95 pt}]{diagrams_coloured.pdf}\,}}
		}%
		\newcommand{\tl}{\mathchoice
			{\,\includegraphics[page = 36, width = 0.18cm*\ratio{\mytextsize}{10.95 pt}]{diagrams_coloured.pdf}\,}
			{\,\includegraphics[page = 36, width = 0.18cm*\ratio{\mytextsize}{10.95 pt}]{diagrams_coloured.pdf}\,}
			{\scalebox{0.8}{\,\includegraphics[page = 36, width = 0.18cm*\ratio{\mytextsize}{10.95 pt}]{diagrams_coloured.pdf}\,}}
			{\scalebox{0.64}{\,\includegraphics[page = 36, width = 0.18cm*\ratio{\mytextsize}{10.95 pt}]{diagrams_coloured.pdf}\,}}
		}%
		\newcommand{\SLolli}{
			\raisebox{-0.4ex}{\,\includegraphics[page = 37, width = 0.13cm*\ratio{\mytextsize}{10.95 pt}]{diagrams_coloured.pdf}\,}
		}%
		\newcommand{\SYpsilon}{
			\raisebox{-0.4ex}{\includegraphics[page = 38, width = 0.37cm*\ratio{\mytextsize}{10.95 pt}]{diagrams_coloured.pdf}}
		}%
		\newcommand{\SPreThree}{
			\raisebox{-0.165ex}{\includegraphics[page = 39, width = 0.54cm*\ratio{\mytextsize}{10.95 pt}]{diagrams_coloured.pdf}\,}
		}%
		\newcommand{\SVee}{
			{\,\includegraphics[page = 40, width = 0.33cm*\ratio{\mytextsize}{10.95 pt}]{diagrams_coloured.pdf}\,}
		}%
		\newcommand{\STriangle}{
			{\,\includegraphics[page = 41, width = 0.33cm*\ratio{\mytextsize}{10.95 pt}]{diagrams_coloured.pdf}\,}
		}%
		\newcommand{\Cherry}[1]{
			\togglefalse{isoption}
			\ifnumequal{#1}{1}{\toggletrue{isoption}\,\includegraphics[page = 1, width = 0.41cm*\ratio{\mytextsize}{10.95 pt}]{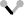}\,}{}
			\ifnumequal{#1}{10}{\toggletrue{isoption}\,\includegraphics[page = 2, width = 0.44cm*\ratio{\mytextsize}{10.95 pt}]{diagrams_grayscale.pdf}\,}{}
			\iftoggle{isoption}{}{\PackageError{Cherry}{Undefined option to Cherry: #1}{}}
		}%
		\newcommand{\Ypsilon}[1]{
			\togglefalse{isoption}
			\ifnumequal{#1}{1}{\toggletrue{isoption}\raisebox{-0.4ex}{\includegraphics[page = 3, width = 0.37cm*\ratio{\mytextsize}{10.95 pt}]{diagrams_grayscale.pdf}}}{}
			\ifnumequal{#1}{10}{\toggletrue{isoption}\raisebox{-0.4ex}{\includegraphics[page = 4, width = 0.4cm*\ratio{\mytextsize}{10.95 pt}]{diagrams_grayscale.pdf}}}{}
			\iftoggle{isoption}{}{\PackageError{Ypsilon}{Undefined option to Ypsilon: #1}{}}
		}%
		\newcommand{\PreThreeloop}[1]{
			\togglefalse{isoption}
			\ifnumequal{#1}{1}{\toggletrue{isoption}\raisebox{-0.7ex}{\includegraphics[page = 6, width = 0.48cm*\ratio{\mytextsize}{10.95 pt}]{diagrams_grayscale.pdf}\,}}{}
			\ifnumequal{#1}{10}{\toggletrue{isoption}\raisebox{-0.7ex}{\includegraphics[page = 7, width = 0.49cm*\ratio{\mytextsize}{10.95 pt}]{diagrams_grayscale.pdf}\,}}{}
			\ifnumequal{#1}{2}{\toggletrue{isoption}\raisebox{-0.7ex}{\includegraphics[page = 8, width = 0.48cm*\ratio{\mytextsize}{10.95 pt}]{diagrams_grayscale.pdf}\,}}{}
			\ifnumequal{#1}{20}{\toggletrue{isoption}\raisebox{-0.7ex}{\includegraphics[page = 9, width = 0.49cm*\ratio{\mytextsize}{10.95 pt}]{diagrams_grayscale.pdf}\,}}{}
			\iftoggle{isoption}{}{\PackageError{PreThreeloop}{Undefined option to PreThreeloop: #1}{}}
		}%
		\newcommand{\Checkmark}[1]{
			\togglefalse{isoption}
			\ifnumequal{#1}{1}{\toggletrue{isoption}\raisebox{-0.7ex}{\includegraphics[page = 10, width = 0.54cm*\ratio{\mytextsize}{10.95 pt}]{diagrams_grayscale.pdf}\,}}{}
			\ifnumequal{#1}{10}{\toggletrue{isoption}\raisebox{-0.7ex}{\includegraphics[page = 11, width = 0.58cm*\ratio{\mytextsize}{10.95 pt}]{diagrams_grayscale.pdf}\,}}{}
			\ifnumequal{#1}{2}{\toggletrue{isoption}\raisebox{-0.7ex}{\includegraphics[page = 12, width = 0.54cm*\ratio{\mytextsize}{10.95 pt}]{diagrams_grayscale.pdf}\,}}{}
			\ifnumequal{#1}{20}{\toggletrue{isoption}\raisebox{-0.7ex}{\includegraphics[page = 13, width = 0.58cm*\ratio{\mytextsize}{10.95 pt}]{diagrams_grayscale.pdf}\,}}{}
			\iftoggle{isoption}{}{\PackageError{Checkmark}{Undefined option to Checkmark: #1}{}}
		}%
		\newcommand{\Triangle}[1]{
			\togglefalse{isoption}
			\ifnumequal{#1}{1}{\toggletrue{isoption}\raisebox{-0.165ex}{\,\includegraphics[page = 14, width = 0.33cm*\ratio{\mytextsize}{10.95 pt}]{diagrams_grayscale.pdf}\,}}{}
			\ifnumequal{#1}{2}{\toggletrue{isoption}\raisebox{-0.165ex}{\,\includegraphics[page = 15, width = 0.33cm*\ratio{\mytextsize}{10.95 pt}]{diagrams_grayscale.pdf}\,}}{}
			\ifnumequal{#1}{3}{\toggletrue{isoption}\raisebox{-0.3ex}{\,\includegraphics[page = 16, width = 0.33cm*\ratio{\mytextsize}{10.95 pt}]{diagrams_grayscale.pdf}\,}}{}
			\iftoggle{isoption}{}{\PackageError{Triangle}{Undefined option to Triangle: #1}{}}
		}%
		\newcommand{\Loop}{\,\includegraphics[page = 17, width = 0.27cm*\ratio{\mytextsize}{10.95 pt}]{diagrams_grayscale.pdf}\,}
		\newcommand{\PreCocktail}[1]{
			\togglefalse{isoption}
			\ifnumequal{#1}{1}{\toggletrue{isoption}\raisebox{-0.7ex}{\includegraphics[page = 18, width = 0.51cm*\ratio{\mytextsize}{10.95 pt}]{diagrams_grayscale.pdf}\,}}{}
			\ifnumequal{#1}{10}{\toggletrue{isoption}\raisebox{-0.7ex}{\includegraphics[page = 19, width = 0.52cm*\ratio{\mytextsize}{10.95 pt}]{diagrams_grayscale.pdf}\,}}{}
			\ifnumequal{#1}{2}{\toggletrue{isoption}\raisebox{-0.7ex}{\includegraphics[page = 20, width = 0.51cm*\ratio{\mytextsize}{10.95 pt}]{diagrams_grayscale.pdf}\,}}{}
			\ifnumequal{#1}{20}{\toggletrue{isoption}\raisebox{-0.7ex}{\includegraphics[page = 21, width = 0.52cm*\ratio{\mytextsize}{10.95 pt}]{diagrams_grayscale.pdf}\,}}{}
			\ifnumequal{#1}{3}{\toggletrue{isoption}\raisebox{-0.7ex}{\includegraphics[page = 22, width = 0.51cm*\ratio{\mytextsize}{10.95 pt}]{diagrams_grayscale.pdf}\,}}{}
			\ifnumequal{#1}{30}{\toggletrue{isoption}\raisebox{-0.7ex}{\includegraphics[page = 23, width = 0.52cm*\ratio{\mytextsize}{10.95 pt}]{diagrams_grayscale.pdf}\,}}{}
			\ifnumequal{#1}{4}{\toggletrue{isoption}\raisebox{-0.7ex}{\includegraphics[page = 24, width = 0.51cm*\ratio{\mytextsize}{10.95 pt}]{diagrams_grayscale.pdf}\,}}{}
			\ifnumequal{#1}{40}{\toggletrue{isoption}\raisebox{-0.7ex}{\includegraphics[page = 25, width = 0.51cm*\ratio{\mytextsize}{10.95 pt}]{diagrams_grayscale.pdf}\,}}{}
			\ifnumequal{#1}{5}{\toggletrue{isoption}\raisebox{-0.7ex}{\includegraphics[page = 26, width = 0.52cm*\ratio{\mytextsize}{10.95 pt}]{diagrams_grayscale.pdf}\,}}{}
			\ifnumequal{#1}{50}{\toggletrue{isoption}\raisebox{-0.835ex}{\includegraphics[page = 27, width = 0.52cm*\ratio{\mytextsize}{10.95 pt}]{diagrams_grayscale.pdf}\,}}{}
			\iftoggle{isoption}{}{\PackageError{PreCocktail}{Undefined option to PreCocktail: #1}{}}
		}%
		\newcommand{\PreThree}[1]{
			\togglefalse{isoption}
			\ifnumequal{#1}{1}{\toggletrue{isoption}\raisebox{-0.7ex}{\includegraphics[page = 28, width = 0.54cm*\ratio{\mytextsize}{10.95 pt}]{diagrams_grayscale.pdf}\,}}{}
			\ifnumequal{#1}{10}{\toggletrue{isoption}\raisebox{-0.7ex}{\includegraphics[page = 29, width = 0.58cm*\ratio{\mytextsize}{10.95 pt}]{diagrams_grayscale.pdf}\,}}{}
			\ifnumequal{#1}{2}{\toggletrue{isoption}\raisebox{-0.7ex}{\includegraphics[page = 30, width = 0.54cm*\ratio{\mytextsize}{10.95 pt}]{diagrams_grayscale.pdf}\,}}{}
			\ifnumequal{#1}{20}{\toggletrue{isoption}\raisebox{-0.7ex}{\includegraphics[page = 31, width = 0.58cm*\ratio{\mytextsize}{10.95 pt}]{diagrams_grayscale.pdf}\,}}{}
			\iftoggle{isoption}{}{\PackageError{PreThree}{Undefined option to PreThree: #1}{}}
		}%
		\newcommand{\ti}{\mathchoice
			{\,\includegraphics[page = 32, width = 0.1cm*\ratio{\mytextsize}{10.95 pt}]{diagrams_grayscale.pdf}\,}
			{\,\includegraphics[page = 32, width = 0.1cm*\ratio{\mytextsize}{10.95 pt}]{diagrams_grayscale.pdf}\,}
			{\scalebox{0.8}{\,\includegraphics[page = 32, width = 0.1cm*\ratio{\mytextsize}{10.95 pt}]{diagrams_grayscale.pdf}\,}}
			{\scalebox{0.64}{\,\includegraphics[page = 32, width = 0.1cm*\ratio{\mytextsize}{10.95 pt}]{diagrams_grayscale.pdf}\,}}
		}%
		\newcommand{\ty}{\mathchoice
			{\,\includegraphics[page = 33, width = 0.25cm*\ratio{\mytextsize}{10.95 pt}]{diagrams_grayscale.pdf}\,}
			{\,\includegraphics[page = 33, width = 0.25cm*\ratio{\mytextsize}{10.95 pt}]{diagrams_grayscale.pdf}\,}
			{\scalebox{0.8}{\,\includegraphics[page = 33, width = 0.25cm*\ratio{\mytextsize}{10.95 pt}]{diagrams_grayscale.pdf}\,}}
			{\scalebox{0.64}{\,\includegraphics[page = 33, width = 0.25cm*\ratio{\mytextsize}{10.95 pt}]{diagrams_grayscale.pdf}\,}}
		}%
		\newcommand{\tp}{\mathchoice
			{\,\includegraphics[page = 34, width = 0.36cm*\ratio{\mytextsize}{10.95 pt}]{diagrams_grayscale.pdf}\,}
			{\,\includegraphics[page = 34, width = 0.36cm*\ratio{\mytextsize}{10.95 pt}]{diagrams_grayscale.pdf}\,}
			{\scalebox{0.8}{\,\includegraphics[page = 34, width = 0.36cm*\ratio{\mytextsize}{10.95 pt}]{diagrams_grayscale.pdf}\,}}
			{\scalebox{0.64}{\,\includegraphics[page = 34, width = 0.36cm*\ratio{\mytextsize}{10.95 pt}]{diagrams_grayscale.pdf}\,}}
		}%
		\newcommand{\tc}{\mathchoice
			{\,\includegraphics[page = 35, width = 0.36cm*\ratio{\mytextsize}{10.95 pt}]{diagrams_grayscale.pdf}\,}
			{\,\includegraphics[page = 35, width = 0.36cm*\ratio{\mytextsize}{10.95 pt}]{diagrams_grayscale.pdf}\,}
			{\scalebox{0.8}{\,\includegraphics[page = 35, width = 0.36cm*\ratio{\mytextsize}{10.95 pt}]{diagrams_grayscale.pdf}\,}}
			{\scalebox{0.64}{\,\includegraphics[page = 35, width = 0.36cm*\ratio{\mytextsize}{10.95 pt}]{diagrams_grayscale.pdf}\,}}
		}%
		\newcommand{\tl}{\mathchoice
			{\,\includegraphics[page = 36, width = 0.18cm*\ratio{\mytextsize}{10.95 pt}]{diagrams_grayscale.pdf}\,}
			{\,\includegraphics[page = 36, width = 0.18cm*\ratio{\mytextsize}{10.95 pt}]{diagrams_grayscale.pdf}\,}
			{\scalebox{0.8}{\,\includegraphics[page = 36, width = 0.18cm*\ratio{\mytextsize}{10.95 pt}]{diagrams_grayscale.pdf}\,}}
			{\scalebox{0.64}{\,\includegraphics[page = 36, width = 0.18cm*\ratio{\mytextsize}{10.95 pt}]{diagrams_grayscale.pdf}\,}}
		}%
		\newcommand{\SLolli}{
			\raisebox{-0.4ex}{\,\includegraphics[page = 37, width = 0.13cm*\ratio{\mytextsize}{10.95 pt}]{diagrams_grayscale.pdf}\,}
		}%
		\newcommand{\SYpsilon}{
			\raisebox{-0.4ex}{\includegraphics[page = 38, width = 0.37cm*\ratio{\mytextsize}{10.95 pt}]{diagrams_grayscale.pdf}}
		}%
		\newcommand{\SPreThree}{
			\raisebox{-0.165ex}{\includegraphics[page = 39, width = 0.54cm*\ratio{\mytextsize}{10.95 pt}]{diagrams_grayscale.pdf}\,}
		}%
		\newcommand{\SVee}{
			{\,\includegraphics[page = 40, width = 0.33cm*\ratio{\mytextsize}{10.95 pt}]{diagrams_grayscale.pdf}\,}
		}%
		\newcommand{\STriangle}{
			{\,\includegraphics[page = 41, width = 0.33cm*\ratio{\mytextsize}{10.95 pt}]{diagrams_grayscale.pdf}\,}
		}%
	\newcommand{\Cherry}[1]{\normalfont\textbf{Chr(#1)}}
	\newcommand{\Ypsilon}[1]{\normalfont\textbf{Y(#1)}}
	\newcommand{\PreThreeloop}[1]{\normalfont\textbf{PTl(#1)}}
	\newcommand{\Checkmark}[1]{\normalfont\textbf{Ch(#1)}}
	\newcommand{\Triangle}[1]{\normalfont\textbf{Tr(#1)}}
	\newcommand{\Loop}{\normalfont\textbf{Lo}}
	\newcommand{\PreCocktail}[1]{\normalfont\textbf{PC(#1)}}
	\newcommand{\PreThree}[1]{\normalfont\textbf{PT(#1)}}
	\newcommand{\ti}{\normalfont\textbf{ti}}
	\newcommand{\ty}{\normalfont\textbf{ty}}
	\newcommand{\tp}{\normalfont\textbf{tp}}
	\newcommand{\tc}{\normalfont\textbf{tc}}
	\newcommand{\tl}{\normalfont\textbf{tl}}
	\newcommand{\SLolli}{\normalfont\textbf{L}}
	\newcommand{\SYpsilon}{\normalfont\textbf{Y}}
	\newcommand{\SPreThree}{\normalfont\textbf{PT}}
	\newcommand{\SVee}{\normalfont\textbf{V}}
	\newcommand{\STriangle}{\normalfont\textbf{Tr}}
\tikzset{
	root/.style={circle, fill=rootcolor, inner sep=0ex, minimum size=1.1ex},
	dot/.style={circle, fill=black, inner sep=0ex, minimum size=0.55ex},
	rootvar/.style={circle,fill=rootcolor!10, draw=rootcolor, inner sep=0ex, minimum size=1.1ex},
	scalarkernel/.style={black, line width=\RSwidth, shorten <=0.2ex, decoration={markings,mark=at position 0.96 with {\arrow[scale=1.1,vectorcolor]{>}}}, postaction={decorate},shorten >=0.4ex},
	vectorkernel/.style={vectorcolor, line width=\RSwidth, shorten <=0.2ex, decoration={markings,mark=at position 0.96 with {\arrow[scale=1.1,vectorcolor]{>}}}, postaction={decorate},shorten >=0.4ex},
	vectorcircle/.style={circle, fill=none, draw=vectorcolor, inner sep=0ex, minimum size=0.66ex, line width=0.7\RSwidth, scale=1.15},
}%
\begin{document}

\maketitle

\begin{abstract}
	\noindent
	Using the method of paracontrolled distributions, we show the local well-posedness of an additive-noise approximation to the fluctuating hydrodynamics of the Keller--Segel model on the two-dimensional torus. Our approximation is a non-linear, non-local, parabolic-elliptic stochastic PDE with an irregular, heterogeneous space-time noise. As a consequence of the irregularity and heterogeneity, solutions to this equation must be renormalised by a sequence of diverging fields. Using the symmetry of the elliptic Green's function, which appears in our non-local term, we establish that the renormalisation diverges at most logarithmically, an improvement over the linear divergence one would expect by power counting. Similar cancellations also serve to reduce the number of diverging counterterms.
\end{abstract}
{%
	\small	
	\textit{Keywords}: singular stochastic partial differential equation, paracontrolled distributions, fluctuating hydrodynamics, parabolic-elliptic Keller--Segel model, Dean--Kawasaki equation;
	\newline
	\textit{2020 Mathematics Subject Classification}: Primary: 60H17. Secondary: 60L40, 92C17.
}%
\microtypesetup{protrusion=false} 

\tableofcontents

\microtypesetup{protrusion=true}
\section{Introduction}\label{sec:introduction}
In this work we are concerned with the local well-posedness of singular stochastic partial differential equations (singular SPDEs) of the kind
\begin{equation}\label{eq:gen_rKS_intro}
	\begin{cases}
		\begin{aligned}
			(\partial_t-\Delta)\rho&=\vdiv(\rho\nabla\Phi_{\rho})+\vdiv(\het\boldsymbol{\xi}), && \text{ in } (0,T]\times\mbT^2,\\
			-\Delta\Phi_{\rho}&=\rho- \inner{\rho}{1}_{L^2(\mbT^{2})},&& \text{ in }(0,T]\times \mbT^2,\\
			\rho\tzero&=\rho_{0}, && \text{ on }\mbT^2,
		\end{aligned}
	\end{cases}
\end{equation}
where $\boldsymbol{\xi}=(\xi^1,\xi^2)$ is a two-dimensional vector of i.i.d.\ space-time white noises, $\mbT^2=\mbR^2/\mbZ^2$ is the two-dimensional torus, $T\in(0,\infty)$ is a time horizon, $\het\in C([0,T];\mcH^2(\mbT^2))$ is a space-time heterogeneity and $\rho_0$ is some suitable initial data which we specify later. The advection in~\eqref{eq:gen_rKS_intro} comes from the Keller--Segel model of chemotaxis~\cite{keller_segel_70} and the noise stems from the theory of fluctuating hydrodynamics where one would ideally set $\het = \sqrt{\rho}$ to obtain the Dean--Kawasaki noise~\cite{dean_96,kawasaki_94}. However, it was shown in~\cite{konarovskyi_lehmann_vRenesse_19,konarovskyi_lehmann_vRenesse_20} that the equation with smooth drift only admits solutions which are empirical measures of the underlying interacting particle system. Hence one does not expect~\eqref{eq:gen_rKS_intro} with $\het = \sqrt{\rho}$ to admit non-atomic solutions.

Motivated by the theory of linear fluctuating hydrodynamics, our main example is instead given by the choice $\het = \srdet$ where $\rdet$ solves the deterministic PDE
\begin{equation}\label{eq:det_KS_intro}
	\begin{cases}
		\begin{aligned}
			(\partial_t-\Delta)\rdet&=\vdiv(\rdet \nabla\Phi_{\rdet}), && \text{ in } (0,T]\times\mbT^2,\\
			-\Delta\Phi_{\rdet}&=\rdet-\inner{\rdet}{1}_{L^2(\mbT^{2})}, && \text{ in }(0,T]\times \mbT^2,\\
			\rdet\tzero&=\rho_{0}, && \text{ on }\mbT^2.
		\end{aligned}
	\end{cases}
\end{equation}
This choice will be applied in a follow-up paper, \cite{martini_mayorcas_22_LDP}, to an additive-noise approximation of the Dean--Kawasaki equation associated to the Keller--Segel model. More precisely, given a \emph{noise intensity} $\eps>0$ and a \emph{correlation length} $\delta=\delta(\eps)>0$, we define the additive-noise approximation $\rho^{(\eps)}_{\delta}$ as a solution to
\begin{equation}\label{eq:gen_rKS_canonical}
	\begin{cases}
		\begin{aligned}
			(\partial_t-\Delta)\rho&=\vdiv(\rho\nabla\Phi_{\rho})+\eps^{1/2}\vdiv(\sqrt{\rdet}\boldsymbol{\xi}^{\delta}), && \text{ in } (0,T]\times\mbT^2,\\
			-\Delta\Phi_{\rho}&=\rho- \inner{\rho}{1}_{L^2(\mbT^{2})},&& \text{ in }(0,T]\times \mbT^2,\\
			\rho\tzero&=\rho_{0}, && \text{ on }\mbT^2,
		\end{aligned}
	\end{cases}
\end{equation}
where $\boldsymbol{\xi}^\delta \defeq(\psi_{\delta}\ast \xi^1,\psi_{\delta}\ast \xi^2)$ denotes a mollified noise and $\psi_{\delta}$ a standard, symmetric mollifier. Under appropriate scaling assumptions on $(\eps,\delta(\eps))$, using the theory developed in this paper, we will show in~\cite{martini_mayorcas_22_LDP} that $(\rho^{(\eps)}_{\delta(\eps)})_{\eps>0}$ satisfies a law of large numbers with limit $\rdet$, a central limit theorem with leading-order fluctuations given by the generalized Ornstein--Uhlenbeck process and a large deviation principle. This relates our additive-noise approximation~\eqref{eq:gen_rKS_canonical} to stochastic particle approximations to the Keller--Segel model~\cite{fournier_jourdain_17,fournier_tardy_21}, which were shown to have the same law of large numbers~\cite{fournier_jourdain_17,tardy_23,bresch_jabin_wang_19,bresch_jabin_wang_20,bresch_jabin_wang_23}---and are expected to have the same central limit theorem\footnote{%
	See~\cite{wang_zhao_zhu_23} for a related model.
}%
---as our approximation~\eqref{eq:gen_rKS_canonical}.
%
%
\begin{remark}
	We may also consider equations of the form
	\begin{equation}\label{eq:det_KS_intro_chi}
		\begin{cases}
			\begin{aligned}
				(\partial_t-\Delta)\rdet&=-\chem\vdiv(\rdet \nabla\Phi_{\rdet}), && \text{ in } (0,T]\times\mbT^2,\\
				-\Delta\Phi_{\rdet}&=\rdet-\inner{\rdet}{1}_{L^2(\mbT^{2})}, && \text{ in }(0,T]\times \mbT^2,\\
				\rdet\tzero&=\rho_{0}, && \text{ on }\mbT^2,
			\end{aligned}
		\end{cases}
	\end{equation}
	where $\chem \in \mbR$. In this setting, when one restricts $\rho_0$ to be non-negative and to integrate to $1$ (i.e.\ the density of a probability measure) one recovers the usual parabolic-elliptic Keller--Segel equation,~\cite{keller_segel_70}, the analysis of which has received much attention~\cite{horstmann_03_KS_I,horstmann_04_KS_II,hillen_painter_09_users,painter_19_chemotaxis}. The global existence of~\eqref{eq:det_KS_intro_chi} in spatial dimension two depends on the size and sign of $\chem$, \cite{jager_luckhaus_92_explosions,corrias_perthame_zaag_04,blanchet_dolbeault_perthame_06}. Since we are only concerned with local existence and all of our analysis is agnostic as to the size and sign of $\chem$ we set it to be $-1$ and work with equations of the form~\eqref{eq:gen_rKS_intro} and~\eqref{eq:det_KS_intro}.
\end{remark}
In this paper, rather than the particular~\eqref{eq:gen_rKS_canonical}, we will treat the general~\eqref{eq:gen_rKS_intro} where $\het$ is an arbitrary function, continuous in time and $\mcH^{2}(\mbT^{2})$ in space. Due to the singularity of the noise, defining a suitable notion of solution to~\eqref{eq:gen_rKS_intro} is non-trivial and we will implement a paracontrolled approach~\cite{gubinelli_15_GIP} to obtain local well-posedness (Theorem~\ref{thm:main_theorem}). To see why such an approach is necessary we consider the terms of~\eqref{eq:gen_rKS_intro} under a formal power counting argument. The proper definition of all function spaces used below can be found in Appendix~\ref{app:Besov}.

The white noise almost surely takes values in $\mcC_{\textnormal{par}}^{-2-}([0,T]\times\mbT^{2};\mbR^{2})$ for any $T>0$.\footnote{%
	Here $\mcC^\alpha_{\textnormal{par}}$ denotes the set of space-time H\"{o}lder-regular distributions of parameter $\alpha \in \mbR$ equipped with the parabolic scaling, i.e.\ regularity in time counts twice as much as regularity in space. These spaces are not used beyond the introduction and so we refer to \cite[Lem.~2.12 \& Def.~3.7]{hairer_14_RegStruct} for an example definition. The shorthand $\alpha\pm$ is used to denote $\alpha\pm\eps$ for some $\eps>0$ arbitrarily small but fixed.
}
Let us assume for now that we can define the product $\het \boldsymbol{\xi}$ intrinsically and that it is no more regular than the white noise itself. Due to the regularising effect of the heat equation (Lemma~\ref{lem:Schauder}), it follows that the solution $\ti$ to the linear equation,
\begin{equation*}
	\begin{cases}
		\begin{aligned}
			(\partial_t -\Delta)\ti &= \vdiv(\het \boldsymbol{\xi}), &&\text{ in }(0,T]\times \mbT^2,\\
			\ti\tzero &= 0, && \text{ on }\mbT^2,
		\end{aligned}
	\end{cases}
\end{equation*}
may be no more regular than $C_T\mcC^{-1-}(\mbT^{2})$, where $\mcC^{\alpha}(\mbT^{2})$ denotes the H\"{o}lder--Besov space of regularity $\alpha\in\mbR$ and $C_{T}\mcC^{\alpha}(\mbT^{2})$ denotes the space of continuous functions on $[0,T]$ taking values in $\mcC^{\alpha}(\mbT^{2})$. Certainly we cannot, in general, expect $\rho$ to be any more regular than this. Assuming that this regularity is passed to $\rho$ and applying the regularising effect of the elliptic equation (Lemma~\ref{lem:elliptic_regularity}) we would have $\nabla\Phi_{\rho}\in C_{T}\mcC^{0-}(\mbT^{2};\mbR^{2})$. However, by Bony's estimate the product $f g$ is only a priori well-defined for $f\in\mcC^{\alpha}(\mbT^{2})$ and $g\in \mcC^{\beta}(\mbT^{2})$ with $\alpha+\beta>0$, which does not hold for any coordinate of the vector $\rho\nabla\Phi_{\rho}$.

The theories of regularity structures, paracontrolled distributions, flow equations and various recent extensions and adaptations thereof have revolutionised the study of singular SPDEs~\cite{hairer_14_RegStruct,gubinelli_15_GIP, kupiainen_16, otto_sauer_smith_weber_21,linares_otto_tempelmayr_tsatsoulis_24,duch_21}. The common thread throughout these theories is to notice that the factors of the ill-defined products are not generic distributions but inherit structure from the noise. This inheritance allows one to define renormalised products, which excise the singular part, allowing one to give meaning to a renormalised equation which is continuous in a finite tuple of \emph{diagrams} built from the noise. These diagrams are referred to as an enhancement.

The theory of paracontrolled distributions was first developed by M.~Gubinelli, P.~Imkeller and N.~Perkowski in~\cite{gubinelli_15_GIP}. The central idea is to use harmonic analysis to construct regular commutators, which allow us to decompose the equation into exogenous noise terms and terms that can be constructed as fixed points. Paracontrolled distributions have been successfully applied to analyse a range of singular SPDEs and operators including; the parabolic Anderson model (PAM)~\cite{gubinelli_15_GIP,koenig_perkowski_vanzuijlen_20}, the Anderson Hamiltonian~\cite{allez_chouk_15,gubinelli_ugurcan_zachhuber_20,chouk_van_zuijlen_21}, the $\Phi^{4}_{3}$ model~\cite{mourrat_weber_17_CDI,catellier_chouk_18}, the Kardar--Parisi--Zhang equation~\cite{gubinelli_perkowski_17}, the stochastic Burgers and Navier--Stokes equations~\cite{zhu_zhu_15,gubinelli_perkowski_17} and the stochastic non-linear wave equation~\cite{gubinelli_koch_oh_18}.

Let $(\psi_{\delta})_{\delta>0}$ be a sequence of symmetric spatial mollifiers. In our case we find that there exists a family of deterministic fields $(f^{\delta})_{\delta>0}$ satisfying
\begin{equation*}
	f^{\delta}\in L_{T}^{\infty}\mcC^{-1-}(\mbT^{2};\mbR^{2}),\qquad\norm{f^{\delta}}_{L_{T}^{\infty}\mcC^{-1-}}\lesssim(1\vee\log(\delta^{-1}))\norm{\het}^{2}_{C_{T}\mcH^{2}}\quad\text{for every}~\delta>0
\end{equation*}
such that the sequence of solutions $(\rho^\delta)_{\delta>0}$ each solving,
\begin{equation}\label{eq:gen_rKS_renormalised}
	\begin{cases}
		\begin{aligned}
			(\partial_t-\Delta)\rho&=\vdiv(\rho\nabla\Phi_{\rho}-f^\delta)+\vdiv(\het\boldsymbol{\xi}^{\delta}), && \text{ in } (0,T]\times\mbT^2,\\
			-\Delta\Phi_{\rho}&=\rho-\inner{\rho}{1}_{L^2(\mbT^{2})}, && \text{ in }(0,T]\times \mbT^2,\\
			\rho\tzero&=\rho_{0}, && \text{ on }\mbT^2,
		\end{aligned}
	\end{cases}
\end{equation}
converges in probability to a unique limit $\rho$, which we identify as the renormalised solution to~\eqref{eq:gen_rKS_intro}. For a definition of the renormalised solution, see Definition~\ref{def:renormalised_solution}; for its existence, see Theorem~\ref{thm:convergence_renorm_sol}, Part~\ref{it:thm_existence_renormalized}; and for the convergence, see Theorem~\ref{thm:convergence_renorm_sol}, Part~\ref{it:thm_convergence_to_renormalized}; see also Subsection~\ref{subsec:gen_strategy} for a formal discussion of these results.

We remark that the eventual application to fluctuating hydrodynamics motivates us to consider mollified noise terms of the form $\het\boldsymbol{\xi}^{\delta}\defeq\het(\psi_{\delta}\ast\boldsymbol{\xi})$, rather than mollifying the whole product $(\het\boldsymbol{\xi})^{\delta}\defeq\psi_{\delta}\ast(\het\boldsymbol{\xi})$, see~\cite[Sec.~3.2]{cornalba_fischer_ingmanns_raithel_23} and~\cite{fehrman_gess_23_zero_range,dirr_fehrman_gess_20}. Further, we mollify the noise in space only so that the Markov property of the underlying dynamics is retained.

Three points of interest arise from~\eqref{eq:gen_rKS_renormalised}. Firstly, in the case where $\het$ is genuinely heterogeneous the field $f^\delta$ is in general also heterogeneous. This has been observed elsewhere, having been pointed out as a possibility in~\cite{hairer_14_RegStruct} and seen explicitly in the renormalisation of singular SPDEs on bounded domains,~\cite{gerencser_hairer_19}. Secondly, if $\het$ is a constant, so that our noise agrees with that of the stochastic Burgers' equation, then $f^{\delta}$ is zero. In this case the renormalised equation agrees exactly with the singular equation, i.e.\ the products are not explicitly renormalised when $\delta=0$. This phenomenon has also been observed in~\cite{daprato_debussche_temam_94,daprato_debussche_02,zhu_zhu_15,gubinelli_perkowski_17}. Thirdly, using the informal power counting described above, one might expect the singular product $\rho^\delta\nabla \Phi_{\rho^\delta}$ to diverge at the order of $\delta^{-1}$, since this is the gap in regularity between the singular factors. However,~\eqref{eq:gen_rKS_renormalised} shows that this divergence is at most logarithmic. This improvement arises from symmetries in the fundamental solution of the elliptic problem, leading to non-trivial cancellations in our stochastic estimates. In the case of constant $\het$ it is exactly these cancellations which show that no explicit renormalisation in~\eqref{eq:gen_rKS_renormalised} is necessary.

To demonstrate the underlying principle, let us consider a one-dimensional example. We assume that $u^{\delta}\to u$ as $\delta \to 0$ in a space of regularity $-1/2-$. The product rule gives the identity,
\begin{equation}\label{eq:product_rule}
	u^{\delta}\partial_{x}\partial_{x}^{-2}u^{\delta}=\frac{1}{2}\Bigl(\partial_{x}^{2}(\partial_{x}^{-1}u^{\delta}\partial_{x}^{-2}u^{\delta})-\partial_{x}(u^{\delta}\partial_{x}^{-2}u^{\delta})\Bigr),
\end{equation}
where we write $\partial_x^{-1}$ as a shorthand for integration in $x$, with the (arbitrary) normalisation that the primitive is mean-free. While the product on the left-hand side, between an object converging in $-1/2-$ and an object converging in $1/2-$ looks ill-posed, the right-hand side is in fact classically well-posed; the first term is the second derivative of a product between objects in $1/2-$ and $3/2-$, while the second is the derivative of a product between an object in $-1/2-$ and one in $3/2-$. Hence the anticipated logarithmic divergence of the left hand side is removed by expanding as on the right-hand side. This basic observation extends to our higher-dimensional case through the symmetry of the Green's function for Poisson's equation. We see that the symmetry alleviates divergences by one order. Linear divergences of $\delta^{-1}$ are improved to logarithmic, and logarithmic divergences are improved to well-posedness. The heterogeneity $\het$ makes these improvements visible, as when $\het$ is constant the same symmetries lead to perfect cancellations removing the need for renormalising counterterms all together. Similar observations have also been made in the context of the KPZ equation,~\cite[Lem.~9.5]{gubinelli_perkowski_17}.

We observe that~\eqref{eq:gen_rKS_intro} is an example of a singular SPDE involving anisotropic regularity since the regularising effect of the elliptic equation only takes place in the spatial variable. It is for this reason that we choose to work with the theory of paracontrolled distributions, since it naturally treats space and time separately. This is in contrast to the theory of regularity structures which naturally treats space and time simultaneously. While there are examples of works which study anisotropic equations and associated regularity structures, \cite{berglund_kuehn_16,hairer_matetski_18,berglund_kuehn_19}, the analysis is somewhat ad hoc and requires a significant amount of bespoke machinery to be developed. It is for this reason that we opt for a paracontrolled approach which more natively applies to the anisotropic regularisation present in~\eqref{eq:gen_rKS_intro}.
\paragraph{Structure of the Paper:}
In the rest of this section we first recall some basic notations and conventions which are used throughout the text. Some of these are accompanied by more rigorous presentations in the appendices. We then present an outline of the general strategy and our main result in Subsection~\ref{subsec:gen_strategy}. Section~\ref{sec:enhancements} contains a detailed proof of the existence and regularity of the various stochastic objects which we are required to construct and constitute our enhanced noise. The careful analysis of these stochastic objects and control over the diverging fields is the main contribution of this paper. In Section~\ref{sec:existence} we show the local well-posedness of paracontrolled solutions given a suitable enhancement of the noise. Finally we include three appendices: Appendix~\ref{app:Besov} recalls some useful and well-known results concerning Besov spaces and paraproducts; Appendix~\ref{sec:shape_coefficient_estimates} provides various estimates on the so-called shape coefficients which we introduce in Section~\ref{sec:enhancements}
and Appendix~\ref{app:summation_estimates} contains a number of summation and discrete convolution estimates that we make repeated use of throughout.
\paragraph{Acknowledgements:} 
We would like to express our gratitude to A.~Etheridge, B.~Fehrman, N.~Perkowski and W.~van Zuijlen for helpful discussions during the writing of this manuscript. We also wish to thank S.~Mahdisoltani for bringing the subject of fluctuating hydrodynamics to our attention, in particular through the paper~\cite{mahdisoltani_et_al_21}.

A.~Martini was supported by the Engineering and Physical Sciences Research Council Doctoral Training Partnerships [grant number EP/R513295/1] and by the Lamb \& Flag Scholarship of St John's College, Oxford. Part of this work was completed during A.~Martini's participation in the Junior Trimester Program `Stochastic modelling in the life science: From evolution to medicine' at the Hausdorff Research Institute for Mathematics, funded by the Deutsche Forschungsgemeinschaft (DFG, German Research Foundation) under Germany's Excellence Strategy -- EXC-2047/1 -- 390685813.

Work on this paper was undertaken during A.~Mayorcas's tenure as INI-Simons Post Doctoral Research Fellow hosted by the Isaac Newton Institute for Mathematical Sciences (INI) participating in programme \emph{Frontiers in Kinetic Theory}, and by the Department of Pure Mathematics and Mathematical Statistics (DPMMS) at the University of Cambridge.  This author would like to thank INI and DPMMS for support and hospitality during this fellowship, which was supported by Simons Foundation (award ID 316017) and by Engineering and Physical Sciences Research Council (EPSRC) grant number EP/R014604/1.
\subsection{Notations and Conventions}\label{sec:notations}
We write $\mbN$ for the natural numbers excluding zero, $\mbN_0 \defeq \mbN\cup\{0\}$ and $\mbN_{-1} \defeq \mbN_0\cup\{-1\}$. We define the two-dimensional torus by $\mbT^{2}\defeq\mbR^{2}/\mbZ^{2}$. Throughout $\lvert\,\cdot\,\rvert$ will indicate the norm $\lvert x\rvert=\big(\sum_{i=1}^2 \lvert x_i\rvert^2\big)^{1/2}$. Occasionally we write $\lvert x\rvert_{\infty}\defeq \max_{i =1,2} \lvert x_i \rvert$ to indicate the maximum norm on $\mbT^2$ or $\mbR^2$. For $r>0$ we use the notation $B(0,r)\defeq \{x\in \mbR^2:\lvert x\rvert<r\}$. From now on we will write $\inner{a}{b}$ to denote the inner product on any Hilbert space which we either specify or leave clear from the context. For $k,n\in\mbN$ we denote by $C^k(\mbT^2;\mbR^n)$ (resp.\ $C^\infty(\mbT^2;\mbR^n)$) the space of $k$-times continuously differentiable (resp.\ smooth) functions on the torus with values in $\mbR^{n}$. We write $\mcS'(\mbT^{2};\mbR)$ for the dual of $C^{\infty}(\mbT^{2};\mbR)$ and $\mcS'(\mbT^{2};\mbR^{n})$ for vectors of elements in $\mcS'(\mbT^{2};\mbR)$. When the context is clear we will remove the target space so as to lighten notation.

For $f\in C^\infty(\mbT^2;\mbR)$ (resp.\ complex sequences $(\zeta(\om))_{\om\in\mbZ^{2}}$ with $\overline{\zeta(-\om)}=\zeta(\om)$ that decay faster than any polynomial) we define its Fourier transform (resp.\ inverse Fourier transform) by the expression,
\begin{equation*}
	\msF f(\omega) \defeq \int_{\mbT^2}\euler^{-2\uppi\upi\inner{\om}{x}}f(x)\dd x,\qquad \msF^{-1} \zeta(x) \defeq \sum_{\om\in\mbZ^2}\euler^{2\uppi\upi\inner{\om}{x}}\zeta(\om).
\end{equation*} 
This is extended componentwise to vector-valued functions, by density to $f\in L^p(\mbT^2;\mbR^n)$ for $p\in [1,\infty)$ and by duality to $f\in \mcS'(\mbT^2;\mbR^n)$. Where convenient we use the shorthand $\hat{f}(\omega)\defeq\msF f (\omega)$. We define the Sobolev space $\mcH^{\alpha}(\mbT^{2};\mbR^{n})$ of regularity $\alpha\in\mbR$ as the space of distributions $u\in\mcS'(\mbT^{2};\mbR^{n})$ such that
\begin{equation*}
	\norm{u}_{\mcH^{\alpha}}\defeq\Bigl(\sum_{\om\in\mbZ^{2}}(1+\abs{2\uppi\om}^{2})^{\alpha}\abs{\hat{u}(\om)}^{2}\Bigr)^{1/2}<\infty.
\end{equation*}
These notions also extend to complex-valued distributions, which we denote by $\mcS'(\mbT^{2};\mbC^{n})$ and $\mcH^{\alpha}(\mbT^{2};\mbC^{n})$.

We often work in the scale of Besov and H\"{o}lder--Besov spaces whose definitions and basic properties are recalled in Appendix~\ref{app:Besov}. We let:
\begin{equation}\label{eq:dyadic_partition_of_unity_intro}
	\begin{split}
		&\varrho_{-1},\varrho_{0}\in C^{\infty}(\mbR^{2};[0,1])~\text{be radially symmetric and such that}\\
		&\supp(\varrho_{-1})\subset B(0,1/2),\qquad\supp(\varrho_{0})\subset\{x\in\mbR^{2}:9/32\leq\abs{x}\leq 1\},\\
		&\sum_{k=-1}^{\infty}\varrho_{k}(x)=1\quad\text{for all}~x\in\mbR^{2}\quad\text{where}~\varrho_{k}(x)\defeq\varrho_{0}(2^{-k}x)\quad\text{for each}~k\in\mbN.
	\end{split}
\end{equation}
This defines a dyadic partition of unity as in Subsection~\ref{subsec:Besov_spaces}. Given $k\geq -1$ we write $\Delta_{k}u\defeq\msF^{-1}(\varrho_{k}\msF u)$ for the associated Littlewood--Paley block and given $\alpha\in \mbR$, $p,q\in [1,\infty]$, we define the Besov-norm $\norm{u}_{\mcB^{\alpha}_{p,q}(\mbT^2;\mbR^{n})}\defeq\norm{(2^{k\alpha}\norm{\Delta_{k}u}_{L^{p}(\mbT^2;\mbR^{n})})_{k\in\mbN_{-1}}}_{\ell^{q}}$ for all $u\in\mcS'(\mbT^{2};\mbR^{n})$. We use $\mcB^\alpha_{p,q}(\mbT^2;\mbR^n)$ to denote the completion of $C^{\infty}(\mbT^{2};\mbR^{n})$ under $\norm{\,\cdot\,}_{\mcB^\alpha_{p,q}(\mbT^2;\mbR^n)}$ and use the shorthand $\mcC^\alpha(\mbT^2;\mbR^n)\defeq\mcB^\alpha_{\infty,\infty}(\mbT^2;\mbR^n)$. As above, we will often remove the domain and target spaces when the context is clear. For $\alpha \in \mbR$ and $p,q\in [1,\infty]$ we use the notation $\mcB^{\alpha-}_{p,q}(\mbT^{2};\mbR^{n})\defeq\cap_{\alpha'<\alpha}\mcB^{\alpha'}_{p,q}(\mbT^{2};\mbR^{n})$. 

We define the action of the heat semigroup on $f \in L^1(\mbT^2;\mbR)$ by the flow,
\begin{equation*}
	[0,\infty)\ni t\mapsto P_{t} f \defeq\msF^{-1}(\euler^{-t\abs{2\uppi\place}^{2}}\hat{f}(\place))=\msH_t\ast f
\end{equation*}
where the convolution against the heat kernel $\msH_{t}$ on $\mbT^{2}$ is defined by
\begin{equation*}
	\msH_{t}\ast f=\int_{\mbT^{2}}\msH_{t}(\place-y)f(y)\dd y=\int_{(0,1)^{2}}\msH_{t}^{\per}(\place-y)f^{\per}(y)\dd y
\end{equation*}
with $f^{\per}\from\mbR^{2}\to\mbR$ the periodization of $f\from\mbT^{2}\to\mbR$ and $\msH^{\per}_{t}$ given by
\begin{equation*}
	\begin{split}
		\msH_{t}^{\per}(x)&\defeq\frac{1}{4\uppi t}\sum_{n\in\mbZ^2}\euler^{-\frac{\abs{x-n}^2}{4t}}=\sum_{\om\in\mbZ^{2}}\euler^{2\uppi\upi\inner{\om}{x}}\euler^{-t\abs{2\uppi\om}^2}\qquad\text{for}~t>0,\\
		\msH_{0}^{\per}&\defeq\sum_{n\in\mbZ^{2}}\delta_{n},\qquad\text{and}\qquad\msH_{t}^{\per}(x)\defeq0\qquad\text{for}~t<0.
	\end{split}
\end{equation*}
For $f\from[0,T]\times\mbT^{2}\to \mbR$, we define the resolution of the heat equation as
\begin{equation*}
	\mcI[f]_{t}\defeq\int_{0}^{t}P_{t-s}f_s\dd s=\int_{0}^{t}\msH_{t-s}\ast f_s\dd s.
\end{equation*}
We define the solution to the elliptic equation $-\Delta\Phi_{f}=f-\inner{f}{1}_{L^{2}(\mbT^{2})}$ by
\begin{equation*}
	\Phi_{f}\defeq\msG\ast f,\qquad\text{where}\quad f\in\mcS'(\mbT^{2};\mbR)\quad\text{and}
\end{equation*}
\begin{equation*}
	\msG(x)\defeq\sum_{\om\in\mbZ^{2}\setminus\{0\}}\euler^{2\uppi\upi\inner{\om}{x}}\frac{1}{\abs{2\uppi\om}^{2}},\qquad(\msG\ast f)(x)=\sum_{\om\in\mbZ^{2}\setminus\{0\}}\euler^{2\uppi\upi\inner{\om}{x}}\frac{1}{\abs{2\uppi\om}^{2}}\hat{f}(\om).
\end{equation*}
We denote for $\om=(\om^{1},\om^{2})\in\mbZ^{2}$, $t\in\mbR$ and $j=1,2$ the Fourier multipliers $H^{j}_{t}(\om)\defeq2\uppi\upi\om^{j}\exp(-t\abs{2\uppi\om}^{2})\mathds{1}_{t\geq0}$ and $G^{j}(\om)\defeq2\uppi\upi\om^{j}\abs{2\uppi\om}^{-2}\mathds{1}_{\om\neq0}$.

Given a Banach space $E$ and an interval $I\subseteq[0,\infty)$ we write $C_{I}E\defeq C(I;E)$ for the space of continuous maps $f\from I\to E$. For compact $I$ we equip $C_{I}E$ with the norm $\norm{f}_{C_{I}E}\defeq\sup_{t\in I}\norm{f_{t}}_{E}$. For $\kappa\in(0,1)$ we define $C^{\kappa}_{I}E\defeq C^{\kappa}(I;E)$ the space of $\kappa$-H\"{o}lder continuous maps $f\from I\to E$ equipped with the norm $\norm{f}_{C_{I}^{\kappa}E}\defeq\norm{f}_{C_{I}E}+\sup_{s\neq t\in I}\frac{\norm{f_{t}-f_{s}}_{E}}{\abs{t-s}^{\kappa}}$. For $T>0$, we use the shorthands $C_{T}E=C_{[0,T]}E$ and $C_{T}^{\kappa}E=C_{[0,T]}^\kappa E$. Note that the norm $\norm{f}_{C_T^{\kappa}E}$ is equivalent to $\norm{f_{0}}_{E}+\sup_{s\neq t\in[0,T]}\frac{\norm{f_{t}-f_{s}}_{E}}{\abs{t-s}^{\kappa}}$. For $\eta\geq0$ we define the Banach space
\begin{equation*}
	\begin{split}
		C_{\eta;T}E\defeq\Bigl\{f\from(0,T]\to E:~&(0,T]\ni t\mapsto(1\wedge t)^{\eta}f_{t}~\text{is continuous in}~E,\\
		&\multiquad[6](1\wedge t)^{\eta}f_{t}\bigtzero\defeq\lim_{t\to0}(1\wedge t)^{\eta}f_{t}\in E\Bigr\}
	\end{split}
\end{equation*}
equipped with the norm
\begin{equation*}
	\norm{f}_{C_{\eta;T}E}\defeq\sup_{t\in(0,T]}(1\wedge t)^{\eta}\norm{f_{t}}_{E}.
\end{equation*}
We refer to $\eta$ as the weight at $0$. For $\kappa\in(0,1]$ we let $C^\kappa_{\eta;T}E\defeq C^\kappa_\eta((0,T];E)$ denote the Banach space of functions $f\from(0,T]\to E$ which are finite under the norm
\begin{equation*}
	 \norm{f}_{C^{\kappa}_{\eta;T}E}\defeq\norm{f}_{C_{\eta;T}E}+\sup_{s\neq t\in(0,T]}(1\wedge s\wedge t)^{\eta}\frac{\norm{f_{t}-f_{s}}_{E}}{\abs{t-s}^{\kappa}}.
\end{equation*}
We also make use of the notation $\msL_{\eta;T}^{\kappa}\mcC^{\alpha}(\mbT^{2})\defeq C_{\eta;T}^{\kappa}\mcC^{\alpha-2\kappa}(\mbT^{2})\cap C_{\eta;T}\mcC^{\alpha}(\mbT^{2})$ to denote a weighted interpolation space, on which we define
\begin{equation*}
	\norm{u}_{\msL_{\eta;T}^{\kappa}\mcC^{\alpha}}\defeq\max\{\norm{u}_{C_{\eta;T}^{\kappa}\mcC^{\alpha-2\kappa}},\norm{u}_{C_{\eta;T}\mcC^{\alpha}}\}.
\end{equation*}
We set $\msL_{T}^{\kappa}\mcC^{\alpha}(\mbT^{2})\defeq\msL_{0;T}^{\kappa}\mcC^{\alpha}(\mbT^{2})\defeq C_{T}^{\kappa}\mcC^{\alpha-2\kappa}(\mbT^{2})\cap C_{T}\mcC^{\alpha}(\mbT^{2})$ and understand $\msL^{0}_{\eta;T}\mcC^{\alpha}(\mbT^{2})=C_{\eta;T}\mcC^{\alpha}(\mbT^{2})$.

We write $\lesssim$ to indicate that an inequality holds up to a constant depending on quantities that we do not keep track of or are fixed throughout. When we do wish to emphasise the dependence on certain quantities $\alpha$, $p$, $d$, we either write $\lesssim_{\alpha,p,d}$ or define $C\defeq C(\alpha,p,d)>0$ and write $\leq C$.

For two vectors $\om_{1},\om_{2}\in\mbZ^{2}$, we use $(\om_{1}\perp\om_{2})$ to denote the formula $\inner{\om_{1}}{\om_{2}}=0$ and $\lnot(\om_{1}\perp\om_{2})$ to denote its logical negation.

Recall the dyadic partition of unity $(\varrho_{k})_{k\in\mbN_{-1}}$ introduced in~\eqref{eq:dyadic_partition_of_unity_intro}. Let $u,v\in\mcS'(\mbT^{2})$, we define the truncated sums
\begin{equation}\label{eq:definition_sim}
	\sum_{\substack{\om_1,\om_2\in\mbZ^{2}\\\om_1\sim\om_2}}\hat{u}(\om_1)\hat{v}(\om_2)\defeq\sum_{\om_1,\om_2\in\mbZ^{2}}\hat{u}(\om_1)\hat{v}(\om_2)\sum_{\substack{k,l\in\mbN_{-1}\\\abs{k-l}\leq1}}\varrho_{l}(\om_1)\varrho_{k}(\om_2)
\end{equation}
and
\begin{equation}\label{eq:definition_precsim}
	\sum_{\substack{\om_1,\om_2\in\mbZ^{2}\\\om_1\precsim\om_2}}\hat{u}(\om_1)\hat{v}(\om_2)\defeq\sum_{\om_1,\om_2\in\mbZ^{2}}\hat{u}(\om_1)\hat{v}(\om_2)\sum_{k=1}^{\infty}\sum_{l=-1}^{k-2}\varrho_{l}(\om_1)\varrho_{k}(\om_2),
\end{equation}
where we implicitly assume that those sums converge absolutely. This is a discrete analogue of the usual paraproduct decomposition, cf.\ Subsection~\ref{subsec:paraproducts}. If $\om_1,\om_2\in\mbZ^{2}\setminus\{0\}$, then $\om_1\sim\om_2$ implies $9/64\abs{\om_1}\leq\abs{\om_2}\leq64/9\abs{\om_1}$ and $\om_1\precsim\om_2$ implies $\abs{\om_1}\leq8/9\abs{\om_2}$. The constants $9/64$ and $8/9$ are invariant under dilations of the chosen partition of unity $(\varrho_{k})_{k\in\mbN_{-1}}$, see e.g.~\cite[(64)]{mourrat_weber_xu_16}.
\begin{details}
	The restriction $\sim$ satisfies
	\begin{equation*}
		\sum_{\substack{k,l\in\mbN_{-1}\\\abs{k-l}\leq1}}\varrho_{l}(\om_1)\varrho_{k}(\om_2)
		\begin{cases}
			\in[0,1]\quad\text{for any }\om_1,\om_2,\\
			=0\quad\text{if }(\abs{\om_1}\geq1\text{ or }\abs{\om_2}\geq1)\text{ and }\frac{\abs{\om_1}}{\abs{\om_2}}\not\in[\frac{9}{64},\frac{64}{9}].
		\end{cases}
	\end{equation*}
	Assume $\abs{\om_2}\geq1$ and $\abs{\om_1}/\abs{\om_2}\not\in[9/64,64/9]$. In particular $\om_2\in\supp(\varrho_{k})$ for some $k\in\mbN_{1}$. We have either $\abs{\om_1}>\frac{64}{9}\abs{\om_2}\geq\frac{64}{9}\frac{9}{32}2^{k}=2^{k+1}$ or $\abs{\om_1}<\frac{9}{64}\abs{\om_2}<\frac{9}{64}2^{k}=\frac{9}{32}2^{k-1}$. In both cases, $\om_1\not\in\bigcup_{j=k-1}^{k+1}\supp(\varrho_{j})$ and so $\sum_{\abs{k-l}\leq1}\varrho_{l}(\om_1)\varrho_{k}(\om_2)=0$. If instead $\abs{\om_1}\geq1$, we can use $\abs{\om_2}/\abs{\om_1}\not\in[9/64,64/9]$ and exchange r\^{o}les.
	
	Occasionally when we consider second moments, we shall also use the same notation with the cut-off function $\abs{\sum_{\abs{k-l}\leq1}\varrho_{l}(\om_1)\varrho_{k}(\om_2)}^2$.
	
	Assume further $\om_1,\om_2\not=0$, as is usual the case in our diagrams. In this case, the conditions $\abs{\om_1}\geq1$, $\abs{\om_1}\geq1$ are always satisfied.
	
	Assume $\om_1,\om_2\in\mbZ^{2}\setminus\{0\}$ are in the support of the cut-off $\sum_{k=1}^{\infty}\sum_{l=-1}^{k-2}\varrho_{l}(\om_1)\varrho_{k}(\om_2)$. By the definition of the dyadic partition of unity $(\varrho_{q})_{q\in\mbN_{-1}}$, we obtain
	\begin{equation*}
		\abs{\om_1}\leq 2^{k-2}=\frac{8}{9}\frac{9}{32}2^{k}\leq\frac{8}{9}\abs{\om_2}
	\end{equation*}
	which yields the lower bound on $\om_2$. Furthermore, if $\om=\om_1+\om_2$, then $\abs{\om}\leq\abs{\om_1}+\abs{\om_2}\leq\frac{17}{9}\abs{\om_2}$.
\end{details}

We also define a probability space $(\Omega,\mcF,\mbP)$ which we assume to be large enough to support a countable family of Brownian motions. This probability space will be fixed, so that whenever a property holds almost surely, it will do so with respect to $\mbP$.
\subsection{Strategy and Main Result}\label{subsec:gen_strategy}
We first outline the paracontrolled approach to~\eqref{eq:gen_rKS_intro} in a relatively loose manner, identifying the main steps of the method and the diagrams that we will need to give meaning to. Recall that we wish to define a sufficiently robust notion of solution to~\eqref{eq:gen_rKS_intro} which in particular is stable under regular approximations to the noise. To do so we first write~\eqref{eq:gen_rKS_intro} in mild form, setting
\begin{equation}\label{eq:mild_gen_rKS}
	\rho=P\rho_0+\vdiv\mcI[\rho\nabla\Phi_{\rho}]+\vdiv\mcI[\het\bxi].
\end{equation}
Note that $\rho$ may blow up before time $T$ due to the non-linearity on the right-hand side of~\eqref{eq:mild_gen_rKS}. For the purpose of this discussion, we will assume that $\rho$ exists until time $T$; see Subsection~\ref{subsec:maximal_time_of_existence} for results on the maximal time of existence. In the remainder of this section we will assume that all terms on the right-hand side of~\eqref{eq:mild_gen_rKS} are continuous in time while taking values in a H\"{o}lder--Besov space $\mcC^{\alpha}(\mbT^2)$, where $\alpha$ is possibly negative.

Working, for now, with smooth initial data, we may assume that the final term on the right-hand side is the least regular component of $\rho$. Using the same stochastic estimates alluded to in the introduction, along with the regularising effect of the heat kernel and the effect of the derivative, we will work under the assumption that $\ti\defeq\nabla\cdot\mcI[\het\bxi]\in C_{T}\mcC^{-1-}(\mbT^{2})$. Passing this regularity to $\rho$ and applying the regularising effect of the elliptic equation (Lemma~\ref{lem:elliptic_regularity}) we expect to have $\nabla\Phi_{\rho} \in C_{T}\mcC^{0-}(\mbT^{2};\mbR^{2})$. Therefore, as discussed in the introduction, the product $\rho\nabla\Phi_{\rho}$ is not a priori well-defined. Our first step is to employ the so called Da Prato--Debussche trick~\cite{daprato_debussche_03} to remove the most singular term by defining $u\defeq \rho-\ti$ so that if $\rho$ is a solution to~\eqref{eq:mild_gen_rKS},
\begin{equation*}
	u=P\rho_0+\vdiv\mcI[u\nabla\Phi_u]+\vdiv\mcI[u\nabla\Phi_{\ti}]+\vdiv\mcI[\ti\nabla\Phi_u]+\vdiv \mcI[\ti \nabla \Phi_{\ti}].
\end{equation*}

We notice that the product $\ti \nabla \Phi_{\ti}$ is not classically well-posed, however it can be renormalised and replaced with the symbol $\ty \defeq \vdiv \mcI[\ti \nabla \Phi_{\ti}] - \tl$, where $\tl\defeq\mbE[\vdiv\mcI[\ti\nabla\Phi_{\ti}]]$ denotes the singular part of this product. The term $\ty$ is well-defined and will have the regularity that one would formally expect of $\vdiv\mcI[\ti\nabla\Phi_{\ti}]$, namely $\ty\in C_{T}\mcC^{0-}(\mbT^{2})$ (see Subsection~\ref{sec:diagrams_of_order_2_and_3}). From now on we continue with our expansion, replacing the singular product by its renormalised counterpart $\ty$ so that we have in fact changed the equation solved by $\rho$.

We are now in better shape, as we may now work with $u\in C_{T}\mcC^{0-}(\mbT^{2})$, which renders the first product on the right-hand side classically well-posed. However, the second and third products remain ill-defined. We may repeat the same trick, defining $w \defeq u- \ty$, which should solve,
\begin{equation*}
	w=P\rho_0+\vdiv\mcI [w\nabla \Phi_{\ti}]+\vdiv\mcI[\ti\nabla\Phi_{w}]+\vdiv\mcI[\nabla\Phi_{\ty}\pa\ti]+\vdiv\mcI[\tp] + Q(w,\ti,\ty),
\end{equation*}
where $Q(w,\ti,\ty)$ denotes a finite sum of classically well-posed terms involving $w$, $\ti$ and $\ty$. The formal definition of Bony's decomposition into para and resonant products is given in Appendix~\ref{app:Besov}, however, for now we simply recall the rules that for $f\in\mcC^{\alpha}(\mbT^{2})$, $g\in\mcC^{\beta}(\mbT^{2})$, one has
\begin{equation*}
	f\pa g\in\mcC^{\beta\wedge(\alpha+\beta)}(\mbT^{2})\text{ for any }\alpha\in\mbR\setminus\{0\},~\beta\in\mbR\qquad~\text{and}~\qquad f \re g\in\mcC^{\alpha+\beta}(\mbT^{2})~\text{if}~\alpha+\beta> 0.
\end{equation*}
The new symbol appearing on the right-hand side for $w$ is a shorthand for
\begin{equation*}
	\tp\defeq\ty\re\nabla\Phi_{\ti}+\nabla\Phi_{\ty}\re\ti.
\end{equation*}
%
%
Although those resonant products are not classically well-defined, further stochastic arguments show that they can in fact be defined as objects finite in $C_T\mcC^{0-}(\mbT^{2};\mbR^{2})$ without the subtraction of any infinite counterterms. 
The full definition of $\tp$, through stochastic calculus, is contained in Subsection~\ref{subsec:Feynman} and we show in Subsections~\ref{sec:diagrams_of_order_2_and_3} and~\ref{sec:Wick_contractions} that $\tp\in C_T\mcC^{0-}(\mbT^{2};\mbR^{2})$. So even though it requires significant work to define, it is not the least regular term on the right-hand side.

Instead this is given by the paraproduct term, $\vdiv\mcI[\nabla\Phi_{\ty}\pa\ti]$, which using Bony's estimate (Lemma~\ref{lem:Bony}) is only finite in $C_T\mcC^{0-}(\mbT^{2})$, and the formal product term $\vdiv\mcI[\ti\nabla \Phi_w]$, which is not even a priori well-defined. Hence, as before we can only expect to find $w\in C_T\mcC^{0-}(\mbT^{2})$ which is not regular enough to define the products $w\nabla \Phi_{\ti}$ and $\ti\nabla \Phi_w$ a priori.

One sees that further applications of the Da Prato--Debussche trick will not improve the situation. Instead we employ the core idea that solutions should resemble the noise at small scales. This is formalised through the \emph{paracontrolled Ansatz}, that is, we only look for solutions such that for each coordinate $j=1,2$,
\begin{equation*}
	w=\sum_{k=1}^{2}\partial_{k}\Phi_{w+\ty}\pa\partial_{k}\mcI[\ti]+w^{\#},\qquad\partial_{j}\Phi_{w}=\sum_{k=1}^{2}\partial_{k}\Phi_{w+\ty}\pa\partial_{k,j}\mcI[\Phi_{\ti}]+(\nabla\Phi_w)^{\#}_{j},
\end{equation*}
or in vector notation,
\begin{equation}\label{eq:intro_paracontrolled_Ansatz}
	w=\nabla \Phi_{w+\ty} \pa \nabla\mcI[\ti] +w^{\#},\qquad	\nabla\Phi_w=\nabla\Phi_{w+\ty}\pa\nabla^2\mcI[\Phi_{\ti}]+(\nabla \Phi_w)^{\#}, 
\end{equation}
where the \emph{paracontrolled remainders} $w^{\#}$ and $(\nabla \Phi_w)^{\#}$ are terms to be fixed by the equation which we stipulate to be finite in $C_T\mcC^{0+}(\mbT^{2})$ and $C_T\mcC^{1+}(\mbT^{2};\mbR^{2})$ respectively.\footnote{
	The remainder $(\nabla \Phi_w)^{\#}$ can be expressed in terms of $\nabla\Phi_{w^{\#}}$ and a more regular commutator, cf.\ Lemma~\ref{lem:commutator_Fourier_multiplier_paraproduct}. Hence, the paracontrolled Ansatz for $w$ implies the Ansatz for $\nabla\Phi_{w}$. In fact in Section~\ref{sec:existence} we make use of an equivalent Ansatz which makes certain technical steps easier but is less clear to present---see Remark~\ref{rem:Ansatz_discussion}.
}
This ensures that the products $w^{\#}\nabla \Phi_{\ti}$ and $\ti (\nabla \Phi_w)^{\#}$ are classically well-defined. Rearranging, using the linearity of the map $f\mapsto \nabla \Phi_f$ and applying Bony's decomposition to the products $w\nabla \Phi_{\ti}$ and $\ti\nabla \Phi_w$, we find the identity
\begin{align*}
	w^{\#} &= P\rho_0+ \vdiv\mcI [w\re \nabla \Phi_{\ti}]+ \vdiv\mcI[\ti\re \nabla\Phi_{w}]+ \vdiv\mcI[\nabla\Phi_{w+\ty}\pa \ti] -\nabla \Phi_{w+\ty} \pa \nabla \mcI[\ti]\\
	&\quad+\tilde{Q}(w,\ti,\ty,\tp),
\end{align*} 
where $\tilde{Q}(w,\ti,\ty,\tp)$ is a new polynomial of its arguments and can be expected to be of strictly positive regularity. Hence, the regularity of $w^{\#}$ is governed by that of the commutator and that of the resonant products $\vdiv\mcI [w\re \nabla \Phi_{\ti}]$ and $\vdiv\mcI[\ti\re \nabla\Phi_{w}]$. The commutator can be controlled by Lemmas~\ref{lem:commutator_Fourier_multiplier_paraproduct} and~\ref{lem:commutator_heat_paraproduct}, which show that
\begin{equation*}
	\vdiv\mcI[\nabla\Phi_{w+\ty}\pa \ti] -\nabla \Phi_{w+\ty} \pa \nabla\mcI[\ti] \in C_T\mcC^{1-}(\mbT^{2}).
\end{equation*} 
To treat the resonant products we make use of the \emph{Ansatz} again, writing for each coordinate $j=1,2$,
\begin{equation*}
	w\re\partial_{j}\Phi_{\ti}=\sum_{k=1}^{2}(\partial_{k}\Phi_{w+\ty}\pa\partial_{k}\mcI[\ti])\re\partial_{j}\Phi_{\ti}+w^{\#}\re\partial_{j}\Phi_{\ti},
\end{equation*}
and
\begin{equation*}
	\ti\re\partial_{j}\Phi_{w}=\sum_{k=1}^{2}\ti\re(\partial_{k}\Phi_{w+\ty}\pa\partial_{k,j}\mcI[\Phi_{\ti}])+\ti\re(\nabla \Phi_w)^{\#}_{j};
\end{equation*}
or again in vector notation,
\begin{equation*}
	 w\re \nabla \Phi_{\ti} = (\nabla \Phi_{w+\ty} \pa \nabla\mcI[\ti])\re \nabla \Phi_{\ti} +w^{\#}\re \nabla \Phi_{\ti},
\end{equation*}
and
\begin{equation*}
	\ti\re \nabla \Phi_{w} = \ti\re(\nabla\Phi_{w+\ty}\pa\nabla^2\mcI[\Phi_{\ti}])+\ti\re (\nabla \Phi_w)^{\#}.
\end{equation*}
Under our stipulation that $w^{\#}\in C_T\mcC^{0+}(\mbT^{2})$ and $(\nabla\Phi_w)^{\#}\in C_{T}\mcC^{1+}(\mbT^{2};\mbR^{2})$, the final two resonant products are classically well-defined and so it only remains to check that the first term of each expansion is finite. To achieve this last step we consider a commutator for the triple product, 
\begin{equation*}
	(\nabla \Phi_{w+\ty} \pa \nabla\mcI[\ti])\re \nabla \Phi_{\ti} = (\nabla \Phi_{w+\ty} \pa \nabla\mcI[\ti])\re \nabla \Phi_{\ti} - \nabla \Phi_{w+\ty} ( \nabla\mcI[\ti]\re \nabla \Phi_{\ti}) + \nabla \Phi_{w+\ty} (\nabla\mcI[\ti]\re \nabla \Phi_{\ti}).
\end{equation*}
Lemma~\ref{lem:commutator_paraproduct_resonant} shows that the commutator lies in $C_{T}\mcC^{1-}(\mbT^{2};\mbR^{2})$. We apply a similar trick to the resonant product $\ti \re \nabla \Phi_w$, writing
\begin{equation*}
	\ti\re(\nabla\Phi_{w+\ty}\pa\nabla^2\mcI[\Phi_{\ti}]) = \ti\re(\nabla\Phi_{w+\ty}\pa\nabla^2\mcI[\Phi_{\ti}]) - \nabla\Phi_{w+\ty}(\nabla^2\mcI[\Phi_{\ti}]\re\ti) + \nabla\Phi_{w+\ty}(\nabla^2\mcI[\Phi_{\ti}]\re\ti).
\end{equation*}
Again, the regularity of the commutator follows from Lemma~\ref{lem:commutator_paraproduct_resonant}. Taken together the last two exogenous terms produce the final diagram we are required to construct,
\begin{equation*}
	\tc \defeq \nabla\mcI[\ti]\re \nabla \Phi_{\ti} + \nabla^2\mcI[\Phi_{\ti}]\re\ti.
\end{equation*}
Note that the first resonant product above should be read as a vector outer product so that $\tc$ is matrix valued. We would naively expect both summands of $\tc$ to diverge logarithmically if we replace $\boldsymbol{\xi}$ by $\boldsymbol{\xi}^{\delta}$ and let $\delta\to 0$. However, the symmetry of the Green's function allows us to show that after summing both terms, $\tc$ is well-defined in a sufficiently strong topology even for $\delta=0$.

Reversing all of the above steps we find the modified equation solved by our paracontrolled object,
\begin{equation}\label{eq:paracon_sol_intro_I}
	\rho =\ti + \ty + \nabla \Phi_{w+\ty} \pa \nabla\mcI[\ti] +w^{\#},
\end{equation}
with $w^{\#}$ a solution to
\begin{equation}\label{eq:paracon_sol_intro_II}
	w^{\#} = P\rho_0+ \vdiv\mcI [w^{\#}\re \nabla \Phi_{\ti}]+ \vdiv\mcI[\nabla \Phi_{w+\ty} \tc]+ \vdiv\mcI[\ti\re (\nabla\Phi_w)^{\#}] + \bar{Q}(w,\ti,\ty,\tp),
\end{equation}
for $\bar{Q}(w,\ti,\ty,\tp)$ a third polynomial of its arguments, their paraproducts and commutators. We call this $\rho$ a paracontrolled solution to~\eqref{eq:gen_rKS_intro} with enhancement $\mbX$. For a rigorous definition, see Definition~\ref{def:paracontrolled_solution}.

In the paracontrolled decomposition~\eqref{eq:paracon_sol_intro_I} of $\rho$ the first three terms lie in spaces of negative regularity. Hence, the singular parts of the product $\rho \nabla \Phi_{\rho}$ will be determined by non-linear combinations of the first three terms. Since $\ti$ and $\ty$ will be supplied as data these terms can be handled directly. However, as $w$ also carries information from $\rho$, products involving $\nabla\Phi_{w+\ty}\pa\nabla\mcI[\ti]$ cannot be handled in the same way. Instead we make use of the commutator estimates above. To see this in practice and to identify the possibly diverging field $f^\delta$ alluded to in the introduction, we recall our notion of a mollified noise by setting $\ti^\delta \defeq \vdiv\mcI[\het (\psi_{\delta} \ast \boldsymbol{\xi}) ]$, where $\psi_{\delta}$ is a standard mollifier. We use the notations $\ty^{\delta}$, $\tp^{\delta}$, $\tc^{\delta}$ to denote the same diagrams now constructed from $\ti^\delta$, and use $\rho^{\delta}$ to denote the solution of the mild equation
\begin{equation}\label{eq:smooth_mild_sol}
	\rho^\delta = P\rho_0 + \vdiv \mcI[\rho^\delta \nabla \Phi_{\rho^\delta}] - \tl^\delta + \ti^\delta.
\end{equation}
We further denote the second Da~Prato--Debussche remainder by $w^{\delta}\defeq\rho^{\delta}-\ti^{\delta}-\ty^{\delta}$ and define $\ty^{\delta}_{\can}\defeq\vdiv\mcI[\ti^{\delta}\nabla\Phi_{\ti^{\delta}}]=\ty^{\delta}+\tl^{\delta}$. 

We have the identity,
\begin{equation}\label{eq:Par_Sol_Nonlinearity}
	\rho^\delta \nabla \Phi_{\rho^\delta} = \ti^\delta \nabla \Phi_{\ti^\delta} + \ty^\delta_{\can}\re \nabla \Phi_{\ti^{\delta}} +\nabla \Phi_{\ty^\delta_{\can}} \re \ti^{\delta}-\tl^{\delta}\re\nabla \Phi_{\ti^\delta}-\nabla \Phi_{\tl^{\delta}}\re\ti^\delta+\nabla\Phi_{w^{\delta}+\ty^{\delta}}\tc^{\delta}+\ldots.
\end{equation}
Here we have only kept track of terms that are either not classically well-defined or contain stochastic diagrams which require construction. The final term involving $\tc^\delta$ arises from applying commutators to the paraproduct term in the expansion of $\rho^\delta$ where the more regular parts have been left implicit above. Since we only expect to have $\rho^\delta\rightarrow \rho$ in $C_T\mcC^{-1-}(\mbT^{2})$ we do not expect \eqref{eq:Par_Sol_Nonlinearity} to converge directly. We have already identified the possibly diverging field which renormalises the first term, since
 \begin{equation*}
 	\vdiv \mcI[\ti^\delta \nabla \Phi_{\ti^\delta}]-\tl^\delta = \ty^\delta \rightarrow \ty \in C_T\mcC^{0-}(\mbT^{2}). 
 \end{equation*}
As discussed in the introduction, formal power counting would lead one to expect $\tl^{\delta}$ to diverge at order $\delta^{-1}$, however, exploiting the symmetry of the elliptic Green's function we have that $\norm{\tl^{\delta}}_{C_{T}\mcC^{0-}}\lesssim (1\vee\log (\delta^{-1}))\norm{\het}_{C_{T}\mcH^{2}}^{2}$. 
 
The diverging diagram $\tl^{\delta}$ is also contained in the terms $\tl^{\delta}\re\nabla \Phi_{\ti^{\delta}}$ and $\nabla \Phi_{\tl^{\delta}}\re\ti^{\delta}$. However, since $\tl^{\delta}$ is of regularity $0-$ and $\ti^{\delta}$ of regularity $-1-$, it is not directly clear how to make sense of those products. Note that if $\tl^{\delta}$ were a diverging constant rather than a field this would simply be scalar multiplication and we would have no trouble. Nevertheless, using that $\tl^{\delta}$ is deterministic, one can define the products $\tl^{\delta}\re\nabla \Phi_{\ti^{\delta}}$ and $\nabla \Phi_{\tl^{\delta}}\re \ti^{\delta}$ directly as It\^{o} objects and show that they diverge at a rate no worse than $\tl^{\delta}$. We refer to Subsection~\ref{sec:canonical} for this argument. 
 
Since $\ty^{\delta}_{\can}=\ty^{\delta}+\tl^{\delta}$, we may expand the product $\ty^\delta_{\can}\re \nabla \Phi_{\ti^\delta} + \nabla \Phi_{\ty^\delta_{\can}}\re \ti^\delta$ to cancel the diverging terms $\tl^{\delta}\re\nabla \Phi_{\ti^\delta}$ and $\nabla \Phi_{\tl^{\delta}}\re\ti^{\delta}$ in~\eqref{eq:Par_Sol_Nonlinearity}. Hence, we can construct the renormalised product $\lim_{\delta\to0}(\vdiv\mcI[\rho^{\delta}\nabla\Phi_{\rho^{\delta}}]-\tl^{\delta})$ without further modifications. In particular, this identifies the renormalising sequence $(f^{\delta})_{\delta>0}$ of~\eqref{eq:gen_rKS_renormalised} as $f^{\delta}=\mbE[\ti^{\delta}\nabla\Phi_{\ti^{\delta}}]$ and~\eqref{eq:smooth_mild_sol} as the corresponding mild formulation.

We have therefore identified both the solution $\rho$ and the non-linear term in~\eqref{eq:mild_gen_rKS} as trilinear functions of a suitable enhancement of the noise. To conclude this section we paraphrase the main result of this paper (Theorem~\ref{thm:convergence_renorm_sol}). In the following, $(\alpha,p,q,\beta,\beta',\beta^{\#},\beta_{0},\kappa,\eta)$ denotes a tuple of exponents which determines the regularity and integrability of each object in our paracontrolled decomposition. We assume that those exponents satisfy assumption~\eqref{eq:exponents} in Section~\ref{sec:existence}, see also Examples~\ref{ex:exponents_regular_initial_data}--\ref{ex:exponents_Lp_initial_data} for various special cases of interest.
\begin{theorem}\label{thm:main_theorem}
	Let $(\alpha,p,q,\beta,\beta',\beta^{\#},\beta_{0},\kappa,\eta)$ be a tuple of exponents satisfying~\eqref{eq:exponents}, $\rho_{0}\in\mcB_{p,q}^{\beta_{0}}(\mbT^{2})$ be some initial data, $T>0$ be a time horizon, $\boldsymbol{\xi}=(\xi^{1},\xi^{2})$ be a vector-valued space-time white noise on $[0,T]\times\mbT^{2}$, $\het\in C_{T}\mcH^{2}(\mbT^{2})$ be a heterogeneity and $(\psi_\delta)_{\delta>0}$ be a family of symmetric mollifiers. Then there exist enhancements $\mbX=(\ti,\ty,\tp,\tc)$, $\mbX^{\delta}=(\ti^\delta,\ty^\delta,\tp^\delta,\tc^\delta)$ as described above (in particular $\mbX^{\delta}$ is built from $\het\boldsymbol{\xi}^{\delta}$ with $\boldsymbol{\xi}^{\delta}=\psi_{\delta}\ast \bxi$) and a random variable $\Tmax\in(0,T]$ (the maximal time of existence) with the following properties:
	\begin{enumerate}
		\item There exists a unique paracontrolled solution $\rho$ to~\eqref{eq:gen_rKS_intro} on $[0,\Tmax)$ with enhancement $\mbX$ and initial data $\rho_{0}$ (in the sense of Definition~\ref{def:paracontrolled_solution}), which we call the renormalised solution. The maximal time of existence $\Tmax$ satisfies
		\begin{equation*}
			\Tmax=T\qquad\text{or}\qquad\lim_{t\uparrow\Tmax}\norm{(1\wedge t)^{\eta}\rho(t)}_{\mcC^{\alpha+1}}=\infty\qquad\text{almost surely}.
		\end{equation*}
		\item For each $\lambda>0$,
		\begin{equation*}
			\lim_{\delta\to0}\mbP\Bigl(\sum_{L=1}^{\infty}\frac{2^{-L}}{L}\frac{\norm{\rho-\rho^{\delta}}_{C_{\eta;T_{L}}\mcC^{\alpha+1}}}{1+\norm{\rho-\rho^{\delta}}_{C_{\eta;T_{L}}\mcC^{\alpha+1}}}>\lambda\Bigr)=0,
		\end{equation*}
		where $\rho^{\delta}$ is the solution to~\eqref{eq:smooth_mild_sol} and
		\begin{equation*}
			T_{L}\defeq T\wedge L\wedge\inf\Bigl\{t\in[0,T]:\norm{(1\wedge t)^{\eta}\rho(t)}_{\mcC^{-1-}}>L~\text{or}~\norm{(1\wedge t)^{\eta}\rho^{\delta}(t)}_{\mcC^{-1-}}>L\Bigr\}
		\end{equation*}
		for every $L\in\mbN$.
	\end{enumerate}
	Furthermore $\tl^\delta = \vdiv \mcI [\ti^\delta \nabla \Phi_{\ti^\delta}] -\ty^\delta = \mbE[\vdiv \mcI [\ti^\delta \nabla \Phi_{\ti^\delta}]]$. If $\het$ is a constant then $\tl^\delta \equiv 0$ while in general one has the bound $\norm{\tl^\delta}_{C_{T}\mcC^{2\alpha+4}}\lesssim(1\vee\log(\delta^{-1}))\norm{\het}_{C_{T}\mcH^{2}}^{2}$ (see Remark~\ref{rem:renormalisation_vanishes_homogeneous} and Lemma~\ref{lem:existence_canonical}).
\end{theorem}
\begin{remark}
	In the case of constant $\het$ it also holds that $\mbE[\ti^\delta\nabla \Phi_{\ti^\delta}]=0$. This is due to the symmetry of the elliptic Green's function, see the discussion of~\eqref{eq:product_rule}.
\end{remark}
\begin{remark}
	Since the submission of this manuscript it has been brought to our attention that in the particular case of the Keller--Segel non-linearity $\rho\nabla\Phi_{\rho}$, one may in fact simplify the analysis of the equation~\eqref{eq:mild_gen_rKS} in the following manner: Using that $\Phi$ denotes the resolution of the mean-free Laplacian on $\mbT^{2}$, one may cast the Keller--Segel non-linearity in the following form
	\begin{equation*}
		\rho\nabla\Phi_{\rho}=-\Delta\Phi_{\rho}\nabla\Phi_{\rho}+\inner{\rho}{1}_{L^{2}}\nabla\Phi_{\rho},
	\end{equation*}
	where 
	%
	%
	the first term can be written by the product rule as
	\begin{equation*}
		\begin{split}
			\Delta\Phi_{\rho}\nabla\Phi_{\rho}=\vdiv(\nabla\Phi_{\rho})^{\otimes2}-\frac{1}{2}\nabla\trace(\nabla\Phi_{\rho})^{\otimes2},
		\end{split}
	\end{equation*}
	with
	\begin{equation*}
		(\nabla\Phi_{\rho})^{\otimes2}_{i,j}\defeq(\nabla\Phi_{\rho}\otimes\nabla\Phi_{\rho})_{i,j}\defeq\partial_{i}\Phi_{\rho}\partial_{j}\Phi_{\rho},\qquad i,j=1,2.
	\end{equation*}
	\begin{details}
		For every coordinate $j=1,2$,
		\begin{equation*}
			\begin{split}
				\Delta\Phi_{\rho}\partial_{j}\Phi_{\rho}=\sum_{i=1}^{2}\partial_{i}(\partial_{i}\Phi_{\rho}\partial_{j}\Phi_{\rho})-\frac{1}{2}\sum_{i=1}^{2}\partial_{j}(\partial_{i}\Phi_{\rho}\partial_{i}\Phi_{\rho}).
			\end{split}
		\end{equation*}
	\end{details}
	Hence, we may re-write~\eqref{eq:mild_gen_rKS} as
	\begin{equation}\label{eq:Hendrik_and_Massimiliano_insight}
		\begin{split}
			\rho&=P\rho_{0}-\vdiv\mcI[\vdiv(\nabla\Phi_{\rho})^{\otimes2}]+\frac{1}{2}\vdiv\mcI[\nabla\trace(\nabla\Phi_{\rho})^{\otimes2}]+\vdiv\mcI[\inner{\rho}{1}_{L^{2}}\nabla\Phi_{\rho}]+\ti.
		\end{split}
	\end{equation}
	Using the bilinearity of each operator on the right-hand side of~\eqref{eq:Hendrik_and_Massimiliano_insight}, it follows that the Da~Prato--Debussche remainder $u=\rho-\ti$ satisfies the equation
	\begin{equation*}
		u=P\rho_{0}+\vdiv\mcI[\ti\nabla\Phi_{\ti}]+\oldhat{Q}(u,\ti),
	\end{equation*}
	where $\oldhat{Q}(u,\ti)$ denotes a finite sum of classically well-posed terms involving $\nabla\Phi_{u}$ and $\nabla\Phi_{\ti}$ rather than $u$ and $\ti$. Consequently, it suffices to renormalise the product $\vdiv\mcI[\ti\nabla\Phi_{\ti}]$ by subtracting $\tl$, without the need to perform a paracontrolled decomposition. Nevertheless our methods are still of independent interest, as they demonstrate how one may construct a noise enhancement that involves heterogeneities and anisotropic regularities. We thank M.~Gubinelli and H.~Weber for these insights.
\end{remark}
\section{Noise Enhancement}\label{sec:enhancements}
In this section we construct the enhancements required in Theorem~\ref{thm:main_theorem} and establish their regularities.
\subsection{Outline and Regularities}
We begin by defining a vector $\boldsymbol{\xi}=(\xi^1,\xi^2)$ of space-time white noises as in~\cite{mourrat_weber_xu_16}. Let $(W^{j}(\cdot,m))_{m\in\mbZ^2,j=1,2}$ be a family of complex-valued Brownian motions on $\mbR_{+}$ starting from $0$ that satisfy $\overline{W^j(\cdot,m)}=W^j(\cdot,-m)$ and are otherwise independent.
\begin{details}
	For example, those can be defined in the following way. Let $B^j(m)$, $\tilde{B}^j(m)$, $j=1,2$, $m\in\mbZ^2$, be real-valued Brownian motions on $\mbR_{+}$ starting from $0$ such that $\mbE[(B^j_t(m))^2]=\mbE[(\tilde{B}^j_t(m))^2]=t/2$. We assume the families $(B^j(m))_{m\in\mbZ^2,j=1,2}$, $(\tilde{B}^j(m))_{m\in\mbZ^2,j=1,2}$ satisfy $B^j(-m)=B^j(m)$ and $\tilde{B}^j(-m)=-\tilde{B}^j(m)$ but are otherwise independent in $j$, $m$ and are also mutually independent. We can then set $W^j(t,m)=B^j_t(m)+\upi\tilde{B}^j_t(m)$.
\end{details}
We define for $j=1,2$, the space-time white noise $\xi^j$ by setting for any $\phi\in L^{2}((0,\infty)\times\mbT^{2};\mbC)$,
\begin{equation}\label{eq:definition_white_noise}
	\xi^{j}(\phi)\defeq\sum_{m_1\in\mbZ^2}\int_{0}^{\infty} \dd W^j(u_1,m_1)\hat{\phi}(u_1,-m_1).
\end{equation}
\begin{details}
	Let $\phi,\psi\in L^2(\mbR_{+}\times\mbT^2;\mbC)$. By It\^{o}'s isometry and Parseval's theorem,
	\begin{equation*}
		\mbE[\xi^{j}(\phi)\overline{\xi^{j}(\psi)}]=\sum_{m_1\in\mbZ^2}\int_{0}^{\infty}\dd u_1\hat{\phi}(u_1,-m_1)\overline{\hat{\psi}(u_1,-m_1)}=\inner{\phi}{\psi}_{L^2(\mbR_{+}\times\mbT^2;\mbC)}.
	\end{equation*}
	Hence this noise is space-time $\delta$-correlated and indeed white.
	
\end{details}
\begin{details}
	Let us now construct the rough Keller--Segel noise $\boldsymbol{\zeta}\defeq\het\boldsymbol{\xi}$ and its mollification $\boldsymbol{\zeta}_{\delta}\defeq\het(\psi_{\delta}\ast\boldsymbol{\xi})$.
	
	We define the noise $\boldsymbol{\zeta}\defeq(\zeta^{1},\zeta^{2})$ by testing $\zeta^j=\het\xi^j$, $j=1,2$, with $\phi\in L^2(\mbR_{+}\times\mbT^2;\mbC)$,
	\begin{equation*}
		\zeta^{j}(\phi)=\xi^{j}(\het\phi)=\sum_{m_1\in\mbZ^2}\int_{0}^{\infty}\dd W^{j}(u_1,m_1)\mathscr{F}(\het\phi)(u_1,-m_1).
	\end{equation*}
	In order to define $\boldsymbol{\zeta}_{\delta}$, let us first construct $\psi_{\delta}$. Let $\varphi\in C^{\infty}(\mbR^2)$ be of compact support, even and such that $\varphi(0)=1$. We define for $\delta>0$, $\psi_{\delta}(x)\defeq\delta^{-2}\sum_{\om\in\mbZ^2}\mathscr{F}_{\mbR^2}^{-1}\varphi(\delta^{-1}(\om+x))$, $x\in\mbR^2$. We obtain by an application of the Poisson summation formula,
	\begin{equation*}
		\psi_{\delta}(x)=\sum_{\om\in\mbZ^2}\euler^{2\uppi\upi\inner{\om}{x}}\varphi(\delta\om).
	\end{equation*}
	We define $\boldsymbol{\zeta}_{\delta}\defeq(\zeta^{1}_{\delta},\zeta^{2}_{\delta})$, $\zeta^{j}_{\delta}(x)\defeq\het(x)\xi^{j}(\psi_{\delta}(x-\cdot))$, $x\in\mbT^2$, $j=1,2$. By testing $\zeta^{j}_{\delta}$ against $\phi\in L^2(\mbR_{+}\times\mbT^2;\mbC)$,
	\begin{equation*}
		\zeta^{j}_{\delta}(\phi)=\xi^{j}(\psi_{\delta}(-\cdot)\ast(\het\phi))=\sum_{m_1\in\mbZ^2}\int_{0}^{\infty}\dd W^{j}(u_1,m_1)\mathscr{F}(\psi_{\delta}(-\cdot)\ast(\het\phi))(u_1,-m_1).
	\end{equation*}
	Let formally $\Dot{W}^{j}(t,\om)$ be the temporal derivative of a complex Brownian motion. We take $\phi(s,x)=\delta_{t}(s)\exp(-2\uppi\upi\inner{\om}{x})$, $t>0$, $\om\in\mbZ^2$, as a test function for $\zeta^{j}_{\delta}$ and obtain
	\begin{equation*}
		\begin{split}
			\hat{\zeta^{j}_{\delta}}(t,\om)&=\sum_{m_1\in\mbZ^2}\Dot{W}^{j}(t,m_1)\msF(\psi_{\delta}(-\cdot)\ast(\het(t,\cdot)\exp(-2\uppi\upi\inner{\om}{\cdot})))(t,-m_1)\\
			&=\sum_{m_1\in\mbZ^2}\Dot{W}^{j}(t,m_1)\hat{\psi_{\delta}}(m_1)\sum_{m_2\in\mbZ^{2}}\hat{\het}(t,m_2)\delta_{-\om,-m_1-m_2}\\
			&=\sum_{m_1\in\mbZ^2}\Dot{W}^{j}(t,m_1)\varphi(\delta m_1)\hat{\het}(t,\om-m_1).
		\end{split}
	\end{equation*}
	Next, we motivate our eventual definitions of $\ti^{\delta}=\vdiv\mcI[\boldsymbol{\zeta}_{\delta}]$ and $\ti=\vdiv\mcI[\boldsymbol{\zeta}]$. Formally, $(\partial_t-\Delta)\ti^{\delta}=\vdiv\boldsymbol{\zeta}_{\delta}$, hence for the spatial Fourier coefficients,
	\begin{equation*}
		(\partial_t+\abs{2\uppi\om}^2)\hat{\ti^{\delta}}(t,\om)=\hat{\vdiv\boldsymbol{\zeta}_{\delta}}(t,\om)=\sum_{j_1=1}^{2}2\uppi\upi\om^{j_1}\hat{\zeta^{j_1}_{\delta}}(t,\om).
	\end{equation*}
	As a consequence,
	\begin{equation*}
		\begin{split}
			&\hat{\ti^{\delta}}(t,\om)-\hat{\ti^{\delta}}(0,\om)\\
			&=-\abs{2\uppi\om}^2\int_{0}^{t}\hat{\ti^{\delta}}(u_1,\om)\dd u_1+\sum_{j_1=1}^{2}2\uppi\upi\om^{j_1}\sum_{m_1\in\mbZ^2}\int_{0}^{t}\dd W^{j_1}(u_1,m_1)\hat{\het}(u_1,\om-m_1)\varphi(\delta m_1)
		\end{split}
	\end{equation*}
	and
	\begin{equation*}
		\dd\hat{\ti^{\delta}}(t,\om)=-\abs{2\uppi\om}^2\hat{\ti^{\delta}}(t,\om)\dd t+\sum_{j_1=1}^{2}2\uppi\upi\om^{j_1}\sum_{m_1\in\mbZ^2}\dd W^{j_1}(t,m_1)\hat{\het}(t,\om-m_1)\varphi(\delta m_1).
	\end{equation*}
	This is an Ornstein--Uhlenbeck process. By applying It\^{o}'s formula to $\hat{\ti^{\delta}}(t,\om)\exp(t\abs{2\uppi\om}^2)$, we arrive at the explicit solution
	\begin{equation*}
		\begin{split}
			&\hat{\ti^{\delta}}(t,\om)\\
			&=\euler^{-t\abs{2\uppi\om}^2}\hat{\ti^{\delta}}(0,\om)+\int_{0}^{t}\euler^{-\abs{t-u_1}\abs{2\uppi\om}^2}\sum_{j_1=1}^{2}2\uppi\upi\om^{j_1}\sum_{m_1\in\mbZ^2}\hat{\het}(u_1,\om-m_1)\varphi(\delta m_1)\dd W^{j_1}(u_1,m_1).
		\end{split}
	\end{equation*}
	We further define $H_{t}^j(\om)\defeq2\uppi\upi \om^j\exp(-t\abs{2\uppi\om}^2)\mathds{1}_{t\geq0}$, the multiplier associated to $\partial_{j}\mcI$. By introducing $H^{j_1}_{t-u_1}(\om)$ and setting $\hat{\ti^{\delta}}(0,\om)=0$, we obtain
	\begin{equation*}
		\hat{\ti^{\delta}}(t,\om)=\sum_{j_1=1}^{2}\sum_{m_1\in\mbZ^2}\int_{0}^{t}\dd W^{j_1}(u_1,m_1)\hat{\het}(u_1,\om-m_1)\varphi(\delta m_1) H_{t-u_1}^{j_1}(\om),
	\end{equation*}
	which we will simply use as the definition of $\ti^{\delta}=\vdiv\mcI[\het(\psi_{\delta}\ast\boldsymbol{\xi})]$.
	
\end{details}
We define our choice of mollifiers.
\begin{definition}\label{def:mollifiers}
	We define the cut-off 
	\begin{equation}\label{eq:def_cut_off}
		\begin{split}
			&\varphi\in C^{\infty}(\mbR^{2})~\text{to be of compact support},~\supp(\varphi)\subset B(0,1),~\text{even}~\text{and such that}~\varphi(0)=1.
		\end{split}
	\end{equation}
	Given $\varphi$ satisfying~\eqref{eq:def_cut_off}, we define a sequence of mollifiers $(\psi_{\delta})_{\delta>0}$ on $\mbT^{2}$ by
	\begin{equation}\label{eq:def_mollifiers}
		\psi_{\delta}(x)\defeq\sum_{\om\in\mbZ^{2}}\euler^{2\uppi\upi\inner{\om}{x}}\varphi(\delta\om).
	\end{equation}
\end{definition}
Our enhancements will lie in the following space of enhanced rough noises, which quantifies their regularities in interpolation spaces (see Section~\ref{sec:notations} and Definition~\ref{def:interpolation_space}).
\begin{definition}[Enhanced rough noise]\label{def:noise_space}
	For all $T>0$, $\alpha\in(-5/2,-2)$ and $\kappa\in(0,1/2)$ let the map
	\begin{equation*}
		\begin{split}
			\Theta&\from(\msL^{\kappa}_T\mcC^{\alpha+2}(\mbT^{2};\mbR)\times\msL^{\kappa}_T\mcC^{2\alpha+5}(\mbT^{2};\mbR))\\
			&\to\msL^{\kappa}_T\mcC^{\alpha+1}(\mbT^{2};\mbR)\times\msL^{\kappa}_T\mcC^{2\alpha+4}(\mbT^{2};\mbR)\times\msL^{\kappa}_T\mcC^{3\alpha+6}(\mbT^{2};\mbR^{2})\times\msL^{\kappa}_T\mcC^{2\alpha+4}(\mbT^{2};\mbR^{2\times2}),\\
			(v,f)&\mapsto\Theta(v,f),
		\end{split}
	\end{equation*}
	be given by 
	\begin{align*}
		Y&\defeq\vdiv\mcI[{v}\nabla\Phi_{v}]-f,\\
		\Theta(v,f)&\defeq(v,Y,Y\re\nabla\Phi_{v}+\nabla\Phi_Y\re v,\nabla \mcI[v]\re\nabla\Phi_{v}+\nabla^2\mcI[\Phi_{v}]\re {v})
	\end{align*}
	and define the space $\rksnoise{\alpha}{\kappa}_T$ to be the closure of the subset
	\begin{equation*}
		\begin{split}
			&\{\Theta(v,f):(v,f)\in\msL^{\kappa}_T\mcC^{\alpha+2}(\mbT^{2};\mbR)\times\msL^{\kappa}_T\mcC^{2\alpha+5}(\mbT^{2};\mbR),~v_{0}=f_{0}=0\}\\
			&\qquad\subset\msL^{\kappa}_T\mcC^{\alpha+1}(\mbT^{2};\mbR)\times\msL^{\kappa}_T\mcC^{2\alpha+4}(\mbT^{2};\mbR)\times\msL^{\kappa}_T\mcC^{3\alpha+6}(\mbT^{2};\mbR^{2})\times\msL^{\kappa}_T\mcC^{2\alpha+4}(\mbT^{2};\mbR^{2\times2}).
		\end{split}
	\end{equation*}
	We shall denote a generic element of this closure by $\mbX=(\ti,\ty,\tp,\tc)\in\rksnoise{\alpha}{\kappa}_T$ and equip it with the metric induced by the norm
	\begin{equation*}
		\norm{\mbX}_{\rksnoise{\alpha}{\kappa}_T}\defeq\max\{\norm{\ti}_{\msL^{\kappa}_T\mcC^{\alpha+1}},\norm{\ty}_{\msL^{\kappa}_T\mcC^{2\alpha+4}},\norm{\tp}_{\msL^{\kappa}_T\mcC^{3\alpha+6}},\norm{\tc}_{\msL^{\kappa}_T\mcC^{2\alpha+4}}\}.
	\end{equation*}
\end{definition}
\begin{details}
	Here, we needed the assumption $\alpha>-5/2$ to make sure that $v\nabla\Phi_{v}$ is well-defined.
	
\end{details}
The main result of this section is the following theorem, reminiscent of~\cite[Thm.~9.1]{gubinelli_perkowski_17}.
\begin{theorem}\label{thm:enhancement_existence}
	Let $T>0$, $\het\in C_{T}\mcH^{2}(\mbT^{2})$, $\boldsymbol{\xi}$ be a two-dimensional vector of space-time white noises, $(\psi_{\delta})_{\delta>0}$ be a sequence of mollifiers as in~\eqref{eq:def_mollifiers}, $\boldsymbol{\xi}^{\delta}\defeq\psi_{\delta}\ast\boldsymbol{\xi}\defeq(\psi_{\delta}\ast\xi^{1},\psi_{\delta}\ast\xi^{2})$ and $\mbX^\delta\defeq(\ti^\delta,\ty^\delta,\tp^\delta,\tc^\delta)$ be given by
	\begin{align*}
		&\ti^{\delta}\defeq\vdiv\mcI[\het\boldsymbol{\xi}^{\delta}],\quad\ty^{\delta}\defeq\vdiv\mcI[\ti^{\delta}\nabla\Phi_{\ti^{\delta}}]-\mbE[\vdiv\mcI[\ti^{\delta}\nabla\Phi_{\ti^{\delta}}]],\\
		&\tp^{\delta}\defeq\ty^{\delta}\re\nabla\Phi_{\ti^{\delta}}+\nabla\Phi_{\ty^{\delta}}\re\ti^{\delta},\quad\tc^{\delta}\defeq\nabla\mcI[\ti^{\delta}]\re\nabla\Phi_{\ti^{\delta}}+\nabla^{2}\mcI[\Phi_{\ti^{\delta}}]\re\ti^{\delta}.
	\end{align*}
	Then for all $\alpha\in(-5/2,-2)$ and $\kappa\in(0,1/2)$ the following hold
	\begin{enumerate}
		\item Almost surely, $\mbX^{\delta}\in\rksnoise{\alpha}{\kappa}_{T}$ and
		\begin{equation*}
			\mbX^\delta\in\msL^{\kappa}_{T}\mcC^{\alpha+2}(\mbT^{2};\mbR)\times\msL^{\kappa}_{T}\mcC^{2\alpha+5}(\mbT^{2};\mbR)\times\msL^{\kappa}_{T}\mcC^{3\alpha+8}(\mbT^{2};\mbR^{2})\times\msL^{\kappa}_{T}\mcC^{2\alpha+6}(\mbT^{2};\mbR^{2\times2}).
		\end{equation*}
		\item Almost surely, there exists some $\mbX=(\ti,\ty,\tp,\tc)\in\rksnoise{\alpha}{\kappa}_{T}$ such that for any $p\in[1,\infty)$ we have $\lim_{\delta\to0}\mbE[\norm{\mbX-\mbX^{\delta}}_{\rksnoise{\alpha}{\kappa}_{T}}^p]^{1/p}=0$ and $\mbE[\norm{\mbX}_{\rksnoise{\alpha}{\kappa}_{T}}^p]^{1/p}<\infty$.
		\item Defining
		\begin{equation*}
			\begin{split}
				&\tl^{\delta}\defeq\mbE[\vdiv\mcI[\ti^{\delta}\nabla\Phi_{\ti^{\delta}}]],\qquad\ty^{\delta}_{\can}\defeq\vdiv\mcI[\ti^{\delta}\nabla\Phi_{\ti^{\delta}}]=\ty^{\delta}+\tl^{\delta},\\
				&\multiquad[3]\tp^{\delta}_{\can}\defeq\ty^{\delta}_{\can}\re\nabla\Phi_{\ti^{\delta}}+\nabla\Phi_{\ty^{\delta}_{\can}}\re\ti^{\delta},
			\end{split}
		\end{equation*}
		it holds that 
		\begin{equation*}
			\norm{\tl^{\delta}}_{\msL_{T}^{\kappa}\mcC^{2\alpha+4}}\lesssim(1\vee\log(\delta^{-1}))\norm{\het}_{C_T\mcH^{2}}^{2}
		\end{equation*}
		and for any $p\in[1,\infty)$,
		\begin{equation*}
			\begin{split}
				&\mbE[\norm{\ty^{\delta}_{\can}}_{\msL_{T}^{\kappa}\mcC^{2\alpha+4}}^{p}]^{1/p}\lesssim(1\vee\log(\delta^{-1}))\norm{\het}_{C_T\mcH^{2}}^{2},\\
				&\mbE[\norm{\tp^{\delta}_{\can}}_{\msL_{T}^{\kappa}\mcC^{3\alpha+6}}^{p}]^{1/p}\lesssim(1\vee\log(\delta^{-1}))\norm{\het}_{C_T\mcH^{2}}^{3}.
			\end{split}
		\end{equation*}
	\end{enumerate}
\end{theorem}
An explicit definition of the limit $\mbX=(\ti,\ty,\tp,\tc)$ can be found in Subsection~\ref{subsec:Feynman}. We call $\mbX$ the \emph{renormalised enhancement} and $\mbX^{\delta}_{\can}=(\ti^{\delta},\ty^{\delta}_{\can},\tp^{\delta}_{\can},\tc^{\delta})$ the \emph{canonical enhancement}.

The result will be shown in several parts, namely in Lemma~\ref{lem:existence_lolli} ($\ti$), Lemma~\ref{lem:existence_ypsilon}~($\ty$), Lemma~\ref{lem:existence_three} and Lemma~\ref{lem:existence_precocktail}~($\tp$), Lemma~\ref{lem:existence_checkmark} and Lemma~\ref{lem:existence_triangle}~($\tc$) and Lemma~\ref{lem:existence_canonical}~($\tl^{\delta}$, $\ty_{\can}^{\delta}$, $\tp_{\can}^{\delta}$).
\begin{remark}
	Different aspects of Theorem~\ref{thm:enhancement_existence} require different assumptions on the heterogeneity $\het$. For example the regularity of $\ti$ and $\ty$ only requires $\het\in C_{T}L^{\infty}(\mbT^{2})$ (Lemma~\ref{lem:existence_lolli} and Lemma~\ref{lem:existence_ypsilon}) while the regularities of the contractions contained in $\tp$, $\tc$ require that uniformly over $t\in [0,T]$ one has $\sup_{\om \in \mbZ^2}|\hat{\het}(t,\omega)|(1+|\omega|^2) <\infty$. The assumption $\het\in C_{T}\mcH^{2}(\mbT^{2})$ implies both of these conditions and provides a convenient norm and well-studied space that controls the latter quantity; hence we choose to work with this simpler, if sub-optimal restriction. Furthermore, with a view to setting $\het = \sqrt{\rdet}$ (cf.~\eqref{eq:gen_rKS_canonical}) the condition $\het\in C_{T}\mcH^{2}(\mbT^{2})$ is more straightforward to check.
\end{remark}
\begin{remark}
	As discussed in the introduction, we build our \emph{regular} enhancement from $\het\boldsymbol{\xi}^{\delta}=\het(\psi_{\delta}\ast\boldsymbol{\xi})$ instead of $(\het\boldsymbol{\xi})^{\delta}=\psi_{\delta}\ast(\het\boldsymbol{\xi})$ and with only a spatial convolution. Hence, for $\ti^{\delta}$ to be more regular, we need to assume some regularity on the heterogeneity $\het$. We again make use of the condition $\sup_{t\in[0,T]}\sup_{\om\in\mbZ^{2}}\abs{\hat{\het}(t,\omega)}(1+\abs{\om}^{2})<\infty$ (see the proof of Lemma~\ref{lem:existence_lolli}), though other choices are possible.
\end{remark}
\begin{remark}
	Our methods also allow us to establish that $\ty\in\msL_{T}^{1-}\mcC^{0-}(\mbT^{2})$. However, since we do not make use of the additional time regularity, we omit the proof.
\end{remark}
In the remainder of this section, we outline the basic arguments involved in proving Theorem~\ref{thm:enhancement_existence}. We motivate the definition of $\ti$ and establish its existence.

It is well-known that the Fourier frequencies of the stochastic heat equation are given by Ornstein--Uhlenbeck processes and by a similar argument, we can find an expression for the Fourier transform of $\ti=\vdiv\mcI[\het\boldsymbol{\xi}]$. For all $\om\in\mbZ^2$ and $t\in\mbR$, let $H_{t}^j(\om)\defeq2\uppi\upi\om^{j}\exp(-t\abs{2\uppi\om}^{2})\mathds{1}_{t\geq0}$ be the multiplier appearing in $\partial_{j}\mcI$. We define $\ti$ by applying the inverse Fourier transform to the sequence
\begin{equation}\label{eq:lolli_example}
	\hat{\ti}(t,\om)\defeq\sum_{j_1=1}^{2}\sum_{m_1\in\mbZ^2}\int_{0}^{t}\dd W^{j_1}(u_1,m_1)\hat{\het}(u_1,\om-m_1)H_{t-u_1}^{j_1}(\om).
\end{equation}
We also introduce the Fourier transform of $\tau\defeq\vdiv\mcI[\boldsymbol{\xi}]$ by
\begin{equation}\label{eq:lolli_homogeneous_after_isometry}
	\hat{\tau}(t,\om)\defeq\sum_{j_1=1}^{2}\int_{0}^{t}\dd W^{j_1}(u_1,\om)H_{t-u_1}^{j_1}(\om).
\end{equation}
\begin{lemma}\label{lem:existence_lolli}
	Let $T>0$, $\alpha<-2$, $\kappa\in(0,1/2)$ and $\het\in C_{T}L^{\infty}(\mbT^{2})$. Then for any $p\in[1,\infty)$ we have $\mbE[\norm{\ti}_{\msL^{\kappa}_{T}\mcC^{\alpha+1}}^p]^{1/p}\lesssim\norm{\het}_{C_{T}L^{\infty}}$ and in particular $\ti\in\msL^{\kappa}_T\mcC^{\alpha+1}(\mbT^{2})$ a.s. Assume in addition $\het\in C_{T}\mcH^{2}(\mbT^{2})$ and $\delta>0$, then it holds that $\mbE[\norm{\ti^{\delta}}_{\msL^{\kappa}_{T}\mcC^{\alpha+2}}^p]^{1/p}\lesssim(1+\delta^{-2})^{1/2}\norm{\het}_{C_{T}\mcH^{2}}$ and in particular $\ti^{\delta}\in\msL^{\kappa}_{T}\mcC^{\alpha+2}(\mbT^{2})$ a.s. What is more, $\lim_{\delta\to0}\mbE[\norm{\ti-\ti^{\delta}}_{\msL^{\kappa}_T\mcC^{\alpha+1}}^{p}]^{1/p}=0$ for any $p\in[1,\infty)$.
\end{lemma}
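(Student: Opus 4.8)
The plan is to reduce all bounds to second-moment estimates on Littlewood--Paley blocks and then feed these into a standard Besov-valued Kolmogorov criterion. As $\srdet$ is real, each of $\Delta_q\ti(t,x)$, $\Delta_q\ti^{\delta}(t,x)$ and their temporal increments is a real Wiener--It\^{o} integral of order one, hence Gaussian, so by hypercontractivity on the first chaos $\mbE(\abs{Z}^{p})^{1/p}\lesssim_{p}\mbE(\abs{Z}^{2})^{1/2}$ for all such $Z$ and it suffices to bound second moments. The criterion I invoke (see Appendix~\ref{app:Besov}; cf.~\cite{gubinelli_perkowski_17,mourrat_weber_xu_16}) is: if $u\from[0,T]\to\mcS'(\mbT^{2})$ has every $\Delta_q u_t(x)$ in a fixed Wiener chaos and, for some $\gamma\in\mbR$, $A\geq0$ and the given $\kappa\in(0,1/2)$,
\begin{equation*}
	\sup_{t\in[0,T]}\mbE(\abs{\Delta_q u_t(x)}^{2})\lesssim A^{2}2^{-2q\gamma},\qquad\sup_{s\neq t\in[0,T]}\abs{t-s}^{-2\kappa}\mbE(\abs{\Delta_q(u_t-u_s)(x)}^{2})\lesssim A^{2}2^{-2q(\gamma-2\kappa)},
\end{equation*}
uniformly in $q\geq-1$ and $x\in\mbT^{2}$, then $\mbE(\norm{u}_{\msL^{\kappa}_{T}\mcC^{\gamma'}}^{p})^{1/p}\lesssim_{\gamma',p}A$ for every $\gamma'<\gamma$ and $p\in[1,\infty)$, and in particular $u\in\msL^{\kappa}_{T}\mcC^{\gamma'}$ a.s.; the slack $\gamma'<\gamma$ absorbs the Besov embedding used to pass from the $L^{2n}(\Omega\times\mbT^{2})$-bounds supplied by hypercontractivity to $\mcC$-bounds. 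Since $\alpha+1<-1$ and $\alpha+2<0$ there is room to run this with $(\gamma,\gamma')=(-1,\alpha+1)$ and with $(\gamma,\gamma')=(0,\alpha+2)$.

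\textbf{The term $\ti$.} By~\eqref{eq:lolli_example}, $\ti(t,x)=\int_0^{t}\int_{\mbT^{2}}\nabla P_{t-r}(x-y)\cdot\srdet(r,y)\,\bxi(\dd r,\dd y)$, so, as $\Delta_q$ commutes with translations, It\^{o}'s isometry gives $\mbE(\abs{\Delta_q\ti(t,x)}^{2})=\int_0^{t}\int_{\mbT^{2}}\abs{(\Delta_q\nabla P_{t-r})(x-y)}^{2}\abs{\srdet(r,y)}^{2}\dd y\dd r\leq\norm{\srdet}_{C_{T}L^{\infty}}^{2}\int_0^{t}\norm{\Delta_q\nabla P_{v}}_{L^{2}(\mbT^{2})}^{2}\dd v$, which by Parseval equals $\norm{\srdet}_{C_{T}L^{\infty}}^{2}\sum_{\om}\varrho_q(\om)^{2}\abs{2\uppi\om}^{2}\tfrac{1-\euler^{-2t\abs{2\uppi\om}^{2}}}{2\abs{2\uppi\om}^{2}}\lesssim2^{2q}\norm{\srdet}_{C_{T}L^{\infty}}^{2}$, uniformly in $t,x$. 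For the increment ($s<t$) one splits $\ti(t,x)-\ti(s,x)$ into the fresh-noise piece $\int_s^{t}\int\nabla P_{t-r}(x-y)\srdet\,\bxi(\dd r,\dd y)$ and the semigroup-difference piece $\int_0^{s}\int[\nabla P_{t-r}-\nabla P_{s-r}](x-y)\srdet\,\bxi(\dd r,\dd y)$; the same computation, using $\abs{\euler^{-a}-\euler^{-b}}\leq1-\euler^{-(b-a)}\leq(b-a)\wedge1\leq(b-a)^{2\kappa}$ (valid since $2\kappa<1$) with $b-a=(t-s)\abs{2\uppi\om}^{2}$, gives $\mbE(\abs{\Delta_q(\ti(t,x)-\ti(s,x))}^{2})\lesssim\norm{\srdet}_{C_{T}L^{\infty}}^{2}\abs{t-s}^{2\kappa}2^{2q(1+2\kappa)}$. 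The criterion with $(\gamma,\gamma')=(-1,\alpha+1)$ now gives $\mbE(\norm{\ti}_{\msL^{\kappa}_{T}\mcC^{\alpha+1}}^{p})^{1/p}\lesssim\norm{\srdet}_{C_{T}L^{\infty}}$ and $\ti\in\msL^{\kappa}_{T}\mcC^{\alpha+1}$ a.s.; no time-weight is needed because $\ti\tzero=0$.

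\textbf{The terms $\ti^{\delta}$.} Here the noise frequency $m$ carries the cutoff $\varphi(\delta m)$, so on the Fourier side $\Delta_q\ti^{\delta}(t,x)=\sum_{j_1,m}\varphi(\delta m)\int_0^{t}\dd W^{j_1}(u,m)\,\partial_{j_1}P_{t-u}\Delta_q[\euler^{2\uppi\upi\inner{m}{\cdot}}\srdet(u)](x)$, and It\^{o}'s isometry reduces the estimate to $\sum_{j_1,m}\abs{\varphi(\delta m)}^{2}\int_0^{t}\norm{\partial_{j_1}P_{t-u}\Delta_q[\euler^{2\uppi\upi\inner{m}{\cdot}}\srdet(u)]}_{L^{\infty}}^{2}\dd u$. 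For $g$ spectrally localised at scale $2^{q}$ one has $\norm{\partial_{j_1}P_{r}\Delta_q g}_{L^{\infty}}\lesssim2^{q}\euler^{-cr2^{2q}}\norm{\Delta_q g}_{L^{\infty}}$; and since $\euler^{2\uppi\upi\inner{m}{\cdot}}\srdet(u)$ has Fourier coefficients $\hat{\srdet}(u,\cdot-m)$, using $\abs{\hat{\srdet}(u,k)}\lesssim\abs{k}^{-2}\norm{\srdet(u)}_{\mcH^{2}}$ for $k\neq0$ and $\sum_{k\neq0}\abs{k}^{-4}<\infty$, we get $\norm{\Delta_q[\euler^{2\uppi\upi\inner{m}{\cdot}}\srdet(u)]}_{L^{\infty}}\leq\sum_{\om\in\supp\varrho_q}\abs{\hat{\srdet}(u,\om-m)}\lesssim\norm{\srdet(u)}_{\mcH^{2}}$ always, and $\lesssim2^{-q}\norm{\srdet(u)}_{\mcH^{2}}$ whenever $\abs{m}\leq2^{q-2}$ (since then $\abs{\om-m}\gtrsim2^{q}$ on $\supp\varrho_q$). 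Combining this with $\supp\varphi(\delta\,\cdot)\subset\{\abs{m}<\delta^{-1}\}$, $\#\{m\in\mbZ^{2}:\abs{m}<\delta^{-1}\}\lesssim1+\delta^{-2}$ and $\int_0^{t}\euler^{-2c(t-u)2^{2q}}\dd u\lesssim2^{-2q}$, a case distinction according to whether $2^{q}\leq\delta^{-1}$ gives $\sup_{t}\mbE(\abs{\Delta_q\ti^{\delta}(t,x)}^{2})\lesssim(1+\delta^{-2})(1\wedge2^{-2q})\norm{\srdet}_{C_{T}\mcH^{2}}^{2}$; multiplying by $2^{2q(\alpha+2)}$ and using $\alpha+2<0$ keeps this $\lesssim(1+\delta^{-2})\norm{\srdet}_{C_{T}\mcH^{2}}^{2}$. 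The temporal increment is treated by the same fresh-noise/semigroup-difference split (retaining $\varphi(\delta m)$ and the $\hat{\srdet}$-decay), producing an extra $\abs{t-s}^{2\kappa}2^{2q\cdot2\kappa}$ which the weight $2^{2q(\alpha+2-2\kappa)}$ absorbs. The criterion with $(\gamma,\gamma')=(0,\alpha+2)$ then gives $\mbE(\norm{\ti^{\delta}}_{\msL^{\kappa}_{T}\mcC^{\alpha+2}}^{p})^{1/p}\lesssim(1+\delta^{-2})^{1/2}\norm{\srdet}_{C_{T}\mcH^{2}}$ and $\ti^{\delta}\in\msL^{\kappa}_{T}\mcC^{\alpha+2}$ a.s.

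\textbf{Convergence and the main difficulty.} The difference $\ti-\ti^{\delta}$ is the analogous It\^{o} integral with the multiplier $1-\varphi(\delta\,\cdot)$ in place of $\varphi(\delta\,\cdot)$ on the noise frequency. Since $\varphi\in C^{\infty}$ with $\varphi(0)=1$, $\abs{1-\varphi(\delta m)}\lesssim(\delta\abs{m})\wedge1$, hence $\abs{1-\varphi(\delta m)}^{2}(1+\abs{m})^{-2\theta}\lesssim\delta^{2\theta}$ uniformly in $m$ for any $\theta\in(0,1)$; pulling this supremum out of the It\^{o}-isometry sum and controlling the $(1+\abs{m})^{2\theta}$-weighted version of the same sum by Young's inequality for discrete convolutions — finite because $\sum_{k\neq0}\abs{k}^{2\theta-4}<\infty$ for $\theta<1$ — yields $\sup_{t}\mbE(\abs{\Delta_q(\ti-\ti^{\delta})(t,x)}^{2})\lesssim\delta^{2\theta}2^{2q(1+\theta)}\norm{\srdet}_{C_{T}\mcH^{2}}^{2}$, with the analogous increment bound. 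Writing $\alpha+1=-1-\eps_{0}$, $\eps_{0}\coloneqq-2-\alpha>0$, and choosing $\theta\in(0,\min\{1,\eps_{0}\})$ makes $2^{2q(\alpha+1)}2^{2q(1+\theta)}=2^{2q(\theta-\eps_{0})}\leq1$ for all $q\geq-1$, so the criterion gives $\mbE(\norm{\ti-\ti^{\delta}}_{\msL^{\kappa}_{T}\mcC^{\alpha+1}}^{p})^{1/p}\lesssim\delta^{\theta}\norm{\srdet}_{C_{T}\mcH^{2}}\to0$ as $\delta\to0$, for every $p\in[1,\infty)$. Beyond the routine variance computations, the delicate points are exactly the $\delta$-bookkeeping of these last two steps: one must see that the extra spatial derivative gained by mollification costs precisely one power of $\delta^{-1}$ — trading the $\#\{\abs{m}<\delta^{-1}\}\sim\delta^{-2}$ growth against the decay of $\hat{\srdet}$ on the high frequencies created by the shift $m$ — and the convergence $\ti^{\delta}\to\ti$ in $\mcC^{\alpha+1}$ must interpolate the $\delta$-uniform bound against a genuine $\delta^{\theta}$-gain, which is available only because $\alpha+1$ sits strictly below the borderline regularity $-1$.
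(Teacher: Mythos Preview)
Your treatment of $\ti$ is essentially the paper's: both pull out $\norm{\srdet}_{C_{T}L^{\infty}}$ via the real-space It\^{o} isometry (the paper packages this as Lemma~\ref{lem:isometry_argument}) and then compute the $\srdet\equiv1$ case on the Fourier side.

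For $\ti^{\delta}$ your route is genuinely different from the paper's and contains one concrete error. The paper stays on the Fourier side, estimates the full covariance $\mbE\big((\hat{\ti^{\delta}}(t,\om)-\hat{\ti^{\delta}}(s,\om))\overline{(\hat{\ti^{\delta}}(t,\om')-\hat{\ti^{\delta}}(s,\om'))}\big)$ by It\^{o}'s isometry, and controls the resulting $m_1$-sum via the decay of $\hat{\srdet}$ together with a threefold discrete convolution estimate; this yields a bound of the form $(1+\delta^{-2})\norm{\srdet}_{C_{T}\mcH^{2}}^{2}\abs{t-s}^{\gamma}2^{q(2\gamma+3\eps)}$ with a small \emph{positive} power of $2^{q}$. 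Your Bernstein/real-space approach is a legitimate alternative, but the intermediate claim $\sup_{t}\mbE(\abs{\Delta_q\ti^{\delta}(t,x)}^{2})\lesssim(1+\delta^{-2})(1\wedge2^{-2q})\norm{\srdet}_{C_{T}\mcH^{2}}^{2}$ is false. Take $\srdet\equiv1$: then $\norm{\Delta_q[\euler^{2\uppi\upi\inner{m}{\cdot}}]}_{L^{\infty}}=\varrho_q(m)$, so after the time integral your $m$-sum equals $\sum_{\abs{m}<\delta^{-1}}\varrho_q(m)^{2}\sim2^{2q}$ whenever $2^{q}\lesssim\delta^{-1}$, whereas your bound predicts $(1+\delta^{-2})2^{-2q}$; at $2^{q}\sim\delta^{-1}$ these differ by a factor $\delta^{-2}$. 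The issue is the range $\abs{m}\sim2^{q}$, where neither of your two per-$m$ bounds gives $2^{-q}$ (indeed $\supp\varrho_q$ lies entirely in $\abs{m}>2^{q-2}$). What your ingredients \emph{do} give is $(1+\delta^{-2})$ for $2^{q}\lesssim\delta^{-1}$ and $(1+\delta^{-2})2^{-2q}$ for $2^{q}\gtrsim\delta^{-1}$; since $\alpha+2<0$ this still yields $\sup_{q}2^{2q(\alpha+2)}\mbE(\abs{\Delta_q\ti^{\delta}}^{2})\lesssim(1+\delta^{-2})\norm{\srdet}_{C_{T}\mcH^{2}}^{2}$, so the conclusion survives once you drop the spurious $(1\wedge2^{-2q})$.

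For the convergence the paper simply invokes dominated convergence and gives no rate. Your $\delta^{\theta}$-rate is correct in conclusion, but ``Young's inequality for discrete convolutions'' is too vague as written; to make it go through, split $m$ into the three ranges $\abs{m}\leq2^{q-2}$, $\abs{m}\sim2^{q}$, $\abs{m}\gtrsim2^{q+1}$, use $\abs{1-\varphi(\delta m)}\lesssim(\delta\abs{m})^{\theta}$ in each, and in the high range supplement your two per-$m$ bounds by the sharper $\norm{\Delta_q[\euler^{2\uppi\upi\inner{m}{\cdot}}\srdet]}_{L^{\infty}}\lesssim2^{q}\abs{m}^{-2}\norm{\srdet}_{\mcH^{2}}$ (same Cauchy--Schwarz argument, now with $\abs{\om-m}\gtrsim\abs{m}$); each range then contributes $\lesssim\delta^{2\theta}2^{2q(1+\theta)}\norm{\srdet}_{C_{T}\mcH^{2}}^{2}$.
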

\begin{proof}
	Let $\gamma\in(0,1]$, $\eps\in(0,\gamma/2)$ and $\max\{1/\eps,2\}<p<\infty$. To establish the existence and regularity of $\ti$ in a Besov space, we apply Nelson's estimate (Lemma~\ref{lem:Nelson_estimate}), Kolmogorov's continuity criterion (Lemma~\ref{lem:existence_criterion}) and the Besov embedding~\eqref{eq:Besov_cts_embedding}. Therefore, in order to establish that $\ti\in C_{T}^{\gamma/2-\eps}\mcC^{-1-\gamma-\eps-2/p}(\mbT^{2})\embed C_{T}^{\gamma/2-\eps}\mcC^{-1-\gamma-3\eps}(\mbT^{2})$ almost surely, it suffices to show that the quantity
	\begin{equation*}
		\sup_{s\neq t\in[0,T]}\abs{t-s}^{-p\gamma/2}\sum_{q\in\mbN_{-1}}2^{pq(-1-\gamma-\eps)}\int_{\mbT^{2}}\mbE[\abs{\Delta_q\ti(t,x)-\Delta_q\ti(s,x)}^2]^{p/2}\dd x
	\end{equation*}
	is finite. Using that $\het\in C_{T}L^{\infty}(\mbT^{2})$ is bounded, we can pass to real space with~\eqref{eq:isometry_bound_delta=0} to deduce for $\tau$ as in~\eqref{eq:lolli_homogeneous_after_isometry},
	\begin{equation*}
		\mbE[\abs{\Delta_{q}\ti(t,x)-\Delta_{q}\ti(s,x)}^2]\leq\norm{\het}_{C_{T}L^\infty}^{2}\mbE[\abs{\Delta_{q}\tau(t,x)-\Delta_{q}\tau(s,x)}^2].
	\end{equation*}
	Using that $\mbE[\hat{\tau}(t,\om)\overline{\hat{\tau}(s,\om')}]=0$ if $\om\neq\om'\in\mbZ^2$, we obtain
	\begin{equation*}
		\begin{split}
			&\mbE[\abs{\Delta_{q}\tau(t,x)-\Delta_{q}\tau(s,x)}^2]\\
			&=\sum_{\om\in\mbZ^{2}}\sum_{\om'\in\mbZ^{2}}\euler^{2\uppi\upi\inner{\om}{x}}\euler^{-2\uppi\upi\inner{\om'}{x}}\varrho_{q}(\om)\varrho_{q}(\om')\mbE[(\hat{\tau}(t,\om)-\hat{\tau}(s,\om))\overline{(\hat{\tau}(t,\om')-\hat{\tau}(s,\om'))}]\\
			&=\sum_{\om\in\mbZ^{2}}\varrho_{q}(\om)^{2}\mbE[\abs{\hat{\tau}(t,\om)-\hat{\tau}(s,\om)}^{2}].
		\end{split}
	\end{equation*}
	It follows by It\^{o}'s isometry and interpolation~\eqref{eq:interpolation},
	\begin{equation*}
		\mbE[\abs{\hat{\tau}(t,\om)-\hat{\tau}(s,\om)}^2]\leq\sum_{j_1=1}^{2}\int_{-\infty}^{\infty}\dd u_1\abs{H^{j_1}_{t-u_1}(\om)-H^{j_1}_{s-u_1}(\om)}^2\lesssim\abs{t-s}^{\gamma}\abs{\om}^{2\gamma}
	\end{equation*}
	\begin{details}
		We bound the right-hand side with~\eqref{eq:interpolation} for $\gamma\in[0,1]$,
		\begin{equation*}
			\begin{split}
				&\sum_{j_1=1}^{2}\int_{-\infty}^{\infty}\dd u_1\abs{H^{j_1}_{t-u_1}(\om)-H^{j_1}_{s-u_1}(\om)}^2\\
				&=\abs{2\uppi\om}^{2}(1-\euler^{-\abs{t-s}\abs{2\uppi\om}^2})^{2}\int_{-\infty}^{s}\dd u_1\euler^{-2\abs{s-u_1}\abs{2\uppi\om}^2}+\frac{1}{2}(1-\euler^{-2\abs{t-s}\abs{2\uppi\om}^2})\\
				&\lesssim\abs{t-s}^{\gamma}\abs{\om}^{2\gamma}.
			\end{split}
		\end{equation*}
	\end{details}
	and therefore uniformly in $x\in\mbT^{2}$,
	\begin{equation*}
		\mbE[\abs{\Delta_{q}\tau(t,x)-\Delta_{q}\tau(s,x)}^2]\lesssim\abs{t-s}^\gamma2^{q(2+2\gamma)}.
	\end{equation*}
	To summarize, we obtain by Lemma~\ref{lem:Nelson_estimate}, Lemma~\ref{lem:existence_criterion} and the arguments above for each $\max\{1/\eps,2\}<p<\infty$,
	\begin{equation}\label{eq:lolli_sum_decomposition}
		\begin{split}
			&\mbE[\norm{\ti}_{C_T^{\gamma/2-\eps}\mcC^{-1-\gamma-3\eps}}^p]\\
			&\lesssim\sup_{s\neq t\in[0,T]}\abs{t-s}^{-p\gamma/2}\sum_{q\in\mbN_{-1}}2^{pq(-1-\gamma-\eps)}\int_{\mbT^{2}}\mbE[\abs{\Delta_q\ti(t,x)-\Delta_q\ti(s,x)}^2]^{p/2}\dd x\\
			&\leq\sup_{s\neq t\in[0,T]}\abs{t-s}^{-p\gamma/2}\norm{\het}_{C_{T}L^{\infty}}^{p}\sum_{q\in\mbN_{-1}}2^{pq(-1-\gamma-\eps)}\sup_{x\in\mbT^{2}}\mbE[\abs{\Delta_q\tau(t,x)-\Delta_q\tau(s,x)}^2]^{p/2}\\
			&\lesssim\norm{\het}_{C_{T}L^{\infty}}^{p}\sum_{q\in\mbN_{-1}}2^{-pq\eps}\\
			&\lesssim\norm{\het}_{C_{T}L^{\infty}}^{p},
		\end{split}
	\end{equation}
	which we can generalize to $p\in[1,\infty)$ by H\"{o}lder's inequality. In particular it follows by Lemma~\ref{lem:existence_criterion} that $\ti\in C_T^{\gamma/2-\eps}\mcC^{-1-\gamma-3\eps}(\mbT^{2})$ almost surely, where $\eps\in(0,\gamma/2)$ can be chosen arbitrarily small.
	\begin{details}
		By choosing $\gamma$ arbitrarily small we obtain $\ti\in C_T\mcC^{-1-}(\mbT^{2})$ and hence $\ti\in\msL^{\kappa}_T\mcC^{-1-}(\mbT^{2})$ a.s.\ for any $\kappa\in(0,1/2)$.
	\end{details}
	
	Next we show that the approximating sequence $(\ti^\delta)_{\delta>0}$ is more regular if $\het\in C_{T}\mcH^{2}(\mbT^{2})$. In Fourier space,
	\begin{equation}\label{eq:lolli_approximation_example}
		\hat{\ti^{\delta}}(t,\om)=\sum_{j_1=1}^{2}\sum_{m_1\in\mbZ^2}\int_{0}^{t}\dd W^{j_1}(u_1,m_1)\hat{\het}(u_1,\om-m_1)\varphi(\delta m_1)H_{t-u_1}^{j_1}(\om).
	\end{equation}
	We apply It\^{o}'s isometry, the triangle inequality and interpolation~\eqref{eq:interpolation} to estimate
	\begin{equation*}
		\begin{split}
			&\Bigl\lvert\mbE\Bigl[\Bigl(\hat{\ti^{\delta}}(t,\om)-\hat{\ti^{\delta}}(s,\om)\Bigr)\overline{\Bigl(\hat{\ti^{\delta}}(t,\om')-\hat{\ti^{\delta}}(s,\om')\Bigr)}\Bigr]\Bigr\rvert\\
			&\lesssim\abs{t-s}^{\gamma}\abs{\om}^{\gamma}\abs{\om'}^{\gamma}\norm{\het}_{C_{T}\mcH^{2}}^{2}\sum_{m_1\in\mbZ^{2}}(1+\abs{\om-m_1}^{2})^{-1}(1+\abs{\om'-m_1}^{2})^{-1}(1+\abs{\delta m_1}^{2})^{-1},
		\end{split}
	\end{equation*}
	where we used that $\abs{\hat{\het}(u,\om)}\lesssim(1+\abs{\om}^{2})^{-1}\norm{\het}_{C_{T}\mcH^{2}}$ uniformly in $u\in[0,T]$ and $\om\in\mbZ^{2}$, and that $(1+x^2)^{1/2}\abs{\varphi(x)}\lesssim\norm{\varphi}_{L^{\infty}}$ uniformly in $x\in\mbR^{2}$ (the latter of which we leave implicit.)
	\begin{details}
		By It\^{o}'s isometry,
		\begin{equation*}
			\begin{split}
				&\mbE\Bigl[\Bigl(\hat{\ti^{\delta}}(t,\om)-\hat{\ti^{\delta}}(s,\om)\Bigr)\overline{\Bigl(\hat{\ti^{\delta}}(t,\om')-\hat{\ti^{\delta}}(s,\om')\Bigr)}\Bigr]\\
				&=\sum_{j_1=1}^{2}\sum_{m_1\in\mbZ^{2}}\int_{0}^{\infty}\dd u_1\hat{\het}(u_1,\om-m_1)\overline{\hat{\het}(u_1,\om'-m_1)}\abs{\varphi(\delta m_1)}^{2}(H_{t-u_1}^{j_1}(\om)-H_{s-u_1}^{j_1}(\om))\overline{(H_{t-u_1}^{j_1}(\om')-H_{s-u_1}^{j_1}(\om'))}.
			\end{split}
		\end{equation*}
		Assume $s<t$. We estimate with~\eqref{eq:interpolation} for $\gamma\in[0,1]$,
		\begin{equation*}
			\begin{split}
				&\sum_{j_1=1}^{2}\int_{-\infty}^{\infty}\dd u_1 \abs{H_{t-u_1}^{j_1}(\om)-H_{s-u_1}^{j_1}(\om)}\abs{H_{t-u_1}^{j_1}(\om')-H_{s-u_1}^{j_1}(\om')}\\
				&\lesssim\abs{\om}\abs{\om'}\int_{-\infty}^{s}\dd u_1(\euler^{-\abs{s-u_1}\abs{2\uppi\om}^{2}}-\euler^{-\abs{t-u_1}\abs{2\uppi\om}^{2}})(\euler^{-\abs{s-u_1}\abs{2\uppi\om'}^{2}}-\euler^{-\abs{t-u_1}\abs{2\uppi\om'}^{2}})\\
				&\quad+\abs{\om}\abs{\om'}\int_{s}^{t}\dd u_1\euler^{-\abs{t-u_1}(\abs{2\pi\om}^{2}+\abs{2\pi\om'}^{2})}\\
				&=\abs{\om}\abs{\om'}(1-\euler^{-\abs{t-s}\abs{2\uppi\om}^{2}})(1-\euler^{-\abs{t-s}\abs{2\uppi\om'}^{2}})\int_{-\infty}^{s}\dd u_1\euler^{-\abs{s-u_1}(\abs{2\uppi\om}^{2}+\abs{2\uppi\om'}^{2})}\\
				&\quad+\abs{\om}\abs{\om'}(\abs{2\pi\om}^{2}+\abs{2\pi\om'}^{2})^{-1}(1-\euler^{-\abs{t-s}(\abs{2\pi\om}^{2}+\abs{2\pi\om'}^{2})})\\
				&\lesssim\abs{t-s}^{\gamma}\abs{\om}^{\gamma}\abs{\om'}^{\gamma}+\abs{\om}\abs{\om'}\abs{t-s}^{\gamma}(\abs{\om}^{2}+\abs{\om'}^{2})^{\gamma-1}\\
				&\lesssim\abs{t-s}^{\gamma}\abs{\om}^{\gamma}\abs{\om'}^{\gamma}.
			\end{split}
		\end{equation*}
	\end{details}
	We may assume $\om,\om'\in\mbZ^{2}\setminus\{0\}$, since $\hat{\ti^{\delta}}(t,0)=0$. We decompose the sum over $m_1\in\mbZ^{2}$ into the domains $m_1=0$, $m_1=\om$, $m_1=\om'$ and $m_1\in\mbZ^{2}\setminus\{0,\om,\om'\}$,
	\begin{equation*}
		\begin{split}
			&\sum_{m_1\in\mbZ^{2}}(1+\abs{\om-m_1}^{2})^{-1}(1+\abs{\om'-m_1}^{2})^{-1}(1+\abs{\delta m_1}^{2})^{-1}\\
			&\leq\abs{\om}^{-2}\abs{\om'}^{-2}+\delta^{-2}(1+\abs{\om-\om'}^{2})^{-1}(\abs{\om}^{-2}+\abs{\om'}^{-2})\\
			&\quad+\delta^{-2}\sum_{m_1\in\mbZ^{2}\setminus\{0,\om,\om'\}}\abs{\om-m_1}^{-2}\abs{\om'-m_1}^{-2}\abs{m_1}^{-2}.
		\end{split}
	\end{equation*}
	We estimate the sum over $m_1\in\mbZ^{2}\setminus\{0,\om,\om'\}$. Assume $\om=\om'$, then by Lemma~\ref{lem:convolution_estimate_paraproduct} and~\eqref{eq:summation_estimates_-alpha},
	\begin{equation*}
		\sum_{m_1\in\mbZ^{2}\setminus\{0,\om,\om'\}}\abs{\om-m_1}^{-4}\abs{m_1}^{-2}\lesssim\abs{\om}^{-2}.
	\end{equation*}
	\begin{details}
		We decompose the sum
		\begin{equation*}
			\begin{split}
				\sum_{m_1\in\mbZ^{2}\setminus\{0,\om,\om'\}}\abs{\om-m_1}^{-4}\abs{m_1}^{-2}&=\sum_{\substack{m_1\in\mbZ^{2}\setminus\{0,\om,\om'\}\\m_{1}\prec(\om-m_{1})}}\abs{\om-m_1}^{-4}\abs{m_1}^{-2}+\sum_{\substack{m_1\in\mbZ^{2}\setminus\{0,\om,\om'\}\\(\om-m_{1})\prec m_{1}}}\abs{\om-m_1}^{-4}\abs{m_1}^{-2}\\
				&\quad+\sum_{\substack{m_1\in\mbZ^{2}\setminus\{0,\om,\om'\}\\m_{1}\sim(\om-m_{1})}}\abs{\om-m_1}^{-4}\abs{m_1}^{-2}.
			\end{split}
		\end{equation*}
		Let $\eps<1$, by the bound induced by~\eqref{eq:definition_precsim} and Lemma~\ref{lem:convolution_estimates},
		\begin{equation*}
			\sum_{\substack{m_1\in\mbZ^{2}\setminus\{0,\om,\om'\}\\m_{1}\prec(\om-m_{1})}}\abs{\om-m_1}^{-4}\abs{m_1}^{-2}\lesssim\abs{\om}^{-2}\sum_{\substack{m_1\in\mbZ^{2}\setminus\{0,\om,\om'\}\\m_{1}\prec(\om-m_{1})}}\abs{\om-m_1}^{-2}\abs{m_1}^{-2}\lesssim\abs{\om}^{-2}\abs{\om}^{-2+2\eps}\lesssim\abs{\om}^{-2}.
		\end{equation*}
		By Lemma~\ref{lem:convolution_estimate_paraproduct},
		\begin{equation*}
			\sum_{\substack{m_1\in\mbZ^{2}\setminus\{0,\om,\om'\}\\(\om-m_{1})\prec m_{1}}}\abs{\om-m_1}^{-4}\abs{m_1}^{-2}\lesssim\abs{\om}^{-2}.
		\end{equation*}
		By Lemma~\ref{lem:convolution_estimates},
		\begin{equation*}
			\sum_{\substack{m_1\in\mbZ^{2}\setminus\{0,\om,\om'\}\\m_{1}\sim(\om-m_{1})}}\abs{\om-m_1}^{-4}\abs{m_1}^{-2}\lesssim\abs{\om}^{-4}.
		\end{equation*}
		This yields the claim.
		
	\end{details}
	Assume $\om\neq\om'$, we apply Lemma~\ref{lem:convolution_twofold} to estimate for $\eps<1$,
	\begin{equation*}
		\sum_{m_1\in\mbZ^{2}\setminus\{0,\om,\om'\}}\abs{\om-m_1}^{-2}\abs{\om'-m_1}^{-2}\abs{m_1}^{-2}\lesssim\abs{\om-\om'}^{-2+\eps}(\abs{\om}^{-2+2\eps}+\abs{\om'}^{-2+2\eps}).
	\end{equation*}
	Having established the decay of the Fourier coefficients, we can bound the Littlewood--Paley blocks. Let $q\in\mbN_{-1}$, we obtain uniformly in $x\in\mbT^{2}$,
	\begin{equation*}
		\begin{split}
			\mbE[\abs{\Delta_q\ti^{\delta}(t,x)-\Delta_q\ti^{\delta}(s,x)}^{2}]&\leq\sum_{\om,\om'\in\mbZ^{2}\setminus\{0\}}\varrho_q(\om)\varrho_q(\om')\Bigl\lvert\mbE\Bigl[\Bigl(\hat{\ti^{\delta}}(t,\om)-\hat{\ti^{\delta}}(s,\om)\Bigr)\overline{\Bigl(\hat{\ti^{\delta}}(t,\om')-\hat{\ti^{\delta}}(s,\om')\Bigr)}\Bigr]\Bigr\rvert\\
			&\lesssim(1+\delta^{-2})\norm{\het}_{C_{T}\mcH^{2}}^{2}\abs{t-s}^{\gamma}2^{q(2\gamma+3\eps)}.
		\end{split}
	\end{equation*}
	Consequently, by Lemma~\ref{lem:Nelson_estimate} and Lemma~\ref{lem:existence_criterion} for any $p\in[1,\infty)$ and $\eps\in(0,\gamma/2)$,
	\begin{details}
		\begin{equation*}
			\begin{split}
				&\mbE[\norm{\ti^{\delta}}_{C_T^{\gamma/2-\eps}\mcC^{-\gamma-4\eps}}^p]\\
				&\lesssim\sup_{s\neq t\in[0,T]}\abs{t-s}^{-p\gamma/2}\sum_{q\in\mbN_{-1}}2^{pq(-\gamma-2\eps)}\int_{\mbT^{2}}\mbE[\abs{\Delta_q\ti^{\delta}(t,x)-\Delta_q\ti^{\delta}(s,x)}^2]^{p/2}\dd x\\
				&\lesssim(1+\delta^{-2})^{p/2}\norm{\het}_{C_{T}\mcH^{2}}^{p}\sum_{q\in\mbN_{-1}}2^{pq(-\gamma-2\eps)}2^{pq(\gamma+\frac{3}{2}\eps)}\\
				&\lesssim(1+\delta^{-2})^{p/2}\norm{\het}_{C_{T}\mcH^{2}}^{p}\sum_{q\in\mbN_{-1}}2^{-pq\frac{1}{2}\eps}\\
				&\lesssim(1+\delta^{-2})^{p/2}\norm{\het}_{C_{T}\mcH^{2}}^{p}
			\end{split}
		\end{equation*}
		and so
	\end{details}
	\begin{equation*}
		\mbE[\norm{\ti^{\delta}}_{C_T^{\gamma/2-\eps}\mcC^{-\gamma-4\eps}}^p]^{1/p}\lesssim(1+\delta^{-2})^{1/2}\norm{\het}_{C_T\mcH^{2}}
	\end{equation*}
	and in particular, $\ti^{\delta}\in C_{T}^{\gamma/2-\eps}\mcC^{-\gamma-4\eps}(\mbT^{2})$ almost surely, where $\eps\in(0,\gamma/2)$ can be chosen arbitrarily small.
	
	To deduce $\lim_{\delta\to0}\mbE[\norm{\ti-\ti^{\delta}}_{\msL_{T}^{\kappa}\mcC^{\alpha+1}}^{p}]^{1/p}=0$ for all $p\in[1,\infty)$, $\alpha<-2$ and $\kappa\in(0,1/2)$, let $\gamma\in(0,1]$, $\eps\in(0,\gamma/2)$, $\max\{1/\eps,2\}<p<\infty$ and $0<\vartheta'<\vartheta\leq1$. We use Nelson's estimate (Lemma~\ref{lem:Nelson_estimate}) and Kolmogorov's continuity criterion (Lemma~\ref{lem:existence_criterion}) to bound
	\begin{equation*}
		\begin{split}
			&\mbE[\norm{\ti-\ti^{\delta}}_{C_{T}^{\gamma/2-\eps}\mcC^{-1-\gamma-\vartheta-3\eps}}^{p}]\\
			&\lesssim\int_{0}^{T}\int_{0}^{T}\abs{t-s}^{-2-p(\gamma/2-\eps)}\sum_{q\in\mbN_{-1}}2^{pq(-1-\gamma-\vartheta-\eps)}\\
			&\quad\times\int_{\mbT^{2}}\mbE[\abs{\Delta_q(\ti-\ti^{\delta})(t,x)-\Delta_q(\ti-\ti^{\delta})(s,x)}^2]^{p/2}\dd x\dd s\dd t.
		\end{split}
	\end{equation*}
	We then use Lemma~\ref{lem:isometry_interpolation} to bound
	\begin{equation*}
		\begin{split}
			&\mbE[\abs{\Delta_q(\ti-\ti^{\delta})(t,x)-\Delta_q(\ti-\ti^{\delta})(s,x)}^{2}]\\
			&\lesssim\delta^{2\vartheta'}\norm{1-\varphi}_{C^{1}}^{2}\norm{\het}_{C_{T}\mcH^{1}}^{2}\sum_{\om\in\mbZ^{2}}\varrho_{q}(\om)^{2}\sum_{j_{1}=1}^{2}\int_{-\infty}^{\infty}\dd u_{1}(1+\abs{2\uppi\om}^{2})^{\vartheta}\abs{H_{t-u_{1}}^{j_{1}}(\om)-H_{s-u_{1}}^{j_{1}}(\om)}^{2}.
		\end{split}
	\end{equation*}
	Using the same estimates as in the derivation of~\eqref{eq:lolli_sum_decomposition}, we obtain
	\begin{equation*}
		\begin{split}
			&\mbE[\abs{\Delta_q(\ti-\ti^{\delta})(t,x)-\Delta_q(\ti-\ti^{\delta})(s,x)}^{2}]\\
			&\lesssim\delta^{2\vartheta'}\abs{t-s}^{\gamma}\norm{1-\varphi}_{C^{1}}^{2}\norm{\het}_{C_{T}\mcH^{1}}^{2}\sum_{\om\in\mbZ^{2}}\varrho_{q}(\om)^{2}(1+\abs{2\uppi\om}^{2})^{\vartheta}\abs{\om}^{2\gamma}\\
			&\lesssim\delta^{2\vartheta'}\abs{t-s}^{\gamma}\norm{1-\varphi}_{C^{1}}^{2}\norm{\het}_{C_{T}\mcH^{1}}^{2}2^{2q(1+\gamma+\vartheta)}.
		\end{split}
	\end{equation*}
	Therefore,
	\begin{equation*}
		\begin{split}
			&\mbE[\norm{\ti-\ti^{\delta}}_{C_{T}^{\gamma/2-\eps}\mcC^{-1-\gamma-\vartheta-3\eps}}^{p}]\\
			&\lesssim\delta^{p\vartheta'}\norm{1-\varphi}_{C^{1}}^{p}\norm{\het}_{C_{T}\mcH^{1}}^{p}\int_{0}^{T}\int_{0}^{T}\abs{t-s}^{-2+p\eps}\dd s\dd t\sum_{q\in\mbN_{-1}}2^{-pq\eps}\\
			&\lesssim\delta^{p\vartheta'}\norm{1-\varphi}_{C^{1}}^{p}\norm{\het}_{C_{T}\mcH^{1}}^{p},
		\end{split}
	\end{equation*}
	which implies $\lim_{\delta\to0}\mbE[\norm{\ti-\ti^{\delta}}_{C_{T}^{\gamma/2-\eps}\mcC^{-1-\gamma-\vartheta-3\eps}}^{p}]=0$. An application of H\"{o}lder's inequality allows us to deduce the convergence for all $p\in[1,\infty)$, where $\eps$ and $\vartheta$ can be chosen arbitrarily small. This yields the claim.
\end{proof}
The convergence of the other approximations in $\mbX^{\delta}$ is analogous. The only crucial observation here is that we can choose $\vartheta'$ and $\vartheta$ arbitrarily small when applying Lemma~\ref{lem:isometry_interpolation}, which ensures that the various applications of Lemma~\ref{lem:convolution_estimates} in the proof of Theorem~\ref{thm:enhancement_existence} carry over. Hence, we will only show the existence of $\mbX$ and $\mbX^{\delta}_{\can}$, the second part of Theorem~\ref{thm:enhancement_existence}, that is, the convergence $\mbX^{\delta}\to\mbX$, then follows as above.
\begin{details}
	See Subsection~\ref{sec:diagrams_of_order_2_and_3} for an example.
\end{details}
\subsection{Feynman Diagrams}\label{subsec:Feynman}
%
%
As demonstrated by~\eqref{eq:lolli_example}, we may construct objects derived from white noise as (iterated) stochastic integrals. However, as we continue to multiply terms to construct the enhancement of Theorem~\ref{thm:enhancement_existence}, we need to apply It\^{o}'s product rule to increasingly complicated expressions. To implement this procedure efficiently, we use an extension of a graphical representation that was developed by~\cite{mourrat_weber_xu_16,gubinelli_perkowski_17}, which relates our stochastic objects to Feynman diagrams.

We make use of several types of vertices, which will be introduced throughout this subsection. A root
$\begin{tikzpicture}[scale=0.7,baseline=-0.6ex]
	\node at (0,0) [root] (middle) {};
	\node at (0.7,0) [] {\scriptsize $(t,\om)$};
\end{tikzpicture}\!\!$
represents the argument $(t,\om)$.
A circle
$\begin{tikzpicture}[scale=0.7,baseline=-0.6ex]
	\node at (0,0) [vectorcircle] (middle) {};
\end{tikzpicture}$
denotes an instance of stochastic integration in time against a two-dimensional Brownian field with heterogeneity $\het$. Graphically this integrator is given by
\begin{equation*}
	\begin{tikzpicture}[scale=0.7,baseline=-0.6ex]
		\node at (0,0) [vectorcircle] (middle) {};
		\node at (0,0.4)[]{\scriptsize $(u_1,\om_1,m_1,j_1)$};
	\end{tikzpicture}
	=\sum_{\om_{1}\in\mbZ^{2}}\sum_{j_1=1}^{2}\sum_{m_1\in\mbZ^{2}}\int_{0}^{\infty}\dd W^{j_1}(u_1,m_1)\hat{\het}(u_1,\om_1-m_1)\ldots,
\end{equation*}
where the placeholder $\ldots$ stands for an integrand in $u_1$, $\om_1$, $m_1$, $j_1$, which is to be determined from the remaining diagram.

Generally, circles are enumerated by $k\in\mbN$ and equipped with tuples of dummy variables $(u_k,\om_k,m_k,j_k)$, where $u_k\in(0,\infty)$, $\om_k\in\mbZ^{2}$, $m_k\in\mbZ^{2}$ and $j_k=1,2$, which we point out explicitly for the reader's convenience. We refer to the $\om$, $\om_k$ as the \emph{frequencies} and the $m_k$ as the \emph{modes}. We denote coordinates of those by superscripts.

Vertices are connected by different types of directed edges, i.e.\ arrows, that represent integrands. Black arrows
$\!\!\begin{tikzpicture}[scale=0.7,baseline=-0.6ex]
	\node at (0,0) [] (middle){};
	\node at (1.2,0) [] (right){};
	\node at (2.47,0)[]{\scriptsize $(u_k,\om_k,m_k,j_k)$};
	\node at (-0.3,0) [] {\scriptsize $(t,\om)$}; 
	\draw[scalarkernel] (middle) to (right); 
\end{tikzpicture}\!\!$
are associated to the integrand $H_{t-u_k}^{j_k}(\om_k)$, which is the Fourier multiplier appearing in $\vdiv\mcI$. Highlighted arrows
$\!\!\begin{tikzpicture}[scale=0.7,baseline=-0.0ex]
	\node at (0,0) [] (middle){};
	\node at (1.2,0) [] (right){};
	\node at (2.47,0)[]{\scriptsize $(u_k,\om_k,m_k,j_k)$};
	\node at (-0.3,0) [] {\scriptsize $(t,\om)$};
	\node at (0.55,0.3) [] {\scriptsize $j$}; 
	\draw[vectorkernel] (middle) to (right); 
\end{tikzpicture}\!\!$,
for $j=1,2$, are associated to $G^j(\om_k)H_{t-u_k}^{j_k}(\om_k)$, where 
\begin{equation*}
	G^j(\om_k)\defeq2\uppi\upi \om_k^j\abs{2\uppi\om_k}^{-2}\mathds{1}_{\om_k\neq0}
\end{equation*}
is the multiplier for $\partial_j\Phi$. The direction of an arrow indicates the smaller time variable $u_k$ in the integration. Furthermore, if a single arrow emerges from the root
$\begin{tikzpicture}[scale=0.7,baseline=-0.6ex]
	\node at (0,0) [root] (middle) {};
	\node at (0.7,0) [] {\scriptsize $(t,\om)$};
\end{tikzpicture}\!\!\!\!$
, we multiply the integrand by~$\mathds{1}_{\om=\om_{1}}$. 

For example by applying those rules, we can represent~\eqref{eq:lolli_example} as 
\begin{equation*}
	\hat{\ti}(t,\om)=
	\begin{tikzpicture}[scale=0.7,baseline=1.2ex]
		\node at (0,0) [root] (below){};
		\node at (0,1.2) [vectorcircle] (above){};
		\node at (0,1.6)[]{\scriptsize $(u_1,\om,m_1,j_1)$};
		\node at (0,-0.4) [] {\scriptsize $(t,\om)$}; 
		\draw[scalarkernel] (below) to (above); 
	\end{tikzpicture}
	\defeq\sum_{j_1=1}^{2}\sum_{m_1\in\mbZ^{2}}\int_{0}^{t}\dd W^{j_1}(u_1,m_1)\hat{\het}(u_1,\om-m_1) H_{t-u_1}^{j_1}(\om),
\end{equation*}
and $\nabla\Phi_{\ti}$ as
\begin{equation*}
	\hat{\partial_j\Phi_{\ti}}(t,\om)=
	\begin{tikzpicture}[scale=0.7,baseline=1.2ex]
		\node at (0,0) [root] (below){};
		\node at (0,1.2) [vectorcircle] (above){};
		\node at (0,1.6)[]{\scriptsize $(u_1,\om,m_1,j_1)$};
		\node at (0,-0.4) [] {\scriptsize $(t,\om)$};
		\node at (0.3,0.55) [] {\scriptsize $j$};
		\draw[vectorkernel] (below) to (above); 
	\end{tikzpicture}
	\defeq\sum_{j_1=1}^{2}\sum_{m_1\in\mbZ^{2}}\int_{0}^{t}\dd W^{j_1}(u_1,m_1)\hat{\het}(u_1,\om-m_1)G^{j}(\om)H_{t-u_1}^{j_1}(\om).
\end{equation*}
In particular if $\om=0$, then $H^j_t(\om)=G^j(\om)=0$, hence we may assume $\om\neq0$ whenever it appears in either multiplier.

As a general rule, \textbf{black} objects in our diagrams are associated to scalars and {\color{vectorcolor}\textbf{highlighted}} objects are associated to vectors. Our arrows have highlighted arrowheads, indicating that they act on vector-valued objects. On the other hand, the type of object they return is determined by the arrow shaft. Accordingly,
$\!\!\begin{tikzpicture}[scale=0.7,baseline=-0.6ex]
	\node at (0,0) [] (below){};
	\node at (1.2,0) [] (right){};
	\node at (2.47,0)[]{\scriptsize $(u_k,\om_k,m_k,j_k)$};
	\node at (-0.3,0) [] {\scriptsize $(t,\om)$}; 
	\draw[scalarkernel] (below) to (right); 
\end{tikzpicture}\!\!$
produces a scalar and
$\!\!\begin{tikzpicture}[scale=0.7,baseline=-0.6ex]
	\node at (0,0) [] (middle){};
	\node at (1.2,0) [] (right){};
	\node at (2.47,0)[]{\scriptsize $(u_k,\om_k,m_k,j_k)$};
	\node at (-0.3,0) [] {\scriptsize $(t,\om)$};
 	\draw[vectorkernel] (middle) to (right); 
\end{tikzpicture}\!\!$
a vector.

The existence of $\vdiv\mcI[\ti\nabla\Phi_{\ti}]$ is not guaranteed by Bony's estimates (Lemma~\ref{lem:Bony}), since $\ti\in C_{T}\mcC^{-1-}(\mbT^{2})$ and hence $\nabla\Phi_{\ti}\in C_{T}\mcC^{0-}(\mbT^{2};\mbR^{2})$. In order to construct such non-linear objects, we formally apply It\^{o}'s product rule to identify the candidate Fourier transform. Let $n\in\mbN$ and assume $a_1,\ldots,a_n\in\mbN$ are distinct. We denote by $\Sigma(a_1,\ldots,a_n)$ the permutation group of $\{a_1,\ldots,a_n\}$. Let $\om_1,\om_2\in\mbZ^2$, we compute 
\begin{equation}\label{eq:diagram_Ito}
	\begin{split}
		&\mathscr{F}(\ti\nabla\Phi_{\ti})(t,\om,j)=\sum_{\substack{\om_1,\om_2\in\mbZ^{2}\\\om=\om_1+\om_2}}\hat{\ti}(t,\om_1)\hat{\partial_{j}\Phi_{\ti}}(t,\om_2)\\
		&=\sum_{\substack{\om_1,\om_2\in\mbZ^{2}\\\om=\om_1+\om_2}}\sum_{j_1,j_2=1}^{2}\sum_{m_1,m_2\in\mbZ^{2}}\int_{0}^{t}\dd W^{j_2}(u_2,m_2)\int_{0}^{u_2}\dd W^{j_1}(u_1,m_1)\\
		&\quad\sum_{\varsigma\in\Sigma(1,2)}\hat{\het}(u_{\varsigma(1)},\om_{\varsigma(1)}-m_{\varsigma(1)})\hat{\het}(u_{\varsigma(2)},\om_{\varsigma(2)}-m_{\varsigma(2)})H^{j_{{\varsigma(1)}}}_{t-u_{\varsigma(1)}}(\om_{\varsigma(1)})G^{j}(\om_{\varsigma(2)})H^{j_{\varsigma(2)}}_{t-u_{\varsigma(2)}}(\om_{\varsigma(2)})\\
		&\quad+\sum_{\substack{\om_1,\om_2\in\mbZ^{2}\\\om=\om_1+\om_2}}\sum_{j_1=1}^{2}\sum_{m_1\in\mbZ^{2}}\int_{0}^{t}\dd u_1\hat{\het}(u_1,\om_1-m_1)\hat{\het}(u_1,\om_2+m_1)H^{j_1}_{t-u_1}(\om_1)G^{j}(\om_2)H^{j_1}_{t-u_1}(\om_2)\\
		&\eqqcolon\hat{\Cherry{10}}(t,\om,j)+\hat{\Loop}(t,\om,j).
	\end{split}
\end{equation}
\begin{details}
	\begin{equation*}
		\begin{split}
			\sum_{\substack{\om_1,\om_2\in\mbZ^{2}\\\om=\om_1+\om_2}}\hat{\ti}(t,\om_1)\hat{\partial_{j}\Phi_{\ti}}(t,\om_2)&=\sum_{\substack{\om_1,\om_2\in\mbZ^{2}\\\om=\om_1+\om_2}}\Bigl(\sum_{j_1=1}^{2}\sum_{m_1\in\mbZ^{2}}\int_{0}^{t}\dd W^{j_1}(u_1,m_1)\hat{\het}(u_1,\om_1-m_1)H^{j_1}_{t-u_1}(\om_1)\Bigr)\\
			&\quad\times\Bigl(\sum_{j_2=1}^{2}\sum_{m_2\in\mbZ^{2}}\int_{0}^{t}\dd W^{j_2}(u_2,m_2)\hat{\het}(u_2,\om_2-m_2)G^{j}(\om_2)H^{j_2}_{t-u_2}(\om_2)\Bigr)\\
		\end{split}
	\end{equation*}
\end{details}
The symmetrization of the first integrand is a direct consequence of the It\^{o} product rule. Note that in the second term, $j_1=j_2$, $u_1=u_2$ but $m_1=-m_2$, which is a consequence of the Hermitean structure of complex Brownian motion. Such a decomposition of stochastic products into iterated (stochastic) integrals is often called a \emph{Wiener chaos decomposition} after~\cite{wiener_38}, see~\cite{hairer_16,mourrat_weber_xu_16} for more details.

To represent $\Cherry{10}$, let us extend our graphical rules. Two arrows emerging from a common vertex represent a convolution in Fourier space. Their integrands are multiplied, but are related by the \emph{Kirchhoff rule}~\cite{mourrat_weber_xu_16}: each vertex $v$ has a frequency $\om$ or $\om_k$ which is part of its variables. This frequency will be called \emph{ingoing} at the vertex $v$. An ingoing frequency at a vertex $v$ is \emph{outgoing} for the vertex $w$, if there exists an arrow pointing from $w$ to $v$. A vertex is called \emph{internal}, if there exists an arrow emerging from it. The rule states that at each internal vertex, the ingoing frequency (e.g.\ $\om$ above) equals the sum of the outgoing frequencies (e.g.\ $\om_1$, $\om_2$ above). In graphical notation,
\begin{equation*}
	\begin{tikzpicture}[scale=0.7,baseline=-0.6ex]
		\node at (0,0) [root] (middle) {}; 
		\node at (-0.6,1.2) [] (left) {}; 
		\node at (0.6,1.2) [] (right) {}; 
		\node at (0,-0.4) () {\scriptsize $(t,\om)$};
		\node at (-1.45,1.3) [] {\scriptsize $(u_1,\om_1,m_1,j_1)$}; 
		\node at (1.45,1.3) [] {\scriptsize $(u_2,\om_2,m_2,j_2)$};
		\node at (0.5,0.4) [] {\scriptsize $j$};
		\draw[scalarkernel] (middle) to (left); 
		\draw[vectorkernel] (middle) to (right); 
	\end{tikzpicture}
	\defeq\mathds{1}_{\om=\om_1+\om_2}H^{j_{1}}_{t-u_{1}}(\om_{1})G^{j}(\om_{2})H^{j_{2}}_{t-u_{2}}(\om_{2}).
\end{equation*}
Those arrows will target the integrators
$\begin{tikzpicture}[scale=0.7,baseline=-0.6ex]
	\node at (0,0) [vectorcircle] (middle) {};
	\node at (1.6,0)[]{\scriptsize $(u_1,\om_1,m_1,j_1)$};
\end{tikzpicture}$
and
$\begin{tikzpicture}[scale=0.7,baseline=-0.6ex]
	\node at (0,0) [vectorcircle] (middle) {};
	\node at (1.6,0)[]{\scriptsize $(u_2,\om_2,m_2,j_2)$};
\end{tikzpicture}$
which will be multiplied and integrated over. The integral is then restricted to the simplex $u_1<u_2$ to ensure that the integrand is adapted. To obtain the integral over the full domain, we symmetrize the integrand by permuting the indices that appear in the simplex. For example,
\begin{equation*}
	\begin{tikzpicture}[scale=0.7,baseline=-0.6ex]
		\node at (0,0) [root] (middle) {}; 
		\node at (-0.6,1.2) [vectorcircle] (left) {}; 
		\node at (0.6,1.2) [vectorcircle] (right) {}; 
		\node at (0,-0.4) () {\scriptsize $(t,\om)$};
		\node at (-1.45,1.6) [] {\scriptsize $(u_1,\om_1,m_1,j_1)$}; 
		\node at (1.45,1.6) [] {\scriptsize $(u_2,\om_2,m_2,j_2)$};
		\node at (0.5,0.4) [] {\scriptsize $j$};
		\draw[scalarkernel] (middle) to (left); 
		\draw[vectorkernel] (middle) to (right); 
	\end{tikzpicture}=\hat{\Cherry{10}}(t,\om,j)
\end{equation*}
is the first object in the decomposition~\eqref{eq:diagram_Ito}.

Next, let us discuss $\Loop$. As can be seen in~\eqref{eq:diagram_Ito}, instances of Lebesgue integration
%
%
%
arise through It\^{o} correction terms. It\^{o} corrections will be denoted by \emph{contractions}, i.e.\ two arrows pointing at different vertices
\begin{tikzpicture}[scale=0.7,baseline=-0.6ex]
	\node at (0,0) [vectorcircle] (middle) {};
\end{tikzpicture}
and
\begin{tikzpicture}[scale=0.7,baseline=-0.6ex]
	\node at (0,0) [vectorcircle] (middle) {};
\end{tikzpicture}
are merged at a common vertex
$\begin{tikzpicture}[scale=0.7,baseline=-0.6ex]
	\node at (0,0) [dot] (middle) {};
\end{tikzpicture}\,$.
Graphically,
\begin{equation*}
	\begin{tikzpicture}[scale=0.7,baseline=-0.6ex]
		\node at (0,0) [] (middle) {};
		\node at (-0.6,1.2) [vectorcircle] (left) {}; 
		\node at (0.6,1.2) [vectorcircle] (right) {}; 
		\draw[scalarkernel] (middle) to (left); 
		\draw[vectorkernel] (middle) to (right);
		\node at (4.2,0.6)[]{are contracted to};
		\node at (7.8,1.2) [dot] (above) {}; 
		\draw[vectorkernel, bend right=60] (7.8,0) to (above); 
		\draw[scalarkernel, bend left=60] (7.8,0) to (above);
	\end{tikzpicture}.
\end{equation*}
Using the orthogonality of $(W^{j}(u,m))_{u\geq0,m\in\mbZ^{2},j=1,2}$, we can identify some of the dummy variables of the two vertices that are being merged. Indeed as in~\eqref{eq:diagram_Ito} we set $j_1=j_2$, $u_1=u_2$, $m_1=-m_2$, but leave $\om_1$, $\om_2$ as they are. To make it easier for the reader to discern the multipliers attached to each arrow, we give both tuples of dummy variables, even after the contraction. Graphically,
\begin{equation*}
	\begin{tikzpicture}[scale=0.7,baseline=-0.6ex]
		\node at (0,0) [] (middle) {};
		\node at (-0.6,1.2) [vectorcircle] (left) {};
		\node at (0.6,1.2) [vectorcircle] (right) {};
		\node at (-1.45,1.6) [] {\scriptsize $(u_1,\om_1,m_1,j_1)$}; 
		\node at (1.45,1.6) [] {\scriptsize $(u_2,\om_2,m_2,j_2)$}; 
		\draw[scalarkernel] (middle) to (left); 
		\draw[vectorkernel] (middle) to (right);
		\node at (4.2,0.6)[]{are contracted to};
		\node at (7.8,1.2) [dot] (above) {};
		\node at (6.36,1.6) [] {\scriptsize $(u_1,\om_1,m_1,j_1)$}; 
		\node at (9.41,1.6) [] {\scriptsize $(u_1,\om_2,-m_1,j_1)$};
		\draw[vectorkernel, bend right=60] (7.8,0) to (above); 
		\draw[scalarkernel, bend left=60] (7.8,0) to (above);
	\end{tikzpicture}.
\end{equation*}
This results in the diagram
\begin{equation*}
	\begin{tikzpicture}[scale=0.7,baseline=-0.6ex]
		\node at (0,0) [root] (middle) {}; 
		\node at (0,1.2) [dot] (above) {}; 
		\node at (-1.44,1.6) [] {\scriptsize $(u_1,\om_1,m_1,j_1)$}; 
		\node at (1.61,1.6) [] {\scriptsize $(u_1,\om_2,-m_1,j_1)$};
		\node at (0.65,0.6) [] {\scriptsize $j$};
		\node at (0,-0.4) {\scriptsize $(t,\om)$}; 
		\draw[vectorkernel, bend right=60] (middle) to (above); 
		\draw[scalarkernel, bend left=60] (middle) to (above);
	\end{tikzpicture}=\hat{\Loop}(t,\om,j),
\end{equation*}
which is the It\^{o} correction in the decomposition~\eqref{eq:diagram_Ito} and coincides with the mean, $\Loop=\mbE[\ti\nabla\Phi_{\ti}]$. Diagrams carrying contractions are often called \emph{Wick contractions} after~\cite{wick_50}, see~\cite{gubinelli_perkowski_17} for more details. 
%

Depending on $\het$, $\Loop$ may be infinite. Hence, we consider the \emph{renormalised enhancement}, where we only keep the first term $\Cherry{10}$ of the decomposition~\eqref{eq:diagram_Ito} to define $\ty$. Formally, this is equivalent to subtracting the mean as a counterterm,
\begin{equation*}
	\Cherry{10}=\ti\nabla\Phi_{\ti}-\mbE[\ti\nabla\Phi_{\ti}].
\end{equation*}
This identity can be made rigorous with suitable regularization and limiting procedures, see the discussion of the canonical enhancement at the end of this section.

Another source of Lebesgue integrals is the concatenation of the $\mcI$ operation. We obtain arrows pointing at other arrows, connected through a vertex
$\begin{tikzpicture}[scale=0.7,baseline=-0.6ex]
	\node at (0,0) [dot] (middle) {};
\end{tikzpicture}\,$.
We multiply their integrands and make sure to respect the Kirchhoff rule. The multiplier of the incoming arrow will be determined by a tuple of dummy variables $(u_k,\om_k,j_k)$ at the connecting vertex. For example, $\vdiv\mcI[\nabla\Phi_{\ti}]$ can be expressed as 
\begin{equation*}
	\begin{split}
		&\msF(\vdiv\mcI[\nabla\Phi_{\ti}])(t,\om)=
		\begin{tikzpicture}[scale=0.7,baseline=-0.6ex]
			\node at (0,0) [root] (left){};
			\node at (1.4,0) [dot] (middle){};
			\node at (2.8,0) [vectorcircle] (right){};
			\node at (-0.7,0)[]{\scriptsize $(t,\om)$};
			\node at (1.4,0.45)[]{\scriptsize $(u_2,\om,j_2)$};
			\node at (4.3,0) [] {\scriptsize $(u_1,\om,m_1,j_1)$}; 
			\draw[scalarkernel] (left) to (middle);
			\draw[vectorkernel] (middle) to (right); 
		\end{tikzpicture}\\
		&\defeq\sum_{j_1,j_2=1}^{2}\sum_{m_1\in\mbZ^{2}}\int_{0}^{t}\dd u_2\int_{0}^{u_2}\dd W^{j_1}(u_1,m_1)\hat{\het}(u_1,\om-m_1)H_{t-u_2}^{j_2}(\om)G^{j_2}(\om)H_{u_2-u_1}^{j_1}(\om).
	\end{split}
\end{equation*}
The renormalised stochastic object $\ty=\vdiv\mcI[\ti\nabla\Phi_{\ti}]-\mbE[\vdiv\mcI[\ti\nabla\Phi_{\ti}]]$ can then be expressed as
\begin{equation*}
	\begin{split}
		&\hat{\ty}(t,\om)=
		\begin{tikzpicture}[scale=0.7,baseline=-4.2ex]
			\node at (0,0) [dot] (middle) {}; 
			\node at (-0.6,1.2) [vectorcircle] (left) {}; 
			\node at (0.6,1.2) [vectorcircle] (right) {}; 
			\node at (0,-1.2) [root] (below) {}; 
			\node at (0,-1.6) () {\scriptsize $(t,\om)$};
			\node at (-1.45,1.6) [] {\scriptsize $(u_1,\om_1,m_1,j_1)$}; 
			\node at (1.45,1.6) [] {\scriptsize $(u_2,\om_2,m_2,j_2)$};
			\node at (1.75,0) {\scriptsize $(u_3,\om_1+\om_2,j_3)$}; 
			\draw[scalarkernel] (middle) to (left); 
			\draw[vectorkernel] (middle) to (right); 
			\draw[scalarkernel] (below) to (middle);
		\end{tikzpicture}\\
		&\defeq\sum_{\substack{\om_1,\om_2\in\mbZ^{2}\\\om=\om_1+\om_2}}\sum_{j_1,j_2,j_3=1}^{2}\sum_{m_1,m_2\in\mbZ^{2}}\int_{0}^{t}\dd u_3\int_{0}^{u_3}\dd W^{j_2}(u_2,m_2)\int_{0}^{u_2}\dd W^{j_1}(u_1,m_1)\\
		&\quad\hat{\het}(u_{1},\om_{1}-m_{1})\hat{\het}(u_{2},\om_{2}-m_{2})\sum_{\varsigma\in\Sigma(1,2)}H^{j_3}_{t-u_3}(\om_{\varsigma(1)}+\om_{\varsigma(2)})H^{j_{\varsigma(1)}}_{u_3-u_{\varsigma(1)}}(\om_{\varsigma(1)})G^{j_3}(\om_{\varsigma(2)})H^{j_{\varsigma(2)}}_{u_3-u_{\varsigma(2)}}(\om_{\varsigma(2)}).
	\end{split}
\end{equation*}
%

In fact, we will not construct $\Cherry{10}$ in itself. As we will see in Lemma~\ref{lem:existence_ypsilon}, $\ty$ can be constructed as a continuous function in time that takes values in a space of distributions. On the other hand, we do not expect $\Cherry{10}$ to admit pointwise-in-time values; instead we expect it to exist as a proper space-time distribution, which resembles the situation discussed in~\cite[pp.~23--24 \& pp.~32--33]{mourrat_weber_xu_16} and~\cite{catellier_chouk_18,hairer_matetski_18}.

A particular variant of the root
$\begin{tikzpicture}[scale=0.7,baseline=-0.6ex]
	\node at (0,0) [root] (middle) {};
\end{tikzpicture}$
is the vertex
$\begin{tikzpicture}[scale=0.7,baseline=-0.6ex]
	\node at (0,0) [rootvar] (middle) {}; 
	\draw[fill] (0,0) circle (0.02);
\end{tikzpicture}$
which arises through applications of the resonant product. The vertex 
$\begin{tikzpicture}[scale=0.7,baseline=-0.6ex]
	\node at (0,0) [rootvar] (middle) {}; 
	\draw[fill] (0,0) circle (0.02);
\end{tikzpicture}$
relates the frequencies of the arrows that it joins through the $\sim$-relation defined in~\eqref{eq:definition_sim}, see also~\cite[(64)]{mourrat_weber_xu_16}.

Let us consider more complicated objects. We have the Wiener chaos decomposition
\begin{equation*}
	\tp=\PreThree{10}+\PreThree{20}+\PreCocktail{30}+(\PreCocktail{10}+\PreCocktail{20})+\PreCocktail{40}.
\end{equation*}
The contractions $\PreThreeloop{10}$ and $\PreThreeloop{20}$ that one might expect are absent in the renormalised enhancement due to our definition of $\ty=\vdiv\mcI[\ti\nabla\Phi_{\ti}]-\tl$. We express the third-order Wiener chaos term $\PreThree{10}$ as follows. For $j=1,2$,
\begin{equation*}
	\begin{split}
		&\hat{\PreThree{10}}(t,\om,j)=
		\begin{tikzpicture}[scale=0.7,baseline=-1.8ex]
			\node at (-0.6,0) [dot] (middle) {}; 
			\node at (-1.2,1.2) [vectorcircle] (aboveleft) {};
			\node at (0,1.2) [vectorcircle] (aboveright) {}; 
			\node at (0.6,0) [vectorcircle] (right) {}; 
			\node at (0,-1.2) [rootvar] (below) {}; 
			\node at (0,-1.6) () {\scriptsize $(t,\om)$};
			\node at (-2.05,1.6) [] {\scriptsize $(u_1,\om_1,m_1,j_1)$}; 
			\node at (0.85,1.6) [] {\scriptsize $(u_2,\om_2,m_2,j_2)$};
			\node at (1.3,0.4) [] {\scriptsize $(u_4,\om_4,m_4,j_4)$};
			\node at (-2.3,0) [] {\scriptsize $(u_3,\om_1+\om_2,j_3)$};
			\node at (0.5,-0.8) [] {\scriptsize $j$};
			\draw[fill] (0,-1.2) circle (0.02);
			\draw[scalarkernel] (middle) to (aboveleft);
			\draw[vectorkernel] (middle) to (aboveright); 
			\draw[vectorkernel] (below) to (right); 
			\draw[scalarkernel] (below) to (middle);
		\end{tikzpicture}\\
		&\defeq\sum_{\substack{\om_1,\om_2,\om_4\in\mbZ^{2}\\\om=\om_1+\om_2+\om_4}}\sum_{j_1,j_2,j_3,j_4=1}^{2}\sum_{m_1,m_2,m_4\in\mbZ^{2}}\int_{0}^{t}\dd u_3\int_{0}^{t}\dd W^{j_4}(u_4,m_4)\int_{0}^{u_4}\dd W^{j_1}(u_1,m_1)\int_{0}^{u_1}\dd W^{j_2}(u_2,m_2)\\
		&\quad\hat{\het}(u_1,\om_1-m_1)\hat{\het}(u_2,\om_2-m_2)\hat{\het}(u_4,\om_4-m_4)\sum_{\varsigma\in \Sigma(1,2,4)}H_{t-u_3}^{j_3}(\om_{\varsigma(1)}+\om_{\varsigma(2)})
		G^{j}(\om_{\varsigma(4)})H_{t-u_{\varsigma(4)}}^{j_{\varsigma(4)}}(\om_{\varsigma(4)})\\
		&\quad\times H_{u_3-u_{\varsigma(1)}}^{j_{\varsigma(1)}}(\om_{\varsigma(1)})G^{j_3}(\om_{\varsigma(2)})H_{u_3-u_{\varsigma(2)}}^{j_{\varsigma(2)}}(\om_{\varsigma(2)})\sum_{\substack{k,l\in\mbN_{-1}\\\abs{k-l}\leq1}}\varrho_k(\om_{\varsigma(1)}+\om_{\varsigma(2)})\varrho_l(\om_{\varsigma(4)}).
	\end{split}
\end{equation*}
The diagrams $\PreCocktail{10}$ and $\PreCocktail{20}$ may not exist in themselves, but the summed object $\PreCocktail{50}\defeq\PreCocktail{10}+\PreCocktail{20}$ does. Its iterated integral representation is given by
\begin{equation*}
	\begin{split}
		&\hat{\PreCocktail{50}}(t,\om,j)=
		\begin{tikzpicture}[scale=0.7,baseline=-1.8ex]
			\node at (-0.6,0) [dot] (middle) {}; 
			\node at (-1.2,1.2) [vectorcircle] (aboveleft) {}; 
			\node at (0.6,0) [dot] (right) {};
			\node at (0,-1.2) [rootvar] (below) {}; 
			\node at (0,-1.6) () {\scriptsize $(t,\om)$}; 
			\node at (0.5,-0.8) [] {\scriptsize $j$}; 
			\node at (-1.2,1.6) [] {\scriptsize $(u_1,\om_1,m_1,j_1)$}; 
			\node at (0.85,0.4) [] {\scriptsize $(u_2,\om_2,m_2,j_2)$}; 
			\node at (2.35,-0.15) [] {\scriptsize $(u_2,\om_4,-m_2,j_2)$};
			\node at (-2.3,0) [] {\scriptsize $(u_3,\om_1+\om_2,j_3)$};
			\draw[fill] (0,-1.2) circle (0.02);
			\draw[vectorkernel] (middle) to (aboveleft); 
			\draw[scalarkernel] (middle) to (right); 
			\draw[vectorkernel] (below) to (right); 
			\draw[scalarkernel] (below) to (middle);
		\end{tikzpicture}
		+
		\begin{tikzpicture}[scale=0.7,baseline=-1.8ex]
			\node at (-0.6,0) [dot] (middle) {}; 
			\node at (-1.2,1.2) [vectorcircle] (aboveleft) {}; 
			\node at (0.6,0) [dot] (right) {};
			\node at (0,-1.2) [rootvar] (below) {}; 
			\node at (0,-1.6) () {\scriptsize $(t,\om)$}; 
			\node at (-0.5,-0.8) [] {\scriptsize $j$}; 
			\node at (-1.2,1.6) [] {\scriptsize $(u_1,\om_1,m_1,j_1)$}; 
			\node at (0.85,0.4) [] {\scriptsize $(u_2,\om_2,m_2,j_2)$}; 
			\node at (2.35,-0.15) [] {\scriptsize $(u_2,\om_4,-m_2,j_2)$};
			\node at (-2.3,0) [] {\scriptsize $(u_3,\om_1+\om_2,j_3)$};
			\draw[fill] (0,-1.2) circle (0.02);
			\draw[vectorkernel] (middle) to (aboveleft); 
			\draw[scalarkernel] (middle) to (right); 
			\draw[scalarkernel] (below) to (right); 
			\draw[vectorkernel] (below) to (middle);
		\end{tikzpicture}\\
		%
		&\defeq\sum_{\substack{\om_1,\om_2,\om_4\in\mbZ^{2}\\\om=\om_1+\om_2+\om_4\\(\om_1+\om_2)\sim\om_4}}\sum_{j_1,j_2,j_3=1}^{2}\sum_{m_1,m_2\in\mbZ^{2}}\int_{0}^{t}\dd u_3\int_{0}^{u_3}\dd u_2\int_{0}^{u_3}\dd W^{j_1}(u_1,m_1)\\
		&\quad\hat{\het}(u_1,\om_1-m_1)\hat{\het}(u_2,\om_2-m_2)\hat{\het}(u_2,\om_4+m_2)(G^{j}(\om_4)+G^{j}(\om_1+\om_2))\\
		&\quad\times H^{j_3}_{t-u_3}(\om_1+\om_2)H^{j_2}_{t-u_2}(\om_4)G^{j_3}(\om_1)H^{j_1}_{u_3-u_1}(\om_1)H^{j_2}_{u_3-u_2}(\om_2).
	\end{split}
\end{equation*}

We will show in Lemma~\ref{lem:elliptic_difference} that the resulting factor $G^{j}(\om_4)+G^{j}(\om_1+\om_2)$ has better decay in $\om_4$ than $G^{j}(\om_4)$. This is due to the symmetry of $G^{j}$, which allows us to write $G^{j}(\om_1+\om_2)+G^{j}(\om_4)=G^{j}(\om-\om_4)-G^{j}(-\om_4)$. The improved decay leads to the well-posedness of $\PreCocktail{50}$ and is a higher-dimensional analogue of the product rule discussed in~\eqref{eq:product_rule}.
\begin{remark}
	One could simplify the contraction by using the identity
	\begin{equation}\label{eq:contraction_identity}
		\sum_{m_2\in\mbZ^{2}}\hat{\het}(u_2,\om_2-m_2)\hat{\het}(u_2,\om_4+m_2)=\hat{\het^{2}}(u_2,\om_2+\om_4).
	\end{equation}
	This idea would allow us to derive bounds in terms of $\norm{\het^2}_{C_T\mcH^{2}}$ rather than $\norm{\het}_{C_T\mcH^{2}}^{2}$. However, \eqref{eq:contraction_identity} is no longer applicable in our prelimiting enhancement due to the cut-off $\varphi(\delta m_2)$ (cf.~\eqref{eq:lolli_approximation_example}). Instead we use direct estimates that do not rely on~\eqref{eq:contraction_identity}.
\end{remark}
The remaining diagrams $\PreThree{20}$, $\PreCocktail{30}$ and $\PreCocktail{40}$ are similar to the ones given above.
\begin{details}
	\begin{equation*}
		\begin{split}
			&\hat{\PreThree{20}}(t,\om,j)=
			\begin{tikzpicture}[scale=0.7,baseline=-1.8ex]
				\node at (-0.6,0) [dot] (middle) {}; 
				\node at (-1.2,1.2) [vectorcircle] (aboveleft) {};
				\node at (0,1.2) [vectorcircle] (aboveright) {}; 
				\node at (0.6,0) [vectorcircle] (right) {}; 
				\node at (0,-1.2) [rootvar] (below) {}; 
				\node at (0,-1.6) () {\scriptsize $(t,\om)$};
				\node at (-2.05,1.6) [] {\scriptsize $(u_1,\om_1,m_1,j_1)$}; 
				\node at (0.85,1.6) [] {\scriptsize $(u_2,\om_2,m_2,j_2)$};
				\node at (1.3,0.4) [] {\scriptsize $(u_4,\om_4,m_4,j_4)$};
				\node at (-2.3,0) [] {\scriptsize $(u_3,\om_1+\om_2,j_3)$};
				\node at (-0.5,-0.8) [] {\scriptsize $j$};
				\draw[fill] (0,-1.2) circle (0.02);
				\draw[scalarkernel] (middle) to (aboveleft);
				\draw[vectorkernel] (middle) to (aboveright); 
				\draw[scalarkernel] (below) to (right); 
				\draw[vectorkernel] (below) to (middle);
			\end{tikzpicture}\\
			&\defeq\sum_{\substack{\om_1,\om_2,\om_4\in\mbZ^{2}\\\om=\om_1+\om_2+\om_4}}\sum_{j_1,j_2,j_3,j_4=1}^{2}\sum_{m_1,m_2,m_4\in\mbZ^{2}}\int_{0}^{t}\dd u_3\int_{0}^{t}\dd W^{j_4}(u_4,m_4)\int_{0}^{u_4}\dd W^{j_1}(u_1,m_1)\int_{0}^{u_1}\dd W^{j_2}(u_2,m_2)\\
			&\quad\hat{\het}(u_1,\om_1-m_1)\hat{\het}(u_2,\om_2-m_2)\hat{\het}(u_4,\om_4-m_4)\sum_{\varsigma\in\Sigma(1,2,4)}G^{j}(\om_{\varsigma(1)}+\om_{\varsigma(2)})H^{j_3}_{t-u_3}(\om_{\varsigma(1)}+\om_{\varsigma(2)})\\
			&\quad\times H^{j_{\varsigma(4)}}_{t-u_{\varsigma(4)}}(\om_{\varsigma(4)})H^{j_{\varsigma(1)}}_{u_3-u_{\varsigma(1)}}(\om_{\varsigma(1)})G^{j_3}(\om_{\varsigma(2)})H^{j_{\varsigma(2)}}_{u_3-u_{\varsigma(2)}}(\om_{\varsigma(2)})\sum_{\substack{k,l\in\mbN_{-1}\\\abs{k-l}\leq 1}}\varrho_k(\om_{\varsigma(1)}+\om_{\varsigma(2)})\varrho_l(\om_{\varsigma(4)}),
		\end{split}
	\end{equation*}
	\begin{equation*}
		\begin{split}
			&\hat{\PreCocktail{30}}(t,\om,j)=
			\begin{tikzpicture}[scale=0.7,baseline=-1.8ex]
				\node at (-0.6,0) [dot] (middle) {}; 
				\node at (-1.2,1.2) [vectorcircle] (aboveleft) {}; 
				\node at (0.6,0) [dot] (right) {};
				\node at (0,-1.2) [rootvar] (below) {}; 
				\node at (0,-1.6) () {\scriptsize $(t,\om)$}; 
				\node at (0.5,-0.8) [] {\scriptsize $j$}; 
				\node at (-1.2,1.6) [] {\scriptsize $(u_1,\om_1,m_1,j_1)$}; 
				\node at (0.85,0.4) [] {\scriptsize $(u_2,\om_2,m_2,j_2)$}; 
				\node at (2.35,-0.15) [] {\scriptsize $(u_2,\om_4,-m_2,j_2)$};
				\node at (-2.3,0) [] {\scriptsize $(u_3,\om_1+\om_2,j_3)$};
				\draw[fill] (0,-1.2) circle (0.02);
				\draw[scalarkernel] (middle) to (aboveleft); 
				\draw[vectorkernel] (middle) to (right); 
				\draw[vectorkernel] (below) to (right); 
				\draw[scalarkernel] (below) to (middle);
			\end{tikzpicture}\\
			%
			&\defeq\sum_{\substack{\om_1,\om_2,\om_4\in\mbZ^{2}\\\om=\om_1+\om_2+\om_4\\(\om_1+\om_2)\sim\om_4}}\sum_{j_1,j_2,j_3=1}^{2}\sum_{m_1,m_2\in\mbZ^{2}}\int_{0}^{t}\dd u_3\int_{0}^{u_3}\dd u_2\int_{0}^{u_3}\dd W^{j_1}(u_1,m_1)\\
			&\quad\hat{\het}(u_1,\om_1-m_1)\hat{\het}(u_2,\om_2-m_2)\hat{\het}(u_2,\om_4+m_2)\\
			&\quad\times H_{t-u_3}^{j_3}(\om_1+\om_2)G^{j}(\om_4)H_{t-u_2}^{j_2}(\om_4)H_{u_3-u_1}^{j_1}(\om_1)G^{j_3}(\om_2)H_{u_3-u_2}^{j_2}(\om_2),
		\end{split}
	\end{equation*}
	\begin{equation*}
		\begin{split}
			&\hat{\PreCocktail{40}}(t,\om,j)
			=\begin{tikzpicture}[scale=0.7,baseline=-1.8ex]
				\node at (-0.6,0) [dot] (middle) {}; 
				\node at (-1.2,1.2) [vectorcircle] (aboveleft) {}; 
				\node at (0.6,0) [dot] (right) {};
				\node at (0,-1.2) [rootvar] (below) {}; 
				\node at (0,-1.6) () {\scriptsize $(t,\om)$}; 
				\node at (-0.5,-0.8) [] {\scriptsize $j$}; 
				\node at (-1.2,1.6) [] {\scriptsize $(u_1,\om_1,m_1,j_1)$}; 
				\node at (0.85,0.4) [] {\scriptsize $(u_2,\om_2,m_2,j_2)$}; 
				\node at (2.35,-0.15) [] {\scriptsize $(u_2,\om_4,-m_2,j_2)$};
				\node at (-2.3,0) [] {\scriptsize $(u_3,\om_1+\om_2,j_3)$};
				\draw[fill] (0,-1.2) circle (0.02);
				\draw[scalarkernel] (middle) to (aboveleft); 
				\draw[vectorkernel] (middle) to (right); 
				\draw[scalarkernel] (below) to (right); 
				\draw[vectorkernel] (below) to (middle);
			\end{tikzpicture}\\
			%
			&\defeq\sum_{\substack{\om_1,\om_2,\om_4\in\mbZ^{2}\\\om=\om_1+\om_2+\om_4\\(\om_1+\om_2)\sim\om_4}}\sum_{j_1,j_2,j_3=1}^{2}\sum_{m_1,m_2\in\mbZ^{2}}\int_{0}^{t}\dd u_3\int_{0}^{u_3}\dd u_2\int_{0}^{u_3}\dd W^{j_1}(u_1,m_1)\\
			&\quad\hat{\het}(u_1,\om_1-m_1)\hat{\het}(u_2,\om_2-m_2)\hat{\het}(u_2,\om_4+m_2)\\
			&\quad\times G^{j}(\om_1+\om_2)H^{j_3}_{t-u_3}(\om_1+\om_2)H^{j_2}_{t-u_2}(\om_4)H^{j_1}_{u_3-u_1}(\om_1)G^{j_3}(\om_2)H^{j_2}_{u_3-u_2}(\om_2).
		\end{split}
	\end{equation*}
\end{details}

We extend our graphical rules to incorporate the operator $\partial_{l}\mcI$ for $l=1,2$. An indexed black arrow pointing at a scalar object
$\!\!\begin{tikzpicture}[scale=0.7,baseline=0ex]
	\node at (0,0) [] (middle){};
	\node at (1.2,0) [] (right){};
	\node at (1.75,0)[]{\scriptsize $(u_k,\om_k)$};
	\node at (-0.3,0) [] {\scriptsize $(t,\om)$};
	\node at (0.55,0.3) [] {\scriptsize $l$}; 
	\draw[scalarkernel] (middle) to (right); 
\end{tikzpicture}\!\!$
is associated to the multiplier $H^{l}_{t-u_k}(\om_k)$. On the other hand, a doubly-indexed, highlighted arrow pointing at a scalar object
$\!\!\begin{tikzpicture}[scale=0.7,baseline=0.6ex]
	\node at (0,0) [] (middle){};
	\node at (1.5,0) [] (right){};
	\node at (2.05,0)[]{\scriptsize $(u_k,\om_k)$};
	\node at (-0.3,0) [] {\scriptsize $(t,\om)$};
	\node at (0.65,0.3) [] {\scriptsize $l,j$}; 
	\draw[vectorkernel] (middle) to (right); 
\end{tikzpicture}\!\!$
is associated to $G^{j}(\om_k)H^{l}_{t-u_k}(\om_k)$.

The remaining object in the enhancement is $\tc_{k,j}$ for $k,j=1,2$. We consider the Wiener chaos decomposition
\begin{equation*}
	\tc_{k,j}=\Checkmark{10}(k,j)+\Checkmark{20}(k,j)+(\Triangle{1}(k,j)+\Triangle{2}(k,j)).
\end{equation*}
The first term is given by
\begin{equation*}
	\begin{split}
		&\hat{\Checkmark{10}}(t,\om,k,j)=
		\begin{tikzpicture}[scale=0.7,baseline=-1.8ex]
			\node at (-0.6,0) [dot] (middle) {}; 
			\node at (-1.2,1.2) [vectorcircle] (aboveleft) {};
			\node at (0.6,0) [vectorcircle] (right) {}; 
			\node at (0,-1.2) [rootvar] (below) {}; 
			\node at (0,-1.6) () {\scriptsize $(t,\om)$};
			\node at (-1.2,1.6) [] {\scriptsize $(u_1,\om_1,m_1,j_1)$}; 
			\node at (0.9,0.4) [] {\scriptsize $(u_2,\om_2,m_2,j_2)$};
			\node at (0.5,-0.8) [] {\scriptsize $j$};
			\node at (-0.5,-0.8) [] {\scriptsize $k$};
			\node at (-1.5,0) [] {\scriptsize $(u_3,\om_1)$};
			\draw[fill] (0,-1.2) circle (0.02);
			\draw[scalarkernel] (middle) to (aboveleft);
			\draw[vectorkernel] (below) to (right); 
			\draw[scalarkernel] (below) to (middle); 
		\end{tikzpicture}\\
		&\defeq\sum_{\substack{\om_1,\om_2\in\mbZ^{2}\\\om=\om_1+\om_2\\\om_1\sim\om_2}}\sum_{j_1,j_2=1}^{2}\sum_{m_1,m_2\in\mbZ^{2}}\int_{0}^{t}\dd u_3\int_{0}^{t}\dd W^{j_1}(u_1,m_1)\int_{0}^{u_1}\dd W^{j_2}(u_2,m_2)\\
		&\quad\hat{\het}(u_1,\om_1-m_1)\hat{\het}(u_2,\om_2-m_2)\sum_{\varsigma\in\Sigma(1,2)}H_{t-u_3}^{k}(\om_{\varsigma(1)})H_{u_3-u_{\varsigma(1)}}^{j_{\varsigma(1)}}(\om_{\varsigma(1)})G^{j}(\om_{\varsigma(2)})H_{t-u_{\varsigma(2)}}^{j_{\varsigma(2)}}(\om_{\varsigma(2)})
	\end{split}
\end{equation*}
and the second term $\Checkmark{20}$ is again similar.
\begin{details}
	\begin{equation*}
		\begin{split}
			&\hat{\Checkmark{20}}(t,\om,k,j)=
			\begin{tikzpicture}[scale=0.7,baseline=-1.8ex]
				\node at (-0.6,0) [dot] (middle) {}; 
				\node at (-1.2,1.2) [vectorcircle] (aboveleft) {};
				\node at (0.6,0) [vectorcircle] (right) {}; 
				\node at (0,-1.2) [rootvar] (below) {}; 
				\node at (0,-1.6) () {\scriptsize $(t,\om)$};
				\node at (-1.2,1.6) [] {\scriptsize $(u_1,\om_1,m_1,j_1)$}; 
				\node at (0.9,0.4) [] {\scriptsize $(u_2,\om_2,m_2,j_2)$};
				\node at (-0.8,-0.8) [] {\scriptsize $k,j$};
				\node at (-1.5,0) [] {\scriptsize $(u_3,\om_1)$};
				\draw[fill] (0,-1.2) circle (0.02);
				\draw[scalarkernel] (middle) to (aboveleft);
				\draw[scalarkernel] (below) to (right); 
				\draw[vectorkernel] (below) to (middle); 
			\end{tikzpicture}\\
			&\defeq\sum_{\substack{\om_1,\om_2\in\mbZ^{2}\\\om=\om_1+\om_2\\\om_1\sim\om_2}}\sum_{j_1,j_2=1}^{2}\sum_{m_1,m_2\in\mbZ^{2}}\int_{0}^{t}\dd u_3\int_{0}^{t}\dd W^{j_1}(u_1,m_1)\int_{0}^{u_1}\dd W^{j_2}(u_2,m_2)\\
			&\quad\hat{\het}(u_1,\om_1-m_1)\hat{\het}(u_2,\om_2-m_2)\sum_{\varsigma\in\Sigma(1,2)}H^k_{t-u_3}(\om_{\varsigma(1)})G^j(\om_{\varsigma(1)})H^{j_{\varsigma(1)}}_{u_3-u_{\varsigma(1)}}(\om_{\varsigma(1)})H^{j_{\varsigma(2)}}_{t-u_{\varsigma(2)}}(\om_{\varsigma(2)}).
		\end{split}
	\end{equation*}
\end{details}
We consider the contractions as a summed object $\Triangle{3}\defeq\Triangle{1}+\Triangle{2}$. We obtain
\begin{equation*}
	\begin{split}
		&\hat{\Triangle{3}}(t,\om,k,j)=
		\begin{tikzpicture}[scale=0.7,baseline=-1.8ex]
			\node at (-0.6,0) [dot] (middle) {}; 
			\node at (0.6,0) [dot] (right) {}; 
			\node at (0,-1.2) [rootvar] (below) {}; 
			\node at (0,-1.6) () {\scriptsize $(t,\om)$};
			\node at (1.95,0.4) [] {\scriptsize $(u_2,\om_1,-m_2,j_2)$}; 
			\node at (2.2,-0.15) [] {\scriptsize $(u_2,\om_2,m_2,j_2)$};
			\node at (0.5,-0.8) [] {\scriptsize $j$};
			\node at (-1.05,0.4) [] {\scriptsize $(u_3,\om_1)$}; 
			\node at (-0.5,-0.8) [] {\scriptsize $k$};
			\draw[fill] (0,-1.2) circle (0.02);
			\draw[scalarkernel] (middle) to (right); 
			\draw[vectorkernel] (below) to (right); 
			\draw[scalarkernel] (below) to (middle);
		\end{tikzpicture}
		+
		\begin{tikzpicture}[scale=0.7,baseline=-1.8ex]
			\node at (-0.6,0) [dot] (middle) {}; 
			\node at (0.6,0) [dot] (right) {}; 
			\node at (0,-1.2) [rootvar] (below) {}; 
			\node at (0,-1.6) () {\scriptsize $(t,\om)$};
			\node at (1.95,0.4) [] {\scriptsize $(u_2,\om_1,-m_2,j_2)$}; 
			\node at (2.2,-0.15) [] {\scriptsize $(u_2,\om_2,m_2,j_2)$};
			\node at (-0.8,-0.8) [] {\scriptsize $k,j$};
			\node at (-1.05,0.4) [] {\scriptsize $(u_3,\om_1)$}; 
			\draw[fill] (0,-1.2) circle (0.02);
			\draw[scalarkernel] (middle) to (right);
			\draw[scalarkernel] (below) to (right); 
			\draw[vectorkernel] (below) to (middle); 
		\end{tikzpicture}\\
		&\defeq\sum_{\substack{\om_1,\om_2\in\mbZ^{2}\\\om=\om_1+\om_2\\\om_1\sim\om_2}}\sum_{j_2=1}^{2}\sum_{m_2\in\mbZ^{2}}\int_{0}^{t}\dd u_3\int_{0}^{u_3}\dd u_2\hat{\het}(u_2,\om_1+m_2)\hat{\het}(u_2,\om_2-m_2)(G^{j}(\om_2)+G^j(\om_1))\\[-20pt]
		&\multiquad[16]\times H_{t-u_3}^{k}(\om_1)H_{u_3-u_2}^{j_2}(\om_1)H_{t-u_2}^{j_2}(\om_2).
	\end{split}
\end{equation*}

We can define approximate diagrams as in~\eqref{eq:lolli_approximation_example} by multiplying the cut-off $\varphi(\delta m_k)$ (cf.~\eqref{eq:def_cut_off}) to each instance of the noise
$\begin{tikzpicture}[scale=0.7,baseline=-0.6ex]
	\node at (0,0) [vectorcircle] (middle) {};
	\node at (1.61,0)[]{\scriptsize $(u_k,\om_k,m_k,j_k)$};
\end{tikzpicture}\!\!$.
\begin{details}
	For instance,
	\begin{equation*}
		\begin{split}
			&\hat{\ty^{\delta}}(t,\om)\\
			&\defeq\sum_{\substack{\om_1,\om_2\in\mbZ^{2}\\\om=\om_1+\om_2}}\sum_{j_1,j_2,j_3=1}^{2}\sum_{m_1,m_2\in\mbZ^{2}}\int_{0}^{t}\dd u_3\int_{0}^{u_3}\dd W^{j_2}(u_2,m_2)\int_{0}^{u_2}\dd W^{j_1}(u_1,m_1)\varphi(\delta m_1)\varphi(\delta m_2)\\
			&\quad\hat{\het}(u_{1},\om_{1}-m_{1})\hat{\het}(u_{2},\om_{2}-m_{2})\sum_{\varsigma\in\Sigma(1,2)}H^{j_3}_{t-u_3}(\om_{\varsigma(1)}+\om_{\varsigma(2)})H^{j_{\varsigma(1)}}_{u_3-u_{\varsigma(1)}}(\om_{\varsigma(1)})G^{j_3}(\om_{\varsigma(2)})H^{j_{\varsigma(2)}}_{u_3-u_{\varsigma(2)}}(\om_{\varsigma(2)}).
		\end{split}
	\end{equation*}
\end{details}
In general, we denote the regularization of a diagram by a superscript $\delta$. The canonical enhancement $\mbX^{\delta}_{\can}=(\ti^{\delta},\ty^{\delta}_{\can},\tp^{\delta}_{\can},\tc^{\delta})$ is then built from regularized noise terms, but retains the diverging sequences that are removed in the renormalised enhancement $\mbX$. Repeating~\eqref{eq:diagram_Ito}, we may consider the decomposition of the diagram with cut-off,
\begin{equation*}
	\ty^{\delta}_{\can}=\vdiv\mcI[\ti^{\delta}\nabla\Phi_{\ti^{\delta}}]=\ty^{\delta}+\tl^{\delta},
\end{equation*}
where $\tl^{\delta}=\mbE[\vdiv\mcI[\ti^{\delta}\nabla\Phi_{\ti^{\delta}}]]$. In addition to $\ty^{\delta}$, we also have to control the mean,
\begin{equation*}
	\begin{split}
		&\hat{\tl^{\delta}}(t,\om)=
		\begin{tikzpicture}[scale=0.7,baseline=-4.2ex]
			\node at (0,0) [dot] (middle) {}; 
			\node at (0,1.2) [dot] (above) {}; 
			\node at (0,-1.2) [root] (below) {}; 
			\node at (0,-1.6) () {\scriptsize $(t,\om)$};
			\node at (-1.44,1.6) [] {\scriptsize $(u_1,\om_1,m_1,j_1)$}; 
			\node at (1.61,1.6) [] {\scriptsize $(u_1,\om_2,-m_1,j_1)$};
			\node at (1.3,0) {\scriptsize $(u_3,\om,j_3)$}; 
			\draw[vectorkernel, bend right=60] (middle) to (above); 
			\draw[scalarkernel, bend left=60] (middle) to (above);
			\draw[scalarkernel] (below) to (middle); 
		\end{tikzpicture}\\
		&\defeq\sum_{\substack{\om_1,\om_2\in\mbZ^{2}\\\om=\om_1+\om_2}}\sum_{j_1,j_3=1}^{2}\sum_{m_1\in\mbZ^{2}}\int_{0}^{t}\dd u_3\int_{0}^{u_3}\dd u_1\hat{\het}(u_1,\om_1-m_1)\hat{\het}(u_1,\om_2+m_1)\abs{\varphi(\delta m_1)}^{2}\\[-16pt]
		&\multiquad[17]\times H^{j_3}_{t-u_3}(\om)H^{j_1}_{u_3-u_1}(\om_1)G^{j_3}(\om_2)H^{j_1}_{u_3-u_1}(\om_2).
	\end{split}
\end{equation*}
Including $\tl^{\delta}$ in $\ty^{\delta}_{\can}$ generates additional terms in the decomposition of $\tp^{\delta}_{\can}$,
\begin{equation*}
	\tp^{\delta}_{\can}=\tp^{\delta}+{\PreThreeloop{10}\!}^{\delta}+{\PreThreeloop{20}\!}^{\delta}.
\end{equation*}
The diagram ${\PreThreeloop{10}\!}^{\delta}$ is given by
\begin{equation*}
	\begin{split}
		&\hat{{\PreThreeloop{10}\!}^{\delta}}(t,\om,j)= 
		\begin{tikzpicture}[scale=0.7,baseline=-1.8ex]
			\node at (-0.6,0) [dot] (middle) {}; 
			\node at (-0.6,1.2) [dot] (abovemiddle) {};
			\node at (0.6,0) [vectorcircle] (right) {}; 
			\node at (0,-1.2) [rootvar] (below) {}; 
			\node at (0,-1.6) () {\scriptsize $(t,\om)$};
			\node at (-2.04,1.6) [] {\scriptsize $(u_1,\om_1,m_1,j_1)$}; 
			\node at (1.01,1.6) [] {\scriptsize $(u_1,\om_2,-m_1,j_1)$};
			\node at (1.4,0.4) [] {\scriptsize $(u_4,\om_4,m_4,j_4)$};
			\node at (-2.5,0) [] {\scriptsize $(u_3,\om_1+\om_2,j_3)$};
			\node at (0.5,-0.8) [] {\scriptsize $j$};
			\draw[fill] (0,-1.2) circle (0.02);
			\draw[scalarkernel, bend left=60] (middle) to (abovemiddle);
			\draw[vectorkernel, bend right=60] (middle) to (abovemiddle); 
			\draw[vectorkernel] (below) to (right); 
			\draw[scalarkernel] (below) to (middle);
		\end{tikzpicture}\\
		&\defeq\sum_{\substack{\om_1,\om_2,\om_4\in\mbZ^{2}\\\om=\om_1+\om_2+\om_4\\(\om_1+\om_2)\sim\om_4}}\sum_{j_1,j_3,j_4=1}^{2}\sum_{m_1,m_4\in\mbZ^{2}}\int_{0}^{t}\dd W^{j_4}(u_4,m_4)\int_{0}^{t}\dd u_3\int_{0}^{u_3}\dd u_1\\
		&\quad\hat{\het}(u_1,\om_1-m_1)\hat{\het}(u_1,\om_2+m_1)\hat{\het}(u_4,\om_4-m_4)\abs{\varphi(\delta m_1)}^{2}\varphi(\delta m_4)\\
		&\quad\times H_{t-u_3}^{j_3}(\om_1+\om_2)G^{j}(\om_4)H_{t-u_4}^{j_4}(\om_4)H_{u_3-u_1}^{j_1}(\om_1)G^{j_3}(\om_2)H_{u_3-u_1}^{j_1}(\om_2)
	\end{split}
\end{equation*}
and ${\PreThreeloop{20}\!}^{\delta}$ is again similar. Here, we have implicitly changed our graphical rules to include the cut-off.
\begin{details}
	\begin{equation*}
		\begin{split}
			&\hat{{\PreThreeloop{20}\!}^{\delta}}(t,\om,j)=
			\begin{tikzpicture}[scale=0.7,baseline=-1.8ex]
				\node at (-0.6,0) [dot] (middle) {}; 
				\node at (-0.6,1.2) [dot] (abovemiddle) {};
				\node at (0.6,0) [vectorcircle] (right) {}; 
				\node at (0,-1.2) [rootvar] (below) {}; 
				\node at (0,-1.6) () {\scriptsize $(t,\om)$};
				\node at (-2.04,1.6) [] {\scriptsize $(u_1,\om_1,m_1,j_1)$}; 
				\node at (1.01,1.6) [] {\scriptsize $(u_1,\om_2,-m_1,j_1)$};
				\node at (1.4,0.4) [] {\scriptsize $(u_4,\om_4,m_4,j_4)$};
				\node at (-2.5,0) [] {\scriptsize $(u_3,\om_1+\om_2,j_3)$};
				\node at (-0.5,-0.8) [] {\scriptsize $j$};
				\draw[fill] (0,-1.2) circle (0.02);
				\draw[scalarkernel, bend left=60] (middle) to (abovemiddle);
				\draw[vectorkernel, bend right=60] (middle) to (abovemiddle); 
				\draw[scalarkernel] (below) to (right); 
				\draw[vectorkernel] (below) to (middle);
			\end{tikzpicture}\\
			&\defeq\sum_{\substack{\om_1,\om_2,\om_4\in\mbZ^{2}\\\om=\om_1+\om_2+\om_4\\(\om_1+\om_2)\sim\om_4}}\sum_{j_1,j_3,j_4=1}^{2}\sum_{m_1,m_4\in\mbZ^{2}}\int_{0}^{t}\dd W^{j_4}(u_4,m_4)\int_{0}^{t}\dd u_3\int_{0}^{u_3}\dd u_1\\
			&\quad\hat{\het}(u_1,\om_1-m_1)\hat{\het}(u_1,\om_2+m_1)\hat{\het}(u_4,\om_4-m_4)\abs{\varphi(\delta m_1)}^{2}\varphi(\delta m_4)\\
			&\quad\times G^{j}(\om_1+\om_2)H_{t-u_3}^{j_3}(\om_1+\om_2)H_{t-u_4}^{j_4}(\om_4)H_{u_3-u_1}^{j_1}(\om_1)G^{j_3}(\om_2)H_{u_3-u_1}^{j_1}(\om_2).
		\end{split}
	\end{equation*}
\end{details}
\subsection{Existence and Regularity of Stochastic Objects}
In this subsection we define the notion of an iterated It\^{o} integral (Definition~\ref{def:iterated_Ito}), introduce It\^{o}'s isometry (Lemma~\ref{lem:Ito_isometry}) and Nelson's estimate (Lemma~\ref{lem:Nelson_estimate}). We further define an iterated It\^{o} integral with mollification and heterogeneity (Definition~\ref{def:iterated_Ito_mollification_heterogeneity}), discuss in Lemma~\ref{lem:isometry_argument} how those can be controlled by passing to real space and then introduce an interpolation argument to show that mollified iterated It\^{o} integrals converge to their un-mollified counterparts (Lemma~\ref{lem:isometry_interpolation}). Finally we derive a general criterion of existence for stochastic objects taking values in Besov spaces (Lemma~\ref{lem:existence_criterion}). See also~\cite{mourrat_weber_xu_16,gubinelli_perkowski_17} for different instances of the same arguments.

Let $n\in\mbN$, $D\subset(0,\infty)^{n}$ and $\phi\in L^2(D\times\mbT^{2n}\times\{1,2\}^{n};\mbC)$. We define the spatial Fourier transform of $\phi$ by
\begin{equation*}
	\hat{\phi}(u_1,\om_1,j_1,\ldots,u_n,\om_n,j_n)\defeq\int_{(\mbT^2)^n}\euler^{-2\uppi\upi(\inner{\om_1}{x_1}+\ldots+\inner{\om_n}{x_n})}\phi(u_1,x_1,j_1,\ldots,u_n,x_n,j_n)\dd x_1\ldots\dd x_n.
\end{equation*}
We can now define iterated It\^{o} integrals.
\begin{definition}[Iterated It\^{o} integral]\label{def:iterated_Ito}
	Let $n\in\mbN$ and let 
	\begin{equation*}
		(0,\infty)^{n}_{>}\defeq\{(u_{1},\ldots,u_{n})\in(0,\infty)^{n}:u_{1}>u_{2}>\ldots>u_{n}\}.
	\end{equation*}
	We define the iterated It\^{o} integral acting on $\phi\in L^2((0,\infty)^n_{>}\times\mbT^{2n}\times\{1,2\}^{n};\mbC)$ by
	\begin{equation*}
		\begin{split}
			I^{n}(\phi)&\defeq\sum_{\om_1,\ldots,\om_n\in\mbZ^2}\sum_{j_1,\ldots,j_n=1}^{2}\int_{0}^{\infty}\dd W^{j_1}(u_1,\om_1)\ldots\int_{0}^{u_{n-1}}\dd W^{j_n}(u_n,\om_n)\hat{\phi}(u_1,-\om_1,j_1,\ldots,u_n,-\om_n,j_n).
		\end{split}
	\end{equation*}
\end{definition}
We can identify second moments of iterated It\^{o} integrals by a version of It\^{o}'s isometry.
\begin{lemma}[It\^{o}'s isometry]\label{lem:Ito_isometry}
	Let $n\in\mbN$ and $\phi\in L^2((0,\infty)^n_{>}\times\mbT^{2n}\times\{1,2\}^{n};\mbC)$, then
	\begin{equation*}
		\begin{split}
			\mbE[\abs{I^{n}(\phi)}^{2}]&=\sum_{\om_1,\ldots,\om_n\in\mbZ^2}\sum_{j_1,\ldots,j_n=1}^{2}\int_{0}^{\infty}\dd u_{1}\ldots\int_{0}^{u_{n-1}}\dd u_{n}\abs{\hat{\phi}(u_1,-\om_1,j_1,\ldots,u_n,-\om_n,j_n)}^{2}\\
			&=\norm{\phi}_{L^2((0,\infty)^n_{>}\times\mbT^{2n}\times\{1,2\}^{n};\mbC)}^{2}.
		\end{split}
	\end{equation*}
\end{lemma}
\begin{proof}
	The claim follows by It\^{o}'s isometry, applied inductively to each stochastic integral.
\end{proof}
The following result, Nelson's estimate, allows us to bound $p\textnormal{th}$-moments of iterated It\^{o} integrals by their second moments.
\begin{lemma}[Nelson's estimate]\label{lem:Nelson_estimate}
	Let $n\in\mbN$ and $p\in[2,\infty)$. Then there exists a $C=C(n,p)>0$ such that for any $\phi\in L^2((0,\infty)^{n}_{>}\times\mbT^{2n}\times\{1,2\}^{n};\mbC)$,
	\begin{equation*}
		\mbE[\abs{I^n(\phi)}^p]^{1/p}\leq C \mbE[\abs{I^n(\phi)}^2]^{1/2}.
	\end{equation*}
\end{lemma}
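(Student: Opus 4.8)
The plan is to invoke the general hypercontractivity principle for elements of a fixed Wiener chaos. First I would observe that $I^n(\phi)$ is, after symmetrisation, an element of the $n$-th homogeneous Wiener chaos $\mathcal{H}_n$ associated to the Gaussian family $(W^j(\cdot,m))_{m,j}$ — more precisely, writing $\phi$ as a kernel on the simplex and extending it symmetrically to all of $(0,\infty)^n \times \mathbb{T}^{2n}\times\{1,2\}^n$, the iterated It\^o integral coincides (up to the combinatorial factor $n!$) with the $n$-th multiple Wiener--It\^o integral of the symmetrised kernel. This places $I^n(\phi)$ in the closed linear span of Hermite polynomials of degree $n$ in the underlying (real) Gaussian variables.

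Next I would apply Nelson's hypercontractive inequality (equivalently, the Gaussian hypercontractivity of the Ornstein--Uhlenbeck semigroup due to Nelson, see also \cite{hairer_16,mourrat_weber_xu_16}): for a random variable $X$ lying in the $n$-th Wiener chaos and any $p\in[2,\infty)$ one has
\begin{equation*}
	\mbE(\abs{X}^p)^{1/p} \leq (p-1)^{n/2}\,\mbE(\abs{X}^2)^{1/2}.
\end{equation*}
Taking $C \coloneqq (p-1)^{n/2}$ (which depends only on $n$ and $p$, as claimed) and $X = I^n(\phi)$ immediately yields the stated bound. The only genuine content is therefore the verification that $I^n(\phi)$ does belong to a single chaos of order $n$; this is standard once one notes that the integrand at each level is adapted (the domain is the ordered simplex $(0,\infty)^n_{>}$) so the iterated integral is well-defined, and that multiple Wiener--It\^o integrals of kernels supported on the simplex are exactly the homogeneous chaos elements.

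The main obstacle — such as it is — is purely bookkeeping: one must be careful that the \emph{complex} Brownian motions $W^j(\cdot,m)$ with the Hermitean constraint $\overline{W^j(\cdot,m)} = W^j(\cdot,-m)$ are handled correctly, i.e.\ that the real and imaginary parts form a genuine (real) Gaussian family to which hypercontractivity applies. This is precisely the construction recalled in the footnote after \eqref{eq:definition_white_noise}, so one passes to the real Brownian motions $B^j(m),\tilde B^j(m)$, expresses $I^n(\phi)$ as a (finite-degree) polynomial chaos element in these, and applies Nelson's theorem there. No delicate estimate is needed beyond this identification; everything else is a direct citation.
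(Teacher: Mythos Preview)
Your proposal is correct. The paper's visible proof is simply a citation to \cite{nualart_06,mourrat_weber_xu_16}, and your hypercontractivity argument is exactly the standard route found in those references, so in that sense you match the paper.

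That said, the paper also carries (in a conditionally-compiled \texttt{details} block) a self-contained proof that takes a genuinely different path: rather than invoking Wiener chaos hypercontractivity, it proceeds by induction on $n$ using the complex Burkholder--Davis--Gundy inequality. One first shows $t\mapsto I^n(\phi,t)$ is a martingale, computes its quadratic variation as an integral of $|I^{n-1}(\phi(u_1,\om_1,j_1,\cdot))|^2$, applies BDG to get $\mbE(|I^n|^p)^{2/p}\lesssim \mbE([I^n,\overline{I^n}]_t^{p/2})^{2/p}$, then uses Minkowski (for $p\geq 2$) to pull the $L^{p/2}$-norm inside the $u_1$-integral and invokes the induction hypothesis on the inner $(n-1)$-fold integral. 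Your approach is cleaner and gives the explicit constant $(p-1)^{n/2}$, but requires the Wiener chaos machinery and the reduction to real Gaussians that you flag; the paper's BDG route is more elementary in that it stays entirely within stochastic calculus and handles the complex Brownian motions directly, at the cost of an unspecified constant and a longer argument.
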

\begin{proof}
	For a proof, see~\cite{nualart_06,mourrat_weber_xu_16}.
\end{proof}
\begin{details}
	We prove Lemma~\ref{lem:Nelson_estimate} as in~\cite{mourrat_weber_xu_16}. We first establish the Burkholder--Davis--Gundy inequality for complex, continuous, local martingales.
	\begin{lemma}\label{lem:complex_Burkholder_Davis_Gundy_inequality}
		Let $p\in[1,\infty)$, then there exists some $C>0$ such that for any $T>0$ and complex, continuous, local martingale $M$ started from $0$,
		\begin{equation*}
			\mbE[\sup_{0\leq t\leq T}\lvert M_t\rvert^p]\leq C\mbE[[M,\overline{M}]_T^{p/2}].
		\end{equation*}
	\end{lemma}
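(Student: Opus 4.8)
The plan is to reduce to the classical (real-valued) Burkholder--Davis--Gundy inequality by splitting $M$ into real and imaginary parts. Write $M = A + \upi B$ with $A \coloneqq \Re M$ and $B \coloneqq \Im M$; since $M$ is a continuous local martingale started from $0$, so are $A$ and $B$, and $A_0 = B_0 = 0$.

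First I would record the effect of this decomposition on the bracket. The quadratic covariation is bilinear (this is the convention under which $[M,\overline M]$ is the natural object) and symmetric, so expanding $[A+\upi B, A - \upi B]$ and using $[A,B] = [B,A]$ gives $[M,\overline M] = [A] + [B]$, a real, non-decreasing process. In particular $[A]_T \leq [M,\overline M]_T$ and $[B]_T \leq [M,\overline M]_T$, and since $x \mapsto x^{p/2}$ is non-decreasing on $[0,\infty)$ the same inequalities hold after raising to the power $p/2$. Next, for the supremum: from $\lvert M_t\rvert = (A_t^2 + B_t^2)^{1/2} \leq \lvert A_t\rvert + \lvert B_t\rvert$ and the elementary bound $(a+b)^p \leq 2^{p-1}(a^p + b^p)$ for $a,b\geq 0$ and $p\geq 1$ (convexity of $x\mapsto x^p$), one obtains $\sup_{0\leq t\leq T}\lvert M_t\rvert^p \leq 2^{p-1}\big(\sup_{0\leq t\leq T}\lvert A_t\rvert^p + \sup_{0\leq t\leq T}\lvert B_t\rvert^p\big)$.

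Finally I would take expectations and apply the classical BDG inequality, with some constant $C_p>0$, to the real continuous local martingales $A$ and $B$ separately, together with the bracket comparison from the previous step:
\begin{equation*}
	\mbE\big(\sup_{0\leq t\leq T}\lvert M_t\rvert^p\big) \leq 2^{p-1} C_p\big(\mbE([A]_T^{p/2}) + \mbE([B]_T^{p/2})\big) \leq 2^p C_p\,\mbE([M,\overline M]_T^{p/2}),
\end{equation*}
which is the assertion with $C \coloneqq 2^p C_p$. There is no genuine obstacle here: the only points requiring a moment's care are (i) keeping the bracket conventions consistent so that $[M,\overline M] = [A]+[B]$ is non-negative and non-decreasing, and (ii) noting that the classical BDG inequality already applies to continuous local martingales, so no additional localisation argument beyond that built into its statement is needed.
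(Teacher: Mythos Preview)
Your proposal is correct. Both you and the paper reduce to the real case by writing $M=A+\upi B$ and using $[M,\overline M]=[A]+[B]$; the only difference is that the paper invokes the two-dimensional Burkholder--Davis--Gundy inequality directly on the $\mbR^2$-valued martingale $(A,B)$ (so that $\mbE(\sup_t(\lvert A_t\rvert^2+\lvert B_t\rvert^2)^{p/2})\leq C\,\mbE(([A]_T+[B]_T)^{p/2})$ in one step), whereas you apply the scalar BDG inequality componentwise and glue the pieces together via $\lvert M_t\rvert\leq\lvert A_t\rvert+\lvert B_t\rvert$ and convexity. Your route is slightly more elementary since it avoids the vector-valued statement, at the cost of an extra factor $2^p$ in the constant; either way the argument goes through without difficulty.
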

	\begin{proof}
		We decompose $M=\Re M+\upi\Im M$ and write
		\begin{equation*}
			\mbE[\sup_{0\leq t\leq T}\lvert M_t\rvert^p]=\mbE[\sup_{0\leq t\leq T}(\lvert\Re M_t\rvert^2+\lvert\Im M_t\rvert^2)^{p/2}].
		\end{equation*}
		By the two-dimensional Burkholder--Davis--Gundy inequality~\cite[Thm.~14.6]{friz_victoir_10}, there exists some $C=C(p) >0$ such that
		\begin{equation*}
			\mbE[\sup_{0\leq t\leq T}(\lvert\Re M_t\rvert^2+\lvert\Im M_t\rvert^2)^{p/2}]\leq C\mbE[([\Re M]_T+[\Im M]_T)^{p/2}]=C\mbE[[M]_T^{p/2}].
		\end{equation*}
	\end{proof}
	We can now prove Nelson's estimate.
	\begin{proof}[Proof of Lemma~\ref{lem:Nelson_estimate}]
		We define 
		\begin{equation*}
			\begin{split}
				I^{n}(\phi,t)&\defeq\sum_{\om_1,\ldots,\om_n\in\mbZ^2}\sum_{j_1,\ldots,j_n=1}^{2}\int_{0}^{t}\dd W^{j_1}(u_1,\om_1)\int_{0}^{u_1}\dd W^{j_2}(u_2,\om_2)\ldots\int_{0}^{u_{n-1}}\dd W^{j_n}(u_n,\om_n)\\
				&\quad\hat{\phi}(u_1,-\om_1,j_1,\ldots,u_n,-\om_n,j_n),
			\end{split}
		\end{equation*}
		and claim that this is a martingale in $t$. Let $\delta>0$. Indeed the claim is clear for the truncation
		\begin{equation*}
			\begin{split}
				I^{n}_{\delta}(\phi,t)&\defeq\sum_{\substack{\om_1,\ldots,\om_n\in\mbZ^{2}\\\abs{\om_1}\leq \delta^{-1},\ldots,\abs{\om_n}\leq \delta^{-1}}}\sum_{j_1,\ldots,j_n=1}^{2}\int_{0}^{t}\dd W^{j_1}(u_1,\om_1)\int_{0}^{u_1}\dd W^{j_2}(u_2,\om_2)\ldots\int_{0}^{u_{n-1}}\dd W^{j_n}(u_n,\om_n)\\
				&\quad\hat{\phi}(u_1,-\om_1,j_1,\ldots,u_n,-\om_n,j_n),
			\end{split}
		\end{equation*}
		and it suffices to show $I^{n}_{\delta}(\phi,t)\to I^{n}(\phi,t)$ in $L^2(\mbP)$ as $\delta\to0$. We apply It\^{o}'s isometry,
		\begin{equation*}
			\begin{split}
				&\mbE[(I^{n}(\phi,t)-I^{n}_{\delta}(\phi,t))\overline{(I^{n}(\phi,t)-I^{n}_{\delta}(\phi,t))}]\\
				&=\sum_{\om_1,\ldots,\om_n\in\mbZ^2}\sum_{j_1,\ldots,j_n=1}^{2}\int_{0}^{t}\dd u_1\int_{0}^{u_1}\dd u_2\ldots\int_{0}^{u_{n-1}}\dd u_n\abs{\hat{\phi}(u_1,-\om_1,j_1,\ldots,u_n,-\om_n,j_n)}^2\\
				&\quad-\sum_{\substack{\om_1,\ldots,\om_n\in\mbZ^{2}\\\abs{\om_1}\leq \delta^{-1},\ldots,\abs{\om_n}\leq \delta^{-1}}}\sum_{j_1,\ldots,j_n=1}^{2}\int_{0}^{t}\dd u_1\int_{0}^{u_1}\dd u_2\ldots\int_{0}^{u_{n-1}}\dd u_n\abs{\hat{\phi}(u_1,-\om_1,j_1,\ldots,u_n,-\om_n,j_n)}^2.
			\end{split}
		\end{equation*}
		The sum on the RHS vanishes as $\delta\to0$. Hence $I^{n}_{\delta}(\phi,t)\to I^{n}(\phi,t)$ in $L^2(\mbP)$, which implies that $I^{n}(\phi,t)$ is a martingale.
		
		We can now inductively apply the complex Burkholder--Davis--Gundy inequality, Lemma~\ref{lem:complex_Burkholder_Davis_Gundy_inequality}. Let $n=1$. We compute the quadratic variation $[I^{1}(\phi),\overline{I^{1}(\phi)}]_t$,
		\begin{equation*}
			[I^{1}(\phi),\overline{I^{1}(\phi)}]_t=\sum_{\om_1\in\mbZ^{2}}\sum_{j_1=1}^{2}\int_{0}^{t}\dd u_1\abs{\hat{\phi}(u_1,-\om_1,j_1)}^{2}.
		\end{equation*}
		We apply Lemma~\ref{lem:complex_Burkholder_Davis_Gundy_inequality} to estimate for any $p\in[1,\infty)$,
		\begin{equation*}
			\mbE[\abs{I^{1}(\phi,t)}^p]\leq C\Bigl(\sum_{\om_1\in\mbZ^{2}}\sum_{j_1=1}^{2}\int_{0}^{t}\abs{\phi(u_1,-\om_1,j_1)}^{2}\dd u_1\Bigr)^{p/2}=C \mbE[\abs{I^{1}(\phi,t)}^2]^{p/2}.
		\end{equation*}
		Assume the claim is true for iterated integrals of order up to $n-1$. We compute the quadratic variation of $I^{n}(\phi,t)$,
		\begin{equation*}
			\begin{split}
				&[I^{n}(\phi),\overline{I^{n}(\phi)}]_t\\
				&=\sum_{\om_1\in\mbZ^{2}}\sum_{j_1=1}^{2}\int_{0}^{t}\dd u_1\\
				&\quad\Bigl\lvert\sum_{\om_2,\ldots,\om_n\in\mbZ^{2}}\sum_{j_2,\ldots,j_n=1}^{2}\int_{0}^{u_1}\dd W^{j_2}(u_2,\om_2)\ldots\int_{0}^{u_{n-1}}\dd W^{j_n}(u_n,\om_n)\hat{\phi}(u_1,-\om_1,j_1,\ldots,u_n,-\om_n,j_n)\Bigr\rvert^{2}.
			\end{split}
		\end{equation*}
		Assume $p\in[2,\infty)$. We apply Minkowski's inequality and the induction hypothesis to estimate
		\begin{equation*}
			\begin{split}
				&\mbE[\abs{I^{n}(\phi,t)}^{p}]^{2/p}\lesssim\mbE[[I^{n}(\phi),\overline{I^{n}(\phi)}]_t^{p/2}]^{2/p}\\
				&\leq\sum_{\om_1\in\mbZ^{2}}\sum_{j_1=1}^{2}\int_{0}^{t}\dd u_1\mbE\Bigl[\Bigl\lvert\sum_{\om_2,\ldots,\om_n\in\mbZ^{2}}\sum_{j_2,\ldots,j_n=1}^{2}\int_{0}^{u_1}\dd W^{j_2}(u_2,\om_2)\ldots\int_{0}^{u_{n-1}}\dd W^{j_n}(u_n,\om_n)\\
				&\multiquad[12]\hat{\phi}(u_1,-\om_1,j_1,\ldots,u_n,-\om_n,-j_n)\Bigr\rvert^{p}\Bigr]^{2/p}\\
				&\lesssim \sum_{\om_1,\ldots,\om_n\in\mbZ^{2}}\sum_{j_1,\ldots,j_n=1}^{2}\int_{0}^{t}\dd u_1\ldots\int_{0}^{u_{n-1}}\dd u_n\abs{\hat{\phi}(u_1,-\om_1,j_1,\ldots,u_n,-\om_n,j_n)}^{2}\\
				&=\mbE[\abs{I^{n}(\phi,t)}^{2}],
			\end{split}
		\end{equation*}
		where the last line follows by It\^{o}'s isometry. We conclude by applying the $L^p(\mbP)$-martingale convergence theorem to let $t\to\infty$.
	\end{proof}
\end{details}
Let $T>0$ be a time horizon. Next we define iterated It\^{o} integrals with mollification~\eqref{eq:def_cut_off}--\eqref{eq:def_mollifiers} and heterogeneity $\het\from[0,T]\times\mbT^{2}\to\mbR$ and show how one may control the influence of the heterogeneity by passing to real space (Lemma~\ref{lem:isometry_argument}).
\begin{definition}\label{def:iterated_Ito_mollification_heterogeneity}
	Let $n\in\mbN$, $T>0$ and
	\begin{equation*}
		(0,T)^{n}_{>}\defeq\{(u_{1},\ldots,u_{n})\in(0,T)^{n}:u_{1}>u_{2}>\ldots>u_{n}\}
	\end{equation*}
	For every $\het\in C_TL^{\infty}(\mbT^{2})$, $\delta\geq0$ and $\varphi$ as in~\eqref{eq:def_cut_off} we define the heterogeneous iterated It\^{o} integral acting on $\phi\in L^2((0,T)^n_{>}\times\mbT^{2n};\mbC^{2n})$ (with mollification if $\delta>0$) by
	\begin{equation*}
		\begin{split}
			I^{n}_{\delta;\het}(\phi)&\defeq\sum_{\om_1,\ldots,\om_n\in\mbZ^2}\sum_{j_1,\ldots,j_n=1}^{2}\sum_{m_1,\ldots,m_n\in\mbZ^2}\varphi(\delta m_1)\dots\varphi(\delta m_n)\\
			&\quad\times\int_{0}^{T}\dd W^{j_1}(u_1,m_1)\int_{0}^{u_1}\dd W^{j_2}(u_2,m_2)\ldots\int_{0}^{u_{n-1}}\dd W^{j_n}(u_n,m_n)\\
			&\quad\times\hat{\het}(u_1,\om_1-m_1)\dots\hat{\het}(u_n,\om_n-m_n)\hat{\phi}(u_1,-\om_1,j_1,\ldots,u_n,-\om_n,j_n).
		\end{split}
	\end{equation*}
\end{definition}
Next we show how one may control the heterogeneity.
\begin{lemma}\label{lem:isometry_argument}
	Let $n\in\mbN$, $T>0$, $\het\in C_TL^{\infty}(\mbT^{2})$, $\delta\geq0$ and $\varphi$ be as in~\eqref{eq:def_cut_off}. If $\delta=0$, then
	\begin{equation}\label{eq:isometry_bound_delta=0}
		\mbE[\abs{I^{n}_{0;\het}(\phi)}^{2}]\leq\norm{\het}_{C_TL^\infty}^{2n}\mbE[\abs{I^{n}(\phi)}^{2}].
	\end{equation}
	If $\delta>0$, then
	\begin{equation}\label{eq:isometry_bound_delta>0}
		\mbE[\abs{I^{n}_{\delta;\het}(\phi)}^{2}]\leq\norm{\varphi}_{L^{\infty}}^{2n}\norm{\het}_{C_TL^\infty}^{2n}\mbE[\abs{I^{n}(\phi)}^{2}].
	\end{equation}
\end{lemma}
\begin{proof}
	Let $\boldsymbol{\om}=(\om_1,\ldots,\om_n)$, $\boldsymbol{m}=(m_1,\ldots,m_n)$, $\boldsymbol{u}=(u_1,\ldots,u_n)$ and $\boldsymbol{j}=(j_1,\ldots,j_n)$. We represent
	\begin{equation*}
		\hat{\het}(u_1,\om_1-m_1)\dots\hat{\het}(u_n,\om_n-m_n)\hat{\phi}(u_1,-\om_1,j_1,\ldots,u_n,-\om_n,j_n)=\hat{\het^{\otimes n}}(\boldsymbol{u},\boldsymbol{\om}-\boldsymbol{m})\hat{\phi}(\boldsymbol{u},-\boldsymbol{\om},\boldsymbol{j}).
	\end{equation*}
	Using that the Fourier transform turns products of $L^2(\mbT^{2n})$-functions into convolutions, we obtain
	\begin{equation*}
		\sum_{\boldsymbol{\om}\in(\mbZ^{2})^{\times n}}\hat{\het^{\otimes n}}(\boldsymbol{u},\boldsymbol{\om}-\boldsymbol{m})\hat{\phi}(\boldsymbol{u},-\boldsymbol{\om},\boldsymbol{j})=\mathscr{F}(\het^{\otimes n}\phi)(\boldsymbol{u},-\boldsymbol{m},\boldsymbol{j})
	\end{equation*}
	and consequently, $I^{n}_{\delta;\het}(\phi)=I^{n}(\psi_{\delta}^{\otimes n}\ast(\het^{\otimes n}\phi))$ for $\delta>0$ and $I^{n}_{0;\het}(\phi)=I^{n}(\het^{\otimes n}\phi)$ for $\delta=0$. To establish~\eqref{eq:isometry_bound_delta>0} for $\delta>0$, we apply It\^{o}'s isometry (Lemma~\ref{lem:Ito_isometry}), Parseval's theorem and use the boundedness of $\varphi$ and $\het$ to control
	\begin{equation*}
		\begin{split}
			&\mbE[\abs{I^n_{\delta;\het}(\phi)}^2]\\
			&=\sum_{\boldsymbol{j}\in\{1,2\}^{\times n}}\sum_{\boldsymbol{m}\in(\mbZ^{2})^{\times n}}\int_{0}^{T}\dd u_1\ldots\int_{0}^{u_{n-1}}\dd u_n\abs{\varphi^{\otimes n}(\delta \boldsymbol{m})}^{2}\abs{\mathscr{F}(\het^{\otimes n}\phi)(\boldsymbol{u},-\boldsymbol{m},\boldsymbol{j})}^{2}\\
			&\leq\norm{\varphi}_{L^{\infty}}^{2n}\sum_{\boldsymbol{j}\in\{1,2\}^{\times n}}\sum_{\boldsymbol{m}\in(\mbZ^{2})^{\times n}}\int_{0}^{T}\dd u_1\ldots\int_{0}^{u_{n-1}}\dd u_n\abs{\mathscr{F}(\het^{\otimes n}\phi)(\boldsymbol{u},-\boldsymbol{m},\boldsymbol{j})}^{2}\\
			&=\norm{\varphi}_{L^{\infty}}^{2n}\sum_{\boldsymbol{j}\in\{1,2\}^{\times n}}\int_{0}^{T}\dd u_1\ldots\int_{0}^{u_{n-1}}\dd u_n\int_{(\mbT^2)^n}\dd \boldsymbol{x}\abs{\het^{\otimes n}(\boldsymbol{u},\boldsymbol{x})}^2\abs{\phi(\boldsymbol{u},\boldsymbol{x},\boldsymbol{j})}^2\\
			&\leq\norm{\varphi}_{L^{\infty}}^{2n}\norm{\het}_{C_TL^\infty}^{2n}\sum_{\boldsymbol{j}\in\{1,2\}^{\times n}}\int_{0}^{T}\dd u_1\ldots\int_{0}^{u_{n-1}}\dd u_n\int_{(\mbT^2)^n}\dd \boldsymbol{x}\abs{\phi(\boldsymbol{u},\boldsymbol{x},\boldsymbol{j})}^2\\
			&=\norm{\varphi}_{L^{\infty}}^{2n}\norm{\het}_{C_TL^\infty}^{2n}\mbE[\abs{I^n(\phi)}^2],
		\end{split}
	\end{equation*}
	which yields~\eqref{eq:isometry_bound_delta>0}. The derivation of~\eqref{eq:isometry_bound_delta=0} for $\delta=0$ is analogous.
\end{proof}
Next we consider the second moment of the difference $I^{n}_{0;\het}(\phi)-I^{n}_{\delta;\het}(\phi)$, where we combine the idea of Lemma~\ref{lem:isometry_argument} with an interpolation argument to extract a small power of $\delta$. The resulting bound~\eqref{eq:isometry_difference} is reminiscent of~\cite[Lem.~8.7]{gubinelli_perkowski_17} in that we also need to pass to a space of lower regularity.
\begin{lemma}\label{lem:isometry_interpolation}
	Let $n\in\mbN$, $T>0$, $\het\in C_T\mcH^{1}(\mbT^{2})$, $\delta\geq0$ and $\varphi$ be as in~\eqref{eq:def_cut_off}. Then for every $0<\vartheta'<\vartheta\leq1$ and $\phi\in L^2((0,T)^n_{>};\mcH^{\vartheta}(\mbT^{2n};\mbC^{2n}))$,
	\begin{equation}\label{eq:isometry_difference}
		\mbE[\abs{I^{n}_{0;\het}(\phi)-I^{n}_{\delta;\het}(\phi)}^2]\lesssim\delta^{2\vartheta'}\norm{1-\varphi}_{C^{1}}^{2}(1+\norm{\varphi}_{L^{\infty}}^{n-1})^{2}\norm{\het}_{C_{T}\mcH^{1}}^{2n}\mbE[\abs{I^{n}((1-\Delta)^{\vartheta/2}\phi)}^{2}].
	\end{equation}
\end{lemma}
\begin{proof}
	We use induction and interpolation to bound for every $\vartheta'\in(0,1)$,
	\begin{equation}\label{eq:tensor_difference_interpolation_bound}
		\abs{1-\varphi^{\otimes n}(\delta\boldsymbol{m})}\leq\delta^{\vartheta'}\norm{1-\varphi}_{C^{1}}(1+\norm{\varphi}_{L^{\infty}}^{n-1})n(1+\abs{\boldsymbol{m}}^{2})^{\vartheta'/2},
	\end{equation}
	which combined with It\^{o}'s isometry and the definition of $I^{n}_{0;\het}(\phi)-I^{n}_{\delta;\het}(\phi)$ yields
	\begin{equation}\label{eq:iterated_integral_interpolation_bound}
		\begin{split}
			&\mbE[\abs{I^{n}_{0;\het}(\phi)-I^{n}_{\delta;\het}(\phi)}^2]\\
			&\lesssim\delta^{2\vartheta'}\norm{1-\varphi}_{C^{1}}^{2}(1+\norm{\varphi}_{L^{\infty}}^{n-1})^{2}\\
			&\quad\times\sum_{\boldsymbol{j}\in\{1,2\}^{\times n}}\sum_{\boldsymbol{m}\in(\mbZ^{2})^{\times n}}\int_{0}^{T}\dd u_1\ldots\int_{0}^{u_{n-1}}\dd u_n(1+\abs{2\uppi\boldsymbol{m}}^{2})^{\vartheta'}\abs{\mathscr{F}(\het^{\otimes n}\phi)(\boldsymbol{u},-\boldsymbol{m},\boldsymbol{j})}^{2}.
		\end{split}
	\end{equation}
	\begin{details}
		\paragraph{Proof of~\eqref{eq:tensor_difference_interpolation_bound}.}
		It follows inductively that
		\begin{equation}\label{eq:tensor_diff_identity_mollifier}
			1-\varphi^{\otimes n}(\delta\boldsymbol{m})=1-\varphi(\delta m_{n})+\sum_{l=1}^{n-1}(1-\varphi(\delta m_{n-l}))\prod_{k=n-l+1}^{n}\varphi(\delta m_{k}).
		\end{equation}
		Indeed, we claim the more general identity that for every $(x_{k})_{k=1,\ldots,n}$ and $(y_{k})_{k=1,\ldots,n}$,
		\begin{equation}\label{eq:inductive_hypothesis}
			\Bigl(\prod_{k=1}^{n}x_{k}\Bigr)-\Bigl(\prod_{k=1}^{n}y_{k}\Bigr)=(x_{n}-y_{n})\Bigl(\prod_{k=1}^{n-1}x_{k}\Bigr)+\sum_{l=1}^{n-1}(x_{n-l}-y_{n-l})\Bigl(\prod_{k=1}^{n-l-1}x_{k}\Bigr)\Bigl(\prod_{k=n-l+1}^{n}y_{k}\Bigr).
		\end{equation}
		To obtain~\eqref{eq:tensor_diff_identity_mollifier}, we can then apply~\eqref{eq:inductive_hypothesis} with $x_{k}=1$ and $y_{k}=\varphi(\delta m_{k})$ for every $k\in\{1,\ldots,n\}$.
		
		The identity~\eqref{eq:inductive_hypothesis} is clear for $n=1$. For the induction step, assume~\eqref{eq:inductive_hypothesis} holds true with $n$ replaced by $n-1$. We decompose
		\begin{equation*}
			\Bigl(\prod_{k=1}^{n}x_{k}\Bigr)-\Bigl(\prod_{k=1}^{n}y_{k}\Bigr)=(x_{n}-y_{n})\Bigl(\prod_{k=1}^{n-1}x_{k}\Bigr)+\Bigl(\Bigl(\prod_{k=1}^{n-1}x_{k}\Bigr)-\Bigl(\prod_{k=1}^{n-1}y_{k}\Bigr)\Bigr)y_{n}.
		\end{equation*}
		Applying the inductive hypothesis, we obtain
		\begin{equation*}
			\begin{split}
				&\Bigl(\prod_{k=1}^{n}x_{k}\Bigr)-\Bigl(\prod_{k=1}^{n}y_{k}\Bigr)\\
				&=(x_{n}-y_{n})\Bigl(\prod_{k=1}^{n-1}x_{k}\Bigr)+\Bigl((x_{n-1}-y_{n-1})\Bigl(\prod_{k=1}^{n-2}x_{k}\Bigr)+\sum_{l=1}^{n-2}(x_{n-1-l}-y_{n-1-l})\Bigl(\prod_{k=1}^{n-l-2}x_{k}\Bigr)\Bigl(\prod_{k=n-l}^{n-1}y_{k}\Bigr)\Bigr)y_{n}\\
				&=(x_{n}-y_{n})\Bigl(\prod_{k=1}^{n-1}x_{k}\Bigr)+(x_{n-1}-y_{n-1})\Bigl(\prod_{k=1}^{n-2}x_{k}\Bigr)y_{n}+\sum_{l=2}^{n-1}(x_{n-l}-y_{n-l})\Bigl(\prod_{k=1}^{n-l-1}x_{k}\Bigr)\Bigl(\prod_{k=n-l+1}^{n}y_{k}\Bigr)\\
				&=(x_{n}-y_{n})\Bigl(\prod_{k=1}^{n-1}x_{k}\Bigr)+\sum_{l=1}^{n-1}(x_{n-l}-y_{n-l})\Bigl(\prod_{k=1}^{n-l-1}x_{k}\Bigr)\Bigl(\prod_{k=n-l+1}^{n}y_{k}\Bigr),
			\end{split}
		\end{equation*}
		which yields~\eqref{eq:inductive_hypothesis}.
		
		Having established~\eqref{eq:tensor_diff_identity_mollifier}, we next prove~\eqref{eq:tensor_difference_interpolation_bound} by interpolation. We estimate for every $m\in\mbZ^{2}$,
		\begin{equation*}
			\abs{1-\varphi(\delta m)}\leq\norm{1-\varphi}_{L^{\infty}(\mbR^{2})}
		\end{equation*}
		and by the mean value theorem,
		\begin{equation*}
			\abs{1-\varphi(\delta m)}\leq\norm{\nabla\varphi}_{L^{\infty}(\mbR^{2})}\delta\abs{m}.
		\end{equation*}
		Interpolation then yields for every $\vartheta'\in[0,1]$,
		\begin{equation}\label{eq:mollifier_interpolation}
			\abs{1-\varphi(\delta m)}\leq\norm{1-\varphi}_{L^{\infty}(\mbR^{2})}^{1-\vartheta'}\norm{\nabla\varphi}_{L^{\infty}(\mbR^{2})}^{\vartheta'}\delta^{\vartheta'}\abs{m}^{\vartheta'}\leq\norm{1-\varphi}_{C^{1}(\mbR^{2})}\delta^{\vartheta'}\abs{m}^{\vartheta'}.
		\end{equation}
		Hence, combining~\eqref{eq:tensor_diff_identity_mollifier} and~\eqref{eq:mollifier_interpolation},
		\begin{equation*}
			\begin{split}
				\abs{1-\varphi^{\otimes n}(\delta\boldsymbol{m})}&\leq\abs{1-\varphi(\delta m_{n})}+\sum_{l=1}^{n-1}\abs{1-\varphi(\delta m_{n-l})}\prod_{k=n-l+1}^{n}\abs{\varphi(\delta m_{k})}\\
				&\leq\delta^{\vartheta'}\norm{1-\varphi}_{C^{1}}\abs{m_{n}}^{\vartheta'}+\delta^{\vartheta'}\norm{1-\varphi}_{C^{1}}\norm{\varphi}_{L^{\infty}}^{n-1}\sum_{l=1}^{n-1}\abs{m_{n-l}}^{\vartheta'}\\
				&\leq\delta^{\vartheta'}\norm{1-\varphi}_{C^{1}}(1+\norm{\varphi}_{L^{\infty}}^{n-1})\sum_{l=0}^{n-1}\abs{m_{n-l}}^{\vartheta'}.
			\end{split}
		\end{equation*}
		Finally, we bound
		\begin{equation*}
			\sum_{l=0}^{n-1}\abs{m_{n-l}}^{\vartheta'}\leq\sum_{l=0}^{n-1}(1+\abs{\boldsymbol{m}}^{2})^{\vartheta'/2}=n(1+\abs{\boldsymbol{m}}^{2})^{\vartheta'/2},
		\end{equation*}
		which yields~\eqref{eq:tensor_difference_interpolation_bound}.
	\end{details}
	
	Let $0<\vartheta'<\vartheta\leq1$, to control the integrand, we use standard product estimates in Bessel potential spaces to obtain
	\begin{details}
		by~\cite[Thm.~6.18]{hairer_09},
	\end{details}
	\begin{equation}\label{eq:integrand_bound}
		\begin{split}
			\sum_{\boldsymbol{m}\in(\mbZ^{2})^{\times n}}(1+\abs{2\uppi\boldsymbol{m}}^{2})^{\vartheta'}\abs{\mathscr{F}(\het^{\otimes n}\phi)(\boldsymbol{u},-\boldsymbol{m},\boldsymbol{j})}^{2}&=\norm{(\het^{\otimes n}\phi)(\boldsymbol{u},\place,\boldsymbol{j})}_{\mcH^{\vartheta'}(\mbT^{2n})}^{2}\\
			&\lesssim\norm{\het}_{C_{T}\mcH^{1}(\mbT^{2})}^{2n}\norm{\phi(\boldsymbol{u},\place,\boldsymbol{j})}_{\mcH^{\vartheta}(\mbT^{2n})}^{2}.
		\end{split}
	\end{equation}
	\begin{details}
		where we used that
		\begin{equation*}
			\norm{\het^{\otimes n}(\boldsymbol{u},\place)}_{\mcH^{1}(\mbT^{2n})}^{2}=\sum_{\boldsymbol{m}\in\mbZ^{2n}}(1+\abs{2\uppi m_{1}}^{2}+\ldots+\abs{2\uppi m_{n}}^{2})\abs{\hat{\het}(u_{1},m_{1})\ldots\hat{\het}(u_{n},m_{n})}^{2}\leq\norm{\het}_{C_{T}\mcH^{1}(\mbT^{2})}^{2n}.
		\end{equation*}
	\end{details}
	\begin{details}
		By Parseval's theorem and Tonelli's theorem,
		\begin{equation*}
			\begin{split}
				&\mbE[\abs{I^{n}_{0;\het}(\phi)-I^{n}_{\delta;\het}(\phi)}^2]\\
				&\lesssim\delta^{2\vartheta'}\norm{1-\varphi}_{C^{1}}^{2}(1+\norm{\varphi}_{L^{\infty}}^{n-1})^{2}\norm{\het}_{C_{T}\mcH^{1}}^{2n}\\
				&\quad\times\sum_{\boldsymbol{\om}\in(\mbZ^{2})^{\times n}}\sum_{\boldsymbol{j}\in\{1,2\}^{\times n}}\int_{0}^{T}\dd u_1\ldots\int_{0}^{u_{n-1}}\dd u_n(1+\abs{2\uppi\boldsymbol{\om}}^{2})^{\vartheta}\abs{\hat{\phi}(\boldsymbol{u},-\boldsymbol{\om},\boldsymbol{j})}^{2}.
			\end{split}
		\end{equation*}
	\end{details}
	Plugging~\eqref{eq:integrand_bound} into~\eqref{eq:iterated_integral_interpolation_bound}, we arrive at 
	\begin{equation*}
		\mbE[\abs{I^{n}_{0;\het}(\phi)-I^{n}_{\delta;\het}(\phi)}^2]\lesssim\delta^{2\vartheta'}\norm{1-\varphi}_{C^{1}}^{2}(1+\norm{\varphi}_{L^{\infty}}^{n-1})^{2}\norm{\het}_{C_{T}\mcH^{1}}^{2n}\mbE[\abs{I^{n}((1-\Delta)^{\vartheta/2}\phi)}^{2}],
	\end{equation*}
	which yields~\eqref{eq:isometry_difference}.
\end{proof}
\begin{remark}
	In~\eqref{eq:isometry_difference}, to extract an exponent $\vartheta'$ of $\delta^{2}$, we had to assume that $\het\in C_{T}\mcH^{1}(\mbT^{2})$ and $\phi\in L^2((0,T)^n_{>};\mcH^{\vartheta}(\mbT^{2n};\mbC^{2n}))$ for some $0<\vartheta'<\vartheta\leq1$; in other words, we had to assume that $\het$ is regular and also had to incur a small loss in the regularity of $\phi$. Both assumptions can be lifted if one considers iterated It\^{o} integrals based on the noise $\psi_{\delta}\ast(\het\boldsymbol{\xi})$ rather than $\het(\psi_{\delta}\ast\boldsymbol{\xi})$, where one can use that $\psi_{\delta}\ast(\het\boldsymbol{\xi})$ may be interpreted as a mollified, vector-valued space-time white noise on an $L^{2}$-space with measure $\het(u,x)^{2}\dd u\dd x$.
	\begin{details}
		\paragraph{Analogue of Lemma~\ref{lem:isometry_interpolation} for iterated It\^{o} integrals based on $\psi_{\delta}\ast(\het\boldsymbol{\xi})$.}
		For iterated It\^{o} integrals based on $\psi_{\delta}\ast(\het\boldsymbol{\xi})$, we can estimate by It\^{o}'s isometry (Lemma~\ref{lem:Ito_isometry}),
		\begin{equation*}
			\begin{split}
			\mbE[\abs{I^{n}(\het^{\otimes n}\phi)-I^{n}(\het^{\otimes n}(\psi_{\delta}^{\otimes n}\ast\phi))}^{2}]&=\norm{\het^{\otimes n}(\phi-\psi_{\delta}^{\otimes n}\ast\phi)}_{L^2((0,T)^n_{>}\times\mbT^{2n}\times\{1,2\}^{n};\mbC)}^{2}\\
			&\leq\norm{\het}_{C_{T}L^{\infty}}^{2n}\norm{\phi-\psi_{\delta}^{\otimes n}\ast\phi}_{L^2((0,T)^n_{>}\times\mbT^{2n}\times\{1,2\}^{n};\mbC)}^{2}
			\end{split}
		\end{equation*} 
		and apply~\eqref{eq:tensor_difference_interpolation_bound} to deduce
		\begin{equation*}
			\norm{\phi-\psi_{\delta}^{\otimes n}\ast\phi}_{L^2((0,T)^n_{>}\times\mbT^{2n}\times\{1,2\}^{n};\mbC)}^{2}\lesssim\delta^{2\vartheta'}\norm{1-\varphi}_{C^{1}}^{2}(1+\norm{\varphi}_{L^{\infty}}^{n-1})^{2}\norm{\phi}_{L^2((0,T)^n_{>};\mcH^{\vartheta'}(\mbT^{2n};\mbC^{2n}))}^{2},
		\end{equation*}
		which yields the bound
		\begin{equation*}
			\mbE[\abs{I^{n}(\het^{\otimes n}\phi)-I^{n}(\het^{\otimes n}(\psi_{\delta}^{\otimes n}\ast\phi))}^{2}]\lesssim\norm{\het}_{C_{T}L^{\infty}}^{2n}\delta^{2\vartheta'}\norm{1-\varphi}_{C^{1}}^{2}(1+\norm{\varphi}_{L^{\infty}}^{n-1})^{2}\norm{\phi}_{L^2((0,T)^n_{>};\mcH^{\vartheta'}(\mbT^{2n};\mbC^{2n}))}^{2}.
		\end{equation*}
		Hence, for integrals based on $\psi_{\delta}\ast(\het\boldsymbol{\xi})$, we neither have to assume that $\het$ is regular nor incur a loss of regularity. However, since our integrals are based on $\het(\psi_{\delta}\ast\boldsymbol{\xi})$, we need to apply Lemma~\ref{lem:isometry_interpolation} and have to do both.
	\end{details}
\end{remark}
The following Kolmogorov criterion provides an efficient method for establishing the regularity of stochastic processes in H\"{o}lder--Besov spaces. The presentation of this lemma is reminiscent of~\cite[Prop.~4.1]{perkowski_20}.
\begin{lemma}\label{lem:existence_criterion}
	Let $\alpha\in\mbR$, $p\in(1,\infty)$, $\gamma\in(1/p,1]$, $\gamma'\in(0,\gamma-1/p)$ and $X\from[0,T]\to\mcS'(\mbT^{2})$ be a stochastic process such that $X(0)=0$. Assume there exists some $K>0$ such that
	\begin{equation}\label{eq:Kolmogorov_condition}
		\int_{0}^{T}\int_{0}^{T}\abs{t-s}^{-2-p\gamma'}\sum_{q\in\mbN_{-1}}2^{pq\alpha}\int_{\mbT^{2}}\mbE[\abs{\Delta_qX(t,x)-\Delta_qX(s,x)}^p]\dd x\dd s\dd t\leq K<\infty.
	\end{equation}
	Then there exists a modification of $X$ (which we do not relabel) such that
	\begin{equation*}
		\mbE\Bigl[\norm{X}^p_{C^{\gamma'}_{T}\mcC^{\alpha-2/p}}\Bigr]\lesssim\mbE\Bigl[\norm{X}^p_{C^{\gamma'}_{T}\mcB_{p,p}^{\alpha}}\Bigr]\lesssim_{p,\gamma'}K
	\end{equation*}
	and in particular $X \in C^{\gamma'}_T \mcC^{\alpha-2/p}(\mbT^{2})$ a.s. Assume that
	\begin{equation*}
		M\defeq\sup_{s\neq t\in[0,T]}\abs{t-s}^{-p\gamma}\sum_{q\in\mbN_{-1}}2^{pq\alpha}\int_{\mbT^{2}}\mbE[\abs{\Delta_qX(t,x)-\Delta_qX(s,x)}^p]\dd x<\infty,
	\end{equation*}
	then for each $\gamma'\in(0,\gamma-1/p)$, \eqref{eq:Kolmogorov_condition} is satisfied with 
	\begin{equation*}
		K=M\int_{0}^{T}\int_{0}^{T}\abs{t-s}^{-2+p(\gamma-\gamma')}\dd s\dd t<\infty.
	\end{equation*}
\end{lemma}
\begin{proof}
	The bound follows by the definition of $\mcB_{p,p}^{\alpha}(\mbT^{2})$ (Definition~\ref{def:Besov_space}), the Besov–H\"{o}lder embedding in time~\cite[Proof~of~Thm.~A.10]{friz_victoir_10} and the Besov embedding in space~\eqref{eq:Besov_cts_embedding}. To show $X\in C^{\gamma'}_{T}\mcB^{\alpha}_{p,p}(\mbT^{2})\embed C^{\gamma'}_{T}\mcC^{\alpha-2/p}(\mbT^{2})$, it suffices to exhibit a smooth, approximating sequence. This can be achieved by combining the Besov embedding~\eqref{eq:Besov_cts_embedding} with the fact that the smooth functions are dense in $\mcB^{\alpha}_{p,p}(\mbT^{2})$.
\end{proof}
\begin{details}
	\begin{proof}
		By the definition of the Besov norm and Tonelli's theorem,
		\begin{equation*}
			\mbE[\norm{X(t)-X(s)}_{\mcB^{\alpha}_{p,p}}^{p}]=\sum_{q\in\mbN_{-1}}2^{pq\alpha}\int_{\mbT^{2}}\mbE[\abs{\Delta_qX(t,x)-\Delta_qX(s,x)}^p]\dd x,
		\end{equation*}
		hence by assumption,
		\begin{equation*}
			\int_{0}^{T}\int_{0}^{T}\abs{t-s}^{-2-p\gamma'}\mbE[\norm{X(t)-X(s)}_{\mcB^{\alpha}_{p,p}}^{p}]\dd s\dd t\leq K<\infty.
		\end{equation*}
		We may apply the Besov–H\"{o}lder embedding in time~\cite[Proof~of~Thm.~A.10]{friz_victoir_10} to bound the H\"{o}lder seminorm of $X$,
		\begin{equation*}
			\mbE\Bigl[\Bigl(\sup_{s\not=t\in[0,T]}\frac{\norm{X(t)-X(s)}_{\mcB_{p,p}^{\alpha}}}{\abs{t-s}^{\gamma'}}\Bigr)^{p}\Bigr]\lesssim_{p,\gamma'}\int_{0}^{T}\int_{0}^{T}\abs{t-s}^{-2-p\gamma'}\mbE[\norm{X(t)-X(s)}_{\mcB^{\alpha}_{p,p}}^{p}]\dd s\dd t\leq K.
		\end{equation*}
		Using that $X(0)=0$, we can estimate the uniform norm,
		\begin{equation*}
			\sup_{t\in[0,T]}\norm{X(t)}_{\mcB_{p,p}^{\alpha}}=\sup_{t\in[0,T]}\norm{X(t)-X(0)}_{\mcB_{p,p}^{\alpha}}\leq T^{\gamma'}\sup_{s\neq t\in[0,T]}\frac{\norm{X(t)-X(s)}_{\mcB_{p,p}^{\alpha}}}{\abs{t-s}^{\gamma'}}.
		\end{equation*}
		By the Besov embedding~\eqref{eq:Besov_cts_embedding}, $\norm{\place}_{\mcC^{\alpha-2/p}}\lesssim\norm{\,\cdot\,}_{\mcB_{p,p}^{\alpha}}$, so that the same holds in $\mcC^{\alpha-2/p}(\mbT^{2})$ as well. This yields the claim.
	\end{proof}
\end{details}
\subsection{Diagrams of Order 2 and 3}\label{sec:diagrams_of_order_2_and_3}
In this section we construct the second-order diagrams $\ty$, $\Checkmark{10}$ and $\Checkmark{20}$ and the third-order diagrams $\PreThree{10}$ and $\PreThree{20}$.
\begin{lemma}\label{lem:existence_ypsilon}
	Let $T>0$, $\alpha<-2$, $\kappa\in(0,1/2)$ and $\het\in C_{T}L^{\infty}(\mbT^{2})$. Then for any $p\in[1,\infty)$ we have $\mbE[\norm{\ty}_{\msL^{\kappa}_{T}\mcC^{2\alpha+4}}^p]^{1/p}\lesssim\norm{\het}_{C_{T}L^\infty}^{2}$ and in particular $\ty\in\msL^{\kappa}_{T}\mcC^{2\alpha+4}(\mbT^{2})$ a.s.
\end{lemma}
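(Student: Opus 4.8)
The plan is to apply the general Kolmogorov-type criterion, Lemma~\ref{lem:existence_criterion}, together with Nelson's estimate, Lemma~\ref{lem:Nelson_estimate}, and the heterogeneity-reduction Lemma~\ref{lem:isometry_argument}, exactly as in the proof of Lemma~\ref{lem:existence_lolli}. Since $\ty$ lives in the second Wiener chaos (its Fourier transform is the iterated double integral $\hat{\Cherry{10}}$ written out in Subsection~\ref{sec:Feynman}, minus nothing, as the contraction is absent by construction), it suffices by Lemma~\ref{lem:Nelson_estimate} to control second moments. By Lemma~\ref{lem:isometry_argument} with $n=2$ we may bound $\mbE(|\Delta_q\ty(t,x)-\Delta_q\ty(s,x)|^2)$ by $\|\srdet\|_{C_TL^\infty}^4$ times the corresponding quantity for the homogeneous object $\vdiv\mcI[\tau\nabla\Phi_\tau]-\mbE(\cdots)$, where $\tau=\vdiv\mcI[\bxi]$. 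So the task reduces to estimating, uniformly in $s\neq t\in[0,T]$,
\begin{equation*}
	\sum_{q\in\mbN_{-1}}2^{pq(2\alpha+4)}2^{-pq\eps}\sup_{x\in\mbT^2}\frac{\mbE(|\Delta_q\ty(t,x)-\Delta_q\ty(s,x)|^2)^{p/2}}{|t-s|^{p\gamma/2}},
\end{equation*}
for suitable small $\gamma,\eps>0$ and large $p$, after which Lemma~\ref{lem:existence_criterion} and the Besov embedding \eqref{eq:Besov_cts_embedding} give $\ty\in C_T^{\gamma/2-\eps}\mcC^{2\alpha+4}$ a.s., hence $\ty\in\msL_T^\kappa\mcC^{2\alpha+4}$.

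The core computation is the second-moment bound. Writing $\hat{\ty}(t,\om)$ as the (symmetrised) double It\^o integral over $\om_1+\om_2=\om$ with integrand built from $H^{j_3}_{t-u_3}(\om_1+\om_2)H^{j_1}_{u_3-u_1}(\om_1)G^{j_3}(\om_2)H^{j_2}_{u_3-u_2}(\om_2)$, one uses It\^o's isometry in $(u_1,u_2)$ and the $\delta$-correlation of the Brownian fields to get, for the homogeneous version, an expression of the form
\begin{equation*}
	\mbE(|\hat{\ty}(t,\om)-\hat{\ty}(s,\om)|^2)\lesssim\sum_{\substack{\om_1,\om_2\in\mbZ^2\setminus\{0\}\\\om_1+\om_2=\om}}|t-s|^{\gamma}\,|\om|^{2\gamma}\,|\om_2|^{-2}\,\frac{1}{|\om_1|^2+|\om_2|^2},
\end{equation*}
where the $|\om_2|^{-2}$ comes from $|G^{j_3}(\om_2)|^2$, the $(|\om_1|^2+|\om_2|^2)^{-1}$ from the time integral of the two exponential factors $H^{j_1}_{u_3-u_1}(\om_1)H^{j_2}_{u_3-u_2}(\om_2)$ on the simplex, the remaining factor $H^{j_3}$ contributes an $O(1)$ bound after using $|G\cdot H|\lesssim 1$ suitably combined with the $u_3$-integration, and the time-increment regularity $|t-s|^\gamma|\om|^{2\gamma}$ is extracted from the outermost multiplier via interpolation \eqref{eq:interpolation} as in Lemma~\ref{lem:existence_lolli}. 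Localising to a Littlewood--Paley block, i.e.\ inserting $\varrho_q(\om)^2$ and using $|\om|\sim 2^q$, one is left with a discrete convolution sum
\begin{equation*}
	\sum_{\substack{\om_1,\om_2\in\mbZ^2\setminus\{0\}\\\om_1+\om_2=\om}}|\om_2|^{-2}(|\om_1|^2+|\om_2|^2)^{-1}\lesssim_{\eps} 2^{q\eps},
\end{equation*}
which follows from the summation/convolution estimates of Appendix~\ref{sec:summation_estimates} (e.g.\ Lemma~\ref{lem:convolution_estimate_paraproduct} and \eqref{eq:summation_estimates_-alpha}). Hence $\mbE(|\Delta_q\ty(t,x)-\Delta_q\ty(s,x)|^2)\lesssim\|\srdet\|_{C_TL^\infty}^4|t-s|^\gamma 2^{q(2+2\gamma+\eps)}$; comparing the exponent $2+2\gamma+\eps$ against the required Besov gain $2(2\alpha+4)<0$ (recall $\alpha<-2$, so $2\alpha+4<0$) shows the sum over $q$ converges once $\gamma,\eps$ are chosen small, and we conclude as above.

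The main obstacle is the stochastic-estimate bookkeeping: correctly extracting the time-increment H\"older factor $|t-s|^\gamma|\om|^{2\gamma}$ from the nested multipliers $H^{j_3}_{t-u_3}(\om)H^{j_1}_{u_3-u_1}(\om_1)H^{j_2}_{u_3-u_2}(\om_2)$ while simultaneously keeping enough decay in $\om_1,\om_2$ so that the resulting frequency sum is summable. This is the step where the interpolation inequality \eqref{eq:interpolation} must be applied at the right place (to the outer $H^{j_3}$ multiplier, not the inner ones, so as not to waste decay), and where one must be slightly careful that the convolution sum $\sum_{\om_1+\om_2=\om}|\om_2|^{-2}(|\om_1|^2+|\om_2|^2)^{-1}$ is only logarithmically divergent in $|\om|$, absorbed into the arbitrarily small $2^{q\eps}$; all the genuinely delicate cancellations (the symmetry of $G$) are \emph{not} needed here, since $\ty$ already sits at regularity $2\alpha+4$, well below the threshold where such cancellations matter --- those are reserved for $\tl^\delta$, $\tp$ and $\tc$. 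The approximation statement $\ty^\delta\to\ty$, though not part of this lemma's statement, would follow by the same estimates applied to differences together with dominated convergence, as remarked after Lemma~\ref{lem:existence_lolli}.
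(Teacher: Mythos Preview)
Your overall framework matches the paper, and the intermediate second-moment bound you write is in fact correct, but two things go wrong. First, and decisively: the convolution sum $\sum_{\om_1+\om_2=\om}|\om_2|^{-2}(|\om_1|^2+|\om_2|^2)^{-1}$ is not $\lesssim 2^{q\eps}$ but $\lesssim|\om|^{-2+\eps}$ (bound $(|\om_1|^2+|\om_2|^2)^{-1}\leq|\om_1|^{-2}$ and use Lemma~\ref{lem:convolution_estimates}, not the paraproduct Lemma~\ref{lem:convolution_estimate_paraproduct}). This missing $|\om|^{-2}$ is the whole story: with your block exponent $2^{q(2+2\gamma+\eps)}$ you obtain only $\ty\in\mcC^{-1-}$, so the claim that the $q$-sum converges for all $\alpha<-2$ fails on the range $\alpha\in(-5/2,-2)$. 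With the corrected convolution estimate the exponent becomes $2^{q(2\gamma+\eps)}$ and the lemma follows.

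Second, your attribution of the individual factors is garbled. The $u_1,u_2$-integrals of $|H^{j_1}|^2,|H^{j_2}|^2$ do \emph{not} produce $(|\om_1|^2+|\om_2|^2)^{-1}$ directly: after It\^o isometry one must expand the square of the $u_3$-integral bilinearly as a double integral over $u_3,u_3'$ (the paper stresses, just before Lemma~\ref{lem:ypsilon_upper_bound}, that the triangle inequality on the $u_3$-integral is not enough here), and then the $u_1,u_2$-integrals yield the kernel $e^{-|u_3-u_3'|(|\om_1|^2+|\om_2|^2)}$; the factor $(|\om_1|^2+|\om_2|^2)^{-1}$ appears only after integrating $u_3,u_3'$ against that kernel. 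Similarly $H^{j_3}$ contributes not $O(1)$ but $|t-s|^\gamma|\om|^{-2+2\gamma}$ via the $L^2$-norm of its increment, and this $|\om|^{-2}$ cancels the $|\om|^2$ from the inner product $\langle\om_2,\om\rangle$ hidden in the $G^{j_3}(\om_2)H^{j_3}(\om)$ pairing that you have suppressed. The paper carries this out by packaging the $u_3,u_3'$-integral into the explicit shape coefficient $\mathsf{D}_{s,t}\SYpsilon$, which it then bounds via an $(\om_1\perp\om_2)$ case split in Lemma~\ref{lem:difference_ypsilon_bound}; your route, once the convolution sum is fixed and the factors are tracked correctly, actually avoids that split.
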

From now on we denote $\Ypsilon{10}\defeq\ty$ to emphasize the separate r\^{o}les of colour and shape.
\begin{details}
	
	We first prove that $\Delta_{q}\Ypsilon{10}(t,x)$ can be represented as an iterated It\^{o} integral. Let
	\begin{equation*}
		\begin{split}
			&\phi_{t,x}^{q}(u_1,x_1,j_1,u_2,x_2,j_2)\\
			&\defeq\sum_{\varsigma\in\Sigma(1,2)}\sum_{j_3=1}^{2}\int_{0}^{t}\dd u_3\int_{\mbT^2}\dd y\\
			&\quad(\partial_{j_3}\mathscr{H}_{t-u_3}\ast\msF^{-1}\varrho_q)(y+x)(\partial_{j_{\varsigma(1)}}\mathscr{H}_{u_3-u_{\varsigma(1)}})(-x_{\varsigma(1)}-y)(\partial_{j_3}\mathscr{G}\ast\partial_{j_{\varsigma(2)}}\mathscr{H}_{u_3-u_{\varsigma(2)}})(-x_{\varsigma(2)}-y),
		\end{split}
	\end{equation*}
	where $\partial_k\mathscr{H}_u$ denotes the kernel to $H^{k}_u$, for $k=1,2$ and $u\geq0$, and $\partial_k\mathscr{G}$ the kernel to $G^{k}$. In this case,
	\begin{equation*}
		\begin{split}
			&\hat{\phi_{t,x}^{q}}(u_1,-\om_1,j_1,u_2,-\om_2,j_2)\\
			&=\euler^{2\uppi\upi\inner{\om_1+\om_2}{x}}\varrho_q(\om_1+\om_2)\\
			&\quad\times\sum_{\varsigma\in\Sigma(1,2)}\sum_{j_3=1}^{2}\int_{0}^{t}\dd u_3H^{j_3}_{t-u_3}(\om_{1}+\om_{2})H^{j_{\varsigma(1)}}_{u_3-u_{\varsigma(1)}}(\om_{\varsigma(1)})G^{j_3}(\om_{\varsigma(2)})H^{j_{\varsigma(2)}}_{u_3-u_{\varsigma(2)}}(\om_{\varsigma(2)}),
		\end{split}
	\end{equation*}
	such that we have the iterated It\^{o} integral representation
	\begin{equation*}
		\begin{split}
			\Delta_q\Ypsilon{10}(t,x)&=\sum_{\om_1,\om_2\in\mbZ^{2}}\sum_{j_1,j_2=1}^{2}\sum_{m_1,m_2\in\mbZ^{2}}\int_{0}^{\infty}\dd W^{j_2}(u_2,m_2)\int_{0}^{u_2}\dd W^{j_1}(u_1,m_1)\\
			&\quad\hat{\het}(u_1,\om_1-m_1)\hat{\het}(u_2,\om_2-m_2)\hat{\phi_{t,x}^{q}}(u_1,-\om_1,j_1,u_2,-\om_2,j_2)\\
			&=I^{2}(\phi_{t,x}^{q}).
		\end{split}
	\end{equation*}
	The Fourier transform $\hat{\Ypsilon{10}}(t,\om)$ is also an iterated It\^{o} integral. Indeed, $\hat{\Ypsilon{10}}(t,\om)=I^{2}(\phi_{t,\om})$, where
	\begin{equation*}
		\begin{split}
			&\phi_{t,\om}(u_1,x_1,j_1,u_2,x_2,j_2)\\
			&\defeq\sum_{\varsigma\in\Sigma(1,2)}\sum_{j_3=1}^{2}\int_{0}^{t}\dd u_3\int_{\mbT^2}\dd y\\
			&\quad(\partial_{j_3}\mathscr{H}_{t-u_3}\ast\euler^{2\uppi\upi\inner{\om}{\place}})(y)(\partial_{j_{\varsigma(1)}}\mathscr{H}_{u_3-u_{\varsigma(1)}})(-x_{\varsigma(1)}-y)(\partial_{j_3}\mathscr{G}\ast\partial_{j_{\varsigma(2)}}\mathscr{H}_{u_3-u_{\varsigma(2)}})(-x_{\varsigma(2)}-y).
		\end{split}
	\end{equation*}
\end{details}
We first derive a useful upper bound on the second moments of $\Ypsilon{10}$ in terms of an explicit, time-dependent function $\mathsf{S}_{s,t}\SYpsilon$. We call this function the \emph{shape coefficient}.
\begin{definition}\label{def:shape_coefficient_ypsilon}
	Let $s,t\geq0$ and $\om_1,\om_2\in2\uppi\mbZ^2\setminus\{0\}$. We define the \emph{shape coefficient}
	\begin{equation}\label{eq:shape_coefficient_ypsilon}
		\begin{split}
			\mathsf{S}_{s,t}\SYpsilon(\om_1,\om_2)\defeq&\int_{0}^{t}\dd u_3\int_{0}^{s}\dd u_3'\int_{-\infty}^{u_3\wedge u_3'}\dd u_2\int_{-\infty}^{u_3\wedge u_3'}\dd u_1\\
			&\quad \euler^{-\abs{t+s-(u_3+u_3')}\abs{\om_1+\om_2}^2}\euler^{-\abs{u_3+u_3'-2u_1}\abs{\om_1}^2}\euler^{-\abs{u_3+u_3'-2u_2}\abs{\om_2}^2},
		\end{split}
	\end{equation}
	and the \emph{increment shape coefficient}
	\begin{equation}\label{eq:increment_shape_coefficient_ypsilon}
		\mathsf{D}_{s,t}\SYpsilon\defeq\mathsf{S}_{t,t}\SYpsilon+\mathsf{S}_{s,s}\SYpsilon-\mathsf{S}_{s,t}\SYpsilon-\mathsf{S}_{t,s}\SYpsilon.
	\end{equation}
\end{definition}
Here, the letter $\mathsf{S}$ stands for shape and the letter $\mathsf{D}$ for difference. It will be clear from the proof of Lemma~\ref{lem:ypsilon_upper_bound} that $\mathsf{D}_{s,t}\SYpsilon\geq0$.

Shape coefficients play a central r\^{o}le in our bounds, as they capture the iterated applications of $\vdiv\mcI$; they fundamentally depend on the shape of the diagram, as opposed to the additional colouring induced by $\nabla\Phi$. Using this notation, we obtain the following bound.
\begin{lemma}\label{lem:ypsilon_upper_bound}
	Let $s,t\in[0,T]$, $x\in\mbT^{2}$ and $q\in\mbN_{-1}$. It holds that
	\begin{equation}\label{eq:ypsilon_upper_bound}
		\begin{split}
			&\mbE[\abs{\Delta_{q}\Ypsilon{10}(t,x)-\Delta_{q}\Ypsilon{10}(s,x)}^2]\\
			&\leq\norm{\het}_{C_{T}L^{\infty}}^{4}2!(2\uppi)^{4}\sum_{\om\in\mbZ^{2}}\varrho_{q}(\om)^{2}\sum_{\substack{\om_1,\om_2\in\mbZ^{2}\setminus\{0\}\\\om=\om_1+\om_2}}\abs{\om_1}^{2}\abs{\om_2}^{-2}\abs{\inner{\om_2}{\om_1+\om_2}}^2\mathsf{D}_{s,t}\SYpsilon(2\uppi\om_1,2\uppi\om_2).
		\end{split}
	\end{equation}
\end{lemma}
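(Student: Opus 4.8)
The plan is to first represent $\Delta_q\Ypsilon{10}(t,x)$ as a second-order iterated It\^{o} integral (this is essentially the content of the preceding \texttt{\textbackslash details} block: with $\phi$ defined through the heat and Green's kernels one has $\Delta_q\Ypsilon{10}(t,x)=I^2_\srdet(\phi_{t,x,q})$ for an appropriate integrand $\phi_{t,x,q}\in L^2((0,\infty)^2_{>}\times\mbT^4\times\{1,2\}^2;\mbC)$). Then, since increments are also iterated It\^{o} integrals, $\Delta_q\Ypsilon{10}(t,x)-\Delta_q\Ypsilon{10}(s,x)=I^2_\srdet(\phi_{t,x,q}-\phi_{s,x,q})$, I would apply Lemma~\ref{lem:isometry_argument} to dominate $\mbE(\abs{\Delta_q\Ypsilon{10}(t,x)-\Delta_q\Ypsilon{10}(s,x)}^2)$ by $\norm{\srdet}_{C_TL^\infty}^4\,\mbE(\abs{I^2(\phi_{t,x,q}-\phi_{s,x,q})}^2)$, replacing the heterogeneous integral by the homogeneous one.

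Next I would compute the second moment of the homogeneous iterated integral by It\^{o}'s isometry, which turns it into a Lebesgue integral over the simplex $0<u_2<u_1$ of $\abs{\hat\phi}^2$ summed over frequencies and the indices $j_1,j_2$. Using the explicit form of $\hat\phi$ — the sum over $\varsigma\in\Sigma(1,2)$ of products $H^{j_3}_{t-u_3}(\om_1+\om_2)H^{j_{\varsigma(1)}}_{u_3-u_{\varsigma(1)}}(\om_{\varsigma(1)})G^{j_3}(\om_{\varsigma(2)})H^{j_{\varsigma(2)}}_{u_3-u_{\varsigma(2)}}(\om_{\varsigma(2)})$ integrated in $u_3$ against $\varrho_q(\om_1+\om_2)$ — I expand the square. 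Crossing with the other copy (evaluated at $u_3'$) and carrying out the sums over $j_1,j_2,j_3$ of the multipliers $H^{j}_u(\om)=2\uppi\upi\om^j\euler^{-u\abs{2\uppi\om}^2}$, $G^j(\om)=2\uppi\upi\om^j\abs{2\uppi\om}^{-2}$, the index sums produce inner products: $\sum_{j_3}$ gives $\inner{\om_1+\om_2}{\om_2}$-type factors through the contraction of the $H^{j_3}$ and $G^{j_3}$ legs, while $\sum_{j_1},\sum_{j_2}$ give $\abs{2\uppi\om_1}^2$, $\abs{2\uppi\om_2}^2$ from the paired noise legs. Symmetrization over $\varsigma$ is absorbed into the combinatorial factor $2!$, and relabelling to make all frequency pairs unordered contributes a further symmetry factor; together with the four factors of $2\uppi$ from the derivative multipliers this accounts for the prefactor $2!(2\uppi)^4$. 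After extracting $\abs{\om_1}^2\abs{\om_2}^{-2}\abs{\inner{\om_2}{\om_1+\om_2}}^2$ from the multiplier algebra, the remaining time integrals against $\varrho_q(\om_1+\om_2)^2$ — after substituting $\om_i\mapsto 2\uppi\om_i$ so the exponents read $\abs{2\uppi\om_i}^2$ — are exactly $\mathsf{S}_{s,t}\SYpsilon$ and its companions; the four-term combination coming from expanding $\abs{\hat\phi(t)-\hat\phi(s)}^2=\abs{\hat\phi(t)}^2-\hat\phi(t)\overline{\hat\phi(s)}-\overline{\hat\phi(t)}\hat\phi(s)+\abs{\hat\phi(s)}^2$ assembles into $\mathsf{D}_{s,t}\SYpsilon(2\uppi\om_1,2\uppi\om_2)$ as in~\eqref{eq:increment_shape_coefficient_ypsilon}.

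A technical point to be careful about is that the definition~\eqref{eq:shape_coefficient_ypsilon} extends the $u_1,u_2$ integrals down to $-\infty$ (convergent because of the decaying exponentials), whereas It\^{o}'s isometry only gives integration over $[0,u_3\wedge u_3']$; since all integrands are nonnegative this only produces an upper bound, consistent with the "$\leq$" in the claim, and it also makes the nonnegativity $\mathsf{D}_{s,t}\SYpsilon\geq0$ transparent once one recognizes the four-term combination as $\mbE(\abs{\cdot}^2)$ of a genuine increment. Another bookkeeping issue is that the two resonant legs in $\Ypsilon{10}$ — one carrying $H$ only and one carrying $GH$ — are \emph{not} symmetric, so the symmetrization over $\varsigma$ must be kept honestly and not collapsed; this is why the bound retains the asymmetric factor $\abs{\om_1}^2\abs{\om_2}^{-2}$ rather than a symmetric expression. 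The main obstacle is precisely this careful tracking of the index sums over $j_1,j_2,j_3$ and the associated combinatorial constants: one must verify that the contraction of the vector-valued kernels indeed yields the scalar $\abs{\inner{\om_2}{\om_1+\om_2}}^2$ after summing over $j_3$ and factors $\abs{\om_1}^2$, $\abs{\om_2}^{-2}\cdot\abs{2\uppi\om_2}^2$ from the noise legs and the Green's multiplier, and that every constant is bounded by $2!(2\uppi)^4$; the time-integral identification with $\mathsf{S}\SYpsilon$ is then a matter of reading off~\eqref{eq:shape_coefficient_ypsilon}. The remaining estimate of the right-hand side of~\eqref{eq:ypsilon_upper_bound} (summing the shape coefficients, exploiting the Green's-function symmetry to gain a logarithm) is deferred to the subsequent lemmas and the appendix on shape-coefficient estimates, so for this lemma it suffices to stop at the displayed bound.
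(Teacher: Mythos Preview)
Your proposal is essentially correct and follows the same route as the paper: apply Lemma~\ref{lem:isometry_argument} to pass to the homogeneous diagram $\Ypsilon{1}$, use orthogonality of Fourier modes to reduce to $\sum_\om\varrho_q(\om)^2\mbE(\abs{\hat{\Ypsilon{1}}(t,\om)-\hat{\Ypsilon{1}}(s,\om)}^2)$, and then compute the latter via It\^{o}'s isometry, extracting the colouring $(2\uppi)^4\abs{\om_1}^2\abs{\om_2}^{-2}\abs{\inner{\om_2}{\om_1+\om_2}}^2$ from the index sums and identifying the residual time integrals with $\mathsf{D}_{s,t}\SYpsilon$.

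The one point where your exposition is vaguer than the paper's is the handling of the symmetrization. You say the sum over $\varsigma\in\Sigma(1,2)$ is ``absorbed'' into the $2!$; the paper makes this precise in three steps. First, the integrand after It\^{o}'s isometry, $\abs{\sum_\varsigma(\cdots)}^2$, is symmetric under simultaneous exchange of $(u_1,\om_1,j_1)\leftrightarrow(u_2,\om_2,j_2)$, so the simplex integral becomes $\tfrac{1}{2!}$ times the full integral over $(0,\infty)^2$. Second --- and this is the step that produces the inequality --- Jensen gives $\abs{\sum_{\varsigma}a_\varsigma}^2\leq 2!\sum_\varsigma\abs{a_\varsigma}^2$. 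Third, since we also sum over $\om_1,\om_2,j_1,j_2$ and integrate over $u_1,u_2$, each permutation contributes identically after relabelling, collapsing $\sum_\varsigma$ to $2!$ times the identity term. The net factor is $\tfrac{1}{2!}\cdot 2!\cdot 2!=2!$. Without the Jensen step the cross-permutation terms in $\abs{\sum_\varsigma(\cdots)}^2$ do not produce the clean asymmetric colouring you quote; so it is worth naming Jensen explicitly rather than leaving it implicit. Your observation about extending the $u_1,u_2$ integrals to $(-\infty,\infty)$ being harmless (since after Jensen the integrand is nonnegative) is correct and is exactly how the paper arrives at $\mathsf{S}_{s,t}\SYpsilon$.
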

We refer to the prefactor $\abs{\om_1}^{2}\abs{\om_2}^{-2}\abs{\inner{\om_2}{\om_1+\om_2}}^2$ as the \emph{colouring} of $\Ypsilon{10}$.

Before we give the proof of Lemma~\ref{lem:ypsilon_upper_bound}, let us comment on its general strategy. In $\Ypsilon{10}$, $\PreThree{10}$ and $\PreThree{20}$ it does not suffice to apply the triangle inequality to push the absolute value past the integral sign. This is related to the appearance of the sub-diagram $\Cherry{10}$, which we do not expect to be pointwise evaluable. Instead, we rely on bilinearity and expand the integrand according to the identity
\begin{equation*}
	(f(t)-f(s))\overline{(g(t)-g(s))}=f(t)\overline{g(t)}+f(s)\overline{g(s)}-f(s)\overline{g(t)}-f(t)\overline{g(s)},
\end{equation*}
which leads to the common equation for this type of shape coefficient,
\begin{equation*}
	\mathsf{D}_{s,t}=\mathsf{S}_{t,t}+\mathsf{S}_{s,s}-\mathsf{S}_{s,t}-\mathsf{S}_{t,s}.
\end{equation*}
We refer to Lemmas~\ref{lem:checkmark_upper_bound}--\ref{lem:existence_canonical} for instances where we can simplify our calculations by applying the triangle inequality.
\begin{proof}[Proof of Lemma~\ref{lem:ypsilon_upper_bound}]
	Let $s,t\in[0,T]$, $x\in\mbT^{2}$ and $q\in\mbN_{-1}$. An application of~\eqref{eq:isometry_bound_delta=0} yields
	\begin{equation*}
		\mbE[\abs{\Delta_{q}\Ypsilon{10}(t,x)-\Delta_{q}\Ypsilon{10}(s,x)}^2]\leq\norm{\het}_{C_{T}L^{\infty}}^{4}\mbE[\abs{\Delta_{q}\Ypsilon{1}(t,x)-\Delta_{q}\Ypsilon{1}(s,x)}^2]
	\end{equation*}
	where $\Ypsilon{1}$ is defined by
	\begin{equation*}
		\begin{split}
			\hat{\Ypsilon{1}}(t,\om)&\defeq\sum_{\substack{\om_1,\om_2\in\mbZ^{2}\\\om=\om_1+\om_2}}\sum_{j_1,j_2,j_3=1}^{2}\int_{0}^{t}\dd u_3\int_{0}^{u_3}\dd W^{j_2}(u_2,\om_2)\int_{0}^{u_2}\dd W^{j_1}(u_1,\om_1)\\
			&\quad\times\sum_{\varsigma\in\Sigma(1,2)}H^{j_3}_{t-u_3}(\om_{\varsigma(1)}+\om_{\varsigma(2)})H^{j_{\varsigma(1)}}_{u_3-u_{\varsigma(1)}}(\om_{\varsigma(1)})G^{j_3}(\om_{\varsigma(2)})H^{j_{\varsigma(2)}}_{u_3-u_{\varsigma(2)}}(\om_{\varsigma(2)}).
		\end{split}
	\end{equation*}
	Using that
	\begin{equation*}
		\mbE\Bigl[\hat{\Ypsilon{1}}(t,\om)\overline{\hat{\Ypsilon{1}}(s,\om')}\Bigr]=0\quad\text{if}\quad\om\neq\om'\in\mbZ^{2},
	\end{equation*}
	 we obtain
	\begin{equation*}
		\mbE[\abs{\Delta_{q}\Ypsilon{1}(t,x)-\Delta_{q}\Ypsilon{1}(s,x)}^2]=\sum_{\om\in\mbZ^{2}}\varrho_{q}(\om)^{2}\mbE\Bigl[\Bigl\lvert\hat{\Ypsilon{1}}(t,\om)-\hat{\Ypsilon{1}}(s,\om)\Bigr\rvert^2\Bigr].
	\end{equation*}
	It follows by an application of It\^{o}'s isometry and Jensen's inequality,
	\begin{equation*}
		\begin{split}
			&\mbE\Bigl[\Bigl\lvert\hat{\Ypsilon{1}}(t,\om)-\hat{\Ypsilon{1}}(s,\om)\Bigr\rvert^2\Bigr]\\
			&\leq\sum_{\substack{\om_1,\om_2\in\mbZ^{2}\\\om=\om_1+\om_2}}\sum_{j_1,j_2,j_3,j_3'=1}^{2}2!\int_{-\infty}^{\infty}\dd u_2\int_{-\infty}^{\infty}\dd u_1\int_{0}^{\infty}\dd u_3\int_{0}^{\infty}\dd u_3'\\
			&\quad(H^{j_3}_{t-u_3}(\om_{1}+\om_{2})-H^{j_3}_{s-u_3}(\om_{1}+\om_{2}))H^{j_{1}}_{u_3-u_{1}}(\om_{1})G^{j_3}(\om_{2})H^{j_{2}}_{u_3-u_{2}}(\om_{2})\\
			&\qquad\times\overline{(H^{j_3'}_{t-u_3'}(\om_{1}+\om_{2})-H^{j_3'}_{s-u_3'}(\om_{1}+\om_{2}))H^{j_{1}}_{u_3'-u_{1}}(\om_{1})G^{j_3'}(\om_{2})H^{j_{2}}_{u_3'-u_{2}}(\om_{2})},
		\end{split}
	\end{equation*}
	where we used that the complex absolute value of $z\in\mbC$ is given by $\abs{z}^{2}=z\overline{z}$.
	\begin{details}
		We apply It\^{o}'s isometry to derive for $s,t\in[0,T]$,
		\begin{equation*}
			\begin{split}
				&\mbE\Bigl[\Bigl\lvert\hat{\Ypsilon{1}}(t,\om)-\hat{\Ypsilon{1}}(s,\om)\Bigr\rvert^2\Bigr]\\
				&=\sum_{\substack{\om_1,\om_2\in\mbZ^{2}\\\om=\om_1+\om_2}}\sum_{j_1,j_2=1}^{2}\int_{0}^{\infty}\dd u_2\int_{0}^{u_2}\dd u_1\\
				&\Bigl\lvert\sum_{\varsigma\in\Sigma(1,2)}\sum_{j_3=1}^{2}\int_{0}^{\infty}\dd u_3(H^{j_3}_{t-u_3}(\om_{1}+\om_{2})-H^{j_3}_{s-u_3}(\om_{1}+\om_{2}))H^{j_{\varsigma(1)}}_{u_3-u_{\varsigma(1)}}(\om_{\varsigma(1)})G^{j_3}(\om_{\varsigma(2)})H^{j_{\varsigma(2)}}_{u_3-u_{\varsigma(2)}}(\om_{\varsigma(2)}) \Bigr\rvert^2.
			\end{split}
		\end{equation*}
		We use that the integrand is symmetric, implying
		\begin{equation*}
			\int_{0}^{\infty}\dd u_2\int_{0}^{u_2}\dd u_1=\int_{0}^{\infty}\dd u_2\int_{0}^{\infty}\dd u_1\mathds{1}_{u_1\leq u_2}=\frac{1}{2!}\int_{0}^{\infty}\dd u_2\int_{0}^{\infty}\dd u_1.
		\end{equation*}
		In order to remove the symmetrization, we apply Jensen's inequality to estimate
		\begin{equation*}
			\begin{split}
				&\mbE\Bigl[\Bigl\lvert\hat{\Ypsilon{1}}(t,\om)-\hat{\Ypsilon{1}}(s,\om)\Bigr\rvert^2\Bigr]\\
				&\leq\sum_{\substack{\om_1,\om_2\in\mbZ^{2}\\\om=\om_1+\om_2}}\sum_{j_1,j_2=1}^{2}2!\int_{0}^{\infty}\dd u_2\int_{0}^{\infty}\dd u_1\\
				&\quad\Bigl\lvert\sum_{j_3=1}^{2}\int_{0}^{\infty}\dd u_3(H^{j_3}_{t-u_3}(\om_{1}+\om_{2})-H^{j_3}_{s-u_3}(\om_{1}+\om_{2}))H^{j_{1}}_{u_3-u_{1}}(\om_{1})G^{j_3}(\om_{2})H^{j_{2}}_{u_3-u_{2}}(\om_{2})\Bigr\rvert^2.
			\end{split}
		\end{equation*}
		To simplify the resulting expressions, we extend the integrals over $u_1$, $u_2$ to $(-\infty,\infty)$,
		\begin{equation*}
			\begin{split}
				&\mbE\Bigl[\Bigl\lvert\hat{\Ypsilon{1}}(t,\om)-\hat{\Ypsilon{1}}(s,\om)\Bigr\rvert^2\Bigr]\\
				&\leq\sum_{\substack{\om_1,\om_2\in\mbZ^{2}\\\om=\om_1+\om_2}}\sum_{j_1,j_2=1}^{2}2!\int_{-\infty}^{\infty}\dd u_2\int_{-\infty}^{\infty}\dd u_1\\
				&\quad\Bigl\lvert\sum_{j_3=1}^{2}\int_{0}^{\infty}\dd u_3(H^{j_3}_{t-u_3}(\om_{1}+\om_{2})-H^{j_3}_{s-u_3}(\om_{1}+\om_{2}))H^{j_{1}}_{u_3-u_{1}}(\om_{1})G^{j_3}(\om_{2})H^{j_{2}}_{u_3-u_{2}}(\om_{2})\Bigr\rvert^2.
			\end{split}
		\end{equation*}
	\end{details}
	Recalling the definition of $\mathsf{D}_{s,t}\SYpsilon$ from~\eqref{eq:increment_shape_coefficient_ypsilon}, we obtain
	\begin{equation*}
		\mbE\Bigl[\Bigl\lvert\hat{\Ypsilon{1}}(t,\om)-\hat{\Ypsilon{1}}(s,\om)\Bigr\rvert^2\Bigr]\leq2!(2\uppi)^{4}\sum_{\substack{\om_1,\om_2\in\mbZ^{2}\setminus\{0\}\\\om=\om_1+\om_2}}\abs{\om_1}^{2}\abs{\om_2}^{-2}\abs{\inner{\om_2}{\om_1+\om_2}}^2\mathsf{D}_{s,t}\SYpsilon(2\uppi\om_1,2\uppi\om_2).
	\end{equation*}
	This yields the claim.
\end{proof}
To evaluate the integrals in $\mathsf{S}_{s,t}\SYpsilon$, we use a case distinction over $(\om_1\perp\om_2)$ and $\lnot(\om_1\perp\om_2)$. We can then find explicit expressions for $\mathsf{D}_{s,t}\SYpsilon$ via~\eqref{eq:increment_shape_coefficient_ypsilon}, which can be used to derive the necessary bounds. This is the content of Lemma~\ref{lem:difference_ypsilon_bound}. We can now give the proof of Lemma~\ref{lem:existence_ypsilon}.
\begin{proof}[Proof of Lemma~\ref{lem:existence_ypsilon}]
	Let $T>0$ and $\gamma\in[0,1]$. To decompose the right-hand side of~\eqref{eq:ypsilon_upper_bound}, we introduce the orthogonal sum
	\begin{equation*}
		\boldsymbol{\mathrm{E}}^{\perp}\Bigl[\Bigl\lvert\hat{\Ypsilon{1}}(t,\om)-\hat{\Ypsilon{1}}(s,\om)\Bigr\rvert^2\Bigr]\defeq\sum_{\substack{\om_1,\om_2\in\mbZ^{2}\setminus\{0\}\\\om=\om_1+\om_2\\\om_1\perp\om_2}}\abs{\om_1}^{2}\abs{\om_2}^{-2}\abs{\inner{\om_2}{\om_1+\om_2}}^2\mathsf{D}_{s,t}\SYpsilon(2\uppi\om_1,2\uppi\om_2)
	\end{equation*}
	and the non-orthogonal sum
	\begin{equation*}
		\boldsymbol{\mathrm{E}}^{\lnot}\Bigl[\Bigl\lvert\hat{\Ypsilon{1}}(t,\om)-\hat{\Ypsilon{1}}(s,\om)\Bigr\rvert^2\Bigr]\defeq\sum_{\substack{\om_1,\om_2\in\mbZ^{2}\setminus\{0\}\\\om=\om_1+\om_2\\\lnot(\om_1\perp\om_2)}}\abs{\om_1}^{2}\abs{\om_2}^{-2}\abs{\inner{\om_2}{\om_1+\om_2}}^2\mathsf{D}_{s,t}\SYpsilon(2\uppi\om_1,2\uppi\om_2).
	\end{equation*}
	We obtain the decomposition
	\begin{equation*}
		\begin{split}
			&\mbE[\abs{\Delta_{q}\Ypsilon{10}(t,x)-\Delta_{q}\Ypsilon{10}(s,x)}^2]\\
			&\leq\norm{\het}_{C_{T}L^{\infty}}^{4}2!(2\uppi)^{4}\sum_{\om\in\mbZ^{2}\setminus\{0\}}\varrho_{q}(\om)^{2}\Bigl(\boldsymbol{\mathrm{E}}^{\perp}\Bigl[\Bigl\lvert\hat{\Ypsilon{1}}(t,\om)-\hat{\Ypsilon{1}}(s,\om)\Bigr\rvert^2\Bigr]+\boldsymbol{\mathrm{E}}^{\lnot}\Bigl[\Bigl\lvert\hat{\Ypsilon{1}}(t,\om)-\hat{\Ypsilon{1}}(s,\om)\Bigr\rvert^2\Bigr]\Bigr),
		\end{split}
	\end{equation*}
	where we used that $\hat{\Ypsilon{1}}(t,0)=0$. In the orthogonal sum $\boldsymbol{\mathrm{E}}^{\perp}$, we obtain by Lemma~\ref{lem:difference_ypsilon_bound},
	\begin{equation*}
		\mathsf{D}_{s,t}\SYpsilon(2\uppi\om_1,2\uppi\om_2)\lesssim\abs{t-s}^{\gamma}\abs{\om_1}^{-2}\abs{\om_2}^{-2}\abs{\om_1+\om_2}^{-4+2\gamma},
	\end{equation*}
	so that
	\begin{equation*}
		\boldsymbol{\mathrm{E}}^{\perp}\Bigl[\Bigl\lvert\hat{\Ypsilon{1}}(t,\om)-\hat{\Ypsilon{1}}(s,\om)\Bigr\rvert^2\Bigr]\lesssim\abs{t-s}^{\gamma}\abs{\om}^{-4+2\gamma}\sum_{\substack{\om_1,\om_2\in\mbZ^{2}\setminus\{0\}\\\om=\om_1+\om_2\\\om_1\perp\om_2}}1,
	\end{equation*}
	where we used the orthogonality $(\om_1\perp\om_2)$ to identify $\abs{\inner{\om_2}{\om_1+\om_2}}^2=\abs{\om_2}^{4}$. Using the orthogonality again, we obtain the bound $\abs{\om_1}^{2}\leq\abs{\om_1}^{2}+\abs{\om_2}^{2}=\abs{\om}^{2}$. By applying~\eqref{eq:summation_estimates_0} to the finite sum over $\om_1\in\mbZ^{2}\setminus\{0\}$, $\abs{\om_1}\leq\abs{\om}$, we arrive at
	\begin{equation*}
		\boldsymbol{\mathrm{E}}^{\perp}\Bigl[\Bigl\lvert\hat{\Ypsilon{1}}(t,\om)-\hat{\Ypsilon{1}}(s,\om)\Bigr\rvert^2\Bigr]\lesssim\abs{t-s}^{\gamma}\abs{\om}^{-2+2\gamma}.
	\end{equation*}
	Next we consider the non-orthogonal sum $\boldsymbol{\mathrm{E}}^{\lnot}$. Lemma~\ref{lem:difference_ypsilon_bound} yields
	\begin{equation*}
		\mathsf{D}_{s,t}\SYpsilon(2\uppi\om_1,2\uppi\om_2)\lesssim\abs{t-s}^{\gamma}\abs{\om_1}^{-4+2\gamma}\abs{\om_2}^{-2}\abs{\om_1+\om_2}^{-2}+\abs{t-s}^{\gamma}\abs{\om_1}^{-4}\abs{\om_2}^{-2}\abs{\om_1+\om_2}^{-2+2\gamma},
	\end{equation*}
	so that by Lemma~\ref{lem:convolution_estimates} it follows that for any $\gamma\in(0,1)$ and $\eps\in(0,(2-2\gamma\wedge 1))$,
	\begin{equation*}
		\begin{split}
			\boldsymbol{\mathrm{E}}^{\lnot}\Bigl[\Bigl\lvert\hat{\Ypsilon{1}}(t,\om)-\hat{\Ypsilon{1}}(s,\om)\Bigr\rvert^2\Bigr]&\lesssim\abs{t-s}^{\gamma}\sum_{\substack{\om_1,\om_2\in\mbZ^{2}\setminus\{0\}\\\om=\om_1+\om_2\\\lnot(\om_1\perp\om_2)}}(\abs{\om_1}^{-2+2\gamma}\abs{\om_2}^{-2}+\abs{\om}^{2\gamma}\abs{\om_1}^{-2}\abs{\om_2}^{-2})\\
			&\lesssim\abs{t-s}^{\gamma}\abs{\om}^{-2+2\gamma+2\eps},
		\end{split}
	\end{equation*}
	where we used the Cauchy--Schwarz inequality to control $\abs{\inner{\om_2}{\om_1+\om_2}}^2\lesssim\abs{\om_2}^{2}\abs{\om_1+\om_2}^{2}$. Applying these results to the bound~\eqref{eq:ypsilon_upper_bound}, we arrive at
	\begin{equation*}
		\mbE[\abs{\Delta_{q}\Ypsilon{10}(t,x)-\Delta_{q}\Ypsilon{10}(s,x)}^2]\lesssim\abs{t-s}^{\gamma}\norm{\het}_{C_{T}L^{\infty}}^{4}2^{q(2\gamma+2\eps)},
	\end{equation*}
	which is uniform in $x\in\mbT^{2}$. Assume in addition $\eps<\gamma/2$. Using that $\Delta_q\Ypsilon{10}$ denotes an iterated It\^{o} integral, we obtain by Lemma~\ref{lem:Nelson_estimate} and Lemma~\ref{lem:existence_criterion} for any $p\in[1,\infty)$,
	\begin{equation*}
		\mbE[\norm{\Ypsilon{10}}_{C_T^{\gamma/2-\eps}\mcC^{-\gamma-4\eps}}^{p}]^{1/p}\lesssim\norm{\het}_{C_TL^{\infty}}^{2}
	\end{equation*}
	and therefore $\Ypsilon{10}\in\msL^{\kappa}_{T}\mcC^{0-}(\mbT^{2})$ a.s.\ for any $\kappa\in(0,1/2)$.
\end{proof}
\begin{details}
	\paragraph{Proof that $\ty^{\delta}\in\msL_{T}^{\kappa}\mcC^{2\alpha+4}(\mbT^{2})$.}
	We give a generic argument that shows that the mollified diagram has at least the same regularity as its un-mollified counterpart. An application of~\eqref{eq:isometry_bound_delta>0} yields the estimate
	\begin{equation*}
		\mbE[\abs{\Delta_{q}\ty^{\delta}(t,x)-\Delta_{q}\ty^{\delta}(s,x)}^2]\leq\norm{\varphi}_{L^{\infty}}^{4}\norm{\het}_{C_{T}L^{\infty}}^{4}\mbE[\abs{\Delta_{q}\Ypsilon{1}(t,x)-\Delta_{q}\Ypsilon{1}(s,x)}^2].
	\end{equation*}
	Hence, it follows by the proof of Lemma~\ref{lem:existence_ypsilon} that $\ty^{\delta}\in\msL_{T}^{\kappa}\mcC^{2\alpha+4}(\mbT^{2})$.
	\paragraph{Proof that $\ty^{\delta}\in\msL_{T}^{\kappa}\mcC^{2\alpha+5}(\mbT^{2})$.}
	We have shown in Lemma~\ref{lem:existence_lolli} that $\ti^{\delta}\in\msL_{T}^{\kappa}\mcC^{\alpha+2}(\mbT^{2})$. It then follows by the definition $\ty^{\delta}=\vdiv\mcI[\ti^{\delta}\nabla\Phi_{\ti^{\delta}}]-\tl^{\delta}$ and the triangle inequality, that
	\begin{equation*}
		\norm{\ty^{\delta}}_{\msL_{T}^{\kappa}\mcC^{2\alpha+5}}\leq\norm{\vdiv\mcI[\ti^{\delta}\nabla\Phi_{\ti^{\delta}}]}_{\msL_{T}^{\kappa}\mcC^{2\alpha+5}}+\norm{\tl^{\delta}}_{\msL_{T}^{\kappa}\mcC^{2\alpha+5}},
	\end{equation*}
	where the second term $\norm{\tl^{\delta}}_{\msL_{T}^{\kappa}\mcC^{2\alpha+5}}$ is finite by Lemma~\ref{lem:existence_canonical}. To estimate the first term, we apply Schauder's estimate (Lemma~\ref{lem:Schauder}) and Bony's estimates (Lemma~\ref{lem:Bony}), to bound
	\begin{equation*}
		\norm{\vdiv\mcI[\ti^{\delta}\nabla\Phi_{\ti^{\delta}}]}_{\msL_{T}^{\kappa}\mcC^{2\alpha+5}}\lesssim\norm{\ti^{\delta}\nabla\Phi_{\ti^{\delta}}}_{C_{T}\mcC^{2\alpha+4}}\lesssim\norm{\ti^{\delta}\nabla\Phi_{\ti^{\delta}}}_{C_{T}\mcC^{\alpha+2}}\lesssim\norm{\ti^{\delta}}_{C_{T}\mcC^{\alpha+2}}^{2},
	\end{equation*}
	where we used that $2\alpha+5>0$.
	\paragraph{Convergence of $(\ty^{\delta})_{\delta>0}$ to $\ty$ in $\msL_{T}^{\kappa}\mcC^{2\alpha+4}(\mbT^{2})$.}
	It suffices to show $\lim_{\delta\to0}\mbE[\norm{\ty-\ty^{\delta}}_{\msL_{T}^{\kappa}\mcC^{2\alpha+4}}^{p}]=0$. Let $\gamma\in(0,1]$, $\eps\in(0,\gamma/2)$, $\max\{1/\eps,2\}<p<\infty$ and $0<\vartheta'<\vartheta\leq1$ be such that $2\vartheta<1-\gamma$. We use Nelson's estimate (Lemma~\ref{lem:Nelson_estimate}) and Kolmogorov's continuity criterion (Lemma~\ref{lem:existence_criterion}) to bound
	\begin{equation}\label{eq:ypsilon_Kolmogorov_Nelson_moll_difference}
		\begin{split}
			&\mbE[\norm{\ty-\ty^{\delta}}_{C_{T}^{\gamma/2-\eps}\mcC^{-\gamma-2\vartheta-3\eps}}^{p}]\\
			&\lesssim\int_{0}^{T}\int_{0}^{T}\abs{t-s}^{-2-p(\gamma/2-\eps)}\sum_{q\in\mbN_{-1}}2^{pq(-\gamma-2\vartheta-\eps)}\int_{\mbT^{2}}\mbE[\abs{\Delta_q(\ty-\ty^{\delta})(t,x)-\Delta_q(\ty-\ty^{\delta})(s,x)}^2]^{p/2}\dd x\dd s\dd t.
		\end{split}
	\end{equation}
	We use~\eqref{eq:isometry_difference} and the representation of $\ty$ as an iterated It\^{o} integral to estimate
	\begin{equation*}
		\begin{split}
			&\mbE[\abs{\Delta_{q}(\ty-\ty^{\delta})(t,x)-\Delta_{q}(\ty-\ty^{\delta})(s,x)}^{2}]\\
			&=\mbE[\abs{I^{2}_{0;\het}(\phi_{t,x}^{q})-I^{2}_{\delta;\het}(\phi_{t,x}^{q})-(I^{2}_{0;\het}(\phi_{s,x}^{q})-I^{2}_{\delta;\het}(\phi_{s,x}^{q}))}^{2}]\\
			&=\mbE[\abs{I^{2}_{0;\het}(\phi^{q}_{t,x}-\phi^{q}_{s,x})-I^{2}_{\delta;\het}(\phi^{q}_{t,x}-\phi^{q}_{s,x})}^{2}]\\
			&\lesssim\delta^{2\vartheta'}\norm{1-\varphi}_{C^{1}(\mbR^{2})}^{2}(1+\norm{\varphi}_{L^{\infty}})^{2}\norm{\het}_{C_{T}\mcH^{1}}^{4}\mbE[\abs{I^{2}((1-\Delta)^{\vartheta/2}(\phi^{q}_{t,x}-\phi^{q}_{s,x}))}^{2}].
		\end{split}
	\end{equation*} 
	By It\^{o}'s isometry (Lemma~\ref{lem:Ito_isometry}),
	\begin{equation*}
		\begin{split}
			&\mbE[\abs{I^{2}((1-\Delta)^{\vartheta/2}(\phi^{q}_{t,x}-\phi^{q}_{s,x}))}^{2}]\\
			&\lesssim\sum_{\om_{1},\om_{2}\in\mbZ^{2}}\sum_{j_{1},j_{2}=1}^{2}\int_{0}^{\infty}\dd u_{1}\int_{0}^{u_{1}}\dd u_{2}(1+\abs{2\uppi\om_{1}}^{2}+\abs{1\uppi\om_{2}}^{2})^{\vartheta}\abs{(\hat{\phi^{q}_{t,x}}-\hat{\phi^{q}_{s,x}})(u_{1},-\om_{1},j_{1},u_{2},-\om_{2},j_{2})}^{2}.
		\end{split}
	\end{equation*}
	We use the explicit form of $\hat{\phi_{t,x}^{q}}$, Jensen's inequality and Lemma~\ref{lem:ypsilon_upper_bound} to bound
	\begin{equation*}
		\begin{split}
			&\sum_{\om_{1},\om_{2}\in\mbZ^{2}}\sum_{j_{1},j_{2}=1}^{2}\int_{0}^{\infty}\dd u_{1}\int_{0}^{u_{1}}\dd u_{2}(1+\abs{2\uppi\om_{1}}^{2}+\abs{1\uppi\om_{2}}^{2})^{\vartheta}\abs{(\hat{\phi^{q}_{t,x}}-\hat{\phi^{q}_{s,x}})(u_{1},-\om_{1},j_{1},u_{2},-\om_{2},j_{2})}^{2}\\
			&=\sum_{\om_{1},\om_{2}\in\mbZ^{2}}\sum_{j_{1},j_{2}=1}^{2}\int_{0}^{\infty}\dd u_{1}\int_{0}^{u_{1}}\dd u_{2}(1+\abs{2\uppi\om_{1}}^{2}+\abs{1\uppi\om_{2}}^{2})^{\vartheta}\varrho_{q}(\om_{1}+\om_{2})^{2}\\
			&\quad\times\Bigl\lvert\sum_{\varsigma\in\Sigma(1,2)}\sum_{j_{3}=1}^{2}\int_{0}^{\infty}\dd u_{3}(H^{j_{3}}_{t-u_{3}}(\om_{1}+\om_{2})-H^{j_{3}}_{s-u_{3}}(\om_{1}+\om_{2}))H^{j_{\varsigma(1)}}_{u_{3}-u_{\varsigma(1)}}(\om_{\varsigma(1)})G^{j_{3}}(\om_{\varsigma(2)})H^{j_{\varsigma(2)}}_{u_{3}-u_{\varsigma(2)}}(\om_{\varsigma(2)})\Bigr\rvert^{2}\\
			&\leq\sum_{\om\in\mbZ^{2}}\varrho_{q}(\om)^{2}\sum_{\substack{\om_{1},\om_{2}\in\mbZ^{2}\\\om=\om_{1}+\om_{2}}}\sum_{j_{1},j_{2}=1}^{2}2!\int_{-\infty}^{\infty}\dd u_{2}\int_{-\infty}^{\infty}\dd u_{1}(1+\abs{2\uppi\om_{1}}^{2}+\abs{2\uppi\om_{2}}^{2})^{\vartheta}\\
			&\quad\times\Bigl\lvert\sum_{j_{3}=1}^{2}\int_{0}^{\infty}\dd u_{3}(H^{j_{3}}_{t-u_{3}}(\om_{1}+\om_{2})-H^{j_{3}}_{s-u_{3}}(\om_{1}+\om_{2}))H^{j_{1}}_{u_{3}-u_{1}}(\om_{1})G^{j_{3}}(\om_{2})H^{j_{2}}_{u_{3}-u_{2}}(\om_{1})\Bigr\rvert^{2}\\
			&\leq\sum_{\om\in\mbZ^{2}}\varrho_{q}(\om)^{2}\sum_{\substack{\om_{1},\om_{2}\in\mbZ^{2}\\\om=\om_{1}+\om_{2}}}(1+\abs{2\uppi\om_{1}}^{2}+\abs{2\uppi\om_{2}}^{2})^{\vartheta}2!(2\uppi)^{4}\abs{\om_{1}}^{2}\abs{\om_{2}}^{-2}\abs{\inner{\om_{2}}{\om_{1}+\om_{2}}}^{2}\mathsf{D}_{s,t}\SYpsilon(2\uppi\om_1,2\uppi\om_2).
		\end{split}
	\end{equation*}
	To summarize, by an application of~\eqref{eq:isometry_difference} and Lemma~\ref{lem:ypsilon_upper_bound} we may control
	\begin{equation}\label{eq:ypsilon_upper_bound_moll_difference}
		\begin{split}
			&\mbE[\abs{\Delta_{q}(\ty-\ty^{\delta})(t,x)-\Delta_{q}(\ty-\ty^{\delta})(s,x)}^2]\\
			&\lesssim\delta^{2\vartheta'}\norm{1-\varphi}_{C^{1}(\mbR^{2})}^{2}(1+\norm{\varphi}_{L^{\infty}})^{2}\norm{\het}_{C_{T}\mcH^{1}}^{4}\sum_{\om\in\mbZ^{2}}\varrho_{q}(\om)^{2}\\
			&\quad\times\sum_{\substack{\om_1,\om_2\in\mbZ^{2}\setminus\{0\}\\\om=\om_1+\om_2}}(1+\abs{\om_{1}}^{2}+\abs{\om_{2}}^{2})^{\vartheta}\abs{\om_1}^{2}\abs{\om_2}^{-2}\abs{\inner{\om_2}{\om_1+\om_2}}^2\mathsf{D}_{s,t}\SYpsilon(2\uppi\om_1,2\uppi\om_2).
		\end{split}
	\end{equation}
	To avoid a case distinction, we estimate
	\begin{equation*}
		(1+\abs{\om_{1}}^{2}+\abs{\om_{2}}^{2})^{\vartheta}\leq(1+\abs{\om_{1}}^{2})^{\vartheta}(1+\abs{\om_{2}}^{2})^{\vartheta},
	\end{equation*}
	This does result in a loss of regularity of order $\vartheta$, which is negligible since $\vartheta$ can be chosen arbitrarily small.
	
	We continue to estimate~\eqref{eq:ypsilon_upper_bound_moll_difference} as in the proof of Lemma~\ref{lem:existence_ypsilon}. We obtain by Lemma~\ref{lem:difference_ypsilon_bound} for $(\om_{1}\perp\om_{2})$,
	\begin{equation*}
		\mathsf{D}_{s,t}\SYpsilon(2\uppi\om_1,2\uppi\om_2)\lesssim\abs{t-s}^{\gamma}\abs{\om_1}^{-2}\abs{\om_2}^{-2}\abs{\om_1+\om_2}^{-4+2\gamma},
	\end{equation*}
	so that
	\begin{equation*}
		\begin{split}
			&\sum_{\substack{\om_1,\om_2\in\mbZ^{2}\setminus\{0\}\\\om=\om_1+\om_2\\\om_{1}\perp\om_{2}}}(1+\abs{\om_{1}}^{2})^{\vartheta}(1+\abs{\om_{2}}^{2})^{\vartheta}\abs{\om_1}^{2}\abs{\om_2}^{-2}\abs{\inner{\om_2}{\om_1+\om_2}}^2\mathsf{D}_{s,t}\SYpsilon(2\uppi\om_1,2\uppi\om_2)\\
			&\lesssim\abs{t-s}^{\gamma}\abs{\om}^{-4+2\gamma+4\vartheta}\sum_{\substack{\om_1,\om_2\in\mbZ^{2}\setminus\{0\}\\\om=\om_1+\om_2\\\om_1\perp\om_2}}1\lesssim\abs{t-s}^{\gamma}\abs{\om}^{-4+2\gamma+4\vartheta}(1\vee\abs{\om}^{2})=\abs{t-s}^{\gamma}\abs{\om}^{-2+2\gamma+4\vartheta},
		\end{split}
	\end{equation*}
	where we used the orthogonality $(\om_1\perp\om_2)$ to identify $\abs{\inner{\om_2}{\om_1+\om_2}}^2=\abs{\om_2}^{4}$ and to bound $\abs{\om_1}^{2}\leq\abs{\om_1}^{2}+\abs{\om_2}^{2}=\abs{\om}^{2}$; and applied~\eqref{eq:summation_estimates_0} to the finite sum over $\om_1\in\mbZ^{2}\setminus\{0\}$ with $\abs{\om_1}\leq\abs{\om}$.
	
	Lemma~\ref{lem:difference_ypsilon_bound} yields for $\lnot(\om_{1}\perp\om_{2})$,
	\begin{equation*}
		\mathsf{D}_{s,t}\SYpsilon(2\uppi\om_1,2\uppi\om_2)\lesssim\abs{t-s}^{\gamma}\abs{\om_1}^{-4+2\gamma}\abs{\om_2}^{-2}\abs{\om_1+\om_2}^{-2}+\abs{t-s}^{\gamma}\abs{\om_1}^{-4}\abs{\om_2}^{-2}\abs{\om_1+\om_2}^{-2+2\gamma},
	\end{equation*}
	so that by Lemma~\ref{lem:convolution_estimates} for all $\gamma\in(0,1)$ and $2\vartheta<1-\gamma$,
	\begin{equation*}
		\begin{split}
			&\sum_{\substack{\om_1,\om_2\in\mbZ^{2}\setminus\{0\}\\\om=\om_1+\om_2\\\lnot(\om_{1}\perp\om_{2})}}(1+\abs{\om_{1}}^{2})^{\vartheta}(1+\abs{\om_{2}}^{2})^{\vartheta}\abs{\om_1}^{2}\abs{\om_2}^{-2}\abs{\inner{\om_2}{\om_1+\om_2}}^2\mathsf{D}_{s,t}\SYpsilon(2\uppi\om_1,2\uppi\om_2)\\
			&\lesssim\abs{t-s}^{\gamma}\sum_{\substack{\om_1,\om_2\in\mbZ^{2}\setminus\{0\}\\\om=\om_1+\om_2\\\lnot(\om_1\perp\om_2)}}(\abs{\om_1}^{-2+2\gamma+2\vartheta}\abs{\om_2}^{-2+2\vartheta}+\abs{\om}^{2\gamma}\abs{\om_1}^{-2+2\vartheta}\abs{\om_2}^{-2+2\vartheta})\\
			&\lesssim\abs{t-s}^{\gamma}\abs{\om}^{-2+2\gamma+4\vartheta},
		\end{split}
	\end{equation*}
	where we used the Cauchy--Schwarz inequality to control $\abs{\inner{\om_2}{\om_1+\om_2}}^2\lesssim\abs{\om_2}^{2}\abs{\om_1+\om_2}^{2}$.
	
	Applying these results to the bound~\eqref{eq:ypsilon_upper_bound_moll_difference}, we arrive at
	\begin{equation*}
		\mbE[\abs{\Delta_{q}(\ty-\ty^{\delta})(t,x)-\Delta_{q}(\ty-\ty^{\delta})(s,x)}^2]\lesssim\delta^{2\vartheta'}\abs{t-s}^{\gamma}\norm{1-\varphi}_{C^{1}(\mbR^{2})}^{2}(1+\norm{\varphi}_{L^{\infty}})^{2}\norm{\het}_{C_{T}\mcH^{1}}^{4}2^{q(2\gamma+4\vartheta)}
	\end{equation*}
	which is uniform in $x\in\mbT^{2}$. Plugging this into~\eqref{eq:ypsilon_Kolmogorov_Nelson_moll_difference} yields
	\begin{equation*}
		\begin{split}
			&\mbE[\norm{\ty-\ty^{\delta}}_{C_{T}^{\gamma/2-\eps}\mcC^{-\gamma-2\vartheta-3\eps}}^p]\\
			&\lesssim\delta^{p\vartheta'}\norm{1-\varphi}_{C^{1}(\mbR^{2})}^{p}(1+\norm{\varphi}_{L^{\infty}})^{p}\norm{\het}_{C_{T}\mcH^{1}}^{2p}\int_{0}^{T}\int_{0}^{T}\abs{t-s}^{-2+p\eps}\sum_{q\in\mbN_{-1}}2^{pq(-\gamma-2\vartheta-\eps)}2^{pq(\gamma+2\vartheta)}\dd s\dd t\\
			&\lesssim\delta^{p\vartheta'}\norm{1-\varphi}_{C^{1}(\mbR^{2})}^{p}(1+\norm{\varphi}_{L^{\infty}})^{p}\norm{\het}_{C_{T}\mcH^{1}}^{2p},
		\end{split}
	\end{equation*}
	which implies $\lim_{\delta\to0}\mbE[\norm{\ty-\ty^{\delta}}_{C_{T}^{\gamma/2-\eps}\mcC^{-\gamma-2\vartheta-3\eps}}^{p}]=0$ for every $p>\max\{1/\eps,2\}$. By an application of Markov's inequality, we can conclude that $\lim_{\delta\to0}\mbE[\norm{\ty-\ty^{\delta}}_{C_{T}^{\gamma/2-\eps}\mcC^{-\gamma-2\vartheta-3\eps}}^p]=0$ for every $p\in[1,\infty)$. Here, $\eps>0$, $\vartheta'>0$ and $\vartheta>\vartheta'$ can be chosen arbitrarily small, which yields convergence in $\msL_{T}^{1/2-}\mcC^{0-}(\mbT^{2})$.
	
\end{details}
Next we consider the third-order diagrams $\PreThree{10}$ and $\PreThree{20}$.
\begin{lemma}\label{lem:existence_three}
	Let $T>0$, $\alpha<-2$, $\kappa\in(0,1/2)$ and $\het\in C_{T}L^{\infty}(\mbT^{2})$. Then for any $p\in[1,\infty)$ we have
	\begin{equation*}
		\mbE[\norm{\PreThree{10}}_{\msL^{\kappa}_{T}\mcC^{3\alpha+6}}^p]^{1/p}+\mbE[\norm{\PreThree{20}}_{\msL^{\kappa}_{T}\mcC^{3\alpha+6}}^p]^{1/p}\lesssim\norm{\het}_{C_{T}L^{\infty}}^{3}
	\end{equation*}
	and in particular $\PreThree{10},\PreThree{20}\in\msL^{\kappa}_{T}\mcC^{3\alpha+6}(\mbT^{2};\mbR^{2})$ a.s.
\end{lemma}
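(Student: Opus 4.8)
The plan is to follow the blueprint of Lemma~\ref{lem:existence_ypsilon}: I will treat $\PreThree{10}$ (and identically $\PreThree{20}$, which differs only in its colouring, not its shape) as a third-order iterated It\^o integral and verify the hypotheses of the Kolmogorov criterion, Lemma~\ref{lem:existence_criterion}, for its Littlewood--Paley blocks. First I would invoke Lemma~\ref{lem:isometry_argument} with $n=3$ to replace $\PreThree{10}$ by the corresponding constant-heterogeneity diagram $\PreThree{1}$, which costs a factor $\norm{\srdet}_{C_TL^{\infty}}^{6}$ in the second moment and hence $\norm{\srdet}_{C_TL^{\infty}}^{3}$ after a square root; this accounts for the cubic power on the right-hand side. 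Writing $\Delta_q\PreThree{1}(t,x)$ as an iterated It\^o integral and using the orthogonality $\mbE(\hat{\PreThree{1}}(t,\om)\overline{\hat{\PreThree{1}}(s,\om')})=0$ for $\om\neq\om'$, one reduces, exactly as in Lemma~\ref{lem:ypsilon_upper_bound}, to bounding $\sum_{\om}\varrho_q(\om)^2\,\mbE(\abs{\hat{\PreThree{1}}(t,\om)-\hat{\PreThree{1}}(s,\om)}^2)$.

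The core of the estimate is the analysis of $\mbE(\abs{\hat{\PreThree{1}}(t,\om)-\hat{\PreThree{1}}(s,\om)}^2)$. Applying It\^o's isometry to the order-three integral, removing the symmetrization over $\Sigma(1,2,4)$ via Jensen's inequality (at the price of a harmless factor $3!$) and extending the inner time integrals to the whole line, the crucial point --- flagged in the text for $\Ypsilon{10}$ and caused by the presence of the sub-diagram $\Cherry{10}$, which is not pointwise-in-time evaluable --- is that one must \emph{not} push the modulus inside the remaining time integrals but instead expand $(f(t)-f(s))\overline{(g(t)-g(s))}$ bilinearly. This produces an increment shape coefficient $\mathsf{D}_{s,t}\SPreThree=\mathsf{S}_{t,t}\SPreThree+\mathsf{S}_{s,s}\SPreThree-\mathsf{S}_{s,t}\SPreThree-\mathsf{S}_{t,s}\SPreThree$, where $\mathsf{S}_{s,t}\SPreThree(2\uppi\om_1,2\uppi\om_2,2\uppi\om_4)$ is a five-fold time integral (variables $u_3,u_3'$ from the $\mcI$ inside $\ty$ and $u_1,u_2,u_4$ from the three instances of the noise) of a product of heat exponentials, fully analogous to~\eqref{eq:shape_coefficient_ypsilon}. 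The outcome is a bound of the schematic form
\[ \mbE(\abs{\hat{\PreThree{1}}(t,\om)-\hat{\PreThree{1}}(s,\om)}^2)\lesssim\sum_{\substack{\om_1,\om_2,\om_4\in\mbZ^{2}\setminus\{0\}\\\om=\om_1+\om_2+\om_4\\(\om_1+\om_2)\sim\om_4}}\mathcal{K}(\om_1,\om_2,\om_4)\,\mathsf{D}_{s,t}\SPreThree(2\uppi\om_1,2\uppi\om_2,2\uppi\om_4), \]
with $\mathcal{K}$ the colouring of $\PreThree{10}$ --- the higher-order analogue of the prefactor $\abs{\om_1}^{2}\abs{\om_2}^{-2}\abs{\inner{\om_2}{\om_1+\om_2}}^2$ of Lemma~\ref{lem:ypsilon_upper_bound}, enriched by one further instance of the noise and an extra elliptic multiplier, so that after controlling the remaining trigonometric inner products by Cauchy--Schwarz it is dominated by a product of non-negative powers of $\abs{\om_1},\abs{\om_2},\abs{\om_1+\om_2}$ together with the genuine Green's-function decay in $\abs{\om_4}$ and $\abs{\om_2}$.

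Next I would bound $\mathsf{D}_{s,t}\SPreThree$ via the interpolation inequality~\eqref{eq:interpolation}, splitting into the cases $\om_1\perp\om_2$ and $\lnot(\om_1\perp\om_2)$ and, within each, distributing the factor $\abs{t-s}^{\gamma}$ onto the $\abs{\om_1+\om_2}^2$ or the $\abs{\om_i}^2$ exponential, exactly in the manner of Lemma~\ref{lem:difference_ypsilon_bound}; this yields $\mathsf{D}_{s,t}\SPreThree\lesssim\abs{t-s}^{\gamma}$ times an explicit product of negative powers of $\abs{\om_1},\abs{\om_2},\abs{\om_4},\abs{\om_1+\om_2}$. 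Inserting these bounds into the frequency sum and using the resonant constraint $(\om_1+\om_2)\sim\om_4$ --- which forces $\abs{\om_1+\om_2}$ and $\abs{\om_4}$ to be dyadically comparable, hence $\abs{\om}\lesssim\abs{\om_4}$ --- together with the discrete convolution and summation estimates of Appendix~\ref{sec:summation_estimates} (in particular Lemma~\ref{lem:convolution_estimates} and~\eqref{eq:summation_estimates_0}), one arrives at
\[ \mbE(\abs{\Delta_q\PreThree{10}(t,x)-\Delta_q\PreThree{10}(s,x)}^2)\lesssim\norm{\srdet}_{C_TL^{\infty}}^{6}\,\abs{t-s}^{\gamma}\,2^{q(2\gamma+c\eps)} \]
for any $\gamma\in(0,1)$, small $\eps>0$ and a fixed $c>0$.

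Since $\Delta_q\PreThree{10}$ is an iterated It\^o integral of order three, Nelson's estimate (Lemma~\ref{lem:Nelson_estimate}) upgrades this to all $p$-moments, and Lemma~\ref{lem:existence_criterion} then gives $\PreThree{10}\in C_T^{\gamma/2-\eps}\mcC^{-\gamma-c'\eps}$ with $\mbE(\norm{\PreThree{10}}_{\msL^{\kappa}_T\mcC^{3\alpha+6}}^p)^{1/p}\lesssim\norm{\srdet}_{C_TL^{\infty}}^{3}$; choosing the parameters $\gamma\in(0,1)$, $\eps$ appropriately (small for the $C_T\mcC^{\,\cdot}$ part, close to $2\kappa$ for the $C^\kappa_T\mcC^{\,\cdot}$ part) gives $\PreThree{10}\in\msL^{\kappa}_T\mcC^{0-}\subseteq\msL^{\kappa}_T\mcC^{3\alpha+6}$ a.s.\ for every $\kappa\in(0,1/2)$, using $3\alpha+6<0$. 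Repeating the argument with the colouring of $\PreThree{20}$ in place of $\mathcal{K}$ gives the second diagram, and adding the two bounds completes the proof. The step I expect to be the main obstacle is precisely this combination of the case-split bound on $\mathsf{D}_{s,t}\SPreThree$ with the three-fold frequency summation: one must verify that the polynomial growth in $\mathcal{K}$ coming from the derivative multipliers is exactly absorbed by the shape-coefficient decay, and that $\abs{t-s}^{\gamma}$ is extracted from the right frequency in each region, so that the net spatial regularity is $0-$ rather than something worse --- this is where the structure of the diagram, as opposed to crude power counting, is essential.
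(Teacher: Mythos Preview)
Your proposal is correct and follows essentially the paper's approach. The one concrete structural observation the paper adds --- and flags as its key idea --- is that the shape coefficient \emph{factorizes}, $\mathsf{S}_{s,t}\SPreThree(\om_1,\om_2,\om_4)=\mathsf{S}_{s,t}\SYpsilon(\om_1,\om_2)\,\mathsf{S}_{s,t}\SLolli(\om_4)$ with $\mathsf{S}_{s,t}\SLolli(\om_4)=\tfrac12\abs{\om_4}^{-2}\euler^{-\abs{t-s}\abs{\om_4}^{2}}$, because there is no arrow at the root linking the vertices labelled $u_3$ and $u_4$; this turns the step you flag as the main obstacle (the case-split bound on $\mathsf{D}_{s,t}\SPreThree$) into a direct corollary of Lemma~\ref{lem:difference_ypsilon_bound} combined with an elementary estimate on $\mathsf{S}_{s,t}\SLolli$, rather than a fresh five-fold computation.
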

\begin{proof}
	The proof of this lemma is similar to the one for Lemma~\ref{lem:existence_ypsilon}, so we only provide a sketch. The key idea is to consider the shape coefficient
	\begin{equation*}
		\mathsf{S}_{s,t}\SPreThree(\om_1,\om_2,\om_4)\defeq\mathsf{S}_{s,t}\SYpsilon(\om_1,\om_2)\mathsf{S}_{s,t}\SLolli(\om_4),\qquad s,t\geq0,\quad\om_1,\om_2,\om_4\in2\uppi\mbZ^2\setminus\{0\},
	\end{equation*}
	where the factor $\mathsf{S}_{s,t}\SYpsilon$ was already defined in~\eqref{eq:shape_coefficient_ypsilon}, and $\mathsf{S}_{s,t}\SLolli$ is given by
	\begin{equation*}
		\mathsf{S}_{s,t}\SLolli(\om_4)\defeq\int_{-\infty}^{s\wedge t}\dd u_4\euler^{-\abs{t-u_4}\abs{\om_4}^2}\euler^{-\abs{s-u_4}\abs{\om_4}^2}=\frac{1}{2}\abs{\om_4}^{-2}\euler^{-\abs{t-s}\abs{\om_4}^2}.
	\end{equation*}
	The factorization $\mathsf{S}_{s,t}\SPreThree=\mathsf{S}_{s,t}\SYpsilon\,\mathsf{S}_{s,t}\SLolli$ follows since there is no arrow \emph{pointing} at the common root between the vertices labelled by $u_3$ and $u_4$.
	
	We can then find explicit expressions for $\mathsf{D}_{s,t}\SPreThree$, which we use to bound the second moments of $\PreThree{10}$ and $\PreThree{20}$. The claim then follows by~\eqref{eq:isometry_bound_delta=0}, Lemma~\ref{lem:Nelson_estimate} and Lemma~\ref{lem:existence_criterion}.
\end{proof}
\begin{details}
	We define a shape coefficient for those diagrams.
	\begin{definition}
		Let $s,t\geq0$, $\om_1,\om_2,\om_4\in2\uppi\mbZ^2\setminus\{0\}$. We define the shape coefficient
		\begin{equation*}
			\mathsf{S}_{s,t}\SPreThree(\om_1,\om_2,\om_4)\defeq\mathsf{S}_{s,t}\SYpsilon(\om_1,\om_2)\mathsf{S}_{s,t}\SLolli(\om_4),
		\end{equation*}
		where the factor $\mathsf{S}_{s,t}\SYpsilon$ was already considered in~\eqref{eq:shape_coefficient_ypsilon}, and $\mathsf{S}_{s,t}\SLolli$ is given by
		\begin{equation*}
			\mathsf{S}_{s,t}\SLolli(\om_4)\defeq\int_{-\infty}^{s\wedge t}\dd u_4\euler^{-\abs{t-u_4}\abs{\om_4}^2}\euler^{-\abs{s-u_4}\abs{\om_4}^2}=\frac{1}{2}\abs{\om_4}^{-2}\euler^{-\abs{t-s}\abs{\om_4}^2}.
		\end{equation*}
		We define the increment shape coefficient by
		\begin{equation*}
			\mathsf{D}_{s,t}\SPreThree\defeq\mathsf{S}_{t,t}\SPreThree+\mathsf{S}_{s,s}\SPreThree-\mathsf{S}_{s,t}\SPreThree-\mathsf{S}_{t,s}\SPreThree.
		\end{equation*}
	\end{definition}
	We first consider $\PreThree{10}$, whose second moments we bound with the following lemma.
	\begin{lemma}
		Let $s,t\in[0,T]$, $x\in\mbT^{2}$, $j=1,2$ and $q\in\mbN_{-1}$. It holds that
		\begin{equation}\label{eq:prethree_upper_bound}
			\begin{split}
				&\mbE[\abs{\Delta_{q}\PreThree{10}(t,x,j)-\Delta_{q}\PreThree{10}(s,x,j)}^2]\\
				&\leq\norm{\het}_{C_{T}L^{\infty}}^{6}3!(2\uppi)^{4}\sum_{\om\in\mbZ^{2}}\varrho_{q}(\om)^{2}\sum_{\substack{\om_1,\om_2,\om_4\in\mbZ^{2}\setminus\{0\}\\\om_1+\om_2\in\mbZ^{2}\setminus\{0\}\\\om=\om_1+\om_2+\om_4\\(\om_1+\om_2)\sim\om_4}}\abs{\om_4^j}^2\abs{\om_4}^{-2}\abs{\om_2}^{-2}\abs{\om_1}^2\abs{\inner{\om_2}{\om_1+\om_2}}^{2}\\[-35pt]
				&\multiquad[27]\times\mathsf{D}_{s,t}\SPreThree(2\uppi\om_1,2\uppi\om_2,2\uppi\om_4).
			\end{split}
		\end{equation}
	\end{lemma}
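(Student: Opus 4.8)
The plan is to follow the proof of Lemma~\ref{lem:ypsilon_upper_bound} almost verbatim, the only new ingredients being the extra Lebesgue integral in $u_3$ and the additional lolli factor in the shape coefficient. First I would observe that, just as for $\Ypsilon{10}$, the increment $\Delta_q\PreThree{10}(t,x,j)-\Delta_q\PreThree{10}(s,x,j)$ can be written as a single heterogeneous third-order iterated It\^o integral $I^3_\srdet(\phi)$, where the test function $\phi$ absorbs the outer $u_3$-Lebesgue integral, the Littlewood--Paley kernel $\msF^{-1}\varrho_q$, and the difference between the two times. Applying Lemma~\ref{lem:isometry_argument} with $n=3$ then pulls out the factor $\norm{\srdet}_{C_TL^\infty}^6$ and reduces the estimate to the white-noise diagram $\PreThree{1}$, obtained from $\PreThree{10}$ by dropping every $\hat{\srdet}(u_k,\om_k-m_k)$ and integrating the $W^{j_k}$ directly against the frequencies $\om_k$. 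Since $\mbE(\hat{\PreThree{1}}(t,\om,j)\overline{\hat{\PreThree{1}}(s,\om',j)})=0$ for $\om\neq\om'\in\mbZ^2$, the Littlewood--Paley block diagonalises into $\sum_{\om\in\mbZ^2}\varrho_q(\om)^2\,\mbE(\abs{\hat{\PreThree{1}}(t,\om,j)-\hat{\PreThree{1}}(s,\om,j)}^2)$, so it remains to bound this frequency-wise increment.

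Next I would apply It\^o's isometry to the remaining third-order iterated integral (stochastic integrators in $u_1,u_2,u_4$ and a free Lebesgue integral in $u_3$). As in the $\Ypsilon{10}$ case, the subdiagram produced by the $u_1,u_2$-integration sitting under the outer $u_3$-integral is a coloured copy of $\Cherry{10}$, which we do not expect to admit pointwise-in-time values; hence one cannot push the absolute value past the $u_3$-integral by the triangle inequality. Instead I would expand the modulus-squared increment bilinearly, using $(f(t)-f(s))\overline{(g(t)-g(s))}=f(t)\overline{g(t)}+f(s)\overline{g(s)}-f(s)\overline{g(t)}-f(t)\overline{g(s)}$, which is precisely what generates the four-term combination $\mathsf{D}_{s,t}\SPreThree=\mathsf{S}_{t,t}\SPreThree+\mathsf{S}_{s,s}\SPreThree-\mathsf{S}_{s,t}\SPreThree-\mathsf{S}_{t,s}\SPreThree$ once the time integrals are carried out. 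Jensen's inequality then lets me discard the symmetrisation over $\Sigma(1,2,4)$, at the cost of the combinatorial constant $3!$.

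It then only remains to estimate the resulting time integral. I would extend the $u_1$- and $u_2$-integrals from the simplex to all of $\mbR$, bound each elliptic multiplier $G^\bullet$ and the directional numerators $2\uppi\upi\om^\bullet$ of the heat multipliers $H^\bullet$ by their absolute values, and recognise what is left as a product of Gaussian exponentials in the time variables. The key point — matching the remark in the sketch of Lemma~\ref{lem:existence_three} — is that since the $u_4$-integrator is attached directly to the root rather than routed through the vertex $u_3$, the time integral factorises as $\mathsf{S}_{s,t}\SYpsilon(2\uppi\om_1,2\uppi\om_2)\,\mathsf{S}_{s,t}\SLolli(2\uppi\om_4)=\mathsf{S}_{s,t}\SPreThree(2\uppi\om_1,2\uppi\om_2,2\uppi\om_4)$, and the four sign choices from the bilinear expansion assemble into $\mathsf{D}_{s,t}\SPreThree$. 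The constraints $\om_1+\om_2\neq 0$ and $(\om_1+\om_2)\sim\om_4$ are inherited from the resonant-product vertex cutoff $\sum_{\abs{k-l}\leq 1}\varrho_k(\om_1+\om_2)\varrho_l(\om_4)$, which is bounded by $1$; and the surviving polynomial prefactors — $\abs{\om_4^j}^2\abs{\om_4}^{-2}$ from $\abs{G^j(\om_4)}^2$ paired with the normalisation of $H^{j_4}_{t-u_4}(\om_4)$, $\abs{\om_1}^2$ from $\abs{H^{j_1}}^2$, and $\abs{\inner{\om_2}{\om_1+\om_2}}^2\abs{\om_2}^{-2}$ after summing $G^{j_3}(\om_2)H^{j_3}_{t-u_3}(\om_1+\om_2)$ over $j_3=1,2$ — combine into the claimed colouring, the $(2\uppi)^4$ absorbing the remaining multiplier normalisations. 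The main obstacle is entirely the careful bookkeeping of the six time integrals and the verification of the factorisation $\mathsf{S}\SPreThree=\mathsf{S}\SYpsilon\,\mathsf{S}\SLolli$; no analytic idea beyond what is already used for Lemma~\ref{lem:ypsilon_upper_bound} is required.
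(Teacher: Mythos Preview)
Your proposal is correct and follows essentially the same route as the paper: apply Lemma~\ref{lem:isometry_argument} to extract $\norm{\srdet}_{C_TL^\infty}^6$ and pass to $\PreThree{1}$, diagonalise via orthogonality in $\om$, apply It\^o's isometry, use Jensen to remove the $\Sigma(1,2,4)$-symmetrisation at cost $3!$, expand bilinearly in $(s,t)$, extend the stochastic time integrals to $(-\infty,\infty)$, and read off the colouring and the increment shape coefficient $\mathsf{D}_{s,t}\SPreThree$. The only omission is that the $u_4$-integral must also be extended to $(-\infty,s\wedge t]$ to match the definition of $\mathsf{S}_{s,t}\SLolli$, but this is the same harmless enlargement you already perform for $u_1,u_2$.
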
 
	\begin{proof}
		Let $s,t\in[0,T]$, $x\in\mbT^{2}$, $j=1,2$ and $q\in\mbN_{-1}$. An application of~\eqref{eq:isometry_bound_delta=0} yields
		\begin{equation*}
			\mbE[\abs{\Delta_{q}\PreThree{10}(t,x,j)-\Delta_{q}\PreThree{10}(s,x,j)}^2]\leq\norm{\het}_{C_{T}L^{\infty}}^{6}\mbE[\abs{\Delta_{q}\PreThree{1}(t,x,j)-\Delta_{q}\PreThree{1}(s,x,j)}^2],
		\end{equation*}
		where $\PreThree{1}$ is defined by
		\begin{equation*}
			\begin{split}
				\hat{\PreThree{1}}(t,\om,j)&\defeq\sum_{\substack{\om_1,\om_2,\om_4\in\mbZ^{2}\\\om=\om_1+\om_2+\om_4}}\sum_{j_1,j_2,j_3,j_4=1}^{2}\int_{0}^{t}\dd u_3\int_{0}^{t}\dd W^{j_4}(u_4,\om_4)\int_{0}^{u_4}\dd W^{j_1}(u_1,\om_1)\int_{0}^{u_1}\dd W^{j_2}(u_2,\om_2)\\
				&\quad\sum_{\varsigma\in\Sigma(1,2,4)}H_{t-u_3}^{j_3}(\om_{\varsigma(1)}+\om_{\varsigma(2)})
				G^{j}(\om_{\varsigma(4)})H_{t-u_{\varsigma(4)}}^{j_{\varsigma(4)}}(\om_{\varsigma(4)})H_{u_3-u_{\varsigma(1)}}^{j_{\varsigma(1)}}(\om_{\varsigma(1)})\\
				&\quad\times G^{j_3}(\om_{\varsigma(2)})H_{u_3-u_{\varsigma(2)}}^{j_{\varsigma(2)}}(\om_{\varsigma(2)})\sum_{\substack{k,l\in\mbN_{-1}\\\abs{k-l}\leq 1}}\varrho_k(\om_{\varsigma(1)}+\om_{\varsigma(2)})\varrho_l(\om_{\varsigma(4)}).
			\end{split}
		\end{equation*}
		Using that
		\begin{equation*}
			\mbE\Bigl[\hat{\PreThree{1}}(t,\om,j)\overline{\hat{\PreThree{1}}(s,\om',j)}\Bigr]=0\quad\text{if}\quad\om\neq\om'\in\mbZ^{2},
		\end{equation*}
		we obtain
		\begin{equation*}
			\mbE[\abs{\Delta_{q}\PreThree{1}(t,x,j)-\Delta_{q}\PreThree{1}(s,x,j)}^2]=\sum_{\om\in\mbZ^{2}}\varrho_{q}(\om)^{2}\mbE\Bigl[\Bigl\lvert\hat{\PreThree{1}}(t,\om,j)-\hat{\PreThree{1}}(s,\om,j)\Bigr\rvert^2\Bigr].
		\end{equation*}
		By It\^{o}'s isometry,
		\begin{equation*}
			\begin{split}
				&\mbE\Bigl[\Bigl\lvert\hat{\PreThree{1}}(t,\om,j)-\hat{\PreThree{1}}(s,\om,j)\Bigr\rvert^2\Bigr]\\
				&=\sum_{\substack{\om_1,\om_2,\om_4\in\mbZ^{2}\setminus\{0\}\\\om=\om_1+\om_2+\om_4
				}}\sum_{j_1,j_2,j_4=1}^{2}\int_{0}^{\infty}\dd u_4 \int_{0}^{u_4}\dd u_1\int_{0}^{u_1}\dd u_2\\
				&\quad\Bigl\lvert\sum_{\varsigma\in\Sigma(1,2,4)}\sum_{j_3=1}^{2}\int_{0}^{\infty}\dd u_3(H^{j_3}_{t-u_3}(\om_{\varsigma(1)}+\om_{\varsigma(2)})H^{j_{\varsigma(4)}}_{t-u_{\varsigma(4)}}(\om_{\varsigma(4)})-H^{j_3}_{s-u_3}(\om_{\varsigma(1)}+\om_{\varsigma(2)})H^{j_{\varsigma(4)}}_{s-u_{\varsigma(4)}}(\om_{\varsigma(4)}))\\
				&\quad\times H^{j_{\varsigma(1)}}_{u_3-u_{\varsigma(1)}}(\om_{\varsigma(1)})H^{j_{\varsigma(2)}}_{u_3-u_{\varsigma(2)}}(\om_{\varsigma(2)})G^{j}(\om_{\varsigma(4)})G^{j_3}(\om_{\varsigma(2)})\sum_{\substack{k,l\in\mbN_{-1}\\\abs{k-l}\leq 1}}\varrho_k(\om_{\varsigma(1)}+\om_{\varsigma(2)})\varrho_l(\om_{\varsigma(4)})\Bigr\rvert^2.
			\end{split}
		\end{equation*}
		By Jensen's inequality,
		\begin{equation*}
			\begin{split}
				&\mbE\Bigl[\Bigl\lvert\hat{\PreThree{1}}(t,\om,j)-\hat{\PreThree{1}}(s,\om,j)\Bigr\rvert^2\Bigr]\\
				&\leq\sum_{\substack{\om_1,\om_2,\om_4\in\mbZ^{2}\\\om=\om_1+\om_2+\om_4}
				}\sum_{j_1,j_2,j_4=1}^{2}3!\int_{-\infty}^{\infty}\dd u_4 \int_{-\infty}^{\infty}\dd u_1\int_{-\infty}^{\infty}\dd u_2\\
				&\quad\Bigl\lvert\sum_{j_3=1}^{2}\int_{0}^{\infty}\dd u_3(H^{j_3}_{t-u_3}(\om_{1}+\om_{2})H^{j_{4}}_{t-u_{4}}(\om_{4})-H^{j_3}_{s-u_3}(\om_{1}+\om_{2})H^{j_{4}}_{s-u_{4}}(\om_{4}))\\
				&\quad\times H^{j_{1}}_{u_3-u_{1}}(\om_{1})H^{j_{2}}_{u_3-u_{2}}(\om_{2})G^{j}(\om_{4})G^{j_3}(\om_{2})\sum_{\substack{k,l\in\mbN_{-1}\\\abs{k-l}\leq 1}}\varrho_k(\om_{1}+\om_{2})\varrho_l(\om_{4})\Bigr\rvert^2.
			\end{split}
		\end{equation*}
		We arrive at 
		\begin{equation*}
			\begin{split}
				&\mbE\Bigl[\Bigl\lvert\hat{\PreThree{1}}(t,\om,j)-\hat{\PreThree{1}}(s,\om,j)\Bigr\rvert^2\Bigr]\\
				&\leq\sum_{\substack{\om_1,\om_2,\om_4\in\mbZ^{2}\\\om=\om_1+\om_2+\om_4\\(\om_1+\om_2)\sim\om_4}
				}\sum_{j_1,j_2,j_3,j_3',j_4=1}^{2}3!\int_{-\infty}^{\infty}\dd u_4 \int_{-\infty}^{\infty}\dd u_1\int_{-\infty}^{\infty}\dd u_2\int_{0}^{\infty}\dd u_3\int_{0}^{\infty}\dd u_3'\\
				&\quad (H^{j_3}_{t-u_3}(\om_{1}+\om_{2})H^{j_{4}}_{t-u_{4}}(\om_{4})-H^{j_3}_{s-u_3}(\om_{1}+\om_{2})H^{j_{4}}_{s-u_{4}}(\om_{4}))H^{j_{1}}_{u_3-u_{1}}(\om_{1})H^{j_{2}}_{u_3-u_{2}}(\om_{2})G^{j}(\om_{4})G^{j_3}(\om_{2})\\ 
				&\quad\times\overline{(H^{j_3'}_{t-u_3'}(\om_{1}+\om_{2})H^{j_{4}}_{t-u_{4}}(\om_{4})-H^{j_3'}_{s-u_3'}(\om_{1}+\om_{2})H^{j_{4}}_{s-u_{4}}(\om_{4}))H^{j_{1}}_{u_3'-u_{1}}(\om_{1})H^{j_{2}}_{u_3'-u_{2}}(\om_{2})G^{j}(\om_{4})G^{j_3'}(\om_{2})}.
			\end{split}
		\end{equation*}
		By introducing the increment shape coefficient,
		\begin{equation*}
			\begin{split}
				&\mbE\Bigl[\Bigl\lvert\hat{\PreThree{1}}(t,\om,j)-\hat{\PreThree{1}}(s,\om,j)\Bigr\rvert^2\Bigr]\\
				&\leq3!(2\uppi)^{4}\sum_{\substack{\om_1,\om_2,\om_4\in\mbZ^{2}\setminus\{0\}\\\om_1+\om_2\in\mbZ^{2}\setminus\{0\}\\\om=\om_1+\om_2+\om_4\\(\om_1+\om_2)\sim\om_4}}\abs{\om_4^j}^2\abs{\om_4}^{-2}\abs{\om_2}^{-2}\abs{\om_1}^2\abs{\inner{\om_2}{\om_1+\om_2}}^{2}\mathsf{D}_{s,t}\SPreThree(2\uppi\om_1,2\uppi\om_2,2\uppi\om_4).
			\end{split}
		\end{equation*}
		This yields the claim.
	\end{proof}
	We bound the increment shape coefficient in an appendix, see Lemma~\ref{lem:difference_three_bound}. We can now prove Lemma~\ref{lem:existence_three}.
	\begin{proof}[Proof of Lemma~\ref{lem:existence_three}]
		Assume $T>0$ and $\gamma\in(0,1)$. To decompose the right-hand side of~\eqref{eq:prethree_upper_bound}, we introduce the orthogonal sum
		\begin{equation*}
			\begin{split}
				&\boldsymbol{\mathrm{E}}^{\perp}\Bigl[\Bigl\lvert\hat{\PreThree{1}}(t,\om,j)-\hat{\PreThree{1}}(s,\om,j)\Bigr\rvert^2\Bigr]\\
				&\defeq\sum_{\substack{\om_1,\om_2,\om_4\in\mbZ^{2}\setminus\{0\}\\\om_1+\om_2\in\mbZ^{2}\setminus\{0\}\\\om=\om_1+\om_2+\om_4\\(\om_1+\om_2)\sim\om_4\\\om_1\perp\om_2}}\abs{\om_4^j}^2\abs{\om_4}^{-2}
				\abs{\om_2}^{-2}\abs{\om_1}^2\abs{\inner{\om_2}{\om_1+\om_2}}^{2}\mathsf{D}_{s,t}\SPreThree(2\uppi\om_1,2\uppi\om_2,2\uppi\om_4),
			\end{split}
		\end{equation*}
		and the non-orthogonal sum
		\begin{equation*}
			\begin{split}
				&\boldsymbol{\mathrm{E}}^{\lnot}\Bigl[\Bigl\lvert\hat{\PreThree{1}}(t,\om,j)-\hat{\PreThree{1}}(s,\om,j)\Bigr\rvert^2\Bigr]\\
				&\defeq\sum_{\substack{\om_1,\om_2,\om_4\in\mbZ^{2}\setminus\{0\}\\\om_1+\om_2\in\mbZ^{2}\setminus\{0\}\\\om=\om_1+\om_2+\om_4\\(\om_1+\om_2)\sim\om_4\\\lnot(\om_1\perp\om_2)}}\abs{\om_4^j}^2\abs{\om_4}^{-2}\abs{\om_2}^{-2}\abs{\om_1}^2\abs{\inner{\om_2}{\om_1+\om_2}}^{2}\mathsf{D}_{s,t}\SPreThree(2\uppi\om_1,2\uppi\om_2,2\uppi\om_4).
			\end{split}
		\end{equation*}
		We obtain the decomposition
		\begin{equation*}
			\begin{split}
				&\mbE[\abs{\Delta_{q}\PreThree{10}(t,x,j)-\Delta_{q}\PreThree{10}(s,x,j)}^2]\\
				&\leq\norm{\het}_{C_{T}L^{\infty}}^{6}3!(2\uppi)^{4}\sum_{\om\in\mbZ^{2}}\varrho_{q}(\om)^{2}\Bigl(\boldsymbol{\mathrm{E}}^{\perp}\Bigl[\Bigl\lvert\hat{\PreThree{1}}(t,\om,j)-\hat{\PreThree{1}}(s,\om,j)\Bigr\rvert^2\Bigr]+\boldsymbol{\mathrm{E}}^{\lnot}\Bigl[\Bigl\lvert\hat{\PreThree{1}}(t,\om,j)-\hat{\PreThree{1}}(s,\om,j)\Bigr\rvert^2\Bigr]\Bigr).
			\end{split}
		\end{equation*}
		We start with the orthogonal sum $\boldsymbol{\mathrm{E}}^{\perp}$. By Lemma~\ref{lem:difference_three_bound},
		\begin{equation*}
			\mathsf{D}_{s,t}\SPreThree(2\uppi\om_1,2\uppi\om_2,2\uppi\om_4)\lesssim\abs{t-s}^{\gamma}\abs{\om_1}^{-2}\abs{\om_2}^{-2}\abs{\om_4}^{-6+2\gamma},
		\end{equation*}
		so that by~\eqref{eq:summation_estimates_0} and Lemma~\ref{lem:convolution_estimates},
		\begin{equation*}
			\begin{split}
				\boldsymbol{\mathrm{E}}^{\perp}\Bigl[\Bigl\lvert\hat{\PreThree{1}}(t,\om,j)-\hat{\PreThree{1}}(s,\om,j)\Bigr\rvert^2\Bigr]&\lesssim\abs{t-s}^{\gamma}\sum_{\substack{\om_3,\om_4\in\mbZ^{2}\setminus\{0\}\\\om=\om_3+\om_4\\\om_3\sim\om_4}}\abs{\om_4}^{-6+2\gamma}\sum_{\substack{\om_1,\om_2\in\mbZ^{2}\setminus\{0\}\\\om_3=\om_1+\om_2\\\om_1\perp\om_2}}1\\
				&\lesssim\abs{t-s}^{\gamma}\sum_{\substack{\om_3,\om_4\in\mbZ^{2}\setminus\{0\}\\\om=\om_3+\om_4\\\om_3\sim\om_4}}\abs{\om_4}^{-6+2\gamma}(1\vee\abs{\om_{3}}^{2})\lesssim\abs{t-s}^{\gamma}(1\vee\abs{\om})^{-2+2\gamma},
			\end{split}
		\end{equation*}
		where we used the orthogonality $(\om_1\perp\om_2)$ to identify $\abs{\inner{\om_2}{\om_1+\om_2}}^2=\abs{\om_2}^{4}$ and to bound $\abs{\om_1}^{2}\leq\abs{\om_1}^{2}+\abs{\om_2}^{2}=\abs{\om_{3}}^{2}$.
		
		Next we consider the non-orthogonal part. An application of Lemma~\ref{lem:difference_three_bound} yields for $\eps\in(0,1-\gamma)$,
		\begin{equation*}
			\begin{split}
				\mathsf{D}_{s,t}\SPreThree(2\uppi\om_1,2\uppi\om_2,2\uppi\om_4)&\lesssim\abs{t-s}^{\gamma}\abs{\om_1}^{-2-2\eps}\abs{\om_2}^{-2}\abs{\om_4}^{-6+2\gamma+2\eps}+\abs{t-s}^{\gamma}\abs{\om_1}^{-4}\abs{\om_2}^{-2}\abs{\om_4}^{-4+2\gamma}\\
				&\quad+\abs{t-s}^{\gamma}\abs{\om_1}^{-4+2\gamma}\abs{\om_2}^{-2}\abs{\om_4}^{-4},
			\end{split}
		\end{equation*}
		so that
		\begin{equation*}
			\begin{split}
				\boldsymbol{\mathrm{E}}^{\lnot}\Bigl[\Bigl\lvert\hat{\PreThree{1}}(t,\om,j)-\hat{\PreThree{1}}(s,\om,j)\Bigr\rvert^2\Bigr]&\lesssim\abs{t-s}^{\gamma}\sum_{\substack{\om_3,\om_4\in\mbZ^{2}\setminus\{0\}\\\om=\om_3+\om_4\\\om_3\sim\om_4}}\abs{\om_4}^{-4+2\gamma+2\eps}\sum_{\substack{\om_1,\om_2\in\mbZ^{2}\setminus\{0\}\\\om_3=\om_1+\om_2\\\lnot(\om_1\perp\om_2)}}\abs{\om_1}^{-2\eps}\abs{\om_2}^{-2}\\
				&\quad+\abs{t-s}^{\gamma}\sum_{\substack{\om_3,\om_4\in\mbZ^{2}\setminus\{0\}\\\om=\om_3+\om_4\\\om_3\sim\om_4}}\abs{\om_4}^{-2+2\gamma}\sum_{\substack{\om_1,\om_2\in\mbZ^{2}\setminus\{0\}\\\om_3=\om_1+\om_2\\\lnot(\om_1\perp\om_2)}}\abs{\om_1}^{-2}\abs{\om_2}^{-2}\\
				&\quad+\abs{t-s}^{\gamma}\sum_{\substack{\om_3,\om_4\in\mbZ^{2}\setminus\{0\}\\\om=\om_3+\om_4\\\om_3\sim\om_4}}\abs{\om_4}^{-2}\sum_{\substack{\om_1,\om_2\in\mbZ^{2}\setminus\{0\}\\\om_3=\om_1+\om_2\\\lnot(\om_1\perp\om_2)}}\abs{\om_1}^{-2+2\gamma}\abs{\om_2}^{-2},
			\end{split}
		\end{equation*}
		where we used the Cauchy--Schwarz inequality to control $\abs{\inner{\om_2}{\om_1+\om_2}}^2\lesssim\abs{\om_2}^{2}\abs{\om_1+\om_2}^{2}$ and $(\om_1+\om_2)\sim\om_4$ to bound $\abs{\om_1+\om_2}^{2}\lesssim\abs{\om_4}^{2}$.
		
		Assume $0<\delta/2<\eps<1-\gamma$. We estimate the first double sum by several applications of Lemma~\ref{lem:convolution_estimates},
		\begin{equation*}
			\sum_{\substack{\om_3,\om_4\in\mbZ^{2}\setminus\{0\}\\\om=\om_3+\om_4\\\om_3\sim\om_4}}\abs{\om_4}^{-4+2\gamma+2\eps}\sum_{\substack{\om_1,\om_2\in\mbZ^{2}\setminus\{0\}\\\om_3=\om_1+\om_2\\\lnot(\om_1\perp\om_2)}}\abs{\om_1}^{-2\eps}\abs{\om_2}^{-2}\lesssim\sum_{\substack{\om_3,\om_4\in\mbZ^{2}\setminus\{0\}\\\om=\om_3+\om_4\\\om_3\sim\om_4}}\abs{\om_4}^{-4+2\gamma+2\eps}\abs{\om_3}^{\delta-2\eps}\lesssim(1\vee\abs{\om})^{-2+2\gamma+\delta}.
		\end{equation*}
		The remaining sums can be bounded in a similar manner,
		\begin{equation*}
			\begin{split}
				&\sum_{\substack{\om_3,\om_4\in\mbZ^{2}\setminus\{0\}\\\om=\om_3+\om_4\\\om_3\sim\om_4}}\abs{\om_4}^{-2+2\gamma}\sum_{\substack{\om_1,\om_2\in\mbZ^{2}\setminus\{0\}\\\om_3=\om_1+\om_2\\\lnot(\om_1\perp\om_2)}}\abs{\om_1}^{-2}\abs{\om_2}^{-2}\leq\sum_{\substack{\om_3,\om_4\in\mbZ^{2}\setminus\{0\}\\\om=\om_3+\om_4\\\om_3\sim\om_4}}\abs{\om_4}^{-2+2\gamma}\sum_{\substack{\om_1,\om_2\in\mbZ^{2}\setminus\{0\}\\\om_3=\om_1+\om_2\\\lnot(\om_1\perp\om_2)}}\abs{\om_1}^{-2+\delta/2}\abs{\om_2}^{-2+\delta/2}\\
				&\lesssim\sum_{\substack{\om_3,\om_4\in\mbZ^{2}\setminus\{0\}\\\om=\om_3+\om_4\\\om_3\sim\om_4}}\abs{\om_4}^{-2+2\gamma}\abs{\om_3}^{-2+\delta}\lesssim(1\vee\abs{\om})^{-2+2\gamma+\delta}
			\end{split}
		\end{equation*}
		and
		\begin{equation*}
			\begin{split}
				&\sum_{\substack{\om_3,\om_4\in\mbZ^{2}\setminus\{0\}\\\om=\om_3+\om_4\\\om_3\sim\om_4}}\abs{\om_4}^{-2}\sum_{\substack{\om_1,\om_2\in\mbZ^{2}\setminus\{0\}\\\om_3=\om_1+\om_2\\\lnot(\om_1\perp\om_2)}}\abs{\om_1}^{-2+2\gamma}\abs{\om_2}^{-2}\leq\sum_{\substack{\om_3,\om_4\in\mbZ^{2}\setminus\{0\}\\\om=\om_3+\om_4\\\om_3\sim\om_4}}\abs{\om_4}^{-2}\sum_{\substack{\om_1,\om_2\in\mbZ^{2}\setminus\{0\}\\\om_3=\om_1+\om_2\\\lnot(\om_1\perp\om_2)}}\abs{\om_1}^{-2+2\gamma}\abs{\om_2}^{-2+\delta}\\
				&\lesssim\sum_{\substack{\om_3,\om_4\in\mbZ^{2}\setminus\{0\}\\\om=\om_3+\om_4\\\om_3\sim\om_4}}\abs{\om_4}^{-2}\abs{\om_3}^{-2+2\gamma+\delta}\lesssim(1\vee\abs{\om})^{-2+2\gamma+\delta}.
			\end{split}
		\end{equation*}
		To summarize,
		\begin{equation*}
			\boldsymbol{\mathrm{E}}^{\lnot}\Bigl[\Bigl\lvert\hat{\PreThree{1}}(t,\om,j)-\hat{\PreThree{1}}(s,\om,j)\Bigr\rvert^2\Bigr]\lesssim\abs{t-s}^{\gamma}(1\vee\abs{\om})^{-2+2\gamma+\delta}
		\end{equation*}
		and therefore uniformly in $x\in\mbT^{2}$,
		\begin{equation*}
			\mbE[\abs{\Delta_{q}\PreThree{10}(t,x,j)-\Delta_{q}\PreThree{10}(s,x,j)}^2]\lesssim\norm{\het}_{C_{T}L^{\infty}}^{6}\abs{t-s}^{\gamma}2^{q(2\gamma+\delta)}.
		\end{equation*}
		Assume in addition $\eps<\gamma/2$. We obtain by~\eqref{eq:isometry_bound_delta=0}, Lemma~\ref{lem:Nelson_estimate} and Lemma~\ref{lem:existence_criterion} for any $p\in[1,\infty)$,
		\begin{equation*}
			\mbE[\norm{\PreThree{10}}_{C_T^{\gamma/2-\eps}\mcC^{-\gamma-4\eps}}^{p}]^{1/p}\lesssim\norm{\het}_{C_TL^{\infty}}^{3}
		\end{equation*}
		and therefore $\PreThree{10}\in\msL^{\kappa}_{T}\mcC^{0-}(\mbT^{2};\mbR^{2})$ a.s.\ for any $\kappa\in(0,1/2)$.
		
		The next diagram is $\PreThree{20}$, whose covariance can be bounded by
		\begin{equation*}
			\begin{split}
				&\mbE[\abs{\Delta_{q}\PreThree{20}(t,x,j)-\Delta_{q}\PreThree{20}(s,x,j)}^2]\\
				&\leq\norm{\het}_{C_{T}L^{\infty}}^{6}3!(2\uppi)^{4}\sum_{\om\in\mbZ^{2}}\varrho_{q}(\om)^{2}\sum_{\substack{\om_1,\om_2,\om_4\in\mbZ^{2}\setminus\{0\}\\\om_1+\om_2\in\mbZ^{2}\setminus\{0\}\\\om=\om_1+\om_2+\om_4\\(\om_1+\om_2)\sim\om_4}}\frac{\abs{\om_1^j+\om_2^j}^{2}}{\abs{\om_1+\om_2}^{4}}\abs{\om_4}^{2}\abs{\om_2}^{-2}\abs{\om_1}^{2}\abs{\inner{\om_2}{\om_1+\om_2}}^2\\[-30pt]
				&\multiquad[30]\times\mathsf{D}_{s,t}\SPreThree(2\uppi\om_1,2\uppi\om_2,2\uppi\om_4).
			\end{split}
		\end{equation*}
		We see that $\PreThree{20}$ differs from $\PreThree{10}$ only in its colouring of the $\om_1+\om_2$ and $\om_4$ arrows. However, the sum of the exponents is preserved leading to the same overall regularity. Consequently by the same arguments as for $\PreThree{10}$, it follows that for any $T>0$, $p\in[1,\infty)$, $\gamma\in(0,1)$, $\eps\in(0,(1-\gamma)\wedge\gamma/2)$,
		\begin{equation*}
			\mbE[\norm{\PreThree{20}}_{C_T^{\gamma/2-\eps}\mcC^{-\gamma-4\eps}}^{p}]^{1/p}\lesssim\norm{\het}_{C_TL^{\infty}}^{3}
		\end{equation*}
		and therefore $\PreThree{20}\in\msL^{\kappa}_{T}\mcC^{0-}(\mbT^{2};\mbR^{2})$ a.s.\ for any $\kappa\in(0,1/2)$.
	\end{proof}
\end{details}
We can also show the existence of the diagrams $\Checkmark{10}$ and $\Checkmark{20}$.
\begin{lemma}\label{lem:existence_checkmark}
	Let $T>0$, $\alpha<-2$, $\kappa\in(0,1/2)$ and $\het\in C_{T}L^{\infty}(\mbT^{2})$. Then for any $p\in[1,\infty)$ we have
	\begin{equation*}
		\mbE[\norm{\Checkmark{10}}_{\msL^{\kappa}_{T}\mcC^{2\alpha+4}}^p]^{1/p}+\mbE[\norm{\Checkmark{20}}_{\msL^{\kappa}_{T}\mcC^{2\alpha+4}}^p]^{1/p}\lesssim\norm{\het}_{C_{T}L^{\infty}}^{2}
	\end{equation*}
	and in particular $\Checkmark{10},\Checkmark{20}\in\msL^{\kappa}_{T}\mcC^{2\alpha+4}(\mbT^{2};\mbR^{2\times2})$ a.s.
\end{lemma}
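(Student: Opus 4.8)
The plan is to follow the proof of Lemma~\ref{lem:existence_ypsilon}, with one simplification: unlike $\Ypsilon{10}$, the diagrams $\Checkmark{10}$ and $\Checkmark{20}$ contain no sub-object of the shape $\Cherry{10}$—the inner objects $\ti$, $\nabla\Phi_{\ti}$ and $\Phi_{\ti}$ are genuine distribution-valued functions of time—so $\Checkmark{10},\Checkmark{20}$ admit pointwise-in-time values and the bilinearity expansion used for $\Ypsilon{10}$ can be replaced by the triangle inequality. First I would write $\Delta_q\Checkmark{10}(t,x,k,j)$ and $\Delta_q\Checkmark{20}(t,x,k,j)$ as iterated It\^o integrals $I^2_{\srdet}$ of deterministic integrands absorbing the Littlewood--Paley block, the inner $u_3$-integral and the $\varsigma$-symmetrisation, exactly as in the proof of Lemma~\ref{lem:ypsilon_upper_bound}, and invoke Lemma~\ref{lem:isometry_argument} to reduce to the white-noise versions $\Checkmark{1},\Checkmark{2}$:
\begin{equation*}
	\mbE(\abs{\Delta_q\Checkmark{10}(t,x,k,j)-\Delta_q\Checkmark{10}(s,x,k,j)}^2)\leq\norm{\srdet}_{C_TL^{\infty}}^{4}\,\mbE(\abs{\Delta_q\Checkmark{1}(t,x,k,j)-\Delta_q\Checkmark{1}(s,x,k,j)}^2),
\end{equation*}
and likewise for $\Checkmark{20}$ and $\Checkmark{2}$; the fourth power of $\norm{\srdet}_{C_TL^{\infty}}$ becomes the claimed second power after a square root.

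Next I would establish the analogue of Lemma~\ref{lem:ypsilon_upper_bound}. Applying It\^o's isometry and pushing the modulus past the symmetrisation and the $u_3$-integral with the triangle inequality gives
\begin{equation*}
	\mbE(\abs{\Delta_q\Checkmark{1}(t,x,k,j)-\Delta_q\Checkmark{1}(s,x,k,j)}^2)\lesssim\sum_{\om\in\mbZ^{2}}\varrho_{q}(\om)^{2}\sum_{\substack{\om_1,\om_2\in\mbZ^{2}\setminus\{0\}\\\om=\om_1+\om_2\\\om_1\sim\om_2}}\abs{\om_1}^{2}\abs{\om_2}^{2}\,\mathsf{D}_{s,t}\SVee(2\uppi\om_1,2\uppi\om_2),
\end{equation*}
where $\mathsf{D}_{s,t}\SVee=\mathsf{S}_{t,t}\SVee+\mathsf{S}_{s,s}\SVee-\mathsf{S}_{s,t}\SVee-\mathsf{S}_{t,s}\SVee$ is built, in the sense of~\eqref{eq:increment_shape_coefficient_ypsilon}, from the shape coefficient of the common \enquote{V}-shape of $\Checkmark{10}$ and $\Checkmark{20}$—two heat multipliers on the $\om_1$-branch, one on the $\om_2$-branch—
\begin{equation*}
	\begin{split}
		\mathsf{S}_{s,t}\SVee(\om_1,\om_2)\coloneqq&\int_{0}^{t}\dd u_3\int_{0}^{s}\dd u_3'\int_{-\infty}^{u_3\wedge u_3'}\dd u_1\int_{-\infty}^{s\wedge t}\dd u_2\\
		&\quad\euler^{-\abs{t-u_3}\abs{\om_1}^2}\euler^{-\abs{u_3-u_1}\abs{\om_1}^2}\euler^{-\abs{s-u_3'}\abs{\om_1}^2}\euler^{-\abs{u_3'-u_1}\abs{\om_1}^2}\euler^{-\abs{t-u_2}\abs{\om_2}^2}\euler^{-\abs{s-u_2}\abs{\om_2}^2},
	\end{split}
\end{equation*}
and I have already used the crude colouring bounds $\abs{\om_1^k}^2\abs{\om_1}^2\abs{\om_2^j}^2\abs{\om_2}^{-2}\lesssim\abs{\om_1}^2\abs{\om_2}^2$ for $\Checkmark{10}$ and $\abs{\om_1^k}^2\abs{\om_1^j}^2\abs{\om_1}^{-2}\abs{\om_2}^2\lesssim\abs{\om_1}^2\abs{\om_2}^2$ for $\Checkmark{20}$. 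Crucially, since these colourings contain no factor $\abs{\inner{\om_2}{\om_1+\om_2}}^2$, the case distinction between $(\om_1\perp\om_2)$ and $\lnot(\om_1\perp\om_2)$ used for $\Ypsilon{10}$ is unnecessary here. Evaluating the Gaussian integrals (giving $\mathsf{S}_{t,t}\SVee(2\uppi\om_1,2\uppi\om_2)\lesssim\abs{\om_1}^{-6}\abs{\om_2}^{-2}$ uniformly in $t$) and interpolating in time as in Lemma~\ref{lem:difference_ypsilon_bound} should yield, for every $\gamma\in[0,1]$,
\begin{equation*}
	\mathsf{D}_{s,t}\SVee(2\uppi\om_1,2\uppi\om_2)\lesssim\abs{t-s}^{\gamma}\bigl(\abs{\om_1}^{-6+2\gamma}\abs{\om_2}^{-2}+\abs{\om_1}^{-6}\abs{\om_2}^{-2+2\gamma}\bigr).
\end{equation*}

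Feeding these into the previous display, using $\om_1\sim\om_2$ (so $\abs{\om_1}\sim\abs{\om_2}\gtrsim\abs{\om}$) and the summation estimate~\eqref{eq:summation_estimates_0}, I would obtain, for any $\gamma\in(0,1)$ and small $\eps>0$,
\begin{equation*}
	\mbE(\abs{\Delta_q\Checkmark{10}(t,x,k,j)-\Delta_q\Checkmark{10}(s,x,k,j)}^2)\lesssim\norm{\srdet}_{C_TL^{\infty}}^{4}\abs{t-s}^{\gamma}2^{q(2\gamma+\eps)},
\end{equation*}
and the same bound for $\Checkmark{20}$. Since each block is an iterated It\^o integral, Nelson's estimate (Lemma~\ref{lem:Nelson_estimate}) upgrades this to all $p$-moments, and the Kolmogorov--Besov criterion (Lemma~\ref{lem:existence_criterion}) gives, for a universal $c>0$ and all $p\in[1,\infty)$, $\eps\in(0,\gamma/2)$, the bound $\mbE(\norm{\Checkmark{10}}_{C_T^{\gamma/2-\eps}\mcC^{-\gamma-c\eps}}^{p})^{1/p}\lesssim\norm{\srdet}_{C_TL^{\infty}}^{2}$ and similarly for $\Checkmark{20}$. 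Choosing $\gamma,\eps$ small enough that $-\gamma-c\eps\geq2\alpha+4$ and using $C_T^{\gamma/2-\eps}\mcC^{-\gamma-c\eps}\hookrightarrow\msL_T^{\kappa}\mcC^{2\alpha+4}$ for $\kappa\in(0,1/2)$ then completes the proof.

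The main obstacle is the second step: one must confirm that the triangle inequality really is legitimate here (which rests on the absence of a $\Cherry{10}$-type, non-pointwise-evaluable sub-object) and carry out the evaluation of $\mathsf{S}_{s,t}\SVee$ carefully enough to extract the full $\abs{\om_1}^{-6}\abs{\om_2}^{-2}$ decay. Balanced against the degree-$4$ colouring and the resonance constraint $\om_1\sim\om_2$, this decay is precisely what brings the summed exponent down to $2^{q\cdot 0}$—i.e.\ the borderline regularity $2\alpha+4=0{-}$ that Bony's estimate predicts for the resonant product of two $\mcC^{0-}$ factors. Everything downstream is mechanical and parallels Lemmas~\ref{lem:existence_lolli} and~\ref{lem:existence_ypsilon}.
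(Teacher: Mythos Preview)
Your outline is sound and would ultimately give the claimed bound, but it does not actually use the simplification you announce, and it differs from the paper's route. You observe—correctly—that the absence of a $\Cherry{10}$ sub-object permits the triangle inequality, yet you then set up a shape coefficient $\mathsf{S}_{s,t}\SVee$ \emph{without} absolute values and pass through the bilinearity increment $\mathsf{D}_{s,t}=\mathsf{S}_{t,t}+\mathsf{S}_{s,s}-\mathsf{S}_{s,t}-\mathsf{S}_{t,s}$, exactly as for $\Ypsilon{10}$. Your claimed bound on $\mathsf{D}_{s,t}\SVee$ then needs its own proof in the style of Lemma~\ref{lem:difference_ypsilon_bound}, which you defer; it is true, but it is more work than needed.

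The paper instead carries out the simplification you name. It defines an absolute-value shape coefficient $\mathsf{A}^{k,k'}_{s,t}\SVee$ by pushing the modulus past all deterministic integrals; after integrating out $u_1$ crudely via $\sum_{j_1}\int|H^{j_1}_{u_3-u_1}(\om_1)||H^{j_1}_{u_3'-u_1}(\om_1)|\,du_1\lesssim1$, Lemma~\ref{lem:checkmark_upper_bound} leaves the colouring $|G^j(\om_2)|^{2}\lesssim|\om_2|^{-2}$ times $\mathsf{A}^{k,k}_{s,t}\SVee(\om_1,\om_1,\om_2)$, which Lemma~\ref{lem:v_regularity} bounds by $|t-s|^{\gamma}|\om_1|^{-2+2\gamma}$ using only~\eqref{eq:interpolation}. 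No exact Gaussian evaluation and no $\mathsf{D}$-type cancellation are needed. Both routes reach the same estimate $\mbE(|\Delta_q\Checkmark{10}(t)-\Delta_q\Checkmark{10}(s)|^2)\lesssim\norm{\srdet}_{C_TL^\infty}^4|t-s|^\gamma 2^{2q\gamma}$ and finish via Lemmas~\ref{lem:Nelson_estimate} and~\ref{lem:existence_criterion}; the paper's route buys brevity, yours reuses the $\Ypsilon{10}$ machinery verbatim at the cost of an extra shape-coefficient computation.
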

We define a shape coefficient for $\Checkmark{10}$ and $\Checkmark{20}$. Since those do not contain the problematic sub-diagram $\Cherry{10}$, it suffices to push the absolute value past the integral sign. We denote this fact by the letter $\mathsf{A}$ for absolute value. In particular, we may bound any integral over $[0,\infty)$ by $(-\infty,\infty)$, which simplifies our calculations.
\begin{definition}\label{def:shape_coefficient_vee}
	Let $s,t\geq0$, $\om_1,\om_1',\om_2\in\mbZ^{2}\setminus\{0\}$ and $k,k'=1,2$. We set
	\begin{equation*}
		\begin{split}
			&\mathsf{A}^{k,k'}_{s,t}\SVee(\om_1,\om_1',\om_2)\\
			&\defeq\sum_{j=1}^{2}\int_{-\infty}^{\infty}\dd u_2\int_{-\infty}^{\infty}\dd u_1\int_{-\infty}^{\infty}\dd u_1'\abs{H_{t-u_1}^{k}(\om_1)H_{t-u_2}^{j}(\om_2)-H_{s-u_1}^{k}(\om_1)H_{s-u_2}^{j}(\om_2)}\\
			&\multiquad[14]\times\abs{H_{t-u_1'}^{k'}(\om_1')H_{t-u_2}^{j}(\om_2)-H_{s-u_1'}^{k'}(\om_1')H_{s-u_2}^{j}(\om_2)}.
		\end{split}
	\end{equation*}
\end{definition}
We can then bound the second moment of $\Checkmark{10}$ in terms of this object.
\begin{lemma}\label{lem:checkmark_upper_bound}
	Let $s,t\in[0,T]$, $x\in\mbT^{2}$, $k,j=1,2$ and $q\in\mbN_{-1}$. It holds that
	\begin{equation*}
		\begin{split}
			&\mbE[\abs{\Delta_{q}\Checkmark{10}(t,x,k,j)-\Delta_{q}\Checkmark{10}(s,x,k,j)}^2]\lesssim\norm{\het}_{C_TL^{\infty}}^{4}\sum_{\om\in\mbZ^{2}}\varrho_{q}(\om)^{2}\sum_{\substack{\om_1,\om_2\in\mbZ^{2}\setminus\{0\}\\\om=\om_1+\om_2\\\om_1\sim\om_2}}\abs{\om_2}^{-2}\mathsf{A}^{k,k}_{s,t}\SVee(\om_1,\om_1,\om_2).
		\end{split}
	\end{equation*}
\end{lemma}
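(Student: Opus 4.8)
The plan is to follow the same template used for $\Ypsilon{10}$ in Lemma~\ref{lem:ypsilon_upper_bound}, but taking advantage of the fact that $\Checkmark{10}$ contains no copy of the non-pointwise-evaluable sub-diagram $\Cherry{10}$, so the triangle inequality may be used freely. First I would apply Lemma~\ref{lem:isometry_argument} to pass to the homogeneous diagram $\Checkmark{1}$ obtained by replacing the heterogeneous integrator with the plain complex Brownian field, picking up the factor $\norm{\srdet}_{C_TL^\infty}^4$. Concretely, define the Fourier coefficients
\begin{equation*}
	\begin{split}
		\hat{\Checkmark{1}}(t,\om,k,j)&\coloneqq\sum_{\substack{\om_1,\om_2\in\mbZ^{2}\\\om=\om_1+\om_2\\\om_1\sim\om_2}}\sum_{j_1,j_2=1}^{2}\int_{0}^{t}\dd u_3\int_{0}^{t}\dd W^{j_1}(u_1,\om_1)\int_{0}^{u_1}\dd W^{j_2}(u_2,\om_2)\\
		&\quad\times\sum_{\varsigma\in\Sigma(1,2)}H_{t-u_3}^{k}(\om_{\varsigma(1)})H_{u_3-u_{\varsigma(1)}}^{j_{\varsigma(1)}}(\om_{\varsigma(1)})G^{j}(\om_{\varsigma(2)})H_{t-u_{\varsigma(2)}}^{j_{\varsigma(2)}}(\om_{\varsigma(2)}),
	\end{split}
\end{equation*}
so that $\mbE(\abs{\Delta_q\Checkmark{10}(t,x,k,j)-\Delta_q\Checkmark{10}(s,x,k,j)}^2)\leq\norm{\srdet}_{C_TL^\infty}^4\,\mbE(\abs{\Delta_q\Checkmark{1}(t,x,k,j)-\Delta_q\Checkmark{1}(s,x,k,j)}^2)$.

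Next I would use that $\mbE(\hat{\Checkmark{1}}(t,\om,k,j)\overline{\hat{\Checkmark{1}}(s,\om',k,j)})=0$ for $\om\neq\om'$, reducing the Littlewood--Paley block to $\sum_{\om}\varrho_q(\om)^2\,\mbE(\abs{\hat{\Checkmark{1}}(t,\om,k,j)-\hat{\Checkmark{1}}(s,\om,k,j)}^2)$. Then I would apply It\^o's isometry over the innermost two Brownian integrals, symmetrize to remove the simplex constraint at the cost of a factor $2!$, and—crucially—push the absolute value through the remaining Lebesgue integral in $u_3$ by the triangle inequality, extending $u_1,u_1'$ and $u_3$ to the full real line. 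The $u_3$-integral of $H_{t-u_3}^k(\om_1)$ against $H_{u_3-u_1}^{j_1}(\om_1)$ is harmless since both multipliers share the same frequency $\om_1$; carrying it out produces a factor that is controlled by constants times $\abs{H_{t-u_1}^k(\om_1)}$ (this is where the definition of $\mathsf{A}^{k,k}_{s,t}\SVee$ with the kernel $H^k$ rather than $H$ comes from — the concatenation $\mathcal{I}\circ\partial_k\mathcal{I}$ collapses into a single $\partial_k\mathcal{I}$-type bound). What remains is exactly $\mathsf{A}^{k,k}_{s,t}\SVee(\om_1,\om_1,\om_2)$ times the colouring factor coming from $G^j(\om_2)$, namely $\abs{\om_2^j}^2\abs{\om_2}^{-4}\leq\abs{\om_2}^{-2}$, which gives the stated bound.

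The main obstacle I anticipate is the bookkeeping around the $u_3$-integration: one has to verify that integrating out $u_3$ (the internal vertex from the $\mcI$-concatenation) genuinely reduces to the $H^k$-kernel bound rather than introducing an extra $\abs{\om_1}^{-2}$ or worsening the time-increment structure. Concretely, after the triangle inequality one faces terms like $\int_{-\infty}^{\infty}\dd u_3\,\abs{H_{t-u_3}^k(\om_1)}\,\abs{H_{u_3-u_1}^{j_1}(\om_1)}$ split according to whether $u_3$ is bounded above by $t\wedge u_1$ or lies between; the point is that $\int\euler^{-\abs{t-u_3}\abs{\om_1}^2}\euler^{-\abs{u_3-u_1}\abs{\om_1}^2}\dd u_3\lesssim(1+\abs{t-u_1}\abs{\om_1}^2)\euler^{-\abs{t-u_1}\abs{\om_1}^2}\abs{\om_1}^{-2}\cdot\abs{2\uppi\om_1}\lesssim\abs{H^k_{t-u_1}(\om_1)}\cdot\abs{\om_1}^{-2}\cdot(\text{poly})$, and one must be careful that the polynomial prefactor is absorbed into the $\lesssim$ uniformly (using $x\euler^{-x}\lesssim1$). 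Since we do not need the increment difference of $H^k_{t-u_3}(\om_1)$ itself for this factor — the time increment is already carried by the $\mathsf{A}^{k,k}_{s,t}\SVee$ object through the difference $H_{t-u_1}^k(\om_1)H_{t-u_2}^j(\om_2)-H_{s-u_1}^k(\om_1)H_{s-u_2}^j(\om_2)$ — this is a routine but slightly delicate estimate rather than a conceptual difficulty. Once this is in place, the bound follows exactly as stated, and the analogous statement for $\Checkmark{20}$ is identical with the colouring $\abs{\om_1^j}^2\abs{\om_1}^{-4}$ instead, which by $\om_1\sim\om_2$ is again $\lesssim\abs{\om_2}^{-2}$.
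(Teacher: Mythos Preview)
Your overall strategy is right through the application of Lemma~\ref{lem:isometry_argument}, the orthogonality reduction, It\^o's isometry, Jensen's inequality to remove the symmetrization, and the triangle inequality. The gap is in the identification of which time variable must be integrated out to arrive at $\mathsf{A}^{k,k}_{s,t}\SVee(\om_1,\om_1,\om_2)$.

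After It\^o's isometry the Brownian variable $u_1$ appears \emph{once}, not as a pair $u_1,u_1'$; it is the deterministic $\mcI$-variable $u_3$ (sitting inside the squared modulus) that doubles into $u_3,u_3'$ upon expanding the square. Moreover the time increment $t$ versus $s$ is carried by $u_3$ (through $H^{k}_{t-u_3}(\om_1)$) and $u_2$ (through $H^{j_2}_{t-u_2}(\om_2)$), not by $u_1$. So the structure after the triangle inequality is
\[
\int\dd u_1\dd u_2\dd u_3\dd u_3'\,\abs{A_{s,t}(u_3,u_2)}\,\abs{A_{s,t}(u_3',u_2)}\,\abs{H^{j_1}_{u_3-u_1}(\om_1)}\,\abs{H^{j_1}_{u_3'-u_1}(\om_1)}\,\abs{G^{j}(\om_2)}^{2},
\]
with $A_{s,t}(u_3,u_2)=H^{k}_{t-u_3}(\om_1)H^{j_2}_{t-u_2}(\om_2)-H^{k}_{s-u_3}(\om_1)H^{j_2}_{s-u_2}(\om_2)$. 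The paper integrates the single $u_1$:
\[
\sum_{j_1=1}^{2}\int_{-\infty}^{u_3\wedge u_3'}\dd u_1\,\abs{H^{j_1}_{u_3-u_1}(\om_1)}\,\abs{H^{j_1}_{u_3'-u_1}(\om_1)}=\tfrac{1}{2}\,\euler^{-\abs{u_3-u_3'}\abs{2\uppi\om_1}^{2}}\lesssim 1,
\]
leaving precisely the triple integral over $u_3,u_3',u_2$ that \emph{is} $\mathsf{A}^{k,k}_{s,t}\SVee(\om_1,\om_1,\om_2)$ once one relabels $(u_3,u_3')\mapsto(u_1,u_1')$ in the definition of the shape coefficient. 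The factor $\abs{G^{j}(\om_2)}^{2}\leq(2\uppi)^{-2}\abs{\om_2}^{-2}$ gives the colouring. There is no ``collapse of $\mcI\circ\partial_k\mcI$'' needed: the $H^{k}$ in the shape coefficient is literally the outer $\partial_k\mcI$-kernel, and what is removed is the inner $H^{j_1}_{u_3-u_1}$.

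Your proposed $u_3$-integration against $H^{j_1}_{u_3-u_1}$ is the wrong direction: it entangles the time increment (which lives in $u_3$) with the convolution, and the resulting kernel $(t-u_1)\euler^{-(t-u_1)\abs{2\uppi\om_1}^{2}}$ does not reduce to $\abs{H^{k}_{t-u_1}(\om_1)}$ in a way that reconstructs the increment structure of $\mathsf{A}^{k,k}_{s,t}\SVee$ after one has already taken absolute values. The obstacle you anticipate is thus not bookkeeping but a genuine mismatch; it disappears entirely once you integrate $u_1$ instead.
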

\begin{proof}
	The claim follows by an application of~\eqref{eq:isometry_bound_delta=0}, It\^{o}'s isometry (Lemma~\ref{lem:Ito_isometry}), the triangle inequality and direct computation.
\end{proof}
\begin{details}
	\begin{proof}
		Let $s,t\in[0,T]$, $x\in\mbT^{2}$, $k,j=1,2$ and $q\in\mbN_{-1}$. An application of~\eqref{eq:isometry_bound_delta=0} yields
		\begin{equation*}
			\begin{split}
				&\mbE[\abs{\Delta_{q}\Checkmark{10}(t,x,k,j)-\Delta_{q}\Checkmark{10}(s,x,k,j)}^2]\leq\norm{\het}_{C_TL^{\infty}}^{4}\mbE[\abs{\Delta_{q}\Checkmark{1}(t,x,k,j)-\Delta_{q}\Checkmark{1}(s,x,k,j)}^2],
			\end{split}
		\end{equation*}
		where $\Checkmark{1}$ is defined by
		\begin{equation*}
			\begin{split}
				\hat{\Checkmark{1}}(t,\om,k,j)&\defeq\sum_{\substack{\om_1,\om_2\in\mbZ^{2}\\\om=\om_1+\om_2\\\om_1\sim\om_2}}\sum_{j_1,j_2=1}^{2}\int_{0}^{t}\dd u_3\int_{0}^{t}\dd W^{j_1}(u_1,\om_1)\int_{0}^{u_1}\dd W^{j_2}(u_2,\om_2)\\
				&\quad\sum_{\varsigma\in\Sigma(1,2)}H_{t-u_3}^{k}(\om_{\varsigma(1)})H_{u_3-u_{\varsigma(1)}}^{j_{\varsigma(1)}}(\om_{\varsigma(1)})G^{j}(\om_{\varsigma(2)})H_{t-u_{\varsigma(2)}}^{j_{\varsigma(2)}}(\om_{\varsigma(2)}).
			\end{split}
		\end{equation*}
		Using that
		\begin{equation*}
			\mbE\Bigl[\hat{\Checkmark{1}}(t,\om,k,j)\overline{\hat{\Checkmark{1}}(s,\om',k,j)}\Bigr]=0\quad\text{if}\quad\om\neq\om'\in\mbZ^{2},
		\end{equation*}
		we obtain
		\begin{equation*}
			\mbE[\abs{\Delta_{q}\Checkmark{1}(t,x,k,j)-\Delta_{q}\Checkmark{1}(s,x,k,j)}^2]=\sum_{\om\in\mbZ^{2}}\varrho_{q}(\om)^{2}\mbE\Bigl[\Bigl\lvert\hat{\Checkmark{1}}(t,\om,k,j)-\hat{\Checkmark{1}}(s,\om,k,j)\Bigr\rvert^2\Bigr].
		\end{equation*}
		By It\^{o}'s isometry,
		\begin{equation*}
			\begin{split}
				&\mbE[\abs{\hat{\Checkmark{1}}(t,\om,k,j)-\hat{\Checkmark{1}}(s,\om,k,j)}^2]\\
				&=\sum_{\substack{\om_1,\om_2\in\mbZ^{2}\\\om=\om_1+\om_2\\\om_1\sim\om_2}}\sum_{j_1,j_2=1}^{2}\frac{1}{2!}\int_{0}^{\infty}\dd u_1\int_{0}^{\infty}\dd u_2\\
				&\Bigl\lvert\sum_{\varsigma\in\Sigma(1,2)}\int_{0}^{\infty}\dd u_3 (H^k_{t-u_3}(\om_{\varsigma(1)})H^{j_{\varsigma(2)}}_{t-u_{\varsigma(2)}}(\om_{\varsigma(2)})-H^k_{s-u_3}(\om_{\varsigma(1)})H^{j_{\varsigma(2)}}_{s-u_{\varsigma(2)}}(\om_{\varsigma(2)}))H^{j_{\varsigma(1)}}_{u_3-u_{\varsigma(1)}}(\om_{\varsigma(1)})G^j(\om_{\varsigma(2)})\Bigr\rvert^2.
			\end{split}
		\end{equation*}
		By Jensen's inequality,
		\begin{equation*}
			\begin{split}
				&\mbE\Bigl[\Bigl\lvert\hat{\Checkmark{1}}(t,\om,k,j)-\hat{\Checkmark{1}}(s,\om,k,j)\Bigr\rvert^2\Bigr]\\
				&\leq\sum_{\substack{\om_1,\om_2\in\mbZ^{2}\\\om=\om_1+\om_2\\\om_1\sim\om_2}}\sum_{j_1,j_2=1}^{2}2!\int_{0}^{\infty}\dd u_1\int_{0}^{\infty}\dd u_2\\
				&\quad\Bigl\lvert\int_{0}^{\infty}\dd u_3 (H^k_{t-u_3}(\om_{1})H^{j_{2}}_{t-u_{2}}(\om_{2})-H^k_{s-u_3}(\om_{1})H^{j_{2}}_{s-u_{2}}(\om_{2}))H^{j_{1}}_{u_3-u_{1}}(\om_{1})G^j(\om_{2})\Bigr\rvert^2.
			\end{split}
		\end{equation*}
		We apply the triangle inequality,
		\begin{equation*}
			\begin{split}
				&\mbE\Bigl[\Bigl\lvert\hat{\Checkmark{1}}(t,\om,k,j)-\hat{\Checkmark{1}}(s,\om,k,j)\Bigr\rvert^2\Bigr]\\
				&\lesssim\sum_{\substack{\om_1,\om_2\in\mbZ^{2}\\\om=\om_1+\om_2\\\om_1\sim\om_2}}\sum_{j_1,j_2=1}^{2}\int_{0}^{\infty}\dd u_1\int_{0}^{\infty}\dd u_2\int_{0}^{\infty}\dd u_3\int_{0}^{\infty}\dd u_3'\abs{G^j(\om_{2})}^{2}\\
				&\quad\times\lvert (H^k_{t-u_3}(\om_{1})H^{j_{2}}_{t-u_{2}}(\om_{2})-H^k_{s-u_3}(\om_{1})H^{j_{2}}_{s-u_{2}}(\om_{2}))H^{j_{1}}_{u_3-u_{1}}(\om_{1})\rvert\\
				&\quad\times\lvert (H^k_{t-u_3'}(\om_{1})H^{j_{2}}_{t-u_{2}}(\om_{2})-H^k_{s-u_3'}(\om_{1})H^{j_{2}}_{s-u_{2}}(\om_{2}))H^{j_{1}}_{u_3'-u_{1}}(\om_{1})\rvert.
			\end{split}
		\end{equation*}
		We compute 
		\begin{equation*}
			\sum_{j_1=1}^{2}\int_{-\infty}^{u_3\wedge u_3'}\dd u_1\abs{H_{u_3-u_1}^{j_1}(\om_1)}\abs{H_{u_3'-u_1}^{j_1}(\om_1)}=\frac{1}{2}\euler^{-\abs{u_3-u_3'}\abs{2\uppi\om_1}^2}\lesssim1,
		\end{equation*}
		which yields
		\begin{equation*}
			\begin{split}
				&\mbE\Bigl[\Bigl\lvert\hat{\Checkmark{1}}(t,\om,k,j)-\hat{\Checkmark{1}}(s,\om,k,j)\Bigr\rvert^2\Bigr]\\
				&\lesssim\sum_{\substack{\om_1,\om_2\in\mbZ^{2}\\\om=\om_1+\om_2\\\om_1\sim\om_2}}\sum_{j_2=1}^{2}\int_{0}^{\infty}\dd u_2\int_{0}^{\infty}\dd u_3\int_{0}^{\infty}\dd u_3'\abs{G^j(\om_{2})}^{2}\\
				&\quad\times\lvert H^k_{t-u_3}(\om_{1})H^{j_{2}}_{t-u_{2}}(\om_{2})-H^k_{s-u_3}(\om_{1})H^{j_{2}}_{s-u_{2}}(\om_{2})\rvert\lvert H^k_{t-u_3'}(\om_{1})H^{j_{2}}_{t-u_{2}}(\om_{2})-H^k_{s-u_3'}(\om_{1})H^{j_{2}}_{s-u_{2}}(\om_{2})\rvert.
			\end{split}
		\end{equation*}
		We introduce the shape coefficient $\mathsf{A}^{k,k}_{s,t}\SVee(\om_1,\om_1,\om_2)$, from which the bound follows.
	\end{proof}
\end{details}
\begin{proof}[Proof of Lemma~\ref{lem:existence_checkmark}]
	We provide a sketch. The shape coefficient is controlled in Lemma~\ref{lem:v_regularity}, we can then use fairly direct estimates and apply Lemma~\ref{lem:Nelson_estimate} and Lemma~\ref{lem:existence_criterion} as before. We observe that $\Checkmark{20}$ differs from $\Checkmark{10}$ only in its colouring of the $\om_1$ and $\om_2$ arrows, with the sum of the exponents preserved. Consequently by the same arguments as for $\Checkmark{10}$, we can also construct $\Checkmark{20}$.
\end{proof}
\begin{details}
	\begin{proof}[Proof of Lemma~\ref{lem:existence_checkmark}]
		Let $T>0$ and $\gamma\in(0,1)$. We first establish the existence of $\Checkmark{10}$. Assume $s\leq t$. An application of Lemma~\ref{lem:v_regularity} yields
		\begin{equation*}
			\mathsf{A}^{k,k}_{s,t}\SVee(\om_1,\om_1,\om_2)\lesssim\abs{t-s}^{\gamma}\abs{\om_1}^{-2+2\gamma},
		\end{equation*}
		so that by Lemma~\ref{lem:checkmark_upper_bound} and an application of Lemma~\ref{lem:convolution_estimates}, uniformly in $x\in\mbT^{2}$,
		\begin{equation*}
			\begin{split}
				\mbE[\abs{\Delta_{q}\Checkmark{10}(t,x,k,j)-\Delta_{q}\Checkmark{10}(s,x,k,j)}^2]&\lesssim\norm{\het}_{C_{T}L^{\infty}}^{4}\abs{t-s}^{\gamma}\sum_{\om\in\mbZ^{2}}\varrho_{q}(\om)^{2}\sum_{\substack{\om_1,\om_2\in\mbZ^{2}\setminus\{0\}\\\om=\om_1+\om_2\\\om_1\sim\om_2}}\abs{\om_2}^{-2}\abs{\om_1}^{-2+2\gamma}\\
				&\lesssim\norm{\het}_{C_{T}L^{\infty}}^{4}\abs{t-s}^{\gamma}\sum_{\om\in\mbZ^{2}}\varrho_{q}(\om)^{2}(1\vee\abs{\om})^{-2+2\gamma}\\
				&=\norm{\het}_{C_{T}L^{\infty}}^{4}\abs{t-s}^{\gamma}2^{q2\gamma}.
			\end{split}
		\end{equation*}
		Assume $\eps<\gamma/2$. We obtain by Lemma~\ref{lem:Nelson_estimate} and Lemma~\ref{lem:existence_criterion} for any $p\in[1,\infty)$,
		\begin{equation*}
			\mbE[\norm{\Checkmark{10}}_{C_T^{\gamma/2-\eps}\mcC^{-\gamma-3\eps}}^{p}]^{1/p}\lesssim\norm{\het}_{C_TL^{\infty}}^{2}
		\end{equation*}
		and therefore $\Checkmark{10}\in\msL^{\kappa}_{T}\mcC^{0-}(\mbT^{2};\mbR^{2\times2})$ a.s.\ for any $\kappa\in(0,1/2)$.
		
		We observe that $\Checkmark{20}$ differs from $\Checkmark{10}$ only in its colouring of the $\om_1$ and $\om_2$ arrows, with the sum of the exponents preserved. Consequently by the same arguments as for $\Checkmark{10}$, it follows that for any $p\in[1,\infty)$, $\eps<\gamma/2$,
		\begin{equation*}
			\mbE[\norm{\Checkmark{20}}_{C_T^{\gamma/2-\eps}\mcC^{-\gamma-3\eps}}^{p}]^{1/p}\lesssim\norm{\het}_{C_TL^{\infty}}^{2}
		\end{equation*}
		and therefore $\Checkmark{20}\in\msL^{\kappa}_{T}\mcC^{0-}(\mbT^{2};\mbR^{2\times2})$ a.s.\ for any $\kappa\in(0,1/2)$.
	\end{proof}
\end{details}
\subsection{Wick Contractions}\label{sec:Wick_contractions}
In this section we construct the contractions $\PreCocktail{30}$, $\PreCocktail{40}$, $\PreCocktail{50}=\PreCocktail{10}+\PreCocktail{20}$ and $\Triangle{3}=\Triangle{1}+\Triangle{2}$.

The diagrams $\PreCocktail{30}$, $\PreCocktail{40}$ differ from $\PreCocktail{10}$, $\PreCocktail{20}$ and $\Triangle{1}$, $\Triangle{2}$, despite their similarity in structure. The first two are well-defined, since two applications of $\nabla\Phi$ appear inside the $\STriangle$-shaped sub-diagram. This is not the case for $\PreCocktail{10}$, $\PreCocktail{20}$ and $\Triangle{1}$, $\Triangle{2}$ as one may tell by the distribution of highlighted arrows. 

However, by adding the problematic diagrams, $\PreCocktail{50}=\PreCocktail{10}+\PreCocktail{20}$ and $\Triangle{3}=\Triangle{1}+\Triangle{2}$ we can make use of the symmetry $G^{j}(\om_{4})=-G^{j}(-\om_{4})$, for all $j=1,2$ and $\om_{4}\in\mbZ^{2}$, to establish the existence of the summed objects.

We define a shape coefficient for those diagrams.
\begin{definition}\label{def:shape_coefficient_triangle}
	Let $s,t\geq0$, $\om_1,\om_2,\om_3\in\mbZ^{2}\setminus\{0\}$ and $k=1,2$. We set 
	\begin{equation*}
		\begin{split}
			&\mathsf{A}^{k}_{s,t}\STriangle(\om_1,\om_2,\om_3)\\
			&\defeq\sum_{j=1}^{2}\int_{-\infty}^{t}\dd u_1\int_{-\infty}^{u_1}\dd u_2\abs{(H^{k}_{t-u_1}(\om_1)H^{j}_{t-u_2}(\om_2)-H^{k}_{s-u_1}(\om_1)H^{j}_{s-u_2}(\om_2))H^{j}_{u_1-u_2}(\om_3)}.
		\end{split}
	\end{equation*}
\end{definition}
In Lemma~\ref{lem:existence_precocktail} we establish the existence of $\PreCocktail{30}$, $\PreCocktail{40}$, $\PreCocktail{50}$ and in Lemma~\ref{lem:existence_triangle}, we establish the existence of $\Triangle{3}$.
\begin{lemma}\label{lem:existence_precocktail}
	Let $T>0$, $\alpha<-2$, $\kappa\in(0,1/2)$ and $\het\in C_{T}\mcH^{2}(\mbT^{2})$. Then for any $p\in[1,\infty)$ we have
	\begin{equation*}
		\mbE[\norm{\PreCocktail{30}}_{\msL^{\kappa}_{T}\mcC^{3\alpha+6}}^p]^{1/p}+\mbE[\norm{\PreCocktail{40}}_{\msL^{\kappa}_{T}\mcC^{3\alpha+6}}^p]^{1/p}+\mbE[\norm{\PreCocktail{50}}_{\msL^{\kappa}_{T}\mcC^{3\alpha+6}}^p]^{1/p}\lesssim\norm{\het}_{C_{T}L^{\infty}}\norm{\het}_{C_{T}\mcH^{2}}^{2}
	\end{equation*}
	and in particular $\PreCocktail{30},\PreCocktail{40},\PreCocktail{50}\in\msL^{\kappa}_{T}\mcC^{3\alpha+6}(\mbT^{2};\mbR^{2})$ a.s.
\end{lemma}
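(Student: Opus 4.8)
The plan is to follow the template already used for Lemma~\ref{lem:existence_three} and Lemma~\ref{lem:existence_checkmark}: bound the second moment of the increment of each Littlewood--Paley block, upgrade to $p$-th moments by Nelson's estimate (Lemma~\ref{lem:Nelson_estimate}), and conclude via the Kolmogorov criterion (Lemma~\ref{lem:existence_criterion}). The new features are that $\PreCocktail{30}$, $\PreCocktail{40}$, $\PreCocktail{50}$ are contracted (first Wiener chaos) diagrams---a single Brownian integrator together with a Lebesgue loop---and that $\PreCocktail{50}$ exists only as the sum $\PreCocktail{10}+\PreCocktail{20}$. Concretely, I would fix $s<t\in[0,T]$, $x\in\mbT^2$, $q\in\mbN_{-1}$ and, using the iterated It\^{o} integral representations of Subsection~\ref{sec:Feynman}, write $\Delta_q\PreCocktail{i0}(t,x)-\Delta_q\PreCocktail{i0}(s,x)$ as a heterogeneous single It\^{o} integral $I^1_\srdet(\phi^{(q)}_{s,t})$, where the deterministic kernel $\phi^{(q)}_{s,t}$ carries the $t$-increment of the $H$-multipliers, the $\varrho_q$ cut-off, the elliptic multipliers $G$, and the two $\hat{\srdet}$ factors coming from the contracted loop at the shared time variable $u_2$. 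Applying Lemma~\ref{lem:isometry_argument} with $n=1$ strips the heterogeneity off the free noise vertex at the cost of one factor $\norm{\srdet}_{C_TL^\infty}$, leaving $\mbE(\abs{I^1(\phi^{(q)}_{s,t})}^2)$. Since none of these diagrams contains the non-pointwise sub-diagram $\Cherry{10}$, I would then apply It\^{o}'s isometry and push the absolute value inside the remaining Lebesgue integrals (the ``$\mathsf{A}$'' convention), which is precisely what $\mathsf{A}^{k}_{s,t}\STriangle$ is designed to capture; the loop factors are estimated by $\abs{\hat{\srdet}(u_2,\om_2-m_2)}\,\abs{\hat{\srdet}(u_2,\om_4+m_2)}\lesssim (1+\abs{\om_2-m_2}^2)^{-1}(1+\abs{\om_4+m_2}^2)^{-1}\norm{\srdet}^2_{C_T\mcH^2}$ and the $m_2$-sum is carried out by a discrete convolution (Appendix~\ref{sec:summation_estimates}), producing the remaining $\norm{\srdet}^2_{C_T\mcH^2}$. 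The outcome is a bound of the schematic form
\[
\mbE\big(\abs{\Delta_q\PreCocktail{i0}(t,x)-\Delta_q\PreCocktail{i0}(s,x)}^2\big)\lesssim \norm{\srdet}^2_{C_TL^\infty}\norm{\srdet}^4_{C_T\mcH^2}\sum_{\om\in\mbZ^2}\varrho_q(\om)^2\!\sum_{\substack{\om_1,\om_2,\om_4\in\mbZ^2\setminus\{0\}\\\om=\om_1+\om_2+\om_4\\(\om_1+\om_2)\sim\om_4}}\!\mathfrak{C}_i(\om_1,\om_2,\om_4)\,\mathsf{A}_{s,t}\STriangle,
\]
where the colourings $\mathfrak{C}_i$ differ only in the elliptic factor coming from the outermost $\nabla\Phi$: roughly $\abs{G^j(\om_4)}^2$ for $\PreCocktail{30}$, $\abs{G^j(\om_1+\om_2)}^2$ for $\PreCocktail{40}$, and $\abs{G^j(\om_4)+G^j(\om_1+\om_2)}^2$ for $\PreCocktail{50}$.

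For $\PreCocktail{30}$ and $\PreCocktail{40}$ this is then routine: the two elliptic multipliers already present inside the $\STriangle$-shaped sub-diagram supply enough decay, so bounding $\mathsf{A}^{k}_{s,t}\STriangle$ by $\abs{t-s}^\gamma$ times suitable negative powers of $\abs{\om_1}$, $\abs{\om_2}$, $\abs{\om_1+\om_2}$ (an estimate of the same type as Lemma~\ref{lem:v_regularity}, established in Appendix~\ref{sec:shape_coefficient_estimates}), relabelling $\om_3\coloneqq\om_1+\om_2$ and performing the nested sums---first over $\om_1+\om_2=\om_3$, then over $\om_3+\om_4=\om$ with $\om_3\sim\om_4$, using Lemma~\ref{lem:convolution_estimates} and the summation lemmas of Appendix~\ref{sec:summation_estimates}---yields $\lesssim\abs{t-s}^\gamma 2^{q(2\gamma+\eps)}$. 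For $\PreCocktail{50}$ the individual colourings $\abs{G^j(\om_4)}^2$ and $\abs{G^j(\om_1+\om_2)}^2$ would each leave a logarithmically divergent $\om_4$-sum over the high-frequency mode of the $\sim$-relation; the decisive step is Lemma~\ref{lem:elliptic_difference}, which, via $G^j(\om_1+\om_2)+G^j(\om_4)=G^j(\om-\om_4)-G^j(-\om_4)$ together with $G^j(-\,\cdot\,)=-G^j(\,\cdot\,)$, upgrades the decay by one power in $\om_4$ on the region $(\om_1+\om_2)\sim\om_4$. This is exactly the higher-dimensional incarnation of the product-rule cancellation discussed around~\eqref{eq:product_rule}, and it makes the $\om_4$-sum converge with the same final exponent as for $\PreCocktail{30}$, $\PreCocktail{40}$.

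In all three cases one thus arrives at
\[
\mbE\big(\abs{\Delta_q\PreCocktail{i0}(t,x)-\Delta_q\PreCocktail{i0}(s,x)}^2\big)\lesssim\norm{\srdet}^2_{C_TL^\infty}\norm{\srdet}^4_{C_T\mcH^2}\abs{t-s}^\gamma 2^{q(2\gamma+\eps)}
\]
uniformly in $x$. Since each $\Delta_q\PreCocktail{i0}$ is an iterated It\^{o} integral of fixed chaos order, Lemma~\ref{lem:Nelson_estimate} promotes this to the $p$-th moment and Lemma~\ref{lem:existence_criterion}---with $\gamma$ close to $1$, $\eps$ small and $\gamma/2-\eps\geq\kappa$---gives $\mbE(\norm{\PreCocktail{i0}}^p_{C^{\gamma/2-\eps}_T\mcC^{-\gamma-c\eps}})^{1/p}\lesssim\norm{\srdet}_{C_TL^\infty}\norm{\srdet}^2_{C_T\mcH^2}$ for a suitable constant $c$; as $3\alpha+6<0$ for $\alpha<-2$, this embeds into $\msL^\kappa_T\mcC^{3\alpha+6}$, yielding the asserted bound and a.s.\ membership.

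The main obstacle is the $\PreCocktail{50}$ case: identifying and exploiting the exact cancellation in $G^j(\om_4)+G^j(\om_1+\om_2)$ through Lemma~\ref{lem:elliptic_difference}, without which the high-frequency mode $\om_4$ cannot be summed. Everything else---the reduction to $I^1_\srdet$, the use of the $\mathsf{A}$-shape coefficient $\mathsf{A}^{k}_{s,t}\STriangle$, and the bookkeeping of exponents in the nested summations---is a careful but essentially routine repetition of the arguments already carried out for $\PreThree{10}$, $\PreThree{20}$ and $\Checkmark{10}$, $\Checkmark{20}$.
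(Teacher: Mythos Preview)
Your overall template is right, and in particular you correctly identify Lemma~\ref{lem:elliptic_difference} as the key input for $\PreCocktail{50}$. However, there is a structural gap in the second-moment bound. After Lemma~\ref{lem:isometry_argument} removes the heterogeneity from the \emph{free} noise vertex, the object $\PreCocktail{5}$ still carries two $\hat{\srdet}$ factors inside the contracted loop. Consequently, It\^{o}'s isometry only identifies the Brownian frequency $\om_1$, while $\om_2,\om_4$ and $\om_2',\om_4'$ remain free, so the Fourier modes of $\PreCocktail{5}$ are \emph{not} uncorrelated in $\om$: the correct starting point is
\[
\mbE(\abs{\Delta_q\PreCocktail{5}(t,x)-\Delta_q\PreCocktail{5}(s,x)}^2)\leq\sum_{\om,\om'}\varrho_q(\om)\varrho_q(\om')\bigl|\mbE\bigl((\hat{\PreCocktail{5}}(t,\om)-\hat{\PreCocktail{5}}(s,\om))\overline{(\hat{\PreCocktail{5}}(t,\om')-\hat{\PreCocktail{5}}(s,\om'))}\bigr)\bigr|,
\]
not your diagonal sum $\sum_\om\varrho_q(\om)^2$. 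This is the essential difference from $\Ypsilon{1}$, $\PreThree{1}$, $\Checkmark{1}$, where Lemma~\ref{lem:isometry_argument} removes \emph{all} heterogeneity and the modes genuinely decorrelate.

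Working with the double sum changes the bookkeeping in two ways. First, you obtain a product $\mathsf{A}^{j_3}_{s,t}\STriangle(\om_1+\om_2,\om_4,\om_2)\,\mathsf{A}^{j_3'}_{s,t}\STriangle(\om_1+\om_2',\om_4',\om_2')$ of two triangle factors, hence $\abs{t-s}^{2\gamma}$ rather than $\abs{t-s}^{\gamma}$; the relevant estimate is Lemma~\ref{lem:triangle_regularity}, which yields $\abs{t-s}^{\gamma}\abs{\om_4}^{2\gamma}\abs{\om_2}^{-1}$ (note the \emph{positive} power of the high-frequency variable). Second, after summing $m_2,m_2',\om_4,\om_4'$ via Lemma~\ref{lem:double_sum_PreCocktail_renormalized}, you are left with a sum over $\om_1$ coupling $\om$ and $\om'$ through $(1\vee\abs{\om-\om_1})^{-2+\eps}(1\vee\abs{\om'-\om_1})^{-2+\eps}\abs{\om_1}^{-2}$, which must be handled by a case distinction $\om=\om'$ versus $\om\neq\om'$ and the three-factor convolution estimate Lemma~\ref{lem:convolution_twofold}. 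The paper's proof carries this out and arrives at $\abs{t-s}^{2\gamma}2^{q(4\gamma+5\eps)}$; your schematic $\abs{t-s}^{\gamma}2^{q(2\gamma+\eps)}$ is not what the argument actually produces. The same off-diagonal structure applies to $\PreCocktail{30}$ and $\PreCocktail{40}$.
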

\begin{lemma}\label{lem:existence_triangle}
	Let $T>0$, $\alpha<-2$, $\kappa\in(0,1/2)$ and $\het\in C_{T}\mcH^{2}(\mbT^{2})$. Then it holds that $\Triangle{3}\in\msL^{\kappa}_{T}\mcC^{2\alpha+4}(\mbT^{2};\mbR^{2\times2})$ and 
	\begin{equation*}
		\norm{\Triangle{3}}_{\msL^{\kappa}_{T}\mcC^{2\alpha+4}}\lesssim\norm{\het}_{C_{T}\mcH^{2}}^{2}.
	\end{equation*}
\end{lemma}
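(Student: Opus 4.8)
The plan is to treat $\Triangle{3}=\Triangle{1}+\Triangle{2}$ exactly as the other Wick contractions, namely by using the summed structure to extract a factor $G^{j}(\om_2)+G^{j}(\om_1)$ whose improved decay in $\om_2$ is the key to convergence. Recall from Subsection~\ref{sec:Feynman} that
\begin{equation*}
	\hat{\Triangle{3}}(t,\om,k,j)=\sum_{\substack{\om_1,\om_2\in\mbZ^2\setminus\{0\}\\\om=\om_1+\om_2\\\om_1\sim\om_2}}\sum_{j_2=1}^{2}\sum_{m_2\in\mbZ^2}\int_0^t\dd u_3\int_0^{u_3}\dd u_2\,\hat{\srdet}(u_2,\om_1+m_2)\hat{\srdet}(u_2,\om_2-m_2)(G^j(\om_2)+G^j(\om_1))H^k_{t-u_3}(\om_1)H^{j_2}_{u_3-u_2}(\om_1)H^{j_2}_{t-u_2}(\om_2),
\end{equation*}
which is a deterministic (Lebesgue-integral) object — so, unlike the stochastic diagrams, no Kolmogorov/Nelson machinery is needed; one can bound $\norm{\Triangle{3}}_{\msL^\kappa_T\mcC^{2\alpha+4}}$ deterministically and pathwise. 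First I would reduce to controlling the Littlewood--Paley blocks: for each $q$ one has $\norm{\Delta_q\Triangle{3}(t)}_{L^\infty}\le\sum_{\om}\varrho_q(\om)|\hat{\Triangle{3}}(t,\om,k,j)|$, and a similar expression for the time increment, so it suffices to produce a bound of the form $|\hat{\Triangle{3}}(t,\om,k,j)-\hat{\Triangle{3}}(s,\om,k,j)|\lesssim|t-s|^\gamma(1\vee|\om|)^{-2-\gamma+\eps}\norm{\srdet}^2_{C_T\mcH^2}$ and then sum over the frequency shells as in the proof of Lemma~\ref{lem:existence_ypsilon}.

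The substantive steps are then: (i) push the absolute value inside the integrals — legitimate here since there is no problematic sub-diagram $\Cherry{10}$, only iterated $\mcI$-operations — introducing the shape coefficient $\mathsf{A}^k_{s,t}\STriangle(\om_1,\om_2,\om_3)$ defined above, so that $|\hat{\Triangle{3}}(t,\om)-\hat{\Triangle{3}}(s,\om)|$ is bounded by $\sum_{\om_1\sim\om_2}|G^j(\om_2)+G^j(\om_1)|\,\mathsf{A}^k_{s,t}\STriangle(\om_1,\om_1,\om_2)$ times the $m_2$-sum of the two $\srdet$-factors; (ii) handle the heterogeneity: $\sum_{m_2}|\hat{\srdet}(u_2,\om_1+m_2)||\hat{\srdet}(u_2,\om_2-m_2)|$ is bounded, using $|\hat{\srdet}(u,\cdot)|\lesssim(1+|\cdot|^2)^{-1}\norm{\srdet}_{C_T\mcH^2}$, by a discrete convolution which Lemma~\ref{lem:convolution_estimate_paraproduct} (or the twofold variant) controls by $\norm{\srdet}^2_{C_T\mcH^2}(1+|\om_1+\om_2|^2)^{-1+\eps}$ — note $\om_1\sim\om_2$ means $\om=\om_1+\om_2$ can be comparable to or much smaller than $\om_1$, so this factor is essentially harmless; (iii) invoke the decay of the shape coefficient $\mathsf{A}^k_{s,t}\STriangle$, which should be recorded in the appendix (Appendix~\ref{sec:shape_coefficient_estimates}), giving something like $\mathsf{A}^k_{s,t}\STriangle(\om_1,\om_1,\om_2)\lesssim|t-s|^\gamma|\om_1|^{-2+2\gamma}|\om_2|^{-2}$ after the $u_1,u_2,u_3$ integrations and interpolation~\eqref{eq:interpolation}; (iv) — and this is the crucial point — use the symmetry identity $G^j(\om_1)+G^j(\om_2)=G^j(\om-\om_2)-G^j(-\om_2)$ together with Lemma~\ref{lem:elliptic_difference} to gain one extra order of decay, replacing the naive $|G^j(\om_2)|\lesssim|\om_2|^{-1}$ by something like $|G^j(\om_1)+G^j(\om_2)|\lesssim|\om|\,|\om_2|^{-2}$ (or $\lesssim 1$, with an $|\om|$ factor); this is the higher-dimensional analogue of the product-rule cancellation~\eqref{eq:product_rule} and is exactly what turns the otherwise logarithmically divergent sum into a convergent one.

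The main obstacle is the bookkeeping in step (iv): one must verify that after inserting the gained decay from $G^j(\om_1)+G^j(\om_2)$, the resulting triple sum over $\om_1\sim\om_2$ with $\om=\om_1+\om_2$ actually converges and produces a clean power of $(1\vee|\om|)$ with exponent below $-2$ so that the final sum over dyadic shells (via~\eqref{eq:summation_estimates_0} and Lemma~\ref{lem:convolution_estimates}) closes — the $\sim$-constraint means $|\om_1|\approx|\om_2|$ but $|\om|$ ranges freely, so one has to split the sum according to whether $|\om|\lesssim|\om_1|$ or $|\om|\approx|\om_1|$ and track the $\eps$-losses from the convolution estimates carefully. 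Once the $s=t$ and increment bounds of the form $|t-s|^\gamma 2^{q(2\gamma+\eps)}$ are in hand, summing over $q$ with a suitable choice of $\gamma$ small and $\eps<\gamma/2$ gives $\Triangle{3}\in C_T\mcC^{0-}\cap C_T^{\kappa}\mcC^{-2\kappa-}$, i.e.\ $\Triangle{3}\in\msL^\kappa_T\mcC^{2\alpha+4}=\msL^\kappa_T\mcC^{0-}$ for $\alpha<-2$, with the stated bound $\lesssim\norm{\srdet}^2_{C_T\mcH^2}$; since the object is deterministic no probabilistic limit argument is required and the estimate holds surely.
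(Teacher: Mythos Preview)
Your plan is correct and follows essentially the same route as the paper's proof. One correction to the bookkeeping in step (iii): the shape-coefficient bound from Lemma~\ref{lem:triangle_regularity} (with the correct argument ordering $\mathsf{A}^k_{s,t}\STriangle(\om_1,\om_2,\om_1)$) gives only $\lesssim|t-s|^\gamma|\om_2|^{2\gamma}|\om_1|^{-1}\approx|t-s|^\gamma|\om_1|^{-1+2\gamma}$ under $\om_1\sim\om_2$, not the $|\om_1|^{-2+2\gamma}|\om_2|^{-2}$ you guess --- the missing $|\om_1|^{-2}$ decay is supplied precisely by Lemma~\ref{lem:elliptic_difference} in step (iv), and the $m_2$-convolution in step (ii) contributes the essential $|\om|^{-2+2\eps}$, so it is not ``harmless'' but rather carries half of the final $|\om|$-decay. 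With these adjustments the paper obtains directly $|\hat{\Triangle{3}}(t,\om)-\hat{\Triangle{3}}(s,\om)|\lesssim\norm{\srdet}^2_{C_T\mcH^2}|t-s|^\kappa|\om|^{-2+2\kappa+2\eps}$ via a single application of the resonant convolution estimate (Lemma~\ref{lem:convolution_estimates}), so the case-splitting on $|\om|$ versus $|\om_1|$ you anticipate in your final paragraph is not needed.
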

We first show Lemma~\ref{lem:existence_precocktail}. Let us focus on $\PreCocktail{50}$, the derivation for $\PreCocktail{30}$ and $\PreCocktail{40}$ is similar, but easier.
\begin{proof}[Proof of Lemma~\ref{lem:existence_precocktail}]
	Let $s,t\in[0,T]$, $x\in\mbT^{2}$, $j=1,2$ and $q\in\mbN_{-1}$. An application of~\eqref{eq:isometry_bound_delta=0} yields 
	\begin{equation*}
		\mbE[\abs{\Delta_{q}\PreCocktail{50}(t,x,j)-\Delta_{q}\PreCocktail{50}(s,x,j)}^2]\leq\norm{\het}_{C_{T}L^{\infty}}^{2}\mbE[\abs{\Delta_{q}\PreCocktail{5}(t,x,j)-\Delta_{q}\PreCocktail{5}(s,x,j)}^2],
	\end{equation*}
	where $\PreCocktail{5}$ is defined by 
	\begin{equation*}
		\begin{split}
			&\hat{\PreCocktail{5}}(t,\om,j)\\
			&\defeq\sum_{\substack{\om_1,\om_2,\om_4\in\mbZ^{2}\\\om=\om_1+\om_2+\om_4\\(\om_1+\om_2)\sim\om_4}}\sum_{j_1,j_2,j_3=1}^{2}\sum_{m_2\in\mbZ^{2}}\int_{0}^{t}\dd u_3\int_{0}^{u_3}\dd u_2\int_{0}^{u_3}\dd W^{j_1}(u_1,\om_1)\hat{\het}(u_2,\om_2-m_2)\hat{\het}(u_2,\om_4+m_2)\\
			&\quad\times(G^{j}(\om_4)+G^{j}(\om_1+\om_2))H^{j_3}_{t-u_3}(\om_1+\om_2)H^{j_2}_{t-u_2}(\om_4)G^{j_3}(\om_1)H^{j_1}_{u_3-u_1}(\om_1)H^{j_2}_{u_3-u_2}(\om_2).
		\end{split}
	\end{equation*}
	Using the definition of the Littlewood--Paley block $\Delta_{q}$, uniformly in $x\in\mbT^{2}$,
	\begin{equation*}
		\begin{split}
			&\mbE[\abs{\Delta_q\PreCocktail{5}(t,x,j)-\Delta_q\PreCocktail{5}(s,x,j)}^2]\\
			&\leq\sum_{\om,\om'\in\mbZ^{2}} \varrho_q(\om)\varrho_q(\om')\Bigl\lvert\mbE\Bigl[\Bigl(\hat{\PreCocktail{5}}(t,\om,j)-\hat{\PreCocktail{5}}(s,\om,j)\Bigr)\overline{\Bigl(\hat{\PreCocktail{5}}(t,\om',j)-\hat{\PreCocktail{5}}(s,\om',j)\Bigr)}\Bigr]\Bigr\rvert.
		\end{split}
	\end{equation*}
	We apply It\^{o}'s isometry and a decay estimate (Lemma~\ref{lem:elliptic_difference}) for the symmetrized elliptic multiplier $G^{j}(\om_{1}+\om_{2})+G^{j}(\om_{4})=G^{j}(\om-\om_{4})-G^{j}(-\om_{4})$. We obtain the bound
	\begin{equation}\label{eq:precocktail_sum_upper_bound}
		\begin{split}
			&\mbE[\abs{\Delta_{q}\PreCocktail{50}(t,x,j)-\Delta_{q}\PreCocktail{50}(s,x,j)}^2]\\
			&\lesssim\norm{\het}_{C_{T}L^{\infty}}^{2}\norm{\het}_{C_T\mcH^{2}}^{4}\sum_{\om,\om'\in\mbZ^{2}}\varrho_{q}(\om)\varrho_{q}(\om')\abs{\om}\abs{\om'}\sum_{\substack{\om_1,\om_2,\om_4\in\mbZ^{2}\setminus\{0\}\\\om_1+\om_2\in\mbZ^{2}\setminus\{0\}\\\om=\om_1+\om_2+\om_4\\(\om_1+\om_2)\sim\om_4}}\sum_{\substack{\om _2',\om_4'\in\mbZ^{2}\setminus\{0\}\\\om_1+\om_2'\in\mbZ^{2}\setminus\{0\}\\\om'=\om_1+\om_2'+\om_4'\\(\om_1+\om_2')\sim\om_4'}}\sum_{j_3,j_3'=1}^{2}\sum_{m_2,m_2'\in\mbZ^{2}}\\
			&(1+\abs{\om_2-m_2}^2)^{-1}(1+\abs{\om_2'-m_2'}^2)^{-1}(1+\abs{\om_4+m_2}^2)^{-1}(1+\abs{\om_4'+m_2'}^2)^{-1}\abs{\om_1}^{-2}\\
			&\times\abs{\om-\om_4}^{-2}\abs{\om'-\om_4'}^{-2}(1+\abs{\om}\abs{\om_4}^{-1})(1+\abs{\om'}\abs{\om_4'}^{-1})\mathsf{A}_{s,t}^{j_3}\STriangle(\om_1+\om_2,\om_4,\om_2)\mathsf{A}_{s,t}^{j_3'}\STriangle(\om_1+\om_2',\om_4',\om_2').
		\end{split}
	\end{equation}
	\begin{details}
		By It\^{o}'s isometry and the triangle inequality,
		\begin{equation*}
			\begin{split}
				&\Bigl\lvert\mbE\Bigl[\Bigl(\hat{\PreCocktail{5}}(t,\om,j)-\hat{\PreCocktail{5}}(s,\om,j)\Bigr)\overline{\Bigl(\hat{\PreCocktail{5}}(t,\om',j)-\hat{\PreCocktail{5}}(s,\om',j)\Bigr)}\Bigr]\Bigr\rvert\\
				&\leq\sum_{\substack{\om_1,\om_2,\om_4\in\mbZ^{2}\setminus\{0\}\\\om_1+\om_2\in\mbZ^{2}\setminus\{0\}\\\om=\om_1+\om_2+\om_4\\(\om_1+\om_2)\sim\om_4}}\sum_{j_1,j_2,j_3=1}^{2}\sum_{m_2\in\mbZ^{2}}\int_{0}^{\infty}\dd u_3\int_{0}^{u_3}\dd u_2\sum_{\substack{\om_2',\om_4'\in\mbZ^{2}\setminus\{0\}\\\om_1+\om_2'\in\mbZ^{2}\setminus\{0\}\\\om'=\om_1+\om_2'+\om_4'\\(\om_1+\om_2')\sim\om_4'}}\sum_{j_2',j_3'=1}^{2}\sum_{m_2'\in\mbZ^{2}}\int_{0}^{\infty}\dd u_3'\int_{0}^{u_3'}\dd u_2'\int_{0}^{\infty}\dd u_1\\
				&\quad\abs{\hat{\het}(u_2,\om_2-m_2)}\abs{\hat{\het}(u_2',\om_2'-m_2')}\abs{\hat{\het}(u_2,\om_4+m_2)}\abs{\hat{\het}(u_2',\om_4'+m_2')}\\
				&\quad\times\abs{G^{j}(\om_1+\om_2)+G^{j}(\om_4)}\abs{G^{j}(\om_1+\om_2')+G^{j}(\om_4')}\\
				&\quad\times\abs{(H^{j_3}_{t-u_3}(\om_1+\om_2)H^{j_2}_{t-u_2}(\om_4)-H^{j_3}_{s-u_3}(\om_1+\om_2)H^{j_2}_{s-u_2}(\om_4))G^{j_3}(\om_1)H^{j_1}_{u_3-u_1}(\om_1)H^{j_2}_{u_3-u_2}(\om_2)}\\
				&\quad\times\abs{(H^{j_3'}_{t-u_3'}(\om_1+\om_2')H^{j_2'}_{t-u_2'}(\om_4')-H^{j_3'}_{s-u_3'}(\om_1+\om_2')H^{j_2'}_{s-u_2'}(\om_4'))G^{j_3'}(\om_1)H^{j_1}_{u_3'-u_1}(\om_1)H^{j_2'}_{u_3'-u_2'}(\om_2')}.
			\end{split}
		\end{equation*}
		After estimating $\het$ by $\abs{\hat{\het}(u_2,\om_2-m_2)}\lesssim(1+\abs{\om_2-m_2}^2)^{-1}\norm{\het}_{C_T\mcH^{2}}$, $u_2\in[0,T]$, we can extend the domain of integration and bound the integral over $u_1$,
		
		\begin{equation*}
			\sum_{j_1=1}^{2}\int_{-\infty}^{u_3\wedge u_3'}\dd u_1\abs{H^{j_1}_{u_3-u_1}(\om_1)H^{j_1}_{u_3'-u_1}(\om_1)}=\euler^{-\abs{u_3-u_3'}\abs{2\uppi\om_1}^2}\leq 1.
		\end{equation*}
		Hence
		\begin{equation*}
			\begin{split}
				&\Bigl\lvert\mbE\Bigl[\Bigl(\hat{\PreCocktail{5}}(t,\om,j)-\hat{\PreCocktail{5}}(s,\om,j)\Bigr)\overline{\Bigl(\hat{\PreCocktail{5}}(t,\om',j)-\hat{\PreCocktail{5}}(s,\om',j)\Bigr)}\Bigr]\Bigr\rvert\\
				&\lesssim\norm{\het}_{C_T\mcH^{2}}^{4}\sum_{\substack{\om_1,\om_2,\om_4\in\mbZ^{2}\setminus\{0\}\\\om_1+\om_2\in\mbZ^{2}\setminus\{0\}\\\om=\om_1+\om_2+\om_4\\(\om_1+\om_2)\sim\om_4}}\sum_{\substack{\om _2',\om_4'\in\mbZ^{2}\setminus\{0\}\\\om_1+\om_2'\in\mbZ^{2}\setminus\{0\}\\\om'=\om_1+\om_2'+\om_4'\\(\om_1+\om_2')\sim\om_4'}}\sum_{j_3,j_3'=1}^{2}\sum_{m_2,m_2'\in\mbZ^{2}}\\
				&\quad(1+\abs{\om_2-m_2}^2)^{-1}(1+\abs{\om_2'-m_2'}^2)^{-1}(1+\abs{\om_4+m_2}^2)^{-1}(1+\abs{\om_4'+m_2'}^2)^{-1}\\
				&\quad\times\abs{G^{j}(\om_1+\om_2)+G^{j}(\om_4)}\abs{G^{j}(\om_1+\om_2')+G^{j}(\om_4')}\abs{G^{j_3}(\om_1)G^{j_3'}(\om_1)}\\
				&\quad\times\sum_{j_2=1}^{2}\int_{-\infty}^{\infty}\dd u_3\int_{-\infty}^{u_3}\dd u_2\abs{(H^{j_3}_{t-u_3}(\om_1+\om_2)H^{j_2}_{t-u_2}(\om_4)-H^{j_3}_{s-u_3}(\om_1+\om_2)H^{j_2}_{s-u_2}(\om_4))H^{j_2}_{u_3-u_2}(\om_2)}\\
				&\quad\times\sum_{j_2'=1}^{2}\int_{-\infty}^{\infty}\dd u_3'\int_{-\infty}^{u_3'}\dd u_2'\abs{(H^{j_3'}_{t-u_3'}(\om_1+\om_2')H^{j_2'}_{t-u_2'}(\om_4')-H^{j_3'}_{s-u_3'}(\om_1+\om_2')H^{j_2'}_{s-u_2'}(\om_4'))H^{j_2'}_{u_3'-u_2'}(\om_2')}.
			\end{split}
		\end{equation*}
		We can now apply Lemma~\ref{lem:elliptic_difference} and introduce the shape coefficient.
	\end{details}
	We control the shape coefficient with Lemma~\ref{lem:triangle_regularity} and obtain for all $\gamma\in[0,1]$,
	\begin{equation*}
		\mathsf{A}_{s,t}^{j_3}\STriangle(\om_1+\om_2,\om_4,\om_2)\lesssim\abs{t-s}^{\gamma}\abs{\om_4}^{2\gamma}\abs{\om_2}^{-1}.
	\end{equation*}
	We can then plug this expression into~\eqref{eq:precocktail_sum_upper_bound} and apply Lemma~\ref{lem:double_sum_PreCocktail_renormalised} to control the sums over $\om_4$ and $m_2\in\mbZ^{2}$. Let $\gamma\in(0,1/2)$ and $\eps\in(0,1-2\gamma)$, it follows that
	\begin{equation}\label{eq:precocktail_sum_upper_bound_2}
		\begin{split}
			&\mbE[\abs{\Delta_{q}\PreCocktail{50}(t,x,j)-\Delta_{q}\PreCocktail{50}(s,x,j)}^2]\\
			&\lesssim\norm{\het}_{C_{T}L^{\infty}}^{2}\norm{\het}_{C_{T}\mcH^{2}}^{4}\abs{t-s}^{2\gamma}\sum_{\om,\om'\in\mbZ^{2}}\varrho_{q}(\om)\varrho_{q}(\om')\abs{\om}^{2\gamma+\eps}\abs{\om'}^{2\gamma+\eps}\\
			&\quad\times\sum_{\om_1\in\mbZ^{2}\setminus\{0\}}\abs{\om_1}^{-2}(1\vee\abs{\om-\om_1})^{-2+\eps}(1\vee\abs{\om'-\om_1})^{-2+\eps}.
		\end{split}
	\end{equation}
	Hence it suffices to control the remaining sum over $\om_1\in\mbZ^{2}\setminus\{0\}$,
	\begin{equation*}
		\sum_{\om_1\in\mbZ^{2}\setminus\{0\}}\abs{\om_1}^{-2}(1\vee\abs{\om-\om_1})^{-2+\eps}(1\vee\abs{\om'-\om_1})^{-2+\eps}.
	\end{equation*}
	We distinguish the cases $\om=\om'$ and $\om\neq\om'$. In the case $\om=\om'$, we decompose the sum into the regions $\om_1=\om$ and $\om_1\in\mbZ^{2}\setminus\{0,\om\}$. We then estimate by Lemma~\ref{lem:convolution_estimates},
	\begin{equation*}
		\sum_{\om_1\in\mbZ^{2}\setminus\{0\}}\abs{\om_1}^{-2}(1\vee\abs{\om-\om_1})^{-4+2\eps}=\abs{\om}^{-2}+\sum_{\om_1\in\mbZ^{2}\setminus\{0,\om\}}\abs{\om_1}^{-2}\abs{\om-\om_1}^{-4+2\eps}\lesssim\abs{\om}^{-2+2\eps}.
	\end{equation*}
	In the case $\om\neq\om'$, we decompose the sum into the regions $\om_1=\om$, $\om_1=\om'$ and $\om_1\in\mbZ^{2}\setminus\{0,\om,\om'\}$,
	\begin{equation*}
		\begin{split}
			&\sum_{\om_1\in\mbZ^{2}\setminus\{0\}}\abs{\om_1}^{-2}(1\vee\abs{\om-\om_1})^{-2+\eps}(1\vee\abs{\om'-\om_1})^{-2+\eps}\\
			&=(\abs{\om}^{-2}+\abs{\om'}^{-2})\abs{\om-\om'}^{-2+\eps}+\sum_{\om_1\in\mbZ^{2}\setminus\{0,\om,\om'\}}\abs{\om_1}^{-2}\abs{\om-\om_1}^{-2+\eps}\abs{\om'-\om_1}^{-2+\eps}
		\end{split}
	\end{equation*}
	and apply Lemma~\ref{lem:convolution_twofold} to bound
	\begin{equation*}
		\sum_{\om_1\in\mbZ^{2}\setminus\{0,\om,\om'\}}\abs{\om_1}^{-2}\abs{\om-\om_1}^{-2+\eps}\abs{\om'-\om_1}^{-2+\eps}\lesssim\abs{\om-\om'}^{-2+\eps}\abs{\om}^{-2+2\eps}+\abs{\om-\om'}^{-2+\eps}\abs{\om'}^{-2+2\eps}.
	\end{equation*}
	Assume $q\in\mbN_{0}$ and $\om,\om'\in\supp(\varrho_q)$, it follows that $2^{q}\lesssim\abs{\om}\lesssim2^{q}$, $2^{q}\lesssim\abs{\om'}\lesssim2^{q}$ and if $\om\neq\om'$ then $2^{q}\lesssim\abs{\om-\om'}\lesssim2^{q}$ as well. We obtain by~\eqref{eq:precocktail_sum_upper_bound_2},
	\begin{equation*}
		\mbE[\abs{\Delta_q\PreCocktail{50}(t,x,j)-\Delta_q\PreCocktail{50}(s,x,j)}^2]\lesssim\norm{\het}_{C_{T}L^{\infty}}^{2}\norm{\het}_{C_{T}\mcH^{2}}^{4}\abs{t-s}^{2\gamma}2^{q(4\gamma+5\eps)}.
	\end{equation*}
	Assume in addition $\eps<\gamma$, we obtain by Lemma~\ref{lem:Nelson_estimate} and Lemma~\ref{lem:existence_criterion} for any $p\in[1,\infty)$,
	\begin{equation*}
		\mbE[\norm{\PreCocktail{50}}_{C_T^{\gamma-\eps}\mcC^{-2\gamma-5\eps}}^{p}]^{1/p}\lesssim\norm{\het}_{C_TL^{\infty}}\norm{\het}_{C_T\mcH^{2}}^{2}
	\end{equation*}
	and therefore $\PreCocktail{50}\in\msL^{\kappa}_{T}\mcC^{0-}(\mbT^{2};\mbR^{2})$ a.s.\ for any $\kappa\in(0,1/2)$.
	\begin{details}
		
		Next we construct $\PreCocktail{30}$. An application of~\eqref{eq:isometry_bound_delta=0} yields uniformly in $x\in\mbT^{2}$,
		\begin{equation*}
			\mbE[\abs{\Delta_{q}\PreCocktail{30}(t,x,j)-\Delta_{q}\PreCocktail{30}(s,x,j)}^2]\leq\norm{\het}_{C_{T}L^{\infty}}^{2}\mbE[\abs{\Delta_{q}\PreCocktail{3}(t,x,j)-\Delta_{q}\PreCocktail{3}(s,x,j)}^2],
		\end{equation*}
		where $\PreCocktail{3}$ is defined by
		\begin{equation*}
			\begin{split}
				&\hat{\PreCocktail{3}}(t,\om,j)\\
				&\defeq\sum_{\substack{\om_1,\om_2,\om_4\in\mbZ^{2}\\\om=\om_1+\om_2+\om_4\\(\om_1+\om_2)\sim\om_4}}\sum_{j_1,j_2,j_3=1}^{2}\sum_{m_2\in\mbZ^{2}}\int_{0}^{t}\dd u_3\int_{0}^{u_3}\dd u_2\int_{0}^{u_3}\dd W^{j_1}(u_1,\om_1)\hat{\het}(u_2,\om_2-m_2)\hat{\het}(u_2,\om_4+m_2)\\
				&\quad\times H_{t-u_3}^{j_3}(\om_1+\om_2)G^{j}(\om_4)H_{t-u_2}^{j_2}(\om_4)H_{u_3-u_1}^{j_1}(\om_1)G^{j_3}(\om_2)H_{u_3-u_2}^{j_2}(\om_2).
			\end{split}
		\end{equation*}
		We obtain by It\^{o}'s isometry for $\om,\om'\in\mbZ^2$, $s,t\in[0,T]$,
		\begin{equation*}
			\begin{split}
				&\Bigl\lvert\mbE\Bigl[\Bigl(\hat{\PreCocktail{3}}(t,\om,j)-\hat{\PreCocktail{3}}(s,\om,j)\Bigr)\overline{\Bigl(\hat{\PreCocktail{3}}(t,\om',j)-\hat{\PreCocktail{3}}(s,\om',j)\Bigr)}\Bigr]\Bigr\rvert\\
				&\leq\sum_{\substack{\om_1,\om_2,\om_4\in\mbZ^{2}\\\om=\om_1+\om_2+\om_4\\(\om_1+\om_2)\sim\om_4}}\sum_{j_1,j_2,j_3=1}^{2}\sum_{m_2\in\mbZ^{2}}\int_{0}^{\infty}\dd u_3\int_{0}^{u_3}\dd u_2\sum_{\substack{\om_2',\om_4'\in\mbZ^{2}\\\om'=\om_1+\om_2'+\om_4'\\(\om_1+\om_2')\sim\om_4'}}\sum_{j_2',j_3'=1}^{2}\sum_{m_2'\in\mbZ^{2}}\int_{0}^{\infty}\dd u_3'\int_{0}^{u_3'}\dd u_2'\int_{0}^{u_3\wedge u_3'}\dd u_1\\
				&\quad\abs{\hat{\het}(u_2,\om_2-m_2)}\abs{\hat{\het}(u_2,\om_2'-m_2')}\abs{\hat{\het}(u_2,\om_4+m_2)}\abs{\hat{\het}(u_2,\om_4'+m_2')}\\
				&\quad\times\abs{G^{j}(\om_4)}\abs{G^{j}(\om_4')}\abs{G^{j_3}(\om_2)}\abs{G^{j_3'}(\om_2')}\\
				&\quad\times\abs{(H^{j_3}_{t-u_3}(\om_1+\om_2)H^{j_2}_{t-u_2}(\om_4)-H^{j_3}_{s-u_3}(\om_1+\om_2)H^{j_2}_{s-u_2}(\om_4))H^{j_1}_{u_3-u_1}(\om_1)H^{j_2}_{u_3-u_2}(\om_2)}\\
				&\quad\times\abs{(H^{j_3'}_{t-u_3'}(\om_1+\om_2')H^{j_2'}_{t-u_2'}(\om_4')-H^{j_3'}_{s-u_3'}(\om_1+\om_2')H^{j_2'}_{s-u_2'}(\om_4'))H^{j_1}_{u_3'-u_1}(\om_1)H^{j_2'}_{u_3'-u_2'}(\om_2')}.
			\end{split}
		\end{equation*}
		We estimate $\abs{\hat{\het}(u_2,\om_2-m_2)}\lesssim(1+\abs{\om_2-m_2}^2)^{-1}\norm{\het}_{C_T\mcH^{2}}$, $u_2\in[0,T]$. We can then extend the domain of integration and compute 
		\begin{equation*}
			\sum_{j_1=1}^{2}\int_{-\infty}^{u_3\wedge u_3'}\dd u_1\abs{H^{j_1}_{u_3-u_1}(\om_1)H^{j_1}_{u_3'-u_1}(\om_1)}=\euler^{-\abs{u_3-u_3'}\abs{2\uppi\om_1}^2}\leq 1.
		\end{equation*}
		Introducing the shape coefficient yields
		\begin{equation*}
			\begin{split}
				&\Bigl\lvert\mbE\Bigl[\Bigl(\hat{\PreCocktail{3}}(t,\om,j)-\hat{\PreCocktail{3}}(s,\om,j)\Bigr)\overline{\Bigl(\hat{\PreCocktail{3}}(t,\om',j)-\hat{\PreCocktail{3}}(s,\om',j)\Bigr)}\Bigr]\Bigr\rvert\\
				&\lesssim\norm{\het}_{C_T\mcH^{2}}^{4}\sum_{\substack{\om_1,\om_2,\om_4\in\mbZ^{2}\setminus\{0\}\\\om_1+\om_2\in\mbZ^{2}\setminus\{0\}\\\om=\om_1+\om_2+\om_4\\(\om_1+\om_2)\sim\om_4}}\sum_{\substack{\om_2',\om_4'\in\mbZ^{2}\setminus\{0\}\\\om_1+\om_2'\in\mbZ^{2}\setminus\{0\}\\\om'=\om_1+\om_2'+\om_4'\\(\om_1+\om_2')\sim\om_4'}}\sum_{j_3,j_3'=1}^{2}\sum_{m_2,m_2'\in\mbZ^{2}}\\
				&\quad(1+\abs{\om_2-m_2}^2)^{-1}(1+\abs{\om_2'-m_2'}^2)^{-1}(1+\abs{\om_4+m_2}^2)^{-1}(1+\abs{\om_4'+m_2'}^2)^{-1}\\
				&\quad\times\abs{G^{j}(\om_4)}\abs{G^{j}(\om_4')}\abs{G^{j_3}(\om_2)}\abs{G^{j_3'}(\om_2')}\mathsf{A}_{s,t}^{j_3}\STriangle(\om_1+\om_2,\om_4,\om_2)\mathsf{A}_{s,t}^{j_3'}\STriangle(\om_1+\om_2',\om_4',\om_2').
			\end{split}
		\end{equation*}
		Assume $s\leq t$ and $\gamma\in(0,1/2)$. We apply Lemma~\ref{lem:triangle_regularity} to control $\mathsf{A}_{s,t}^{j_3}\STriangle(\om_1+\om_2,\om_4,\om_2)$,
		\begin{equation*}
			\begin{split}
				&\Bigl\lvert\mbE\Bigl[\Bigl(\hat{\PreCocktail{3}}(t,\om,j)-\hat{\PreCocktail{3}}(s,\om,j)\Bigr)\overline{\Bigl(\hat{\PreCocktail{3}}(t,\om',j)-\hat{\PreCocktail{3}}(s,\om',j)\Bigr)}\Bigr]\Bigr\rvert\\
				&\lesssim\norm{\het}_{C_T\mcH^{2}}^{4}\abs{t-s}^{2\gamma}\sum_{\om_1\in\mbZ^{2}\setminus\{0\}}\sum_{\substack{\om_2,\om_4\in\mbZ^{2}\setminus\{0\}\\\om_1+\om_2\in\mbZ^{2}\setminus\{0\}\\\om-\om_1=\om_2+\om_4\\(\om-\om_4)\sim\om_4}}\sum_{\substack{\om_2',\om_4'\in\mbZ^{2}\setminus\{0\}\\\om_1+\om_2'\in\mbZ^{2}\setminus\{0\}\\\om'-\om_1=\om_2'+\om_4'\\(\om'-\om_4')\sim\om_4'}}\sum_{m_2,m_2'\in\mbZ^{2}}\\
				&\quad(1+\abs{\om_2-m_2}^2)^{-1}(1+\abs{\om_2'-m_2'}^2)^{-1}(1+\abs{\om_4+m_2}^2)^{-1}(1+\abs{\om_4'+m_2'}^2)^{-1}\\
				&\quad\times\abs{\om_4}^{-1+2\gamma}\abs{\om_4'}^{-1+2\gamma}\abs{\om_2}^{-2}\abs{\om_2'}^{-2}.
			\end{split}
		\end{equation*}
		Let $\eps\in(0,(1-2\gamma)/2)$. We then apply Lemma~\ref{lem:double_sum_PreCocktail} to obtain
		\begin{equation*}
			\begin{split}
				&\Bigl\lvert\mbE\Bigl[\Bigl(\hat{\PreCocktail{3}}(t,\om,j)-\hat{\PreCocktail{3}}(s,\om,j)\Bigr)\overline{\Bigl(\hat{\PreCocktail{3}}(t,\om',j)-\hat{\PreCocktail{3}}(s,\om',j)\Bigr)}\Bigr]\Bigr\rvert\\
				&\lesssim\norm{\het}_{C_T\mcH^{2}}^{4}\abs{t-s}^{2\gamma}(1\vee\abs{\om})^{-1+2\gamma+2\eps}(1\vee\abs{\om'})^{-1+2\gamma+2\eps}\\
				&\quad\times\sum_{\om_1\in\mbZ^{2}\setminus\{0\}}(1\vee\abs{\om-\om_1})^{-2+\eps}(1\vee\abs{\om'-\om_1})^{-2+\eps}.
			\end{split}
		\end{equation*}
		Assume $\om=\om'$. We obtain by~\eqref{eq:summation_estimates_-alpha},
		\begin{equation*}
			\sum_{\om_1\in\mbZ^{2}\setminus\{0\}}(1\vee\abs{\om-\om_1})^{-4+2\eps}=1+\sum_{\om_1\in\mbZ^{2}\setminus\{0,\om\}}\abs{\om-\om_1}^{-4+2\eps}\lesssim1.
		\end{equation*}
		Assume $\om\neq\om'$. We obtain by Lemma~\ref{lem:convolution_estimates}, 
		\begin{equation*}
			\begin{split}
				&\sum_{\om_1\in\mbZ^{2}\setminus\{0\}}(1\vee\abs{\om-\om_1})^{-2+\eps}(1\vee\abs{\om'-\om_1})^{-2+\eps}\\
				&\leq2\abs{\om-\om'}^{-2+\eps}+\sum_{\om_1\in\mbZ^{2}\setminus\{0,\om,\om'\}}\abs{\om-\om_1}^{-2+\eps}\abs{\om'-\om_1}^{-2+\eps}\lesssim\abs{\om-\om'}^{-2+2\eps}.
			\end{split}
		\end{equation*}
		Consequently for $q\in\mbN_{0}$,
		\begin{equation*}
			\sum_{\om\in\mbZ^{2}} \abs{\varrho_q(\om)}^2\mbE\Bigl[\Bigl\lvert\hat{\PreCocktail{3}}(t,\om,j)-\hat{\PreCocktail{3}}(s,\om,j)\Bigr\rvert^2\Bigl]\lesssim\norm{\het}_{C_T\mcH^{2}}^{4}\abs{t-s}^{2\gamma}2^{q(4\gamma+4\eps)}
		\end{equation*}
		and for $q=-1$,
		\begin{equation*}
			\sum_{\om\in\mbZ^{2}} \abs{\varrho_{-1}(\om)}^2\mbE\Bigl[\Bigl\lvert\hat{\PreCocktail{3}}(t,\om,j)-\hat{\PreCocktail{3}}(s,\om,j)\Bigr\rvert^2\Bigl]\lesssim\norm{\het}_{C_T\mcH^{2}}^{4}\abs{t-s}^{2\gamma}\lesssim\norm{\het}_{C_T\mcH^{2}}^{4}\abs{t-s}^{2\gamma}2^{-(4\gamma+4\eps)}.
		\end{equation*}
		Furthermore for $q\in\mbN_{-1}$,
		\begin{equation*}
			\begin{split}
				&\sum_{\substack{\om,\om'\in\mbZ^{2}\\\om\not=\om'}} \varrho_q(\om)\varrho_q(\om')\Bigl\lvert\mbE\Bigl[\Bigl(\hat{\PreCocktail{3}}(t,\om,j)-\hat{\PreCocktail{3}}(s,\om,j)\Bigr)\overline{\Bigl(\hat{\PreCocktail{3}}(t,\om',j)-\hat{\PreCocktail{3}}(s,\om',j)\Bigr)}\Bigr]\Bigr\rvert\\
				&\lesssim\norm{\het}_{C_T\mcH^{2}}^{4}\abs{t-s}^{2\gamma}2^{q(4\gamma+6\eps)}.
			\end{split}
		\end{equation*}
		Assume in addition $\eps<\gamma$. We obtain by~\eqref{eq:isometry_bound_delta=0}, Lemma~\ref{lem:Nelson_estimate} and Lemma~\ref{lem:existence_criterion} for any $p\in[1,\infty)$,
		\begin{equation*}
			\mbE[\norm{\PreCocktail{30}}_{C_T^{\gamma-\eps}\mcC^{-2\gamma-6\eps}}^{p}]^{1/p}\lesssim\norm{\het}_{C_TL^{\infty}}\norm{\het}_{C_T\mcH^{2}}^{2}
		\end{equation*}
		and therefore $\PreCocktail{30}\in\msL^{\kappa}_{T}\mcC^{0-}(\mbT^{2};\mbR^{2})$ a.s.\ for any $\kappa\in(0,1/2)$.
		
		Next we construct $\PreCocktail{40}$. An application of~\eqref{eq:isometry_bound_delta=0} yields uniformly in $x\in\mbT^{2}$,
		\begin{equation*}
			\mbE[\abs{\Delta_{q}\PreCocktail{40}(t,x,j)-\Delta_{q}\PreCocktail{40}(s,x,j)}^2]\leq\norm{\het}_{C_{T}L^{\infty}}^{2}\mbE[\abs{\Delta_{q}\PreCocktail{4}(t,x,j)-\Delta_{q}\PreCocktail{4}(s,x,j)}^2],
		\end{equation*}
		where $\PreCocktail{4}$ is defined by
		\begin{equation*}
			\begin{split}
				&\hat{\PreCocktail{4}}(t,\om,j)\\
				&\defeq\sum_{\substack{\om_1,\om_2,\om_4\in\mbZ^{2}\\\om=\om_1+\om_2+\om_4\\(\om_1+\om_2)\sim\om_4}}\sum_{j_1,j_2,j_3=1}^{2}\sum_{m_2\in\mbZ^{2}}\int_{0}^{t}\dd u_3\int_{0}^{u_3}\dd u_2\int_{0}^{u_3}\dd W^{j_1}(u_1,\om_1)\hat{\het}(u_2,\om_2-m_2)\hat{\het}(u_2,\om_4+m_2)\\
				&\quad\times G^{j}(\om_1+\om_2)H^{j_3}_{t-u_3}(\om_1+\om_2)H^{j_2}_{t-u_2}(\om_4)H^{j_1}_{u_3-u_1}(\om_1)G^{j_3}(\om_2)H^{j_2}_{u_3-u_2}(\om_2).
			\end{split}
		\end{equation*}
		By It\^{o}'s isometry for $\om,\om'\in\mbZ^2$, $s,t\in[0,T]$,
		\begin{equation*}
			\begin{split}
				&\Bigl\lvert\mbE\Bigl[\Bigl(\hat{\PreCocktail{4}}(t,\om,j)-\hat{\PreCocktail{4}}(s,\om,j)\Bigr)\overline{\Bigl(\hat{\PreCocktail{4}}(t,\om',j)-\hat{\PreCocktail{4}}(s,\om',j)\Bigr)}\Bigr]\Bigr\rvert\\
				&\leq\sum_{\substack{\om_1,\om_2,\om_4\in\mbZ^{2}\\\om=\om_1+\om_2+\om_4\\(\om_1+\om_2)\sim\om_4}}\sum_{j_1,j_2,j_3=1}^{2}\sum_{m_2\in\mbZ^{2}}\int_{0}^{\infty}\dd u_3\int_{0}^{u_3}\dd u_2\sum_{\substack{\om_2',\om_4'\in\mbZ^{2}\\\om'=\om_1+\om_2'+\om_4'\\(\om_1+\om_2')\sim\om_4'}}\sum_{j_2',j_3'=1}^{2}\sum_{m_2'\in\mbZ^{2}}\int_{0}^{\infty}\dd u_3'\int_{0}^{u_3'}\dd u_2'\int_{0}^{u_3\wedge u_3'}\dd u_1\\
				&\quad\abs{\hat{\het}(u_2,\om_2-m_2)}\abs{\hat{\het}(u_2',\om_2'-m_2')}\abs{\hat{\het}(u_2,\om_4+m_2)}\abs{\hat{\het}(u_2',\om_4'+m_2')}\\
				&\quad\times\abs{G^{j}(\om_1+\om_2)}\abs{G^{j}(\om_1+\om_2')}\abs{G^{j_3}(\om_2)}\abs{G^{j_3'}(\om_2')}\\
				&\quad\times\abs{(H^{j_3}_{t-u_3}(\om_1+\om_2)H^{j_2}_{t-u_2}(\om_4)-H^{j_3}_{s-u_3}(\om_1+\om_2)H^{j_2}_{s-u_2}(\om_4))H^{j_1}_{u_3-u_1}(\om_1)H^{j_2}_{u_3-u_2}(\om_2)}\\
				&\quad\times\abs{(H^{j_3'}_{t-u_3'}(\om_1+\om_2')H^{j_2'}_{t-u_2'}(\om_4')-H^{j_3'}_{s-u_3'}(\om_1+\om_2')H^{j_2'}_{s-u_2'}(\om_4'))H^{j_1}_{u_3'-u_1}(\om_1)H^{j_2'}_{u_3'-u_2'}(\om_2')},
			\end{split}
		\end{equation*}
		and so,
		\begin{equation*}
			\begin{split}
				&\Bigl\lvert\mbE\Bigl[\Bigl(\hat{\PreCocktail{4}}(t,\om,j)-\hat{\PreCocktail{4}}(s,\om,j)\Bigr)\overline{\Bigl(\hat{\PreCocktail{4}}(t,\om',j)-\hat{\PreCocktail{4}}(s,\om',j)\Bigr)}\Bigr]\Bigr\rvert\\
				&\lesssim\norm{\het}_{C_T\mcH^{2}}^{4}\sum_{\substack{\om_1,\om_2,\om_4\in\mbZ^{2}\setminus\{0\}\\\om_1+\om_2\in\mbZ^{2}\setminus\{0\}\\\om=\om_1+\om_2+\om_4\\(\om_1+\om_2)\sim\om_4}}\sum_{\substack{\om_2',\om_4'\in\mbZ^{2}\setminus\{0\}\\\om_1+\om_2'\in\mbZ^{2}\setminus\{0\}\\\om'=\om_1+\om_2'+\om_4'\\(\om_1+\om_2')\sim\om_4'}}\sum_{j_3,j_3'=1}^{2}\sum_{m_2,m_2'\in\mbZ^{2}}\\
				&\quad(1+\abs{\om_2-m_2}^2)^{-1}(1+\abs{\om_2'-m_2'}^2)^{-1}(1+\abs{\om_4+m_2}^2)^{-1}(1+\abs{\om_4'+m_2'}^2)^{-1}\\
				&\quad\times\abs{G^{j}(\om_1+\om_2)}\abs{G^{j}(\om_1+\om_2')}\abs{G^{j_3}(\om_2)}\abs{G^{j_3'}(\om_2')}\mathsf{A}_{s,t}^{j_3}\STriangle(\om_1+\om_2,\om_4,\om_2)\mathsf{A}_{s,t}^{j_3'}\STriangle(\om_1+\om_2',\om_4',\om_2').
			\end{split}
		\end{equation*}
		The only difference to $\PreCocktail{3}$ is the factor $\abs{G^{j}(\om-\om_4)}\abs{G^{j}(\om'-\om_4')}$ that appears rather than $\abs{G^{j}(\om_4)}\abs{G^{j}(\om_4')}$. Hence, in the proof of Lemma~\ref{lem:double_sum_PreCocktail} we estimate instead of~\eqref{eq:PreCocktail_colour_difference},
		\begin{equation*}
			\begin{split}
				&\sum_{\substack{\om_4\in\mbZ^{2}\setminus\{0,\om,\om-\om_1\}\\(\om-\om_4)\sim\om_4}}\abs{\om-\om_4}^{-1}\abs{\om_4}^{2\gamma}\abs{\om-\om_1-\om_4}^{-2}\\
				&\leq\Bigl(\sum_{\substack{\om_4\in\mbZ^{2}\setminus\{0,\om\}\\(\om-\om_4)\sim\om_4}}\abs{\om-\om_4}^{-p}\abs{\om_4}^{p\delta}\Bigr)^{1/p}\Bigl(\sum_{\substack{\om_4\in\mbZ^{2}\setminus\{0,\om-\om_1\}}}\abs{\om_4}^{q(2\gamma-\delta )}\abs{\om-\om_1-\om_4}^{q(-2+\eps)}\Bigr)^{1/q},
			\end{split}
		\end{equation*}
		where $\eps\in(0,1-2\gamma)$, $p\in(1,\infty)$, $q=p/(p-1)$ and $\delta>0$. However, Lemma~\ref{lem:convolution_estimates} still applies, since the condition $p(1-\delta)>2$ for the resonant product is preserved.
		
		Assume in addition $\eps<\gamma$. We obtain by~\eqref{eq:isometry_bound_delta=0}, Lemma~\ref{lem:Nelson_estimate} and Lemma~\ref{lem:existence_criterion} for any $p\in[1,\infty)$,
		\begin{equation*}
			\mbE[\norm{\PreCocktail{40}}_{C_T^{\gamma-\eps}\mcC^{-2\gamma-6\eps}}^{p}]^{1/p}\lesssim\norm{\het}_{C_TL^{\infty}}\norm{\het}_{C_T\mcH^{2}}^{2}
		\end{equation*}
		and therefore $\PreCocktail{40}\in\msL^{\kappa}_{T}\mcC^{0-}(\mbT^{2};\mbR^{2})$ a.s.\ for any $\kappa\in(0,1/2)$.
	\end{details}
\end{proof}
Next we prove the existence of $\Triangle{3}$.
\begin{proof}[Proof of Lemma~\ref{lem:existence_triangle}]
	Let $0\leq s\leq t\leq T$, $\om\in\mbZ^2$ and $k,j=1,2$. We can bound the increment
	\begin{equation*}
		\begin{split}
			&\abs{\hat{\Triangle{3}}(t,\om,k,j)-\hat{\Triangle{3}}(s,\om,k,j)}\\
			&\lesssim\norm{\het}_{C_T\mcH^{2}}^{2}\sum_{\substack{\om_1,\om_2\in\mbZ^{2}\setminus\{0\}\\\om=\om_1+\om_2\\\om_1\sim\om_2}}\sum_{m_2\in\mbZ^{2}}\frac{\abs{G^j(\om_2)+G^j(\om_1)}}{(1+\abs{\om_1+m_2}^{2})(1+\abs{\om_2-m_2}^{2})}\mathsf{A}_{s,t}^{k}\STriangle(\om_1,\om_2,\om_1).
		\end{split}
	\end{equation*}
	\begin{details}
		\begin{equation*}
			\begin{split}
				&\hat{\Triangle{3}}(t,\om,k,j)-\hat{\Triangle{3}}(s,\om,k,j)\\
				&=\sum_{\substack{\om_1,\om_2\in\mbZ^{2}\\\om=\om_1+\om_2\\\om_1\sim\om_2}}\sum_{j_2=1}^{2}\sum_{m_2\in\mbZ^{2}}\int_{0}^{\infty}\dd u_3\int_{0}^{u_3}\dd u_2\hat{\het}(u_2,\om_1+m_2)\hat{\het}(u_2,\om_2-m_2)(G^j(\om_1)+G^j(\om_2))\\
				&\quad\times(H^k_{t-u_3}(\om_1)H^{j_2}_{t-u_2}(\om_2)-H^k_{s-u_3}(\om_1)H^{j_2}_{s-u_2}(\om_2))H^{j_2}_{u_3-u_2}(\om_1).
			\end{split}
		\end{equation*}
	\end{details}
	We control the shape coefficient with Lemma~\ref{lem:triangle_regularity} and the elliptic multiplier with Lemma~\ref{lem:elliptic_difference}. Let $\kappa\in[0,1]$, we arrive at
	\begin{equation*}
		\begin{split}
			&\abs{\hat{\Triangle{3}}(t,\om,k,j)-\hat{\Triangle{3}}(s,\om,k,j)}\lesssim\norm{\het}_{C_T\mcH^{2}}^{2}\abs{t-s}^{\kappa}\abs{\om}\sum_{\substack{\om_1,\om_2\in\mbZ^{2}\setminus\{0\}\\\om=\om_1+\om_2\\\om_1\sim\om_2}}\sum_{m_2\in\mbZ^{2}}\frac{(1+\abs{\om}\abs{\om_2}^{-1})\abs{\om_1}^{-3+2\kappa}}{(1+\abs{\om_1+m_2}^{2})(1+\abs{\om_2-m_2}^{2})}.
		\end{split}
	\end{equation*}
	Let $\eps\in(0,1)$, we bound by Lemma~\ref{lem:convolution_estimates},
	\begin{equation*}
		\begin{split}
			&\sum_{m_2\in\mbZ^{2}}(1+\abs{\om_1+m_2}^{2})^{-1}(1+\abs{\om_2-m_2}^{2})^{-1}\\
			&=2(1+\abs{\om}^{2})^{-1}+\sum_{m_2\in\mbZ^{2}\setminus\{-\om_1,\om_2\}}(1+\abs{\om_1+m_2}^{2})^{-1}(1+\abs{\om_2-m_2}^{2})^{-1}\\
			&\lesssim\abs{\om}^{-2+2\eps}
		\end{split}
	\end{equation*}
	and obtain
	\begin{equation*}
		\abs{\hat{\Triangle{3}}(t,\om,k,j)-\hat{\Triangle{3}}(s,\om,k,j)}\lesssim\norm{\het}_{C_T\mcH^{2}}^{2}\abs{t-s}^{\kappa}\abs{\om}^{-1+2\eps}\sum_{\substack{\om_1,\om_2\in\mbZ^{2}\setminus\{0\}\\\om=\om_1+\om_2\\\om_1\sim\om_2}}(1+\abs{\om}\abs{\om_2}^{-1})\abs{\om_1}^{-3+2\kappa}.
	\end{equation*}
	For any $\kappa\in(0,1/2)$, we bound by Lemma~\ref{lem:convolution_estimates},
	\begin{equation*}
		\abs{\hat{\Triangle{3}}(t,\om,k,j)-\hat{\Triangle{3}}(s,\om,k,j)}\lesssim\norm{\het}_{C_T\mcH^{2}}^{2}\abs{t-s}^{\kappa}\abs{\om}^{-2+2\kappa+2\eps}.
	\end{equation*}
	It follows directly that
	\begin{equation*}
		\norm{\Triangle{3}}_{C_T^{\kappa}\mcC^{-2\kappa-2\eps}}\lesssim\norm{\het}_{C_T\mcH^{2}}^{2}
	\end{equation*}
	\begin{details}
		\begin{equation*}
			\begin{split}
				\norm{\Triangle{3}(t)-\Triangle{3}(s)}_{\mcC^{-2\kappa-2\eps}}&=\norm{(2^{-q(2\kappa+2\eps)}\norm{\Delta_q\Triangle{3}(t)-\Delta_q\Triangle{3}(s)}_{L^\infty})_{q\in\mbN_{-1}}}_{\ell^\infty}\lesssim\norm{\het}_{C_T\mcH^{2}}^{2}\abs{t-s}^{\kappa}.
			\end{split}
		\end{equation*}
	\end{details}
	and we obtain $\Triangle{3}\in\msL_{T}^{\kappa}\mcC^{0-}(\mbT^{2};\mbR^{2\times2})$ for any $\kappa\in(0,1/2)$.
	\begin{details}
		We need to embed $\Triangle{3}$ into a slightly weaker space to ensure it is an element of the closure of smooth functions. We apply Lemma~\ref{lem:Besov_criterion} and use $\norm{\Triangle{3}}_{C_T^{\kappa}\mcC^{-2\kappa-2\eps}}<\infty$ to deduce $\Triangle{3}\in C_{T}^{\kappa}\mcC^{-2\kappa-3\eps}(\mbT^{2};\mbR^{2\times2})$.
	\end{details}
\end{proof}
\subsection{Construction of the Canonical Enhancement}\label{sec:canonical}
In this section we construct $\tl^{\delta}$ and ${\PreThreeloop{10}\!}^{\delta}$, ${\PreThreeloop{20}\!}^{\delta}$ for correlation lengths $\delta>0$. Additionally, we bound their speeds of divergence as $\delta\to0$ by a logarithmic rate using the symmetry of the elliptic equation.
\begin{lemma}\label{lem:existence_canonical}
	Let $T>0$, $\alpha<-2$, $\kappa\in(0,1)$, $\delta>0$, $\het\in C_{T}\mcH^{2}(\mbT^{2})$ and $\varphi$ be as in~\eqref{eq:def_cut_off}. Then it holds that
	\begin{equation*}
		\norm{\tl^{\delta}}_{\msL^{\kappa}_{T}\mcC^{2\alpha+4}}\lesssim(1\vee\log(\delta^{-1}))\norm{\varphi}_{L^{\infty}}^{2}\norm{\het}_{C_{T}\mcH^{2}}^{2},\qquad\norm{\tl^{\delta}}_{\msL^{\kappa}_{T}\mcC^{2\alpha+5}}\lesssim(1\vee(\delta^{-1}\log(\delta^{-1})))\norm{\varphi}_{L^{\infty}}^{2}\norm{\het}_{C_{T}\mcH^{2}}^{2},
	\end{equation*}
	and for any $\kappa\in(0,1/2)$, $p\in[1,\infty)$, 
	\begin{equation*}
		\mbE[\norm{{\PreThreeloop{10}\!}^{\delta}}_{\msL^{\kappa}_{T}\mcC^{3\alpha+6}}^p]^{1/p}\lesssim(1\vee\log(\delta^{-1}))\norm{\varphi}_{L^{\infty}}^{3}\norm{\het}_{C_{T}L^{\infty}}\norm{\het}_{C_{T}\mcH^{2}}^{2},
	\end{equation*}
	and
	\begin{equation*}
		\mbE[\norm{{\PreThreeloop{20}\!}^{\delta}}_{\msL^{\kappa}_{T}\mcC^{3\alpha+6}}^p]^{1/p}\lesssim(1\vee\log(\delta^{-1}))\norm{\varphi}_{L^{\infty}}^{3}\norm{\het}_{C_{T}L^{\infty}}\norm{\het}_{C_{T}\mcH^{2}}^{2}.
	\end{equation*}
\end{lemma}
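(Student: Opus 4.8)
The unifying observation is that each of $\tl^{\delta}$, ${\PreThreeloop{10}\!}^{\delta}$ and ${\PreThreeloop{20}\!}^{\delta}$ contains exactly one $\Loop$-type contraction: a pair of arrows decorated with $H^{j_1}$ and $G^{j_3}H^{j_1}$ emanating from the contracted vertex, carrying frequencies $\om_1$, $\om_2$ with $\om_1+\om_2$ fixed and modes $m_1$, $-m_1$. Since the two $H$-arrows are invariant under $(\om_1,m_1)\mapsto(\om_2,-m_1)$ and the factors $\hat{\srdet}(u,\om_1-m_1)\hat{\srdet}(u,\om_2+m_1)$ are as well, one may symmetrise over this exchange and replace the bare multiplier $G^{j_3}(\om_2)$ by $\tfrac12\big(G^{j_3}(\om_1)+G^{j_3}(\om_2)\big)$. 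Because $G^{j_3}$ is odd and $\om_1+\om_2$ is held fixed, Lemma~\ref{lem:elliptic_difference} then yields $\big|G^{j_3}(\om_1)+G^{j_3}(\om_2)\big|\lesssim|\om_1+\om_2|\,|\om_2|^{-2}$, one power of decay better in $|\om_2|$ than the naive $|G^{j_3}(\om_2)|\sim|\om_2|^{-1}$. This is the higher-dimensional form of~\eqref{eq:product_rule} and is precisely what downgrades the power-counting divergence from linear to logarithmic.

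The plan is to treat $\tl^{\delta}$ first, since it is deterministic. Starting from the integral representation of $\hat{\tl^{\delta}}(t,\om)$ in Subsection~\ref{sec:Feynman}, I would symmetrise as above, estimate $|\hat{\srdet}(u,\eta)|\lesssim(1+|\eta|^2)^{-1}\norm{\srdet}_{C_T\mcH^2}$, and push absolute values through the $u_1$- and $u_3$-integrals (no problematic sub-diagram is present, so the triangle inequality suffices, exactly as in the proof of Lemma~\ref{lem:existence_triangle}), bounding the time integrals by an $\mathsf{A}$-type shape coefficient controlled through Lemma~\ref{lem:triangle_regularity}. What remains is a sum over $\om_1$ (with $\om_2=\om-\om_1$) and the cut-off mode $m_1$; the $\om_1$-sum is handled by the convolution estimates of Appendix~\ref{sec:summation_estimates}, while the $m_1$-sum, restricted to $|m_1|\lesssim\delta^{-1}$ by $|\varphi(\delta m_1)|^2$, reduces --- thanks to the extra $|\om_2|^{-1}\sim|m_1|^{-1}$ decay from the symmetrisation --- to $\sum_{0<|m_1|\lesssim\delta^{-1}}|m_1|^{-2}\sim\log(\delta^{-1})$. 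Estimating the increment $\hat{\tl^{\delta}}(t,\om)-\hat{\tl^{\delta}}(s,\om)$ in the same manner gives the full $\msL_T^{\kappa}\mcC^{2\alpha+4}$ bound. For the second, $\msL_T^{\kappa}\mcC^{2\alpha+5}$ bound one runs the identical argument but extracts one extra power $\langle\om\rangle^{-1}$ at the cost of foregoing the symmetrisation improvement, so that the $m_1$-sum is only summable up to $\delta^{-1}\log(\delta^{-1})$.

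Next, ${\PreThreeloop{10}\!}^{\delta}$ and ${\PreThreeloop{20}\!}^{\delta}$ are first-order Wiener-chaos objects: they equal the contracted $\Loop$-block above, convolved through a $\sim$-restricted resonant vertex with a single stochastic integrator against $W^{j_4}$. Here I would apply Lemma~\ref{lem:isometry_argument} to reduce to constant $\srdet$ (keeping the cut-off, which only helps), use It\^{o}'s isometry to pass to second moments, and factor the expectation into (i) an inner contracted block treated exactly as for $\tl^{\delta}$ --- symmetrisation plus Lemma~\ref{lem:elliptic_difference}, contributing the $\log(\delta^{-1})$ --- and (ii) an outer resonant sum over $\om_4$, $m_4$ handled by the same convolution and resonant-summation estimates used for $\PreCocktail{30}$, $\PreCocktail{40}$ in Lemma~\ref{lem:existence_precocktail}. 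Combining the two factors, then invoking Nelson's estimate (Lemma~\ref{lem:Nelson_estimate}) and the Kolmogorov criterion (Lemma~\ref{lem:existence_criterion}) as in the earlier existence lemmas, gives the $p$-th moment bounds and a.s.\ membership in $\msL_T^{\kappa}\mcC^{3\alpha+6}$.

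The main obstacle is the bookkeeping around the cut-off: one must follow $|\varphi(\delta m_1)|^2$ through the $m_1$-summation carefully enough to verify that the single power gained from Lemma~\ref{lem:elliptic_difference} is exactly what converts the otherwise linear $\delta^{-1}$ divergence into $\log(\delta^{-1})$, and to do so uniformly over the several colourings occurring in $\tl^{\delta}$, ${\PreThreeloop{10}\!}^{\delta}$ and ${\PreThreeloop{20}\!}^{\delta}$, and simultaneously with the spatial- and time-increment estimates required by the $\msL_T^{\kappa}$ norms. Once this is in place, the remaining frequency sums are routine given the appendix estimates.
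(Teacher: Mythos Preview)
Your proposal is correct and follows essentially the same route as the paper: symmetrise the loop contraction to produce $G^{j_3}(\om_1)+G^{j_3}(\om_2)$, apply Lemma~\ref{lem:elliptic_difference}, and feed the resulting extra decay into the double-sum estimate Lemma~\ref{lem:sum_m_om_canonical} to extract the $\log(\delta^{-1})$; then Nelson plus Kolmogorov closes the stochastic bounds. One small navigational correction: for ${\PreThreeloop{10}\!}^{\delta}$ and ${\PreThreeloop{20}\!}^{\delta}$, after It\^{o}'s isometry the time structure that remains is two independent $u_3,u_3'$ legs joined at a single shared $u_4$, i.e.\ the $\SVee$-shape, so the relevant shape-coefficient bound is Lemma~\ref{lem:v_regularity} (as in the $\Checkmark{10}$ analysis) rather than the $\STriangle$ coefficient used for the $\PreCocktail$ diagrams --- this does not affect your argument, since both are proved by the same triangle-inequality-plus-interpolation mechanism and give comparable decay.
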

\begin{proof}
	We first establish $\tl^{\delta}\in\msL_{T}^{1-}\mcC^{0-}(\mbT^{2})$. Let $s,t\in[0,T]$, $\om\in\mbZ^{2}$ and $\kappa\in(0,1)$. It suffices to consider $\om\in\mbZ^{2}\setminus\{0\}$, since $\hat{\tl^{\delta}}(t,0)=0$. We symmetrize the contraction. Changing the r\^{o}les of $\om_1$, $\om_2$ in the definition of $\hat{\tl^{\delta}}(t,\om)$ and using that $\varphi$ is even, we obtain
	\begin{equation*}
		\begin{split}
			\hat{\tl^{\delta}}(t,\om)&=\frac{1}{2}\sum_{\substack{\om_1,\om_2\in\mbZ^{2}\\\om=\om_1+\om_2}}\sum_{j_1,j_3=1}^{2}\sum_{m_1\in\mbZ^{2}}\int_{0}^{t}\dd u_3\int_{0}^{u_3}\dd u_1\hat{\het}(u_1,\om_1-m_1)\hat{\het}(u_1,\om_2+m_1)\abs{\varphi(\delta m_1)}^{2}\\[-16pt]
			&\multiquad[17]\times H^{j_3}_{t-u_3}(\om)H^{j_1}_{u_3-u_1}(\om_1)H^{j_1}_{u_3-u_1}(\om_2)(G^{j_3}(\om_1)+G^{j_3}(\om_2)).
		\end{split}
	\end{equation*}
	We apply the triangle inequality, Lemma~\ref{lem:elliptic_difference} and~\eqref{eq:interpolation} to bound
	\begin{equation*}
		\begin{split}
			&\abs{\hat{\tl^{\delta}}(t,\om)-\hat{\tl^{\delta}}(s,\om)}\\
			&\lesssim\norm{\varphi}_{L^{\infty}}^{2}\norm{\het}_{C_{T}\mcH^{2}}^{2}\abs{t-s}^{\kappa}\abs{\om}^{2\kappa}\sum_{\substack{m_1\in\mbZ^{2}\\\abs{m_1}\leq\delta^{-1}}}\sum_{\om_1\in\mbZ^{2}\setminus\{0,\om\}}\frac{\abs{\om_1}^{-2}(1+\abs{\om}\abs{\om-\om_1}^{-1})}{(1+\abs{\om_1-m_1}^{2})(1+\abs{\om-\om_1+m_1}^{2})}.
		\end{split}
	\end{equation*}
	We can now apply Lemma~\ref{lem:sum_m_om_canonical} to control the sums over $m_1\in\mbZ^{2}$, $\abs{m_1}\leq\delta^{-1}$, and $\om_1\in\mbZ^{2}\setminus\{0,\om\}$ for every $\eps\in(0,1/2)$,
	\begin{equation*}
		\abs{\hat{\tl^{\delta}}(t,\om)-\hat{\tl^{\delta}}(s,\om)}\lesssim(1\vee\log(\delta^{-1}))\norm{\varphi}_{L^{\infty}}^{2}\norm{\het}_{C_{T}\mcH^{2}}^{2}\abs{t-s}^{\kappa}\abs{\om}^{-2+2\kappa+3\eps}
	\end{equation*}
	and so
	\begin{equation*}
		\norm{\tl^{\delta}}_{\msL_{T}^{\kappa}\mcC^{-3\eps}}\lesssim(1\vee\log(\delta^{-1}))\norm{\varphi}_{L^{\infty}}^{2}\norm{\het}_{C_{T}\mcH^{2}}^{2}.
	\end{equation*}
	The proof that $\tl^{\delta}\in\msL_{T}^{\kappa}\mcC^{1-}(\mbT^{2})$ with a divergence of order $1\vee(\delta^{-1}\log(\delta^{-1}))$ follows by a similar derivation, where we skip the symmetrization and use that $\abs{\varphi(\delta m_1)}\lesssim(1+\abs{\delta m_1}^{2})^{-1/4}\norm{\varphi}_{L^{\infty}}$.
	\begin{details}
		We use $\supp(\varphi)\subset B(0,1)$ to estimate
		\begin{equation*}
			\sup_{x\in\mbR^{2}}(1+\abs{x}^{2})^{1/4}\abs{\varphi(x)}\lesssim\norm{\varphi}_{L^{\infty}},
		\end{equation*}
		which implies
		\begin{equation*}
			\abs{\varphi(\delta m_1)}=(1+\abs{\delta m_{1}}^{2})^{-1/4}(1+\abs{\delta m_{1}}^{2})^{1/4}\abs{\varphi(\delta m_1)}\lesssim(1+\abs{\delta m_{1}}^{2})^{-1/4}\norm{\varphi}_{L^{\infty}}.
		\end{equation*}
	\end{details}
	\begin{details}
		We estimate as before,
		\begin{equation*}
			\begin{split}
				&\abs{\hat{\tl^{\delta}}(t,\om)-\hat{\tl^{\delta}}(s,\om)}\\
				&\lesssim\norm{\varphi}_{L^{\infty}}^{2}\norm{\het}_{C_{T}\mcH^{2}}^{2}\abs{t-s}^{\kappa}\abs{\om}^{2\kappa-1}\sum_{\substack{m_1\in\mbZ^{2}\\\abs{m_1}\leq\delta^{-1}}}\sum_{\om_1\in\mbZ^{2}\setminus\{0,\om\}}\frac{\abs{\om_1}^{-1}}{(1+\abs{\delta m_1}^{2})^{1/2}(1+\abs{\om_1-m_1}^{2})(1+\abs{\om-\om_1+m_1}^{2})},
			\end{split}
		\end{equation*}
		where we used that $\abs{\varphi(\delta m_1)}\lesssim(1+\abs{\delta m_1}^{2})^{-1/4}\norm{\varphi}_{L^{\infty}}$. We can control the double sum by Lemma~\ref{lem:sum_m_om_canonical_regular} and obtain for any $\eps\in(0,1)$,
		\begin{equation*}
			\abs{\hat{\tl^{\delta}}(t,\om)-\hat{\tl^{\delta}}(s,\om)}\lesssim(1\vee(\delta^{-1}\log(\delta^{-1})))\norm{\varphi}_{L^{\infty}}^{2}\norm{\het}_{C_{T}\mcH^{2}}^{2}\abs{t-s}^{\kappa}\abs{\om}^{2\kappa-3+3\eps}.
		\end{equation*}
		This yields the claim.
	\end{details}
	
	Next we establish that ${\PreThreeloop{10}\!}^{\delta},{\PreThreeloop{20}\!}^{\delta}\in\msL_{T}^{1/2-}\mcC^{0-}(\mbT^{2};\mbR^{2})$. We first consider ${\PreThreeloop{10}\!}^{\delta}$. We apply It\^{o}'s isometry, \eqref{eq:isometry_bound_delta>0} and Lemma~\ref{lem:elliptic_difference} to bound uniformly in $x\in\mbT^{2}$,
	\begin{equation*}
		\begin{split}
			&\mbE[\abs{\Delta_q{\PreThreeloop{10}\!}^{\delta}(t,x,j)-\Delta_q{\PreThreeloop{10}\!}^{\delta}(s,x,j)}^2]\\
			&\lesssim\norm{\varphi}_{L^{\infty}}^{6}\norm{\het}_{C_{T}L^{\infty}}^{2}\norm{\het}_{C_{T}\mcH^{2}}^{4}\sum_{\om,\om'\in\mbZ^{2}}\varrho_{q}(\om)\varrho_{q}(\om')\\
			&\quad\times\sum_{\substack{\om_4\in\mbZ^{2}\setminus\{0,\om,\om'\}\\(\om-\om_4)\sim\om_4\\(\om'-\om_4)\sim\om_4}}\sum_{\substack{\om_1,\om_2\in\mbZ^{2}\setminus\{0\}\\\om-\om_4=\om_1+\om_2}}\sum_{\substack{\om_1',\om_2'\in\mbZ^{2}\setminus\{0\}\\\om'-\om_4=\om_1'+\om_2'}}\sum_{j_3,j_3'=1}^{2}\sum_{\substack{m_1\in\mbZ^{2}\\\abs{m_1}\leq\delta^{-1}}}\sum_{\substack{m_1'\in\mbZ^{2}\\\abs{m_1'}\leq\delta^{-1}}}\\
			&\quad(1+\abs{\om_1-m_1}^{2})^{-1}(1+\abs{\om_2+m_1}^{2})^{-1}(1+\abs{\om_1'-m_1'}^{2})^{-1}(1+\abs{\om_2'+m_1'}^{2})^{-1}\abs{\om_4}^{-2}\\
			&\quad\times\abs{\om-\om_4}\abs{\om'-\om_4}\abs{\om_2}^{-2}\abs{\om_2'}^{-2}(1+\abs{\om-\om_4}\abs{\om_1}^{-1})(1+\abs{\om'-\om_4}\abs{\om_1'}^{-1})\\
			&\quad\times\mathsf{A}_{s,t}^{j_3,j_3'}\SVee(\om_1+\om_2,\om_1'+\om_2',\om_4).
		\end{split}
	\end{equation*}
	\begin{details}
		We symmetrize
		\begin{equation*}
			\begin{split}
				&\hat{{\PreThreeloop{10}\!}^{\delta}}(t,\om,j)\\
				&=\frac{1}{2}\sum_{\substack{\om_1,\om_2,\om_4\in\mbZ^{2}\\\om=\om_1+\om_2+\om_4\\(\om_1+\om_2)\sim\om_4}}\sum_{j_1,j_3,j_4=1}^{2}\sum_{m_1,m_4\in\mbZ^{2}}\int_{0}^{t}\dd W^{j_4}(u_4,m_4)\int_{0}^{t}\dd u_3\int_{0}^{u_3}\dd u_1\\
				&\quad\hat{\het}(u_1,\om_1-m_1)\hat{\het}(u_1,\om_2+m_1)\hat{\het}(u_4,\om_4-m_4)\abs{\varphi(\delta m_1)}^{2}\varphi(\delta m_4)\\
				&\quad\times H_{t-u_3}^{j_3}(\om_1+\om_2)G^{j}(\om_4)H_{t-u_4}^{j_4}(\om_4)H_{u_3-u_1}^{j_1}(\om_1)(G^{j_3}(\om_2)+G^{j_3}(\om_1))H_{u_3-u_1}^{j_1}(\om_2).
			\end{split}
		\end{equation*}
		Let $x\in\mbT^{2}$, $j=1,2$, $\delta>0$ and $q\in\mbN_{-1}$. An application of~\eqref{eq:isometry_bound_delta>0} yields uniformly in $x\in\mbT^{2}$,
		\begin{equation*}
			\mbE[\abs{\Delta_q{\PreThreeloop{10}\!}^{\delta}(t,x,j)-\Delta_q{\PreThreeloop{10}\!}^{\delta}(s,x,j)}^2]\leq\norm{\varphi}_{L^{\infty}}^{2}\norm{\het}_{C_{T}L^{\infty}}^{2}\mbE[\abs{\Delta_q{\PreThreeloop{1}\!}^{\delta}(t,x,j)-\Delta_q{\PreThreeloop{1}\!}^{\delta}(s,x,j)}^2],
		\end{equation*}
		where ${\PreThreeloop{1}\!}^{\delta}$ is defined by
		\begin{equation*}
			\begin{split}
				&\hat{{\PreThreeloop{1}\!}^{\delta}}(t,\om,j)\\
				&\defeq\frac{1}{2}\sum_{\substack{\om_1,\om_2,\om_4\in\mbZ^{2}\\\om=\om_1+\om_2+\om_4\\(\om_1+\om_2)\sim\om_4}}\sum_{j_1,j_3,j_4=1}^{2}\sum_{m_1\in\mbZ^{2}}\int_{0}^{t}\dd W^{j_4}(u_4,\om_4)\int_{0}^{t}\dd u_3\int_{0}^{u_3}\dd u_1\hat{\het}(u_1,\om_1-m_1)\hat{\het}(u_1,\om_2+m_1)\abs{\varphi(\delta m_1)}^{2}\\
				&\quad\times H_{t-u_3}^{j_3}(\om_1+\om_2)G^{j}(\om_4)H_{t-u_4}^{j_4}(\om_4)H_{u_3-u_1}^{j_1}(\om_1)(G^{j_3}(\om_2)+G^{j_3}(\om_1))H_{u_3-u_1}^{j_1}(\om_2).
			\end{split}
		\end{equation*}
		By the definition of the Littlewood-Paley block $\Delta_{q}$,
		\begin{equation*}
			\begin{split}
				&\mbE[\abs{\Delta_q{\PreThreeloop{1}\!}^{\delta}(t,x,j)-\Delta_q{\PreThreeloop{1}\!}^{\delta}(s,x,j)}^2]\\
				&\leq\sum_{\om,\om'\in\mbZ^{2}}\varrho_q(\om)\varrho_q(\om')\Bigl\lvert\mbE\Bigl[\Bigl(\hat{{\PreThreeloop{1}\!}^{\delta}}(t,\om,j)-\hat{{\PreThreeloop{1}\!}^{\delta}}(s,\om,j)\Bigr)\overline{\Bigl(\hat{{\PreThreeloop{1}\!}^{\delta}}(t,\om',j)-\hat{{\PreThreeloop{1}\!}^{\delta}}(s,\om',j)\Bigr)}\Bigr]\Bigr\rvert,
			\end{split}
		\end{equation*}
		where
		\begin{equation*}
			\begin{split}
				&\Bigl\lvert\mbE\Bigl[\Bigl(\hat{{\PreThreeloop{1}\!}^{\delta}}(t,\om,j)-\hat{{\PreThreeloop{1}\!}^{\delta}}(s,\om,j)\Bigr)\overline{\Bigl(\hat{{\PreThreeloop{1}\!}^{\delta}}(t,\om',j)-\hat{{\PreThreeloop{1}\!}^{\delta}}(s,\om',j)\Bigr)}\Bigr]\Bigr\rvert\\
				&=\frac{1}{4}\sum_{\substack{\om_1,\om_2,\om_4\in\mbZ^{2}\\\om=\om_1+\om_2+\om_4\\(\om_1+\om_2)\sim\om_4}}\sum_{\substack{\om_1',\om_2'\in\mbZ^{2}\\\om'=\om_1'+\om_2'+\om_4\\(\om_1'+\om_2')\sim\om_4}}\sum_{j_1,j_1',j_3,j_3',j_4=1}^{2}\sum_{m_1,m_1'\in\mbZ^{2}}\int_{0}^{\infty}\dd u_4\int_{0}^{\infty}\dd u_3\int_{0}^{u_3}\dd u_1\int_{0}^{\infty}\dd u_3'\int_{0}^{u_3'}\dd u_1'\\
				&\quad\hat{\het}(u_1,\om_1-m_1)\hat{\het}(u_1,\om_2+m_1)\overline{\hat{\het}(u_1',\om_1'-m_1')\hat{\het}(u_1',\om_2'+m_1')}\\
				&\quad\times\abs{\varphi(\delta m_1)}^{2}\abs{\varphi(\delta m_1')}^{2} G^{j}(\om_4)(G^{j_3}(\om_2)+G^{j_3}(\om_1))\overline{G^{j}(\om_4)(G^{j_3'}(\om_2')+G^{j_3'}(\om_1'))}\\
				&\quad\times (H_{t-u_3}^{j_3}(\om_1+\om_2)H_{t-u_4}^{j_4}(\om_4)-H_{s-u_3}^{j_3}(\om_1+\om_2)H_{s-u_4}^{j_4}(\om_4))H_{u_3-u_1}^{j_1}(\om_1)H_{u_3-u_1}^{j_1}(\om_2)\\
				&\quad\times \overline{(H_{t-u_3'}^{j_3'}(\om_1'+\om_2')H_{t-u_4}^{j_4}(\om_4)-H_{s-u_3'}^{j_3'}(\om_1'+\om_2')H_{s-u_4}^{j_4}(\om_4))H_{u_3'-u_1'}^{j_1'}(\om_1')H_{u_3'-u_1'}^{j_1'}(\om_2')}.
			\end{split}
		\end{equation*}
		Taking the absolute value,
		\begin{equation*}
			\begin{split}
				&\Bigl\lvert\mbE\Bigl[\Bigl(\hat{{\PreThreeloop{1}\!}^{\delta}}(t,\om,j)-\hat{{\PreThreeloop{1}\!}^{\delta}}(s,\om,j)\Bigr)\overline{\Bigl(\hat{{\PreThreeloop{1}\!}^{\delta}}(t,\om',j)-\hat{{\PreThreeloop{1}\!}^{\delta}}(s,\om',j)\Bigr)}\Bigr]\Bigr\rvert\\
				&\lesssim\sum_{\substack{\om_1,\om_2,\om_4\in\mbZ^{2}\\\om=\om_1+\om_2+\om_4\\(\om_1+\om_2)\sim\om_4}}\sum_{\substack{\om_1',\om_2'\in\mbZ^{2}\\\om'=\om_1'+\om_2'+\om_4\\(\om_1'+\om_2')\sim\om_4}}\sum_{j_1,j_1',j_3,j_3',j_4=1}^{2}\sum_{m_1,m_1'\in\mbZ^{2}}\int_{0}^{\infty}\dd u_4\int_{0}^{\infty}\dd u_3\int_{0}^{u_3}\dd u_1\int_{0}^{\infty}\dd u_3'\int_{0}^{u_3'}\dd u_1'\\
				&\quad\abs{\hat{\het}(u_1,\om_1-m_1)}\abs{\hat{\het}(u_1,\om_2+m_1)}\abs{\hat{\het}(u_1',\om_1'-m_1')}\abs{\hat{\het}(u_1',\om_2'+m_1')}\\
				&\quad\times\abs{\varphi(\delta m_1)}^{2}\abs{\varphi(\delta m_1')}^{2}\abs{G^{j}(\om_4)}^2\abs{G^{j_3}(\om_2)+G^{j_3}(\om_1)}\abs{G^{j_3'}(\om_2')+G^{j_3'}(\om_1')}\\
				&\quad\times \abs{(H_{t-u_3}^{j_3}(\om_1+\om_2)H_{t-u_4}^{j_4}(\om_4)-H_{s-u_3}^{j_3}(\om_1+\om_2)H_{s-u_4}^{j_4}(\om_4))H_{u_3-u_1}^{j_1}(\om_1)H_{u_3-u_1}^{j_1}(\om_2)}\\
				&\quad\times \abs{(H_{t-u_3'}^{j_3'}(\om_1'+\om_2')H_{t-u_4}^{j_4}(\om_4)-H_{s-u_3'}^{j_3'}(\om_1'+\om_2')H_{s-u_4}^{j_4}(\om_4))H_{u_3'-u_1'}^{j_1'}(\om_1')H_{u_3'-u_1'}^{j_1'}(\om_2')}.
			\end{split}
		\end{equation*}
		We estimate $\sup_{u_1\in[0,T]}\abs{\hat{\het}(u_1,\om_1-m_1)}\lesssim\norm{\het}_{C_{T}\mcH^{2}}(1+\abs{\om_1-m_1}^{2})^{-1}$ and subsequently
		\begin{equation*}
			\sum_{j_1=1}^{2}\int_{-\infty}^{u_3}\dd u_1\abs{H_{u_3-u_1}^{j_1}(\om_1)H_{u_3-u_1}^{j_1}(\om_2)}=\frac{\inner{\om_1}{\om_2}}{\abs{\om_1}^2+\abs{\om_2}^2}\lesssim 1.
		\end{equation*}
		So,
		\begin{equation*}
			\begin{split}
				&\Bigl\lvert\mbE\Bigl[\Bigl(\hat{{\PreThreeloop{1}\!}^{\delta}}(t,\om,j)-\hat{{\PreThreeloop{1}\!}^{\delta}}(s,\om,j)\Bigr)\overline{\Bigl(\hat{{\PreThreeloop{1}\!}^{\delta}}(t,\om',j)-\hat{{\PreThreeloop{1}\!}^{\delta}}(s,\om',j)\Bigr)}\Bigr]\Bigr\rvert\\
				&\lesssim\norm{\varphi}_{L^{\infty}}^{4}\norm{\het}_{C_{T}\mcH^{2}}^{4}\sum_{\substack{\om_1,\om_2,\om_4\in\mbZ^{2}\\\om=\om_1+\om_2+\om_4\\(\om_1+\om_2)\sim\om_4}}\sum_{\substack{\om_1',\om_2'\in\mbZ^{2}\\\om'=\om_1'+\om_2'+\om_4\\(\om_1'+\om_2')\sim\om_4}}\sum_{j_3,j_3',j_4=1}^{2}\sum_{\substack{m_1\in\mbZ^{2}\\\abs{m_1}\leq\delta^{-1}}}\sum_{\substack{m_1'\in\mbZ^{2}\\\abs{m_1'}\leq\delta^{-1}}}\\
				&\quad(1+\abs{\om_1-m_1}^{2})^{-1}(1+\abs{\om_2+m_1}^{2})^{-1}(1+\abs{\om_1'-m_1'}^{2})^{-1}(1+\abs{\om_2'+m_1'}^{2})^{-1}\\
				&\quad\times\abs{G^{j}(\om_4)}^2\abs{G^{j_3}(\om_2)+G^{j_3}(\om_1)}\abs{G^{j_3'}(\om_2')+G^{j_3'}(\om_1')}\\
				&\quad\times\int_{-\infty}^{\infty}\dd u_4\int_{-\infty}^{\infty}\dd u_3\int_{-\infty}^{\infty}\dd u_3'\abs{H_{t-u_3}^{j_3}(\om_1+\om_2)H_{t-u_4}^{j_4}(\om_4)-H_{s-u_3}^{j_3}(\om_1+\om_2)H_{s-u_4}^{j_4}(\om_4)}\\
				&\multiquad[14]\times \abs{H_{t-u_3'}^{j_3'}(\om_1'+\om_2')H_{t-u_4}^{j_4}(\om_4)-H_{s-u_3'}^{j_3'}(\om_1'+\om_2')H_{s-u_4}^{j_4}(\om_4)}.
			\end{split}
		\end{equation*}
		Hence, for $\om,\om'\in\mbZ^{2}$ by It\^{o}'s isometry,
		\begin{equation*}
			\begin{split}
				&\Bigl\lvert\mbE\Bigl[\Bigl(\hat{{\PreThreeloop{1}\!}^{\delta}}(t,\om,j)-\hat{{\PreThreeloop{1}\!}^{\delta}}(s,\om,j)\Bigr)\overline{\Bigl(\hat{{\PreThreeloop{1}\!}^{\delta}}(t,\om',j)-\hat{{\PreThreeloop{1}\!}^{\delta}}(s,\om',j)\Bigr)}\Bigr]\Bigr\rvert\\
				&\lesssim\norm{\varphi}_{L^{\infty}}^{4}\norm{\het}_{C_{T}\mcH^{2}}^{4}\sum_{\substack{\om_4\in\mbZ^{2}\setminus\{0,\om,\om'\}\\(\om-\om_4)\sim\om_4\\(\om'-\om_4)\sim\om_4}}\sum_{\substack{\om_1,\om_2\in\mbZ^{2}\setminus\{0\}\\\om-\om_4=\om_1+\om_2}}\sum_{\substack{\om_1',\om_2'\in\mbZ^{2}\setminus\{0\}\\\om'-\om_4=\om_1'+\om_2'}}\sum_{j_3,j_3'=1}^{2}\sum_{\substack{m_1\in\mbZ^{2}\\\abs{m_1}\leq\delta^{-1}}}\sum_{\substack{m_1'\in\mbZ^{2}\\\abs{m_1'}\leq\delta^{-1}}}\\
				&\quad(1+\abs{\om_1-m_1}^{2})^{-1}(1+\abs{\om_2+m_1}^{2})^{-1}(1+\abs{\om_1'-m_1'}^{2})^{-1}(1+\abs{\om_2'+m_1'}^{2})^{-1}\\
				&\quad\times\abs{G^{j}(\om_4)}^2\abs{G^{j_3}(\om_2)+G^{j_3}(\om_1)}\abs{G^{j_3'}(\om_2')+G^{j_3'}(\om_1')}\mathsf{A}_{s,t}^{j_3,j_3'}\SVee(\om_1+\om_2,\om_1'+\om_2',\om_4).
			\end{split}
		\end{equation*}
	\end{details}
	Assume $s\leq t$, $\gamma\in[0,1]$, $\delta>0$ and $\eps\in(0,1/2)$ whose range we continue to restrict throughout the proof. We apply Lemma~\ref{lem:v_regularity} to control the shape coefficient and subsequently Lemma~\ref{lem:sum_m_om_canonical} to obtain
	\begin{equation*}
		\begin{split}
			&\mbE[\abs{\Delta_q{\PreThreeloop{10}\!}^{\delta}(t,x,j)-\Delta_q{\PreThreeloop{10}\!}^{\delta}(s,x,j)}^2]\\
			&\lesssim(1\vee\log(\delta^{-1}))^{2}\norm{\varphi}_{L^{\infty}}^{6}\norm{\het}_{C_{T}L^{\infty}}^{2}\norm{\het}_{C_{T}\mcH^{2}}^{4}\abs{t-s}^{\gamma}\sum_{\om,\om'\in\mbZ^{2}}\varrho(\om)\varrho(\om')\\
			&\quad\times\sum_{\substack{\om_4\in\mbZ^{2}\setminus\{0,\om,\om'\}\\(\om-\om_4)\sim\om_4\\(\om'-\om_4)\sim\om_4}}\abs{\om_4}^{-2}\abs{\om-\om_4}^{-2+\gamma+3\eps}\abs{\om'-\om_4}^{-2+\gamma+3\eps}.
		\end{split}
	\end{equation*}
	Assume $6\eps<4-2\gamma$, we apply H\"{o}lder's inequality,
	\begin{equation}\label{eq:prethreeloop_colour_difference}
		\begin{split}
			&\sum_{\substack{\om_4\in\mbZ^{2}\setminus\{0,\om,\om'\}\\(\om-\om_4)\sim\om_4\\(\om'-\om_4)\sim\om_4}}\abs{\om_4}^{-2}\abs{\om-\om_4}^{-2+\gamma+3\eps}\abs{\om'-\om_4}^{-2+\gamma+3\eps}\\
			&\leq\Bigl(\sum_{\substack{\om_4\in\mbZ^{2}\setminus\{0,\om\}\\(\om-\om_4)\sim\om_4}}\abs{\om_4}^{-2}\abs{\om-\om_4}^{-4+2\gamma+6\eps}\Bigr)^{1/2}\Bigl(\sum_{\substack{\om_4\in\mbZ^{2}\setminus\{0,\om'\}\\(\om'-\om_4)\sim\om_4}}\abs{\om_4}^{-2}\abs{\om'-\om_4}^{-4+2\gamma+6\eps}\Bigr)^{1/2}\\
			&\lesssim(1\vee\abs{\om})^{-2+\gamma+3\eps}(1\vee\abs{\om'})^{-2+\gamma+3\eps},
		\end{split}
	\end{equation}
	which implies
	\begin{equation*}
		\begin{split}
			&\mbE[\abs{\Delta_q{\PreThreeloop{10}\!}^{\delta}(t,x,j)-\Delta_q{\PreThreeloop{10}\!}^{\delta}(s,x,j)}^2]\\
			&\lesssim(1\vee\log(\delta^{-1}))^{2}\norm{\varphi}_{L^{\infty}}^{6}\norm{\het}_{C_{T}L^{\infty}}^{2}\norm{\het}_{C_{T}\mcH^{2}}^{4}\abs{t-s}^{\gamma}\sum_{\om,\om'\in\mbZ^{2}}\varrho_{q}(\om)\varrho_{q}(\om')(1\vee\abs{\om})^{-2+\gamma+3\eps}(1\vee\abs{\om'})^{-2+\gamma+3\eps}.
		\end{split}
	\end{equation*}
	Assume in addition $\gamma\in(0,1)$ and $\eps\in(0,\gamma/2)$, we obtain by Lemma~\ref{lem:Nelson_estimate} and Lemma~\ref{lem:existence_criterion} for any $p\in[1,\infty)$, 
	\begin{equation*}
		\mbE[\norm{{\PreThreeloop{10}\!}^{\delta}}_{C_T^{\gamma/2-\eps}\mcC^{-\gamma-6\eps}}^{p}]^{1/p}\lesssim(1\vee\log(\delta^{-1}))\norm{\varphi}_{L^{\infty}}^{3}\norm{\het}_{C_{T}L^{\infty}}\norm{\het}_{C_{T}\mcH^{2}}^{2}
	\end{equation*}
	and therefore ${\PreThreeloop{10}\!}^{\delta}\in\msL^{\kappa}_{T}\mcC^{0-}(\mbT^{2};\mbR^{2})$ a.s.\ for any $\kappa\in(0,1/2)$.
	
	The only difference between $\hat{{\PreThreeloop{20}\!}^{\delta}}$ and $\hat{{\PreThreeloop{10}\!}^{\delta}}$ is that the factor $G^{j}(\om_1+\om_2)$ replaces $G^{j}(\om_4)$. Instead of~\eqref{eq:prethreeloop_colour_difference}, we estimate by H\"{o}lder's inequality,
	\begin{equation*}
		\begin{split}
			&\sum_{\substack{\om_4\in\mbZ^{2}\setminus\{0,\om,\om'\}\\(\om-\om_4)\sim\om_4\\(\om'-\om_4)\sim\om_4}}\abs{\om-\om_4}^{-3+\gamma+3\eps}\abs{\om'-\om_4}^{-3+\gamma+3\eps}\\
			&\leq\Bigl(\sum_{\substack{\om_4\in\mbZ^{2}\setminus\{\om\}\\(\om-\om_4)\sim\om_4}}\abs{\om-\om_4}^{-6+2\gamma+6\eps}\Bigr)^{1/2}\Bigl(\sum_{\substack{\om_4\in\mbZ^{2}\setminus\{\om'\}\\(\om'-\om_4)\sim\om_4}}\abs{\om'-\om_4}^{-6+2\gamma+6\eps}\Bigr)^{1/2}\\
			&\lesssim(1\vee\abs{\om})^{-2+\gamma+3\eps}(1\vee\abs{\om'})^{-2+\gamma+3\eps}.
		\end{split}
	\end{equation*}
	As before, we obtain by~\eqref{eq:isometry_bound_delta>0}, Lemma~\ref{lem:Nelson_estimate} and Lemma~\ref{lem:existence_criterion} for any $\gamma\in(0,1)$, $6\eps<4-2\gamma$, $\eps<\gamma/2$, $\delta>0$ and $p\in[1,\infty)$,
	\begin{equation*}
		\mbE[\norm{{\PreThreeloop{20}\!}^{\delta}}_{C_T^{\gamma/2-\eps}\mcC^{-\gamma-6\eps}}^{p}]^{1/p}\lesssim(1\vee\log(\delta^{-1}))\norm{\varphi}_{L^{\infty}}^{3}\norm{\het}_{C_{T}L^{\infty}}\norm{\het}_{C_{T}\mcH^{2}}^{2}
	\end{equation*}
	and therefore ${\PreThreeloop{20}\!}^{\delta}\in\msL^{\kappa}_{T}\mcC^{0-}(\mbT^{2};\mbR^{2})$ a.s.\ for any $\kappa\in(0,1/2)$.
\end{proof}
\begin{remark}
	We may also construct $\Loop^{\delta}=\mbE[\ti^{\delta}\nabla\Phi_{\ti^{\delta}}]$, which is similar to $\tl^{\delta}$ but without the lower stem (cf.~\eqref{eq:diagram_Ito}). However, to obtain $\kappa$-time regularity, we need to trade $2\kappa$-space regularity in the parabolic multipliers $H_{t-u_1}^{j_1}(\om_1)H_{t-u_1}^{j_1}(\om_2)$. We then sum over $\om_1,\om_2\in\mbZ^{2}$, hence we will be stuck with a divergence of $\delta^{-2\kappa}$, where $\kappa$ is arbitrarily small but positive.
\end{remark}
\begin{remark}\label{rem:renormalisation_vanishes_homogeneous}
	We can show that $\Loop^{\delta}\equiv0$, if $\het\equiv1$. Indeed, if we choose $\het\equiv1$, then $\hat{\het}(u,\om)=\mathds{1}_{\om=0}$. Consequently on the right-hand side of~\eqref{eq:diagram_Ito}, $\om_1=m_1$ and $\om_2=-m_1$. By the symmetrization, we obtain the factor $G^{j}(\om_1)+G^{j}(\om_2)$. Using that $G^{j}$ is odd and that $\om_1=-\om_2$, we see that this term is zero, so that $\Loop^{\delta}\equiv0$. Similarly, $\tl^{\delta}={\PreThreeloop{10}\!}^{\delta}={\PreThreeloop{20}\!}^{\delta}\equiv0$, if $\het\equiv1$.
\end{remark}
\section{Existence of Paracontrolled Solutions}\label{sec:existence}
In this section we show the existence and uniqueness of a paracontrolled solution to~\eqref{eq:gen_rKS_intro} given an abstract enhancement, where the notion of paracontrolled solution has previously been motivated in~\eqref{eq:paracon_sol_intro_I}--\eqref{eq:paracon_sol_intro_II} and will be made precise in Definition~\ref{def:paracontrolled_solution}.

In Subsection~\ref{subsec:local_well_posedness} we consider the solution on small time intervals (Lemma~\ref{lem:construction_paracontrolled}), which we then extend in Subsection~\ref{subsec:maximal_time_of_existence} to a maximal time of existence (Lemma~\ref{lem:paracontrolled_sol_in_blow_up_space}). In Subsection~\ref{subsec:ren_sol} we then combine the deterministic solution theory above with the stochastic existence of the renormalised enhancement (Theorem~\ref{thm:enhancement_existence}) to construct the renormalised solution to~\eqref{eq:gen_rKS_intro} as a random variable (Theorem~\ref{thm:convergence_renorm_sol}, Part~\ref{it:thm_existence_renormalized}). We can then show that solutions to~\eqref{eq:smooth_mild_sol} converge in probability to the renormalised solution (Theorem~\ref{thm:convergence_renorm_sol}, Part~\ref{it:thm_convergence_to_renormalized}).
\subsection{Local Well-Posedness}\label{subsec:local_well_posedness}
Throughout we fix exponents satisfying the assumptions below. To explain their usage: $p$ and $\beta_0$ will be the integrability and regularity exponents of the admissible initial condition in the Besov scale $\mcB^{\beta_0}_{p,q}(\mbT^{2})$, where $q$ is the microscopic parameter; $\alpha$ will be the regularity of the space-time white noise, so that almost surely $\ti\in C_{T}\mcC^{\alpha+1}(\mbT^{2})$; $\beta$ measures the regularity of the second Da~Prato--Debussche remainder, $w$, in the H\"{o}lder scale and $\eta$ the allowed blow-up of $w$ at $t=0$; $\beta'$ measures the regularity of the Gubinelli derivative $w'$; $\beta^{\#}$ measures the maximal spatial regularity of the paracontrolled remainder and finally $\kappa$ will be used to denote time regularity.

From now on we fix $(\alpha,p,q,\beta,\beta',\beta^{\#},\beta_{0},\kappa,\eta)$ satisfying
\begin{equation}\label{eq:exponents}
	\begin{split}
		\alpha\in(-9/4,-2),&\qquad q\in[1,\infty],\\
		p\in(2/(\alpha+3),\infty],&\qquad\beta\in(-1/2,\alpha+2),\\
		\beta'\in(-2\alpha-4,(\beta+1)\wedge(2\alpha+5)],&\qquad\beta^{\#}\in(-\alpha-2,\alpha+\beta'+2),\\
		\beta_{0}\in(\beta+2/p-(\alpha+3),\beta^{\#}],&\qquad\kappa\in((\beta^{\#}-\alpha-2)/2,1/2),\\
		\eta\in\Bigl[\Bigl(\Bigl(\frac{\beta-\beta_{0}}{2}\vee0\Bigr)+\frac{1}{p}\Bigr)&\vee\Bigl(\frac{\beta^{\#}-\beta_0}{4}+\frac{1}{2p}\Bigr),\frac{\alpha+3}{2}\Bigr).
	\end{split}
\end{equation}
\begin{details}
	\paragraph{Derivation of~\eqref{eq:exponents}.}
	For our a priori bounds, it is sufficient to assume
	\begin{equation*}
		\begin{split}
			\alpha\in(-9/4,-2),&\qquad q\in[1,\infty],\\
			p\in[1,\infty],&\qquad\beta\in(-1/2,\alpha+2),\\			
			\beta'\in(-2\alpha-4,(\beta+1)\wedge(2\alpha+5)],&\qquad\beta^{\#}\in(-\alpha-2,\alpha+\beta'+2),\\
			\beta_{0}\in(-\infty,\beta^{\#}],&\qquad\kappa\in((\beta^{\#}-\alpha-2)/2,1/2),\\
			\eta\in\Bigl[\Bigl(\Bigl(\frac{\beta-\beta_{0}}{2}\vee0\Bigr)+\frac{1}{p}\Bigr)&\vee\Bigl(\frac{\beta^{\#}-\beta_{0}}{4}+\frac{1}{2p}\Bigr),\frac{1}{2}\wedge\frac{\alpha+3}{2}\Bigr).
		\end{split}
	\end{equation*}
	We can now resolve the case distinctions above, subject to the condition that those intervals need to be non-empty.
	
	We start by resolving the case distinction in the upper bound on $\eta$,
	\begin{equation*}
		\frac{\alpha+3}{2}<\frac{1}{2}\qquad\iff\qquad\alpha<-2.
	\end{equation*}
	To proceed with the lower bound on $\eta$, we distinguish between $\beta_{0}\in(-\infty,\beta]$ and $\beta_{0}\in(\beta,\beta^{\#}]$.
	\paragraph{Case 1.}
	Assume $\beta_{0}\leq\beta$. To ensure that the resulting interval for  $\eta\in[(\frac{\beta-\beta_{0}}{2}+\frac{1}{p})\vee(\frac{\beta^{\#}-\beta_{0}}{4}+\frac{1}{2p}),\frac{\alpha+3}{2})$ is non-empty, we need to assume
	\begin{equation*}
		\frac{\beta-\beta_{0}}{2}+\frac{1}{p}<\frac{\alpha+3}{2}\qquad\iff\qquad\beta+\frac{2}{p}-(\alpha+3)<\beta_{0}
	\end{equation*}
	and
	\begin{equation*}
		\frac{\beta^{\#}-\beta_{0}}{4}+\frac{1}{2p}<\frac{\alpha+3}{2}\qquad\iff\qquad\beta^{\#}+\frac{2}{p}-2(\alpha+3)<\beta_{0}.
	\end{equation*}
	We place both as lower bounds on $\beta_{0}$ and note
	\begin{equation*}
		\beta^{\#}+\frac{2}{p}-2(\alpha+3)<\beta+\frac{2}{p}-(\alpha+3)\qquad\iff\qquad\beta^{\#}<\alpha+\beta+3,
	\end{equation*}
	which is true by the assumptions $\beta^{\#}<\alpha+\beta'+2$ and $\beta'<\beta+1$. Hence, our intervals simplify to
	\begin{equation*}
		\beta_{0}\in(\beta+2/p-(\alpha+3),\beta],\qquad\eta\in\Bigl[\Bigl(\frac{\beta-\beta_{0}}{2}+\frac{1}{p}\Bigr)\vee\Bigl(\frac{\beta^{\#}-\beta_{0}}{4}+\frac{1}{2p}\Bigr),\frac{\alpha+3}{2}\Bigr).
	\end{equation*}
	To ensure 
	\begin{equation*}
		\beta+\frac{2}{p}-(\alpha+3)<\beta\qquad\iff\qquad\frac{2}{p}<\alpha+3,
	\end{equation*}
	we assume $p>2/(\alpha+3)$. Hence in the case $\beta_{0}\leq\beta$, we can simplify our intervals to
	\begin{equation*}
		\begin{split}
			\alpha\in(-9/4,-2),&\qquad q\in[1,\infty],\\
			p\in(2/(\alpha+3),\infty],&\qquad\beta\in(-1/2,\alpha+2),\\
			\beta'\in(-2\alpha-4,(\beta+1)\wedge(2\alpha+5)],&\qquad\beta^{\#}\in(-\alpha-2,\alpha+\beta'+2),\\
			\beta_0\in(\beta+2/p-(\alpha+3),\beta],&\qquad\kappa\in((\beta^{\#}-\alpha-2)/2,1/2),\\
			\eta\in\Bigl[\Bigl(\frac{\beta-\beta_{0}}{2}+\frac{1}{p}\Bigr)&\vee\Bigl(\frac{\beta^{\#}-\beta_{0}}{4}+\frac{1}{2p}\Bigr),\frac{\alpha+3}{2}\Bigr).
		\end{split}
	\end{equation*}
	\paragraph{Case 2.}
	Assume $\beta_{0}\in(\beta,\beta^{\#}]$. To ensure that the resulting interval for  $\eta\in[\frac{1}{p}\vee(\frac{\beta^{\#}-\beta_{0}}{4}+\frac{1}{2p}),\frac{\alpha+3}{2})$ is non-empty, we need to assume
	\begin{equation*}
		\frac{1}{p}<\frac{\alpha+3}{2}
	\end{equation*}
	and
	\begin{equation*}
		\frac{\beta^{\#}-\beta_{0}}{4}+\frac{1}{2p}<\frac{\alpha+3}{2}\qquad\iff\qquad\beta^{\#}+\frac{2}{p}-2(\alpha+3)<\beta_{0}.
	\end{equation*}
	Both are satisfied if we assume $p>2/(\alpha+3)$, since 
	\begin{equation*}
		\beta^{\#}<\alpha+\beta'+2\leq\alpha+\beta+3<\alpha+\beta+3+(\alpha+3)-\frac{2}{p}=\beta+2(\alpha+3)-\frac{2}{p},
	\end{equation*}
	which yields
	\begin{equation*}
		\beta^{\#}+\frac{2}{p}-2(\alpha+3)<\beta<\beta_{0}.
	\end{equation*}
	Hence in the case $\beta_{0}\in(\beta,\beta^{\#}]$, we can simplify our intervals to
	\begin{equation*}
		\begin{split}
			\alpha\in(-9/4,-2),&\qquad q\in[1,\infty],\\
			p\in(2/(\alpha+3),\infty],&\qquad \beta\in(-1/2,\alpha+2),\\
			\beta'\in(-2\alpha-4,(\beta+1)\wedge(2\alpha+5)],&\qquad\beta^{\#}\in(-\alpha-2,\alpha+\beta'+2),\\
			\beta_0\in(\beta,\beta^{\#}],&\qquad\kappa\in((\beta^{\#}-\alpha-2)/2,1/2),\\
			\eta\in\Bigl[\frac{1}{p}\vee\Bigl(\frac{\beta^{\#}-\beta_{0}}{4}&+\frac{1}{2p}\Bigr),\frac{\alpha+3}{2}\Bigr).
		\end{split}
	\end{equation*}
	Combining both cases, we obtain for all $\beta_0\in(\beta+2/p-(\alpha+3),\beta^{\#}]$,
	\begin{equation*}
		\begin{split}
			\alpha\in(-9/4,-2),&\qquad q\in[1,\infty],\\
			p\in(2/(\alpha+3),\infty],&\qquad\beta\in(-1/2,\alpha+2),\\
			\beta'\in(-2\alpha-4,(\beta+1)\wedge(2\alpha+5)],&\qquad\beta^{\#}\in(-\alpha-2,\alpha+\beta'+2),\\
			\beta_0\in(\beta+2/p-(\alpha+3),\beta^{\#}],&\qquad\kappa\in((\beta^{\#}-\alpha-2)/2,1/2),\\
			\eta\in\Bigl[\Bigl(\Bigl(\frac{\beta-\beta_{0}}{2}\vee0\Bigr)+\frac{1}{p}\Bigr)&\vee\Bigl(\frac{\beta^{\#}-\beta_{0}}{4}+\frac{1}{2p}\Bigr),\frac{\alpha+3}{2}\Bigr).
		\end{split}
	\end{equation*}
	The interval for $\kappa\in((\beta^{\#}-\alpha-2)/2,1/2)$ is non-empty, since $\beta^{\#}<\alpha+\beta+3$ and $\beta<0$ imply $\beta^{\#}-\alpha-2<1$.
	
	The interval for $\beta^{\#}\in(-\alpha-2,\alpha+\beta'+2)$ is non-empty, since $-2\alpha-4<\beta'$.
	
	The interval for $\beta'\in(-2\alpha-4,(\beta+1)\wedge(2\alpha+5)]$ is non-empty, since $-2\alpha-4<1/2<\beta+1$ and $-2\alpha-4<2\alpha+5$.
	
	The interval for $\beta\in(-1/2,\alpha+2)$ is non-empty, since $-1/2=-9/4+7/4<-9/4+2<\alpha+2$.
	
\end{details}
One can confirm that the intervals in~\eqref{eq:exponents} are non-empty. 

Particular choices of exponents allow us to consider regular and irregular initial data:
\begin{example}\label{ex:exponents_regular_initial_data}
	By taking $p=\infty$, $\beta_{0}=\beta^{\#}$ and $\eta=0$, we can choose as initial data any $\rho_{0}\in\mcB_{\infty,q}^{\beta^{\#}}(\mbT^{2})$, without incurring a blow-up at $0$.
\end{example}
\begin{example}\label{ex:exponents_Calpha+1_initial_data}
	By taking $p=q=\infty$ and $\beta<2\alpha+4$,
	\begin{details}
		which is possible, since
		\begin{equation*}
			-1/2<2\alpha+4\quad\iff\quad-9/4<\alpha,
		\end{equation*}
	\end{details}
	 we can choose as initial data any $\rho_{0}\in\mcC^{\alpha+1}(\mbT^{2})$, which is the natural regularity of $\ti$ and the state space of $\rho=\ti+\ty+w$. 
	 \begin{details}
	 	Indeed, $\beta_{0}=\alpha+1\in(\beta-(\alpha+3),\beta^{\#}]$, since
	 	\begin{equation*}
	 		\beta-(\alpha+3)<\alpha+1\qquad\iff\qquad\beta<2\alpha+4
	 	\end{equation*}
	 	and
	 	\begin{equation*}
	 		\alpha+1<-1<-\frac{1}{2}<\beta<\beta^{\#}.
	 	\end{equation*}
	 \end{details}
\end{example}
\begin{example}\label{ex:exponents_Lp_initial_data}
	Using that $L^{p}(\mbT^{2})\embed\mcB_{p,\infty}^{0}(\mbT^{2})$ (see Subsection~\ref{subsec:Besov_spaces}), we can choose as initial data any $\rho_{0}\in L^{p}(\mbT^{2})$ with $p>2$.
	\begin{details}
		Let $p>2$, $q=\infty$, $\eps<1/4$, $\alpha=-2-\eps$ and $\beta=-1/2+\eps$. Then for $\eps$ sufficiently small depending on $p$, we can ensure $p>2/(\alpha+3)$, since
		\begin{equation*}
			p>\frac{2}{\alpha+3}=\frac{2}{1-\eps}\quad\iff\quad\eps<1-\frac{2}{p}.
		\end{equation*}
		We can also ensure $\beta_{0}=0\in(\beta+2/p-(\alpha+3),\beta^{\#}]$, since
		\begin{equation*}
			\beta+\frac{2}{p}-(\alpha+3)=-\frac{1}{2}+\eps+\frac{2}{p}-(1-\eps)<0\quad\iff\quad2\eps<\frac{3}{2}-\frac{2}{p}
		\end{equation*}
		and
		\begin{equation*}
			\beta_{0}=0<-\alpha-2<\beta^{\#}.
		\end{equation*}
	\end{details}
\end{example}
Let us fix a $T>0$, we define the space of paracontrolled distributions.
\begin{definition}\label{def:paracontrolled_space}
	Let $\mbX\in\rksnoise{\alpha}{\kappa}_{T}$ and $\rho_0\in\mcB_{p,q}^{\beta_0}(\mbT^{2})$. We define the space
	\begin{equation*}
		\msD_{T}\subset\msL_{\eta;T}^{\kappa}\mcC^{\beta}(\mbT^2;\mbR)\times\msL_{\eta;T}^{\kappa}\mcC^{\beta'}(\mbT^2;\mbR^{2})\times(\msL_{\eta;T}^{\kappa}\mcC^{\beta}(\mbT^2;\mbR)\cap\msL_{2\eta;T}^{\kappa}\mcC^{\beta^{\#}}(\mbT^2;\mbR))
	\end{equation*}
	of distributions paracontrolled by $\mbX$ as those triples $\boldsymbol{w}\defeq(w,w', w^{\#})$ such that
	\begin{equation}\label{eq:paracontrolled_Ansatz}
		w=\vdiv\mcI[w'\pa \ti]+w^{\#}
	\end{equation}
	as well as $\lim_{t\to0}w_{t}=\lim_{t\to0}w_{t}^{\#}=\rho_{0}$ in $\mcS'(\mbT^{2};\mbR)$ and $\lim_{t\to0}w'_{t}=\nabla\Phi_{\rho_{0}}$ in $\mcS'(\mbT^{2};\mbR^{2})$. We equip this space with the metric induced by the norm
	\begin{equation*}
		\norm{\boldsymbol{w}}_{\msD_{T}}\defeq\max\{\norm{w}_{\msL_{\eta;T}^{\kappa}\mcC^{\beta}},\norm{w'}_{\msL_{\eta;T}^{\kappa}\mcC^{\beta'}},\norm{w^{\#}}_{\msL_{\eta;T}^{\kappa}\mcC^{\beta}},\norm{w^{\#}}_{\msL_{2\eta;T}^{\kappa}\mcC^{\beta^{\#}}}\}.
	\end{equation*}
\end{definition}
\begin{remark}\label{rem:Ansatz_discussion}
	The Ansatz~\eqref{eq:paracontrolled_Ansatz} allows us to write the mild equation for $w^{\#}$ as $w^{\#}=P\rho_{0}+\vdiv\mcI[\Omega^{\#}(\boldsymbol{w})]$ with some $\Omega^{\#}(\boldsymbol{w})$ determined by~\eqref{eq:mild_gen_rKS}. We can hence simplify our estimates by using the space-time regularization of $\mcI$. Note that~\eqref{eq:paracontrolled_Ansatz} is equivalent to the Ansatz~\eqref{eq:intro_paracontrolled_Ansatz} discussed in the introduction, up to commutators.
\end{remark}
\begin{lemma}\label{lem:parspace_non_trivial}
	Given $\mbX\in\rksnoise{\alpha}{\kappa}_{T}$ and $\rho_0\in\mcB_{p,q}^{\beta_0}(\mbT^{2})$, the space $\msD_T$ is a non-empty, complete metric space.
\end{lemma}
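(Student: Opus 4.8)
The plan is to treat non-emptiness and completeness separately, the former being the only part requiring any work.

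For non-emptiness I would exhibit the element $\boldsymbol{w}_0=(w_0,w_0',w_0^{\#})$ built from the free evolution: set $w_0^{\#}:=P_{\cdot}\rho_0$, $w_0':=\nabla\Phi_{P_{\cdot}\rho_0}$ and $w_0:=\vdiv\mcI[w_0'\pa\ti]+w_0^{\#}$. The paracontrolled ansatz~\eqref{eq:paracontrolled_Ansatz} holds by construction, and since $\mcI[f]\tzero=0$, $P_0=\mathrm{Id}$ and $f\mapsto\nabla\Phi_f$ is linear, the initial conditions $w_0\tzero=w_0^{\#}\tzero=\rho_0$ and $w_0'\tzero=\nabla\Phi_{\rho_0}$ follow (the first two understood as $P_t\rho_0\to\rho_0$ in $\mcS'(\mbT^2)$, which holds because $P_t\phi\to\phi$ for $\phi\in C^{\infty}$). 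The substance is to check that $\boldsymbol{w}_0$ lies in the three weighted spaces. Using the embedding $\mcB^{\beta_0}_{p,q}\hookrightarrow\mcC^{\beta_0-2/p}$ and the Schauder smoothing estimate for the heat flow, one obtains $\norm{P_t\rho_0}_{\mcC^{\beta^{\#}}}\lesssim(1\wedge t)^{-((\beta^{\#}-\beta_0)/2+1/p)}\norm{\rho_0}_{\mcB^{\beta_0}_{p,q}}$ together with the matching $\kappa$-H\"older-in-time bound into $\mcC^{\beta^{\#}-2\kappa}$; since the standing assumptions force $\beta^{\#}\geq\beta_0>\beta_0-2/p$ and $\eta\geq(\beta^{\#}-\beta_0)/2+1/p$, this gives $w_0^{\#}\in\msL^{\kappa}_{\eta;T}\mcC^{\beta^{\#}}$. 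Because $\alpha<-2$ yields the chain $\beta<2\alpha+4<-\alpha-2<\beta^{\#}$, the one-derivative gain of $\nabla\Phi$ gives $w_0'\in\msL^{\kappa}_{\eta;T}\mcC^{\beta^{\#}+1}\hookrightarrow\msL^{\kappa}_{\eta;T}\mcC^{\beta+1}$. Finally, Bony's estimate (Lemma~\ref{lem:Bony}), using $\beta+1>0$ and $\ti\in\msL^{\kappa}_T\mcC^{\alpha+1}$ (part of $\mbX\in\rksnoise{\alpha}{\kappa}_T$), gives $w_0'\pa\ti\in\msL^{\kappa}_{\eta;T}\mcC^{\alpha+1}$, and the weighted Schauder estimate for $\vdiv\mcI$ (which applies since $\eta<1$, a consequence of $\beta^{\#}>-\alpha-2>\alpha+\beta+1$) places $\vdiv\mcI[w_0'\pa\ti]$ in $\msL^{\kappa}_{\eta;T}\mcC^{\alpha+2}\hookrightarrow\msL^{\kappa}_{\eta;T}\mcC^{\beta}$, using $\beta<2\alpha+4<\alpha+2$; adding $w_0^{\#}\in\msL^{\kappa}_{\eta;T}\mcC^{\beta^{\#}}\hookrightarrow\msL^{\kappa}_{\eta;T}\mcC^{\beta}$ gives $w_0\in\msL^{\kappa}_{\eta;T}\mcC^{\beta}$. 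Hence $\boldsymbol{w}_0\in\msD_T$.

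For completeness I would argue that $\msD_T$ is a closed subset of the ambient space $E:=\msL^{\kappa}_{\eta;T}\mcC^{\beta}\times\msL^{\kappa}_{\eta;T}\mcC^{\beta+1}\times\msL^{\kappa}_{\eta;T}\mcC^{\beta^{\#}}$, which is a Banach space under $\norm{\,\cdot\,}_{\msD_T}$ by the definition of the weighted norms. The ansatz constraint is the kernel of the bounded linear map $\boldsymbol{w}\mapsto w-\vdiv\mcI[w'\pa\ti]-w^{\#}$ from $E$ to $\msL^{\kappa}_{\eta;T}\mcC^{\beta}$ (bounded by exactly the combination of Lemma~\ref{lem:Bony} and the Schauder estimate used above), hence closed. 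The three initial conditions are affine constraints; on the subspace cut out by the ansatz one has $w\tzero=w^{\#}\tzero$, so it suffices that $\boldsymbol{w}\mapsto(w'\tzero,w^{\#}\tzero)$ is well defined and $\norm{\,\cdot\,}_{\msD_T}$-continuous there, which follows by a standard argument in which the parabolic weighted norm (with $\eta<1$ and $\beta^{\#}>\beta_0-2/p$) controls the $t\downarrow0$ behaviour. A closed subset of a complete metric space is complete, so $\msD_T$, with the metric induced by $\norm{\,\cdot\,}_{\msD_T}$, is complete.

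The only genuinely technical step is the first: verifying that the free-evolution candidate lands in the weighted H\"older spaces with precisely the blow-up rate $\eta$. This is pure bookkeeping with the standing assumptions on $(p,q,\alpha,\beta,\beta^{\#},\beta_0,\kappa,\eta)$ — in particular the lower bound $\eta\geq(\beta^{\#}-\beta_0)/2+1/p$, which is tailored so that $P_{\cdot}\rho_0\in C_{\eta;T}\mcC^{\beta^{\#}}$ — together with the Schauder estimates for $P$ and $\mcI$; everything else is soft functional analysis.
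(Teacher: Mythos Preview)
Your proof is essentially the paper's: the same candidate $\big(P\rho_0,\nabla\Phi_{P\rho_0},\vdiv\mcI[\nabla\Phi_{P\rho_0}\pa\ti]+P\rho_0\big)$ for non-emptiness, checked with Schauder/elliptic regularity and Bony exactly as the paper does, and the same completeness argument (your closed-subset framing is equivalent to the paper's direct Cauchy-sequence argument for the ansatz constraint).

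One slip worth flagging: your closure argument for the initial conditions claims that $\boldsymbol{w}\mapsto(w'\tzero,w^{\#}\tzero)$ is continuous in the $\msD_T$-norm. It is not --- the weighted norms on $(0,T]$ permit blow-up of order $t^{-\eta}$ and carry no information about the value at $t=0$, so evaluation at zero is not even well defined on the ambient space. The paper sidesteps this entirely: since every element of a Cauchy sequence in $\msD_T$ carries the \emph{same} fixed initial data $(\rho_0,\nabla\Phi_{\rho_0},\rho_0)$, one simply declares the limit (obtained on $(0,T]$) to have that initial data as well, and there is nothing to check. Replace your continuity claim with this observation and the argument is complete.
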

\begin{proof}
	To show that $\msD_T$ is non-empty, we can choose $w'=\nabla\Phi_{P\rho_0}$, $w^{\#}=P\rho_0$ and subsequently set $w\defeq\vdiv\mcI[w'\pa\ti]+w^{\#}$. The initial condition is satisfied, since $\lim_{t\to0}w'_{t}=\nabla\Phi_{\rho_0}$ in $\mcS'(\mbT^{2};\mbR^{2})$ and $\lim_{t\to0}w^{\#}_{t}=\rho_{0}$ in $\mcS'(\mbT^{2};\mbR)$. By Lemma~\ref{lem:Schauder} and Lemma~\ref{lem:elliptic_regularity}, using that $\beta'\leq\beta+1$, and $((\beta-\beta_{0})/2\vee0)+1/p\leq\eta$, we obtain $w'=\nabla\Phi_{P\rho_0}\in\msL_{\eta;T}^{\kappa}\mcC^{\beta'}(\mbT^{2};\mbR^{2})$, $w^{\#}=P\rho_0\in\msL_{\eta;T}^{\kappa}\mcC^{\beta}(\mbT^{2};\mbR)$ and
	\begin{equation*}
		\norm{\nabla\Phi_{P\rho_0}}_{\msL^{\kappa}_{\eta;T}\mcC^{\beta'}}\lesssim\norm{\nabla\Phi_{P\rho_0}}_{\msL^{\kappa}_{\eta;T}\mcC^{\beta+1}}\lesssim\norm{P\rho_0}_{\msL^{\kappa}_{\eta;T}\mcC^{\beta}}\lesssim_{T}\norm{\rho_0}_{\mcB_{p,q}^{\beta_{0}}}.
	\end{equation*}
	Using that $\beta_{0}\leq\beta^{\#}$ and $(\beta^{\#}-\beta_{0})/2+1/p\leq2\eta$, we obtain $w^{\#}\in\msL_{2\eta;T}^{\kappa}\mcC^{\beta^{\#}}(\mbT^{2};\mbR)$ and
	\begin{equation*}
		\norm{P\rho_0}_{\msL^{\kappa}_{2\eta;T}\mcC^{\beta^{\#}}}\lesssim_{T}\norm{\rho_0}_{\mcB_{p,q}^{\beta_{0}}}.
	\end{equation*}
	Since $w=\vdiv\mcI[w'\pa\ti]+w^{\#}$, we find by an application of the triangle inequality, Lemma~\ref{lem:Schauder} and Lemma~\ref{lem:Bony}, that $w\in\msL_{\eta;T}^{\kappa}\mcC^{\beta}(\mbT^{2};\mbR)$ and
	\begin{equation*}
		\norm{w}_{\msL^{\kappa}_{\eta;T}\mcC^{\beta}}\lesssim_{T}\norm{w'}_{C_{\eta;T}\mcC^{\beta'}}\norm{\ti}_{C_{T}\mcC^{\alpha+1}}+\norm{w^{\#}}_{\msL^{\kappa}_{\eta;T}\mcC^{\beta}},
	\end{equation*}
	where we used that $\beta<\alpha+2$ and $\beta'>0$.
	\begin{details}
		By the triangle inequality,
		\begin{equation*}
			\norm{w}_{\msL^{\kappa}_{\eta;T}\mcC^{\beta}}\leq\norm{\vdiv\mcI[w'\pa\ti]}_{\msL^{\kappa}_{\eta;T}\mcC^{\beta}}+\norm{w^{\#}}_{\msL^{\kappa}_{\eta;T}\mcC^{\beta}}
		\end{equation*}
		and by Lemma~\ref{lem:Schauder} and Lemma~\ref{lem:Bony}, using that $\beta<\alpha+2$ and $\beta'>0$,
		\begin{equation*}
			\norm{\vdiv\mcI[w'\pa\ti]}_{\msL^{\kappa}_{\eta;T}\mcC^{\beta}}\lesssim_{T}\norm{\vdiv(w'\pa\ti)}_{C_{\eta;T}\mcC^{\alpha}}\lesssim\norm{w'}_{C_{\eta;T}\mcC^{\beta'}}\norm{\ti}_{C_{T}\mcC^{\alpha+1}}.
		\end{equation*}
	\end{details}
	%
	%
	To show completeness, let $(\boldsymbol{w}_n)_{n\in\mbN}$ be a Cauchy sequence in $\msD_{T}$. By the completeness of $\msL_{\eta;T}^{\kappa}\mcC^{\beta}(\mbT^{2};\mbR)\times\msL_{\eta;T}^{\kappa}\mcC^{\beta'}(\mbT^{2};\mbR^{2})\times(\msL_{\eta;T}^{\kappa}\mcC^{\beta}(\mbT^{2};\mbR)\cap\msL_{2\eta;T}^{\kappa}\mcC^{\beta^{\#}}(\mbT^{2};\mbR))$, we obtain $w_n\to w\in\msL_{\eta;T}^{\kappa}\mcC^{\beta}(\mbT^{2};\mbR)$, $w'_{n}\to w'\in\msL_{\eta;T}^{\kappa}\mcC^{\beta'}(\mbT^{2};\mbR^{2})$, $w^{\#}_{n}\to w^{\#}\in\msL_{\eta;T}^{\kappa}\mcC^{\beta}(\mbT^{2};\mbR)$ and $w^{\#}_{n}\to w^{\#}\in\msL_{2\eta;T}^{\kappa}\mcC^{\beta^{\#}}(\mbT^{2};\mbR)$ all with the correct initial conditions. It suffices to show $w=\vdiv\mcI[w'\pa\ti]+w^{\#}$ so that $\boldsymbol{w}\in\msD_{T}$. Then again by Lemma~\ref{lem:Schauder} and Lemma~\ref{lem:Bony},
	%
	\begin{equation*}
		w_n-\vdiv\mcI[w'\pa\ti]-w^{\#}=\vdiv\mcI[(w'_{n}-w')\pa\ti]+w^{\#}_{n}-w^{\#}\to 0\qquad\text{in}\quad\msL_{\eta;T}^{\kappa}\mcC^{\beta}(\mbT^{2};\mbR),
	\end{equation*}
	\begin{details}
		\begin{equation*}
			\norm{\vdiv\mcI[(w'_{n}-w')\pa\ti]}_{\msL_{\eta;T}^{\kappa}\mcC^{\beta}}\lesssim_{T}\norm{w'_{n}-w'}_{C_{\eta;T}\mcC^{\beta'}}\norm{\ti}_{C_T\mcC^{\alpha+1}}\to0,
		\end{equation*}
	\end{details}
	which combined with $w_n\to w\in\msL_{\eta;T}^{\kappa}\mcC^{\beta}(\mbT^{2};\mbR)$ yields $w=\vdiv\mcI[w'\pa\ti]+w^{\#}$.
\end{proof}
In the next lemma we show that $w\re\nabla\Phi_{\ti}+\ti\re\nabla\Phi_{w}$ is well-defined in $\msD_{T}\times\rksnoise{\alpha}{\kappa}_{T}$.
\begin{lemma}\label{lem:renormalised_products_estimates}
	There exists a continuous operator $\msP\from\msD_T\times\rksnoise{\alpha}{\kappa}_{T}\to C_{2\eta;T}\mcC^{2\alpha+4}(\mbT^{2};\mbR^{2})$, such that when all objects are smooth,
	\begin{equation*}
		\msP(\boldsymbol{w},\mbX)=w\re\nabla\Phi_{\ti}+\ti\re\nabla\Phi_{w}.
	\end{equation*} 
\end{lemma}
\begin{proof}
	Using the notation $\msC(f,g,h)=(f\pa g)\re h-f(g\re h)$ (see Lemma~\ref{lem:commutator_paraproduct_resonant}) and recalling that $\tc=\nabla\mcI[\ti]\re\nabla\Phi_{\ti}+\nabla^{2}\mcI[\Phi_{\ti}]\re\ti$ we can expand the product into
	\begin{equation*}
		\begin{split}
			\msP(\boldsymbol{w},\mbX)&\defeq\msC(w',\nabla\mcI[\ti],\nabla\Phi_{\ti})+\msC(w',\nabla^{2}\mcI[\Phi_{\ti}],\ti)+(w^{\#}+\vdiv\mcI[w'\pa\ti]-w'\pa\nabla\mcI[\ti])\re\nabla\Phi_{\ti}\\
			&\quad+(\nabla\Phi_{w^{\#}}+\nabla\vdiv\Phi_{\mcI[w'\pa\ti]}-w'\pa\nabla^{2}\mcI[\Phi_{\ti}])\re\ti+w'\tc,
		\end{split}
	\end{equation*}
	where $\boldsymbol{w}\in\msD_{T}$ and $\mbX\in\rksnoise{\alpha}{\kappa}_{T}$. In what follows, we establish the bound
	\begin{equation}\label{eq:renormalised_products_estimates_bound_3}
		\norm{\msP(\boldsymbol{w},\mbX)}_{C_{2\eta;T}\mcC^{2\alpha+4}}\lesssim_{T}(\norm{w'}_{\msL_{\eta;T}^{\kappa}\mcC^{\beta'}}+\norm{w^{\#}}_{C_{2\eta;T}\mcC^{\beta^{\#}}})(1+\norm{\ti}_{C_T\mcC^{\alpha+1}}+\norm{\tc}_{C_T\mcC^{2\alpha+4}})^{2}
	\end{equation}
	by various applications of our commutator results. The regularity $\msP(\boldsymbol{w},\mbX)\in C_{2\eta;T}\mcC^{2\alpha+4}(\mbT^{2};\mbR^{2})$ follows by the same arguments.
	
	Using Lemma~\ref{lem:commutator_paraproduct_resonant} and that $\beta'\in(0,1)$, $2\alpha+4<0<2\alpha+4+\beta'$, we obtain
	\begin{equation*}
		\norm{\msC(w',\nabla\mcI[\ti],\nabla\Phi_{\ti})}_{C_{2\eta;T}\mcC^{2\alpha+4}}\lesssim_{T}\norm{w'}_{C_{\eta;T}\mcC^{\beta'}}\norm{\nabla\mcI[\ti]}_{C_T\mcC^{\alpha+2}}\norm{\nabla\Phi_{\ti}}_{C_T\mcC^{\alpha+2}}
	\end{equation*}
	and
	\begin{equation*}
		\norm{\msC(w',\nabla^{2}\mcI[\Phi_{\ti}],\ti)}_{C_{2\eta;T}\mcC^{2\alpha+4}}\lesssim_{T}\norm{w'}_{C_{\eta;T}\mcC^{\beta'}}\norm{\nabla^{2}\mcI[\Phi_{\ti}]}_{C_T\mcC^{\alpha+3}}\norm{\ti}_{C_T\mcC^{\alpha+1}}.
	\end{equation*}
	
	Further, by Lemma~\ref{lem:Bony}, using that $2\alpha+4<0<\beta^{\#}+\alpha+2$,
	\begin{equation*}
		\norm{(w^{\#}+\vdiv\mcI[w'\pa\ti]-w'\pa\nabla\mcI[\ti])\re\nabla\Phi_{\ti}}_{C_{2\eta;T}\mcC^{2\alpha+4}}\lesssim\norm{w^{\#}+\vdiv\mcI[w'\pa\ti]-w'\pa\nabla\mcI[\ti]}_{C_{2\eta;T}\mcC^{\beta^{\#}}}\norm{\nabla\Phi_{\ti}}_{C_T\mcC^{\alpha+2}}.
	\end{equation*}
	To control the remainder, we apply Lemma~\ref{lem:commutator_Fourier_multiplier_paraproduct} and Lemma~\ref{lem:commutator_heat_paraproduct}, using that $\kappa\in((\beta^{\#}-\alpha-2)/2,1/2)$ and $\beta^{\#}<\alpha+\beta'+2$,
	\begin{equation*}
		\norm{w^{\#}+\vdiv\mcI[w'\pa\ti]-w'\pa\nabla\mcI[\ti]}_{C_{2\eta;T}\mcC^{\beta^{\#}}}\lesssim_{T}\norm{w^{\#}}_{C_{2\eta;T}\mcC^{\beta^{\#}}}+\norm{w'}_{\msL_{\eta;T}^{\kappa}\mcC^{\beta'}}\norm{\ti}_{C_T\mcC^{\alpha+1}}.
	\end{equation*}
	\begin{details}
		The remainder $w^{\#}+\vdiv\mcI[w'\pa\ti]-w'\pa\nabla\mcI[\ti]$ can be decomposed into
		\begin{equation*}
			w^{\#}+\vdiv\mcI[w'\pa\ti]-w'\pa\nabla\mcI[\ti]=w^{\#}+\mcI[\vdiv (w'\pa \ti)-w'\pa \nabla\ti]+\mcI[w'\pa \nabla\ti]-w'\pa \mcI[\nabla\ti].
		\end{equation*}
		We estimate the first commutator by Lemma~\ref{lem:Schauder} and Lemma~\ref{lem:commutator_Fourier_multiplier_paraproduct}, using that $\beta^{\#}<\alpha+\beta'+2$, $\beta'<1$,
		\begin{equation*}
			\norm{\mcI[\vdiv(w'\pa\ti)-w'\pa\nabla\ti]}_{\msL_{2\eta;T}^{\kappa}\mcC^{\beta^{\#}}}\lesssim_{T}\norm{\vdiv(w'\pa\ti)-w'\pa\nabla\ti}_{C_{\eta;T}\mcC^{\alpha+\beta'}}\lesssim\norm{w'}_{C_{\eta;T}\mcC^{\beta'}}\norm{\ti}_{C_T\mcC^{\alpha+1}}.
		\end{equation*}
		For the second commutator we apply Lemma~\ref{lem:commutator_heat_paraproduct}, using that $\kappa\in(0,1/2)$ and that there exists some $\gamma<2\kappa\wedge\beta'$ such that $\beta^{\#}\in(\gamma+\alpha,\gamma+\alpha+2)$, to estimate
		\begin{equation*}
			\norm{\mcI[w'\pa\nabla\ti]-w'\pa\mcI[\nabla\ti]}_{C_{2\eta;T}\mcC^{\beta^{\#}}}\lesssim_{T}\norm{w'}_{\msL_{\eta;T}^{\kappa}\mcC^{\gamma}}\norm{\nabla\ti}_{C_T\mcC^{\alpha}}\lesssim\norm{w'}_{\msL_{\eta;T}^{\kappa}\mcC^{\beta'}}\norm{\nabla\ti}_{C_T\mcC^{\alpha}}.
		\end{equation*}
		Indeed, if we choose $\gamma>\beta^{\#}-\alpha-2$, the upper bound $\beta^{\#}<\gamma+\alpha+2$ becomes immediate. Choosing $\gamma$ sufficiently close to $\beta^{\#}-\alpha-2$ provides the lower bound $\gamma+\alpha<\beta^{\#}$. We only need to make sure that this choice is compatible with the assumption $\gamma<2\kappa\wedge\beta'$. However, the intervals $(\beta^{\#}-\alpha-2,2\kappa)$ and $(\beta^{\#}-\alpha-2,\beta')$ are non-empty so that we can find a suitable $\gamma$.
		
		This yields
		\begin{equation*}
			\norm{w^{\#}+\vdiv\mcI[w'\pa\ti]-w'\pa\nabla\mcI[\ti]}_{C_{2\eta;T}\mcC^{\beta^{\#}}}\lesssim_{T}\norm{w'}_{\msL_{\eta;T}^{\kappa}\mcC^{\beta'}}\norm{\ti}_{C_T\mcC^{\alpha+1}}+\norm{w^{\#}}_{C_{2\eta;T}\mcC^{\beta^{\#}}}.
		\end{equation*}
	\end{details}
	Similarly, by Lemma~\ref{lem:Bony}, Lemma~\ref{lem:commutator_Fourier_multiplier_paraproduct} and Lemma~\ref{lem:commutator_heat_paraproduct}, using that $2\alpha+4<0<\beta^{\#}+\alpha+2$,
	\begin{equation*}
		\begin{split}
			&\norm{(\nabla\Phi_{w^{\#}}+\nabla\vdiv\Phi_{\mcI[w'\pa\ti]}-w'\pa\nabla^{2}\mcI[\Phi_{\ti}])\re\ti}_{C_{2\eta;T}\mcC^{2\alpha+4}}\\
			&\lesssim_{T}(\norm{w^{\#}}_{C_{2\eta;T}\mcC^{\beta^{\#}}}+\norm{w'}_{\msL_{\eta;T}^{\kappa}\mcC^{\beta'}}\norm{\ti}_{C_T\mcC^{\alpha+1}})\norm{\ti}_{C_T\mcC^{\alpha+1}}.
		\end{split}
	\end{equation*}
	\begin{details}
		The remainder $\nabla\Phi_{w^{\#}}+\nabla\vdiv\Phi_{\mcI[w'\pa\ti]}-w'\pa\nabla^{2}\mcI[\Phi_{\ti}]$ can be decomposed into
		\begin{equation}\label{eq:Phi_tilde_sharp_w}
			\begin{split}
				&\nabla\Phi_{w^{\#}}+\nabla\vdiv\Phi_{\mcI[w'\pa\ti]}-w'\pa\nabla^{2}\mcI[\Phi_{\ti}]\\
				&=\mcI[\nabla\vdiv (w'\pa\Phi_{\ti})-w'\pa\nabla^2\Phi_{\ti}]+\mcI[w'\pa\nabla^2\Phi_{\ti}]-w'\pa\mcI[\nabla^2\Phi_{\ti}]+\nabla\Phi^{\#}_{w},
			\end{split}
		\end{equation}
		where
		\begin{equation}\label{eq:Phi_sharp_w}
			\Phi^{\#}_{w}\defeq\vdiv\mcI[\Phi(D)(w'\pa\ti)-w'\pa\Phi_{\ti}]+\Phi_{w^{\#}}.
		\end{equation}
		We estimate the first commutator in~\eqref{eq:Phi_tilde_sharp_w} by using Lemma~\ref{lem:Schauder} and Lemma~\ref{lem:commutator_Fourier_multiplier_paraproduct}, and that $\beta^{\#}<\alpha+\beta'+2$, $\beta'<1$,
		\begin{equation*}
			\begin{split}
				\norm{\mcI[\nabla\vdiv(w'\pa\Phi_{\ti})-w'\pa\nabla^{2}\Phi_{\ti}]}_{\msL_{2\eta;T}^{\kappa}\mcC^{\beta^{\#}+1}}&\lesssim_{T}\norm{\nabla\vdiv(w'\pa\Phi_{\ti})-w'\pa\nabla^{2}\Phi_{\ti}}_{C_{\eta;T}\mcC^{\alpha+\beta'+1}}\\
				&\lesssim\norm{w'}_{C_{\eta;T}\mcC^{\beta'}}\norm{\Phi_{\ti}}_{C_T\mcC^{\alpha+3}}.
			\end{split}
		\end{equation*}
		For the second commutator in~\eqref{eq:Phi_tilde_sharp_w} we apply Lemma~\ref{lem:commutator_heat_paraproduct}, using that $\kappa\in(0,1/2)$ and that there exists some $\gamma<2\kappa\wedge\beta'$ such that $\beta^{\#}\in(\gamma+\alpha,\gamma+\alpha+2)$ to estimate
		\begin{equation*}
			\norm{\mcI[w'\pa\nabla^2\Phi_{\ti}]-w'\pa\mcI[\nabla^2\Phi_{\ti}]}_{C_{2\eta;T}\mcC^{\beta^{\#}+1}}\lesssim_{T}\norm{w'}_{\msL_{\eta;T}^{\kappa}\mcC^{\gamma}}\norm{\nabla^2\Phi_{\ti}}_{C_T\mcC^{\alpha+1}}\lesssim\norm{w'}_{\msL_{\eta;T}^{\kappa}\mcC^{\beta'}}\norm{\nabla^2\Phi_{\ti}}_{C_T\mcC^{\alpha+1}}.
		\end{equation*}
		Finally, we estimate with Lemma~\ref{lem:Fourier_multiplier},
		\begin{equation*}
			\norm{\nabla\Phi^{\#}_{w}}_{C_{2\eta;T}\mcC^{\beta^{\#}+1}}\lesssim\norm{\Phi^{\#}_{w}}_{C_{2\eta;T}\mcC^{\beta^{\#}+2}},
		\end{equation*}
		which shows that~\eqref{eq:Phi_tilde_sharp_w} can be bounded by
		\begin{equation*}
			\norm{\nabla\Phi_{w^{\#}}+\nabla\vdiv\Phi_{\mcI[w'\pa\ti]}-w'\pa\nabla^{2}\mcI[\Phi_{\ti}]}_{C_{2\eta;T}\mcC^{\beta^{\#}+1}}\lesssim_{T}\norm{w'}_{\msL_{\eta;T}^{\kappa}\mcC^{\beta'}}\norm{\ti}_{C_T\mcC^{\alpha+1}}+\norm{\Phi^{\#}_{w}}_{C_{2\eta;T}\mcC^{\beta^{\#}+2}}.
		\end{equation*}
		We control $\Phi^{\#}_{w}$ defined by~\eqref{eq:Phi_sharp_w}. We bound the commutator in~\eqref{eq:Phi_sharp_w} with Schauder's estimate, Lemma~\ref{lem:Schauder}, and Lemma~\ref{lem:commutator_Fourier_multiplier_paraproduct}, using that $\beta^{\#}<\alpha+\beta'+2$ and $\beta'<1$,
		\begin{equation*}
			\begin{split}
				\norm{\vdiv\mcI[\Phi(D)(w'\pa\ti)-w'\pa\Phi_{\ti}]}_{\msL_{2\eta;T}^{\kappa}\mcC^{\beta^{\#}+2}}&\lesssim_{T}\norm{\vdiv(\Phi(D)(w'\pa\ti)-w'\pa\Phi_{\ti})}_{C_{\eta;T}\mcC^{\alpha+\beta'+2}}\\
				&\lesssim\norm{\Phi(D)(w'\pa \ti)-w'\pa \Phi_{\ti}}_{C_{\eta;T}\mcC^{\alpha+\beta'+3}}\\
				&\lesssim\norm{w'}_{C_{\eta;T}\mcC^{\beta'}}\norm{\ti}_{C_{T}\mcC^{\alpha+1}}.
			\end{split}
		\end{equation*}
		The remaining term can be bounded by Lemma~\ref{lem:Fourier_multiplier},
		\begin{equation*}
			\norm{\Phi_{w^{\#}}}_{C_{2\eta;T}\mcC^{\beta^{\#}+2}}\lesssim\norm{w^{\#}}_{C_{2\eta;T}\mcC^{\beta^{\#}}}.
		\end{equation*}
		This yields the estimate
		\begin{equation*}
			\norm{\nabla\Phi_{w^{\#}}+\nabla\vdiv\Phi_{\mcI[w'\pa\ti]}-w'\pa\nabla^{2}\mcI[\Phi_{\ti}]}_{C_{2\eta;T}\mcC^{\beta^{\#}+1}}\lesssim_{T}\norm{w'}_{\msL_{\eta;T}^{\kappa}\mcC^{\beta'}}\norm{\ti}_{C_T\mcC^{\alpha+1}}+\norm{w^{\#}}_{C_{2\eta;T}\mcC^{\beta^{\#}}}.
		\end{equation*}
	\end{details}
	
	Finally, by Lemma~\ref{lem:Bony}, using that $2\alpha+4<0<2\alpha+4+\beta'$,
	\begin{equation*}
		\norm{w'\tc}_{C_{2\eta;T}\mcC^{2\alpha+4}}\lesssim_{T}\norm{w'}_{C_{\eta;T}\mcC^{\beta'}}\norm{\tc}_{C_T\mcC^{2\alpha+4}}.
	\end{equation*}
	This yields the claim.
\end{proof}
We can now derive a priori bounds for our solution map.
\begin{lemma}\label{lem:bounds_apriori}
	For every $\mbX\in\rksnoise{\alpha}{\kappa}_{T}$ and $\rho_0\in\mcB_{p,q}^{\beta_{0}}(\mbT^{2})$, let $\boldsymbol{\Psi}$, acting on $\boldsymbol{u}=(u,u',u^{\#})\in\msD_T$, be given by $\boldsymbol{\Psi}(\boldsymbol{u})\defeq(w,w',w^{\#})$, where
	\begin{equation*}
		\begin{cases}
			\begin{aligned}
				w&\defeq\vdiv\mcI[w'\pa \ti]+w^{\#},\quad w'\defeq\nabla\Phi_ u+\nabla\Phi_{\ty},\\
				w^{\#}&\defeq P\rho_0+\vdiv\mcI[\Omega^{\#}(\boldsymbol{u})],
			\end{aligned}
		\end{cases}
	\end{equation*}
	and
	\begin{equation*}
		\begin{split}
			\Omega^{\#}(\boldsymbol{u})&\defeq u\nabla\Phi_u+u\nabla\Phi_{\ty}+\ty\nabla\Phi_u+\ty \nabla\Phi_{\ty}+\tp+\nabla \Phi_{\ti}\pa\ty+\ty\pa\nabla\Phi_{\ti}\\
			&\quad+\ti\pa\nabla \Phi_{\ty}+ u\pa\nabla\Phi_{\ti}+\nabla\Phi_{\ti}\pa u+\ti\pa\nabla \Phi_{u}+\msP(\boldsymbol{u},\mbX).
		\end{split}
	\end{equation*}
	Then $\boldsymbol{\Psi}(\boldsymbol{u})\in\msD_{T}$ and there exists some $\theta>0$ depending only on the chosen exponents and the dimension, such that for every $T\leq1$,
	\begin{align}
		\max\{\norm{w}_{\msL_{\eta;T}^{\kappa}\mcC^{\beta}},\norm{w^{\#}}_{\msL_{\eta;T}^{\kappa}\mcC^{\beta}},\norm{w^{\#}}_{\msL_{2\eta;T}^{\kappa}\mcC^{\beta^{\#}}}\}&\lesssim (1+T^{\theta}\norm{\boldsymbol{u}}_{\msD_T})^{2}(1+\norm{\mbX}_{\rksnoise{\alpha}{\kappa}_{T}}+\norm{\rho_0}_{\mcB_{p,q}^{\beta_0}})^{2},\label{eq:Psi_bounds_w_w_sharp}\\
		\norm{w'}_{\msL^{\kappa}_{\eta;T}\mcC^{\beta'}}&\lesssim\norm{u}_{\msL_{\eta;T}^{\kappa}\mcC^{\beta}}+\norm{\mbX}_{\rksnoise{\alpha}{\kappa}_{T}}.\label{eq:Psi_bounds_w_prime}
	\end{align}
\end{lemma}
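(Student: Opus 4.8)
The plan is to treat each of the three components of $\boldsymbol{\Psi}(\boldsymbol{u})$ in turn, estimating them in the norms appearing in $\norm{\,\cdot\,}_{\msD_T}$, and then to check the compatibility of the initial conditions and the paracontrolled structure~\eqref{eq:paracontrolled_Ansatz} so that $\boldsymbol{\Psi}(\boldsymbol{u})\in\msD_T$ at the end. Throughout I would exploit that the small parameter $T^{\theta}$ comes for free from the regularising Schauder estimates (Lemma~\ref{lem:Schauder}): every instance of $\mcI$ applied to something in a $C_{\eta;T}\mcC^{\bullet}$-space gains a factor $T^{\theta}$ for some $\theta>0$ depending only on the exponent gap, and since all the exponent inequalities fixed before the lemma are strict there is room to extract a uniform $\theta>0$. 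I would not track $\theta$ explicitly beyond noting its existence.

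First I would bound $w'=\nabla\Phi_u+\nabla\Phi_{\ty}$. By linearity of $f\mapsto\nabla\Phi_f$ and the elliptic regularity estimate (Lemma~\ref{lem:elliptic_regularity}), $\norm{\nabla\Phi_u}_{\msL^{\kappa}_{\eta;T}\mcC^{\beta+1}}\lesssim\norm{u}_{\msL^{\kappa}_{\eta;T}\mcC^{\beta}}$ and $\norm{\nabla\Phi_{\ty}}_{\msL^{\kappa}_{\eta;T}\mcC^{\beta+1}}\lesssim\norm{\ty}_{\msL^{\kappa}_{T}\mcC^{2\alpha+4}}\leq\norm{\mbX}_{\rksnoise{\alpha}{\kappa}_{T}}$, using $\beta<2\alpha+4$ so no regularity is lost; this gives~\eqref{eq:Psi_bounds_w_prime}. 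Next, for $w^{\#}=P\rho_0+\vdiv\mcI[\Omega^{\#}(\boldsymbol{u})]$ I would estimate $\norm{P\rho_0}_{\msL^{\kappa}_{\eta;T}\mcC^{\beta^{\#}}}\lesssim_T\norm{\rho_0}_{\mcB^{\beta_0}_{p,q}}$ via Schauder and the chosen relation $(\beta^{\#}-\beta_0)/2+1/p\leq\eta$, and then go term by term through $\Omega^{\#}(\boldsymbol{u})$. The genuinely bilinear-in-$u$ terms $u\nabla\Phi_u$, $u\nabla\Phi_{\ty}$, $\ty\nabla\Phi_u$ are handled by Bony's estimate (Lemma~\ref{lem:Bony}) since $\beta>-1/2$ forces $\beta+(\beta+1)>0$ and $\beta+(2\alpha+5)>0$, so the products live in $\mcC^{\beta}$; applying $\vdiv\mcI$ lands them in $\mcC^{\beta^{\#}}$ because $\beta^{\#}<\alpha+\beta+3-2/p$, losing one derivative to $\vdiv$ and regaining two from $\mcI$. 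The purely exogenous terms $\ty\nabla\Phi_{\ty}$, $\tp$, $\nabla\Phi_{\ti}\pa\ty$, $\ty\pa\nabla\Phi_{\ti}$, $\ti\pa\nabla\Phi_{\ty}$ are bounded directly by $\norm{\mbX}_{\rksnoise{\alpha}{\kappa}_{T}}$ using Bony and the regularities recorded in Definition~\ref{def:noise_space}. The three paraproduct terms $u\pa\nabla\Phi_{\ti}$, $\nabla\Phi_{\ti}\pa u$, $\ti\pa\nabla\Phi_u$ are only of regularity $\alpha+2<0$ before $\vdiv\mcI$, but $\beta^{\#}<\alpha+\beta+3$ leaves just enough room after the Schauder gain. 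Finally $\msP(\boldsymbol{u},\mbX)$ is controlled by Lemma~\ref{lem:renormalized_products_estimates}, which gives the bound~\eqref{eq:renormalized_products_estimates_bound_3} in $C_{\eta;T}\mcC^{2\alpha+4}$ and hence, after $\vdiv\mcI$, in $\mcC^{\beta^{\#}}$ since $\beta^{\#}<2\alpha+5$. Collecting everything and using that each nonlinearity is at most quadratic in $\boldsymbol{u}$ yields the quadratic bound~\eqref{eq:Psi_bounds_w_w_sharp} for $w^{\#}$.

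For $w=\vdiv\mcI[w'\pa\ti]+w^{\#}$ I would use the triangle inequality: the second summand is already estimated, and for the first, Bony's estimate gives $w'\pa\ti\in C_{\eta;T}\mcC^{\alpha+1}$ (regularity $\beta+1+(\alpha+1)\wedge(\alpha+1)=\alpha+1$ as $\beta+1>0$), then Schauder with $\beta<\alpha+2$ yields $\vdiv\mcI[w'\pa\ti]\in\msL^{\kappa}_{\eta;T}\mcC^{\beta}$ with a $T^{\theta}$ prefactor times $\norm{w'}_{C_{\eta;T}\mcC^{\beta+1}}\norm{\ti}_{C_T\mcC^{\alpha+1}}$; combining with~\eqref{eq:Psi_bounds_w_prime} gives the stated bound on $\norm{w}_{\msL^{\kappa}_{\eta;T}\mcC^{\beta}}$. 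It then only remains to verify membership in $\msD_T$: the identity $w=\vdiv\mcI[w'\pa\ti]+w^{\#}$ holds by the very definition of $w$, so the Ansatz~\eqref{eq:paracontrolled_Ansatz} is automatic; and the initial conditions $w\tzero=\rho_0$, $w'\tzero=\nabla\Phi_{\rho_0}$, $w^{\#}\tzero=\rho_0$ follow because $\mcI[\cdot]\tzero=0$, $P_0\rho_0=\rho_0$, $u\tzero=\rho_0$ (as $\boldsymbol{u}\in\msD_T$), $\ty\tzero=0$, and linearity of $f\mapsto\nabla\Phi_f$. I expect the main obstacle to be organisational rather than conceptual: carefully checking, term by term in $\Omega^{\#}$, that the fixed exponent inequalities leave a strictly positive margin so that a single $\theta>0$ works for every term simultaneously, and correctly invoking the weighted-in-time ($C_{\eta;T}$) versions of Schauder and Bony — in particular keeping track of where the blow-up weight $\eta$ is needed (for $P\rho_0$ and for the products that see $u$ near $t=0$) versus where it is harmless. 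The commutator-heavy term $\msP(\boldsymbol{u},\mbX)$ is the one place where the time-regularity exponent $\kappa>(\beta^{\#}-\alpha-2)/2$ is essential, but that estimate is already packaged in Lemma~\ref{lem:renormalized_products_estimates}, so here it can be used as a black box.
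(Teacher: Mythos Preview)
Your proposal is correct and follows essentially the same route as the paper: the paper also bounds $w'$ via elliptic regularity, places each summand of $\Omega^{\#}(\boldsymbol{u})$ in a common space (there $C_{2\eta;T}\mcC^{\alpha+\beta+2}$) via Bony's estimates and Lemma~\ref{lem:renormalized_products_estimates}, applies Schauder to get $w^{\#}$, and then reassembles $w$. The one detail worth making explicit, which you only allude to under ``products that see $u$ near $t=0$'', is that the bilinear term $u\nabla\Phi_u$ lives in a $C_{2\eta;T}$ space, so the Schauder estimate for $\vdiv\mcI[\Omega^{\#}(\boldsymbol{u})]$ must be invoked with $\eta'=2\eta$; the parameter constraint $\eta<1-(\beta^{\#}-\alpha-\beta-1)/2$ is exactly what makes this legal.
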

\begin{proof}
	We derive bounds for our solution map in several steps. By the same arguments it also follows that $\boldsymbol{\Psi}(\boldsymbol{u})\in\msD_{T}$. In the remainder of the proof, we will assume $T\leq1$.
	
	\emph{Step $1$. The $\msL_{\eta;T}^{\kappa}\mcC^{\beta}(\mbT^{2};\mbR)$-regularity of $w$.} As in the proof of Lemma~\ref{lem:parspace_non_trivial}, but this time keeping track of the dependency on $T$, we see that
	\begin{equation*}
		\norm{w}_{\msL_{\eta;T}^{\kappa}\mcC^{\beta}}\lesssim(T^{1-\frac{\beta-\alpha}{2}}\vee T^{1-\kappa})\norm{w'}_{C_{\eta;T}\mcC^{\beta'}}\norm{\ti}_{C_T\mcC^{\alpha+1}}+\norm{w^{\#}}_{\msL_{\eta;T}^{\kappa}\mcC^{\beta}}.
	\end{equation*}
	\emph{Step $2$. The $\msL_{\eta;T}^{\kappa}\mcC^{\beta}(\mbT^{2};\mbR)$ and $\msL_{2\eta;T}^{\kappa}\mcC^{\beta^{\#}}(\mbT^{2};\mbR)$-regularity of $w^{\#}$.} To establish the $\msL_{\eta;T}^{\kappa}\mcC^{\beta}(\mbT^{2};\mbR)$-regularity of $w^{\#}$, we apply Lemma~\ref{lem:Schauder}, using that $((\beta-\beta_{0})/2\vee0)+1/p\leq\eta$, $\eta<1/2$, $\beta+1<\alpha+\beta+4$ and $-(\alpha+1)/2\vee\kappa<1-\eta$,
	\begin{equation*}
		\norm{w^{\#}}_{\msL_{\eta;T}^{\kappa}\mcC^{\beta}}\lesssim T^{\eta-(\frac{\beta-\beta_0}{2}\vee0)-\frac{1}{p}}\norm{\rho_0}_{\mcB_{p,q}^{\beta_0}}+(T^{\frac{\alpha+3}{2}-\eta}\vee T^{1-\kappa-\eta})\norm{\Omega^{\#}(\boldsymbol{u})}_{C_{2\eta;T}\mcC^{\alpha+\beta+2}}.
	\end{equation*}	
	To establish the $\msL_{2\eta;T}^{\kappa}\mcC^{\beta^{\#}}(\mbT^{2};\mbR)$-regularity of $w^{\#}$, we apply Lemma~\ref{lem:Schauder} as above, but this time using that $\beta_{0}\leq\beta^{\#}$, $(\beta^{\#}-\beta_{0})/2+1/p\leq2\eta$, $\eta<1/2$, $\beta^{\#}+1<\alpha+\beta+4$ and $(\beta^{\#}-\alpha-\beta-1)/2\vee\kappa<1$,
	\begin{equation*}
		\norm{w^{\#}}_{\msL_{2\eta;T}^{\kappa}\mcC^{\beta^{\#}}}\lesssim T^{2\eta-\frac{\beta^{\#}-\beta_0}{2}-\frac{1}{p}}\norm{\rho_0}_{\mcB_{p,q}^{\beta_0}}+(T^{1-\frac{\beta^{\#}-\alpha-\beta-1}{2}}\vee T^{1-\kappa})\norm{\Omega^{\#}(\boldsymbol{u})}_{C_{2\eta;T}\mcC^{\alpha+\beta+2}}.
	\end{equation*}	
	\emph{Step $3$. The $C_{2\eta;T}\mcC^{\alpha+\beta+2}(\mbT^{2};\mbR^{2})$-regularity of $\Omega^{\#}(\boldsymbol{u})$.} We obtain by various applications of Lemma~\ref{lem:Bony}, using in particular that $4\alpha+9>0$ and $\beta>-1/2$,
	\begin{equation*}
		\norm{u\nabla\Phi_ u}_{C_{2\eta;T}\mcC^{\alpha+\beta+2}}\lesssim\norm{u}_{C_{\eta;T}\mcC^{\beta}}^{2},
	\end{equation*}
	\begin{equation*}
		\begin{split}
			\max\Bigl\{\norm{u\nabla\Phi_{\ty}}_{C_{2\eta;T}\mcC^{\alpha+\beta+2}},&\norm{\ty\nabla\Phi_u }_{C_{2\eta;T}\mcC^{\alpha+\beta+2}},\norm{u\pa\nabla\Phi_{\ti}}_{C_{2\eta;T}\mcC^{\alpha+\beta+2}},\\
			&\norm{\nabla\Phi_{\ti}\pa u}_{C_{2\eta;T}\mcC^{\alpha+\beta+2}},\norm{\ti\pa\nabla\Phi_u}_{C_{2\eta;T}\mcC^{\alpha+\beta+2}}\Bigr\}\lesssim T^{\eta} \norm{u}_{C_{\eta;T}\mcC^{\beta}}\norm{\mbX}_{\rksnoise{\alpha}{\kappa}_{T}},
		\end{split}
	\end{equation*}
	and
	\begin{equation*}
		\begin{split}
			\max\Bigl\{\norm{{\ty}\nabla\Phi_{\ty}}_{C_{2\eta;T}\mcC^{\alpha+\beta+2}},&\norm{\tp}_{C_{2\eta;T}\mcC^{\alpha+\beta+2}},\norm{\nabla\Phi_{\ti}\pa{\ty}}_{C_{2\eta;T}\mcC^{\alpha+\beta+2}},\\
			&\norm{{\ty}\pa\nabla\Phi_{\ti}}_{C_{2\eta;T}\mcC^{\alpha+\beta+2}},\norm{{\ti}\pa\nabla\Phi_{\ty}}_{C_{2\eta;T}\mcC^{\alpha+\beta+2}}\Bigr\}\lesssim T^{2\eta}(1+\norm{\mbX}_{\rksnoise{\alpha}{\kappa}_T})^{2}.
		\end{split}
	\end{equation*}
	\begin{details}
		Using that $\alpha+\beta+2\leq\beta$, $0<2\beta+1$
		\begin{equation*}
			\norm{u\nabla\Phi_ u}_{C_{2\eta;T}\mcC^{\alpha+\beta+2}}\lesssim\norm{u}_{C_{\eta;T}\mcC^{\beta}}\norm{\nabla\Phi_{u}}_{C_{\eta;T}\mcC^{\beta+1}}.
		\end{equation*}
		Using that $\alpha+\beta+2\leq\beta$, $2\alpha+5>0$ and $\beta+2\alpha+5>0$,
		\begin{equation*}
			\norm{u\nabla\Phi_{\ty}}_{C_{2\eta;T}\mcC^{\alpha+\beta+2}}\lesssim T^{\eta} \norm{u}_{C_{\eta;T}\mcC^{\beta}}\norm{\nabla\Phi_{\ty}}_{C_{T}\mcC^{2\alpha+5}}.
		\end{equation*}
		Using that $\alpha+\beta+2\leq2\alpha+4$, $\beta+1>0$ and $\beta+2\alpha+5>0$,
		\begin{equation*}
			\norm{\ty\nabla\Phi_u }_{C_{2\eta;T}\mcC^{\alpha+\beta+2}}\lesssim T^{\eta}\norm{\ty}_{C_{T}\mcC^{2\alpha+4}}\norm{\nabla\Phi_{u}}_{C_{\eta;T}\mcC^{\beta+1}}.
		\end{equation*}
		Using that $\alpha+\beta+2\leq2\alpha+4$ and $4\alpha+9>0$,
		\begin{equation*}
			\norm{\ty\nabla\Phi_{\ty}}_{C_{2\eta;T}\mcC^{\alpha+\beta+2}}\lesssim T^{2\eta}\norm{\ty}_{C_T\mcC^{2\alpha+4}}\norm{\nabla\Phi_{\ty}}_{C_T\mcC^{2\alpha+5}}. 
		\end{equation*}
		Using that $\alpha+\beta+2\leq3\alpha+6$,
		\begin{equation*}
			\norm{\tp}_{C_{2\eta;T}\mcC^{\alpha+\beta+2}}\lesssim T^{2\eta}\norm{\tp}_{C_{T}\mcC^{3\alpha+6}}.
		\end{equation*}
		Using that $\alpha+\beta+2\leq3\alpha+6$, $\alpha+2<0$,
		\begin{equation*}
			\norm{\nabla\Phi_{\ti}\pa{\ty}}_{C_{2\eta;T}\mcC^{\alpha+\beta+2}}\lesssim T^{2\eta}\norm{\nabla\Phi_{\ti}}_{C_T\mcC^{\alpha+2}}\norm{\ty}_{C_T\mcC^{2\alpha+4}}.
		\end{equation*}
		Using that $\alpha+\beta+2\leq3\alpha+6$, $2\alpha+4<0$,
		\begin{equation*}
			\norm{{\ty}\pa\nabla\Phi_{\ti}}_{C_{2\eta;T}\mcC^{\alpha+\beta+2}}\lesssim T^{2\eta}\norm{\ty}_{C_T\mcC^{2\alpha+4}}\norm{\nabla\Phi_{\ti}}_{C_T\mcC^{\alpha+2}}.
		\end{equation*}
		Using that $\alpha+\beta+2\leq3\alpha+6$, $\alpha+1<0$,
		\begin{equation*}
			\norm{{\ti}\pa\nabla\Phi_{\ty}}_{C_{2\eta;T}\mcC^{\alpha+\beta+2}}\lesssim T^{2\eta}\norm{\ti}_{C_T\mcC^{\alpha+1}}\norm{\nabla\Phi_{\ty}}_{C_T\mcC^{2\alpha+5}}.
		\end{equation*}
		Using that $\beta<0$,
		\begin{equation*}
			\norm{u\pa\nabla\Phi_{\ti}}_{C_{2\eta;T}\mcC^{\alpha+\beta+2}}\lesssim T^{\eta}\norm{u}_{C_{\eta;T}\mcC^{\beta}}\norm{\nabla\Phi_{\ti}}_{C_T\mcC^{\alpha+2}}.
		\end{equation*}
		Similarly using that $\alpha+2<0$,
		\begin{equation*}
			\norm{\nabla\Phi_{\ti}\pa u}_{C_{2\eta;T}\mcC^{\alpha+\beta+2}}\lesssim T^{\eta}\norm{\nabla\Phi_{\ti}}_{C_T\mcC^{\alpha+2}}\norm{u}_{C_{\eta;T}\mcC^{\beta}}.
		\end{equation*}
		Using that $\alpha+1<0$,
		\begin{equation*}
			\norm{\ti\pa\nabla\Phi_u}_{C_{2\eta;T}\mcC^{\alpha+\beta+2}}\lesssim T^{\eta}\norm{\ti}_{C_{T}\mcC^{\alpha+1}}\norm{\nabla\Phi_{u}}_{C_{\eta;T}\mcC^{\beta+1}}.
		\end{equation*}
	\end{details}
	By~\eqref{eq:renormalised_products_estimates_bound_3} of Lemma~\ref{lem:renormalised_products_estimates}, using that $\alpha+\beta+2\leq2\alpha+4$,
	\begin{equation*}
		\norm{\msP(\boldsymbol{u},\mbX)}_{C_{2\eta;T}\mcC^{\alpha+\beta+2}}\lesssim(\norm{u'}_{\msL_{\eta;T}^{\kappa}\mcC^{\beta'}}+\norm{u^{\#}}_{C_{2\eta;T}\mcC^{\beta^{\#}}})(1+\norm{\ti}_{C_T\mcC^{\alpha+1}}+\norm{\tc}_{C_T\mcC^{2\alpha+4}})^{2}.
	\end{equation*}
	\emph{Step $4$. The $\msL_{\eta;T}^{\kappa}\mcC^{\beta'}(\mbT^{2};\mbR^{2})$-regularity of $w'$.} By definition, $w'=\nabla\Phi_u+\nabla\Phi_{\ty}$. Using that $\beta'\leq\beta+1$ and $\beta'\leq2\alpha+5$, we obtain
	\begin{equation*}
		\norm{w'}_{\msL_{\eta;T}^{\kappa}\mcC^{\beta'}}\lesssim\norm{u}_{\msL_{\eta;T}^{\kappa}\mcC^{\beta}}+T^{\eta}\norm{\ty}_{\msL_{T}^{\kappa}\mcC^{2\alpha+4}}.
	\end{equation*}
	\emph{Step $5$. Closing the bounds.} Using that $T\leq1$, we can collect all of the terms above and cast them in the form~\eqref{eq:Psi_bounds_w_w_sharp}--\eqref{eq:Psi_bounds_w_prime}. This yields the claim.
\end{proof}
\begin{details}
	In order to construct a solution, we apply Banach's fixed-point theorem.
	\begin{theorem}[Banach's fixed-point theorem]
		Let $(S,d)$ be a non-empty, complete metric space. Let $\Psi\from S\to S$ be a contraction, that is there exists some $c\in(0,1)$ such that for any $x,y\in S$,
		\begin{align*}
			d(\Psi(x),\Psi(y))\leq cd(x,y).
		\end{align*}
		Then there exists a unique fixed point of $\Psi$ in $S$.
	\end{theorem}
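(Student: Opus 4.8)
The plan is to run the classical Picard iteration. First I would pick an arbitrary point $x_0\in S$, which is possible since $S$ is non-empty, and define the sequence $(x_n)_{n\geq 0}$ by $x_{n+1}\coloneqq\Psi(x_n)$. A one-line induction using the contraction hypothesis gives $d(x_{n+1},x_n)\leq c^n d(x_1,x_0)$ for every $n$, so that for $m>n$ the triangle inequality and the geometric series yield
\begin{equation*}
	d(x_m,x_n)\leq\sum_{k=n}^{m-1}d(x_{k+1},x_k)\leq d(x_1,x_0)\sum_{k=n}^{\infty}c^{k}=\frac{c^{n}}{1-c}\,d(x_1,x_0).
\end{equation*}
Since $c\in(0,1)$ the right-hand side tends to $0$ as $n\to\infty$, so $(x_n)_{n\geq 0}$ is Cauchy in $(S,d)$.

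Next I would invoke completeness of $(S,d)$ to obtain a limit $x_{\infty}\in S$. To check that $x_{\infty}$ is a fixed point, I would use that $\Psi$ is $c$-Lipschitz, hence continuous, and pass to the limit in $x_{n+1}=\Psi(x_n)$; equivalently one estimates $d(\Psi(x_{\infty}),x_{\infty})\leq d(\Psi(x_{\infty}),x_{n+1})+d(x_{n+1},x_{\infty})\leq c\,d(x_{\infty},x_n)+d(x_{n+1},x_{\infty})\to 0$. Finally, for uniqueness, if $x,y$ are both fixed points then $d(x,y)=d(\Psi(x),\Psi(y))\leq c\,d(x,y)$, and $c<1$ forces $d(x,y)=0$, i.e.\ $x=y$.

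There is no genuine obstacle in this argument: it is entirely standard, and the only places where the full strength of the hypotheses is used are completeness, to produce the limit of the Cauchy sequence, and the strict inequality $c<1$, which is needed both for summability of the geometric series in the Cauchy estimate and for the uniqueness step. In the application of this theorem in the present paper, the role of $S$ is played by a suitable closed ball inside $\msD_{T}$ and the contraction constant is made less than $1$ by choosing $T$ sufficiently small, using the factors $T^{\theta}$ appearing in Lemma~\ref{lem:bounds_apriori} and the analogous Lipschitz-type bounds for $\boldsymbol{\Psi}$.
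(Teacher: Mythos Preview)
Your proof is correct and is the standard Picard iteration argument; the paper itself states this theorem without proof (it is only recalled as a tool), so there is nothing to compare against. One minor remark on your closing sentence about the application: in the paper the fixed-point theorem is actually applied to the second iterate $\boldsymbol{\Psi}^{\circ2}$ on a ball in $\msD_{\bar T}$, not to $\boldsymbol{\Psi}$ directly, since~\eqref{eq:Psi_bounds_w_prime} carries no small factor of $\bar T^{\theta}$ and $\boldsymbol{\Psi}$ alone is not a contraction.
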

\end{details}
While Lemma~\ref{lem:bounds_apriori} shows that $\boldsymbol{\Psi}$ is a map from $\msD_T$ to itself, it is not a contraction for small $T$, since there is no small time parameter on the right-hand side of~\eqref{eq:Psi_bounds_w_prime}. The remedy is to apply $\boldsymbol{\Psi}$ twice and argue that a fixed point of $\boldsymbol{\Psi}^{\circ 2}$ is also a fixed point of $\boldsymbol{\Psi}$ itself.
\begin{proposition}\label{prop:w_local_existence}
	Let $\mbX\in\rksnoise{\alpha}{\kappa}_{T}$ and $\rho_0\in\mcB_{p,q}^{\beta_{0}}(\mbT^{2})$. Then there exists some $\bar{T}\in (0,T\wedge1]$ such that there is a unique solution $\boldsymbol{w}=(w,w',w^{\#})\in\msD_{\bar{T}}$ to the equation
	\begin{equation}\label{eq:paracontrolled_equation}
		\begin{cases}
			\begin{aligned}
				w&=\vdiv\mcI[w'\pa \ti]+w^{\#},\quad w'=\nabla\Phi_{w}+\nabla\Phi_{\ty},\\
				w^{\#}&=P\rho_0+\vdiv\mcI[\Omega^{\#}(\boldsymbol{w})].
			\end{aligned}
		\end{cases}
	\end{equation}
	The time of existence $\bar{T}$ depends only on $\norm{\mbX}_{\rksnoise{\alpha}{\kappa}_{T}}$, $\norm{\rho_{0}}_{\mcB_{p,q}^{\beta_{0}}}$, the chosen exponents and the dimension.
\end{proposition}
\begin{proof}
	We let $\bar{T}\in(0,T]$, $\bar{T}\leq1$, $\mbX \in \rksnoise{\alpha}{\kappa}_{T}$, $\boldsymbol{u}\in\msD_{\bar{T}}$ and define
	\begin{equation*}
		(\Psi(\boldsymbol{u}),\Psi(\boldsymbol{u})',\Psi(\boldsymbol{u})^{\#})\defeq\boldsymbol{\Psi}(\boldsymbol{u}).
	\end{equation*}
	By Lemma~\ref{lem:bounds_apriori} there exists some $\theta>0$ such that
	\begin{equation}\label{eq:Psi_bound_1}
		\max\{\norm{\Psi(\boldsymbol{u})}_{\msL_{\eta;\bar{T}}^{\kappa}\mcC^{\beta}},\norm{\Psi(\boldsymbol{u})^{\#}}_{\msL_{\eta;\bar{T}}^{\kappa}\mcC^{\beta}},\norm{\Psi(\boldsymbol{u})^{\#}}_{\msL_{2\eta;\bar{T}}^{\kappa}\mcC^{\beta^{\#}}}\}\lesssim(1+\bar{T}^{\theta}\norm{\boldsymbol{u}}_{\msD_{\bar{T}}})^{2}(1+\norm{\mbX}_{\rksnoise{\alpha}{\kappa}_{\bar{T}}}+\norm{\rho_0}_{\mcB_{p,q}^{\beta_{0}}})^{2},
	\end{equation}
	and
	\begin{equation}\label{eq:Psi_bound_2}
		\norm{\Psi(\boldsymbol{u})'}_{\msL_{\eta;\bar{T}}^{\kappa}\mcC^{\beta'}}\lesssim\norm{u}_{\msL_{\eta;\bar{T}}^{\kappa}\mcC^{\beta}}+\norm{\mbX}_{\rksnoise{\alpha}{\kappa}_{\bar{T}}}.
	\end{equation}
	Now denote
	\begin{equation*}
		(\Psi^{\circ2}(\boldsymbol{u}),\Psi^{\circ2}(\boldsymbol{u})',\Psi^{\circ2}(\boldsymbol{u})^{\#})\defeq\boldsymbol{\Psi}^{\circ2}(\boldsymbol{u}).
	\end{equation*}
	By iterating the bounds~\eqref{eq:Psi_bound_1}--\eqref{eq:Psi_bound_2}, using $\bar{T}\leq1$ to streamline exponents, we obtain
	\begin{equation}\label{eq:Psi_2_Bound}
		\norm{\boldsymbol{\Psi}^{\circ2}(\boldsymbol{u})}_{\msD_{\bar{T}}}\lesssim(1+\bar{T}^{\theta/2}\norm{\boldsymbol{u}}_{\msD_{\bar{T}}})^{4}(1+\norm{\mbX}_{\rksnoise{\alpha}{\kappa}_{\bar{T}}}+\norm{\rho_0}_{\mcB_{p,q}^{\beta_0}})^{6}.
	\end{equation}
	\begin{details}
		We obtain by~\eqref{eq:Psi_bound_1}--\eqref{eq:Psi_bound_2},
		\begin{equation}\label{eq:Psi_1_Bound}
			\norm{\boldsymbol{\Psi}(\boldsymbol{u})}_{\msD_{\bar{T}}}\lesssim(1+\norm{\boldsymbol{u}}_{\msD_{\bar{T}}})^{2}(1+\norm{\mbX}_{\rksnoise{\alpha}{\kappa}_{\bar{T}}}+\norm{\rho_0}_{\mcB_{p,q}^{\beta_{0}}})^{2}.
		\end{equation}
		By plugging~\eqref{eq:Psi_1_Bound} into~\eqref{eq:Psi_bound_1},
		\begin{equation*}
			\begin{split}
				&\max\{\norm{\Psi^{\circ2}(\boldsymbol{u})}_{\msL_{\eta;\bar{T}}^{\kappa}\mcC^{\beta}},\norm{\Psi^{\circ2}(\boldsymbol{u})^{\#}}_{\msL_{\eta;\bar{T}}^{\kappa}\mcC^{\beta}},\norm{\Psi^{\circ2}(\boldsymbol{u})^{\#}}_{\msL_{2\eta;\bar{T}}^{\kappa}\mcC^{\beta^{\#}}}\}\\
				&\lesssim(1+\bar{T}^{\theta}\norm{\boldsymbol{\Psi}(\boldsymbol{u})}_{\msD_{\bar{T}}})^{2}(1+\norm{\mbX}_{\rksnoise{\alpha}{\kappa}_{\bar{T}}}+\norm{\rho_0}_{\mcB_{p,q}^{\beta_{0}}})^{2}\\
				&\lesssim(1+\bar{T}^{\theta}(1+\norm{\boldsymbol{u}}_{\msD_{\bar{T}}})^{2}(1+\norm{\mbX}_{\rksnoise{\alpha}{\kappa}_{\bar{T}}}+\norm{\rho_{0}}_{\mcB_{p,q}^{\beta_{0}}})^{2})^{2}(1+\norm{\mbX}_{\rksnoise{\alpha}{\kappa}_{\bar{T}}}+\norm{\rho_0}_{\mcB_{p,q}^{\beta_{0}}})^{2}\\
				&\lesssim(1+\bar{T}^{\theta}(1+\norm{\boldsymbol{u}}_{\msD_{\bar{T}}})^{2})^{2}(1+\norm{\mbX}_{\rksnoise{\alpha}{\kappa}_{\bar{T}}}+\norm{\rho_0}_{\mcB_{p,q}^{\beta_{0}}})^{6}\\
				&\lesssim(1+\bar{T}^{\theta/2}\norm{\boldsymbol{u}}_{\msD_{\bar{T}}})^{4}(1+\norm{\mbX}_{\rksnoise{\alpha}{\kappa}_{\bar{T}}}+\norm{\rho_0}_{\mcB_{p,q}^{\beta_{0}}})^{6}.
			\end{split}
		\end{equation*}
		By plugging~\eqref{eq:Psi_bound_1} into~\eqref{eq:Psi_bound_2} and using that $\bar{T}^{\theta}\leq\bar{T}^{\theta/2}$,
		\begin{equation*}
			\begin{split}
				\norm{\Psi^{\circ2}(\boldsymbol{u})'}_{\msL_{\eta;\bar{T}}^{\kappa}\mcC^{\beta'}}&\lesssim\norm{\Psi(\boldsymbol{u})}_{\msL_{\eta;\bar{T}}^{\kappa}\mcC^{\beta}}+\norm{\mbX}_{\rksnoise{\alpha}{\kappa}_{\bar{T}}}\\
				&\lesssim(1+\bar{T}^{\theta}\norm{\boldsymbol{u}}_{\msD_{\bar{T}}})^{2}(1+\norm{\mbX}_{\rksnoise{\alpha}{\kappa}_{\bar{T}}}+\norm{\rho_0}_{\mcB_{p,q}^{\beta_{0}}})^{2}+\norm{\mbX}_{\rksnoise{\alpha}{\kappa}_{\bar{T}}}\\
				&\lesssim(1+\bar{T}^{\theta/2}\norm{\boldsymbol{u}}_{\msD_{\bar{T}}})^{4}(1+\norm{\mbX}_{\rksnoise{\alpha}{\kappa}_{\bar{T}}}+\norm{\rho_0}_{\mcB_{p,q}^{\beta_0}})^{6}.
			\end{split}
		\end{equation*}
		Together, this yields~\eqref{eq:Psi_2_Bound}.
	\end{details}
	
	Let $C>0$ be larger than the implicit constants of the inequalities \eqref{eq:Psi_bound_1} and \eqref{eq:Psi_2_Bound} above. Assume that $M,R>0$ are sufficiently large such that
	\begin{equation*}
		(1+\norm{\mbX}_{\rksnoise{\alpha}{\kappa}_{T}}+\norm{\rho_0}_{\mcB_{p,q}^{\beta_{0}}})<M,\qquad 2CM^{6}<R.
	\end{equation*}
	Assume further that $\norm{\boldsymbol{u}}_{\msD_{\bar{T}}}<R$. Using the bound \eqref{eq:Psi_2_Bound}, we can choose $\bar{T}=\bar{T}(R,\theta)\leq1$ smaller, if necessary, such that
	\begin{equation*}
		\norm{\boldsymbol{\Psi}^{\circ2}(\boldsymbol{u})}_{\msD_{\bar{T}}}\leq C(1+\bar{T}^{\theta/2}R)^{4}M^{6}\leq 2CM^{6}<R.
	\end{equation*}
	Consequently, $\boldsymbol{\Psi}^{\circ2}$ is a self-mapping on the ball
	\begin{equation*}
		\mfB_{R;\bar{T}}\defeq\{\boldsymbol{u}\in\msD_{\bar{T}}:\norm{\boldsymbol{u}}_{\msD_{\bar{T}}}<R\}.
	\end{equation*}
	Upon choosing $R>0$ sufficiently large, we can ensure that $\mfB_{R;\bar{T}}\subset\msD_{\bar{T}}$ is non-empty.
	\begin{details}
		Let $w'\defeq\nabla\Phi_{P\rho_{0}}$, $w^{\#}\defeq P\rho_{0}$ and $w\defeq\vdiv\mcI[w'\pa\ti]+w^{\#}$ as in Lemma~\ref{lem:parspace_non_trivial}. We can then choose $R>\norm{\boldsymbol{w}}_{\msD_{1}}$. It follows that $\norm{\boldsymbol{w}}_{\msD_{\bar{T}}}\leq\norm{\boldsymbol{w}}_{\msD_{1}}<R$ and $\boldsymbol{w}\in\mfB_{R;\bar{T}}$.
	\end{details}
	
	To achieve contractivity, we use the bilinearity of the equation. Let $\boldsymbol{v}=(v,v',v^{\#}),\boldsymbol{w}=(w,w',w^{\#})\in\msD_{\bar{T}}$ and denote
	\begin{equation*}
		(\Psi(\boldsymbol{v}),\Psi(\boldsymbol{v})',\Psi(\boldsymbol{v})^{\#})\defeq\boldsymbol{\Psi}(\boldsymbol{v}),\qquad(\Psi(\boldsymbol{w}),\Psi(\boldsymbol{w})',\Psi(\boldsymbol{w})^{\#})\defeq\boldsymbol{\Psi}(\boldsymbol{w}).
	\end{equation*}
	We see that
	\begin{align*}
		\Psi(\boldsymbol{v})-\Psi(\boldsymbol{w})&=\vdiv\mcI[(\Psi(\boldsymbol{v})'-\Psi(\boldsymbol{w})')\pa\ti]+\Psi(\boldsymbol{v})^{\#}-\Psi(\boldsymbol{w})^{\#},\\
		\Psi(\boldsymbol{v})'-\Psi(\boldsymbol{w})'&=\nabla\Phi_{v-w},\\
		\Psi(\boldsymbol{v})^{\#}-\Psi(\boldsymbol{w})^{\#}&=\vdiv\mcI[\Omega^{\#}(\boldsymbol{v})-\Omega^{\#}(\boldsymbol{w})],
	\end{align*}
	where
	\begin{equation*}
		\begin{split}
			\Omega^{\#}(\boldsymbol{v})-\Omega^{\#}(\boldsymbol{w})&=v\nabla\Phi_{v-w}+(v-w)\nabla\Phi_{w}+(v-w)\nabla\Phi_{\ty}+\ty\nabla\Phi_{v-w}+(v-w)\pa\nabla\Phi_{\ti}\\
			&\quad+\nabla\Phi_{\ti}\pa(v-w)+\ti\pa\nabla\Phi_{v-w}+\msP(\boldsymbol{v},\mbX)-\msP(\boldsymbol{w},\mbX).
		\end{split}
	\end{equation*}
	The difference of the renormalised products is given by
	\begin{equation*}
		\begin{split}
			\msP(\boldsymbol{v},\mbX)-\msP(\boldsymbol{w},\mbX)&=\msC(v'-w',\nabla\mcI[\ti],\nabla\Phi_{\ti})+\msC(v'-w',\nabla^{2}\mcI[\Phi_{\ti}],\ti)\\
			&\quad+(v^{\#}-w^{\#}+\vdiv\mcI[(v'-w')\pa\ti]-(v'-w')\pa\nabla\mcI[\ti])\re\nabla\Phi_{\ti}\\
			&\quad+(\nabla\Phi_{v^{\#}-w^{\#}}+\nabla\vdiv\Phi_{\mcI[(v'-w')\pa\ti]}-(v'-w')\pa\nabla^{2}\mcI[\Phi_{\ti}])\re\ti+(v'-w')\tc.
		\end{split}
	\end{equation*}
	Using the same bounds as before, we obtain, for some $\theta>0$,
	\begin{equation*}
		\begin{split}
			&\norm{\Psi(\boldsymbol{v})-\Psi(\boldsymbol{w})}_{\msL_{\eta;\bar{T}}^{\kappa}\mcC^{\beta}}\lesssim \bar{T}^{\theta}\norm{\Psi(\boldsymbol{v})'-\Psi(\boldsymbol{w})'}_{C_{\eta;\bar{T}}\mcC^{\beta'}}\norm{\ti}_{C_{\bar{T}}\mcC^{\alpha+1}}+\norm{\Psi(\boldsymbol{v})^{\#}-\Psi(\boldsymbol{w})^{\#}}_{\msL_{\eta;\bar{T}}^{\kappa}\mcC^{\beta}},
		\end{split}
	\end{equation*}
	and
	\begin{equation*}
		\max\{\norm{\Psi(\boldsymbol{v})^{\#}-\Psi(\boldsymbol{w})^{\#}}_{\msL_{\eta;\bar{T}}^{\kappa}\mcC^{\beta}},\norm{\Psi(\boldsymbol{v})^{\#}-\Psi(\boldsymbol{w})^{\#}}_{\msL_{2\eta;\bar{T}}^{\kappa}\mcC^{\beta^{\#}}}\}\lesssim \bar{T}^{\theta}\norm{\Omega^{\#}(\boldsymbol{v})-\Omega^{\#}(\boldsymbol{w})}_{C_{2\eta;\bar{T}}\mcC^{\alpha+\beta+2}},
	\end{equation*}
	as well as
	\begin{equation*}
		\norm{\Psi(\boldsymbol{v})'-\Psi(\boldsymbol{w})'}_{\msL_{\eta;\bar{T}}^{\kappa}\mcC^{\beta'}}\lesssim\norm{v-w}_{\msL_{\eta;\bar{T}}^{\kappa}\mcC^{\beta}}.
	\end{equation*}
	For the right-hand side,
	\begin{equation*}
		\begin{split}
			&\norm{\Omega^{\#}(\boldsymbol{v})-\Omega^{\#}(\boldsymbol{w})}_{C_{2\eta;\bar{T}}\mcC^{\alpha+\beta+2}}\\
			&\lesssim\norm{v}_{C_{\eta;\bar{T}}\mcC^{\beta}}\norm{v-w }_{C_{\eta;\bar{T}}\mcC^{\beta}}+\norm{v-w}_{C_{\eta;\bar{T}}\mcC^{\beta}}\norm{w}_{C_{\eta;\bar{T}}\mcC^{\beta}}+\norm{v-w}_{C_{\eta;\bar{T}}\mcC^{\beta}}\norm{\ty}_{C_{\bar{T}}\mcC^{2\alpha+4}}\\
			&\quad+\norm{v-w}_{C_{\eta;\bar{T}}\mcC^{\beta}}\norm{\ti}_{C_{\bar{T}}\mcC^{\alpha+1}}+\norm{\msP(\boldsymbol{v},\mbX)-\msP(\boldsymbol{w},\mbX)}_{C_{2\eta;\bar{T}}\mcC^{2\alpha+4}}.
		\end{split}
	\end{equation*}
	By the same arguments as in the proof of Lemma~\ref{lem:renormalised_products_estimates} we have
	\begin{equation*}
		\begin{split}
			&\norm{\msP(\boldsymbol{v},\mbX)-\msP(\boldsymbol{w},\mbX)}_{C_{2\eta;\bar{T}}\mcC^{2\alpha+4}}\\
			&\lesssim(\norm{v'-w'}_{\msL_{\eta;\bar{T}}^{\kappa}\mcC^{\beta'}}+\norm{v^{\#}-w^{\#}}_{C_{2\eta;\bar{T}}\mcC^{\beta^{\#}}})(1+\norm{\ti}_{C_{\bar{T}}\mcC^{\alpha+1}}+\norm{\tc}_{C_{\bar{T}}\mcC^{2\alpha+4}})^{2}.
		\end{split}
	\end{equation*}
	Combining the bounds above, we obtain
	\begin{align}
		&\max\{\norm{\Psi(\boldsymbol{v})-\Psi(\boldsymbol{w})}_{\msL_{\eta;\bar{T}}^{\kappa}\mcC^{\beta}},\norm{\Psi(\boldsymbol{v})^{\#}-\Psi(\boldsymbol{w})^{\#}}_{\msL_{\eta;\bar{T}}^{\kappa}\mcC^{\beta}},\norm{\Psi(\boldsymbol{v})^{\#}-\Psi(\boldsymbol{w})^{\#}}_{\msL_{2\eta;\bar{T}}^{\kappa}\mcC^{\beta^{\#}}}\}\notag\\
		&\qquad\lesssim\bar{T}^{\theta}\norm{\boldsymbol{v}-\boldsymbol{w}}_{\msD_{\bar{T}}}(1+\norm{\mbX}_{\rksnoise{\alpha}{\kappa}_{\bar{T}}}+\norm{v}_{\msL_{\eta;\bar{T}}^{\kappa}\mcC^{\beta}}+\norm{w}_{\msL_{\eta;\bar{T}}^{\kappa}\mcC^{\beta}})^{2},\label{eq:psi_bounds_diff_1}\\
		&\norm{\Psi(\boldsymbol{v})'-\Psi(\boldsymbol{w})'}_{\msL_{\eta;\bar{T}}^{\kappa}\mcC^{\beta'}}\lesssim\norm{v-w}_{\msL_{\eta;\bar{T}}^{\kappa}\mcC^{\beta}}.\label{eq:psi_bounds_diff_2}
	\end{align}
	
	Next we consider
	\begin{equation*}
		(\Psi^{\circ2}(\boldsymbol{v}),\Psi^{\circ2}(\boldsymbol{v})',\Psi^{\circ2}(\boldsymbol{v})^{\#})\defeq\boldsymbol{\Psi}^{\circ2}(\boldsymbol{v}),\qquad(\Psi^{\circ2}(\boldsymbol{w}),\Psi^{\circ2}(\boldsymbol{w})',\Psi^{\circ2}(\boldsymbol{w})^{\#})\defeq\boldsymbol{\Psi}^{\circ2}(\boldsymbol{w}).
	\end{equation*}
	Iterating~\eqref{eq:psi_bounds_diff_1}--\eqref{eq:psi_bounds_diff_2}, we arrive at
	\begin{equation*}
		\begin{split}
			&\norm{\boldsymbol{\Psi}^{\circ2}(\boldsymbol{v})-\boldsymbol{\Psi}^{\circ2}(\boldsymbol{w})}_{\msD_{\bar{T}}}\\
			&\lesssim \bar{T}^{\theta}\norm{\boldsymbol{v}-\boldsymbol{w}}_{\msD_{\bar{T}}}(1+\norm{\mbX}_{\rksnoise{\alpha}{\kappa}_{\bar{T}}}+\norm{v}_{\msL_{\eta;\bar{T}}^{\kappa}\mcC^{\beta}}+\norm{w}_{\msL_{\eta;\bar{T}}^{\kappa}\mcC^{\beta}}+\norm{\Psi(\boldsymbol{v})}_{\msL_{\eta;\bar{T}}^{\kappa}\mcC^{\beta}}+\norm{\Psi(\boldsymbol{w})}_{\msL_{\eta;\bar{T}}^{\kappa}\mcC^{\beta}})^{4}.
		\end{split}
	\end{equation*}
	\begin{details}
		We obtain by~\eqref{eq:psi_bounds_diff_1}--\eqref{eq:psi_bounds_diff_2},
		\begin{equation}\label{eq:Psi_1_Bound_diff}
			\norm{\boldsymbol{\Psi}(\boldsymbol{v})-\boldsymbol{\Psi}(\boldsymbol{w})}_{\msD_{\bar{T}}}\lesssim\norm{\boldsymbol{v}-\boldsymbol{w}}_{\msD_{\bar{T}}}(1+\norm{\mbX}_{\rksnoise{\alpha}{\kappa}_{\bar{T}}}+\norm{v}_{\msL_{\eta;\bar{T}}^{\kappa}\mcC^{\beta}}+\norm{w}_{\msL_{\eta;\bar{T}}^{\kappa}\mcC^{\beta}})^{2}.
		\end{equation} 
		By plugging~\eqref{eq:Psi_1_Bound_diff} into~\eqref{eq:psi_bounds_diff_1},
		\begin{equation*}
			\begin{split}
				&\max\{\norm{\Psi^{\circ2}(\boldsymbol{v})-\Psi^{\circ2}(\boldsymbol{w})}_{\msL_{\eta;\bar{T}}^{\kappa}\mcC^{\beta}},\norm{\Psi^{\circ2}(\boldsymbol{v})^{\#}-\Psi^{\circ2}(\boldsymbol{w})^{\#}}_{\msL_{\eta;\bar{T}}^{\kappa}\mcC^{\beta}},\norm{\Psi^{\circ2}(\boldsymbol{v})^{\#}-\Psi^{\circ2}(\boldsymbol{w})^{\#}}_{\msL_{2\eta;\bar{T}}^{\kappa}\mcC^{\beta^{\#}}}\}\\
				&\lesssim\bar{T}^{\theta}\norm{\boldsymbol{\Psi}(\boldsymbol{v})-\boldsymbol{\Psi}(\boldsymbol{w})}_{\msD_{\bar{T}}}(1+\norm{\mbX}_{\rksnoise{\alpha}{\kappa}_{\bar{T}}}+\norm{\Psi(\boldsymbol{v})}_{\msL_{\eta;\bar{T}}^{\kappa}\mcC^{\beta}}+\norm{\Psi(\boldsymbol{w})}_{\msL_{\eta;\bar{T}}^{\kappa}\mcC^{\beta}})^{2}\\
				&\lesssim\bar{T}^{\theta}\norm{\boldsymbol{v}-\boldsymbol{w}}_{\msD_{\bar{T}}}(1+\norm{\mbX}_{\rksnoise{\alpha}{\kappa}_{\bar{T}}}+\norm{v}_{\msL_{\eta;\bar{T}}^{\kappa}\mcC^{\beta}}+\norm{w}_{\msL_{\eta;\bar{T}}^{\kappa}\mcC^{\beta}}+\norm{\Psi(\boldsymbol{v})}_{\msL_{\eta;\bar{T}}^{\kappa}\mcC^{\beta}}+\norm{\Psi(\boldsymbol{w})}_{\msL_{\eta;\bar{T}}^{\kappa}\mcC^{\beta}})^{4}.
			\end{split}
		\end{equation*}
		By plugging~\eqref{eq:psi_bounds_diff_1} into~\eqref{eq:psi_bounds_diff_2},
		\begin{equation*}
			\begin{split}
				&\norm{\Psi^{\circ2}(\boldsymbol{v})'-\Psi^{\circ2}(\boldsymbol{w})'}_{\msL_{\eta;\bar{T}}^{\kappa}\mcC^{\beta'}}\\
				&\lesssim\norm{\Psi(\boldsymbol{v})-\Psi(\boldsymbol{w})}_{\msL_{\eta;\bar{T}}^{\kappa}\mcC^{\beta}}\\
				&\lesssim\bar{T}^{\theta}\norm{\boldsymbol{v}-\boldsymbol{w}}_{\msD_{\bar{T}}}(1+\norm{\mbX}_{\rksnoise{\alpha}{\kappa}_{\bar{T}}}+\norm{v}_{\msL_{\eta;\bar{T}}^{\kappa}\mcC^{\beta}}+\norm{w}_{\msL_{\eta;\bar{T}}^{\kappa}\mcC^{\beta}})^{2}\\
				&\lesssim\bar{T}^{\theta}\norm{\boldsymbol{v}-\boldsymbol{w}}_{\msD_{\bar{T}}}(1+\norm{\mbX}_{\rksnoise{\alpha}{\kappa}_{\bar{T}}}+\norm{v}_{\msL_{\eta;\bar{T}}^{\kappa}\mcC^{\beta}}+\norm{w}_{\msL_{\eta;\bar{T}}^{\kappa}\mcC^{\beta}}+\norm{\Psi(\boldsymbol{v})}_{\msL_{\eta;\bar{T}}^{\kappa}\mcC^{\beta}}+\norm{\Psi(\boldsymbol{w})}_{\msL_{\eta;\bar{T}}^{\kappa}\mcC^{\beta}})^{4}.
			\end{split}
		\end{equation*}
		This yields the claimed estimate.
		
	\end{details}
	Assume $\boldsymbol{v},\boldsymbol{w}\in \mfB_{R;\bar{T}}$, it follows by~\eqref{eq:Psi_bound_1}, the definition of $\bar{T}$ and $\mfB_{R;\bar{T}}$, that
	\begin{equation*}
		\norm{v}_{\msL_{\eta;\bar{T}}^{\kappa}\mcC^{\beta}}<R,\qquad\norm{w}_{\msL_{\eta;\bar{T}}^{\kappa}\mcC^{\beta}}<R,\qquad\norm{\Psi(\boldsymbol{v})}_{\msL_{\eta;\bar{T}}^{\kappa}\mcC^{\beta}}<R,\qquad\norm{\Psi(\boldsymbol{w})}_{\msL_{\eta;\bar{T}}^{\kappa}\mcC^{\beta}}<R.
	\end{equation*}
	\begin{details}
		We estimate with~\eqref{eq:Psi_bound_1},
		\begin{equation*}
			\begin{split}
				\norm{\Psi(\boldsymbol{v})}_{\msL_{\eta;\bar{T}}^{\kappa}\mcC^{\beta}}&\leq C(1+\bar{T}^{\theta}\norm{\boldsymbol{v}}_{\msD_{\bar{T}}})^{2}(1+\norm{\mbX}_{\rksnoise{\alpha}{\kappa}_{\bar{T}}}+\norm{\rho_0}_{\mcB_{p,q}^{\beta_{0}}})^{2}\\
				&<C(1+\bar{T}^{\theta}R)^{2}(1+\norm{\mbX}_{\rksnoise{\alpha}{\kappa}_{\bar{T}}}+\norm{\rho_0}_{\mcB_{p,q}^{\beta_{0}}})^{2}\\
				&\leq C(1+\bar{T}^{\theta/2}R)^{4}M^{6}\\
				&\leq2CM^{6}<R.
			\end{split}
		\end{equation*}
	\end{details}
	Choosing $\bar{T}$ still smaller, if necessary, we can arrange that for some $c<1$,
	\begin{equation*}
		\norm{\boldsymbol{\Psi}^{\circ2}(\boldsymbol{v})-\boldsymbol{\Psi}^{\circ2}(\boldsymbol{w})}_{\msD_{\bar{T}}}<c\norm{\boldsymbol{v}-\boldsymbol{w}}_{\msD_{\bar{T}}},
	\end{equation*}
	showing that $\boldsymbol{\Psi}^{\circ2}$ is a contraction on $\mfB_{R;\bar{T}}$.
	
	By Banach's fixed-point theorem there exists a unique fixed point for $\boldsymbol{\Psi}^{\circ2}$ in $\mfB_{R;\bar{T}}$. It suffices to argue that a fixed point to $\boldsymbol{\Psi}^{\circ2}$ is also a fixed point to $\boldsymbol{\Psi}$. The following is due to~\cite[Thm.~5.15]{perkowski_20}. Denote for the sake of notation, $\boldsymbol{w}=\boldsymbol{\Psi}^{\circ2}(\boldsymbol{w})$ and $\boldsymbol{v}\defeq\boldsymbol{\Psi}(\boldsymbol{w})$. We have $\boldsymbol{\Psi}^{\circ2}(\boldsymbol{v})=\boldsymbol{\Psi}(\boldsymbol{w})=\boldsymbol{v}$, hence $\boldsymbol{v}$ is itself a fixed point to $\boldsymbol{\Psi}^{\circ2}$, yielding by uniqueness that $\boldsymbol{v}=\boldsymbol{w}$. One can furthermore show that this fixed point is in fact unique in all of $\msD_{\bar{T}}$; it suffices to compare two putative solutions in $\msD_{\bar{T}}$ and similar estimates to those above show that they must be equal on a small time interval. Continuity then gives equality in all of $[0,\bar{T}]$.
\end{proof}
The utility of Proposition~\ref{prop:w_local_existence} is that it allows us to show existence and uniqueness of a suitable notion of solution to~\eqref{eq:gen_rKS_intro} by setting $\rho\defeq\ti+\ty+w$. We call this solution paracontrolled and are now in a position to give a rigorous definition (cf.~\eqref{eq:paracon_sol_intro_I}--\eqref{eq:paracon_sol_intro_II} for a formal motivation.)
\begin{definition}\label{def:paracontrolled_solution}
	Let $\mbX\in\rksnoise{\alpha}{\kappa}_{T}$ and $\rho_{0}\in\mcB_{p,q}^{\beta_{0}}(\mbT^{2})$. For every $S\leq T$, we call $\rho\from[0,S]\to\mcS'(\mbT^{2})$ a paracontrolled solution to~\eqref{eq:gen_rKS_intro} on $[0,S]$ with enhancement $\mbX$ and initial data $\rho_{0}$, if
	\begin{itemize}
		\item the triple $\boldsymbol{w}\defeq(w,w',w^{\#})$ of distributions
		\begin{equation*}
			w\defeq\rho-\ti-\ty,\qquad w'\defeq\nabla\Phi_{w}+\nabla\Phi_{\ty},\qquad w^{\#}\defeq\rho-\ti-\ty-\vdiv\mcI[w'\pa\ti];
		\end{equation*}
		satisfies $\boldsymbol{w}\in\msD_{S}$ (see Definition~\ref{def:paracontrolled_space});
		\item the solution $\rho$ satisfies $\rho=P\rho_{0}+\vdiv\mcI[\rho\nabla\Phi_{\rho}]+\ti$, where the product $\vdiv\mcI[\rho\nabla\Phi_{\rho}]$ needs to be interpreted in the renormalised sense, that is
		\begin{equation*}
			\vdiv\mcI[\rho\nabla\Phi_{\rho}]\defeq\ty+\vdiv\mcI[w'\pa\ti]+\vdiv\mcI[\Omega^{\#}(\boldsymbol{w})]
		\end{equation*}
		with $\Omega^{\#}(\boldsymbol{w})$ defined as in Lemma~\ref{lem:bounds_apriori}.
	\end{itemize}
	In particular it follows by Definition~\ref{def:paracontrolled_space}, that $\lim_{t\to0}\rho_{t}=\rho_{0}$ in $\mcS'(\mbT^{2})$.
	We call $\rho\from[0,S)\to\mcS'(\mbT^{2})$ a paracontrolled solution on $[0,S)$, if $\rho$ is a paracontrolled solution on $[0,S']$ for every $S'<S$.
	%
	%
\end{definition}
Using the uniqueness of the fixed point in Proposition~\ref{prop:w_local_existence}, one can show that paracontrolled solutions are unique.
\begin{lemma}\label{lem:paracontrolled_solution_unique}
	Let $\mbX\in\rksnoise{\alpha}{\kappa}_{T}$, $\rho_{0}\in\mcB_{p,q}^{\beta_{0}}(\mbT^{2})$ and $S\leq T$. Then there exists at most one paracontrolled solution $\rho\from[0,S]\to\mcS'(\mbT^{2})$ to~\eqref{eq:gen_rKS_intro} on $[0,S]$ with enhancement $\mbX$ and initial data $\rho_{0}$.
\end{lemma}
\begin{proof}
	Consider two paracontrolled solutions $\rho$, $\widetilde{\rho}$, which by definition are associated to two triples $\boldsymbol{w}$, $\widetilde{\boldsymbol{w}}$. However, these triples must both satisfy by definition the same fixed-point equation~\eqref{eq:paracontrolled_equation}, which has a unique solution by Proposition~\ref{prop:w_local_existence}. Therefore both paracontrolled solutions must agree.
\end{proof}
In the following lemma, we prove the existence of a paracontrolled solution given an abstract enhancement.
\begin{lemma}\label{lem:construction_paracontrolled}
	Let $\mbX\in\rksnoise{\alpha}{\kappa}_{T}$ and $\rho_0\in\mcB_{p,q}^{\beta_{0}}(\mbT^{2})$. Let $\bar{T}\in (0,T\wedge1]$ be as in Proposition~\ref{prop:w_local_existence} and $\boldsymbol{w}=(w,w',w^{\#})\in\msD_{\bar{T}}$ be the fixed point of~\eqref{eq:paracontrolled_equation} constructed therein. Then $\rho=\ti+\ty+w\in\msL_{\eta;\bar{T}}^{\kappa}\mcC^{\alpha+1}(\mbT^{2})$ is the unique paracontrolled solution to~\eqref{eq:gen_rKS_intro} on $[0,\bar{T}]$ with enhancement $\mbX$ and initial data $\rho_{0}$.
\end{lemma}
\begin{proof}
	Let $\boldsymbol{w}=(w,w',w^{\#})\in\msD_{\bar{T}}$ be the fixed point to~\eqref{eq:paracontrolled_equation} and $\rho\defeq\ti+\ty+w$. It follows by definition,
	\begin{equation*}
		w=\rho-\ti-\ty,\qquad w'=\nabla\Phi_{w}+\nabla\Phi_{\ty},\qquad w^{\#}=w-\vdiv\mcI[w'\pa\ti]=\rho-\ti-\ty-\vdiv\mcI[w'\pa\ti]
	\end{equation*}
	and
	\begin{equation*}
		\rho=P\rho_{0}+\ty+\vdiv\mcI[w'\pa\ti]+\vdiv\mcI[\Omega^{\#}(\boldsymbol{w})]+\ti.
	\end{equation*}
	Hence, $\rho$ is a paracontrolled solution to~\eqref{eq:gen_rKS_intro} on $[0,\bar{T}]$ with enhancement $\mbX$ and initial data $\rho_{0}$, which is unique by Lemma~\ref{lem:paracontrolled_solution_unique}.
\end{proof}
The next lemma shows that paracontrolled solutions are locally Lipschitz continuous in the noise enhancement and the initial data.
\begin{lemma}\label{lem:Local_Lipschitz}
	Let $R>0$, $\mbX=(\ti_{\textsf{X}},{\ty\!}_{\textsf{X}},\tp_{\textsf{X}},\tc_{\textsf{X}}),\mbY=(\ti_{\textsf{Y}},{\ty\!}_{\textsf{Y}},\tp_{\textsf{Y}},\tc_{\textsf{Y}})\in\rksnoise{\alpha}{\kappa}_{T}$ and $\rho_{0}^{\textsf{X}},\rho_{0}^{\textsf{Y}}\in\mcB_{p,q}^{\beta_{0}}(\mbT^{2})$ be such that
	\begin{equation*}
		\max\{\norm{\mbX}_{\rksnoise{\alpha}{\kappa}_{T}},\norm{\mbY}_{\rksnoise{\alpha}{\kappa}_{T}},\norm{\rho_{0}^{\textsf{X}}}_{\mcB_{p,q}^{\beta_{0}}},\norm{\rho_{0}^{\textsf{Y}}}_{\mcB_{p,q}^{\beta_{0}}}\}<R.
	\end{equation*}
	Then there exists some $\bar{T}=\bar{T}(R)\in (0,T]$ with the following properties
	\begin{enumerate}
		\item There exist solutions $\boldsymbol{w}_{\textsf{X}}\defeq(w_{\textsf{X}},w'_{\textsf{X}},w^{\#}_{\textsf{X}})$ and $\boldsymbol{w}_{\textsf{Y}}\defeq(w_{\textsf{Y}},w'_{\textsf{Y}},w^{\#}_{\textsf{Y}})$ to
		\begin{equation*}
			\begin{cases}
				\begin{aligned}
					w_{\textsf{X}}&\defeq\vdiv\mcI[w'_{\textsf{X}}\pa \ti_{\textsf{X}}]+w^{\#}_{\textsf{X}},\quad w'_{\textsf{X}}\defeq\nabla\Phi_{w_{\textsf{X}}}+\nabla\Phi_{{\ty\!}_{\textsf{X}}},\\
					w^{\#}_{\textsf{X}}&\defeq P\rho_{0}^{\textsf{X}}+\vdiv\mcI[\Omega^{\#}_{\textsf{X}}(\boldsymbol{w}_{\textsf{X}})],
				\end{aligned}
			\end{cases}
		\end{equation*}
		and
		\begin{equation*}
			\begin{cases}
				\begin{aligned}
					w_{\textsf{Y}}&\defeq\vdiv\mcI[w'_{\textsf{Y}}\pa \ti_{\textsf{Y}}]+w^{\#}_{\textsf{Y}},\quad w'_{\textsf{Y}}\defeq\nabla\Phi_{w_{\textsf{Y}}}+\nabla\Phi_{{\ty\!}_{\textsf{Y}}},\\
					w^{\#}_{\textsf{Y}}&\defeq P\rho_{0}^{\textsf{Y}}+\vdiv\mcI[\Omega^{\#}_{\textsf{Y}}(\boldsymbol{w}_{\textsf{Y}})],
				\end{aligned}
			\end{cases}
		\end{equation*}
		on $[0,\bar{T}]$ by an application of Proposition~\ref{prop:w_local_existence}. Here, $\Omega^{\#}_{\textsf{X}}(\boldsymbol{w}_{\textsf{X}})$ and $\Omega^{\#}_{\textsf{Y}}(\boldsymbol{w}_{\textsf{Y}})$ are defined as in Lemma~\ref{lem:bounds_apriori} with noises $\mbX$, $\mbY$ respectively.
		\item Setting
		\begin{equation*}
			\rho_{\textsf{X}}\defeq\ti_{\textsf{X}}+{\ty\!}_{\textsf{X}}+w_{\textsf{X}},\qquad\rho_{\textsf{Y}}\defeq\ti_{\textsf{Y}}+{\ty\!}_{\textsf{Y}}+w_{\textsf{Y}},
		\end{equation*}
		one has
		\begin{equation*}
			\begin{split}
				&\max\{\norm{w_{\textsf{X}}-w_{\textsf{Y}}}_{\msL_{\eta;\bar{T}}^{\kappa}\mcC^{\beta}},\norm{w^{\#}_{\textsf{X}}-w^{\#}_{\textsf{Y}}}_{\msL_{\eta;\bar{T}}^{\kappa}\mcC^{\beta}},\norm{w^{\#}_{\textsf{X}}-w^{\#}_{\textsf{Y}}}_{\msL_{2\eta;\bar{T}}^{\kappa}\mcC^{\beta^{\#}}},\norm{\rho_{\textsf{X}}-\rho_{\textsf{Y}}}_{\msL_{\eta;\bar{T}}^{\kappa}\mcC^{\alpha+1}}\}\\
				&\lesssim\norm{\rho^{\textsf{X}}_{0}-\rho^{\textsf{Y}}_{0}}_{\mcB_{p,q}^{\beta_{0}}}+\norm{\mbX-\mbY}_{\rksnoise{\alpha}{\kappa}_{\bar{T}}}(\norm{\boldsymbol{w}_{\textsf{X}}}_{\msD_{\bar{T}}}+\norm{\boldsymbol{w}_{\textsf{Y}}}_{\msD_{\bar{T}}}+\norm{\mbX}_{\rksnoise{\alpha}{\kappa}_{\bar{T}}}+\norm{\mbY}_{\rksnoise{\alpha}{\kappa}_{\bar{T}}}+1)^{2}.
			\end{split}
		\end{equation*}
	\end{enumerate}
\end{lemma}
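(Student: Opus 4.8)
\emph{Strategy.} I would obtain this as a refinement of the fixed‑point argument in Proposition~\ref{prop:w_local_existence} and Lemma~\ref{lem:bounds_apriori}, now bookkeeping the dependence on the enhancement and the initial datum. First I would settle the existence claim (1). The time $\bar T$ produced in Proposition~\ref{prop:w_local_existence} is selected, through Lemma~\ref{lem:bounds_apriori}, only in terms of an upper bound $M$ for $1+\norm{\mbX}_{\rksnoise{\alpha}{\kappa}_{T}}+\norm{\rho_0}_{\mcB_{p,q}^{\beta_0}}$ and a radius $R'$ with $2CM^{6}<R'$. Since by hypothesis $\norm{\mbX}_{\rksnoise{\alpha}{\kappa}_{T}}$, $\norm{\mbY}_{\rksnoise{\alpha}{\kappa}_{T}}$, $\norm{\rho_0^{\textsf X}}_{\mcB_{p,q}^{\beta_0}}$, $\norm{\rho_0^{\textsf Y}}_{\mcB_{p,q}^{\beta_0}}$ are all $<R$, I may fix $M\coloneqq 1+R$ and $R'\coloneqq 2CM^{6}+1$ once and for all; Proposition~\ref{prop:w_local_existence} then furnishes a single $\bar T=\bar T(R)\in(0,T]$, $\bar T\le 1$, and solutions $\boldsymbol{w}_{\textsf X}$, $\boldsymbol{w}_{\textsf Y}$ (paracontrolled by $\mbX$, resp.\ $\mbY$), each lying in the ball $\mfB_{R';\bar T}$ of the corresponding space; in particular $\norm{\boldsymbol{w}_{\textsf X}}_{\msD_{\bar T}},\norm{\boldsymbol{w}_{\textsf Y}}_{\msD_{\bar T}}\le R'=R'(R)$.

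\emph{The difference system.} For part (2), write $\Delta w\coloneqq w_{\textsf X}-w_{\textsf Y}$, and analogously $\Delta w'$, $\Delta w^{\#}$, $\Delta\rho_0\coloneqq\rho_0^{\textsf X}-\rho_0^{\textsf Y}$, and $\Delta\mbX\coloneqq\mbX-\mbY$ with components $\Delta\ti,\Delta\ty,\Delta\tp,\Delta\tc$. Subtracting the two systems of Proposition~\ref{prop:w_local_existence} and exploiting the (multi)linearity of each building block --- the paraproduct $\pa$, the resonant product $\re$, the linear maps $f\mapsto\nabla\Phi_f$ and $\mcI$, and, via Lemma~\ref{lem:renormalized_products_estimates}, the operator $\msP$, which is a finite sum of commutators $\msC$ and para/resonant products --- I get
\begin{equation*}
\Delta w=\vdiv\mcI[\Delta w'\pa\ti_{\textsf X}]+\vdiv\mcI[w'_{\textsf Y}\pa\Delta\ti]+\Delta w^{\#},\qquad \Delta w'=\nabla\Phi_{\Delta w}+\nabla\Phi_{\Delta\ty},
\end{equation*}
\begin{equation*}
\Delta w^{\#}=P\Delta\rho_0+\vdiv\mcI\big[\Omega^{\#}_{\textsf X}(\boldsymbol{w}_{\textsf X})-\Omega^{\#}_{\textsf Y}(\boldsymbol{w}_{\textsf Y})\big],
\end{equation*}
where $\Omega^{\#}_{\textsf X}(\boldsymbol{w}_{\textsf X})-\Omega^{\#}_{\textsf Y}(\boldsymbol{w}_{\textsf Y})$ expands into a finite sum of multilinear terms, each carrying exactly one factor from the list $\{\Delta w,\Delta w',\Delta w^{\#},\Delta\ti,\Delta\ty,\Delta\tp,\Delta\tc,\Delta\rho_0\}$, the remaining factors being among $w_{\textsf X}$, $w_{\textsf Y}$, $\boldsymbol{w}_{\textsf X}$, $\boldsymbol{w}_{\textsf Y}$, $\mbX$, $\mbY$.

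\emph{Estimating and assembling.} I would then estimate each of these terms exactly by the bounds already used in the proofs of Lemma~\ref{lem:renormalized_products_estimates}, Lemma~\ref{lem:bounds_apriori} and Proposition~\ref{prop:w_local_existence} (Bony's estimate Lemma~\ref{lem:Bony}, the Schauder estimate Lemma~\ref{lem:Schauder}, Lemma~\ref{lem:elliptic_regularity}, and the commutator lemmas, each of which puts a positive power $\bar T^{\theta}$ on the terms built purely from $\boldsymbol{w}$'s), using $\bar T\le 1$ and the Part~(1) bounds. As in Proposition~\ref{prop:w_local_existence}, the $\Delta w'$‑relation has no small‑time gain, so one first records $\norm{\Delta w'}_{\msL^{\kappa}_{\eta;\bar T}\mcC^{\beta+1}}\lesssim\norm{\Delta w}_{\msL^{\kappa}_{\eta;\bar T}\mcC^{\beta}}+\norm{\Delta\ty}_{\msL^{\kappa}_{\bar T}\mcC^{2\alpha+4}}$ and feeds it through one more application of the $\Delta w,\Delta w^{\#}$ bounds (equivalently, works with $\boldsymbol\Psi^{\circ2}$). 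This yields a bound of the schematic form
\begin{equation*}
\norm{\Delta\boldsymbol{w}}_{\msD_{\bar T}}\le C\bar T^{\theta}(1+\norm{\mbX}_{\rksnoise{\alpha}{\kappa}_{\bar T}}+\norm{\mbY}_{\rksnoise{\alpha}{\kappa}_{\bar T}}+R')^{2}\,\norm{\Delta\boldsymbol{w}}_{\msD_{\bar T}}+C\Big(\norm{\Delta\rho_0}_{\mcB_{p,q}^{\beta_0}}+\norm{\Delta\mbX}_{\rksnoise{\alpha}{\kappa}_{\bar T}}\,\Pi\Big),
\end{equation*}
with $\Pi\coloneqq(\norm{\boldsymbol{w}_{\textsf X}}_{\msD_{\bar T}}+\norm{\boldsymbol{w}_{\textsf Y}}_{\msD_{\bar T}}+\norm{\mbX}_{\rksnoise{\alpha}{\kappa}_{\bar T}}+\norm{\mbY}_{\rksnoise{\alpha}{\kappa}_{\bar T}}+1)^{2}$; shrinking $\bar T=\bar T(R)$ further makes the first coefficient $<1/2$ and the self‑referential term is absorbed, giving $\norm{\Delta\boldsymbol{w}}_{\msD_{\bar T}}\lesssim\norm{\Delta\rho_0}_{\mcB_{p,q}^{\beta_0}}+\norm{\Delta\mbX}_{\rksnoise{\alpha}{\kappa}_{\bar T}}\,\Pi$. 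Finally, since $\rho_{\textsf X}-\rho_{\textsf Y}=\Delta\ti+\Delta\ty+\Delta w$ and $\alpha+1<-1/2<\beta$ while $\alpha+1<2\alpha+4$ (as $\alpha>-3$), the embeddings $\mcC^{\beta}\hookrightarrow\mcC^{\alpha+1}$, $\mcC^{2\alpha+4}\hookrightarrow\mcC^{\alpha+1}$ at every regularity level of $\msL^{\kappa}$, together with $\norm{\,\cdot\,}_{C_{\eta;\bar T}E}\le\norm{\,\cdot\,}_{C_{\bar T}E}$ and the definition of $\norm{\,\cdot\,}_{\rksnoise{\alpha}{\kappa}_{\bar T}}$, give
\begin{equation*}
\norm{\rho_{\textsf X}-\rho_{\textsf Y}}_{\msL^{\kappa}_{\eta;\bar T}\mcC^{\alpha+1}}\lesssim\norm{\Delta\ti}_{\msL^{\kappa}_{\bar T}\mcC^{\alpha+1}}+\norm{\Delta\ty}_{\msL^{\kappa}_{\bar T}\mcC^{2\alpha+4}}+\norm{\Delta\boldsymbol{w}}_{\msD_{\bar T}}\le\norm{\Delta\mbX}_{\rksnoise{\alpha}{\kappa}_{\bar T}}+\norm{\Delta\boldsymbol{w}}_{\msD_{\bar T}},
\end{equation*}
and inserting the bound for $\norm{\Delta\boldsymbol{w}}_{\msD_{\bar T}}$ (and $\norm{\Delta\mbX}\le\norm{\Delta\mbX}\,\Pi$ since $\Pi\ge1$) produces the claimed estimate.

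\emph{Main obstacle.} The genuine work is the middle step: organising $\Omega^{\#}_{\textsf X}(\boldsymbol{w}_{\textsf X})-\Omega^{\#}_{\textsf Y}(\boldsymbol{w}_{\textsf Y})$ and the $\Delta w,\Delta w'$‑relations into the ``single‑$\Delta$‑factor'' form, verifying that the coefficient of $\norm{\Delta\mbX}_{\rksnoise{\alpha}{\kappa}_{\bar T}}$ is at most quadratic in the listed norms, and --- as in Proposition~\ref{prop:w_local_existence} --- circumventing the lack of a small‑time factor in the $w'$‑equation by iterating the map once before absorbing. Each individual multilinear estimate required is a special case of one already proved in Lemmas~\ref{lem:Bony}, \ref{lem:Schauder}, \ref{lem:commutator_Fourier_multiplier_paraproduct}, \ref{lem:commutator_heat_paraproduct}, \ref{lem:commutator_paraproduct_resonant}, \ref{lem:renormalized_products_estimates} and in the a priori bounds of Lemma~\ref{lem:bounds_apriori}, so no new analytic input is needed --- only careful bookkeeping.
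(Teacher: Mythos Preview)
Your proposal is correct and follows essentially the same approach as the paper, which summarizes its proof as ``follows as in Lemma~\ref{lem:bounds_apriori}, using the trilinearity of the equation.'' The only organisational difference is that where you frame the handling of the missing small-time factor in the $\Delta w'$ equation via one iteration of $\boldsymbol\Psi$ (i.e.\ passing to $\boldsymbol\Psi^{\circ2}$), the paper's detailed argument instead substitutes the bound $\norm{\Delta w'}\lesssim\norm{\Delta w}+\norm{\Delta\mbX}$ directly into the $\Delta w$ and $\Delta w^{\#}$ estimates and performs two successive absorptions (first in $\norm{w_{\textsf X}-w_{\textsf Y}}$, then in $\norm{w^{\#}_{\textsf X}-w^{\#}_{\textsf Y}}$), which is analytically equivalent.
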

\begin{proof}
	The claim follows as in Lemma~\ref{lem:bounds_apriori}, using the trilinearity of the equation.
\end{proof}
\begin{details}
	\begin{proof}
		We establish the local Lipschitz continuity by bounding $\norm{\rho_{\textsf{X}}-\rho_{\textsf{Y}}}_{\msL_{\eta;\bar{T}}^{\kappa}\mcC^{\alpha+1}}$, $\norm{w_{\textsf{X}}-w_{\textsf{Y}}}_{\msL_{\eta;\bar{T}}^{\kappa}\mcC^{\beta}}$, $\norm{w'_{\textsf{X}}-w'_{\textsf{Y}}}_{\msL_{\eta;\bar{T}}^{\kappa}\mcC^{\beta'}}$, $\norm{w^{\#}_{\textsf{X}}-w^{\#}_{\textsf{Y}}}_{\msL_{\eta;\bar{T}}^{\kappa}\mcC^{\beta}}$ and $\norm{w^{\#}_{\textsf{X}}-w^{\#}_{\textsf{Y}}}_{\msL_{2\eta;\bar{T}}^{\kappa}\mcC^{\beta^{\#}}}$.
		
		We start with the Gubinelli derivatives $w'_{\textsf{X}}-w'_{\textsf{Y}}$ and bound, using that $\beta'\leq\beta+1$ and $\beta'\leq2\alpha+5$,
		\begin{equation}\label{eq:Gubinelli_derivative_lipschitz}
			\begin{split}
				\norm{w'_{\textsf{X}}-w'_{\textsf{Y}}}_{\msL_{\eta;\bar{T}}^{\kappa}\mcC^{\beta'}}&=\norm{\nabla\Phi_{w_{\textsf{X}}}-\nabla\Phi_{w_{\textsf{Y}}}+\nabla\Phi_{{\ty\!}_{\textsf{X}}}-\nabla\Phi_{{\ty\!}_{\textsf{Y}}}}_{\msL_{\eta;\bar{T}}^{\kappa}\mcC^{\beta'}}\\
				&\lesssim\norm{w_{\textsf{X}}-w_{\textsf{Y}}}_{\msL_{\eta;\bar{T}}^{\kappa}\mcC^{\beta}}+\norm{\mbX-\mbY}_{\rksnoise{\alpha}{\kappa}_{\bar{T}}}.
			\end{split}
		\end{equation}
		We have
		\begin{equation*}
			\rho_{\textsf{X}}-\rho_{\textsf{Y}}=\ti_{\textsf{X}}-\ti_{\textsf{Y}}+{\ty\!}_{\textsf{X}}-{\ty\!}_{\textsf{Y}}+w_{\textsf{X}}-w_{\textsf{Y}}.
		\end{equation*}
		For the noise terms, we obtain using that $\alpha+1\leq 2\alpha+4$,
		\begin{equation*}
			\norm{\ti_{\textsf{X}}-\ti_{\textsf{Y}}}_{\msL_{\bar{T}}^{\kappa}\mcC^{\alpha+1}}+\norm{{\ty\!}_{\textsf{X}}-{\ty\!}_{\textsf{Y}}}_{\msL_{\bar{T}}^{\kappa}\mcC^{\alpha+1}}\lesssim\norm{\mbX-\mbY}_{\rksnoise{\alpha}{\kappa}_{\bar{T}}}.
		\end{equation*}
		We further decompose
		\begin{equation*}
			\begin{split}
				w_{\textsf{X}}-w_{\textsf{Y}}&=\vdiv\mcI[w'_{\textsf{X}}\pa\ti_{\textsf{X}}]-\vdiv\mcI[w'_{\textsf{Y}}\pa\ti_{\textsf{Y}}]+w^{\#}_{\textsf{X}}-w^{\#}_{\textsf{Y}}\\
				&=\vdiv\mcI[(w'_{\textsf{X}}-w'_{\textsf{Y}})\pa\ti_{\textsf{X}}]+\vdiv\mcI[w'_{\textsf{Y}}\pa(\ti_{\textsf{X}}-\ti_{\textsf{Y}})]+w^{\#}_{\textsf{X}}-w^{\#}_{\textsf{Y}}.
			\end{split}
		\end{equation*}
		By Lemma~\ref{lem:Schauder}, using that $\beta'>0$,
		\begin{equation*}
			\begin{split}
				\norm{\vdiv\mcI[w'_{\textsf{X}}\pa\ti_{\textsf{X}}]-\vdiv\mcI[w'_{\textsf{Y}}\pa\ti_{\textsf{Y}}]}_{\msL_{\eta;\bar{T}}^{\kappa}\mcC^{\beta}}&\lesssim(\bar{T}^{1-\frac{\beta-\alpha}{2}}\vee \bar{T}^{1-\kappa})\norm{w'_{\textsf{X}}-w'_{\textsf{Y}}}_{C_{\eta;\bar{T}}\mcC^{\beta'}}\norm{\ti_{\textsf{X}}}_{C_{\bar{T}}\mcC^{\alpha+1}}\\
				&\quad+(\bar{T}^{1-\frac{\beta-\alpha}{2}}\vee \bar{T}^{1-\kappa})\norm{w'_{\textsf{Y}}}_{C_{\eta;\bar{T}}\mcC^{\beta'}}\norm{\ti_{\textsf{X}}-\ti_{\textsf{Y}}}_{C_{\bar{T}}\mcC^{\alpha+1}}.
			\end{split}
		\end{equation*}
		Therefore
		\begin{equation*}
			\begin{split}
				\norm{w_{\textsf{X}}-w_{\textsf{Y}}}_{\msL_{\eta;\bar{T}}^{\kappa}\mcC^{\beta}}&\lesssim(\bar{T}^{1-\frac{\beta-\alpha}{2}}\vee \bar{T}^{1-\kappa})\norm{w'_{\textsf{X}}-w'_{\textsf{Y}}}_{C_{\eta;\bar{T}}\mcC^{\beta'}}\norm{\mbX}_{\rksnoise{\alpha}{\kappa}_{\bar{T}}}\\
				&\quad+(\bar{T}^{1-\frac{\beta-\alpha}{2}}\vee \bar{T}^{1-\kappa})\norm{w'_{\textsf{Y}}}_{C_{\eta;\bar{T}}\mcC^{\beta'}}\norm{\mbX-\mbY}_{\rksnoise{\alpha}{\kappa}_{\bar{T}}}+\norm{w^{\#}_{\textsf{X}}-w^{\#}_{\textsf{Y}}}_{\msL_{\eta;\bar{T}}^{\kappa}\mcC^{\beta}}.
			\end{split}
		\end{equation*}
		Plugging in~\eqref{eq:Gubinelli_derivative_lipschitz}, we obtain
		\begin{equation*}
			\begin{split}
				&\norm{w_{\textsf{X}}-w_{\textsf{Y}}}_{\msL_{\eta;\bar{T}}^{\kappa}\mcC^{\beta}}\\
				&\lesssim(\bar{T}^{1-\frac{\beta-\alpha}{2}}\vee \bar{T}^{1-\kappa})\norm{w_{\textsf{X}}-w_{\textsf{Y}}}_{\msL_{\eta;\bar{T}}^{\kappa}\mcC^{\beta}}\norm{\mbX}_{\rksnoise{\alpha}{\kappa}_{\bar{T}}}+(\bar{T}^{1-\frac{\beta-\alpha}{2}}\vee \bar{T}^{1-\kappa})\norm{\mbX-\mbY}_{\rksnoise{\alpha}{\kappa}_{\bar{T}}}\norm{\mbX}_{\rksnoise{\alpha}{\kappa}_{\bar{T}}}\\
				&\quad+(\bar{T}^{1-\frac{\beta-\alpha}{2}}\vee \bar{T}^{1-\kappa})\norm{w'_{\textsf{Y}}}_{C_{\eta;\bar{T}}\mcC^{\beta'}}\norm{\mbX-\mbY}_{\rksnoise{\alpha}{\kappa}_{\bar{T}}}+\norm{w^{\#}_{\textsf{X}}-w^{\#}_{\textsf{Y}}}_{\msL_{\eta;\bar{T}}^{\kappa}\mcC^{\beta}}.
			\end{split}
		\end{equation*}
		We can choose $\bar{T}=\bar{T}(\alpha,\beta,\kappa,\norm{\mbX}_{\rksnoise{\alpha}{\kappa}_{T}})\leq1$ smaller, if necessary, to absorb the first term, so that
		\begin{equation}\label{eq:w_lipschitz_1}
			\norm{w_{\textsf{X}}-w_{\textsf{Y}}}_{\msL_{\eta;\bar{T}}^{\kappa}\mcC^{\beta}}\lesssim\norm{\mbX-\mbY}_{\rksnoise{\alpha}{\kappa}_{\bar{T}}}(\norm{\mbX}_{\rksnoise{\alpha}{\kappa}_{\bar{T}}}+\norm{w'_{\textsf{Y}}}_{C_{\eta;\bar{T}}\mcC^{\beta'}})+\norm{w^{\#}_{\textsf{X}}-w^{\#}_{\textsf{X}}}_{\msL_{\eta;\bar{T}}^{\kappa}\mcC^{\beta}}.
		\end{equation}
		Next we decompose
		\begin{equation*}
			w^{\#}_{\textsf{X}}-w^{\#}_{\textsf{Y}}=P\rho^{\textsf{X}}_{0}-P\rho^{\textsf{Y}}_{0}+\vdiv\mcI[\Omega^{\#}_{\textsf{X}}(\boldsymbol{w}_{\textsf{X}})]-\vdiv\mcI[\Omega^{\#}_{\textsf{Y}}(\boldsymbol{w}_{\textsf{Y}})].
		\end{equation*}
		We control the $\msL_{\eta;\bar{T}}^{\kappa}\mcC^{\beta}$-regularity. By Lemma~\ref{lem:Schauder}, using that $(\frac{\beta-\beta_{0}}{2}\vee0)+\frac{1}{p}\leq\eta$,
		\begin{equation*}
			\norm{P\rho^{\textsf{X}}_{0}-P\rho^{\textsf{Y}}_{0}}_{\msL_{\eta;\bar{T}}^{\kappa}\mcC^{\beta}}\lesssim \bar{T}^{\eta-(\frac{\beta-\beta_{0}}{2}\vee0)-\frac{1}{p}}\norm{\rho^{\textsf{X}}_{0}-\rho^{\textsf{Y}}_{0}}_{\mcB_{p,q}^{\beta_{0}}}.
		\end{equation*}
		Next by Lemma~\ref{lem:Schauder},
		\begin{equation*}
			\norm{\vdiv\mcI[\Omega^{\#}_{\textsf{X}}(\boldsymbol{w}_{\textsf{X}})]-\vdiv\mcI[\Omega^{\#}_{\textsf{Y}}(\boldsymbol{w}_{\textsf{Y}})]}_{\msL_{\eta;\bar{T}}^{\kappa}\mcC^{\beta}}\lesssim (\bar{T}^{\frac{\alpha+3}{2}-\eta}\vee \bar{T}^{1-\kappa-\eta})\norm{\Omega^{\#}_{\textsf{X}}(\boldsymbol{w}_{\textsf{X}})-\Omega^{\#}_{\textsf{Y}}(\boldsymbol{w}_{\textsf{Y}})}_{C_{2\eta;\bar{T}}\mcC^{\alpha+\beta+2}}.
		\end{equation*}
		It follows that
		\begin{equation}\label{eq:w_sharp_lipschitz_beta_1}
			\norm{w^{\#}_{\textsf{X}}-w^{\#}_{\textsf{Y}}}_{\msL_{\eta;\bar{T}}^{\kappa}\mcC^{\beta}}\lesssim\norm{\rho^{\textsf{X}}_{0}-\rho^{\textsf{Y}}_{0}}_{\mcB_{p,q}^{\beta_{0}}}+(\bar{T}^{\frac{\alpha+3}{2}-\eta}\vee \bar{T}^{1-\kappa-\eta})\norm{\Omega^{\#}_{\textsf{X}}(\boldsymbol{w}_{\textsf{X}})-\Omega^{\#}_{\textsf{Y}}(\boldsymbol{w}_{\textsf{Y}})}_{C_{2\eta;\bar{T}}\mcC^{\alpha+\beta+2}}.
		\end{equation}
		We control the $\msL_{2\eta;\bar{T}}^{\kappa}\mcC^{\beta^{\#}}$-regularity. By Lemma~\ref{lem:Schauder}, using that $\beta_{0}\leq\beta^{\#}$ and $\frac{\beta^{\#}-\beta_{0}}{2}+\frac{1}{p}\leq2\eta$,
		\begin{equation*}
			\norm{P\rho^{\textsf{X}}_{0}-P\rho^{\textsf{Y}}_{0}}_{\msL_{2\eta;\bar{T}}^{\kappa}\mcC^{\beta^{\#}}}\lesssim \bar{T}^{2\eta-\frac{\beta^{\#}-\beta_{0}}{2}-\frac{1}{p}}\norm{\rho^{\textsf{X}}_{0}-\rho^{\textsf{Y}}_{0}}_{\mcB_{p,q}^{\beta_{0}}}.
		\end{equation*}
		Next by Lemma~\ref{lem:Schauder},
		\begin{equation*}
			\norm{\vdiv\mcI[\Omega^{\#}_{\textsf{X}}(\boldsymbol{w}_{\textsf{X}})]-\vdiv\mcI[\Omega^{\#}_{\textsf{Y}}(\boldsymbol{w}_{\textsf{Y}})]}_{\msL_{2\eta;\bar{T}}^{\kappa}\mcC^{\beta^{\#}}}\lesssim(\bar{T}^{1-\frac{\beta^{\#}-\alpha-\beta-1}{2}}\vee \bar{T}^{1-\kappa})\norm{\Omega^{\#}_{\textsf{X}}(\boldsymbol{w}_{\textsf{X}})-\Omega^{\#}_{\textsf{Y}}(\boldsymbol{w}_{\textsf{Y}})}_{C_{2\eta;\bar{T}}\mcC^{\alpha+\beta+2}}.
		\end{equation*}
		It follows that
		\begin{equation}\label{eq:w_sharp_lipschitz_beta_sharp_1}
			\norm{w^{\#}_{\textsf{X}}-w^{\#}_{\textsf{Y}}}_{\msL_{2\eta;\bar{T}}^{\kappa}\mcC^{\beta^{\#}}}\lesssim\norm{\rho^{\textsf{X}}_{0}-\rho^{\textsf{Y}}_{0}}_{\mcB_{p,q}^{\beta_{0}}}+(\bar{T}^{1-\frac{\beta^{\#}-\alpha-\beta-1}{2}}\vee \bar{T}^{1-\kappa})\norm{\Omega^{\#}_{\textsf{X}}(\boldsymbol{w}_{\textsf{X}})-\Omega^{\#}_{\textsf{Y}}(\boldsymbol{w}_{\textsf{Y}})}_{C_{2\eta;\bar{T}}\mcC^{\alpha+\beta+2}}.
		\end{equation}
		We decompose
		\begin{equation*}
			\begin{split}
				&\Omega_{\textsf{X}}^{\#}(\boldsymbol{w}_{\textsf{X}})-\Omega^{\#}_{\textsf{Y}}(\boldsymbol{w}_{\textsf{Y}})\\
				&=w_{\textsf{X}}\nabla\Phi_{w_{\textsf{X}}}-w_{\textsf{Y}}\nabla\Phi_{w_{\textsf{Y}}}+w_{\textsf{X}}\nabla\Phi_{{\ty\!}_{\textsf{X}}}-w_{\textsf{Y}}\nabla\Phi_{{\ty\!}_{\textsf{Y}}}+{\ty\!}_{\textsf{X}}\nabla\Phi_{w_{\textsf{X}}}-{\ty\!}_{\textsf{Y}}\nabla\Phi_{w_{\textsf{Y}}}+{\ty\!}_{\textsf{X}}\nabla\Phi_{{\ty\!}_{\textsf{X}}}-{\ty\!}_{\textsf{Y}}\nabla\Phi_{{\ty\!}_{\textsf{Y}}}\\
				&\quad+\tp_{\textsf{X}}-\tp_{\textsf{Y}}+\nabla\Phi_{\ti_{\textsf{X}}}\pa{\ty\!}_{\textsf{X}}-\nabla\Phi_{\ti_{\textsf{Y}}}\pa{\ty\!}_{\textsf{Y}}+{\ty\!}_{\textsf{X}}\pa\nabla\Phi_{\ti_{\textsf{X}}}-{\ty\!}_{\textsf{Y}}\pa\nabla\Phi_{\ti_{\textsf{Y}}}\\
				&\quad+\ti_{\textsf{X}}\pa\nabla\Phi_{{\ty\!}_{\textsf{X}}}-\ti_{\textsf{Y}}\pa\nabla\Phi_{{\ty\!}_{\textsf{Y}}}+w_{\textsf{X}}\pa\nabla\Phi_{\ti_{\textsf{X}}}-w_{\textsf{Y}}\pa\nabla\Phi_{\ti_{\textsf{Y}}}+\nabla\Phi_{\ti_{\textsf{X}}}\pa w_{\textsf{X}}-\nabla\Phi_{\ti_{\textsf{Y}}}\pa w_{\textsf{Y}}\\
				&\quad+\ti_{\textsf{X}}\pa\nabla\Phi_{w_{\textsf{X}}}-\ti_{\textsf{Y}}\pa\nabla\Phi_{w_{\textsf{Y}}}+\msP(\boldsymbol{w}_{\textsf{X}},\mbX)-\msP(\boldsymbol{w}_{\textsf{Y}},\mbY).
			\end{split}
		\end{equation*}
		Each term can be controlled as in Lemma~\ref{lem:bounds_apriori}. We decompose
		\begin{equation*}
			w_{\textsf{X}}\nabla\Phi_{w_{\textsf{X}}}-w_{\textsf{Y}}\nabla\Phi_{w_{\textsf{Y}}}=w_{\textsf{X}}\nabla\Phi_{w_{\textsf{X}}}-w_{\textsf{Y}}\nabla\Phi_{w_{\textsf{X}}}+w_{\textsf{Y}}\nabla\Phi_{w_{\textsf{X}}}-w_{\textsf{Y}}\nabla\Phi_{w_{\textsf{Y}}}.
		\end{equation*}
		Using that $\alpha+\beta+2\leq\beta$, $0<2\beta+1$,
		\begin{equation*}
			\begin{split}
				&\norm{w_{\textsf{X}}\nabla\Phi_{w_{\textsf{X}}}-w_{\textsf{Y}}\nabla\Phi_{w_{\textsf{Y}}}}_{C_{2\eta;\bar{T}}\mcC^{\alpha+\beta+2}}\\
				&\lesssim\norm{w_{\textsf{X}}-w_{\textsf{Y}}}_{C_{\eta;\bar{T}}\mcC^{\beta}}\norm{\nabla\Phi_{w_{\textsf{X}}}}_{C_{\eta;\bar{T}}\mcC^{\beta+1}}+\norm{w_{\textsf{Y}}}_{C_{\eta;\bar{T}}\mcC^{\beta}}\norm{\nabla\Phi_{w_{\textsf{X}}}-\nabla\Phi_{w_{\textsf{Y}}}}_{C_{\eta;\bar{T}}\mcC^{\beta+1}}.
			\end{split}
		\end{equation*}
		Using that $\alpha+\beta+2\leq\beta$, $2\alpha+5>0$ and $\beta+2\alpha+5>0$,
		\begin{equation*}
			\begin{split}
				\norm{w_{\textsf{X}}\nabla\Phi_{{\ty\!}_{\textsf{X}}}-w_{\textsf{Y}}\nabla\Phi_{{\ty\!}_{\textsf{Y}}}}_{C_{2\eta;\bar{T}}\mcC^{\alpha+\beta+2}}&\lesssim \bar{T}^{\eta}\norm{w_{\textsf{X}}-w_{\textsf{Y}}}_{C_{\eta;\bar{T}}\mcC^{\beta}}\norm{\nabla\Phi_{{\ty\!}_{\textsf{X}}}}_{C_{\bar{T}}\mcC^{2\alpha+5}}\\
				&\quad+\bar{T}^{\eta}\norm{w_{\textsf{Y}}}_{C_{\eta;\bar{T}}\mcC^{\beta}}\norm{\nabla\Phi_{{\ty\!}_{\textsf{X}}}-\nabla\Phi_{{\ty\!}_{\textsf{Y}}}}_{C_{\bar{T}}\mcC^{2\alpha+5}}.
			\end{split}
		\end{equation*}
		Using that $\alpha+\beta+2\leq2\alpha+4$, $\beta+1>0$ and $\beta+2\alpha+5>0$,
		\begin{equation*}
			\begin{split}
				\norm{{\ty\!}_{\textsf{X}}\nabla\Phi_{w_{\textsf{X}}}-{\ty\!}_{\textsf{Y}}\nabla\Phi_{w_{\textsf{Y}}}}_{C_{2\eta;\bar{T}}\mcC^{\alpha+\beta+2}}&\lesssim \bar{T}^{\eta}\norm{{\ty\!}_{\textsf{X}}-{\ty\!}_{\textsf{Y}}}_{C_{\bar{T}}\mcC^{2\alpha+4}}\norm{\nabla\Phi_{w_{\textsf{X}}}}_{C_{\eta;\bar{T}}\mcC^{\beta+1}}\\
				&\quad+\bar{T}^{\eta}\norm{{\ty\!}_{\textsf{Y}}}_{C_{\bar{T}}\mcC^{2\alpha+4}}\norm{\nabla\Phi_{w_{\textsf{X}}}-\nabla\Phi_{w_{\textsf{Y}}}}_{C_{\eta;\bar{T}}\mcC^{\beta+1}}.
			\end{split}
		\end{equation*}
		Using that $\alpha+\beta+2\leq2\alpha+4$ and $4\alpha+9>0$,
		\begin{equation*}
			\begin{split}
				\norm{{\ty\!}_{\textsf{X}}\nabla\Phi_{{\ty\!}_{\textsf{X}}}-{\ty\!}_{\textsf{Y}}\nabla\Phi_{{\ty\!}_{\textsf{Y}}}}_{C_{2\eta;\bar{T}}\mcC^{\alpha+\beta+2}}&\lesssim \bar{T}^{2\eta}\norm{{\ty\!}_{\textsf{X}}-{\ty\!}_{\textsf{Y}}}_{C_{\bar{T}}\mcC^{2\alpha+4}}\norm{\nabla\Phi_{{\ty\!}_{\textsf{X}}}}_{C_{\bar{T}}\mcC^{2\alpha+5}}\\
				&\quad+\bar{T}^{2\eta}\norm{{\ty\!}_{\textsf{Y}}}_{C_{\bar{T}}\mcC^{2\alpha+4}}\norm{\nabla\Phi_{{\ty\!}_{\textsf{X}}}-\nabla\Phi_{{\ty\!}_{\textsf{Y}}}}_{C_{\bar{T}}\mcC^{2\alpha+5}}.
			\end{split}
		\end{equation*}
		Using that $\alpha+\beta+2\leq3\alpha+6$,
		\begin{equation*}
			\norm{\tp_{\textsf{X}}-\tp_{\textsf{Y}}}_{C_{2\eta;\bar{T}}\mcC^{\alpha+\beta+2}}\lesssim \bar{T}^{2\eta}\norm{\tp_{\textsf{X}}-\tp_{\textsf{Y}}}_{C_{\bar{T}}\mcC^{3\alpha+6}}.
		\end{equation*}
		Using that $\alpha+\beta+2\leq 3\alpha+6$, $\alpha+2<0$,
		\begin{equation*}
			\begin{split}
				\norm{\nabla\Phi_{\ti_{\textsf{X}}}\pa{\ty\!}_{\textsf{X}}-\nabla\Phi_{\ti_{\textsf{Y}}}\pa{\ty\!}_{\textsf{Y}}}_{C_{2\eta;\bar{T}}\mcC^{\alpha+\beta+2}}&\lesssim \bar{T}^{2\eta}\norm{\nabla\Phi_{\ti_{\textsf{X}}}-\nabla\Phi_{\ti_{\textsf{Y}}}}_{C_{\bar{T}}\mcC^{\alpha+2}}\norm{{\ty\!}_{\textsf{X}}}_{C_{\bar{T}}\mcC^{2\alpha+4}}\\
				&\quad+\bar{T}^{2\eta}\norm{\nabla\Phi_{\ti_{\textsf{Y}}}}_{C_{\bar{T}}\mcC^{\alpha+2}}\norm{{\ty\!}_{\textsf{X}}-{\ty\!}_{\textsf{Y}}}_{C_{\bar{T}}\mcC^{2\alpha+4}}.
			\end{split}
		\end{equation*}
		Using that $\alpha+\beta+2\leq3\alpha+6$, $2\alpha+4<0$,
		\begin{equation*}
			\begin{split}
				\norm{{\ty\!}_{\textsf{X}}\pa\nabla\Phi_{\ti_{\textsf{X}}}-{\ty\!}_{\textsf{Y}}\pa\nabla\Phi_{\ti_{\textsf{Y}}}}_{C_{2\eta;\bar{T}}\mcC^{\alpha+\beta+2}}&\lesssim \bar{T}^{2\eta}\norm{{\ty\!}_{\textsf{X}}-{\ty\!}_{\textsf{Y}}}_{C_{\bar{T}}\mcC^{2\alpha+4}}\norm{\nabla\Phi_{\ti_{\textsf{X}}}}_{C_{\bar{T}}\mcC^{\alpha+2}}\\
				&\quad+\bar{T}^{2\eta}\norm{{\ty\!}_{\textsf{Y}}}_{C_{\bar{T}}\mcC^{2\alpha+4}}\norm{\nabla\Phi_{\ti_{\textsf{X}}}-\nabla\Phi_{\ti_{\textsf{Y}}}}_{C_{\bar{T}}\mcC^{\alpha+2}}.
			\end{split}
		\end{equation*}
		Using that $\alpha+\beta+2\leq3\alpha+6$, $\alpha+1<0$,
		\begin{equation*}
			\begin{split}
				\norm{\ti_{\textsf{X}}\pa\nabla\Phi_{{\ty\!}_{\textsf{X}}}-\ti_{\textsf{Y}}\pa\nabla\Phi_{{\ty\!}_{\textsf{Y}}}}_{C_{2\eta;\bar{T}}\mcC^{\alpha+\beta+2}}&\lesssim \bar{T}^{2\eta}\norm{\ti_{\textsf{X}}-\ti_{\textsf{Y}}}_{C_{\bar{T}}\mcC^{\alpha+1}}\norm{\nabla\Phi_{{\ty\!}_{\textsf{X}}}}_{C_{\bar{T}}\mcC^{2\alpha+5}}\\
				&\quad+\bar{T}^{2\eta}\norm{\ti_{\textsf{Y}}}_{C_{\bar{T}}\mcC^{\alpha+1}}\norm{\nabla\Phi_{{\ty\!}_{\textsf{X}}}-\nabla\Phi_{{\ty\!}_{\textsf{Y}}}}_{C_{\bar{T}}\mcC^{2\alpha+5}}.
			\end{split}
		\end{equation*}
		Using that $\beta<0$,
		\begin{equation*}
			\begin{split}
				\norm{w_{\textsf{X}}\pa\nabla\Phi_{\ti_{\textsf{X}}}-w_{\textsf{Y}}\pa\nabla\Phi_{\ti_{\textsf{Y}}}}_{C_{2\eta;\bar{T}}\mcC^{\alpha+\beta+2}}&\lesssim \bar{T}^{\eta}\norm{w_{\textsf{X}}-w_{\textsf{Y}}}_{C_{\eta;\bar{T}}\mcC^{\beta}}\norm{\nabla\Phi_{\ti_{\textsf{X}}}}_{C_{\bar{T}}\mcC^{\alpha+2}}\\
				&\quad+\bar{T}^{\eta}\norm{w_{\textsf{Y}}}_{C_{\eta;\bar{T}}\mcC^{\beta}}\norm{\nabla\Phi_{\ti_{\textsf{X}}}-\nabla\Phi_{\ti_{\textsf{Y}}}}_{C_{\bar{T}}\mcC^{\alpha+2}}.
			\end{split}
		\end{equation*}
		Using that $\alpha+2<0$,
		\begin{equation*}
			\begin{split}
				&\norm{\nabla\Phi_{\ti_{\textsf{X}}}\pa w_{\textsf{X}}-\nabla\Phi_{\ti_{\textsf{Y}}}\pa w_{\textsf{Y}}}_{C_{2\eta;\bar{T}}\mcC^{\alpha+\beta+2}}\\
				&\lesssim \bar{T}^{\eta}\norm{\nabla\Phi_{\ti_{\textsf{X}}}-\nabla\Phi_{\ti_{\textsf{Y}}}}_{C_{\bar{T}}\mcC^{\alpha+2}}\norm{w_{\textsf{X}}}_{C_{\eta;\bar{T}}\mcC^{\beta}}+\bar{T}^{\eta}\norm{\nabla\Phi_{\ti_{\textsf{Y}}}}_{C_{\bar{T}}\mcC^{\alpha+2}}\norm{w_{\textsf{X}}-w_{\textsf{Y}}}_{C_{\eta;\bar{T}}\mcC^{\beta}}.
			\end{split}
		\end{equation*}
		Using that $\alpha+1<0$,
		\begin{equation*}
			\begin{split}
				\norm{\ti_{\textsf{X}}\pa\nabla\Phi_{w_{\textsf{X}}}-\ti_{\textsf{Y}}\pa\nabla\Phi_{w_{\textsf{Y}}}}_{C_{2\eta;\bar{T}}\mcC^{\alpha+\beta+2}}&\lesssim \bar{T}^{\eta}\norm{\ti_{\textsf{X}}-\ti_{\textsf{Y}}}_{C_{\bar{T}}\mcC^{\alpha+1}}\norm{\nabla\Phi_{w_{\textsf{X}}}}_{C_{\eta;\bar{T}}\mcC^{\beta+1}}\\
				&\quad+\bar{T}^{\eta}\norm{\ti_{\textsf{Y}}}_{C_{\bar{T}}\mcC^{\alpha+1}}\norm{\nabla\Phi_{w_{\textsf{X}}}-\nabla\Phi_{w_{\textsf{Y}}}}_{C_{\eta;\bar{T}}\mcC^{\beta+1}}.
			\end{split}
		\end{equation*}
		Summing the bounds above, we estimate $\Omega^{\#}_{\textsf{X}}(\boldsymbol{w}_{\textsf{X}})-\Omega^{\#}_{\textsf{Y}}(\boldsymbol{w}_{\textsf{Y}})$ by
		\begin{equation}\label{eq:Omega_sharp_lipschitz_1}
			\begin{split}
				&\norm{\Omega^{\#}_{\textsf{X}}(\boldsymbol{w}_{\textsf{X}})-\Omega^{\#}_{\textsf{Y}}(\boldsymbol{w}_{\textsf{Y}})}_{C_{2\eta;\bar{T}}\mcC^{\alpha+\beta+2}}\\
				&\lesssim\norm{w_{\textsf{X}}-w_{\textsf{Y}}}_{C_{\eta;\bar{T}}\mcC^{\beta}}(\norm{w_{\textsf{X}}}_{C_{\eta;\bar{T}}\mcC^{\beta}}+\norm{w_{\textsf{Y}}}_{C_{\eta;\bar{T}}\mcC^{\beta}}+\norm{\mbX}_{\rksnoise{\alpha}{\kappa}_{\bar{T}}}+\norm{\mbY}_{\rksnoise{\alpha}{\kappa}_{\bar{T}}})\\
				&\quad+\norm{\mbX-\mbY}_{\rksnoise{\alpha}{\kappa}_{\bar{T}}}(\norm{w_{\textsf{Y}}}_{C_{\eta;\bar{T}}\mcC^{\beta}}+\norm{w_{\textsf{X}}}_{C_{\eta;\bar{T}}\mcC^{\beta}}+\norm{\mbX}_{\rksnoise{\alpha}{\kappa}_{\bar{T}}}+\norm{\mbY}_{\rksnoise{\alpha}{\kappa}_{\bar{T}}}+1)\\
				&\quad+\norm{\msP(\boldsymbol{w}_{\textsf{X}},\mbX)-\msP(\boldsymbol{w}_{\textsf{Y}},\mbY)}_{C_{2\eta;\bar{T}}\mcC^{\alpha+\beta+2}}.
			\end{split}
		\end{equation}
		We denote $\widetilde{w}^{\#}_{\textsf{X}}\defeq w^{\#}_{\textsf{X}}+\vdiv\mcI[w'_{\textsf{X}}\pa\ti_{\textsf{X}}]-w'_{\textsf{X}}\pa\nabla\mcI[\ti_{\textsf{X}}]$ and $\widetilde{\Phi}_{w_{\textsf{X}}}^{\#}\defeq\nabla\Phi_{w^{\#}_{\textsf{X}}}+\nabla\vdiv\Phi_{\mcI[w'_{\textsf{X}}\pa\ti_{\textsf{X}}]}-w'_{\textsf{X}}\pa\nabla^{2}\mcI[\Phi_{\ti_{\textsf{X}}}]$ and define $\widetilde{w}^{\#}_{\textsf{Y}}$, $\widetilde{\Phi}_{w_{\textsf{Y}}}^{\#}$ \emph{mutatis mutandis}. Next we consider the term $\msP(\boldsymbol{w}_{\textsf{X}},\mbX)-\msP(\boldsymbol{w}_{\textsf{Y}},\mbY)$. We decompose
		\begin{equation*}
			\begin{split}
				&\msP(\boldsymbol{w}_{\textsf{X}},\mbX)-\msP(\boldsymbol{w}_{\textsf{Y}},\mbY)\\
				&=\msC(w'_{\textsf{X}},\nabla\mcI[\ti_{\textsf{X}}],\nabla\Phi_{\ti_{\textsf{X}}})-\msC(w'_{\textsf{Y}},\nabla\mcI[\ti_{\textsf{Y}}],\nabla\Phi_{\ti_{\textsf{Y}}})+\msC(w'_{\textsf{X}},\nabla^{2}\mcI[\Phi_{\ti_{\textsf{X}}}],\ti_{\textsf{X}})-\msC(w'_{\textsf{Y}},\nabla^{2}\mcI[\Phi_{\ti_{\textsf{Y}}}],\ti_{\textsf{Y}})\\
				&\quad+\widetilde{w}^{\#}_{\textsf{X}}\re\nabla\Phi_{\ti_{\textsf{X}}}-\widetilde{w}^{\#}_{\textsf{Y}}\re\nabla\Phi_{\ti_{\textsf{Y}}}+\widetilde{\Phi}_{w_{\textsf{X}}}^{\#}\re\ti_{\textsf{X}}-\widetilde{\Phi}_{w_{\textsf{Y}}}^{\#}\re\ti_{\textsf{Y}}+w'_{\textsf{X}}\tc_{\textsf{X}}-w'_{\textsf{Y}}\tc_{\textsf{Y}}.
			\end{split}
		\end{equation*}
		We first estimate 
		\begin{equation}\label{eq:renorm_product_reg_bound}
			\norm{\msP(\boldsymbol{w}_{\textsf{X}},\mbX)-\msP(\boldsymbol{w}_{\textsf{Y}},\mbY)}_{C_{2\eta;\bar{T}}\mcC^{\alpha+\beta+2}}\lesssim\norm{\msP(\boldsymbol{w}_{\textsf{X}},\mbX)-\msP(\boldsymbol{w}_{\textsf{Y}},\mbY)}_{C_{2\eta;\bar{T}}\mcC^{2\alpha+4}}.
		\end{equation}
		We decompose the first term
		\begin{equation*}
			\begin{split}
				&\msC(w'_{\textsf{X}},\nabla\mcI[\ti_{\textsf{X}}],\nabla\Phi_{\ti_{\textsf{X}}})-\msC(w'_{\textsf{Y}},\nabla\mcI[\ti_{\textsf{Y}}],\nabla\Phi_{\ti_{\textsf{Y}}})\\
				&=\msC(w'_{\textsf{X}},\nabla\mcI[\ti_{\textsf{X}}],\nabla\Phi_{\ti_{\textsf{X}}})-\msC(w'_{\textsf{Y}},\nabla\mcI[\ti_{\textsf{X}}],\nabla\Phi_{\ti_{\textsf{X}}})\\
				&\quad+\msC(w'_{\textsf{Y}},\nabla\mcI[\ti_{\textsf{X}}],\nabla\Phi_{\ti_{\textsf{X}}})-\msC(w'_{\textsf{Y}},\nabla\mcI[\ti_{\textsf{Y}}],\nabla\Phi_{\ti_{\textsf{X}}})\\
				&\quad+\msC(w'_{\textsf{Y}},\nabla\mcI[\ti_{\textsf{Y}}],\nabla\Phi_{\ti_{\textsf{X}}})-\msC(w'_{\textsf{Y}},\nabla\mcI[\ti_{\textsf{Y}}],\nabla\Phi_{\ti_{\textsf{Y}}}).
			\end{split}
		\end{equation*}
		We estimate as in Lemma~\ref{lem:renormalised_products_estimates} using that $\beta'\in(0,1)$ and $2\alpha+4<0<2\alpha+4+\beta'$,
		\begin{equation*}
			\begin{split}
				&\norm{\msC(w'_{\textsf{X}},\nabla\mcI[\ti_{\textsf{X}}],\nabla\Phi_{\ti_{\textsf{X}}})-\msC(w'_{\textsf{Y}},\nabla\mcI[\ti_{\textsf{Y}}],\nabla\Phi_{\ti_{\textsf{Y}}})}_{C_{2\eta;\bar{T}}\mcC^{2\alpha+4}}\\
				&\lesssim_{\bar{T}}\norm{w'_{\textsf{X}}-w'_{\textsf{Y}}}_{C_{\eta;\bar{T}}\mcC^{\beta'}}\norm{\nabla\mcI[\ti_{\textsf{X}}]}_{C_{\bar{T}}\mcC^{\alpha+2}}\norm{\nabla\Phi_{\ti_{\textsf{X}}}}_{C_{\bar{T}}\mcC^{\alpha+2}}\\
				&\quad+\norm{w'_{\textsf{Y}}}_{C_{\eta;\bar{T}}\mcC^{\beta'}}\norm{\nabla\mcI[\ti_{\textsf{X}}]-\nabla\mcI[\ti_{\textsf{Y}}]}_{C_{\bar{T}}\mcC^{\alpha+2}}\norm{\nabla\Phi_{\ti_{\textsf{X}}}}_{C_{\bar{T}}\mcC^{\alpha+2}}\\
				&\quad+\norm{w'_{\textsf{Y}}}_{C_{\eta;\bar{T}}\mcC^{\beta'}}\norm{\nabla\mcI[\ti_{\textsf{Y}}]}_{C_{\bar{T}}\mcC^{\alpha+2}}\norm{\nabla\Phi_{\ti_{\textsf{X}}}-\nabla\Phi_{\ti_{\textsf{Y}}}}_{C_{\bar{T}}\mcC^{\alpha+2}}.
			\end{split}
		\end{equation*}
		Using that $\beta'\in(0,1)$ and $2\alpha+4<0<2\alpha+4+\beta'$,
		\begin{equation*}
			\begin{split}
				&\norm{\msC(w'_{\textsf{X}},\nabla^{2}\mcI[\Phi_{\ti_{\textsf{X}}}],\ti_{\textsf{X}})-\msC(w'_{\textsf{Y}},\nabla^{2}\mcI[\Phi_{\ti_{\textsf{Y}}}],\ti_{\textsf{Y}})}_{C_{2\eta;\bar{T}}\mcC^{2\alpha+4}}\\
				&\lesssim_{\bar{T}}\norm{w'_{\textsf{X}}-w'_{\textsf{Y}}}_{C_{\eta;\bar{T}}\mcC^{\beta'}}\norm{\nabla^{2}\mcI[\Phi_{\ti_{\textsf{X}}}]}_{C_{\bar{T}}\mcC^{\alpha+3}}\norm{\ti_{\textsf{X}}}_{C_{\bar{T}}\mcC^{\alpha+1}}\\
				&\quad+\norm{w'_{\textsf{Y}}}_{C_{\eta;\bar{T}}\mcC^{\beta'}}\norm{\nabla^{2}\mcI[\Phi_{\ti_{\textsf{X}}}]-\nabla^{2}\mcI[\Phi_{\ti_{\textsf{Y}}}]}_{C_{\bar{T}}\mcC^{\alpha+3}}\norm{\ti_{\textsf{X}}}_{C_{\bar{T}}\mcC^{\alpha+1}}\\
				&\quad+\norm{w'_{\textsf{Y}}}_{C_{\eta;\bar{T}}\mcC^{\beta'}}\norm{\nabla^{2}\mcI[\Phi_{\ti_{\textsf{Y}}}]}_{C_{\bar{T}}\mcC^{\alpha+3}}\norm{\ti_{\textsf{X}}-\ti_{\textsf{Y}}}_{C_{\bar{T}}\mcC^{\alpha+1}}.
			\end{split}
		\end{equation*}
		Using that $\beta^{\#}+\alpha+2>0$,
		\begin{equation*}
			\begin{split}
				&\norm{\widetilde{w}^{\#}_{\textsf{X}}\re\nabla\Phi_{\ti_{\textsf{X}}}-\widetilde{w}^{\#}_{\textsf{Y}}\re\nabla\Phi_{\ti_{\textsf{Y}}}}_{C_{2\eta;\bar{T}}\mcC^{2\alpha+4}}\\
				&\lesssim\norm{\widetilde{w}^{\#}_{\textsf{X}}-\widetilde{w}^{\#}_{\textsf{Y}}}_{C_{2\eta;\bar{T}}\mcC^{\beta^{\#}}}\norm{\nabla\Phi_{\ti_{\textsf{X}}}}_{C_{\bar{T}}\mcC^{\alpha+2}}+\norm{\widetilde{w}^{\#}_{\textsf{Y}}}_{C_{2\eta;\bar{T}}\mcC^{\beta^{\#}}}\norm{\nabla\Phi_{\ti_{\textsf{X}}}-\nabla\Phi_{\ti_{\textsf{Y}}}}_{C_{\bar{T}}\mcC^{\alpha+2}}.
			\end{split}
		\end{equation*}
		Using that $\beta^{\#}+\alpha+2>0$,
		\begin{equation*}
			\begin{split}
				&\norm{\widetilde{\Phi}^{\#}_{w_{\textsf{X}}}\re\ti_{\textsf{X}}-\widetilde{\Phi}^{\#}_{w_{\textsf{Y}}}\re\ti_{\textsf{Y}}}_{C_{2\eta;\bar{T}}\mcC^{2\alpha+4}}\\
				&\lesssim\norm{\widetilde{\Phi}^{\#}_{w_{\textsf{X}}}-\widetilde{\Phi}^{\#}_{w_{\textsf{Y}}}}_{C_{2\eta;\bar{T}}\mcC^{\beta^{\#}+1}}\norm{\ti_{\textsf{X}}}_{C_{\bar{T}}\mcC^{\alpha+1}}+\norm{\widetilde{\Phi}^{\#}_{w_{\textsf{Y}}}}_{C_{2\eta;\bar{T}}\mcC^{\beta^{\#}+1}}\norm{\ti_{\textsf{X}}-\ti_{\textsf{Y}}}_{C_{\bar{T}}\mcC^{\alpha+1}}.
			\end{split}
		\end{equation*}
		Using that $2\alpha+4+\beta'>0$,
		\begin{equation*}
			\norm{w'_{\textsf{X}}\tc_{\textsf{X}}-w'_{\textsf{Y}}\tc_{\textsf{Y}}}_{C_{2\eta;\bar{T}}\mcC^{2\alpha+4}}\lesssim_{\bar{T}}\norm{w'_{\textsf{X}}-w'_{\textsf{Y}}}_{C_{\eta;\bar{T}}\mcC^{\beta'}}\norm{\tc_{\textsf{X}}}_{C_{\bar{T}}\mcC^{2\alpha+4}}+\norm{w'_{\textsf{Y}}}_{C_{\eta;\bar{T}}\mcC^{\beta'}}\norm{\tc_{\textsf{X}}-\tc_{\textsf{Y}}}_{C_{\bar{T}}\mcC^{2\alpha+4}}.
		\end{equation*}
		Summing the bounds above, we obtain
		\begin{equation*}
			\begin{split}
				&\norm{\msP(\boldsymbol{w}_{\textsf{X}},\mbX)-\msP(\boldsymbol{w}_{\textsf{Y}},\mbY)}_{C_{2\eta;\bar{T}}\mcC^{2\alpha+4}}\\
				&\lesssim_{\bar{T}}\norm{w'_{\textsf{X}}-w'_{\textsf{Y}}}_{C_{\eta;\bar{T}}\mcC^{\beta'}}(\norm{\mbX}_{\rksnoise{\alpha}{\kappa}_{\bar{T}}}^{2}+\norm{\mbX}_{\rksnoise{\alpha}{\kappa}_{\bar{T}}})\\
				&\quad+\norm{\mbX-\mbY}_{\rksnoise{\alpha}{\kappa}_{\bar{T}}}(\norm{w'_{\textsf{Y}}}_{C_{\eta;\bar{T}}\mcC^{\beta'}}\norm{\mbX}_{\rksnoise{\alpha}{\kappa}_{\bar{T}}}+\norm{w'_{\textsf{Y}}}_{C_{\eta;{\bar{T}}}\mcC^{\beta'}}\norm{\mbY}_{\rksnoise{\alpha}{\kappa}_{\bar{T}}}\\
				&\multiquad[8]+\norm{\widetilde{w}^{\#}_{\textsf{Y}}}_{C_{2\eta;\bar{T}}\mcC^{\beta^{\#}}}+\norm{\widetilde{\Phi}^{\#}_{w_{\textsf{Y}}}}_{C_{2\eta;\bar{T}}\mcC^{\beta^{\#}+1}}+\norm{w'_{\textsf{Y}}}_{C_{\eta;\bar{T}}\mcC^{\beta'}})\\
				&\quad+\norm{\widetilde{w}^{\#}_{\textsf{X}}-\widetilde{w}^{\#}_{\textsf{Y}}}_{C_{2\eta;\bar{T}}\mcC^{\beta^{\#}}}\norm{\mbX}_{\rksnoise{\alpha}{\kappa}_{\bar{T}}}+\norm{\widetilde{\Phi}^{\#}_{w_{\textsf{X}}}-\widetilde{\Phi}^{\#}_{w_{\textsf{Y}}}}_{C_{2\eta;\bar{T}}\mcC^{\beta^{\#}+1}}\norm{\mbX}_{\rksnoise{\alpha}{\kappa}_{\bar{T}}}.
			\end{split}
		\end{equation*}
		Controlling $\norm{w'_{\textsf{X}}-w'_{\textsf{Y}}}_{C_{2\eta;\bar{T}}\mcC^{\beta'}}$ as in~\eqref{eq:Gubinelli_derivative_lipschitz}, we obtain
		\begin{equation}\label{eq:renorm_product_lipschitz_1}
			\begin{split}
				&\norm{\msP(\boldsymbol{w}_{\textsf{X}},\mbX)-\msP(\boldsymbol{w}_{\textsf{Y}},\mbY)}_{C_{2\eta;\bar{T}}\mcC^{2\alpha+4}}\\
				&\lesssim\norm{w_{\textsf{X}}-w_{\textsf{Y}}}_{C_{\eta;\bar{T}}\mcC^{\beta}}(\norm{\mbX}_{\rksnoise{\alpha}{\kappa}_{\bar{T}}}^{2}+\norm{\mbX}_{\rksnoise{\alpha}{\kappa}_{\bar{T}}})\\
				&\quad+\norm{\mbX-\mbY}_{\rksnoise{\alpha}{\kappa}_{\bar{T}}}(\norm{w'_{\textsf{Y}}}_{C_{\eta;\bar{T}}\mcC^{\beta'}}\norm{\mbX}_{\rksnoise{\alpha}{\kappa}_{\bar{T}}}+\norm{w'_{\textsf{Y}}}_{C_{\eta;\bar{T}}\mcC^{\beta'}}\norm{\mbY}_{\rksnoise{\alpha}{\kappa}_{\bar{T}}}+\norm{\widetilde{w}^{\#}_{\textsf{Y}}}_{C_{2\eta;\bar{T}}\mcC^{\beta^{\#}}}\\
				&\multiquad[11]+\norm{\widetilde{\Phi}^{\#}_{w_{\textsf{Y}}}}_{C_{2\eta;\bar{T}}\mcC^{\beta^{\#}+1}}+\norm{w'_{\textsf{Y}}}_{C_{\eta;\bar{T}}\mcC^{\beta'}}+\norm{\mbX}_{\rksnoise{\alpha}{\kappa}_{\bar{T}}}^{2}+\norm{\mbX}_{\rksnoise{\alpha}{\kappa}_{\bar{T}}})\\
				&\quad+\norm{\widetilde{w}^{\#}_{\textsf{X}}-\widetilde{w}^{\#}_{\textsf{Y}}}_{C_{2\eta;\bar{T}}\mcC^{\beta^{\#}}}\norm{\mbX}_{\rksnoise{\alpha}{\kappa}_{\bar{T}}}+\norm{\widetilde{\Phi}^{\#}_{w_{\textsf{X}}}-\widetilde{\Phi}^{\#}_{w_{\textsf{Y}}}}_{C_{2\eta;\bar{T}}\mcC^{\beta^{\#}+1}}\norm{\mbX}_{\rksnoise{\alpha}{\kappa}_{\bar{T}}}.
			\end{split}
		\end{equation}
		Next we estimate $\widetilde{w}^{\#}_{\textsf{X}}-\widetilde{w}^{\#}_{\textsf{Y}}$. We decompose
		\begin{equation}\label{eq:remainder_w_tilde_lipschitz}
			\begin{split}
				&\widetilde{w}^{\#}_{\textsf{X}}-\widetilde{w}^{\#}_{\textsf{Y}}\\
				&=\mcI[\vdiv(w'_{\textsf{X}}\pa\ti_{\textsf{X}})-w'_{\textsf{X}}\pa\nabla\ti_{\textsf{X}}]-\mcI[\vdiv(w'_{\textsf{Y}}\pa\ti_{\textsf{Y}})-w'_{\textsf{Y}}\pa\nabla\ti_{\textsf{Y}}]\\
				&\quad+\mcI[w'_{\textsf{X}}\pa\nabla\ti_{\textsf{X}}]-w'_{\textsf{X}}\pa\mcI[\nabla\ti_{\textsf{X}}]-\mcI[w'_{\textsf{Y}}\pa\nabla\ti_{\textsf{Y}}]+w'_{\textsf{Y}}\pa\mcI[\nabla\ti_{\textsf{Y}}]+w^{\#}_{\textsf{X}}-w^{\#}_{\textsf{Y}}.
			\end{split}
		\end{equation}
		For the first line of~\eqref{eq:remainder_w_tilde_lipschitz},
		\begin{equation*}
			\begin{split}
				&\mcI[\vdiv(w'_{\textsf{X}}\pa\ti_{\textsf{X}})-w'_{\textsf{X}}\pa\nabla\ti_{\textsf{X}}]-\mcI[\vdiv(w'_{\textsf{Y}}\pa\ti_{\textsf{Y}})-w'_{\textsf{Y}}\pa\nabla\ti_{\textsf{Y}}]\\
				&=\mcI[\vdiv((w'_{\textsf{X}}-w'_{\textsf{Y}})\pa\ti_{\textsf{X}})-(w'_{\textsf{X}}-w'_{\textsf{Y}})\pa\nabla\ti_{\textsf{X}}]+\mcI[\vdiv(w'_{\textsf{Y}}\pa(\ti_{\textsf{X}}-\ti_{\textsf{Y}}))-w'_{\textsf{Y}}\pa\nabla(\ti_{\textsf{X}}-\ti_{\textsf{Y}})].
			\end{split}
		\end{equation*}
		We estimate as in Lemma~\ref{lem:renormalised_products_estimates},
		\begin{equation*}
			\begin{split}
				&\norm{\mcI[\vdiv(w'_{\textsf{X}}\pa\ti_{\textsf{X}})-w'_{\textsf{X}}\pa\nabla\ti_{\textsf{X}}]-\mcI[\vdiv(w'_{\textsf{Y}}\pa\ti_{\textsf{Y}})-w'_{\textsf{Y}}\pa\nabla\ti_{\textsf{Y}}]}_{\msL_{2\eta;\bar{T}}^{\kappa}\mcC^{\beta^{\#}}}\\
				&\lesssim_{\bar{T}}\norm{w'_{\textsf{X}}-w'_{\textsf{Y}}}_{C_{\eta;\bar{T}}\mcC^{\beta'}}\norm{\ti_{\textsf{X}}}_{C_{\bar{T}}\mcC^{\alpha+1}}+\norm{w'_{\textsf{Y}}}_{C_{\eta;\bar{T}}\mcC^{\beta'}}\norm{\ti_{\textsf{X}}-\ti_{\textsf{Y}}}_{C_{\bar{T}}\mcC^{\alpha+1}}.
			\end{split}
		\end{equation*}
		For the second line of~\eqref{eq:remainder_w_tilde_lipschitz},
		\begin{equation*}
			\begin{split}
				&\mcI[w'_{\textsf{X}}\pa\nabla\ti_{\textsf{X}}]-w'_{\textsf{X}}\pa\mcI[\nabla\ti_{\textsf{X}}]-\mcI[w'_{\textsf{Y}}\pa\nabla\ti_{\textsf{Y}}]+w'_{\textsf{Y}}\pa\mcI[\nabla\ti_{\textsf{Y}}]\\
				&=\mcI[(w'_{\textsf{X}}-w'_{\textsf{Y}})\pa\nabla\ti_{\textsf{X}}]-(w'_{\textsf{X}}-w'_{\textsf{Y}})\pa\mcI[\nabla\ti_{\textsf{X}}]+\mcI[w'_{\textsf{Y}}\pa\nabla(\ti_{\textsf{X}}-\ti_{\textsf{Y}})]-w'_{\textsf{Y}}\pa\mcI[\nabla(\ti_{\textsf{X}}-\ti_{\textsf{Y}})].
			\end{split}
		\end{equation*}
		We estimate as in Lemma~\ref{lem:renormalised_products_estimates}, with $\gamma<2\kappa\wedge\beta'$ such that $\beta^{\#}\in(\gamma+\alpha,\gamma+\alpha+2)$, 
		\begin{equation*}
			\begin{split}
				&\norm{\mcI[w'_{\textsf{X}}\pa\nabla\ti_{\textsf{X}}]-w'_{\textsf{X}}\pa\mcI[\nabla\ti_{\textsf{X}}]-\mcI[w'_{\textsf{Y}}\pa\nabla\ti_{\textsf{Y}}]+w'_{\textsf{Y}}\pa\mcI[\nabla\ti_{\textsf{Y}}]}_{C_{2\eta;\bar{T}}\mcC^{\beta^{\#}}}\\
				&\lesssim_{\bar{T}}\norm{w'_{\textsf{X}}-w'_{\textsf{Y}}}_{\msL_{\eta;\bar{T}}^{\kappa}\mcC^{\gamma}}\norm{\nabla\ti_{\textsf{X}}}_{C_{\bar{T}}\mcC^{\alpha}}+\norm{w'_{\textsf{Y}}}_{\msL_{\eta;\bar{T}}^{\kappa}\mcC^{\gamma}}\norm{\nabla\ti_{\textsf{X}}-\nabla\ti_{\textsf{Y}}}_{C_{\bar{T}}\mcC^{\alpha}}\\
				&\lesssim\norm{w'_{\textsf{X}}-w'_{\textsf{Y}}}_{\msL_{\eta;\bar{T}}^{\kappa}\mcC^{\beta'}}\norm{\nabla\ti_{\textsf{X}}}_{C_{\bar{T}}\mcC^{\alpha}}+\norm{w'_{\textsf{Y}}}_{\msL_{\eta;\bar{T}}^{\kappa}\mcC^{\beta'}}\norm{\nabla\ti_{\textsf{X}}-\nabla\ti_{\textsf{Y}}}_{C_{\bar{T}}\mcC^{\alpha}}.
			\end{split}
		\end{equation*}
		Following Lemma~\ref{lem:Schauder} and Lemma~\ref{lem:commutator_heat_paraproduct}, we can bound the implicit constant uniformly in $\bar{T}\leq 1$, removing the time dependence of $\lesssim_{\bar{T}}$. We will make frequent use of this fact without explicitly pointing out every instance. This yields the estimate
		\begin{equation*}
			\begin{split}
				\norm{\widetilde{w}^{\#}_{\textsf{X}}-\widetilde{w}^{\#}_{\textsf{Y}}}_{C_{2\eta;\bar{T}}\mcC^{\beta^{\#}}}&\lesssim\norm{w'_{\textsf{X}}-w'_{\textsf{Y}}}_{\msL_{\eta;\bar{T}}^{\kappa}\mcC^{\beta'}}\norm{\ti_{\textsf{X}}}_{C_{\bar{T}}\mcC^{\alpha+1}}\\
				&\quad+\norm{\mbX-\mbY}_{\rksnoise{\alpha}{\kappa}_{\bar{T}}}\norm{w'_{\textsf{Y}}}_{\msL_{\eta;\bar{T}}^{\kappa}\mcC^{\beta'}}+\norm{w^{\#}_{\textsf{X}}-w^{\#}_{\textsf{Y}}}_{C_{2\eta;\bar{T}}\mcC^{\beta^{\#}}}.
			\end{split}
		\end{equation*}
		Plugging this into~\eqref{eq:renorm_product_lipschitz_1} yields
		\begin{equation}\label{eq:renorm_product_lipschitz_2}
			\begin{split}
				&\norm{\msP(\boldsymbol{w}_{\textsf{X}},\mbX)-\msP(\boldsymbol{w}_{\textsf{Y}},\mbY)}_{C_{2\eta;\bar{T}}\mcC^{2\alpha+4}}\\
				&\lesssim\norm{w_{\textsf{X}}-w_{\textsf{Y}}}_{C_{\eta;\bar{T}}\mcC^{\beta}}(\norm{\mbX}_{\rksnoise{\alpha}{\kappa}_{\bar{T}}}^{2}+\norm{\mbX}_{\rksnoise{\alpha}{\kappa}_{\bar{T}}})\\
				&\quad+\norm{\mbX-\mbY}_{\rksnoise{\alpha}{\kappa}_{\bar{T}}}\Bigl(\norm{w'_{\textsf{Y}}}_{\msL_{\eta;\bar{T}}^{\kappa}\mcC^{\beta'}}\norm{\mbX}_{\rksnoise{\alpha}{\kappa}_{\bar{T}}}+\norm{w'_{\textsf{Y}}}_{C_{\eta;\bar{T}}\mcC^{\beta'}}\norm{\mbY}_{\rksnoise{\alpha}{\kappa}_{\bar{T}}}+\norm{\widetilde{w}^{\#}_{\textsf{Y}}}_{C_{2\eta;\bar{T}}\mcC^{\beta^{\#}}}\\
				&\multiquad[11]+\norm{\widetilde{\Phi}^{\#}_{w_{\textsf{Y}}}}_{C_{2\eta;\bar{T}}\mcC^{\beta^{\#}+1}}+\norm{w'_{\textsf{Y}}}_{C_{\eta;\bar{T}}\mcC^{\beta'}}+\norm{\mbX}_{\rksnoise{\alpha}{\kappa}_{\bar{T}}}^{2}+\norm{\mbX}_{\rksnoise{\alpha}{\kappa}_{\bar{T}}}\Bigr)\\
				&\quad+\norm{w'_{\textsf{X}}-w'_{\textsf{Y}}}_{\msL_{\eta;\bar{T}}^{\kappa}\mcC^{\beta'}}\norm{\mbX}_{\rksnoise{\alpha}{\kappa}_{\bar{T}}}^{2}+\norm{w^{\#}_{\textsf{X}}-w^{\#}_{\textsf{Y}}}_{C_{2\eta;\bar{T}}\mcC^{\beta^{\#}}}\norm{\mbX}_{\rksnoise{\alpha}{\kappa}_{\bar{T}}}+\norm{\widetilde{\Phi}^{\#}_{w_{\textsf{X}}}-\widetilde{\Phi}^{\#}_{w_{\textsf{Y}}}}_{C_{2\eta;\bar{T}}\mcC^{\beta^{\#}+1}}\norm{\mbX}_{\rksnoise{\alpha}{\kappa}_{\bar{T}}}.
			\end{split}
		\end{equation}
		Next we estimate the remainder $\widetilde{\Phi}^{\#}_{w_{\textsf{X}}}-\widetilde{\Phi}^{\#}_{w_{\textsf{Y}}}$. We decompose
		\begin{equation}\label{eq:remainder_Phi_w_tilde_lipschitz}
			\begin{split}
				&\widetilde{\Phi}^{\#}_{w_{\textsf{X}}}-\widetilde{\Phi}^{\#}_{w_{\textsf{Y}}}\\
				&=\mcI[\nabla\vdiv(w'_{\textsf{X}}\pa\Phi_{\ti_{\textsf{X}}})-w'_{\textsf{X}}\pa\nabla^2\Phi_{\ti_{\textsf{X}}}]-\mcI[\nabla\vdiv(w'_{\textsf{Y}}\pa\Phi_{\ti_{\textsf{Y}}})-w'_{\textsf{Y}}\pa\nabla^2\Phi_{\ti_{\textsf{Y}}}]\\
				&\quad+\mcI[w'_{\textsf{X}}\pa\nabla^{2}\Phi_{\ti_{\textsf{X}}}]-w'_{\textsf{X}}\pa\mcI[\nabla^2\Phi_{\ti_{\textsf{X}}}]-\mcI[w'_{\textsf{Y}}\pa\nabla^2\Phi_{\ti_{\textsf{Y}}}]+w'_{\textsf{Y}}\pa\mcI[\nabla^{2}\Phi_{\ti_{\textsf{Y}}}]+\nabla\Phi^{\#}_{w_{\textsf{X}}}-\nabla\Phi^{\#}_{w_{\textsf{Y}}},
			\end{split}
		\end{equation}
		where $\Phi^{\#}_{w_{\textsf{X}}}=\vdiv\mcI[\Phi(D)(w'_{\textsf{X}}\pa\ti_{\textsf{X}})-w'_{\textsf{X}}\pa\Phi_{\ti_{\textsf{X}}}]+\Phi_{w^{\#}_{\textsf{X}}}$ and $\Phi^{\#}_{w_{\textsf{Y}}}$ is defined \emph{mutatis mutandis}. The first line of~\eqref{eq:remainder_Phi_w_tilde_lipschitz} can be decomposed as
		\begin{equation*}
			\begin{split}
				&\mcI[\nabla\vdiv(w'_{\textsf{X}}\pa\Phi_{\ti_{\textsf{X}}})-w'_{\textsf{X}}\pa\nabla^{2}\Phi_{\ti_{\textsf{X}}}]-\mcI[\nabla\vdiv(w'_{\textsf{Y}}\pa\Phi_{\ti_{\textsf{Y}}})+w'_{\textsf{Y}}\pa\nabla^2\Phi_{\ti_{\textsf{Y}}}]\\
				&=\mcI[\nabla\vdiv((w'_{\textsf{X}}-w'_{\textsf{Y}})\pa\Phi_{\ti_{\textsf{X}}})-(w'_{\textsf{X}}-w'_{\textsf{Y}})\pa\nabla^2\Phi_{\ti_{\textsf{X}}}]\\
				&\quad+\mcI[\nabla\vdiv(w'_{\textsf{Y}}\pa(\Phi_{\ti_{\textsf{X}}}-\Phi_{\ti_{\textsf{Y}}}))-w'_{\textsf{Y}}\pa\nabla^2(\Phi_{\ti_{\textsf{X}}}-\Phi_{\ti_{\textsf{X}}})].
			\end{split}
		\end{equation*}
		We estimate as in Lemma~\ref{lem:renormalised_products_estimates},
		\begin{equation*}
			\begin{split}
				&\norm{\mcI[\nabla\vdiv(w'_{\textsf{X}}\pa\Phi_{\ti_{\textsf{X}}})-w'_{\textsf{X}}\pa\nabla^{2}\Phi_{\ti_{\textsf{X}}}]-\mcI[\nabla\vdiv(w'_{\textsf{Y}}\pa\Phi_{\ti_{\textsf{Y}}})+w'_{\textsf{Y}}\pa\nabla^2\Phi_{\ti_{\textsf{Y}}}]}_{\msL_{2\eta;\bar{T}}^{\kappa}\mcC^{\beta^{\#}+1}}\\
				&\lesssim_{\bar{T}}\norm{w'_{\textsf{X}}-w'_{\textsf{Y}}}_{C_{\eta;\bar{T}}\mcC^{\beta'}}\norm{\Phi_{\ti_{\textsf{X}}}}_{C_{\bar{T}}\mcC^{\alpha+3}}+\norm{w'_{\textsf{Y}}}_{C_{\eta;\bar{T}}\mcC^{\beta'}}\norm{\Phi_{\ti_{\textsf{X}}}-\Phi_{\ti_{\textsf{Y}}}}_{C_{\bar{T}}\mcC^{\alpha+3}}.
			\end{split}
		\end{equation*}
		The second line of~\eqref{eq:remainder_Phi_w_tilde_lipschitz} can be decomposed as
		\begin{equation*}
			\begin{split}
				&\mcI[w'_{\textsf{X}}\pa\nabla^2\Phi_{\ti_{\textsf{X}}}]-w'_{\textsf{X}}\pa\mcI[\nabla^{2}\Phi_{\ti_{\textsf{X}}}]-\mcI[w'_{\textsf{Y}}\pa\nabla^2\Phi_{\ti_{\textsf{Y}}}]+w'_{\textsf{Y}}\pa\mcI[\nabla^{2}\Phi_{\ti_{\textsf{Y}}}]\\
				&=\mcI[(w'_{\textsf{X}}-w'_{\textsf{Y}})\pa\nabla^2\Phi_{\ti_{\textsf{X}}}]-(w'_{\textsf{X}}-w'_{\textsf{Y}})\pa\mcI[\nabla^{2}\Phi_{\ti_{\textsf{X}}}]\\
				&\quad+\mcI[w'_{\textsf{Y}}\pa(\nabla^2\Phi_{\ti_{\textsf{X}}}-\nabla^2\Phi_{\ti_{\textsf{Y}}})]-w'_{\textsf{Y}}\pa\mcI[\nabla^{2}\Phi_{\ti_{\textsf{X}}}-\nabla^2\Phi_{\ti_{\textsf{Y}}}].
			\end{split}
		\end{equation*}
		As in Lemma~\ref{lem:renormalised_products_estimates}, we estimate for some $\gamma<2\kappa\wedge\beta'$ such that $\beta^{\#}\in(\gamma+\alpha,\gamma+\alpha+2)$,
		\begin{equation*}
			\begin{split}
				&\norm{\mcI[w'_{\textsf{X}}\pa\nabla^2\Phi_{\ti_{\textsf{X}}}]-w'_{\textsf{X}}\pa\mcI[\nabla^{2}\Phi_{\ti_{\textsf{X}}}]-\mcI[w'_{\textsf{Y}}\pa\nabla^2\Phi_{\ti_{\textsf{Y}}}]+w'_{\textsf{Y}}\pa\mcI[\nabla^{2}\Phi_{\ti_{\textsf{Y}}}]}_{C_{2\eta;\bar{T}}\mcC^{\beta^{\#}+1}}\\
				&\lesssim_{\bar{T}}\norm{w'_{\textsf{X}}-w'_{\textsf{Y}}}_{\msL_{\eta;\bar{T}}^{\kappa}\mcC^{\gamma}}\norm{\nabla^{2}\Phi_{\ti_{\textsf{X}}}}_{C_{\bar{T}}\mcC^{\alpha+1}}+\norm{w'_{\textsf{Y}}}_{\msL_{\eta;\bar{T}}^{\kappa}\mcC^{\gamma}}\norm{\nabla^{2}\Phi_{\ti_{\textsf{X}}}-\nabla^{2}\Phi_{\ti_{\textsf{Y}}}}_{C_{\bar{T}}\mcC^{\alpha+1}}\\
				&\lesssim\norm{w'_{\textsf{X}}-w'_{\textsf{Y}}}_{\msL_{\eta;\bar{T}}^{\kappa}\mcC^{\beta'}}\norm{\nabla^{2}\Phi_{\ti_{\textsf{X}}}}_{C_{\bar{T}}\mcC^{\alpha+1}}+\norm{w'_{\textsf{Y}}}_{\msL_{\eta;\bar{T}}^{\kappa}\mcC^{\beta'}}\norm{\nabla^{2}\Phi_{\ti_{\textsf{X}}}-\nabla^{2}\Phi_{\ti_{\textsf{Y}}}}_{C_{\bar{T}}\mcC^{\alpha+1}}.
			\end{split}
		\end{equation*}
		The last term of~\eqref{eq:remainder_Phi_w_tilde_lipschitz} can be estimated by
		\begin{equation*}
			\begin{split}
				\norm{\nabla\Phi_{w_{\textsf{X}}}^{\#}-\nabla\Phi_{w_{\textsf{Y}}}^{\#}}_{C_{2\eta;\bar{T}}\mcC^{\beta^{\#}+1}}\lesssim\norm{\Phi_{w_{\textsf{X}}}^{\#}-\Phi_{w_{\textsf{Y}}}^{\#}}_{C_{2\eta;\bar{T}}\mcC^{\beta^{\#}+2}}.
			\end{split}
		\end{equation*}
		This yields the estimate
		\begin{equation}\label{eq:remainder_Phi_w_tilde_lipschitz_bound_1}
			\begin{split}
				&\norm{\widetilde{\Phi}^{\#}_{w_{\textsf{X}}}-\widetilde{\Phi}^{\#}_{w_{\textsf{Y}}}}_{C_{2\eta;\bar{T}}\mcC^{\beta^{\#}+1}}\\
				&\lesssim\norm{w'_{\textsf{X}}-w'_{\textsf{Y}}}_{\msL_{\eta;\bar{T}}^{\kappa}\mcC^{\beta'}}\norm{\mbX}_{\rksnoise{\alpha}{\kappa}_{\bar{T}}}+\norm{\mbX-\mbY}_{\rksnoise{\alpha}{\kappa}_{\bar{T}}}\norm{w'_{\textsf{Y}}}_{\msL_{\eta;\bar{T}}^{\kappa}\mcC^{\beta'}}+\norm{\Phi_{w_{\textsf{X}}}^{\#}-\Phi_{w_{\textsf{Y}}}^{\#}}_{C_{2\eta;\bar{T}}\mcC^{\beta^{\#}+2}}.
			\end{split}
		\end{equation}
		The remainder $\Phi_{w_{\textsf{X}}}^{\#}-\Phi_{w_{\textsf{Y}}}^{\#}$ can be decomposed as
		\begin{equation}\label{eq:remainder_Phi_w_lipschitz}
			\begin{split}
				\Phi^{\#}_{w_{\textsf{X}}}-\Phi^{\#}_{w_{\textsf{Y}}}&=\vdiv\mcI[\Phi(D)(w'_{\textsf{X}}\pa\ti_{\textsf{X}})-w'_{\textsf{X}}\pa\Phi_{\ti_{\textsf{X}}}]-\vdiv\mcI[\Phi(D)(w'_{\textsf{Y}}\pa\ti_{\textsf{Y}})-w'_{\textsf{Y}}\pa\Phi_{\ti_{\textsf{Y}}}]\\
				&\quad+\Phi_{w^{\#}_{\textsf{X}}}-\Phi_{w^{\#}_{\textsf{Y}}}.
			\end{split}
		\end{equation}
		The first line of~\eqref{eq:remainder_Phi_w_lipschitz} can be decomposed as
		\begin{equation*}
			\begin{split}
				&\vdiv\mcI[\Phi(D)(w'_{\textsf{X}}\pa\ti_{\textsf{X}})-w'_{\textsf{X}}\pa\Phi_{\ti_{\textsf{X}}}]-\vdiv\mcI[\Phi(D)(w'_{\textsf{Y}}\pa\ti_{\textsf{Y}})-w'_{\textsf{Y}}\pa\Phi_{\ti_{\textsf{Y}}}]\\
				&=\vdiv\mcI[\Phi(D)((w'_{\textsf{X}}-w'_{\textsf{Y}})\pa\ti_{\textsf{X}})-(w'_{\textsf{X}}-w'_{\textsf{Y}})\pa\Phi_{\ti_{\textsf{X}}}]\\
				&\quad+\vdiv\mcI[\Phi(D)(w'_{\textsf{Y}}\pa(\ti_{\textsf{X}}-\ti_{\textsf{Y}}))-w'_{\textsf{Y}}\pa(\Phi_{\ti_{\textsf{X}}}-\Phi_{\ti_{\textsf{Y}}})].
			\end{split}
		\end{equation*}
		We estimate as in Lemma~\ref{lem:renormalised_products_estimates},
		\begin{equation*}
			\begin{split}
				&\norm{\vdiv\mcI[\Phi(D)(w'_{\textsf{X}}\pa\ti_{\textsf{X}})-w'_{\textsf{X}}\pa\Phi_{\ti_{\textsf{X}}}]-\vdiv\mcI[\Phi(D)(w'_{\textsf{Y}}\pa\ti_{\textsf{Y}})-w'_{\textsf{Y}}\pa\Phi_{\ti_{\textsf{Y}}}]}_{\msL_{2\eta;\bar{T}}^{\kappa}\mcC^{\beta^{\#}+2}}\\
				&\lesssim_{\bar{T}}\norm{w'_{\textsf{X}}-w'_{\textsf{Y}}}_{C_{\eta;\bar{T}}\mcC^{\beta'}}\norm{\ti_{\textsf{X}}}_{C_{\bar{T}}\mcC^{\alpha+1}}+\norm{w'_{\textsf{Y}}}_{C_{\eta;\bar{T}}\mcC^{\beta'}}\norm{\ti_{\textsf{X}}-\ti_{\textsf{Y}}}_{C_{\bar{T}}\mcC^{\alpha+1}}.
			\end{split}
		\end{equation*}
		The second line of~\eqref{eq:remainder_Phi_w_lipschitz} can be estimated by
		\begin{equation*}
			\norm{\Phi_{w^{\#}_{\textsf{X}}}-\Phi_{w^{\#}_{\textsf{Y}}}}_{C_{2\eta;\bar{T}}\mcC^{\beta^{\#}+2}}\lesssim\norm{w^{\#}_{\textsf{X}}-w^{\#}_{\textsf{Y}}}_{C_{2\eta;\bar{T}}\mcC^{\beta^{\#}}}.
		\end{equation*}
		We obtain the bound
		\begin{equation}\label{eq:remainder_Phi_w_lipschitz_bound}
			\norm{\Phi^{\#}_{w_{\textsf{X}}}-\Phi^{\#}_{w_{\textsf{Y}}}}_{C_{2\eta;\bar{T}}\mcC^{\beta^{\#}+2}}\lesssim\norm{w'_{\textsf{X}}-w'_{\textsf{Y}}}_{C_{\eta;\bar{T}}\mcC^{\beta'}}\norm{\mbX}_{\rksnoise{\alpha}{\kappa}_{\bar{T}}}+\norm{\mbX-\mbY}_{\rksnoise{\alpha}{\kappa}_{\bar{T}}}\norm{w'_{\textsf{Y}}}_{C_{\eta;\bar{T}}\mcC^{\beta'}}+\norm{w^{\#}_{\textsf{X}}-w^{\#}_{\textsf{Y}}}_{C_{2\eta;\bar{T}}\mcC^{\beta^{\#}}}.
		\end{equation}
		We plug~\eqref{eq:remainder_Phi_w_lipschitz_bound} into~\eqref{eq:remainder_Phi_w_tilde_lipschitz_bound_1} to obtain
		\begin{equation}\label{eq:remainder_Phi_w_tilde_lipschitz_bound_2}
			\norm{\widetilde{\Phi}^{\#}_{w_{\textsf{X}}}-\widetilde{\Phi}^{\#}_{w_{\textsf{Y}}}}_{C_{2\eta;\bar{T}}\mcC^{\beta^{\#}+1}}\lesssim\norm{w'_{\textsf{X}}-w'_{\textsf{Y}}}_{\msL_{\eta;\bar{T}}^{\kappa}\mcC^{\beta'}}\norm{\mbX}_{\rksnoise{\alpha}{\kappa}_{\bar{T}}}+\norm{\mbX-\mbY}_{\rksnoise{\alpha}{\kappa}_{\bar{T}}}\norm{w'_{\textsf{Y}}}_{\msL_{\eta;\bar{T}}^{\kappa}\mcC^{\beta'}}+\norm{w^{\#}_{\textsf{X}}-w^{\#}_{\textsf{Y}}}_{C_{2\eta;\bar{T}}\mcC^{\beta^{\#}}}.
		\end{equation}
		We can now plug~\eqref{eq:remainder_Phi_w_tilde_lipschitz_bound_2} into~\eqref{eq:renorm_product_lipschitz_2} to obtain
		\begin{equation}\label{eq:renorm_product_lipschitz_3}
			\begin{split}
				&\norm{\msP(\boldsymbol{w}_{\textsf{X}},\mbX)-\msP(\boldsymbol{w}_{\textsf{Y}},\mbY)}_{C_{2\eta;\bar{T}}\mcC^{2\alpha+4}}\\
				&\lesssim\norm{w_{\textsf{X}}-w_{\textsf{Y}}}_{C_{\eta;\bar{T}}\mcC^{\beta}}(\norm{\mbX}_{\rksnoise{\alpha}{\kappa}_{\bar{T}}}^{2}+\norm{\mbX}_{\rksnoise{\alpha}{\kappa}_{\bar{T}}})\\
				&\quad+\norm{\mbX-\mbY}_{\rksnoise{\alpha}{\kappa}_{\bar{T}}}\Bigl(\norm{w'_{\textsf{Y}}}_{\msL_{\eta;\bar{T}}^{\kappa}\mcC^{\beta'}}\norm{\mbX}_{\rksnoise{\alpha}{\kappa}_{\bar{T}}}+\norm{w'_{\textsf{Y}}}_{C_{\eta;\bar{T}}\mcC^{\beta'}}\norm{\mbY}_{\rksnoise{\alpha}{\kappa}_{\bar{T}}}+\norm{\widetilde{w}^{\#}_{\textsf{Y}}}_{C_{2\eta;\bar{T}}\mcC^{\beta^{\#}}}\\
				&\multiquad[11]+\norm{\widetilde{\Phi}^{\#}_{w_{\textsf{Y}}}}_{C_{2\eta;\bar{T}}\mcC^{\beta^{\#}+1}}+\norm{w'_{\textsf{Y}}}_{C_{\eta;\bar{T}}\mcC^{\beta'}}+\norm{\mbX}_{\rksnoise{\alpha}{\kappa}_{\bar{T}}}^{2}+\norm{\mbX}_{\rksnoise{\alpha}{\kappa}_{\bar{T}}}\Bigr)\\
				&\quad+\norm{w'_{\textsf{X}}-w'_{\textsf{Y}}}_{\msL_{\eta;\bar{T}}^{\kappa}\mcC^{\beta'}}\norm{\mbX}_{\rksnoise{\alpha}{\kappa}_{\bar{T}}}^{2}+\norm{w^{\#}_{\textsf{X}}-w^{\#}_{\textsf{Y}}}_{C_{2\eta;\bar{T}}\mcC^{\beta^{\#}}}\norm{\mbX}_{\rksnoise{\alpha}{\kappa}_{\bar{T}}}.
			\end{split}
		\end{equation}
		We can now use~\eqref{eq:renorm_product_reg_bound} and plug~\eqref{eq:renorm_product_lipschitz_3} into~\eqref{eq:Omega_sharp_lipschitz_1} to obtain
		\begin{equation}\label{eq:Omega_sharp_lipschitz_2}
			\begin{split}
				&\norm{\Omega^{\#}_{\textsf{X}}(\boldsymbol{w}_{\textsf{X}})-\Omega^{\#}_{\textsf{Y}}(\boldsymbol{w}_{\textsf{Y}})}_{C_{2\eta;\bar{T}}\mcC^{\alpha+\beta+2}}\\
				&\lesssim\norm{w_{\textsf{X}}-w_{\textsf{Y}}}_{C_{\eta;\bar{T}}\mcC^{\beta}}(\norm{w_{\textsf{X}}}_{C_{\eta;\bar{T}}\mcC^{\beta}}+\norm{w_{\textsf{Y}}}_{C_{\eta;\bar{T}}\mcC^{\beta}}+\norm{\mbX}_{\rksnoise{\alpha}{\kappa}_{\bar{T}}}+\norm{\mbY}_{\rksnoise{\alpha}{\kappa}_{\bar{T}}}+\norm{\mbX}_{\rksnoise{\alpha}{\kappa}_{\bar{T}}}^{2})\\
				&\quad+\norm{\mbX-\mbY}_{\rksnoise{\alpha}{\kappa}_{\bar{T}}}\Bigl(\norm{w'_{\textsf{Y}}}_{\msL_{\eta;\bar{T}}^{\kappa}\mcC^{\beta'}}\norm{\mbX}_{\rksnoise{\alpha}{\kappa}_{\bar{T}}}+\norm{w'_{\textsf{Y}}}_{C_{\eta;\bar{T}}\mcC^{\beta'}}\norm{\mbY}_{\rksnoise{\alpha}{\kappa}_{\bar{T}}}+\norm{\widetilde{w}^{\#}_{\textsf{Y}}}_{C_{2\eta;\bar{T}}\mcC^{\beta^{\#}}}\\
				&\multiquad[8]+\norm{\widetilde{\Phi}^{\#}_{w_{\textsf{Y}}}}_{C_{2\eta;\bar{T}}\mcC^{\beta^{\#}+1}}+\norm{w'_{\textsf{Y}}}_{C_{\eta;\bar{T}}\mcC^{\beta'}}+\norm{\mbX}_{\rksnoise{\alpha}{\kappa}_{\bar{T}}}^{2}+\norm{w_{\textsf{Y}}}_{C_{\eta;\bar{T}}\mcC^{\beta}}\\
				&\multiquad[18]+\norm{w_{\textsf{X}}}_{C_{\eta;\bar{T}}\mcC^{\beta}}+\norm{\mbX}_{\rksnoise{\alpha}{\kappa}_{\bar{T}}}+\norm{\mbY}_{\rksnoise{\alpha}{\kappa}_{\bar{T}}}+1\Bigr)\\
				&\quad+\norm{w'_{\textsf{X}}-w'_{\textsf{Y}}}_{\msL_{\eta;\bar{T}}^{\kappa}\mcC^{\beta'}}\norm{\mbX}_{\rksnoise{\alpha}{\kappa}_{\bar{T}}}^{2}+\norm{w^{\#}_{\textsf{X}}-w^{\#}_{\textsf{Y}}}_{C_{2\eta;\bar{T}}\mcC^{\beta^{\#}}}\norm{\mbX}_{\rksnoise{\alpha}{\kappa}_{\bar{T}}}.
			\end{split}
		\end{equation}
		Plugging~\eqref{eq:Gubinelli_derivative_lipschitz} into~\eqref{eq:Omega_sharp_lipschitz_2}, we obtain
		\begin{equation}\label{eq:Omega_sharp_lipschitz_3}
			\begin{split}
				&\norm{\Omega^{\#}_{\textsf{X}}(\boldsymbol{w}_{\textsf{X}})-\Omega^{\#}_{\textsf{Y}}(\boldsymbol{w}_{\textsf{Y}})}_{C_{2\eta;\bar{T}}\mcC^{\alpha+\beta+2}}\\
				&\lesssim\norm{w_{\textsf{X}}-w_{\textsf{Y}}}_{\msL_{\eta;\bar{T}}^{\kappa}\mcC^{\beta}}(\norm{w_{\textsf{X}}}_{C_{\eta;\bar{T}}\mcC^{\beta}}+\norm{w_{\textsf{Y}}}_{C_{\eta;\bar{T}}\mcC^{\beta}}+\norm{\mbX}_{\rksnoise{\alpha}{\kappa}_{\bar{T}}}+\norm{\mbY}_{\rksnoise{\alpha}{\kappa}_{\bar{T}}}+\norm{\mbX}_{\rksnoise{\alpha}{\kappa}_{\bar{T}}}^{2})\\
				&\quad+\norm{\mbX-\mbY}_{\rksnoise{\alpha}{\kappa}_{\bar{T}}}\Bigl(\norm{w'_{\textsf{Y}}}_{\msL_{\eta;\bar{T}}^{\kappa}\mcC^{\beta'}}\norm{\mbX}_{\rksnoise{\alpha}{\kappa}_{\bar{T}}}+\norm{w'_{\textsf{Y}}}_{C_{\eta;\bar{T}}\mcC^{\beta'}}\norm{\mbY}_{\rksnoise{\alpha}{\kappa}_{\bar{T}}}+\norm{\widetilde{w}^{\#}_{\textsf{Y}}}_{C_{2\eta;\bar{T}}\mcC^{\beta^{\#}}}\\
				&\multiquad[8]+\norm{\widetilde{\Phi}^{\#}_{w_{\textsf{Y}}}}_{C_{2\eta;\bar{T}}\mcC^{\beta^{\#}+1}}+\norm{w'_{\textsf{Y}}}_{C_{\eta;\bar{T}}\mcC^{\beta'}}+\norm{\mbX}_{\rksnoise{\alpha}{\kappa}_{\bar{T}}}^{2}+\norm{w_{\textsf{Y}}}_{C_{\eta;\bar{T}}\mcC^{\beta}}\\
				&\multiquad[18]+\norm{w_{\textsf{X}}}_{C_{\eta;\bar{T}}\mcC^{\beta}}+\norm{\mbX}_{\rksnoise{\alpha}{\kappa}_{\bar{T}}}+\norm{\mbY}_{\rksnoise{\alpha}{\kappa}_{\bar{T}}}+1\Bigr)\\
				&\quad+\norm{w^{\#}_{\textsf{X}}-w^{\#}_{\textsf{Y}}}_{C_{2\eta;\bar{T}}\mcC^{\beta^{\#}}}\norm{\mbX}_{\rksnoise{\alpha}{\kappa}_{\bar{T}}}.
			\end{split}
		\end{equation}
		We can now plug~\eqref{eq:Omega_sharp_lipschitz_3} into~\eqref{eq:w_sharp_lipschitz_beta_sharp_1} to obtain
		\begin{equation}\label{eq:w_sharp_lipschitz_beta_sharp_2}
			\begin{split}
				&\norm{w^{\#}_{\textsf{X}}-w^{\#}_{\textsf{X}}}_{\msL_{2\eta;\bar{T}}^{\kappa}\mcC^{\beta^{\#}}}\\
				&\lesssim\norm{\rho^{\textsf{X}}_{0}-\rho^{\textsf{Y}}_{0}}_{\mcB_{p,q}^{\beta_{0}}}\\
				&\quad+(\bar{T}^{1-\frac{\beta^{\#}-\alpha-\beta-1}{2}}\vee \bar{T}^{1-\kappa})\norm{w_{\textsf{X}}-w_{\textsf{Y}}}_{\msL_{\eta;\bar{T}}^{\kappa}\mcC^{\beta}}\Bigl(\norm{w_{\textsf{X}}}_{C_{\eta;\bar{T}}\mcC^{\beta}}+\norm{w_{\textsf{Y}}}_{C_{\eta;\bar{T}}\mcC^{\beta}}\\
				&\multiquad[22]+\norm{\mbX}_{\rksnoise{\alpha}{\kappa}_{\bar{T}}}+\norm{\mbY}_{\rksnoise{\alpha}{\kappa}_{\bar{T}}}+\norm{\mbX}_{\rksnoise{\alpha}{\kappa}_{\bar{T}}}^{2}\Bigr)\\
				&\quad+\norm{\mbX-\mbY}_{\rksnoise{\alpha}{\kappa}_{\bar{T}}}\Bigl(\norm{w'_{\textsf{Y}}}_{\msL_{\eta;\bar{T}}^{\kappa}\mcC^{\beta'}}\norm{\mbX}_{\rksnoise{\alpha}{\kappa}_{\bar{T}}}+\norm{w'_{\textsf{Y}}}_{C_{\eta;\bar{T}}\mcC^{\beta'}}\norm{\mbY}_{\rksnoise{\alpha}{\kappa}_{\bar{T}}}+\norm{\widetilde{w}^{\#}_{\textsf{Y}}}_{C_{2\eta;\bar{T}}\mcC^{\beta^{\#}}}\\
				&\multiquad[8]+\norm{\widetilde{\Phi}^{\#}_{w_{\textsf{Y}}}}_{C_{2\eta;\bar{T}}\mcC^{\beta^{\#}+1}}+\norm{w'_{\textsf{Y}}}_{C_{\eta;\bar{T}}\mcC^{\beta'}}+\norm{\mbX}_{\rksnoise{\alpha}{\kappa}_{\bar{T}}}^{2}+\norm{w_{\textsf{Y}}}_{C_{\eta;\bar{T}}\mcC^{\beta}}\\
				&\multiquad[18]+\norm{w_{\textsf{X}}}_{C_{\eta;\bar{T}}\mcC^{\beta}}+\norm{\mbX}_{\rksnoise{\alpha}{\kappa}_{\bar{T}}}+\norm{\mbY}_{\rksnoise{\alpha}{\kappa}_{\bar{T}}}+1\Bigr)\\
				&\quad+(\bar{T}^{1-\frac{\beta^{\#}-\alpha-\beta-1}{2}}\vee \bar{T}^{1-\kappa})\norm{w^{\#}_{\textsf{X}}-w^{\#}_{\textsf{Y}}}_{C_{2\eta;\bar{T}}\mcC^{\beta^{\#}}}\norm{\mbX}_{\rksnoise{\alpha}{\kappa}_{\bar{T}}}.
			\end{split}
		\end{equation}
		Choosing $\bar{T}=\bar{T}(\alpha,\beta,\beta^{\#},\kappa,\eta,\norm{\mbX}_{\rksnoise{\alpha}{\kappa}_{T}})$ smaller, if necessary, we can absorb the last line of~\eqref{eq:w_sharp_lipschitz_beta_sharp_2} to obtain
		\begin{equation}\label{eq:w_sharp_lipschitz_beta_sharp_3}
			\begin{split}
				&\norm{w^{\#}_{\textsf{X}}-w^{\#}_{\textsf{X}}}_{\msL_{2\eta;\bar{T}}^{\kappa}\mcC^{\beta^{\#}}}\\
				&\lesssim\norm{\rho^{\textsf{X}}_{0}-\rho^{\textsf{Y}}_{0}}_{\mcB_{p,q}^{\beta_{0}}}\\
				&\quad+(\bar{T}^{1-\frac{\beta^{\#}-\alpha-\beta-1}{2}}\vee \bar{T}^{1-\kappa})\norm{w_{\textsf{X}}-w_{\textsf{Y}}}_{\msL_{\eta;\bar{T}}^{\kappa}\mcC^{\beta}}\Bigl(\norm{w_{\textsf{X}}}_{C_{\eta;\bar{T}}\mcC^{\beta}}+\norm{w_{\textsf{Y}}}_{C_{\eta;\bar{T}}\mcC^{\beta}}\\
				&\multiquad[22]+\norm{\mbX}_{\rksnoise{\alpha}{\kappa}_{\bar{T}}}+\norm{\mbY}_{\rksnoise{\alpha}{\kappa}_{\bar{T}}}+\norm{\mbX}_{\rksnoise{\alpha}{\kappa}_{\bar{T}}}^{2}\Bigr)\\
				&\quad+\norm{\mbX-\mbY}_{\rksnoise{\alpha}{\kappa}_{\bar{T}}}\Bigl(\norm{w'_{\textsf{Y}}}_{\msL_{\eta;\bar{T}}^{\kappa}\mcC^{\beta'}}\norm{\mbX}_{\rksnoise{\alpha}{\kappa}_{\bar{T}}}+\norm{w'_{\textsf{Y}}}_{C_{\eta;\bar{T}}\mcC^{\beta'}}\norm{\mbY}_{\rksnoise{\alpha}{\kappa}_{\bar{T}}}+\norm{\widetilde{w}^{\#}_{\textsf{Y}}}_{C_{2\eta;\bar{T}}\mcC^{\beta^{\#}}}\\
				&\multiquad[8]+\norm{\widetilde{\Phi}^{\#}_{w_{\textsf{Y}}}}_{C_{2\eta;\bar{T}}\mcC^{\beta^{\#}+1}}+\norm{w'_{\textsf{Y}}}_{C_{\eta;\bar{T}}\mcC^{\beta'}}+\norm{\mbX}_{\rksnoise{\alpha}{\kappa}_{\bar{T}}}^{2}+\norm{w_{\textsf{Y}}}_{C_{\eta;\bar{T}}\mcC^{\beta}}\\
				&\multiquad[18]+\norm{w_{\textsf{X}}}_{C_{\eta;\bar{T}}\mcC^{\beta}}+\norm{\mbX}_{\rksnoise{\alpha}{\kappa}_{\bar{T}}}+\norm{\mbY}_{\rksnoise{\alpha}{\kappa}_{\bar{T}}}+1\Bigr).
			\end{split}
		\end{equation}
		We can now plug~\eqref{eq:w_sharp_lipschitz_beta_sharp_3} into~\eqref{eq:Omega_sharp_lipschitz_3} to obtain
		\begin{equation}\label{eq:Omega_sharp_lipschitz_4}
			\begin{split}
				&\norm{\Omega^{\#}_{\textsf{X}}(\boldsymbol{w}_{\textsf{X}})-\Omega^{\#}_{\textsf{Y}}(\boldsymbol{w}_{\textsf{Y}})}_{C_{2\eta;\bar{T}}\mcC^{\alpha+\beta+2}}\\
				&\lesssim\norm{w_{\textsf{X}}-w_{\textsf{Y}}}_{\msL_{\eta;\bar{T}}^{\kappa}\mcC^{\beta}}(\norm{w_{\textsf{X}}}_{C_{\eta;\bar{T}}\mcC^{\beta}}+\norm{w_{\textsf{Y}}}_{C_{\eta;\bar{T}}\mcC^{\beta}}+\norm{\mbX}_{\rksnoise{\alpha}{\kappa}_{\bar{T}}}+\norm{\mbY}_{\rksnoise{\alpha}{\kappa}_{\bar{T}}}+\norm{\mbX}_{\rksnoise{\alpha}{\kappa}_{\bar{T}}}^{2})(1+\norm{\mbX}_{\rksnoise{\alpha}{\kappa}_{\bar{T}}})\\
				&\quad+\norm{\mbX-\mbY}_{\rksnoise{\alpha}{\kappa}_{\bar{T}}}\Bigl(\norm{w'_{\textsf{Y}}}_{\msL_{\eta;\bar{T}}^{\kappa}\mcC^{\beta'}}\norm{\mbX}_{\rksnoise{\alpha}{\kappa}_{\bar{T}}}+\norm{w'_{\textsf{Y}}}_{C_{\eta;\bar{T}}\mcC^{\beta'}}\norm{\mbY}_{\rksnoise{\alpha}{\kappa}_{\bar{T}}}+\norm{\widetilde{w}^{\#}_{\textsf{Y}}}_{C_{2\eta;\bar{T}}\mcC^{\beta^{\#}}}\\
				&\multiquad[8]+\norm{\widetilde{\Phi}^{\#}_{w_{\textsf{Y}}}}_{C_{2\eta;\bar{T}}\mcC^{\beta^{\#}+1}}+\norm{w'_{\textsf{Y}}}_{C_{\eta;\bar{T}}\mcC^{\beta'}}+\norm{\mbX}_{\rksnoise{\alpha}{\kappa}_{\bar{T}}}^{2}+\norm{w_{\textsf{Y}}}_{C_{\eta;\bar{T}}\mcC^{\beta}}\\
				&\multiquad[18]+\norm{w_{\textsf{X}}}_{C_{\eta;\bar{T}}\mcC^{\beta}}+\norm{\mbX}_{\rksnoise{\alpha}{\kappa}_{\bar{T}}}+\norm{\mbY}_{\rksnoise{\alpha}{\kappa}_{\bar{T}}}+1\Bigr)(1+\norm{\mbX}_{\rksnoise{\alpha}{\kappa}_{\bar{T}}})\\
				&\quad+\norm{\rho^{\textsf{X}}_{0}-\rho^{\textsf{Y}}_{0}}_{\mcB_{p,q}^{\beta_{0}}}\norm{\mbX}_{\rksnoise{\alpha}{\kappa}_{\bar{T}}}.
			\end{split}
		\end{equation}
		We can now plug~\eqref{eq:Omega_sharp_lipschitz_4} into~\eqref{eq:w_sharp_lipschitz_beta_1} to obtain
		\begin{equation}\label{eq:w_sharp_lipschitz_beta_2}
			\begin{split}
				&\norm{w^{\#}_{\textsf{X}}-w^{\#}_{\textsf{Y}}}_{\msL_{\eta;\bar{T}}^{\kappa}\mcC^{\beta}}\\
				&\lesssim\norm{\rho^{\textsf{X}}_{0}-\rho^{\textsf{Y}}_{0}}_{\mcB_{p,q}^{\beta_{0}}}(1+(\bar{T}^{\frac{\alpha+3}{2}-\eta}\vee \bar{T}^{1-\kappa-\eta})\norm{\mbX}_{\rksnoise{\alpha}{\kappa}_{\bar{T}}})\\
				&\quad+(\bar{T}^{\frac{\alpha+3}{2}-\eta}\vee \bar{T}^{1-\kappa-\eta})\norm{w_{\textsf{X}}-w_{\textsf{Y}}}_{\msL_{\eta;\bar{T}}^{\kappa}\mcC^{\beta}}\\
				&\quad\qquad\times(\norm{w_{\textsf{X}}}_{C_{\eta;\bar{T}}\mcC^{\beta}}+\norm{w_{\textsf{Y}}}_{C_{\eta;\bar{T}}\mcC^{\beta}}+\norm{\mbX}_{\rksnoise{\alpha}{\kappa}_{\bar{T}}}+\norm{\mbY}_{\rksnoise{\alpha}{\kappa}_{\bar{T}}}+\norm{\mbX}_{\rksnoise{\alpha}{\kappa}_{\bar{T}}}^{2})(1+\norm{\mbX}_{\rksnoise{\alpha}{\kappa}_{\bar{T}}})\\
				&\quad+(\bar{T}^{\frac{\alpha+3}{2}-\eta}\vee \bar{T}^{1-\kappa-\eta})\norm{\mbX-\mbY}_{\rksnoise{\alpha}{\kappa}_{\bar{T}}}\\
				&\quad\qquad\times\Bigl(\norm{w'_{\textsf{Y}}}_{\msL_{\eta;\bar{T}}^{\kappa}\mcC^{\beta'}}\norm{\mbX}_{\rksnoise{\alpha}{\kappa}_{\bar{T}}}+\norm{w'_{\textsf{Y}}}_{C_{\eta;\bar{T}}\mcC^{\beta'}}\norm{\mbY}_{\rksnoise{\alpha}{\kappa}_{\bar{T}}}+\norm{\widetilde{w}^{\#}_{\textsf{Y}}}_{C_{2\eta;\bar{T}}\mcC^{\beta^{\#}}}\\
				&\multiquad[8]+\norm{\widetilde{\Phi}^{\#}_{w_{\textsf{Y}}}}_{C_{2\eta;\bar{T}}\mcC^{\beta^{\#}+1}}+\norm{w'_{\textsf{Y}}}_{C_{\eta;\bar{T}}\mcC^{\beta'}}+\norm{\mbX}_{\rksnoise{\alpha}{\kappa}_{\bar{T}}}^{2}+\norm{w_{\textsf{Y}}}_{C_{\eta;\bar{T}}\mcC^{\beta}}\\
				&\multiquad[18]+\norm{w_{\textsf{X}}}_{C_{2\eta;\bar{T}}\mcC^{\beta}}+\norm{\mbX}_{\rksnoise{\alpha}{\kappa}_{\bar{T}}}+\norm{\mbY}_{\rksnoise{\alpha}{\kappa}_{\bar{T}}}+1\Bigr)(1+\norm{\mbX}_{\rksnoise{\alpha}{\kappa}_{\bar{T}}})
			\end{split}
		\end{equation}
		Choosing $\bar{T}$ smaller, if necessary, we can simplify~\eqref{eq:w_sharp_lipschitz_beta_2} to
		\begin{equation}\label{eq:w_sharp_lipschitz_beta_3}
			\begin{split}
				&\norm{w^{\#}_{\textsf{X}}-w^{\#}_{\textsf{Y}}}_{\msL_{\eta;\bar{T}}^{\kappa}\mcC^{\beta}}\\
				&\lesssim\norm{\rho^{\textsf{X}}_{0}-\rho^{\textsf{Y}}_{0}}_{\mcB_{p,q}^{\beta_{0}}}\\
				&\quad+(\bar{T}^{\frac{\alpha+3}{2}-\eta}\vee \bar{T}^{1-\kappa-\eta})\norm{w_{\textsf{X}}-w_{\textsf{Y}}}_{\msL_{\eta;\bar{T}}^{\kappa}\mcC^{\beta}}\\
				&\quad\qquad\times(\norm{w_{\textsf{X}}}_{C_{\eta;\bar{T}}\mcC^{\beta}}+\norm{w_{\textsf{Y}}}_{C_{\eta;\bar{T}}\mcC^{\beta}}+\norm{\mbX}_{\rksnoise{\alpha}{\kappa}_{\bar{T}}}+\norm{\mbY}_{\rksnoise{\alpha}{\kappa}_{\bar{T}}}+\norm{\mbX}_{\rksnoise{\alpha}{\kappa}_{\bar{T}}}^{2})(1+\norm{\mbX}_{\rksnoise{\alpha}{\kappa}_{\bar{T}}})\\
				&\quad+\norm{\mbX-\mbY}_{\rksnoise{\alpha}{\kappa}_{\bar{T}}}\Bigl(\norm{w'_{\textsf{Y}}}_{\msL_{\eta;\bar{T}}^{\kappa}\mcC^{\beta'}}\norm{\mbX}_{\rksnoise{\alpha}{\kappa}_{\bar{T}}}+\norm{w'_{\textsf{Y}}}_{C_{\eta;\bar{T}}\mcC^{\beta'}}\norm{\mbY}_{\rksnoise{\alpha}{\kappa}_{\bar{T}}}+\norm{\widetilde{w}^{\#}_{\textsf{Y}}}_{C_{2\eta;\bar{T}}\mcC^{\beta^{\#}}}\\
				&\multiquad[8]+\norm{\widetilde{\Phi}^{\#}_{w_{\textsf{Y}}}}_{C_{2\eta;\bar{T}}\mcC^{\beta^{\#}+1}}+\norm{w'_{\textsf{Y}}}_{C_{\eta;\bar{T}}\mcC^{\beta'}}+\norm{\mbX}_{\rksnoise{\alpha}{\kappa}_{\bar{T}}}^{2}+\norm{w_{\textsf{Y}}}_{C_{\eta;\bar{T}}\mcC^{\beta}}\\
				&\multiquad[18]+\norm{w_{\textsf{X}}}_{C_{2\eta;\bar{T}}\mcC^{\beta}}+\norm{\mbX}_{\rksnoise{\alpha}{\kappa}_{\bar{T}}}+\norm{\mbY}_{\rksnoise{\alpha}{\kappa}_{\bar{T}}}+1\Bigr)
			\end{split}
		\end{equation}
		We can now plug~\eqref{eq:w_sharp_lipschitz_beta_3} into~\eqref{eq:w_lipschitz_1} to obtain
		\begin{equation}\label{eq:w_lipschitz_2}
			\begin{split}
				&\norm{w_{\textsf{X}}-w_{\textsf{Y}}}_{\msL_{\eta;\bar{T}}^{\kappa}\mcC^{\beta}}\\
				&\lesssim\norm{\rho^{\textsf{X}}_{0}-\rho^{\textsf{Y}}_{0}}_{\mcB_{p,q}^{\beta_{0}}}\\
				&\quad+(\bar{T}^{\frac{\alpha+3}{2}-\eta}\vee \bar{T}^{1-\kappa-\eta})\norm{w_{\textsf{X}}-w_{\textsf{Y}}}_{\msL_{\eta;\bar{T}}^{\kappa}\mcC^{\beta}}\\
				&\quad\qquad\times(\norm{w_{\textsf{X}}}_{C_{\eta;\bar{T}}\mcC^{\beta}}+\norm{w_{\textsf{Y}}}_{C_{\eta;\bar{T}}\mcC^{\beta}}+\norm{\mbX}_{\rksnoise{\alpha}{\kappa}_{\bar{T}}}+\norm{\mbY}_{\rksnoise{\alpha}{\kappa}_{\bar{T}}}+\norm{\mbX}_{\rksnoise{\alpha}{\kappa}_{\bar{T}}}^{2})(1+\norm{\mbX}_{\rksnoise{\alpha}{\kappa}_{\bar{T}}})\\
				&\quad+\norm{\mbX-\mbY}_{\rksnoise{\alpha}{\kappa}_{\bar{T}}}\Bigl(\norm{w'_{\textsf{Y}}}_{\msL_{\eta;\bar{T}}^{\kappa}\mcC^{\beta'}}\norm{\mbX}_{\rksnoise{\alpha}{\kappa}_{\bar{T}}}+\norm{w'_{\textsf{Y}}}_{C_{\eta;\bar{T}}\mcC^{\beta'}}\norm{\mbY}_{\rksnoise{\alpha}{\kappa}_{\bar{T}}}+\norm{\widetilde{w}^{\#}_{\textsf{Y}}}_{C_{2\eta;\bar{T}}\mcC^{\beta^{\#}}}\\
				&\multiquad[8]+\norm{\widetilde{\Phi}^{\#}_{w_{\textsf{Y}}}}_{C_{2\eta;\bar{T}}\mcC^{\beta^{\#}+1}}+\norm{w'_{\textsf{Y}}}_{C_{\eta;\bar{T}}\mcC^{\beta'}}+\norm{\mbX}_{\rksnoise{\alpha}{\kappa}_{\bar{T}}}^{2}+\norm{w_{\textsf{Y}}}_{C_{\eta;\bar{T}}\mcC^{\beta}}\\
				&\multiquad[18]+\norm{w_{\textsf{X}}}_{C_{\eta;\bar{T}}\mcC^{\beta}}+\norm{\mbX}_{\rksnoise{\alpha}{\kappa}_{\bar{T}}}+\norm{\mbY}_{\rksnoise{\alpha}{\kappa}_{\bar{T}}}+1\Bigr).
			\end{split}
		\end{equation}
		Let $C$ be the implicit constant of~\eqref{eq:w_lipschitz_2}. We can now choose $\bar{T}$ smaller, if necessary, such that
		\begin{equation*}
			C(\bar{T}^{\frac{\alpha+3}{2}-\eta}\vee \bar{T}^{1-\kappa-\eta})\Bigl(\norm{w_{\textsf{X}}}_{C_{\eta;\bar{T}}\mcC^{\beta}}+\norm{w_{\textsf{Y}}}_{C_{\eta;\bar{T}}\mcC^{\beta}}+\norm{\mbX}_{\rksnoise{\alpha}{\kappa}_{\bar{T}}}+\norm{\mbY}_{\rksnoise{\alpha}{\kappa}_{\bar{T}}}+\norm{\mbX}_{\rksnoise{\alpha}{\kappa}_{\bar{T}}}^{2}\Bigr)(1+\norm{\mbX}_{\rksnoise{\alpha}{\kappa}_{\bar{T}}})<1.
		\end{equation*}
		This allows us to absorb the second term of~\eqref{eq:w_lipschitz_2} and simplify to
		\begin{equation*}
			\begin{split}
				&\norm{w_{\textsf{X}}-w_{\textsf{Y}}}_{\msL_{\eta;\bar{T}}^{\kappa}\mcC^{\beta}}\\
				&\lesssim\norm{\rho^{\textsf{X}}_{0}-\rho^{\textsf{Y}}_{0}}_{\mcB_{p,q}^{\beta_{0}}}\\
				&\quad+\norm{\mbX-\mbY}_{\rksnoise{\alpha}{\kappa}_{\bar{T}}}\Bigl(\norm{w'_{\textsf{Y}}}_{\msL_{\eta;\bar{T}}^{\kappa}\mcC^{\beta'}}\norm{\mbX}_{\rksnoise{\alpha}{\kappa}_{\bar{T}}}+\norm{w'_{\textsf{Y}}}_{C_{\eta;\bar{T}}\mcC^{\beta'}}\norm{\mbY}_{\rksnoise{\alpha}{\kappa}_{\bar{T}}}+\norm{\widetilde{w}^{\#}_{\textsf{Y}}}_{C_{2\eta;\bar{T}}\mcC^{\beta^{\#}}}\\
				&\multiquad[8]+\norm{\widetilde{\Phi}^{\#}_{w_{\textsf{Y}}}}_{C_{2\eta;\bar{T}}\mcC^{\beta^{\#}+1}}+\norm{w'_{\textsf{Y}}}_{C_{\eta;\bar{T}}\mcC^{\beta'}}+\norm{\mbX}_{\rksnoise{\alpha}{\kappa}_{\bar{T}}}^{2}+\norm{w_{\textsf{Y}}}_{C_{\eta;\bar{T}}\mcC^{\beta}}\\
				&\multiquad[18]+\norm{w_{\textsf{X}}}_{C_{\eta;\bar{T}}\mcC^{\beta}}+\norm{\mbX}_{\rksnoise{\alpha}{\kappa}_{\bar{T}}}+\norm{\mbY}_{\rksnoise{\alpha}{\kappa}_{\bar{T}}}+1\Bigr).
			\end{split}
		\end{equation*}
		Estimating $\norm{\widetilde{w}^{\#}_{\textsf{Y}}}_{C_{\eta;\bar{T}}\mcC^{\beta^{\#}}}$ and $\norm{\widetilde{\Phi}^{\#}_{w_{\textsf{Y}}}}_{C_{\eta;\bar{T}}\mcC^{\beta^{\#}}}$ as in Lemma~\ref{lem:renormalised_products_estimates}, we obtain the bound
		\begin{equation*}
			\norm{w_{\textsf{X}}-w_{\textsf{Y}}}_{\msL_{\eta;\bar{T}}^{\kappa}\mcC^{\beta}}\lesssim\norm{\rho^{\textsf{X}}_{0}-\rho^{\textsf{Y}}_{0}}_{\mcB_{p,q}^{\beta_{0}}}+\norm{\mbX-\mbY}_{\rksnoise{\alpha}{\kappa}_{\bar{T}}}(\norm{\boldsymbol{w}_{\textsf{X}}}_{\msD_{\bar{T}}}+\norm{\boldsymbol{w}_{\textsf{Y}}}_{\msD_{\bar{T}}}+\norm{\mbX}_{\rksnoise{\alpha}{\kappa}_{\bar{T}}}+\norm{\mbY}_{\rksnoise{\alpha}{\kappa}_{\bar{T}}}+1)^{2}.
		\end{equation*}
		Choosing $\bar{T}$ still smaller, if necessary, we can simplify~\eqref{eq:w_sharp_lipschitz_beta_3} to 
		\begin{equation*}
			\begin{split}
				\norm{w^{\#}_{\textsf{X}}-w^{\#}_{\textsf{Y}}}_{\msL_{\eta;\bar{T}}^{\kappa}\mcC^{\beta}}&\lesssim\norm{\rho^{\textsf{X}}_{0}-\rho^{\textsf{Y}}_{0}}_{\mcB_{p,q}^{\beta_{0}}}+\norm{w_{\textsf{X}}-w_{\textsf{Y}}}_{\msL_{\eta;\bar{T}}^{\kappa}\mcC^{\beta}}\\
				&\quad+\norm{\mbX-\mbY}_{\rksnoise{\alpha}{\kappa}_{\bar{T}}}(\norm{\boldsymbol{w}_{\textsf{X}}}_{\msD_{\bar{T}}}+\norm{\boldsymbol{w}_{\textsf{Y}}}_{\msD_{\bar{T}}}+\norm{\mbX}_{\rksnoise{\alpha}{\kappa}_{\bar{T}}}+\norm{\mbY}_{\rksnoise{\alpha}{\kappa}_{\bar{T}}}+1)^{2}.
			\end{split}
		\end{equation*}
		and we can simplify~\eqref{eq:w_sharp_lipschitz_beta_sharp_3} to 
		\begin{equation*}
			\begin{split}
				\norm{w^{\#}_{\textsf{X}}-w^{\#}_{\textsf{X}}}_{\msL_{2\eta;\bar{T}}^{\kappa}\mcC^{\beta^{\#}}}&\lesssim\norm{\rho^{\textsf{X}}_{0}-\rho^{\textsf{Y}}_{0}}_{\mcB_{p,q}^{\beta_{0}}}+\norm{w_{\textsf{X}}-w_{\textsf{Y}}}_{\msL_{\eta;\bar{T}}^{\kappa}\mcC^{\beta}}\\
				&\quad+\norm{\mbX-\mbY}_{\rksnoise{\alpha}{\kappa}_{\bar{T}}}(\norm{\boldsymbol{w}_{\textsf{X}}}_{\msD_{\bar{T}}}+\norm{\boldsymbol{w}_{\textsf{Y}}}_{\msD_{\bar{T}}}+\norm{\mbX}_{\rksnoise{\alpha}{\kappa}_{\bar{T}}}+\norm{\mbY}_{\rksnoise{\alpha}{\kappa}_{\bar{T}}}+1)^2.
			\end{split}
		\end{equation*}
		We obtain the bound
		\begin{equation*}
			\begin{split}
				&\max\{\norm{w_{\textsf{X}}-w_{\textsf{Y}}}_{\msL_{\eta;\bar{T}}^{\kappa}\mcC^{\beta}},\norm{w^{\#}_{\textsf{X}}-w^{\#}_{\textsf{Y}}}_{\msL_{\eta;\bar{T}}^{\kappa}\mcC^{\beta}},\norm{w^{\#}_{\textsf{X}}-w^{\#}_{\textsf{Y}}}_{\msL_{2\eta;\bar{T}}^{\kappa}\mcC^{\beta^{\#}}},\norm{\rho_{\textsf{X}}-\rho_{\textsf{Y}}}_{\msL_{\eta;\bar{T}}^{\kappa}\mcC^{\alpha+1}}\}\\
				&\lesssim\norm{\rho^{\textsf{X}}_{0}-\rho^{\textsf{Y}}_{0}}_{\mcB_{p,q}^{\beta_{0}}}+\norm{\mbX-\mbY}_{\rksnoise{\alpha}{\kappa}_{\bar{T}}}(\norm{\boldsymbol{w}_{\textsf{X}}}_{\msD_{\bar{T}}}+\norm{\boldsymbol{w}_{\textsf{Y}}}_{\msD_{\bar{T}}}+\norm{\mbX}_{\rksnoise{\alpha}{\kappa}_{\bar{T}}}+\norm{\mbY}_{\rksnoise{\alpha}{\kappa}_{\bar{T}}}+1)^{2}.
			\end{split}
		\end{equation*}
		This shows the local Lipschitz continuity of our paracontrolled solution up to time $\bar{T}$. 
	\end{proof}
\end{details}
\subsection{Maximal Time of Existence}\label{subsec:maximal_time_of_existence}
To establish a maximal time of existence for paracontrolled solutions to~\eqref{eq:gen_rKS_intro}, we iterate the construction of Subsection~\ref{subsec:local_well_posedness} (Theorem~\ref{thm:maximal_existence}). 

Let $(\alpha,p,q,\beta,\beta',\beta^{\#},\beta_{0},\kappa,\eta)$ satisfy~\eqref{eq:exponents}. Assume we have constructed a solution $\boldsymbol{w}\in\msD_{T_{1}}$ to~\eqref{eq:paracontrolled_equation} until time $T_{1}\in(0,T)$. Given the initial data $\rho_{T_{1}}=\ti_{T_{1}}+\ty_{T_{1}}+w_{T_{1}}\in\mcC^{\alpha+1}(\mbT^{2})$ and an enhancement $\mbX\in\rksnoise{\alpha}{\kappa}_{T}$, our paracontrolled Ansatz for the continuation is $\widetilde{\boldsymbol{w}}=(\widetilde{w},\widetilde{w}',\widetilde{w}^{\#})$, where for $t\geq0$,
\begin{equation}\label{eq:paracontrolled_equation_ctd_Ansatz}
	\widetilde{w}_{t}=\vdiv\mcI[\widetilde{w}'\pa\ti_{\shift{T_{1}}}]_{t}+\widetilde{w}^{\#}_{t}
\end{equation}
with Gubinelli derivative
\begin{equation}\label{eq:paracontrolled_equation_ctd_Gubi_der}
	\widetilde{w}'_{t}=\nabla\Phi_{\widetilde{w}_{t}}+\nabla\Phi_{\ty_{T_{1}+t}}
\end{equation}
and paracontrolled remainder
\begin{equation}\label{eq:paracontrolled_equation_ctd_remainder}
	\begin{split}
		\widetilde{w}^{\#}_{t}&=P_{t}(\rho_{T_{1}}-\ti_{T_{1}}-\ty_{T_{1}})+\vdiv\mcI[\widetilde{w}\nabla\Phi_{\widetilde{w}}]_{t}+\vdiv\mcI[\widetilde{w}\nabla\Phi_{\ty_{\shift{T_{1}}}}]_{t}+\vdiv\mcI[\ty_{\shift{T_{1}}}\nabla\Phi_{\widetilde{w}}]_{t}\\
		&\quad+\vdiv\mcI[\ty_{\shift{T_{1}}}\nabla\Phi_{\ty_{\shift{T_{1}}}}]_{t}+\vdiv\mcI[\tp_{\shift{T_{1}}}]_{t}+\vdiv\mcI[\nabla\Phi_{\ti_{\shift{T_{1}}}}\pa\ty_{\shift{T_{1}}}]_{t}\\
		&\quad+\vdiv\mcI[\ty_{\shift{T_{1}}}\pa\nabla\Phi_{\ti_{\shift{T_{1}}}}]_{t}+\vdiv\mcI[\ti_{\shift{T_{1}}}\pa\nabla\Phi_{\ty_{\shift{T_{1}}}}]_{t}+\vdiv\mcI[\widetilde{w}\pa\nabla\Phi_{\ti_{\shift{T_{1}}}}]_{t}\\
		&\quad+\vdiv\mcI[\nabla\Phi_{\ti_{\shift{T_{1}}}}\pa\widetilde{w}]_{t}+\vdiv\mcI[\ti_{\shift{T_{1}}}\pa\nabla\Phi_{\widetilde{w}}]_{t}+\vdiv\mcI[\msP_{T_{1}}(\widetilde{\boldsymbol{w}},\mbX)]_{t},
	\end{split}
\end{equation}
where the renormalised product $\msP_{T_{1}}(\widetilde{\boldsymbol{w}},\mbX)_{t}$ is given by
\begin{equation}\label{eq:paracontrolled_equation_ctd_ren_prod}
	\begin{split}
		\msP_{T_{1}}(\widetilde{\boldsymbol{w}},\mbX)_{t}&\defeq\msC(\widetilde{w}'_{t},\nabla\mcI[\ti_{\shift{T_{1}}}]_{t},\nabla\Phi_{\ti_{T_{1}+t}})+\msC(\widetilde{w}'_{t},\nabla^{2}\mcI[\Phi_{\ti_{\shift{T_{1}}}}]_{t},\ti_{T_{1}+t})\\
		&\quad+(\widetilde{w}^{\#}_{t}+\vdiv\mcI[\widetilde{w}'\pa\ti_{\shift{T_{1}}}]_{t}-\widetilde{w}'_{t}\pa\nabla\mcI[\ti_{\shift{T_{1}}}]_{t})\re\nabla\Phi_{\ti_{T_{1}+t}}\\
		&\quad+(\nabla\Phi_{\widetilde{w}_{t}^{\#}}+\nabla\vdiv\Phi_{\mcI[\widetilde{w}'\pa\ti_{\shift{T_{1}}}]_{t}}-\widetilde{w}'_{t}\pa\nabla^{2}\mcI[\Phi_{\ti_{\shift{T_{1}}}}]_{t})\re\ti_{T_{1}+t}\\
		&\quad+\widetilde{w}'_{t}\tc_{T_{1}+t}-\widetilde{w}'_{t}(\nabla P_{t}\mcI[\ti]_{T_{1}}\re\nabla\Phi_{\ti_{T_{1}+t}}+\nabla^{2}P_{t}\mcI[\Phi_{\ti}]_{T_{1}}\re\ti_{T_{1}+t}).
	\end{split}
\end{equation}
Hence, the only difference between the first iteration (Proposition~\ref{prop:w_local_existence}) and subsequent steps is the initial data $\rho_{T_{1}}-\ti_{T_{1}}-\ty_{T_{1}}$ in~\eqref{eq:paracontrolled_equation_ctd_remainder} and the additional term 
\begin{equation*}
	\widetilde{w}'_{t}(\nabla P_{t}\mcI[\ti]_{T_{1}}\re\nabla\Phi_{\ti_{T_{1}+t}}+\nabla^{2}P_{t}\mcI[\Phi_{\ti}]_{T_{1}}\re\ti_{T_{1}+t})
\end{equation*}
in the renormalised product~\eqref{eq:paracontrolled_equation_ctd_ren_prod}.
\begin{remark}\label{rem:iteration_par_remainder_discontinuous}
	The paracontrolled triple $\widetilde{\boldsymbol{w}}$ of the second iteration step is related to the paracontrolled triple $\boldsymbol{w}$ of the first iteration step by
	\begin{equation*}
		\widetilde{w}_{t}=w_{T_{1}+t},\qquad\widetilde{w}'_{t}=w'_{T_{1}+t},\qquad\widetilde{w}^{\#}_{t}=w^{\#}_{T_{1}+t}+P_{t}(w_{T_{1}}-w^{\#}_{T_{1}})
	\end{equation*}
	(for those $t$ such that both $\widetilde{\boldsymbol{w}}_{t}$ and $\boldsymbol{w}_{T_{1}+t}$ exist.) The additional terms in $\widetilde{w}^{\#}$ and $\msP_{T_{1}}(\widetilde{\boldsymbol{w}},\mbX)$ ensure that the initial data in~\eqref{eq:paracontrolled_equation_ctd_remainder} can be expressed in terms of $\rho_{T_{1}}$, $\ti_{T_{1}}$ and $\ty_{T_{1}}$. Since we assumed $\mbX\in\rksnoise{\alpha}{\kappa}_{T}$, this allows us to restart the equation as long as $\rho_{T_{1}}\in\mcC^{\alpha+1}(\mbT^{2})$. The price to pay is that the dynamics of the paracontrolled remainder changes at $T_{1}$, hence the triple will no longer be continuous on the full time interval of existence. However, this will not be an issue, as we are primarily interested in the dynamics of the solution
	\begin{equation*}
		\widetilde{\rho}_{t}\defeq\ti_{T_{1}+t}+\ty_{T_{1}+t}+\vdiv\mcI[\widetilde{w}'\pa\ti_{T_{1}+\place}]_{t}+\widetilde{w}^{\#}_{t}=\ti_{T_{1}+t}+\ty_{T_{1}+t}+\vdiv\mcI[w'\pa\ti]_{T_{1}+t}+w^{\#}_{T_{1}+t}=\rho_{T_{1}+t}
	\end{equation*}
	which will still be continuous on the full time interval.
\end{remark}
\begin{details}
	\paragraph{Proof that~\eqref{eq:paracontrolled_equation_ctd_Ansatz}--\eqref{eq:paracontrolled_equation_ctd_ren_prod} is compatible with Definition~\ref{def:paracontrolled_solution}.}
	To show that the iteration~\eqref{eq:paracontrolled_equation_ctd_Ansatz}--\eqref{eq:paracontrolled_equation_ctd_ren_prod} produces a paracontrolled solution in the sense of Definition~\ref{def:paracontrolled_solution}, we will repeatedly apply the Markov property of the heat equation
	\begin{equation}\label{eq:Markov_heat_eq}
		(\mcI[f]\circ\Theta_{T_{1}})(t)=P_{t}\mcI[f]_{T_{1}}+\mcI[f\circ\Theta_{T_{1}}]_{t}.
	\end{equation}
	Let $\rho$ be a paracontrolled solution in the sense of Definition~\ref{def:paracontrolled_solution}. Let $T_{1}>0$, $t\geq0$ and define $\widetilde{\rho}_{t}=\rho_{T_{1}+t}$, it follows that
	\begin{equation*}
		\widetilde{\rho}_{t}=\ti_{T_{1}+t}+\ty_{T_{1}+t}+\vdiv\mcI[w'\pa\ti]_{T_{1}+t}+w^{\#}_{T_{1}+t}.
	\end{equation*}
	We use the Markov property~\eqref{eq:Markov_heat_eq} to write
	\begin{equation*}
		\widetilde{\rho}_{t}=\ti_{T_{1}+t}+\ty_{T_{1}+t}+\vdiv\mcI[w'_{\shift{T_{1}}}\pa\ti_{\shift{T_{1}}}]_{t}+P_{t}\vdiv\mcI[w'\pa\ti]_{T_{1}}+w^{\#}_{T_{1}+t}.
	\end{equation*}
	Define $\widetilde{w}_{t}=w_{T_{1}+t}$, $\widetilde{w}'_{t}=w'_{T_{1}+t}$ and $\widetilde{w}^{\#}_{t}=w^{\#}_{T_{1}+t}+P_{t}\vdiv\mcI[w'\pa\ti]_{T_{1}}$. It follows that $\widetilde{w}'_{t}=\nabla\Phi_{w_{T_{1}+t}}+\nabla\Phi_{\ty_{T_{1}+t}}=\nabla\Phi_{\widetilde{w}_{t}}+\nabla\Phi_{\ty_{T_{1}+t}}$. The remainder $w^{\#}_{T_{1}+t}$ solves by Markov's property~\eqref{eq:Markov_heat_eq},
	\begin{equation*}
		\begin{split}
			w^{\#}_{T_{1}+t}&=P_{T_{1}+t}\rho_{0}+\vdiv\mcI[\Omega^{\#}(\boldsymbol{w})]_{T_{1}+t}\\
			&=P_{t}(P_{T_{1}}\rho_{0}+\vdiv\mcI[\Omega^{\#}(\boldsymbol{w})]_{T_{1}})+\vdiv\mcI[\Omega^{\#}(\boldsymbol{w})\circ\Theta_{T_{1}}]_{t}\\
			&=P_{t}(w_{T_{1}}-\vdiv\mcI[w'\pa\ti]_{T_{1}})+\vdiv\mcI[\widetilde{w}\nabla\Phi_{\widetilde{w}}]_{t}+\vdiv\mcI[\widetilde{w}\nabla\Phi_{\ty_{\shift{T_{1}}}}]_{t}+\vdiv\mcI[\ty_{\shift{T_{1}}}\nabla\Phi_{\widetilde{w}}]_{t}\\
			&\quad+\vdiv\mcI[\ty_{\shift{T_{1}}}\nabla\Phi_{\ty_{\shift{T_{1}}}}]_{t}+\vdiv\mcI[\tp_{\shift{T_{1}}}]_{t}+\vdiv\mcI[\nabla\Phi_{\ti_{\shift{T_{1}}}}\pa\ty_{\shift{T_{1}}}]_{t}+\vdiv\mcI[\ty_{\shift{T_{1}}}\pa\nabla\Phi_{\ti_{\shift{T_{1}}}}]_{t}\\
			&\quad+\vdiv\mcI[\ti_{\shift{T_{1}}}\pa\nabla\Phi_{\ty_{\shift{T_{1}}}}]_{t}+\vdiv\mcI[\widetilde{w}\pa\nabla\Phi_{\ti_{\shift{T_{1}}}}]_{t}+\vdiv\mcI[\nabla\Phi_{\ti_{\shift{T_{1}}}}\pa\widetilde{w}]_{t}+\vdiv\mcI[\ti_{\shift{T_{1}}}\pa\nabla\Phi_{\widetilde{w}}]_{t}\\
			&\quad+\vdiv\mcI[\msP(\boldsymbol{w},\mbX)\circ\Theta_{T_{1}}]_{t},
		\end{split}
	\end{equation*}
	where
	\begin{equation*}
		\begin{split}
			\msP(\boldsymbol{w},\mbX)\circ\Theta_{T_{1}}(t)&=\msP(\boldsymbol{w},\mbX)_{T_{1}+t}\\
			&=\msC(\widetilde{w}'_{t},\nabla\mcI[\ti]_{T_{1}+t},\nabla\Phi_{\ti_{T_{1}+t}})+\msC(\widetilde{w}'_{t},\nabla^{2}\mcI[\Phi_{\ti}]_{T_{1}+t},\ti_{T_{1}+t})\\
			&\quad+(w^{\#}_{T_{1}+t}+\vdiv\mcI[w'\pa\ti]_{T_{1}+t}-\widetilde{w}'_{t}\pa\nabla\mcI[\ti]_{T_{1}+t})\re\nabla\Phi_{\ti_{T_{1}+1}}\\
			&\quad+(\nabla\Phi_{w^{\#}_{T_{1}+t}}+\nabla\vdiv\Phi_{\mcI[w'\pa\ti]_{T_{1}+t}}-\widetilde{w}'_{t}\pa\nabla^{2}\mcI[\Phi_{\ti}]_{T_{1}+t})\re\ti_{T_{1}+t}+\widetilde{w}'_{t}\tc_{T_{1}+t}.
		\end{split}
	\end{equation*}
	By the Markov property~\eqref{eq:Markov_heat_eq},
	\begin{equation*}
		\begin{split}
			&\msC(\widetilde{w}'_{t},\nabla\mcI[\ti]_{T_{1}+t},\nabla\Phi_{\ti_{T_{1}+t}})+\msC(\widetilde{w}'_{t},\nabla^{2}\mcI[\Phi_{\ti}]_{T_{1}+t},\ti_{T_{1}+t})\\
			&=\msC(\widetilde{w}'_{t},\nabla\mcI[\ti_{\shift{T_{1}}}]_{t},\nabla\Phi_{\ti_{T_{1}+t}})+\msC(\widetilde{w}'_{t},\nabla^{2}\mcI[\Phi_{\ti_{\shift{T_{1}}}}]_{t},\ti_{T_{1}+t})\\
			&\quad+\msC(\widetilde{w}'_{t},P_{t}\nabla\mcI[\ti]_{T_{1}},\nabla\Phi_{\ti_{T_{1}+t}})+\msC(\widetilde{w}'_{t},P_{t}\nabla^{2}\mcI[\Phi_{\ti}]_{T_{1}},\ti_{T_{1}+t}),
		\end{split}
	\end{equation*}
	\begin{equation*}
		\begin{split}
			&\vdiv\mcI[w'\pa\ti]_{T_{1}+t}-\widetilde{w}'_{t}\pa\nabla\mcI[\ti]_{T_{1}+t}\\
			&=\vdiv\mcI[\widetilde{w}'\pa\ti_{\shift{T_{1}}}]_{t}-\widetilde{w}'_{t}\pa\nabla\mcI[\ti_{\shift{T_{1}}}]_{t}+P_{t}\vdiv\mcI[w'\pa\ti]_{T_{1}}-\widetilde{w}'_{t}\pa P_{t}\nabla\mcI[\ti]_{T_{1}},
		\end{split}
	\end{equation*}
	\begin{equation*}
		\begin{split}
			&\nabla\vdiv\Phi_{\mcI[w'\pa\ti]_{T_{1}+t}}-\widetilde{w}'_{t}\pa\nabla^{2}\mcI[\Phi_{\ti}]_{T_{1}+t}\\
			&=\nabla\vdiv\Phi_{\mcI[\widetilde{w}'\pa\ti_{\shift{T_{1}}}]_{t}}-\widetilde{w}'_{t}\pa\nabla^{2}\mcI[\Phi_{\ti_{\shift{T_{1}}}}]_{t}+P_{t}\nabla\vdiv\Phi_{\mcI[w'\pa\ti]_{T_{1}}}-\widetilde{w}'_{t}\pa P_{t}\nabla^{2}\mcI[\Phi_{\ti}]_{T_{1}}
		\end{split}
	\end{equation*}
	and, using the identity $w^{\#}_{T_{1}+t}=\widetilde{w}^{\#}_{t}- P_{t}\vdiv\mcI[w'\pa\ti]_{T_{1}}$, we obtain
	\begin{equation*}
		\begin{split}
			&\msP(\boldsymbol{w},\mbX)_{T_{1}+t}\\
			&=\msC(\widetilde{w}'_{t},\nabla\mcI[\ti_{\shift{T_{1}}}]_{t},\nabla\Phi_{\ti_{T_{1}+t}})+\msC(\widetilde{w}'_{t},\nabla^{2}\mcI[\Phi_{\ti_{\shift{T_{1}}}}]_{t},\ti_{T_{1}+t})\\
			&\quad+(\widetilde{w}^{\#}_{t}+\vdiv\mcI[\widetilde{w}'\pa\ti_{\shift{T_{1}}}]_{t}-\widetilde{w}'_{t}\pa\nabla\mcI[\ti_{\shift{T_{1}}}]_{t})\re\nabla\Phi_{\ti_{T_{1}+1}}\\
			&\quad+(\nabla\Phi_{\widetilde{w}^{\#}_{t}}+\nabla\vdiv\Phi_{\mcI[\widetilde{w}'\pa\ti_{\shift{T_{1}}}]_{t}}-\widetilde{w}'_{t}\pa\nabla^{2}\mcI[\Phi_{\ti_{\shift{T_{1}}}}]_{t})\re\ti_{T_{1}+t}+\widetilde{w}'_{t}\tc_{T_{1}+t}\\
			&\quad+\msC(\widetilde{w}'_{t},P_{t}\nabla\mcI[\ti]_{T_{1}},\nabla\Phi_{\ti_{T_{1}+t}})+\msC(\widetilde{w}'_{t},P_{t}\nabla^{2}\mcI[\Phi_{\ti}]_{T_{1}},\ti_{T_{1}+t})\\
			&\quad+(-P_{t}\vdiv\mcI[w'\pa\ti]_{T_{1}}+P_{t}\vdiv\mcI[w'\pa\ti]_{T_{1}}-\widetilde{w}'_{t}\pa P_{t}\nabla\mcI[\ti]_{T_{1}})\re\nabla\Phi_{\ti_{T_{1}+1}}\\
			&\quad+(-\nabla\Phi_{P_{t}\vdiv\mcI[w'\pa\ti]_{T_{1}}}+P_{t}\nabla\vdiv\Phi_{\mcI[w'\pa\ti]_{T_{1}}}-\widetilde{w}'_{t}\pa P_{t}\nabla^{2}\mcI[\Phi_{\ti}]_{T_{1}})\re\ti_{T_{1}+t}.
		\end{split}
	\end{equation*}
	We simplify the term
	\begin{equation*}
		\begin{split}
			&\msC(\widetilde{w}'_{t},P_{t}\nabla\mcI[\ti]_{T_{1}},\nabla\Phi_{\ti_{T_{1}+t}})+\msC(\widetilde{w}'_{t},P_{t}\nabla^{2}\mcI[\Phi_{\ti}]_{T_{1}},\ti_{T_{1}+t})\\
			&\quad+(-P_{t}\vdiv\mcI[w'\pa\ti]_{T_{1}}+P_{t}\vdiv\mcI[w'\pa\ti]_{T_{1}}-\widetilde{w}'_{t}\pa P_{t}\nabla\mcI[\ti]_{T_{1}})\re\nabla\Phi_{\ti_{T_{1}+1}}\\
			&\quad+(-\nabla\Phi_{P_{t}\vdiv\mcI[w'\pa\ti]_{T_{1}}}+P_{t}\nabla\vdiv\Phi_{\mcI[w'\pa\ti]_{T_{1}}}-\widetilde{w}'_{t}\pa P_{t}\nabla^{2}\mcI[\Phi_{\ti}]_{T_{1}})\re\ti_{T_{1}+t}\\
			&=\msC(\widetilde{w}'_{t},P_{t}\nabla\mcI[\ti]_{T_{1}},\nabla\Phi_{\ti_{T_{1}+t}})+\msC(\widetilde{w}'_{t},P_{t}\nabla^{2}\mcI[\Phi_{\ti}]_{T_{1}},\ti_{T_{1}+t})\\
			&\quad-(\widetilde{w}'_{t}\pa P_{t}\nabla\mcI[\ti]_{T_{1}})\re\nabla\Phi_{\ti_{T_{1}+1}}-(\widetilde{w}'_{t}\pa P_{t}\nabla^{2}\mcI[\Phi_{\ti}]_{T_{1}})\re\ti_{T_{1}+t}\\
			&=-\widetilde{w}'_{t}(P_{t}\nabla\mcI[\ti]_{T_{1}}\re\nabla\Phi_{\ti_{T_{1}+t}}+P_{t}\nabla^{2}\mcI[\Phi_{\ti}]_{T_{1}}\re\ti_{T_{1}+t}).
		\end{split}
	\end{equation*}
	Therefore,
	\begin{equation*}
		\begin{split}
			&\msP(\boldsymbol{w},\mbX)_{T_{1}+t}\\
			&=\msC(\widetilde{w}'_{t},\nabla\mcI[\ti_{\shift{T_{1}}}]_{t},\nabla\Phi_{\ti_{T_{1}+t}})+\msC(\widetilde{w}'_{t},\nabla^{2}\mcI[\Phi_{\ti_{\shift{T_{1}}}}]_{t},\ti_{T_{1}+t})\\
			&\quad+(\widetilde{w}^{\#}_{t}+\vdiv\mcI[\widetilde{w}'\pa\ti_{\shift{T_{1}}}]_{t}-\widetilde{w}'_{t}\pa\nabla\mcI[\ti_{\shift{T_{1}}}]_{t})\re\nabla\Phi_{\ti_{T_{1}+1}}\\
			&\quad+(\nabla\Phi_{\widetilde{w}^{\#}_{t}}+\nabla\vdiv\Phi_{\mcI[\widetilde{w}'\pa\ti_{\shift{T_{1}}}]_{t}}-\widetilde{w}'_{t}\pa\nabla^{2}\mcI[\Phi_{\ti_{\shift{T_{1}}}}]_{t})\re\ti_{T_{1}+t}\\
			&\quad+\widetilde{w}'_{t}\tc_{T_{1}+t}-\widetilde{w}'_{t}(P_{t}\nabla\mcI[\ti]_{T_{1}}\re\nabla\Phi_{\ti_{T_{1}+t}}+P_{t}\nabla^{2}\mcI[\Phi_{\ti}]_{T_{1}}\re\ti_{T_{1}+t})\\
			&=\msP_{T_{1}}(\boldsymbol{w},\mbX)_{t}.
		\end{split}
	\end{equation*}
	Hence, $\widetilde{w}^{\#}_{t}=w^{\#}_{T_{1}+t}+P_{t}\vdiv\mcI[w'\pa\ti]_{T_{1}}$ satisfies the equation
	\begin{equation*}
		\begin{split}
			&\widetilde{w}^{\#}_{t}\\
			&=P_{t}w_{T_{1}}+\vdiv\mcI[\widetilde{w}\nabla\Phi_{\widetilde{w}}]_{t}+\vdiv\mcI[\widetilde{w}\nabla\Phi_{\ty_{\shift{T_{1}}}}]_{t}+\vdiv\mcI[\ty_{\shift{T_{1}}}\nabla\Phi_{\widetilde{w}}]_{t}\\
			&\quad+\vdiv\mcI[\ty_{\shift{T_{1}}}\nabla\Phi_{\ty_{\shift{T_{1}}}}]_{t}+\vdiv\mcI[\tp_{\shift{T_{1}}}]_{t}+\vdiv\mcI[\nabla\Phi_{\ti_{\shift{T_{1}}}}\pa\ty_{\shift{T_{1}}}]_{t}+\vdiv\mcI[\ty_{\shift{T_{1}}}\pa\nabla\Phi_{\ti_{\shift{T_{1}}}}]_{t}\\
			&\quad+\vdiv\mcI[\ti_{\shift{T_{1}}}\pa\nabla\Phi_{\ty_{\shift{T_{1}}}}]_{t}+\vdiv\mcI[\widetilde{w}\pa\nabla\Phi_{\ti_{\shift{T_{1}}}}]_{t}+\vdiv\mcI[\nabla\Phi_{\ti_{\shift{T_{1}}}}\pa\widetilde{w}]_{t}+\vdiv\mcI[\ti_{\shift{T_{1}}}\pa\nabla\Phi_{\widetilde{w}}]_{t}\\
			&\quad+\vdiv\mcI[\msP_{T_{1}}(\boldsymbol{w},\mbX)]_{t},
		\end{split}
	\end{equation*}
	which is precisely~\eqref{eq:paracontrolled_equation_ctd_remainder}.
\end{details}

We can now use a technique inspired by~\cite[Derivation of~(1.27)]{mourrat_weber_17_CDI} to establish a maximal time of existence $\Tmax\leq T$ and a paracontrolled solution on $[0,\Tmax)$.
\begin{theorem}\label{thm:maximal_existence}
	Let $(\alpha,p,q,\beta,\beta',\beta^{\#},\beta_{0},\kappa,\eta)$ satisfy~\eqref{eq:exponents}, $T>0$, $\mbX\in\rksnoise{\alpha}{\kappa}_{T}$ and $\rho_{0}\in\mcB_{p,q}^{\beta_{0}}(\mbT^{2})$. Then there exists a $\Tmax\in(0,T]$ and a unique paracontrolled solution $\rho\from[0,\Tmax)\to\mcS'(\mbT^{2})$ to~\eqref{eq:gen_rKS_intro} on $[0,\Tmax)$ with enhancement $\mbX$ and initial data $\rho_{0}$, such that
	\begin{equation*}
		\Tmax=T\qquad\text{or}\qquad\lim_{t\uparrow\Tmax}\norm{\rho_{t}}_{\mcC^{\alpha+1}}=\infty.
	\end{equation*}
\end{theorem}
\begin{proof}
	Let $(\alpha,p,q,\beta,\beta',\beta^{\#},\beta_{0},\kappa,\eta)$ satisfy~\eqref{eq:exponents}. Since $\rho_{0}\in\mcB_{p,q}^{\beta_{0}}(\mbT^{2})$, we can apply Proposition~\ref{prop:w_local_existence} and run the equation for some small time $\bar{t}_{0}=\bar{t}_{0}(\norm{\mbX}_{\rksnoise{\alpha}{\kappa}_{T}},\norm{\rho_{0}}_{\mcB_{p,q}^{\beta_{0}}})\leq T$. By construction, we obtain $\rho_{t_{0}}\in\mcC^{\alpha+1}(\mbT^{2})$ for each $0<t_{0}\leq\bar{t}_{0}$.
	
	For the second iteration step, we choose a new tuple of exponents $(\alpha,\infty,\infty,\beta_{1},\beta'_{1},\beta^{\#}_{1},\alpha+1,\kappa,\eta_{1})$ satisfying the assumptions~\eqref{eq:exponents} and
	\begin{equation}\label{eq:exponents_Tmax}
		\eta_{1}\in\Bigl[\Bigl(\frac{\beta_{1}-(\alpha+1)}{2}\Bigr)\vee\Bigl(\frac{\beta^{\#}_{1}-(\alpha+1)}{4}\Bigr),\frac{\alpha+3}{2}\Bigr)\cap\Bigl(\frac{-2\alpha-4}{2},\frac{\alpha+3}{2}\Bigr).
	\end{equation}
	\begin{details}
		
		We need to show that  $(\alpha,\infty,\infty,\beta_{1},\beta'_{1},\beta^{\#}_{1},\alpha+1,\kappa,\eta_{1})$ can be chosen such that
		\begin{equation*}
			\begin{split}
				\alpha\in(-9/4,-2),&\qquad q=\infty,\\
				p=\infty,&\qquad\beta_{1}\in(-1/2,2\alpha+4),\\
				\beta'_{1}\in(-2\alpha-4,(\beta_{1}+1)\wedge(2\alpha+5)],&\qquad\beta^{\#}_{1}\in(-\alpha-2,\alpha+\beta'_{1}+2),\\
				\alpha+1\in(\beta_{1}-(\alpha+3),\beta_{1}],&\qquad\kappa\in((\beta^{\#}_{1}-\alpha-2)/2,1/2),\\
				\eta_{1}\in\Bigl[\Bigl(\frac{\beta_{1}-(\alpha+1)}{2}\Bigr)\vee\Bigl(\frac{\beta^{\#}_{1}-(\alpha+1)}{4}\Bigr),&\frac{\alpha+3}{2}\Bigr)\cap\Bigl(\frac{-2\alpha-4}{2},\frac{\alpha+3}{2}\Bigr),
			\end{split}
		\end{equation*}
		where $\alpha$ and $\kappa$ are the same as in our first application of Proposition~\ref{prop:w_local_existence} (so that we may measure the enhancement $\mbX$ in the same space $\rksnoise{\alpha}{\kappa}_{T}$ as in the first iteration step.)
		
		The interval $\beta_{1}\in(-1/2,2\alpha+4)$ is non-empty, since
		\begin{equation*}
			-1/2<2\alpha+4\quad\iff\quad-9/4<\alpha.
		\end{equation*}
		Given $\beta_{1}\in(-1/2,2\alpha+4)\subset(-1/2,\alpha+2)$, it follows by the derivation of~\eqref{eq:exponents} that the intervals $\beta'_{1}\in(-2\alpha-4,(\beta_{1}+1)\wedge(2\alpha+5)]$ and $\beta^{\#}_{1}\in(-\alpha-2,\alpha+\beta'_{1}+2)$ are non-empty. Next we need to make sure that $\alpha+1\in(\beta_{1}-(\alpha+3),\beta_{1}]$. We estimate
		\begin{equation*}
			\alpha+1<-\frac{1}{2}<\beta_{1}
		\end{equation*}
		and
		\begin{equation*}
			\beta_{1}-(\alpha+3)<\alpha+1\quad\iff\quad\beta_{1}<2\alpha+4,
		\end{equation*}
		which is already satisfied.
		
		To ensure $\kappa\in((\beta^{\#}_{1}-\alpha-2)/2,1/2)$, we choose $\beta_{1}^{\#}\leq\beta_{1}$. (We can always choose $\beta_{1}^{\#}$ closer to $-\alpha-2$ to achieve this). It then follows that 
		\begin{equation*}
			\frac{\beta^{\#}_{1}-\alpha-2}{2}\leq\frac{\beta^{\#}-\alpha-2}{2}<\kappa<\frac{1}{2}.
		\end{equation*}
		The intersection
		\begin{equation*}
			\eta_{1}\in\Bigl[\Bigl(\frac{\beta_{1}-(\alpha+1)}{2}\Bigr)\vee\Bigl(\frac{\beta^{\#}_{1}-(\alpha+1)}{4}\Bigr),\frac{\alpha+3}{2}\Bigr)\cap\Bigl(\frac{-2\alpha-4}{2},\frac{\alpha+3}{2}\Bigr)
		\end{equation*}
		is non-empty, since by the derivation of~\eqref{eq:exponents},
		\begin{equation*}
			\Bigl(\frac{\beta_{1}-(\alpha+1)}{2}\Bigr)\vee\Bigl(\frac{\beta^{\#}_{1}-(\alpha+1)}{4}\Bigr)<\frac{\alpha+3}{2};
		\end{equation*}
		and
		\begin{equation*}
			\frac{-2\alpha-4}{2}<\frac{\alpha+3}{2}\quad\iff\quad-7<3\alpha,
		\end{equation*}
		which holds true by $-7/3<-9/4<\alpha$.
		
	\end{details}
	\begin{details}
		We cannot simply set $\beta_{1}=\beta$, $\beta'_{1}=\beta'$ and $\beta^{\#}_{1}=\beta^{\#}$. For instance, $\beta\in(-1/2,\alpha+2)$, whereas by assumption $\beta_{1}\in(-1/2,2\alpha+4)$. If $\beta\in[2\alpha+4,\alpha+2)$, then $\beta=\beta_{1}$ won't be possible.
		
	\end{details}
	Using the assumption~\eqref{eq:exponents_Tmax}, we can control the additional term appearing in~\eqref{eq:paracontrolled_equation_ctd_ren_prod} via Bony's estimates (Lemma~\ref{lem:Bony}) and Schauder's estimates (Lemma~\ref{lem:Schauder}).
	
	\begin{details}
		Let $S\leq T-T_{1}$, to estimate the new term in the renormalised product~\eqref{eq:paracontrolled_equation_ctd_ren_prod}, we use Bony's estimates (Lemma~\ref{lem:Bony}) with $\beta'_{1}+2\alpha+4>0$,
		\begin{equation*}
			\begin{split}
				&\norm{\widetilde{w}'(\nabla P\mcI[\ti]_{T_{1}}\re\nabla\Phi_{\ti_{T_{1}+\place}}+\nabla^{2}P\mcI[\Phi_{\ti}]_{T_{1}}\re\ti_{T_{1}+\place})}_{C_{2\eta_{1};S}\mcC^{2\alpha+4}}\\
				&\lesssim\norm{\widetilde{w}'}_{C_{\eta_{1};S}\mcC^{\beta'_{1}}}(\norm{\nabla P\mcI[\ti]_{T_{1}}\re\nabla\Phi_{\ti_{T_{1}+\place}}}_{C_{\eta_{1};S}\mcC^{2\alpha+4}}+\norm{\nabla^{2}P\mcI[\Phi_{\ti}]_{T_{1}}\re\ti_{T_{1}+\place}}_{C_{\eta_{1};S}\mcC^{2\alpha+4}}).
			\end{split}
		\end{equation*}
		We then use Bony's estimate to bound
		\begin{equation*}
			\norm{\nabla P\mcI[\ti]_{T_{1}}\re\nabla\Phi_{\ti_{T_{1}+\place}}}_{C_{\eta_{1};S}\mcC^{2\alpha+4}}\lesssim\norm{\nabla P\mcI[\ti]_{T_{1}}}_{C_{\eta_{1};S}\mcC^{\alpha+2+2\eta_{1}}}\norm{\nabla\Phi_{\ti_{T_{1}+\place}}}_{C_{S}\mcC^{\alpha+2}}
		\end{equation*}
		and
		\begin{equation*}
			\norm{\nabla^{2}P\mcI[\Phi_{\ti}]_{T_{1}}\re\ti_{T_{1}+\place}}_{C_{\eta_{1};S}\mcC^{2\alpha+4}}\lesssim\norm{\nabla^{2}P\mcI[\Phi_{\ti}]_{T_{1}}}_{C_{\eta_{1};S}\mcC^{\alpha+3+2\eta_{1}}}\norm{\ti_{T_{1}+\place}}_{C_{S}\mcC^{\alpha+1}}.
		\end{equation*}
		We bound by Schauder's estimate (Lemma~\ref{lem:Schauder}),
		\begin{equation*}
			\begin{split}
				\norm{\nabla P\mcI[\ti]_{T_{1}}}_{C_{\eta_{1};S}\mcC^{\alpha+2+2\eta_{1}}}\lesssim\norm{P\mcI[\ti]_{T_{1}}}_{C_{\eta_{1};S}\mcC^{\alpha+3+2\eta_{1}}}&\lesssim(1\vee S^{-\eta_{1}})(1\wedge S)^{\eta_{1}}\norm{\mcI[\ti]_{T_{1}}}_{\mcC^{\alpha+3}}\\
				&\lesssim(1\vee S^{-\eta_{1}})(1\wedge S)^{\eta_{1}}\norm{\ti}_{C_{T_{1}}\mcC^{\alpha+1}},
			\end{split}
		\end{equation*}
		where $(1\vee S^{-\eta_{1}})(1\wedge S)^{\eta_{1}}=(1\wedge S^{\eta_{1}})^{-1}(1\wedge S)^{\eta_{1}}=1$.
		
	\end{details}
	\begin{details}
		To estimate the initial data in~\eqref{eq:paracontrolled_equation_ctd_remainder}, we use the triangle inequality and $\alpha+1<2\alpha+4$,
		\begin{equation*}
			\norm{\rho_{T_{1}}-\ti_{T_{1}}-\ty_{T_{1}}}_{\mcC^{\alpha+1}}\lesssim\norm{\rho_{T_{1}}}_{\mcC^{\alpha+1}}+\norm{\ti_{T_{1}}}_{\mcC^{\alpha+1}}+\norm{\ty_{T_{1}}}_{\mcC^{2\alpha+4}}.
		\end{equation*}
		
	\end{details}
	Let $R>0$ be such that $\norm{\mbX}_{\rksnoise{\alpha}{\kappa}_{T}}\leq R$ and assume that $0<t_{0}<\bar{t}_{0}$ satisfies $\norm{\rho_{t_{0}}}_{\mcC^{\alpha+1}}\leq R$. We can then use the decomposition~\eqref{eq:paracontrolled_equation_ctd_Ansatz}--\eqref{eq:paracontrolled_equation_ctd_ren_prod} and an argument analogous to Proposition~\ref{prop:w_local_existence} to re-start the equation from $\rho_{t_{0}}$ and run it until time $t_{0}+T(R)$ for some $T(R)>0$.
	
	Next, assume $\norm{\rho_{t_{0}+T(R)/2}}_{\mcC^{\alpha+1}}\leq R$, we can then re-start the equation at time $t_{0}+T(R)/2$ from $\rho_{t_{0}+T(R)/2}$ and run it until $t_{0}+T(R)/2+T(R)$ with the same exponents. We can then continue this procedure on the intervals
	\begin{equation}\label{eq:intervals_iteration}
		[0,\bar{t}_{0}],\quad[t_{0},t_{0}+T(R)],\quad\Bigl[t_{0}+\frac{T(R)}{2},t_{0}+\frac{T(R)}{2}+T(R)\Bigr],\quad\Bigl[t_{0}+T(R),t_{0}+2T(R)\Bigr],\quad\ldots
	\end{equation}
	until the first time $t\in[t_{0},T]$ such that $\norm{\rho_{t}}_{\mcC^{\alpha+1}}>R$, or, if such a $t$ does not exist, until $T$. We denote this time horizon by $\Tmax^{(R)}$. This yields a paracontrolled solution $\rho\from[0,\Tmax^{(R)}]\to\mcS'(\mbT^{2})$ to~\eqref{eq:gen_rKS_intro} with enhancement $\mbX$ and initial data $\rho_{0}\in\mcB_{p,q}^{\beta_{0}}(\mbT^{2})$, where we used that the intervals~\eqref{eq:intervals_iteration} had some overlap as in~\cite[Derivation of~(1.27)]{mourrat_weber_17_CDI} to ensure that $\rho$ is independent of the choice of iteration used in its construction. Furthermore, the paracontrolled solution is unique by an application of Lemma~\ref{lem:paracontrolled_solution_unique}.
	\begin{details}
		\paragraph{Continuity of $[0,\Tmax^{(R)}]\ni t\mapsto(1\wedge t)^{\eta}\rho_{t}\in\mcC^{\alpha+1}(\mbT^{2})$.}
		It follows by the first iteration step that $[0,\bar{t}_{0}]\ni t\mapsto(1\wedge t)^{\eta}\rho_{t}$ is continuous in $\mcC^{\alpha+1}(\mbT^{2})$. It then follows by the second iteration step with $t_{0}<\bar{t}_{0}$, that $[t_{0},t_{0}+T(R)]\ni t\mapsto(1\wedge(t-t_{0}))^{\eta_{1}}\rho_{t}$ is continuous in $\mcC^{\alpha+1}(\mbT^{2})$. Hence, by the continuity of $[\bar{t}_{0},t_{0}+T(R)]\ni t\mapsto(1\wedge t)^{\eta}(1\wedge(t-t_{0}))^{-\eta_{1}}$, it follows that
		\begin{equation*}
			[\bar{t}_{0},t_{0}+T(R)]\ni t\mapsto(1\wedge t)^{\eta}\rho_{t}=(1\wedge t)^{\eta}(1\wedge(t-t_{0}))^{-\eta_{1}}(1\wedge(t-t_{0}))^{\eta_{1}}\rho_{t}
		\end{equation*}
		is continuous in $\mcC^{\alpha+1}(\mbT^{2})$. We can then iterate the argument to obtain the continuity of $[0,\Tmax^{(R)}]\ni t\mapsto(1\wedge t)^{\eta}\rho_{t}\in\mcC^{\alpha+1}(\mbT^{2})$.
		\paragraph{The bound $\sup_{t\in[0,\Tmax^{(R)}]}(1\wedge t)^{\eta}\norm{\rho_{t}}_{\mcC^{\alpha+1}}<\infty$.}
		Let $t\in[t_{0},t_{0}+T(R)]$. By the construction of the second iteration step, we obtain
		\begin{equation*}
			\norm{\rho_{t}}_{\mcC^{\alpha+1}}\lesssim(1\wedge (t-t_{0}))^{-\eta_{1}}
		\end{equation*}
		and it is not clear a priori if we can achieve a uniform estimate for $t$ close to $t_{0}$. To solve this problem, we use that there is some overlap between $[0,\bar{t}_{0}]$ and $[t_{0},t_{0}+T(R)]$. We estimate for all $0<t\leq\bar{t}_{0}$,
		\begin{equation*}
			(1\wedge t)^{\eta}\norm{\rho_{t}}_{\mcC^{\alpha+1}}\leq\norm{\rho}_{C_{\eta;\bar{t}_{0}\mcC^{\alpha+1}}};
		\end{equation*}
		and for all $\bar{t}_{0}\leq t\leq t_{0}+T(R)$,
		\begin{equation*}
			\begin{split}
				(1\wedge t)^{\eta}\norm{\rho_{t}}_{\mcC^{\alpha+1}}&=(1\wedge t)^{\eta}(1\wedge(t-t_{0}))^{-\eta_{1}}(1\wedge(t-t_{0}))^{\eta_{1}}\norm{\rho_{t}}_{\mcC^{\alpha+1}}\\
				&\leq(1\wedge t)^{\eta}(1\wedge(t-t_{0}))^{-\eta_{1}}\norm{\rho_{\shift{t_{0}}}}_{C_{\eta_{1};T(R)}\mcC^{\alpha+1}}\\
				&\lesssim(1\wedge T)^{\eta}(1\wedge(\bar{t}_{0}-t_{0}))^{-\eta_{1}}\norm{\rho_{\shift{t_{0}}}}_{C_{\eta_{1};T(R)}\mcC^{\alpha+1}}.
			\end{split}
		\end{equation*}
		Consequently,
		\begin{equation*}
			\sup_{t\in[0,t_{0}+T(R)]}(1\wedge t)^{\eta}\norm{\rho_{t}}_{\mcC^{\alpha+1}}<\infty.
		\end{equation*}
		The subsequent iteration steps then follow by induction.
	\end{details}
	The sequence $(T^{(R)}_{\max})_{R>0}$ is increasing, hence we may define $\Tmax\defeq\sup_{R>0}\Tmax^{(R)}=\lim_{R\to\infty}\Tmax^{(R)}\in(0,T]$. Assume $\Tmax<T$, then 
	\begin{equation*}
		\lim_{t\uparrow\Tmax}\norm{\rho_{t}}_{\mcC^{\alpha+1}}=\lim_{R\uparrow\infty}\norm{\rho_{\Tmax^{(R)}}}_{\mcC^{\alpha+1}}>\lim_{R\uparrow\infty}R=\infty.
	\end{equation*}
	This yields the claim.
\end{proof}
The principal issue in comparing paracontrolled solutions is that the maximal time of existence may depend on the noise enhancement. Hence, one needs to find a metric space of functions that can blow up at separate, finite times. A suitable space was introduced in~\cite[Sec.~1.5.1]{chandra_chevyrev_hairer_shen_22}, whose construction we follow:
\begin{definition}\label{def:space_with_cemetery}
	Let $(\target,\norm{\place}_{\target})$ be a normed vector space and let $\cem$ be a cemetery state. We define $\target^{\cem}\defeq\target\cup\{\cem\}$ and equip it with the topology generated by the open balls of $\target$ and the sets of the form  $\{u:\norm{u}_{\target}>R\}\cup\{\cem\}$ with $R>0$.
\end{definition}
\begin{details}
	The topology of $\target^{\cem}$ can be metrized with the metric
	\begin{equation*}
		d^{\cem}(u,v)\defeq\norm{u-v}_{\target}\wedge(h(u)+h(v)),
	\end{equation*}
	where we set $\norm{\cem}_{\target}\defeq\infty$ and $h(u)\defeq(1+\norm{u}_{\target})^{-1}$. A proof of this statement can be found in~\cite[Sec.~1.5.1]{chandra_chevyrev_hairer_shen_22}.
	
\end{details}
Next we define the space $\target^{\sol}_{T}$ of continuous functions on $[0,T]$ with values in $\target^{\cem}$ that cannot return from the cemetery state $\cem$.
\begin{definition}\label{def:sol_space}
	We define the set
	\begin{equation*}
		\target^{\sol}_{T}\defeq\Bigl\{f\in C([0,T];\target^{\cem}):f|_{(T_{\target}^{\cem}[f],T]}\equiv\cem\Bigr\},\quad\text{where}\quad T_{\target}^{\cem}[f]\defeq T\wedge\inf\{t\in[0,T]:f(t)=\cem\}.
	\end{equation*}
\end{definition}
We equip $\target^{\sol}_{T}$ with a suitable metric.
\begin{lemma}\label{lem:Fsol_T_convergence}
	There exists a metric $D_{T}^{\target}$ on $\target^{\sol}_{T}$ such that $D_{T}^{\target}(f_{n},f)\to 0$  as $n\to\infty$ if and only if for every $L>0$,
	\begin{equation*}
		\lim_{n\to\infty}\sup_{t\in[0,T_{L}]}\norm{f_{n}(t)-f(t)}_{\target}=0,
	\end{equation*}
	where
	\begin{equation*}
		T_{L}\defeq T\wedge L\wedge\inf\{t\in[0,T]:\norm{f_{n}(t)}_{\target}>L~\text{or}~\norm{f(t)}_{\target}>L\}.
	\end{equation*}
\end{lemma}

\begin{proof}
	The existence of such a metric follows by~\cite[Lem.~1.2]{chandra_chevyrev_hairer_shen_22}.
	\begin{details}
		
		Let $f\in\target^{\sol}_{T}$. We may extend $f$ from $[0,T]$ to $[0,\infty)$ by setting $f(t)=f(T)$ for all $t>T$. It follows that $f$ is an element of $\target^{\sol}$ as defined in~\cite[Sec.~1.5.1]{chandra_chevyrev_hairer_shen_22} with identical metrics on such functions. In particular, our space $\target^{\sol}_{T}$ is well-defined by the construction in~\cite[Sec.~1.5.1]{chandra_chevyrev_hairer_shen_22}.
	\end{details}
\end{proof}
\begin{details}
	Let $\psi\from\mbR\to[0,1]$ be a smooth, non-increasing function with $\supp(\psi')\subset[1,2]$, $\psi(1)=1$ and $\psi(2)=0$. Define for $L>0$ and $f\in\target^{\sol}_{T}$, the functionals $S_{f}(t)\defeq\sup_{s\leq t}\norm{f(s)}_{\target}$, $\Theta_{L}(f)(t)\defeq(\psi(S_{f}(t)/L),f(t))$ and the joint metric 
	\begin{equation*}
		\hat{d}((a,f),(b,g))\defeq\abs{a-b}+(a\wedge b)d^{\cem}(f,g),\qquad\text{where}~a,b\in[0,1]~\text{and}~f,g\in\target^{\sol}_{T}.
	\end{equation*}
	Then,
	\begin{equation*}
		D_{T}(f,g)\defeq\sum_{L=1}^{\infty}2^{-L}\sup_{t\in[0,T\wedge L]}\hat{d}\Bigl(\Theta_{L}(f)(t),\Theta_{L}(g)(t)\Bigr).
	\end{equation*}
\end{details}
Finally, we equip $\target^{\sol}_{T}$ with a weight at $0$ to allow for irregular initial data.
\begin{definition}\label{def:sol_space_weight}
	We denote $u_{\eta\weight}(t)\defeq(1\wedge t)^{\eta}u(t)$ for each $\eta\geq0$, $u\from(0,T]\to\target^{\cem}$ and $t\in(0,T]$. We define
	\begin{equation*}
		\target^{\sol}_{\eta;T}\defeq\Bigl\{u\from(0,T]\to\target^{\cem}:u_{\eta\weight}(0)\defeq\lim_{t\to0}(1\wedge t)^{\eta}u(t)\in\target^{\cem},~u_{\eta\weight}\in\target^{\sol}_{T}\Bigr\}
	\end{equation*}
	and equip $\target^{\sol}_{\eta;T}$ with the metric $D_{\eta;T}^{\target}$ given by
	\begin{equation*}
		D_{\eta;T}^{\target}(u,v)\defeq D_{T}^{\target}(u_{\eta\weight},v_{\eta\weight}),\qquad u,v\in\target^{\sol}_{\eta;T}.
	\end{equation*}
\end{definition}
In the following, we show that the paracontrolled solutions constructed in Theorem~\ref{thm:maximal_existence} are elements of $(\mcC^{\alpha+1}(\mbT^{2}))^{\sol}_{\eta;T}$.
\begin{lemma}\label{lem:paracontrolled_sol_in_blow_up_space}
	Let $(\alpha,p,q,\beta,\beta',\beta^{\#},\beta_{0},\kappa,\eta)$ satisfy~\eqref{eq:exponents}, $T>0$, $\mbX\in\rksnoise{\alpha}{\kappa}_{T}$, $\rho_{0}\in\mcB_{p,q}^{\beta_{0}}(\mbT^{2})$ and $\rho$ be the paracontrolled solution to~\eqref{eq:gen_rKS_intro} on $[0,\Tmax)$ with enhancement $\mbX$ and initial data $\rho_{0}$, as constructed in Theorem~\ref{thm:maximal_existence}. Then $\rho\in(\mcC^{\alpha+1}(\mbT^{2}))^{\sol}_{\eta;T}$ and in particular $\Tmax=T_{\mcC^{\alpha+1}}^{\cem}[\rho_{\eta\weight}]$.
\end{lemma}
\begin{proof}
	The paracontrolled solution constructed in Theorem~\ref{thm:maximal_existence} is continuous and exists until it blows up in $\mcC^{\alpha+1}(\mbT^{2})$, from which the claim follows. 
	\begin{details}
		
		Proof that $\Tmax=T^{\cem}_{\mcC^{\alpha+1}}[\rho_{\eta\weight}]$. Let $t_{0}>0$ be as in the proof of  Theorem~\ref{thm:maximal_existence}. We claim that $\Tmax$ is the unique $S\in[t_{0},T]$ with the property
		\begin{equation}\label{eq:Tmax_fixed_point}
			S=T\wedge\sup_{R>0}\inf\Bigl\{t\in[t_{0},S):\norm{\rho_{t}}_{\mcC^{\alpha+1}}>R\Bigr\}.
		\end{equation}
		\paragraph{Proof that $\Tmax$ satisfies~\eqref{eq:Tmax_fixed_point}.}
		By definition,
		\begin{equation*}
			\Tmax=\sup_{R>0}\Tmax^{(R)}=T\wedge\sup_{R>0}\inf\Bigl\{t\in[t_{0},\Tmax):\norm{\rho_{t}}_{\mcC^{\alpha+1}}>R\Bigr\},
		\end{equation*}
		 hence it satisfies~\eqref{eq:Tmax_fixed_point}.
		\paragraph{Proof of uniqueness.}
		Let $S\neq\Tmax$ be another time that satisfies~\eqref{eq:Tmax_fixed_point}. If $S<\Tmax$, let $R>0$ be such that $S<\Tmax^{(R)}$, then
		\begin{equation*}
			S<\Tmax^{(R)}=T\wedge\inf\Bigl\{t\in[t_{0},\Tmax):\norm{\rho_{t}}_{\mcC^{\alpha+1}}>R\Bigr\}\leq T\wedge\inf\Bigl\{t\in[t_{0},S):\norm{\rho_{t}}_{\mcC^{\alpha+1}}>R\Bigr\}\leq S,
		\end{equation*}
		which yields a contradiction. If $\Tmax<S$, let $R>0$ be such that
		\begin{equation*}
			\Tmax<T\wedge\inf\Bigl\{t\in[t_{0},S):\norm{\rho_{t}}_{\mcC^{\alpha+1}}>R\Bigr\}\leq T\wedge\inf\Bigl\{t\in[t_{0},\Tmax):\norm{\rho_{t}}_{\mcC^{\alpha+1}}>R\Bigr\}\leq\Tmax,
		\end{equation*}
		which yields another contradiction. Hence $\Tmax$ is the unique $S\in[t_{0},T]$ with the property~\eqref{eq:Tmax_fixed_point}.
		
		We can use that $[t_{0},\Tmax]\ni t\mapsto(1\wedge t)^{\eta}$ is bounded above and below to deduce for each $R>0$,
		\begin{equation*}
			T\wedge\inf\Bigl\{t\in[t_{0},\Tmax):\norm{\rho_{t}}_{\mcC^{\alpha+1}}>R\Bigr\}\leq T\wedge\inf\Bigl\{t\in[t_{0},\Tmax):(1\wedge t)^{\eta}\norm{\rho_{t}}_{\mcC^{\alpha+1}}>(1\wedge T)^{\eta}R\Bigr\}
		\end{equation*}
		and
		\begin{equation*}
			T\wedge\inf\Bigl\{t\in[t_{0},\Tmax):\norm{\rho_{t}}_{\mcC^{\alpha+1}}>R\Bigr\}\geq T\wedge\inf\Bigl\{t\in[t_{0},\Tmax):(1\wedge t)^{\eta}\norm{\rho_{t}}_{\mcC^{\alpha+1}}>(1\wedge t_{0})^{\eta}R\Bigr\},
		\end{equation*}
		which implies
		\begin{equation*}
			\Tmax=T\wedge\sup_{R>0}\inf\Bigl\{t\in[t_{0},\Tmax):\norm{\rho_{t}}_{\mcC^{\alpha+1}}>R\Bigr\}=T\wedge\sup_{R>0}\inf\Bigl\{t\in[t_{0},\Tmax):(1\wedge t)^{\eta}\norm{\rho_{t}}_{\mcC^{\alpha+1}}>R\Bigr\}.
		\end{equation*}
		By definition,
		\begin{equation*}
			\begin{split}
				T^{\cem}[\rho_{\eta\weight}]=T\wedge\inf\Bigl\{t\in[0,T]:\rho_{\eta\weight}(t)=\cem\Bigr\}&=T\wedge\inf\Bigl\{t\in[0,T]:\norm{\rho_{\eta\weight}(t)}_{\mcC^{\alpha+1}}=\infty\Bigr\}\\
				&=T\wedge\sup_{R>0}\inf\Bigl\{t\in[0,T]:\norm{\rho_{\eta\weight}(t)}_{\mcC^{\alpha+1}}>R\Bigr\}.
			\end{split}
		\end{equation*}
		Using that $T^{\cem}[\rho_{\eta'\weight}]\geq t_{0}$ by construction, we obtain
		\begin{equation*}
			T^{\cem}[\rho_{\eta\weight}]=T\wedge\sup_{R>0}\inf\Bigl\{t\in[t_{0},T]:\norm{\rho_{\eta\weight}(t)}_{\mcC^{\alpha+1}}>R\Bigr\}=\Tmax.
		\end{equation*}
	\end{details}
\end{proof}
\subsection{The Renormalised Solution}\label{subsec:ren_sol}
In this subsection we combine the deterministic solution theory (Lemma~\ref{lem:paracontrolled_sol_in_blow_up_space}) with the stochastic existence of the renormalised enhancement (Theorem~\ref{thm:enhancement_existence}) to construct the renormalised solution to~\eqref{eq:gen_rKS_intro} as a random variable (Theorem~\ref{thm:convergence_renorm_sol}, Part~\ref{it:thm_existence_renormalized}). We then show that the renormalised solution is the limit in probability of solutions to~\eqref{eq:smooth_mild_sol} (Theorem~\ref{thm:convergence_renorm_sol}, Part~\ref{it:thm_convergence_to_renormalized}).
\begin{definition}\label{def:renormalised_solution}
	Let $(\alpha,p,q,\beta,\beta',\beta^{\#},\beta_{0},\kappa,\eta)$ satisfy~\eqref{eq:exponents}, $\rho_{0}\in\mcB^{\beta_0}_{p,q}(\mbT^{2})$ be some initial data, $T>0$, $\het\in C_{T}\mcH^{2}(\mbT^{2})$ and $\mbX$ be the renormalised enhancement of Theorem~\ref{thm:enhancement_existence}. We call the paracontrolled solution $\rho\in(\mcC^{\alpha+1}(\mbT^{2}))_{\eta;T}^{\sol}$ to~\eqref{eq:gen_rKS_intro} on $[0,T_{\mcC^{\alpha+1}}^{\cem}[\rho_{\eta\weight}])$ with enhancement $\mbX$ and initial data $\rho_{0}$ (in the sense of Definition~\ref{def:paracontrolled_solution}) the renormalised solution.
\end{definition}
We can now prove the main result of this section, which is similar to~\cite[Cor.~4.7~\&~Cor.~5.9]{gubinelli_15_GIP}, \cite[Thm.~3.7]{gubinelli_perkowski_17} and~\cite[Cor.~3.13]{catellier_chouk_18}.
\begin{theorem}\label{thm:convergence_renorm_sol}
	Let $(\alpha,p,q,\beta,\beta',\beta^{\#},\beta_{0},\kappa,\eta)$ satisfy~\eqref{eq:exponents}, $\rho_{0}\in\mcB^{\beta_0}_{p,q}(\mbT^{2})$ be some initial data, $T>0$, $\het\in C_{T}\mcH^{2}(\mbT^{2})$ and $\mbX$ (resp.\ $(\mbX^{\delta})_{\delta>0}$) be the renormalised (resp.\ prelimiting renormalised) enhancement of Theorem~\ref{thm:enhancement_existence}.
	\begin{enumerate}
		\item\label{it:thm_existence_renormalized}
		There exist a paracontrolled solution $\rho$ (resp.\ $(\rho^{\delta})_{\delta>0}$) in $(\mcC^{\alpha+1}(\mbT^{2}))_{\eta;T}^{\sol}$ to~\eqref{eq:gen_rKS_intro} on $[0,T_{\mcC^{\alpha+1}}^{\cem}[\rho_{\eta\weight}])$ (resp.\ on $[0,T_{\mcC^{\alpha+1}}^{\cem}[\rho^{\delta}_{\eta\weight}])$) with enhancement $\mbX$ (resp.\ $(\mbX^{\delta})_{\delta>0}$) and initial data $\rho_{0}$ in the sense of Definition~\ref{def:paracontrolled_solution} (for every $\delta>0$). The random variables $(\rho^{\delta})_{\delta>0}$ coincide with the mild solutions~\eqref{eq:smooth_mild_sol}.
		\item\label{it:thm_convergence_to_renormalized}
		It holds that $\rho^{\delta}\to\rho$ in $(\mcC^{\alpha+1}(\mbT^{2}))^{\sol}_{\eta;T}$ in probability as $\delta\to0$. In particular for each $\lambda>0$ one has that
		\begin{equation}\label{eq:convergence_frechet}
			\lim_{\delta\to0}\mbP\Bigl(\sum_{L=1}^{\infty}\frac{2^{-L}}{L}\frac{\norm{\rho-\rho^{\delta}}_{C_{\eta;T_{L}}\mcC^{\alpha+1}}}{1+\norm{\rho-\rho^{\delta}}_{C_{\eta;T_{L}}\mcC^{\alpha+1}}}>\lambda\Bigr)=0,
		\end{equation}
		where for every $L\in\mbN$ we denote 
		\begin{equation*}
			T_{L}\defeq T\wedge L\wedge\inf\Bigl\{t\in[0,T]:\norm{\rho_{\eta\weight}(t)}_{\mcC^{\alpha+1}}>L~\text{or}~\norm{\rho_{\eta\weight}^{\delta}(t)}_{\mcC^{\alpha+1}}>L\Bigr\}.
		\end{equation*}
	\end{enumerate}
\end{theorem}
\begin{proof}
	Let $(\alpha,p,q,\beta,\beta',\beta^{\#},\beta_{0},\kappa,\eta)$ satisfy~\eqref{eq:exponents}. The enhancements $\mbX$ and $(\mbX^{\delta})_{\delta>0}$ exist almost surely by an application of Theorem~\ref{thm:enhancement_existence}. It then follows by Lemma~\ref{lem:paracontrolled_sol_in_blow_up_space} that $\rho$ and $(\rho^{\delta})_{\delta>0}$ are random variables in $(\mcC^{\alpha+1}(\mbT^{2}))^{\sol}_{\eta;T}$. By Lemma~\ref{lem:Fsol_T_convergence} and the local Lipschitz continuity of the solution map in the enhancement (Lemma~\ref{lem:Local_Lipschitz}) we deduce that for every $\lambda>0$ there exists some $\nu>0$ such that $\norm{\mbX-\mbX^{\delta}}_{\rksnoise{\alpha}{\kappa}_{T}}\leq\nu$ implies $D_{\eta;T}^{\mcC^{\alpha+1}}(\rho,\rho^{\delta})\leq\lambda$.
	\begin{details}
		We want to show
		\begin{equation}\label{eq:continuity_formula}
			\forall\lambda>0~\exists\nu>0~\forall\delta>0~\norm{\mbX-\mbX^{\delta}}_{\rksnoise{\alpha}{\kappa}_{T}}\leq\nu~\implies~D_{\eta;T}^{\mcC^{\alpha+1}}(\rho,\rho^{\delta})\leq\lambda.
		\end{equation}
		Assume~\eqref{eq:continuity_formula} is not true, then
		\begin{equation}\label{eq:continuity_formula_negation}
			\exists\lambda>0~\forall\nu>0~\exists\delta>0~\norm{\mbX-\mbX^{\delta}}_{\rksnoise{\alpha}{\kappa}_{T}}\leq\nu~\wedge~D_{\eta;T}^{\mcC^{\alpha+1}}(\rho,\rho^{\delta})>\lambda.
		\end{equation}
		Let $\lambda>0$ be as in~\eqref{eq:continuity_formula_negation} and choose for every $\nu>0$, $\delta(\nu)>0$ such that $\norm{\mbX-\mbX^{\delta(\nu)}}_{\rksnoise{\alpha}{\kappa}_{T}}\leq\nu$ and $D_{\eta;T}^{\mcC^{\alpha+1}}(\rho,\rho^{\delta(\nu)})>\lambda$. We obtain $\lim_{\nu\to0}\norm{\mbX-\mbX^{\delta(\nu)}}_{\rksnoise{\alpha}{\kappa}_{T}}=0$, so by the local Lipschitz continuity and Lemma~\ref{lem:Fsol_T_convergence}, $\lim_{\nu\to0}D_{\eta;T}^{\mcC^{\alpha+1}}(\rho,\rho^{\delta(\nu)})=0$, which contradicts $D_{\eta;T}^{\mcC^{\alpha+1}}(\rho,\rho^{\delta(\nu)})>\lambda$. Therefore, \eqref{eq:continuity_formula_negation} has to be false and~\eqref{eq:continuity_formula} has to be true.
		
	\end{details}
	Hence, by an application of Theorem~\ref{thm:enhancement_existence},
	\begin{equation*}
		\mbP\Bigl(D_{\eta;T}^{\mcC^{\alpha+1}}(\rho,\rho^{\delta})>\lambda\Bigr)\leq\mbP(\norm{\mbX-\mbX^{\delta}}_{\rksnoise{\alpha}{\kappa}_{T}}>\nu)\to0\quad\text{as}~\delta\to0,
	\end{equation*}
	which yields that $\rho^{\delta}\to\rho$ in $(\mcC^{\alpha+1}(\mbT^{2}))_{\eta;T}^{\sol}$ in probability. The convergence~\eqref{eq:convergence_frechet} then follows by the explicit form of $D_{\eta;T}^{\mcC^{\alpha+1}}$, see~\cite[Sec.~1.5.1]{chandra_chevyrev_hairer_shen_22}.
	\begin{details}
		\paragraph{Proof of the claim.}
		We obtain for each $t\leq T_{L}$ that $S_{\rho_{\eta\weight}}(t)\leq L$ and $S_{\rho^{\delta}_{\eta\weight}}(t)\leq L$, hence
		\begin{equation*}
			\Theta_{L}(\rho_{\eta\weight})(t)=(1,\rho_{\eta\weight}(t)),\qquad\Theta_{L}(\rho^{\delta}_{\eta\weight})(t)=(1,\rho^{\delta}_{\eta\weight}(t))
		\end{equation*}
		and
		\begin{equation*}
			\begin{split}
				&\hat{d}\Bigl(\Theta_{L}(\rho_{\eta\weight})(t),\Theta_{L}(\rho^{\delta}_{\eta\weight})(t)\Bigr)\\
				&=d^{\cem}\Bigl(\rho_{\eta\weight}(t),\rho^{\delta}_{\eta\weight}(t)\Bigr)\\
				&=\norm{\rho_{\eta\weight}(t)-\rho^{\delta}_{\eta\weight}(t)}_{\mcC^{\alpha+1}}\wedge\Bigl(\frac{1}{1+\norm{\rho_{\eta\weight}(t)}_{\mcC^{\alpha+1}}}+\frac{1}{1+\norm{\rho^{\delta}_{\eta\weight}(t)}_{\mcC^{\alpha+1}}}\Bigr)\\
				&\geq\norm{\rho_{\eta\weight}(t)-\rho^{\delta}_{\eta\weight}(t)}_{\mcC^{\alpha+1}}\wedge\frac{2}{1+L}.
			\end{split}
		\end{equation*}
		By the definition of $D_{\eta;T}^{\mcC^{\alpha+1}}$,
		\begin{equation*}
			\begin{split}
				D_{\eta;T}^{\mcC^{\alpha+1}}(\rho,\rho^{\delta})&=\sum_{L=1}^{\infty}2^{-L}\sup_{t\in[0,T\wedge L]}\hat{d}\Bigl(\Theta_{L}(\rho_{\eta\weight})(t),\Theta_{L}(\rho^{\delta}_{\eta\weight})(t)\Bigr)\\
				&\geq\sum_{L=1}^{\infty}2^{-L}\sup_{t\leq T_{L}}\hat{d}\Bigl(\Theta_{L}(\rho_{\eta\weight})(t),\Theta_{L}(\rho^{\delta}_{\eta\weight})(t)\Bigr)\\
				&\geq\sum_{L=1}^{\infty}2^{-L}\sup_{t\leq T_{L}}\Bigl(\norm{\rho_{\eta\weight}(t)-\rho^{\delta}_{\eta\weight}(t)}_{\mcC^{\alpha+1}}\wedge\frac{2}{1+L}\Bigr)\\
				&=\sum_{L=1}^{\infty}2^{-L}\Bigl(\norm{\rho-\rho^{\delta}}_{C_{\eta;T_{L}}\mcC^{\alpha+1}}\wedge\frac{2}{1+L}\Bigr),
			\end{split}
		\end{equation*}
		where in the last equality we used that $x\mapsto(x\wedge\frac{2}{1+L})$ is continuous and non-decreasing. Let $a\geq0$ and $x\geq0$, we use the bounds
		\begin{equation}\label{eq:minimum_factorize}
			(x\wedge a)\geq(1\wedge a)(1\wedge x)
		\end{equation}
		and
		\begin{equation}\label{eq:minimum_lower_bound}
			(1\wedge x)\geq\frac{x}{1+x}
		\end{equation}
		to deduce
		\begin{equation*}
			\Bigl(\norm{\rho-\rho^{\delta}}_{C_{\eta;T_{L}}\mcC^{\alpha+1}}\wedge\frac{2}{1+L}\Bigr)\geq\Bigl(1\wedge\frac{2}{1+L}\Bigr)\frac{\norm{\rho-\rho^{\delta}}_{C_{\eta;T_{L}}\mcC^{\alpha+1}}}{1+\norm{\rho-\rho^{\delta}}_{C_{\eta;T_{L}}\mcC^{\alpha+1}}}.
		\end{equation*}
		Using that $\frac{1}{L}\leq\frac{2}{1+L}\leq1$ and for all $L\in\mbN$, we obtain
		\begin{equation*}
			\Bigl(1\wedge\frac{2}{1+L}\Bigr)=\frac{2}{1+L}\geq\frac{1}{L}.
		\end{equation*}
		Therefore,
		\begin{equation*}
			D_{\eta;T}^{\mcC^{\alpha+1}}(\rho,\rho^{\delta})\geq\sum_{L=1}^{\infty}\frac{2^{-L}}{L}\frac{\norm{\rho-\rho^{\delta}}_{C_{\eta;T_{L}}\mcC^{\alpha+1}}}{1+\norm{\rho-\rho^{\delta}}_{C_{\eta;T_{L}}\mcC^{\alpha+1}}},
		\end{equation*}
		which implies
		\begin{equation*}
			\mbP\Bigl(\sum_{L=1}^{\infty}\frac{2^{-L}}{L}\frac{\norm{\rho-\rho^{\delta}}_{C_{\eta;T_{L}}\mcC^{\alpha+1}}}{1+\norm{\rho-\rho^{\delta}}_{C_{\eta;T_{L}}\mcC^{\alpha+1}}}>\lambda\Bigr)\leq\mbP(D^{\mcC^{\alpha+1}}_{\eta;T}(\rho,\rho^{\delta})>\lambda)\to0.
		\end{equation*}
		\paragraph{Proof of~\eqref{eq:minimum_factorize}.}
		To prove~\eqref{eq:minimum_factorize}, we carry out a case distinction. Case 1, assume $a\leq1$, then
		\begin{align*}
			x\leq a&\implies(1\wedge a)(1\wedge x)=ax\leq x=(x\wedge a)\\
			a\leq x\leq1&\implies (1\wedge a)(1\wedge x)=ax\leq a=(x\wedge a)\\
			1\leq x&\implies (1\wedge a)(1\wedge x)=a=(x\wedge a).
		\end{align*}
		Case 2, assume $1<a$, then
		\begin{align*}
			x\leq1&\implies(1\wedge a)(1\wedge x)=x=(x\wedge a)\\
			1\leq x\leq a&\implies(1\wedge a)(1\wedge x)=1\leq x=(x\wedge a)\\
			a\leq x&\implies(1\wedge a)(1\wedge x)=1\leq a=(x\wedge a).
		\end{align*}
		\paragraph{Proof of~\eqref{eq:minimum_lower_bound}.}
		We establish the equivalent
		\begin{equation*}
			x\leq(1+x)(1\wedge x).
		\end{equation*}
		It suffices to note
		\begin{align*}
			x\in[0,1]&\implies x\leq x+x^{2}=(1+x)(1\wedge x)\\
			x\geq1&\implies x\leq(1+x)=(1+x)(1\wedge x).
		\end{align*}
	\end{details}
\end{proof}
\begin{example}
	The same arguments as in the proof of Theorem~\ref{thm:convergence_renorm_sol}, Part~\ref{it:thm_existence_renormalized}, also allow us to construct the mild solution to~\eqref{eq:gen_rKS_canonical} as a paracontrolled solution to~\eqref{eq:gen_rKS_intro} with enhancement $\mbX^{(\eps)}_{\delta}=(\eps^{1/2}\ti^{\delta},\eps\ty_{\can}^{\delta},\eps^{3/2}\tp_{\can}^{\delta},\eps\tc^{\delta})$ (and heterogeneity $\het=\sqrt{\rdet}$). To obtain a  limit for vanishing correlation lengths $\delta$, one then needs to assume that the noise intensity $\eps$ decays sufficiently quickly depending on the behaviour of the renormalisation. This will be the focus of an upcoming work~\cite{martini_mayorcas_22_LDP}.
\end{example}
\appendix 
\section{Besov and H\"{o}lder--Besov Spaces}\label{app:Besov}
Throughout the following section, all properties are given for mappings or distributions on $\mbT^d$ taking values in $\mbR^n$ for some $d,n\in\mbN$.
\subsection{Besov Spaces}\label{subsec:Besov_spaces}
Applying essentially the same arguments as in the proof of~\cite[Prop.~2.10]{bahouri_chemin_danchin_11} there exists a \emph{dyadic partition of unity}, i.e.\ a pair of non-negative, radially symmetric and compactly supported smooth functions $\varrho_{-1},\varrho_{0}\in C^{\infty}(\mbR^{d};[0,1])$ such that $\supp(\varrho_{-1})\subset B(0,1/2)$, $\supp(\varrho_{0})\subset \{x\in\mbR^d:9/32\leq\abs{x}\leq 1\}$ and $\sum_{k=-1}^{\infty}\varrho_{k}(x)=1$ for all $x\in\mbR^{d}$, where we denote $\varrho_k(x)\defeq\varrho_0(2^{-k}x)$ for each $k\in \mbN$.

For every $k\geq-1$ we define the Littlewood--Paley block $\Delta_{k}$ to be the Fourier multiplier $\Delta_{k}u\defeq\msF^{-1}(\varrho_{k}\msF u)$ and set
\begin{equation*}
	\Delta_{<k}u\defeq\sum_{l=-1}^{k-1}\Delta_{l}u.
\end{equation*}
As with the Fourier transform, we initially define these operators on smooth functions and then extend them by duality to $\mcS'(\mbT^d;\mbR^n)$.
\begin{definition}[Besov spaces]\label{def:Besov_space}
	Let $\alpha\in\mbR$ and $p,q\in[1,\infty]$. We define the non-homogeneous Besov 
	space $\mcB^{\alpha}_{p,q}(\mbT^{d})$ to be the completion of the smooth functions $C^{\infty}(\mbT^{d})$ under the norm
	\begin{equation*}
		\norm{u}_{\mcB^{\alpha}_{p,q}(\mbT^{d})}\defeq\norm{(2^{k\alpha}\norm{\Delta_{k}u}_{L^{p}(\mbT^{d})})_{k\in\mbN_{-1}}}_{\ell^{q}},
	\end{equation*}
	which is extended to vector resp.\ matrix-valued functions in a natural componentwise manner. Here $\ell^{q}$ denotes the usual space of $q$-summable sequences (or bounded when $q=\infty$.) When $p=q=\infty$ we recall the shorthand $\mcC^{\alpha}(\mbT^{d})\defeq\mcB^\alpha_{\infty,\infty}(\mbT^{d})$ and call $\mcC^{\alpha}(\mbT^{d})$ the H\"{o}lder--Besov space.
\end{definition}
\begin{remark}
	Note that the dyadic partition of unity obtained by~\cite[Prop.~2.10]{bahouri_chemin_danchin_11} is built from $\widetilde{\varrho}_{-1}$, $\widetilde{\varrho}_{0}$ with $\supp(\widetilde{\varrho}_{-1})\subset B(0,4/3)$ and $\supp(\widetilde{\varrho}_{0})\subset \{x\in\mbR^d:3/4\leq \abs{x}\leq 8/3\}$. However, for our purposes it is convenient to rescale these functions by a factor of $3/8$ so that the only integer in the support of $\varrho_{-1}$ is $0$. Since the Besov spaces are independent of the chosen dyadic partition of unity~\cite[Cor.~2.70]{bahouri_chemin_danchin_11} this change is harmless.
\end{remark}
Besov spaces enjoy a number of useful properties which we list below. Proofs of the following statements can be found in~\cite{bahouri_chemin_danchin_11,gubinelli_15_GIP}.
\begin{enumerate}
	\item Embeddings: There exists a constant $C>0$ such that for any $\alpha \in \mbR$, $1\leq p_1\leq p_2\leq \infty$ and $1\leq q_1\leq q_2 \leq \infty$,
	\begin{equation}\label{eq:Besov_cts_embedding}
		\norm{u}_{\mcB^{\alpha-d(1/p_1-1/p_2)}_{p_2,q_2}} \leq C \norm{u}_{\mcB^{\alpha}_{p_1,q_1}}.
	\end{equation}
	We also have the following continuous embeddings,
	\begin{align*}
		\norm{u}_{\mcB^\alpha_{p,q}}&\lesssim\norm{u}_{\mcB^{\alpha'}_{p,q}},\quad \alpha<\alpha' \in \mbR,\\
		%
		%
		\norm{u}_{\mcB^\alpha_{p,q}}&\lesssim\norm{u}_{\mcB^{\alpha'}_{p,q'}},\quad \alpha<\alpha' \in \mbR,~q< q'\in [1,\infty].
	\end{align*}
	\item Relations to $L^{p}(\mbT^{d})$-spaces: For all $p\in [1,\infty]$, one has,
	\begin{align*}
		\norm{f}_{\mcB^{0}_{p,\infty}}\lesssim\norm{f}_{L^{p}}\lesssim\norm{f}_{\mcB^{0}_{p,1}}.
	\end{align*}
\end{enumerate}
\begin{details}
	We show the separability of the Besov space $\mcB_{p,q}^{\alpha}(\mbT^{d})$ for $p,q\in[1,\infty]$ and $\alpha\in\mbR$.
	\begin{lemma}\label{lem:Besov_separable}
		Let $p,q\in[1,\infty]$ and $\alpha\in\mbR$. It holds that 
		\begin{equation}\label{eq:Besov_characterization}
			\mcB_{p,q}^{\alpha}(\mbT^{d})=\Bigl\{u\in\mcS'(\mbT^{d}):\norm{u}_{\mcB_{p,q}^{\alpha}}<\infty,\lim_{j\to\infty}2^{j\alpha}\norm{\Delta_{j}u}_{L^{p}}=0\Bigr\}
		\end{equation}
		and each $\mcB_{p,q}^{\alpha}(\mbT^{d})$ is separable. 
	\end{lemma}
	\begin{proof}
		Recall (see Definition~\ref{def:Besov_space}) that $\mcB_{p,q}^{\alpha}(\mbT^{d})$ is defined as the closure of $C^{\infty}(\mbT^{d})$ under the Besov norm $u\mapsto\norm{u}_{\mcB_{p,q}^{\alpha}}\defeq\norm{(2^{k\alpha}\norm{\Delta_{k}u}_{L^{p}})_{k\in\mbN_{-1}}}_{l^{q}}$. We first prove the characterization~\eqref{eq:Besov_characterization}.
		
		Assume $u\in\mcB_{p,q}^{\alpha}(\mbT^{d})$, we need to show that $\lim_{j\to\infty}2^{j\alpha}\norm{\Delta_{j}u}_{L^{p}}=0$. By definition, there exist a sequence $(u_{n})_{n\in\mbN}$ such that $u_{n}\in C^{\infty}(\mbT^{d})$ for every $n\in\mbN$ and $u_{n}\to u\in\mcB_{p,q}^{\alpha}(\mbT^{d})$ as $n\to\infty$. It follows that for each $j\in\mbN_{-1}$,
		\begin{equation*}
			2^{j\alpha}\norm{\Delta_j u}_{L^p}\leq2^{j\alpha}\norm{\Delta_j u-\Delta_ju_n}_{L^p}+2^{j\alpha}\norm{\Delta_j u_n}_{L^p}\leq \norm{u-u_n}_{\mcB_{p,q}^{\alpha}}+2^{j\alpha}\norm{\Delta_j u_n}_{L^p}.
		\end{equation*}
		Letting $j\to\infty$, we obtain $\lim_{j\to\infty}2^{j\alpha}\norm{\Delta_j u}_{L^p}=\norm{u-u_n}_{\mcB_{p,q}^{\alpha}}$, subsequently letting $n\to\infty$ yields $\lim_{j\to\infty}2^{j\alpha}\norm{\Delta_j u}_{L^p}=0$.
		
		Conversely assume that $u\in\mcS'(\mbT^{d})$ satisfies $\norm{u}_{\mcB_{p,q}^{\alpha}}<\infty$ and $\lim_{j\to\infty}2^{j\alpha}\norm{\Delta_{j}u}_{L^{p}}=0$. We need to show that there exists a sequence $(u_{n})_{n\in\mbN}$ such that $u_{n}\in C^{\infty}(\mbT^{d})$ for every $n\in\mbN$ and $\norm{u-u_{n}}_{\mcB_{p,q}^{\alpha}}\to0$ as $n\to\infty$. We define $u_{n}\defeq\sum_{j=-1}^{n-1}\Delta_{j}u\in C^{\infty}(\mbT^{d})$ and apply~\cite[Thm.~21.18]{vanzuijlen_22}: For $q=\infty$ we deduce $\norm{u-u_{n}}_{\mcB_{p,\infty}^{\alpha}}\lesssim\sup_{j\geq n}(2^{j\alpha}\norm{\Delta_{j}u}_{L^{p}})$, which vanishes as $n\to0$, since
		\begin{equation*}
			\lim_{n\to\infty}\sup_{j\geq n}(2^{j\alpha}\norm{\Delta_{j}u}_{L^{p}})=\limsup_{n\to\infty}2^{n\alpha}\norm{\Delta_{n}u}_{L^{p}}=\lim_{j\to\infty}2^{j\alpha}\norm{\Delta_{j}u}_{L^{p}}=0.
		\end{equation*}
		For $q<\infty$ we deduce $\norm{u-u_{n}}_{\mcB_{p,q}^{\alpha}}^{q}\lesssim\sum_{j=n}^{\infty}2^{jq\alpha}\norm{\Delta_{j}u}_{L^{p}}^{q}$, which vanishes as $n\to0$, since $\norm{u}_{\mcB_{p,q}^{\alpha}}<\infty$. This yields the characterization~\eqref{eq:Besov_characterization}.
		
		Next we show the separability of each $\mcB_{p,q}^{\alpha}(\mbT^{d})$. If $p<\infty$, let $D$ be a countable, dense subset of $L^{p}(\mbT^{d})$; and if $p=\infty$, let $D$ be a countable, dense subset of $C(\mbT^{d})$. We define the countable set 
		\begin{equation*}
			H\defeq\Bigl\{\sum_{j=-1}^{J}\Delta_jh_j:J\in\mbN_{-1},~h_j\in D,~j=1,\ldots,J\Bigr\}
		\end{equation*}
		and show that $H$ is dense in $C^{\infty}(\mbT^{d})\subset\mcB_{p,q}^{\alpha}(\mbT^{d})$. 
		
		Assume first $q=\infty$. Let $u\in C^{\infty}(\mbT^{d})$, then for each $\eps>0$ there exists a $J\in\mbN_{-1}$ such that $2^{j\alpha}\norm{\Delta_{j}u}_{L^{p}}<\eps$ for every $j>J$. For $j=-1,\ldots,J$, let $h_{j}\in D$ be such that $\norm{u-h_{j}}_{L^{p}}<\eps2^{-j\alpha}$, which combined with Poisson's summation formula and Young's convolution inequality yields
		\begin{equation*}
			\norm{\Delta_ju-\Delta_jh_{j}}_{L^p}\leq(\norm{\mathscr{F}^{-1}_{\mbR^d}\varrho_{0}}_{L^1}\vee\norm{\mathscr{F}^{-1}_{\mbR^d}\varrho_{-1}}_{L^1})\norm{u-h_{j}}_{L^p}\lesssim\eps2^{-j\alpha}.
		\end{equation*}
		It then follows that
		\begin{equation*}
			\Bigl\lVert u-\sum_{j=-1}^{J}\Delta_j h_{j}\Bigr\rVert_{\mcB_{p,\infty}^{\alpha}}\lesssim\sup_{j=-1,\ldots,J}\Bigl(2^{j\alpha}\norm{\Delta_{j}u-\Delta_{j}h_{j}}_{L^{p}}\Bigr)\vee\sup_{j=J+1,\ldots,\infty}\Bigl(2^{j\alpha}\norm{\Delta_{j}u}_{L^{p}}\Bigr)\lesssim\eps,
		\end{equation*}
		which shows that $H$ is dense in $\mcB_{p,\infty}^{\alpha}(\mbT^{d})$.
		
		Assume next $q<\infty$. Let $u\in C^{\infty}(\mbT^{d})$, then for each $\eps>0$ there exists a $J\in\mbN_{-1}$ such that $2^{j\alpha}\norm{\Delta_ju}_{L^p}<\eps2^{-j}$ for every $j>J$. For $j=-1,\ldots,J$, let $h_{j}\in D$ be such that $\norm{u-h_{j}}_{L^p}<\eps2^{-j(\alpha+1)}$, which combined with Poisson's summation formula and Young's convolution inequality yields
		\begin{equation*}
			\norm{\Delta_ju-\Delta_jh_j}_{L^p}\leq(\norm{\mathscr{F}^{-1}_{\mbR^{d}}\varrho_{0}}_{L^1}\vee\norm{\mathscr{F}^{-1}_{\mbR^{d}}\varrho_{-1}}_{L^1})\norm{u-h_j}_{L^p}\lesssim\eps2^{-j(\alpha+1)}.
		\end{equation*}
		It then follows that
		\begin{equation*}
			\Bigl\lVert u-\sum_{j=-1}^{J}\Delta_jh_j\Bigr\rVert_{\mcB_{p,q}^{\alpha}}^{q}\lesssim\sum_{j=-1}^{J}2^{jq\alpha}\norm{\Delta_{j}u-\Delta_{j}h_{j}}_{L^{p}}^{q}+\sum_{j=J+1}^{\infty}2^{jq\alpha}\norm{\Delta_{j}u}_{L^{p}}^{q}\lesssim\eps^{q}\sum_{j=-1}^{\infty}2^{-jq}\lesssim\eps^{q},
		\end{equation*}
		which shows that $H$ is dense in $\mcB_{p,q}^{\alpha}(\mbT^{d})$. This yields the claim.
	\end{proof}
	The next lemma presents a natural criterion that allows us to find elements of $\mcB_{p,q}^{\alpha}(\mbT^{d})$.
	\begin{lemma}\label{lem:Besov_criterion}
		Let $p,q\in[1,\infty]$ and $\alpha<\alpha'\in\mbR$. 
		\begin{itemize}
			\item If $q\in[1,\infty)$, then each $u\in\mcS'(\mbT^{d})$ such that $\norm{u}_{\mcB_{p,q}^{\alpha}}<\infty$ is an element of $\mcB_{p,q}^{\alpha}(\mbT^{d})$.
			\item If $q=\infty$, then each $u\in\mcS'(\mbT^{d})$ such that $\norm{u}_{\mcB_{p,\infty}^{\alpha'}}<\infty$ is an element of $\mcB_{p,\infty}^{\alpha}(\mbT^{d})$.
		\end{itemize}
	\end{lemma}
	\begin{proof}
		Let $p,q\in[1,\infty]$ and $\alpha\in\mbR$. To prove the first claim, let $q<\infty$ and $u\in\mcS'(\mbT^{d})$ be such that $\norm{u}_{\mcB_{p,q}^{\alpha}}<\infty$. It follows immediately that $\lim_{j\to\infty}2^{j\alpha}\norm{\Delta_{j}u}_{L^{p}}=0$, hence $u\in\mcB_{p,q}^{\alpha}(\mbT^{d})$ by Lemma~\ref{lem:Besov_separable}. To prove the second claim, let $q=\infty$, $\alpha<\alpha'\in\mbR$ and $u\in\mcS'(\mbT^{d})$ be such that $\norm{u}_{\mcB_{p,\infty}^{\alpha'}}<\infty$. It follows that $\norm{u}_{\mcB_{p,\infty}^{\alpha}}<\infty$ and
		\begin{equation*}
			2^{j\alpha}\norm{\Delta_{j}u}_{L^p}\leq2^{j(\alpha-\alpha')}\norm{u}_{\mcB_{p,\infty}^{\alpha'}}\to0\quad\text{as}~j\to\infty,
		\end{equation*}
		hence $u\in\mcB_{p,\infty}^{\alpha}(\mbT^{d})$ by Lemma~\ref{lem:Besov_separable}. This yields the claim.
	\end{proof}
\end{details}
We regularly work in a scale of interpolation spaces which relate temporal and spatial regularity and are suitable for solutions to parabolic PDEs.
\begin{definition}[Interpolation spaces]\label{def:interpolation_space}
	Let $T>0$, $\eta\geq0$, $\alpha\in\mbR$ and $\kappa\in(0,1]$. We define the norm
	\begin{equation*}
		\norm{u}_{\msL_{\eta;T}^{\kappa}\mcC^{\alpha}}\defeq\max\{\norm{u}_{C_{\eta;T}^{\kappa}\mcC^{\alpha-2\kappa}},\norm{u}_{C_{\eta;T}\mcC^{\alpha}}\},
	\end{equation*}
	and the spaces
	\begin{equation*}
		\msL_{\eta;T}^{\kappa}\mcC^{\alpha}(\mbT^{d})=C_{\eta;T}^{\kappa}\mcC^{\alpha-2\kappa}(\mbT^{d})\cap C_{\eta;T}\mcC^{\alpha}(\mbT^{d}).
	\end{equation*}
	We set
	\begin{equation*}
		\msL_{T}^{\kappa}\mcC^{\alpha}(\mbT^{d})\defeq\msL_{0;T}^{\kappa}\mcC^{\alpha}(\mbT^{d})\defeq C_{T}^{\kappa}\mcC^{\alpha-2\kappa}(\mbT^{d})\cap C_{T}\mcC^{\alpha}(\mbT^{d})
	\end{equation*}
	 and by an abuse of notation understand $\msL^{0}_{\eta;T}\mcC^{\alpha} (\mbT^{d})=C_{\eta;T}\mcC^{\alpha}(\mbT^{d})$.
\end{definition}
\subsection{Paraproducts}\label{subsec:paraproducts}
For $u,v\in C^{\infty}(\mbT^d;\mbR)$ we define the paraproduct $\pa$ and resonant product $\re$ by
\begin{equation*}
	u\pa v\defeq\sum_{k\geq-1}\Delta_{<k-1}u\Delta_{k}v,\qquad u\re v\defeq\sum_{\abs{k-l}\leq1}\Delta_{l}u\Delta_{k}v.
\end{equation*}
Formally one has the decomposition $uv= u\pa v+ v\pa u + u\re v$. Conditions under which this decomposition is valid for (time-dependent) distributions $u$ and $v$ are given by the following version of Bony's estimates. These operators naturally extend to vector-valued and matrix-valued objects as either inner or outer products. Where the precise meaning is not clear from context it will be specified in the text.
\begin{lemma}[Bony's estimates]\label{lem:Bony}
	Let $T>0$ and $\eta,\eta_1,\eta_2\geq0$ be such that $\eta=\eta_1+\eta_2$.
	\begin{itemize}
		\item Let $\beta \in \mbR$, $u\in C_{\eta_1;T}L^\infty(\mbT^d;\mbR)$ and $v\in C_{\eta_2;T}\mcC^{\beta}(\mbT^d;\mbR)$, then $u\pa v\in C_{\eta;T}\mcC^{\beta}(\mbT^{d};\mbR)$ and
		\begin{equation*}
			\norm{u\pa v}_{C_{\eta;T}\mcC^{\beta}}\lesssim_{\beta}\norm{u}_{C_{\eta_1;T}L^\infty}\norm{v}_{C_{\eta_2;T}\mcC^{\beta}}.
		\end{equation*}
		\item Let $\beta\in\mbR$, $\alpha<0$, $u\in C_{\eta_1;T}\mcC^{\alpha}(\mbT^d;\mbR)$ and $v\in C_{\eta_2;T}\mcC^{\beta}(\mbT^d;\mbR)$, then $u\pa v\in C_{\eta;T}\mcC^{\alpha+\beta}(\mbT^{d};\mbR)$ and
		\begin{equation*}
			\norm{u\pa v}_{C_{\eta;T}\mcC^{\alpha+\beta}}\lesssim_{\alpha,\beta}\norm{u}_{C_{\eta_1;T}\mcC^{\alpha}}\norm{v}_{C_{\eta_2;T}\mcC^{\beta}}.
		\end{equation*}
		\item Let $\alpha,\beta\in\mbR$ with $\alpha+\beta>0$, $u\in C_{\eta_1;T}\mcC^{\alpha}(\mbT^d;\mbR)$ and $v\in C_{\eta_2;T}\mcC^{\beta}(\mbT^d;\mbR)$, then $u\re v\in C_{\eta;T}\mcC^{\alpha+\beta}(\mbT^{d};\mbR)$ and
		\begin{equation*}
			\norm{u\re v}_{C_{\eta;T}\mcC^{\alpha+\beta}}\lesssim_{\alpha,\beta}\norm{u}_{C_{\eta_1;T}\mcC^{\alpha}}\norm{v}_{C_{\eta_2;T}\mcC^{\beta}}.
		\end{equation*}
	\end{itemize}
\end{lemma}
\begin{proof}
	The result is a direct consequence of~\cite[Lem.~2.1]{gubinelli_15_GIP}.
\end{proof}
\subsection{Parabolic and Elliptic Regularity Estimates}
We will make use of the following interpolation inequality. Let $x\geq0$ and $\gamma\in[0,1]$, then
\begin{equation}\label{eq:interpolation}
	0\leq1-\euler^{-x}\leq x^{\gamma}.
\end{equation}
We also apply the following rapid-decay inequality. For any $r>0$, uniformly in $x\geq0$,
\begin{equation}\label{eq:rapid_decay}
	x^{r}\euler^{-x}\lesssim 1.
\end{equation}
The operators $P$, $\mcI$ and $\Phi$ introduced in Subsection~\ref{sec:notations} and their accompanying kernels $\msH$ and $\msG$ can be generalized to $\mbT^{d}$ \emph{mutatis mutandis}. For all $n\in\mbN$, we also apply $\mcI$ to $f=(f_1,\ldots,f_n)\from[0,T]\times\mbT^{d}\to\mbR^{n}$ by setting $\mcI[f]=(\mcI[f_1],\ldots,\mcI[f_n])$.
\begin{lemma}\label{lem:heat_flow}
	Let $\alpha\leq\beta\in\mbR$ and $p,q,p',q'\in[1,\infty]$ be such that $p\geq p'$ and $q\geq q'$. Then for any $t>0$,
	\begin{equation}\label{eq:HeatFlow}
		\norm{P_tf}_{\mcB^{\beta}_{p,q}}\lesssim\bigl(1\vee t^{-\frac{\beta -\alpha}{2}}\bigr)\bigl(1\vee t^{-\frac{d}{2}(\frac{1}{p'}-\frac{1}{p})}\bigr)\norm{f}_{\mcB_{p',q'}^{\alpha}}.
	\end{equation}
	Secondly, if $\alpha\leq\beta\leq\alpha+2$ then for any $t>0$,
	\begin{equation}\label{eq:HeatFlowMinusId}
		\norm{(P_{t}-1)f}_{\mcB^{\alpha}_{p,q}} \lesssim t^{\frac{\beta-\alpha}{2}} \|f\|_{\mcB^{\beta}_{p,q}}.
	\end{equation}
\end{lemma}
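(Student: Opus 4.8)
The plan is to prove both bounds by a Littlewood--Paley decomposition, handling the zero block $\Delta_{-1}$ separately, since on the torus the heat semigroup neither regularises nor decays at frequency zero. Throughout I use that $P_t$ commutes with every $\Delta_k$, so $\Delta_k P_t f = P_t\Delta_k f$, and that by density it suffices to treat smooth $f$.

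For \eqref{eq:HeatFlow}, since $q\geq q'$ one has $\|(a_k)_k\|_{\ell^q}\leq\|(a_k)_k\|_{\ell^{q'}}$, so it is enough to bound $\|(2^{k\beta}\|\Delta_k P_t f\|_{L^p})_k\|_{\ell^{q'}}$. For $k\geq 0$ I would combine: (a) Bernstein's inequality $\|\Delta_k g\|_{L^p}\lesssim 2^{kd(1/p'-1/p)}\|\Delta_k g\|_{L^{p'}}$; (b) a heat-decay estimate $\|P_t\Delta_k f\|_{L^{p'}}\lesssim\euler^{-ct2^{2k}}\|\Delta_k f\|_{L^{p'}}$ for a universal $c>0$, obtained by writing $P_t\Delta_k f=g_{k,t}(D)\Delta_k f$ with $g_{k,t}$ equal to a fattened cutoff of $\varrho_k$ times $\euler^{-t|2\uppi\,\cdot\,|^2}$, and checking the scale-invariant symbol bounds $2^{|a|k}\|\partial^a g_{k,t}\|_{L^\infty}\lesssim\euler^{-ct2^{2k}/2}$ — the polynomial growth produced by differentiating the Gaussian is absorbed into the exponential by the rapid-decay inequality \eqref{eq:rapid_decay} — so that the standard Fourier multiplier estimate (cf.\ \cite{bahouri_chemin_danchin_11}) gives (b); and (c) the elementary bound $2^{k(\beta-\alpha+d(1/p'-1/p))}\euler^{-ct2^{2k}/2}\lesssim(1\vee t^{-(\beta-\alpha)/2})(1\vee t^{-\frac{d}{2}(1/p'-1/p)})\,\euler^{-ct2^{2k}/4}$, again from \eqref{eq:rapid_decay}, retaining a residual exponential for summation. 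For $k=-1$ the block $\Delta_{-1}f$ has bounded frequency support, so Bernstein plus the $L^{p'}$-contractivity of the heat flow give $\|\Delta_{-1}P_tf\|_{L^p}\lesssim\|\Delta_{-1}f\|_{L^{p'}}$, which is dominated by the right-hand side since $1\leq(1\vee t^{-\cdots})(1\vee t^{-\cdots})$. Taking the $\ell^{q'}$-norm and discarding the residual exponentials ($\leq 1$) then yields $(1\vee t^{-(\beta-\alpha)/2})(1\vee t^{-\frac{d}{2}(1/p'-1/p)})\|f\|_{\mcB^\alpha_{p',q'}}$.

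For \eqref{eq:HeatFlowMinusId} I would split on the size of $t$. Set $\gamma\coloneqq(\beta-\alpha)/2$; if $\gamma=0$ the bound is immediate from \eqref{eq:HeatFlow} and the triangle inequality, so assume $\gamma\in(0,1]$ (this uses $\beta\leq\alpha+2$). For $t\geq 1$, $\|(P_t-1)f\|_{\mcB^\alpha_{p,q}}\leq\|P_tf\|_{\mcB^\alpha_{p,q}}+\|f\|_{\mcB^\alpha_{p,q}}\lesssim\|f\|_{\mcB^\alpha_{p,q}}\lesssim\|f\|_{\mcB^\beta_{p,q}}\leq t^\gamma\|f\|_{\mcB^\beta_{p,q}}$, using \eqref{eq:HeatFlow} with $p'=p$, $q'=q$ and the continuous embedding $\mcB^\beta_{p,q}\hookrightarrow\mcB^\alpha_{p,q}$. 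For $t\leq 1$, use $(P_t-1)f=-\int_0^t P_s\Delta f\,\dd s$ together with \eqref{eq:HeatFlow} (again $p'=p$, $q'=q$) and the boundedness of $\Delta\colon\mcB^\beta_{p,q}\to\mcB^{\beta-2}_{p,q}$ to get
\[
\|(P_t-1)f\|_{\mcB^\alpha_{p,q}}\lesssim\int_0^t\bigl(1\vee s^{-(\alpha-\beta+2)/2}\bigr)\,\dd s\;\|f\|_{\mcB^\beta_{p,q}}=\int_0^t\bigl(1\vee s^{\gamma-1}\bigr)\,\dd s\;\|f\|_{\mcB^\beta_{p,q}}\lesssim t^\gamma\|f\|_{\mcB^\beta_{p,q}},
\]
the integral being finite and $\lesssim t^\gamma$ for $t\leq 1$ precisely because $\gamma-1\in(-1,0]$. (Alternatively one can work directly on the Fourier side: on $\supp\varrho_k$ the multiplier of $P_t-1$ has modulus $\lesssim(t2^{2k})^\gamma$ by the interpolation inequality \eqref{eq:interpolation}, with uniformly controlled symbol seminorms for $k\geq 0$, while the block $k=-1$ is handled by interpolating between the trivial bound $2\|\Delta_{-1}f\|_{L^p}$ and $\|\Delta_{-1}\int_0^t P_s\Delta f\,\dd s\|_{L^p}\lesssim t\|\Delta_{-1}f\|_{L^p}$.)

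The routine-but-delicate point is step (b): one must verify that the constant in $\|P_t\Delta_k f\|_{L^{p'}}\lesssim\euler^{-ct2^{2k}}\|\Delta_k f\|_{L^{p'}}$ is genuinely uniform in both $k\geq 0$ and $t>0$, which is exactly where the rapid-decay inequality \eqref{eq:rapid_decay} is needed to absorb the polynomial blow-up from differentiating the Gaussian symbol, and this is what dictates the scale-invariant formulation $2^{|a|k}\|\partial^a g_{k,t}\|_{L^\infty}$ of the multiplier bound. Everything else — Bernstein's inequality, the embedding $\ell^{q'}\hookrightarrow\ell^q$, the elementary manipulations of powers of $t$, the fact that $\Delta$ lowers Besov regularity by $2$, and the density reduction — is standard.
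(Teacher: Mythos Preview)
Your proposal is correct. For \eqref{eq:HeatFlow} you unpack what the paper simply cites (the Besov embedding \eqref{eq:Besov_cts_embedding} combined with \cite[Lem.~2.4]{bahouri_chemin_danchin_11}), so the content is identical. For \eqref{eq:HeatFlowMinusId} the paper takes your ``alternative'' route rather than your main one: it works blockwise from the start, writes $\|(P_t-1)\Delta_k f\|_{L^p}\leq\int_0^t\|P_s\Delta\Delta_k f\|_{L^p}\dd s$, applies the exponential heat decay $\|P_s\Delta_k g\|_{L^p}\lesssim\euler^{-cs2^{2k}}\|\Delta_k g\|_{L^p}$ together with Bernstein for $\Delta$, and then uses \eqref{eq:interpolation} on $1-\euler^{-ct2^{2k}}$ to extract $t^{(\beta-\alpha)/2}2^{k(\beta-\alpha)}$. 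This handles all $t>0$ in one stroke with no $t\leq1$/$t\geq1$ split, which is the small gain over your global-integral argument. Your main approach --- reusing \eqref{eq:HeatFlow} as a black box on $P_s\Delta f$ --- is more modular and equally valid. One simplification available to you in the alternative: on the torus the only frequency in $\supp(\varrho_{-1})$ is $0$, so $(P_t-1)\Delta_{-1}f\equiv0$ and no interpolation is needed for that block. (Also, a trivial sign: $(P_t-1)f=+\int_0^t P_s\Delta f\,\dd s$, not $-$.)
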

\begin{proof}
	We first show~\eqref{eq:HeatFlow}, which is an easy consequence of the Besov embedding and the regularising effect of the heat flow. By~\eqref{eq:Besov_cts_embedding} for any $\beta \in \mbR$ and $p,q,p',q' \in [1,\infty]$ with $p\geq p'$ and $q\geq q'$, it holds that,
	\begin{equation*}
		\norm{P_{t}f}_{\mcB^{\beta}_{p,q}}\lesssim\norm{P_{t}f}_{\mcB^{\beta+d(1/p'-1/p)}_{p',q'}}.
	\end{equation*}
	We now apply the semigroup property and the regularizing effect of the heat flow (\cite[Lem.~2.4]{bahouri_chemin_danchin_11} \&~\eqref{eq:rapid_decay}),
	\begin{equation*}
		\norm{P_{t}f}_{\mcB^{\beta+d(1/p'-1/p)}_{p',q'}}\lesssim \bigl(1\vee t^{-\frac{d}{2}(\frac{1}{p'}-\frac{1}{p})}\bigr)\norm{P_{t/2}f}_{\mcB^{\beta}_{p',q'}}\lesssim\bigl(1\vee t^{-\frac{d}{2}(\frac{1}{p'}-\frac{1}{p})}\bigr)\bigl(1\vee t^{-\frac{\beta -\alpha}{2}}\bigr)\norm{f}_{\mcB^{\alpha}_{p',q'}},
	\end{equation*}
	which yields~\eqref{eq:HeatFlow}. 
	\begin{details}
		We need to make sure that~\cite[Lem.~2.4]{bahouri_chemin_danchin_11} also applies to $f$ on the torus. By the Poisson summation formula, 
		\begin{equation*}
			\sum_{\om\in\mbZ^{d}}\euler^{2\uppi\upi\inner{\om}{x}}\euler^{-t\abs{2\uppi\om}^{2}}=\frac{1}{(4\uppi t)^{d/2}}\sum_{\om\in\mbZ^{d}}\euler^{-\frac{\abs{\om+x}^{2}}{4t}}
		\end{equation*}
		so that for $f\in C^{\infty}(\mbT^{d})$,
		\begin{equation*}
			\sum_{\om\in\mbZ^{d}}\euler^{2\uppi\upi\inner{\om}{x}}\euler^{-t\abs{2\uppi\om}^{2}}\hat{f}(\om)=\int_{\mbT^d}\sum_{\om\in\mbZ^{d}}\euler^{2\uppi\upi\inner{\om}{x-y}}\euler^{-t\abs{2\uppi\om}^{2}}f(y)\dd y=\frac{1}{(4\uppi t)^{d/2}}\sum_{\om\in\mbZ^{d}}\int_{\mbT^{d}}\euler^{-\frac{\abs{\om+x-y}^{2}}{4t}}f(y)\dd y.
		\end{equation*}
		Using that $f$ is periodic, we obtain
		\begin{equation*}
			P_tf(x)=\sum_{\om\in\mbZ^{d}}\euler^{2\uppi\upi\inner{\om}{x}}\euler^{-t\abs{2\uppi\om}^{2}}\hat{f}(\om)=\int_{\mbR^{d}}\frac{1}{(4\uppi t)^{d/2}}\euler^{-\frac{\abs{x-y}^{2}}{4t}}f(y)\dd y,
		\end{equation*}
		which we can use to extend~\cite[Lem.~2.4]{bahouri_chemin_danchin_11} to $f\in L^{p}(\mbT^{d})$.
	\end{details}
	The bound~\eqref{eq:HeatFlowMinusId} can be found in~\cite[Prop.~A.13]{mourrat_weber_17_CDI}, who cite~\cite[Prop.~6]{mourrat_weber_15_GWP} for a proof in the full space. We provide a short argument. We consider the Littlewood--Paley blocks $\Delta_{k}(P_t-1)u=(P_t-1)\Delta_{k}u$ for $k\in\mbN_{-1}$. Since $(P_t-1)\Delta_{-1}u=0$, we may assume $k\in\mbN$. We apply~\cite[Lem.~2.4 \& Lem.~2.1]{bahouri_chemin_danchin_11} to obtain the existence of some $c>0$ such that
	\begin{equation*}
		\begin{split}
			\norm{(P_t-1)\Delta_{k}f}_{L^p}&\leq\int_{0}^{t}\norm{\partial_s P_s\Delta_{k}f}_{L^p}\dd s=\int_{0}^{t}\norm{P_s\Delta\Delta_{k}f}_{L^p}\dd s\\
			&\lesssim\norm{\Delta\Delta_k f}_{L^{p}}\int_{0}^{t}\euler^{-cs2^{2k}}\dd s\lesssim\norm{\Delta_kf}_{L^{p}}2^{2k}\int_{0}^{t}\euler^{-cs2^{2k}}\dd s\\
			&\lesssim\norm{\Delta_kf}_{L^{p}}(1-\euler^{-ct2^{2k}})\lesssim\norm{\Delta_kf}_{L^{p}}t^{\frac{\beta-\alpha}{2}}2^{k(\beta-\alpha)}.
		\end{split}
	\end{equation*}
	In the last inequality, we applied~\eqref{eq:interpolation} with $(\beta-\alpha)/2\in[0,1]$. This yields the claim using the definition of the $\mcB_{p,q}^{\alpha}(\mbT^{d})$-norm.
\end{proof}
We can now establish Schauder estimates similar to~\cite[Lem.~A.9]{gubinelli_15_GIP} and~\cite[Prop.~2.7]{catellier_chouk_18}.
\begin{lemma}\label{lem:Schauder}
	Let $T>0$, $\eta\geq0$, $\alpha\leq\beta\in\mbR$, $\kappa\in[0,1]$ and $p,q\in[1,\infty]$. Then the following hold
	\begin{enumerate}
		\item\label{it:Schauder_I}
		If $\frac{\beta-\alpha}{2}+\frac{d}{2p}\leq\eta$, then
		\begin{equation}\label{eq:Schauder_1}
			\norm{Pf}_{\msL_{\eta;T}^{\kappa}\mcC^{\beta}}\lesssim (1\vee T^{-\frac{\beta-\alpha}{2}}) (1\vee T^{-\frac{d}{2p}})(1\wedge T)^{\eta}\norm{f}_{\mcB_{p,q}^{\alpha}}
		\end{equation}
		and $P\from\mcB_{p,q}^{\alpha}(\mbT^{d})\to\msL_{\eta;T}^{\kappa}\mcC^{\beta}(\mbT^{d})$ is a continuous map.
		\item\label{it:Schauder_II}
		If $\eta'\in[0,1)$, $\eta\leq\eta'$ and $\beta<\alpha+2$ are such that $\frac{\beta-\alpha}{2} \vee \kappa \leq 1-(\eta'-\eta)$, then
		\begin{equation}\label{eq:Schauder_2}
			\norm{\mcI[f]}_{\msL_{\eta;T}^{\kappa}\mcC^{\beta}}\lesssim_{T}\norm{f}_{C_{\eta';T}\mcC^{\alpha}}
		\end{equation}
		and $\mcI\from C_{\eta';T}\mcC^{\alpha}(\mbT^{d})\to\msL_{\eta;T}^{\kappa}\mcC^{\beta}(\mbT^{d})$ is a continuous map.
		In particular if $T\leq1$, then
		\begin{equation*}
			\norm{\mcI[f]}_{\msL_{\eta;T}^{\kappa}\mcC^{\beta}}\lesssim( T^{1-\frac{\beta-\alpha}{2}-(\eta'-\eta)}\vee T^{1-\kappa-(\eta'-\eta)})\norm{f}_{C_{\eta';T}\mcC^{\alpha}}.
		\end{equation*}
		\item\label{it:Schauder_continuous}
		Furthermore, $\mcI\from C_T\mcC^{\alpha}(\mbT^{d})\to \msL_{T}^{\kappa}\mcC^{\alpha+2}(\mbT^{d})$ is a continuous map.
	\end{enumerate}
\end{lemma}
\begin{proof}
	The proofs of~\eqref{eq:Schauder_1} and~\eqref{eq:Schauder_2} are simple consequences of Lemma~\ref{lem:heat_flow}, the semigroup property and the definition of the interpolation spaces, Definition~\ref{def:interpolation_space}. The continuity of $t\mapsto (1\wedge t)^{\eta}P_{t}u_0$ and $t\mapsto(1\wedge t)^{\eta}\mcI[f]_{t}$ follows from the fact that the same statement is true for smooth $u_{0}$, $f$, and then by taking limits along a smooth approximating sequence.
	\begin{details}
		\paragraph{Proof of Claim~\ref{it:Schauder_I}.}
		We first consider $Pf$ and prove~\eqref{eq:Schauder_1}. Assume $\frac{\beta-\alpha}{2}+\frac{d}{2p}\leq\eta$. To control the $C_{\eta;T}\mcC^{\beta}(\mbT^{d})$-norm of $Pf$, we use~\eqref{eq:HeatFlow} to bound
		\begin{equation*}
			\norm{Pf}_{C_{\eta;T}\mcC^{\beta}}\lesssim\sup_{t\in[0,T]}(1\vee t^{-\frac{\beta-\alpha}{2}})(1\vee t^{-\frac{d}{2p}})(1\wedge t)^{\eta}\norm{f}_{\mcB_{p,q}^{\alpha}}.
		\end{equation*}
		Next we control the H\"{o}lder seminorm. We compute for $s\leq t$ using~\eqref{eq:HeatFlowMinusId} and~\eqref{eq:HeatFlow},
		\begin{equation*}
			\begin{split}
				\norm{P_tf-P_sf}_{\mcC^{\beta-2\kappa}}=\norm{(P_{t-s}-1)P_sf}_{\mcC^{\beta-2\kappa}}&\lesssim\abs{t-s}^{\kappa}\norm{P_sf}_{\mcC^{\beta}}\\
				&\lesssim\abs{t-s}^{\kappa}(1\vee s^{-\frac{\beta-\alpha}{2}})(1\vee s^{-\frac{d}{2p}})\norm{f}_{\mcB_{p,q}^\alpha},
			\end{split}
		\end{equation*}
		which implies
		\begin{equation*}
			\sup_{s\not=t\in[0,T]}(1\wedge s\wedge t)^{\eta}\frac{\norm{P_tf-P_sf}_{\mcC^{\beta-2\kappa}}}{\abs{t-s}^{\kappa}}\lesssim\sup_{t\in[0,T]}(1\vee t^{-\frac{\beta-\alpha}{2}})(1\vee t^{-\frac{d}{2p}})(1\wedge t)^{\eta}\norm{f}_{\mcB_{p,q}^{\alpha}}.
		\end{equation*}
		Using that $[0,T]\ni t\mapsto(1\vee t^{-\frac{\beta-\alpha}{2}})(1\vee t^{-\frac{d}{2p}})(1\wedge t)^{\eta}$ is non-decreasing and continuous, we obtain~\eqref{eq:Schauder_1}.
		
		To establish the continuity of the operator $P\from\mcB_{p,q}^{\alpha}(\mbT^{d})\to\msL_{\eta;T}^{\kappa}\mcC^{\beta}(\mbT^{d})$, it suffices to show that $Pf\in\msL_{\eta;T}^{\kappa}\mcC^{\beta}(\mbT^{d})$ for every $f\in\mcB_{p,q}^{\alpha}(\mbT^{d})$. Since $C_{\eta;T}^{\kappa}\mcC^{\beta-2\kappa}(\mbT^{d})$ is defined as those functions that are finite under the norm $\norm{\place}_{C_{\eta;T}^{\kappa}\mcC^{\beta-2\kappa}}$, it is clear by~\eqref{eq:Schauder_1} that $Pf\in C_{\eta;T}^{\kappa}\mcC^{\beta-2\kappa}(\mbT^{d})$. Hence, it remains to establish $Pf\in C_{\eta;T}\mcC^{\beta}(\mbT^{d})$ (i.e.\ that $[0,T]\ni t\mapsto(1\wedge t)^{\eta}P_{t}f\in\mcC^{\beta}(\mbT^{d})$ is continuous.) By the definition of $\mcB_{p,q}^{\alpha}(\mbT^{d})$, there exists a sequence $(f_{n})_{n\in\mbN}$ of smooth approximations such that $C^{\infty}(\mbT^{d})\ni f_{n}\to f\in\mcB_{p,q}^{\alpha}(\mbT^{d})$. Since $f_{n}\to f\in\mcB_{p,q}^{\alpha}(\mbT^{d})$, we obtain by~\eqref{eq:Schauder_1} that $\lim_{n\to\infty}\norm{Pf-Pf_{n}}_{C_{\eta;T}\mcC^{\beta}}=0$; hence, by the completeness of the continuous functions in the supremum norm, it suffices to show that $Pf_{n}\in C_{\eta;T}\mcC^{\beta}(\mbT^{d})$. 
		
		Let $\eps\in(0,1]$, using that $f_{n}\in C^{\infty}(\mbT^{d})\subset\mcC^{\beta+2\eps}(\mbT^{d})$, we apply~\eqref{eq:Schauder_1} to obtain
		\begin{equation*}
			\norm{Pf_{n}}_{\msL_{T}^{\eps}\mcC^{\beta+2\eps}}\lesssim\norm{f_{n}}_{\mcC^{\beta+2\eps}},
		\end{equation*}
		which yields $Pf_{n}\in C_{T}^{\eps}\mcC^{\beta}(\mbT^{d})\subset C_{T}\mcC^{\beta}(\mbT^{d})\subset C_{\eta;T}\mcC^{\beta}(\mbT^{d})$.
		\paragraph{Proof of Claim~\ref{it:Schauder_II}.}
		Next we consider $\mcI[f]$ and prove~\eqref{eq:Schauder_2}. Assume $\eta\leq\eta'\in[0,1)$ and $\beta<\alpha+2$. To control the $C_{\eta;T}\mcC^{\beta}(\mbT^{d})$-norm of $\mcI[f]$, we use~\eqref{eq:HeatFlow} to bound
		\begin{equation*}
			\norm{\mcI[f]_{t}}_{\mcC^{\beta}}\leq\int_{0}^{t}\norm{P_{t-r}f(r)}_{\mcC^{\beta}}\dd r\lesssim\int_{0}^{t}(1\vee \abs{t-r}^{-\frac{\beta-\alpha}{2}})(1\wedge r)^{-\eta'}\dd r\norm{f}_{C_{\eta';t}\mcC^{\alpha}}.
		\end{equation*}
		Taking the supremum, we obtain
		\begin{equation*}
			\norm{\mcI[f]}_{C_{\eta;T}\mcC^{\beta}}\lesssim\Bigl(\sup_{t\in[0,T]}(1\wedge t)^{\eta}\int_{0}^{t}(1\vee \abs{t-r}^{-\frac{\beta-\alpha}{2}})(1\wedge r)^{-\eta'}\dd r\Bigr)\norm{f}_{C_{\eta';T}\mcC^{\alpha}}.
		\end{equation*}
		To bound this expression, we distinguish the cases $t\in[0,1]$, $t\in(1,2]$ and $t\in(2,\infty)$. Assume $t\in[0,1]$, then
		\begin{equation*}
			(1\wedge t)^{\eta}\int_{0}^{t}(1\vee \abs{t-r}^{-\frac{\beta-\alpha}{2}})(1\wedge r)^{-\eta'}\dd r=t^{\eta}\int_{0}^{t}\abs{t-r}^{-\frac{\beta-\alpha}{2}}r^{-\eta'}\dd r\lesssim t^{1-\frac{\beta-\alpha}{2}-(\eta'-\eta)}.
		\end{equation*}
		Assume $t\in(1,2]$, then
		\begin{equation*}
			\begin{split}
				(1\wedge t)^{\eta}\int_{0}^{t}(1\vee \abs{t-r}^{-\frac{\beta-\alpha}{2}})(1\wedge r)^{-\eta'}\dd r&=\int_{0}^{t-1}r^{-\eta'}\dd r+\int_{t-1}^{1}\abs{t-r}^{-\frac{\beta-\alpha}{2}}r^{-\eta'}\dd r+\int_{1}^{t}\abs{t-r}^{-\frac{\beta-\alpha}{2}}\dd r\\
				&\leq\int_{0}^{t}r^{-\eta'}\dd r+\int_{0}^{t}\abs{t-r}^{-\frac{\beta-\alpha}{2}}r^{-\eta'}\dd r+\int_{0}^{t}\abs{t-r}^{-\frac{\beta-\alpha}{2}}\dd r\\
				&\lesssim t^{1-\eta'}+t^{1-\frac{\beta-\alpha}{2}-\eta'}+t^{1-\frac{\beta-\alpha}{2}}.
			\end{split}
		\end{equation*}
		Assume $t\in(2,\infty)$, then
		\begin{equation*}
			\begin{split}
				(1\wedge t)^{\eta}\int_{0}^{t}(1\vee \abs{t-r}^{-\frac{\beta-\alpha}{2}})(1\wedge r)^{-\eta'}\dd r&=\int_{0}^{1}r^{-\eta'}\dd r+\int_{1}^{t-1}1\dd r+\int_{t-1}^{t}\abs{t-r}^{-\frac{\beta-\alpha}{2}}\dd r\\
				&\leq\int_{0}^{t}r^{-\eta'}\dd r+\int_{0}^{t}1\dd r+\int_{0}^{t}\abs{t-r}^{-\frac{\beta-\alpha}{2}}\dd r\\
				&\lesssim t^{1-\eta'}+t+t^{1-\frac{\beta-\alpha}{2}}.
			\end{split}
		\end{equation*}
		We use that $\frac{\beta-\alpha}{2}\leq1-(\eta'-\eta)$ to ensure $\sup_{t\in[0,1]} t^{1-\frac{\beta-\alpha}{2}-(\eta'-\eta)}<\infty$. All in all, we obtain
		\begin{equation}\label{eq:Schauder_integral_estimate}
			\sup_{t\in[0,T]}(1\wedge t)^{\eta}\int_{0}^{t}(1\vee \abs{t-r}^{-\frac{\beta-\alpha}{2}})(1\wedge r)^{-\eta'}\dd r\lesssim_{T}1.
		\end{equation}
		In particular if $T\leq1$, then
		\begin{equation*}
			\sup_{t\in[0,T]}(1\wedge t)^{\eta}\int_{0}^{t}(1\vee \abs{t-r}^{-\frac{\beta-\alpha}{2}})(1\wedge r)^{-\eta'}\dd r\lesssim T^{1-\frac{\beta-\alpha}{2}-(\eta'-\eta)}.
		\end{equation*}
		Consequently,
		\begin{equation*}
			\norm{\mcI[f]}_{C_{\eta;T}\mcC^{\beta}}\lesssim_{T}\norm{f}_{C_{\eta';T}\mcC^{\alpha}}.
		\end{equation*}
		Next we consider the H\"{o}lder seminorm of $\mcI[f]$ and wish to establish
		\begin{equation}\label{eq:I_seminorm}
			\sup_{s\not=t\in[0,T]}(1\wedge s\wedge t)^{\eta}\frac{\norm{\mcI[f]_{t}-\mcI[f]_{s}}_{\mcC^{\beta-2\kappa}}}{\abs{t-s}^{\kappa}}\lesssim_{T}\norm{f}_{C_{\eta';T}\mcC^\alpha}.
		\end{equation}
		Assume $s\leq t$. To establish~\eqref{eq:I_seminorm}, we decompose $\mcI[f]_{t}-\mcI[f]_{s}=(P_{t-s}-1)\mcI[f]_{s}+\int_{s}^{t}P_{t-r}f(r)\dd r$. We control the first summand using~\eqref{eq:HeatFlowMinusId},
		\begin{equation*}
			\norm{(P_{t-s}-1)\mcI[f]_{s}}_{\mcC^{\beta-2\kappa}}\lesssim\abs{t-s}^{\kappa}\norm*{\int_{0}^{s}P_{s-r}f(r)\dd r}_{\mcC^{\beta}}\lesssim\abs{t-s}^{\kappa}\int_{0}^{s}(1\vee \abs{s-r}^{-\frac{\beta-\alpha}{2}})(1\wedge r)^{-\eta'}\dd r \norm{f}_{C_{\eta';s}\mcC^{\alpha}}.
		\end{equation*}
		Taking the supremum, we obtain
		\begin{equation*}
			\sup_{s\not=t\in[0,T]}(1\wedge s\wedge t)^{\eta}\frac{\norm{(P_{t-s}-1)\mcI[f]_{s}}_{\mcC^{\beta-2\kappa}}}{\abs{t-s}^{\kappa}}\lesssim\Bigl(\sup_{s\not=t\in[0,T]}(1\wedge s)^{\eta}\int_{0}^{s}(1\vee\abs{s-r}^{-\frac{\beta-\alpha}{2}})(1\wedge r)^{-\eta'}\dd r\Bigr)\norm{f}_{C_{\eta';T}\mcC^{\alpha}},
		\end{equation*}
		which we can estimate as in~\eqref{eq:Schauder_integral_estimate}. In the second summand, we distinguish cases. In the case $\alpha\leq\beta-2\kappa$, we apply~\eqref{eq:HeatFlow},
		\begin{equation*}
			\norm*{\int_{s}^{t}P_{t-r}f(r)\dd r}_{\mcC^{\beta-2\kappa}}\lesssim\int_{s}^{t}(1\vee\abs{t-r}^{-\frac{\beta-\alpha}{2}+\kappa})(1\wedge r)^{-\eta'}\dd r\norm{f}_{C_{\eta';t}\mcC^\alpha}.
		\end{equation*}
		Taking the supremum, we obtain
		\begin{equation*}
			\begin{split}
				&\sup_{s\not=t\in[0,T]}(1\wedge s\wedge t)^{\eta}\frac{\norm*{\int_{s}^{t}P_{t-r}f(r)\dd r}_{\mcC^{\beta-2\kappa}}}{\abs{t-s}^{\kappa}}\\
				&\lesssim\Bigl(\sup_{s\not=t\in[0,T]}(1\wedge s\wedge t)^{\eta}\abs{t-s}^{-\kappa}\int_{s}^{t}(1\vee\abs{t-r}^{-\frac{\beta-\alpha}{2}+\kappa})(1\wedge r)^{-\eta'}\dd r\Bigr)\norm{f}_{C_{\eta';T}\mcC^\alpha}.
			\end{split}
		\end{equation*}
		We distinguish the cases $1<t-s$ and $t-s\leq1$. Assume $1<t-s$, then
		\begin{equation*}
			\begin{split}
				&\sup_{s\not=t\in[0,T]}(1\wedge s\wedge t)^{\eta}\abs{t-s}^{-\kappa}\int_{s}^{t}(1\vee \abs{t-r}^{-\frac{\beta-\alpha}{2}+\kappa})(1\wedge r)^{-\eta'}\dd r\\
				&\leq \sup_{t\in[0,T]}(1\wedge t)^{\eta}\int_{0}^{t}(1\vee \abs{t-r}^{-\frac{\beta-\alpha}{2}+\kappa})(1\wedge r)^{-\eta'}\dd r.
			\end{split}
		\end{equation*}
		We can then estimate the remaining integral as in~\eqref{eq:Schauder_integral_estimate}, using that $\beta-\alpha-2\kappa\geq0$ and $\frac{\beta-\alpha}{2}-\kappa\leq1-(\eta'-\eta)$. Assume $t-s\leq1$ and $t\in[0,1]$. We obtain
		\begin{equation*}
			\begin{split}
				&\int_{s}^{t}(1\vee \abs{t-r}^{-\frac{\beta-\alpha}{2}+\kappa})(1\wedge r)^{-\eta'}\dd r=\int_{s}^{t}\abs{t-r}^{-\frac{\beta-\alpha}{2}+\kappa}r^{-\eta'}\dd r\\
				&\leq\abs{t-s}^{\kappa}\int_{0}^{t}\abs{t-r}^{-\frac{\beta-\alpha}{2}}r^{-\eta'}\dd r\lesssim\abs{t-s}^{\kappa}t^{1-\frac{\beta-\alpha}{2}-\eta'}.
			\end{split}
		\end{equation*}
		Assume $t-s\leq1$, $t\in(1,\infty)$ and $s\in[0,1]$. We obtain
		\begin{equation*}
			\begin{split}
				&\int_{s}^{t}(1\vee \abs{t-r}^{-\frac{\beta-\alpha}{2}+\kappa})(1\wedge r)^{-\eta'}\dd r=\int_{s}^{1}\abs{t-r}^{-\frac{\beta-\alpha}{2}+\kappa}r^{-\eta'}\dd r+\int_{1}^{t}\abs{t-r}^{-\frac{\beta-\alpha}{2}+\kappa}\dd r\\
				&\leq\abs{t-s}^{\kappa}\int_{0}^{t}\abs{t-r}^{-\frac{\beta-\alpha}{2}}r^{-\eta'}\dd r+\abs{t-s}^{\kappa}\int_{0}^{t}\abs{t-r}^{-\frac{\beta-\alpha}{2}}\dd r\\
				&\lesssim\abs{t-s}^{\kappa}t^{1-\frac{\beta-\alpha}{2}-\eta'}+\abs{t-s}^{\kappa}t^{1-\frac{\beta-\alpha}{2}}.
			\end{split}
		\end{equation*}
		Assume $t-s\leq1$, $t\in(1,\infty)$ and $s\in(1,\infty)$. We obtain
		\begin{equation*}
			\begin{split}
				\int_{s}^{t}(1\vee \abs{t-r}^{-\frac{\beta-\alpha}{2}+\kappa})(1\wedge r)^{-\eta'}\dd r&=\int_{s}^{t}\abs{t-r}^{-\frac{\beta-\alpha}{2}+\kappa}\dd r\\
				&\leq\abs{t-s}^{\kappa}\int_{0}^{t}\abs{t-r}^{-\frac{\beta-\alpha}{2}}\dd r\lesssim\abs{t-s}^{\kappa}t^{1-\frac{\beta-\alpha}{2}}.
			\end{split}
		\end{equation*}
		Consequently,
		\begin{equation*}
			\sup_{s\not=t\in[0,T]}(1\wedge s\wedge t)^{\eta}\abs{t-s}^{-\kappa}\int_{s}^{t}(1\vee \abs{t-r}^{-\frac{\beta-\alpha}{2}+\kappa})(1\wedge r)^{-\eta'}\dd r\lesssim_{T}1,
		\end{equation*}
		where we used that $\frac{\beta-\alpha}{2}<1-(\eta'-\eta)$ to ensure $\sup_{t\in[0,T]}t^{1-\frac{\beta-\alpha}{2}-(\eta'-\eta)}=T^{1-\frac{\beta-\alpha}{2}-(\eta'-\eta)}<\infty$. In particular if $T\leq 1$, then
		\begin{equation*}
			\sup_{s\not=t\in[0,T]}(1\wedge s\wedge t)^{\eta}\abs{t-s}^{-\kappa}\int_{s}^{t}(1\vee \abs{t-r}^{-\frac{\beta-\alpha}{2}+\kappa})(1\wedge r)^{-\eta'}\dd r\lesssim T^{1-\frac{\beta-\alpha}{2}-(\eta'-\eta)}.
		\end{equation*}
		On the other hand, if $\beta-2\kappa<\alpha$, we estimate instead
		\begin{equation*}
			\norm*{\int_{s}^{t}P_{t-r}f(r)\dd r}_{\mcC^{\beta-2\kappa}}\lesssim\int_{s}^{t}\norm{P_{t-r}f(r)}_{\mcC^{\alpha}}\dd r\lesssim\int_{s}^{t}(1\wedge r)^{-\eta'}\dd r\norm{f}_{C_{\eta';t}\mcC^{\alpha}}.
		\end{equation*}
		Taking the supremum, we obtain
		\begin{equation*}
			\begin{split}
				&\sup_{s\not=t\in[0,T]}(1\wedge s\wedge t)^{\eta}\frac{\norm*{\int_{s}^{t}P_{t-r}f(r)\dd r}_{\mcC^{\beta-2\kappa}}}{\abs{t-s}^{\kappa}}\lesssim\Bigl(\sup_{s\not=t\in[0,T]}(1\wedge s\wedge t)^{\eta}\abs{t-s}^{-\kappa}\int_{s}^{t}(1\wedge r)^{-\eta'}\dd r\Bigr)\norm{f}_{C_{\eta';T}\mcC^{\alpha}}.
			\end{split}
		\end{equation*}
		We estimate
		\begin{equation*}
			\int_{s}^{t}(1\wedge r)^{-\eta'}\dd r\leq (1\wedge s)^{-\eta}\int_{s}^{t}(1\wedge r)^{-(\eta'-\eta)}\dd r.
		\end{equation*}
		From here on, we use $\eta\leq\eta'$ to ensure $\eta'-\eta\geq0$. Assume $t\in[0,1]$, then
		\begin{equation*}
			\int_{s}^{t}(1\wedge r)^{-(\eta'-\eta)}\dd r=\int_{s}^{t}r^{-(\eta'-\eta)}\dd r\lesssim t^{1-(\eta'-\eta)}-s^{1-(\eta'-\eta)}.
		\end{equation*}
		Assume $t\in(1,\infty)$ and $s\in[0,1]$, then
		\begin{equation*}
			\int_{s}^{t}(1\wedge r)^{-(\eta'-\eta)}\dd r=\int_{s}^{1}r^{-(\eta'-\eta)}\dd r+\int_{1}^{t}1\dd r\leq\int_{s}^{t}r^{-(\eta'-\eta)}\dd r+\int_{s}^{t}1\dd r\lesssim t^{1-(\eta'-\eta)}-s^{1-(\eta'-\eta)}+\abs{t-s}.
		\end{equation*}
		Assume $t\in(1,\infty)$ and $s\in(1,\infty)$, then
		\begin{equation*}
			\int_{s}^{t}(1\wedge r)^{-(\eta'-\eta)}\dd r=\int_{s}^{t}1\dd r\leq\abs{t-s}.
		\end{equation*}
		Using that $\kappa\leq1-(\eta'-\eta)$, we obtain
		\begin{equation*}
			\sup_{s\not=t\in[0,T]}\frac{t^{1-(\eta'-\eta)}-s^{1-(\eta'-\eta)}}{\abs{t-s}^{\kappa}}\lesssim T^{1-(\eta'-\eta)-\kappa}.
		\end{equation*}
		Consequently,
		\begin{equation*}
			\sup_{s\not=t\in[0,T]}(1\wedge s\wedge t)^{\eta}\abs{t-s}^{-\kappa}\int_{s}^{t}(1\wedge r)^{-\eta'}\dd r\lesssim_{T}1.
		\end{equation*}
		In particular if $T\leq1$,
		\begin{equation*}
			\sup_{s\not=t\in[0,T]}(1\wedge s\wedge t)^{\eta}\abs{t-s}^{-\kappa}\int_{s}^{t}(1\wedge r)^{-\eta'}\dd r\lesssim T^{1-(\eta'-\eta)-\kappa}.
		\end{equation*}
		This establishes the time regularity
		\begin{equation*}
			\sup_{s\not=t\in[0,T]}(1\wedge s\wedge t)^{\eta}\frac{\norm{\mcI[f]_{t}-\mcI[f]_{s}}_{\mcC^{\beta-2\kappa}}}{\abs{t-s}^{\kappa}}\lesssim_{T}\norm{f}_{C_{\eta';T}\mcC^\alpha}
		\end{equation*}
		and in particular if $T\leq1$, then
		\begin{equation*}
			\sup_{s\not=t\in[0,T]}(1\wedge s\wedge t)^{\eta}\frac{\norm{\mcI[f]_{t}-\mcI[f]_{s}}_{\mcC^{\beta-2\kappa}}}{\abs{t-s}^{\kappa}}\lesssim (T^{1-\frac{\beta-\alpha}{2}-(\eta'-\eta)}\vee T^{1-(\eta'-\eta)-\kappa})\norm{f}_{C_{\eta';T}\mcC^\alpha}.
		\end{equation*}
		This proves~\eqref{eq:Schauder_2}. 
		
		To establish the continuity of $\mcI\from C_{\eta';T}\mcC^{\alpha}(\mbT^{d})\to\msL_{\eta;T}^{\kappa}\mcC^{\beta}(\mbT^{d})$, it suffices to show that $\mcI[f]\in\msL_{\eta;T}^{\kappa}\mcC^{\beta}(\mbT^{d})$ for every $f\in C_{\eta';T}\mcC^{\alpha}(\mbT^{d})$. Since $C_{\eta;T}^{\kappa}\mcC^{\beta-2\kappa}(\mbT^{d})$ is defined as those functions that are finite under the norm $\norm{\place}_{C_{\eta;T}^{\kappa}\mcC^{\beta-2\kappa}}$ it is clear by~\eqref{eq:Schauder_2} that $\mcI[f]\in C_{\eta;T}^{\kappa}\mcC^{\beta-2\kappa}(\mbT^{d})$. Hence it remains to establish $\mcI[f]\in C_{\eta;T}\mcC^{\beta}(\mbT^{d})$ (i.e.\ that $[0,T]\ni t\mapsto(1\wedge t)^{\eta}\mcI[f]_{t}\in\mcC^{\beta}(\mbT^{d})$ is continuous.) By the definition of $C_{\eta';T}\mcC^{\alpha}(\mbT^{d})$, it follows that $f_{\eta'\weight}\in C_{T}\mcC^{\alpha}(\mbT^{d})$, where $f_{\eta'\weight}(t)\defeq(1\wedge t)^{\eta'}f(t)$ for all $t\in(0,T]$ and $f_{\eta'\weight}(0)\defeq\lim_{t\to0}f_{\eta'\weight}(t)$. Using the density of $C^{\infty}(\mbT^{d})\subset\mcC^{\alpha}(\mbT^{d})$ and a partition of unity (in time), we can find a sequence $(g_{n})_{n\in\mbN}$ such that $g_{n}\in C_{T}C^{\infty}(\mbT^{d})$ and $g_{n}\to f_{\eta'\weight}\in C_{T}\mcC^{\alpha}(\mbT^{d})$. Using that the map $C_{\eta';T}\mcC^{\alpha}\to C_{T}\mcC^{\alpha}(\mbT^{d})$, $f\mapsto f_{\eta'\weight} $, is an isometric isomorphism, it follows that $f_{n}(t)\defeq(1\wedge t)^{-\eta'}g_{n}(t)$ satisfies $f_{n}\in C_{\eta';T}C^{\infty}(\mbT^{d})$ and $f_{n}\to f\in C_{\eta';T}\mcC^{\alpha}(\mbT^{d})$. Since $f_{n}\to f\in C_{\eta';T}\mcC^{\alpha}(\mbT^{d})$, we obtain by~\eqref{eq:Schauder_2} that $\lim_{n\to\infty}\norm{\mcI[f]-\mcI[f_{n}]}_{C_{\eta;T}\mcC^{\beta}}\to0$; hence, by the completeness of the continuous functions in the supremum norm, it suffices to show that $\mcI[f_{n}]\in C_{\eta;T}\mcC^{\beta}(\mbT^{d})$.
		
		Let $\eps\in(0,1-\eta']$, using that $f_{n}\in C_{\eta';T}C^{\infty}(\mbT^{d})\subset C_{\eta';T}\mcC^{\beta+2\eps}(\mbT^{d})$, we apply~\eqref{eq:Schauder_2} to obtain 
		\begin{equation*}
			\norm{\mcI[f_{n}]}_{\msL_{T}^{\eps}\mcC^{\beta+2\eps}}\lesssim_{T}\norm{f_{n}}_{C_{\eta';T}\mcC^{\beta+2\eps}},
		\end{equation*}
		which yields $\mcI[f_{n}]\in C_{T}^{\eps}\mcC^{\beta}(\mbT^{d})\subset C_{T}\mcC^{\beta}\subset C_{\eta;T}\mcC^{\beta}(\mbT^{d})$.
		\paragraph{Proof of Claim~\ref{it:Schauder_continuous}.}
		Let $f\in C_{T}\mcC^{\alpha}(\mbT^{d})$, we aim to establish $\mcI[f]\in\msL_{T}^{\kappa}\mcC^{\alpha+2}(\mbT^{d})$. To control the $C_{T}\mcC^{\alpha+2}(\mbT^{d})$-norm of $\mcI[f]$, we use~\cite[Lem.~A.9]{gubinelli_15_GIP} to bound
		\begin{equation*}
			\sup_{t\in[0,T]}\norm{\mcI[f]_{t}}_{\mcC^{\alpha+2}}\lesssim\norm{f}_{C_{T}\mcC^{\alpha}}.
		\end{equation*}
		Next we control the H\"{o}lder seminorm. We compute for $0<s\leq t\leq T$ using~\eqref{eq:HeatFlowMinusId} and~\cite[Lem.~A.9]{gubinelli_15_GIP},
		\begin{equation*}
			\norm{(P_{t-s}-1)\mcI[f]_{s}}_{\mcC^{\alpha+2-2\kappa}}\lesssim\abs{t-s}^{\kappa}\norm{\mcI[f]_{s}}_{\mcC^{\alpha+2}}\lesssim\abs{t-s}^{\kappa}\norm{f}_{C_{s}\mcC^{\alpha}}.
		\end{equation*}
		Taking the supremum, we obtain
		\begin{equation*}
			\sup_{s\neq t\in[0,T]}\frac{\norm{(P_{t-s}-1)\mcI[f]_{s}}_{\mcC^{\alpha+2-2\kappa}}}{\abs{t-s}^{\kappa}}\lesssim\norm{f}_{C_{T}\mcC^{\alpha}}.
		\end{equation*}
		Next, by~\eqref{eq:HeatFlow},
		\begin{equation*}
			\norm*{\int_{s}^{t}P_{t-r}f(r)\dd r}_{\mcC^{\alpha+2-2\kappa}}\lesssim\int_{s}^{t}(1\vee\abs{t-r}^{-1+\kappa})\dd r\norm{f}_{C_{t}\mcC^{\alpha}}.
		\end{equation*}
		Taking the supremum,
		\begin{equation*}
			\sup_{s\neq t\in[0,T]}\frac{\norm*{\int_{s}^{t}P_{t-r}f(r)\dd r}_{\mcC^{\alpha+2-2\kappa}}}{\abs{t-s}^{\kappa}}\lesssim\Bigl(\sup_{s\neq t\in[0,T]}\abs{t-s}^{-\kappa}\int_{s}^{t}(1\vee\abs{t-r}^{-1+\kappa})\dd r\Bigr)\norm{f}_{C_{T}\mcC^{\alpha}}.
		\end{equation*}
		Assume $t-s<1$, then
		\begin{equation*}
			\int_{s}^{t}(1\vee\abs{t-r}^{-1+\kappa})\dd r=\int_{s}^{t}\abs{t-r}^{-1+\kappa}\dd r\lesssim\abs{t-s}^{\kappa}.
		\end{equation*}
		Assume $1<t-s$, then $\abs{t-s}^{-\kappa}\leq1$. We estimate
		\begin{equation*}
			\int_{s}^{t}(1\vee\abs{t-r}^{-1+\kappa})\dd r\leq\int_{0}^{t}(1\vee\abs{t-r}^{-1+\kappa})\dd r.
		\end{equation*}
		Using that $1\leq1+s<t$, we obtain
		\begin{equation*}
			\int_{0}^{t}(1\vee\abs{t-r}^{-1+\kappa})\dd r=\int_{0}^{t-1}1\dd r+\int_{t-1}^{t}\abs{t-r}^{-1+\kappa}\dd r\leq\int_{0}^{t}1\dd r+\int_{0}^{t}\abs{t-r}^{-1+\kappa}\dd r\lesssim t+t^{\kappa}.
		\end{equation*}
		Consequently,
		\begin{equation*}
			\sup_{s\not=t\in[0,T]}\abs{t-s}^{-\kappa}\int_{s}^{t}(1\vee\abs{t-r}^{-1+\kappa})\dd r\lesssim1+T+T^{\kappa}.
		\end{equation*}
		In particular if $T\leq1$, then
		\begin{equation*}
			\sup_{s\not=t\in[0,T]}\abs{t-s}^{-\kappa}\int_{s}^{t}(1\vee\abs{t-r}^{-1+\kappa})\dd r\lesssim1.
		\end{equation*}
		This establishes the time regularity
		\begin{equation*}
			\sup_{s\not=t\in[0,T]}\frac{\norm{\mcI[f]_{t}-\mcI[f]_{s}}_{\mcC^{\alpha+2-2\kappa}}}{\abs{t-s}^{\kappa}}\lesssim_{T}\norm{f}_{C_{T}\mcC^\alpha}.
		\end{equation*}
		Therefore,
		\begin{equation}\label{eq:Schauder_continuity}
			\norm{\mcI[f]}_{\msL_{T}^{\kappa}\mcC^{\alpha+2}}\lesssim_{T}\norm{f}_{C_{T}\mcC^{\alpha}}.
		\end{equation}
		To establish the continuity of the operator $\mcI\from C_{T}\mcC^{\alpha}(\mbT^{d})\to\msL_{T}^{\kappa}\mcC^{\alpha+2}(\mbT^{d})$, it suffices to show that $\mcI[f]\in\msL_{T}^{\kappa}\mcC^{\alpha+2}(\mbT^{d})$ for every $f\in C_{T}\mcC^{\alpha}(\mbT^{d})$. Since $C_{T}^{\kappa}\mcC^{\alpha+2-2\kappa}(\mbT^{d})$ is defined as those functions that are finite under the norm $\norm{\place}_{C_{T}^{\kappa}\mcC^{\alpha+2-2\kappa}}$, it is clear by~\eqref{eq:Schauder_continuity} that $\mcI[f]\in C_{T}^{\kappa}\mcC^{\alpha+2-2\kappa}(\mbT^{d})$. Hence, it remains to establish $\mcI[f]\in C_{T}\mcC^{\alpha+2}(\mbT^{d})$ (i.e.\ that $[0,T]\ni t\mapsto\mcI[f]_{t}\in\mcC^{\alpha+2}(\mbT^{d})$ is continuous.) Using the density of $C^{\infty}(\mbT^{d})\subset\mcC^{\alpha}(\mbT^{d})$ and a partition of unity, we can find a sequence $(f_{n})_{n\in\mbN}$ such that $\lim_{n\to\infty}\norm{f-f_{n}}_{C_{T}\mcC^{\alpha}}=0$ and $f_{n}\in C_{T}C^{\infty}(\mbT^{d})$ for every $n\in\mbN$. Since $f_{n}\to f\in C_{T}\mcC^{\alpha}(\mbT^{d})$, we obtain by~\eqref{eq:Schauder_continuity} that $\lim_{n\to\infty}\norm{\mcI[f]-\mcI[f_{n}]}_{C_{T}\mcC^{\alpha+2}}=0$; hence, by the completeness of the continuous functions in the supremum norm, it suffices to show that $\mcI[f_{n}]\in C_{T}\mcC^{\alpha+2}(\mbT^{d})$.
		
		Let $\eps\in(0,1]$, using that $f_{n}\in C_{T}C^{\infty}(\mbT^{d})\subset C_{T}\mcC^{\alpha+2\eps}(\mbT^{d})$, we apply~\eqref{eq:Schauder_continuity} to obtain
		\begin{equation*}
			\norm{\mcI[f_{n}]}_{\msL_{T}^{\eps}\mcC^{\alpha+2+2\eps}}\lesssim_{T}\norm{f_{n}}_{C_{T}\mcC^{\alpha+2\eps}},
		\end{equation*}
		which yields $\mcI[f_{n}]\in C_{T}^{\eps}\mcC^{\alpha+2}(\mbT^{d})\subset C_{T}\mcC^{\alpha+2}(\mbT^{d})$.
	\end{details}
\end{proof}
\begin{details}
	In the next lemma we show that the heat equation acting on Besov distributions satisfies the initial condition.
	\begin{lemma}
		Let $\alpha\in\mbR$ and $p,q\in[1,\infty]$.
		\begin{enumerate}
			\item\label{it:IC_heat_sg}
			Let $u_{0}\in\mcB_{p,q}^{\alpha}(\mbT^{d})$, then $\lim_{t\to0}P_{t}u_{0}=u_{0}$ in $\mcB_{p,q}^{\alpha}(\mbT^{d})$. In particular, $\lim_{t\to0}P_{t}u_{0}=u_{0}$ in $\mcS'(\mbT^{d})$.
			\item\label{it:IC_heat_resolution}
			Let $\eta'\in[0,1)$ and $f\in C_{\eta';T}\mcC^{\alpha}(\mbT^{d})$, then $\lim_{t\to0}\mcI[f]_{t}=0$ in $\mcC^{\alpha+2(1-\eta')}(\mbT^{d})$. In particular, $\lim_{t\to0}\mcI[f]_{t}=0$ in $\mcS'(\mbT^{d})$.
		\end{enumerate}
	\end{lemma}
	\begin{proof}
		Proof of Claim~\ref{it:IC_heat_sg}. A small extension of Lemma~\ref{lem:Schauder} yields $Pu_{0}\in C_{T}\mcB_{p,q}^{\alpha}(\mbT^{d})$, hence $\lim_{t\to0}P_{t}u_{0}\in\mcB_{p,q}^{\alpha}(\mbT^{d})$, which we denote by $P_{0}u_{0}$. It suffices to show that $P_{0}u_{0}=u_{0}$ in $\mcB_{p,q}^{\alpha}(\mbT^{d})$. Let $\beta<\alpha$, we obtain by~\eqref{eq:HeatFlowMinusId},
		\begin{equation*}
			\norm{P_{t}u_{0}-u_{0}}_{\mcB_{p,q}^{\beta}}\lesssim t^{\frac{\alpha-\beta}{2}}\norm{u_{0}}_{\mcB_{p,q}^{\alpha}}\to0.
		\end{equation*}
		Therefore,
		\begin{equation*}
			\norm{P_{0}u_{0}-u_{0}}_{\mcB_{p,q}^{\beta}}\leq\norm{P_{0}u_{0}-P_{t}u_{0}}_{\mcB_{p,q}^{\beta}}+\norm{P_{t}u_{0}-u_{0}}_{\mcB_{p,q}^{\beta}}\lesssim\norm{P_{0}u_{0}-P_{t}u_{0}}_{\mcB_{p,q}^{\alpha}}+\norm{P_{t}u_{0}-u_{0}}_{\mcB_{p,q}^{\beta}}\to0,
		\end{equation*}
		which implies $P_{0}u_{0}=u_{0}$ in $\mcB_{p,q}^{\beta}(\mbT^{d})$. Consequently by~\cite[Thm.~21.18]{vanzuijlen_22}, it follows that $P_{0}u_{0}=v$ in $\mcS'(\mbT^{d})$, which yields $P_{0}u_{0}=v$ in $\mcB_{p,q}^{\alpha}(\mbT^{d})$ and hence the claim.
		
		Hence, $v=u_{0}$ in $\mcB_{p,q}^{\beta}(\mbT^{d})$ but $v\neq u_{0}$ in $\mcB_{p,q}^{\alpha}(\mbT^{d})$, which contradicts the Hausdorff property of $\mcB_{p,q}^{\beta}(\mbT^{d})$. Therefore, $P_{0}u_{0}=u_{0}$ in $\mcB_{p,q}^{\alpha}(\mbT^{d})$, which yields the claim.
		
		Proof of Claim~\ref{it:IC_heat_resolution}. We obtain by Lemma~\ref{lem:Schauder} that $\mcI[f]\in C_{T}\mcC^{\alpha+2(1-\eta')}(\mbT^{d})$, hence $\lim_{t\to0}\mcI[f]_{t}\in\mcC^{\alpha+2(1-\eta')}(\mbT^{d})$, which we denote by $\mcI[f]_{0}$. It suffices to show that $\mcI[f]_{0}=0$ in $\mcC^{\alpha+2(1-\eta')}(\mbT^{d})$. Let $\alpha\leq\beta<\alpha+2(1-\eta')$, we obtain by~\eqref{eq:HeatFlow},
		\begin{equation*}
			\norm{\mcI[f]_{t}}_{\mcC^{\beta}}\leq\int_{0}^{t}\norm{P_{t-r}f(r)}_{\mcC^{\beta}}\dd r\lesssim\int_{0}^{t}(1\vee \abs{t-r}^{-\frac{\beta-\alpha}{2}})(1\wedge r)^{-\eta'}\dd r\norm{f}_{C_{\eta';t}\mcC^{\alpha}},
		\end{equation*}
		hence for $t\in(0,1)$,
		\begin{equation*}
			\norm{\mcI[f]_{t}}_{\mcC^{\beta}}\lesssim t^{1-\frac{\beta-\alpha}{2}-\eta'}\norm{f}_{C_{\eta';T}\mcC^{\alpha}}\to0.
		\end{equation*}
		Therefore, 
		\begin{equation*}
			\norm{\mcI[f]_{0}}_{\mcC^{\beta}}\leq\norm{\mcI[f]_{0}-\mcI[f]_{t}}_{\mcC^{\beta}}+\norm{\mcI[f]_{t}}_{\mcC^{\beta}}\lesssim\norm{\mcI[f]_{0}-\mcI[f]_{t}}_{\mcC^{\alpha+2(1-\eta')}}+\norm{\mcI[f]_{t}}_{\mcC^{\beta}}\to0,
		\end{equation*}
		which implies $\mcI[f]_{0}=0$ in $\mcC^{\beta}(\mbT^{d})$. Consequently by~\cite[Thm.~21.18]{vanzuijlen_22}, it follows that $\mcI[f]_{0}=0$ in $\mcC^{\alpha+2(1-\eta')}(\mbT^{d})$ which yields the claim.
	\end{proof}
\end{details}
Assume $\theta\from\mbR^d\to\mbC$ is smooth and such that $\partial^{\nu}\theta$ is of at most polynomial growth for each multi-indix $\nu\in\mbN^{d}_{0}$.
\begin{details}
	Cf.~\cite[Def.~14.6~\&~Lem.~19.2]{vanzuijlen_22}.
\end{details}
Additionally assume that $\theta$ satisfies the \emph{reality condition}
\begin{equation}\label{eq:reality_condition}
	\overline{\theta(\om)}=\theta(-\om),\qquad\om\in\mbZ^{d}.
\end{equation}
We define the Fourier multiplier acting on $u\in\mcS'(\mbT^d;\mbR)$ by the expression
\begin{equation*}
	\theta(D)u \defeq \msF^{-1}(\theta\hat{u}).
\end{equation*}
The polynomial growth condition on all partial derivatives and~\eqref{eq:reality_condition} ensure that $\theta(D)$ maps real-valued distributions to real-valued distributions, a result which we generalize in the following lemma.
\begin{lemma}\label{lem:Fourier_multiplier}
	Let $\alpha\in\mbR$, $p,q\in[1,\infty]$, $u\in\mcB^\alpha_{p,q}(\mbT^{d})$ and $k=2\floor{1+d/2}$. Assume that $\theta\from\mbR^{d}\to\mbC$ satisfies $\theta\in C^{k}(\mbR^d\setminus\{0\};\mbC)$, $\theta(0)=0$, the reality condition~\eqref{eq:reality_condition} and that there exist some $m\in\mbR$, $C>0$, such that for any multi-index $\nu\in\mbN^d_0$ with $\abs{\nu}\leq k$,
	\begin{equation*}
		\abs{\partial^\nu\theta(x)}\leq C\abs{x}^{m-\abs{\nu}},\qquad x\in\mbR^d\setminus\{0\}.
	\end{equation*}
	Then,
	\begin{equation*}
		\norm{\theta(D)u}_{\mcB^{\alpha-m}_{p,q}}\lesssim\norm{u}_{\mcB^{\alpha}_{p,q}}.
	\end{equation*}
\end{lemma}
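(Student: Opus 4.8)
The statement is a Mikhlin--H\"ormander type multiplier estimate adapted to the periodic Littlewood--Paley decomposition, in the spirit of the results in~\cite{bahouri_chemin_danchin_11}, and the plan is to prove it by a direct dyadic argument; it suffices to establish the bound for smooth $u$, whence it extends to all of $\mcB^\alpha_{p,q}$ by density. First I would reduce matters to estimating the Littlewood--Paley blocks $\Delta_j\theta(D)u$ one at a time. Since $\supp(\varrho_j)\cap\supp(\varrho_l)=\emptyset$ for $\abs{j-l}\geq 2$, we may write $\varrho_j\theta=\varrho_j\theta\,\widetilde{\varrho}_{j}$ with $\widetilde{\varrho}_{j}\coloneqq\varrho_{j-1}+\varrho_j+\varrho_{j+1}$, so that $\Delta_j\theta(D)u=K_j\ast\widetilde{\Delta}_{j}u$ on $\mbT^d$, where $\widetilde{\Delta}_{j}\coloneqq\msF^{-1}(\widetilde{\varrho}_{j}\msF\,\cdot\,)$ and $K_j$ is the periodic kernel with Fourier coefficients $(\varrho_j\theta)(\om)$, $\om\in\mbZ^d$. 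For the lowest block $j=-1$ the only integer in $\supp(\varrho_{-1})$ is the origin, at which $\theta(0)=0$, hence $\Delta_{-1}\theta(D)u=0$; thus it suffices to treat $j\geq 0$, on whose support $\theta$ is $C^k$ away from the origin and obeys the stated symbol bounds.

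The core of the argument is the kernel estimate $\norm{K_j}_{L^1(\mbT^d)}\lesssim 2^{jm}$, uniformly in $j\geq 0$. By Poisson summation $K_j$ is the periodization of the full-space kernel $K_j^{\mbR}\coloneqq\msF_{\mbR^d}^{-1}(\varrho_j\theta)$, so $\norm{K_j}_{L^1(\mbT^d)}\leq\norm{K_j^{\mbR}}_{L^1(\mbR^d)}$. Writing $\varrho_j(\xi)=\varrho_0(2^{-j}\xi)$ and substituting $\xi=2^j\zeta$ gives $K_j^{\mbR}(x)=2^{j(d+m)}\int_{\mbR^d}\euler^{2\uppi\upi 2^j\inner{\zeta}{x}}\psi_j(\zeta)\dd\zeta$ with $\psi_j(\zeta)\coloneqq 2^{-jm}\varrho_0(\zeta)\theta(2^j\zeta)$; the bounds $\abs{\partial^\nu\theta(x)}\leq C\abs{x}^{m-\abs{\nu}}$ together with $\abs{\zeta}\sim 1$ on $\supp(\varrho_0)$ yield $\norm{\partial^\nu\psi_j}_{L^\infty}\lesssim 1$ for $\abs{\nu}\leq k$, uniformly in $j$, with all $\psi_j$ supported in a fixed annulus. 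Integrating by parts with the operator $(I-\Delta_\zeta)^{N}$, $N\coloneqq k/2=\floor{1+d/2}$, gives $\bigl(1+4\uppi^2 2^{2j}\abs{x}^2\bigr)^N\abs{K_j^{\mbR}(x)}\lesssim 2^{j(d+m)}\abs{\supp(\psi_j)}\sup\abs{(I-\Delta_\zeta)^N\psi_j}\lesssim 2^{j(d+m)}$, hence $\abs{K_j^{\mbR}(x)}\lesssim 2^{j(d+m)}(1+2^{2j}\abs{x}^2)^{-N}$; since $2N>d$ in every dimension, the change of variables $y=2^j x$ gives $\norm{K_j^{\mbR}}_{L^1(\mbR^d)}\lesssim 2^{jm}\int_{\mbR^d}(1+\abs{y}^2)^{-N}\dd y\lesssim 2^{jm}$. (The factor of $2$ in $k=2\floor{1+d/2}$ is precisely what makes $N$ an integer, so that $(I-\Delta_\zeta)^N$ is available, and what guarantees $2N>d$.)

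Finally I would assemble the pieces. Young's convolution inequality on $\mbT^d$ gives $\norm{\Delta_j\theta(D)u}_{L^p}\leq\norm{K_j}_{L^1(\mbT^d)}\norm{\widetilde{\Delta}_{j}u}_{L^p}\lesssim 2^{jm}\sum_{\abs{l-j}\leq 1}\norm{\Delta_l u}_{L^p}$ for $j\geq 0$, while the $j=-1$ term vanishes. Multiplying by $2^{j(\alpha-m)}$ and taking the $\ell^q$-norm in $j$, the factors $2^{jm}$ cancel, and after absorbing the $\abs{l-j}\leq 1$ index shift into the $\ell^q$-norm at the cost of a harmless constant one obtains $\norm{\theta(D)u}_{\mcB^{\alpha-m}_{p,q}}\lesssim\norm{u}_{\mcB^\alpha_{p,q}}$. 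The reality condition~\eqref{eq:reality_condition} ensures $\theta(D)$ maps real-valued functions to real-valued functions, so that the estimate is meaningful on $\mcB^\alpha_{p,q}(\mbT^d;\mbR)$, and the vector/matrix-valued cases follow componentwise. The main obstacle is the kernel decay bound of the second paragraph: one must carefully check that the symbol estimates survive the dilation uniformly in $j$ and that $k=2\floor{1+d/2}$ derivatives suffice for $L^1$-integrability in all dimensions; the transfer from $\mbR^d$ to $\mbT^d$ via Poisson summation is routine but should be stated with care, since $\theta$ is defined on $\mbR^d$ whereas only its restriction to $\mbZ^d$ enters the definition of $\theta(D)$.
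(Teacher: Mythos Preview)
Your proof is correct and follows essentially the same approach as the paper: the paper disposes of the $j=-1$ block using $\theta(0)=0$ exactly as you do, and for $j\geq 0$ it simply cites~\cite[Lem.~2.2]{bahouri_chemin_danchin_11}, whose content is precisely the dyadic kernel estimate $\norm{K_j^{\mbR}}_{L^1(\mbR^d)}\lesssim 2^{jm}$ that you have spelled out via rescaling and integration by parts. Your treatment is more self-contained but not methodologically different.
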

\begin{proof}
	Since $u$ is periodic the only frequency contained in the support of $\varrho_{-1}$ is $\om=0$, hence $\theta(D)\Delta_{-1}u=0$. The remaining Littlewood--Paley blocks can then be addressed directly with~\cite[Lem.~2.2]{bahouri_chemin_danchin_11}.
\end{proof}
Lemma~\ref{lem:Fourier_multiplier} leads directly to a control on solutions to Poisson's equation and their derivatives.
\begin{lemma}\label{lem:elliptic_regularity}
	For any $\alpha\in\mbR$ and $p,q\in [1,\infty]$, it follows that
	\begin{equation*}
		\norm{\msG\ast u}_{\mcB^{\alpha}_{p,q}}\lesssim\norm{u}_{\mcB^{\alpha-2}_{p,q}}, \qquad \norm{\nabla \msG\ast u}_{\mcB^{\alpha}_{p,q}}\lesssim \norm{u}_{\mcB^{\alpha-1}_{p,q}}.
	\end{equation*}
\end{lemma}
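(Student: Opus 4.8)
The plan is to apply Lemma~\ref{lem:Fourier_multiplier} to the two Fourier multipliers associated with $\msG\ast$ and $\nabla\msG\ast$ and simply check their symbols satisfy the hypotheses of that lemma. First I would record that on the torus, for a mean-free distribution $u$, the operator $\msG\ast$ is the Fourier multiplier $\theta_{\msG}(D)$ with
\begin{equation*}
	\theta_{\msG}(\om) = \frac{1}{\abs{2\uppi\om}^{2}}\indic_{\om\neq0},
\end{equation*}
and the $j$-th component of $\nabla\msG\ast$ is the multiplier $\theta_{\msG,j}(D)$ with symbol $\theta_{\msG,j}(\om) = 2\uppi\upi\om^{j}\abs{2\uppi\om}^{-2}\indic_{\om\neq0} = G^{j}(\om)$, in the notation introduced in Subsection~\ref{sec:Feynman}. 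The mean-free assumption $\inner{u}{1}_{L^2(\mbT^d)}=0$ means $\hat{u}(0)=0$, so the value of these symbols at the origin is irrelevant; we may extend them to $\mbR^{d}\setminus\{0\}$ by the same formulae, which are smooth there.

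Next I would verify the growth bounds. The function $x\mapsto \abs{2\uppi x}^{-2}$ is homogeneous of degree $-2$, hence for every multi-index $\nu$ there is a constant with $\abs{\partial^{\nu}(\abs{2\uppi x}^{-2})}\lesssim \abs{x}^{-2-\abs{\nu}}$ on $\mbR^{d}\setminus\{0\}$; this gives $m=-2$ for $\theta_{\msG}$. Similarly $x\mapsto 2\uppi\upi x^{j}\abs{2\uppi x}^{-2}$ is homogeneous of degree $-1$, so $\abs{\partial^{\nu}\theta_{\msG,j}(x)}\lesssim\abs{x}^{-1-\abs{\nu}}$, giving $m=-1$. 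Both symbols vanish at infinity, in particular they can be taken to equal $0$ at the origin as required by the hypothesis $\theta(0)=0$ once we recall that $\theta(D)\Delta_{-1}u=0$ regardless (only $\om=0$ lies in $\supp(\varrho_{-1})$ on the torus). The reality condition~\eqref{eq:reality_condition} holds since $\overline{\abs{2\uppi\om}^{-2}} = \abs{2\uppi(-\om)}^{-2}$ and $\overline{2\uppi\upi\om^{j}\abs{2\uppi\om}^{-2}} = 2\uppi\upi(-\om)^{j}\abs{2\uppi(-\om)}^{-2}$. Applying Lemma~\ref{lem:Fourier_multiplier} with $m=-2$ and $m=-1$ respectively then yields
\begin{equation*}
	\norm{\msG\ast u}_{\mcB^{\alpha}_{p,q}} = \norm{\theta_{\msG}(D)u}_{\mcB^{\alpha-(\alpha-2)+(\alpha-2)}_{p,q}}\lesssim\norm{u}_{\mcB^{\alpha-2}_{p,q}},\qquad \norm{\nabla\msG\ast u}_{\mcB^{\alpha}_{p,q}}\lesssim\norm{u}_{\mcB^{\alpha-1}_{p,q}},
\end{equation*}
which is the claim (for the gradient one bounds each component and takes the maximum over $j=1,2$, or works componentwise in the vector-valued Besov norm).

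There is essentially no obstacle here: the entire content is the homogeneity of the Newtonian-type symbols plus the bookkeeping needed to invoke Lemma~\ref{lem:Fourier_multiplier}. The only mild point of care is the smoothness requirement $\theta\in C^{k}(\mbR^{d}\setminus\{0\})$ with $k=2\floor{1+d/2}$ — but $\abs{2\uppi x}^{-2}$ and $x^{j}\abs{2\uppi x}^{-2}$ are $C^{\infty}$ away from the origin, so this is automatic, and the singularity at $0$ is harmless by the mean-free hypothesis. If one wishes, one can also note that the case $\msG\ast u$ follows from $\nabla\msG\ast$ applied twice, but the direct route via Lemma~\ref{lem:Fourier_multiplier} is cleaner.
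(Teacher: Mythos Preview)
Your proposal is correct and follows exactly the same approach as the paper: apply Lemma~\ref{lem:Fourier_multiplier} to the multipliers $\theta_{1}(\om)=\abs{2\uppi\om}^{-2}\indic_{\om\neq0}$ and $\theta_{2}(\om)=2\uppi\upi\om\abs{2\uppi\om}^{-2}\indic_{\om\neq0}$. You spell out the verification of the hypotheses (homogeneity, reality, smoothness away from the origin) in more detail than the paper does, but the argument is identical.
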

\begin{proof}
	Simply apply Lemma~\ref{lem:Fourier_multiplier} to the multipliers $\theta_1(\omega)=\frac{1}{\abs{2\uppi\om}^{2}}\mathds{1}_{\omega\neq 0}$ and $\theta_2(\omega)=\frac{2\uppi \upi\omega}{\abs{2\uppi\om}^{2}}\mathds{1}_{\omega\neq 0}$.
\end{proof}
\subsection{Commutator Results}
Many of the results presented below are analogues and simple extensions of similar results found in \cite{perkowski_13,catellier_chouk_18} to time-weighted spaces, $C_{\eta;T}\mcC^{\alpha}(\mbT^{d};\mbR)$.
\begin{lemma}\label{lem:commutator_Fourier_multiplier_paraproduct}
	Let $T>0$, $\eta,\eta_1,\eta_2\geq0$ be such that $\eta=\eta_1+\eta_2$, $\alpha\in(-\infty,1)$, $\beta\in\mbR$ and $k=2\floor{1+d/2}$. Assume $\theta\from\mbR^{d}\to\mbC$ satisfies $\theta\in C^{k+1}(\mbR^d\setminus\{0\};\mbC)$, $\theta(0)=0$, \eqref{eq:reality_condition} and that there exist some $m\in\mbR$, $C>0$, such that for any multi-index $\nu\in\mbN^d_0$ with $\abs{\nu}\leq k+1$,
	\begin{equation*}
		\abs{\partial^\nu\theta(x)}\leq C\abs{x}^{m-\abs{\nu}},\qquad x\in\mbR^d\setminus\{0\}.
	\end{equation*}
	Let $u\in C_{\eta_1;T}\mcC^{\alpha}(\mbT^{d};\mbR)$ and $v\in C_{\eta_2;T}\mcC^{\beta}(\mbT^{d};\mbR)$, then $\theta(D)(u\pa v)-u\pa\theta(D)v\in C_{\eta;T}\mcC^{\alpha+\beta-m}(\mbT^{d};\mbR)$ and
	\begin{equation*}
		\norm{\theta(D)(u\pa v)-u\pa\theta(D)v}_{C_{\eta;T}\mcC^{\alpha+\beta-m}}\lesssim\norm{u}_{C_{\eta_1;T}\mcC^{\alpha}}\norm{v}_{C_{\eta_2;T}\mcC^{\beta}}.
	\end{equation*}
\end{lemma}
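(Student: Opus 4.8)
The plan is to prove the commutator estimate by passing to Littlewood--Paley blocks and analysing the Fourier support of each term, following the standard commutator strategy from paracontrolled calculus (cf.~\cite{perkowski_13,catellier_chouk_18}), with the extra ingredient being the time-weights, which however enter only multiplicatively and factor through exactly as in Lemma~\ref{lem:Bony}. Fix $t\in(0,T]$; since every bound below will carry the factor $(1\wedge t)^{-\eta_1}\|u(t)\|_{\mcC^{\alpha}}(1\wedge t)^{-\eta_2}\|v(t)\|_{\mcC^{\beta}}$ and $\eta=\eta_1+\eta_2$, it suffices to prove the static estimate $\|\theta(D)(u\pa v)-u\pa\theta(D)v\|_{\mcC^{\alpha+\beta-m}}\lesssim\|u\|_{\mcC^{\alpha}}\|v\|_{\mcC^{\beta}}$ and then take the supremum after reinserting the weights. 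First I would write
\begin{equation*}
	\theta(D)(u\pa v)-u\pa\theta(D)v=\sum_{j\geq-1}\big(\theta(D)(\Delta_{<j-1}u\,\Delta_j v)-\Delta_{<j-1}u\,\theta(D)\Delta_j v\big)=:\sum_{j\geq-1}C_j.
\end{equation*}
Each summand $C_j$ has Fourier support in an annulus of size $\sim 2^{j}$ (because $\Delta_{<j-1}u$ has frequencies $\lesssim 2^{j-2}$ and $\Delta_j v$ has frequencies $\sim 2^j$, while $\theta(D)$ does not enlarge spectral support), so to bound the $\mcC^{\alpha+\beta-m}$-norm it is enough to prove $\|C_j\|_{L^\infty}\lesssim 2^{-j(\alpha+\beta-m)}\|u\|_{\mcC^{\alpha}}\|v\|_{\mcC^{\beta}}$.

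The heart of the matter is a pointwise bound on each $C_j$. Writing $\psi_j$ for the kernel of $\theta(D)$ localised to the relevant frequency band (i.e. $\theta(D)$ composed with a fattened Littlewood--Paley projector $\widetilde\Delta_j$ that acts as the identity on the spectrum of $C_j$), one expands
\begin{equation*}
	C_j(x)=\int \psi_j(y)\,\Delta_j v(x-y)\,\big(\Delta_{<j-1}u(x-y)-\Delta_{<j-1}u(x)\big)\,\dd y,
\end{equation*}
using $\int\psi_j=0$ is not needed here; rather one uses the mean-value form $\Delta_{<j-1}u(x-y)-\Delta_{<j-1}u(x)=-\int_0^1 y\cdot\nabla\Delta_{<j-1}u(x-sy)\,\dd s$, which gains a factor $|y|$. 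Then $\|\nabla\Delta_{<j-1}u\|_{L^\infty}\lesssim 2^{j(1-\alpha)}\|u\|_{\mcC^{\alpha}}$ (this is where $\alpha<1$ is used, to sum the Bernstein bounds $2^{k(1-\alpha)}$ over $k<j-1$), while $\|\Delta_j v\|_{L^\infty}\lesssim 2^{-j\beta}\|v\|_{\mcC^{\beta}}$ and $\int|y|\,|\psi_j(y)|\,\dd y\lesssim 2^{-j(m+1)}$ (this last is the scaling of the kernel of a symbol of order $m$; it follows from the derivative bounds on $\theta$ with $k+1$ derivatives via the usual argument of~\cite[Lem.~2.2]{bahouri_chemin_danchin_11} applied to $x^\nu \widetilde\varrho_j(x)\theta(x)$ and its inverse Fourier transform, integrating by parts $k$ times to get integrable decay). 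Multiplying these three contributions gives $\|C_j\|_{L^\infty}\lesssim 2^{j(1-\alpha)}\cdot 2^{-j\beta}\cdot 2^{-j(m+1)}\|u\|_{\mcC^{\alpha}}\|v\|_{\mcC^{\beta}}=2^{-j(\alpha+\beta+m)}\cdots$; combined with the spectral localisation $C_j$ lives at frequency $2^j$, this yields $2^{(\alpha+\beta-m)}$-summability as required. Wait --- one must double-check the exponent bookkeeping: the kernel of a multiplier of order $m$ localised at scale $2^j$ satisfies $\|\psi_j\|_{L^1}\lesssim 2^{jm}$ and $\||\cdot|\psi_j\|_{L^1}\lesssim 2^{j(m-1)}$, so in fact $\|C_j\|_{L^\infty}\lesssim 2^{j(m-1)}\cdot 2^{j(1-\alpha)}\cdot 2^{-j\beta}=2^{j(m-\alpha-\beta)}$, i.e. $\|C_j\|_{L^\infty}\lesssim 2^{-j(\alpha+\beta-m)}\|u\|_{\mcC^\alpha}\|v\|_{\mcC^\beta}$, which is exactly the target and requires $\theta\in C^{k+1}$ (one extra derivative beyond Lemma~\ref{lem:Fourier_multiplier}) to control the kernel moment $\||\cdot|\psi_j\|_{L^1}$.

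I expect the main obstacle to be the careful verification of the kernel moment bound $\||\cdot|\,\psi_j\|_{L^1(\mbR^d)}\lesssim 2^{j(m-1)}$ uniformly in $j$, on the torus: one writes $\psi_j=\msF^{-1}_{\mbR^d}(\theta\,\widetilde\varrho_j)$ after a Poisson-summation identification of the periodic and full-space kernels (as done repeatedly in the excerpt, e.g. in the proof of Lemma~\ref{lem:heat_flow}), rescales $x\mapsto 2^j x$ to reduce to $j=0$ with symbol $\theta(2^j\cdot)\widetilde\varrho_0$, and then integrates by parts $k=2\lfloor 1+d/2\rfloor$ times against the oscillatory factor to produce decay $|x|^{-k}$, which is integrable against $|x|$ in $d$ dimensions precisely because $k>d+1$. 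The hypotheses $\theta\in C^{k+1}$ and the scaling bounds $|\partial^\nu\theta(x)|\leq C|x|^{m-|\nu|}$ for $|\nu|\leq k+1$ are exactly what make each integration by parts produce a symbol of one lower order, so that the resulting $L^1$ norm scales like $2^{j(m-1)}$ after undoing the dilation. Once this kernel estimate is in place, everything else is the routine telescoping and Bernstein-inequality bookkeeping sketched above, and reinserting the time weights gives the stated $C_{\eta;T}$ bound with $\eta=\eta_1+\eta_2$ since the constants are uniform in $t\in(0,T]$.
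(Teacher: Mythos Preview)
Your proposal is correct and follows essentially the same approach as the paper: the paper's proof simply cites \cite[Lem.~5.3.20]{perkowski_13} and \cite[Lem.~A.1]{catellier_chouk_18} for the static estimate and observes that the extension to the weighted spaces $C_{\eta;T}\mcC^{\alpha}$ is immediate, and what you have written is precisely a reconstruction of that cited argument (paraproduct decomposition, mean-value expansion of $\Delta_{<j-1}u$, Bernstein for $\nabla\Delta_{<j-1}u$ under $\alpha<1$, and the kernel moment bound $\||\cdot|\psi_j\|_{L^1}\lesssim 2^{j(m-1)}$ requiring one extra derivative on $\theta$). Your observation that the time weights factor multiplicatively with $\eta=\eta_1+\eta_2$ is exactly the ``simple extension'' the paper alludes to.
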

\begin{proof}
	The result is a simple extension of~\cite[Lem.~5.3.20]{perkowski_13} and \cite[Lem.~A.1]{catellier_chouk_18} to functions with prescribed blow-up in $\mcC^{\alpha}(\mbT^{d};\mbR)$ at $t=0$.
\end{proof}
Next, we consider the commutator between the operators $\mcI$ and $\pa$, a result reminiscent of~\cite[Prop.~2.7]{catellier_chouk_18}.
\begin{lemma}\label{lem:commutator_heat_paraproduct}
	Let $T>0$, $\eta,\eta_{1},\eta_{2}\in[0,1)$ be such that $\eta=\eta_1+\eta_2$, $\kappa>0$, $\alpha\in(-\infty,(1\wedge2\kappa))$, $\beta\in\mbR$ and $m\in(0,2)$. For every $u\in\msL_{\eta_1;T}^{\kappa}\mcC^{\alpha}(\mbT^{d};\mbR)$ and $v\in C_{\eta_2;T}\mcC^{\beta}(\mbT^{d};\mbR)$, it holds that $\mcI[u\pa v]-u\pa\mcI[v]\in C_{\eta;T}\mcC^{\alpha+\beta+m}(\mbT^{d};\mbR)$ and
	\begin{equation}\label{eq:commutator_heat_paraproduct_bound}
		\norm{\mcI[u\pa v]-u\pa \mcI[v]}_{C_{\eta;T}\mcC^{\alpha+\beta+m}}\lesssim_{T}\norm{u}_{\msL_{\eta_1;T}^{\kappa}\mcC^{\alpha}}\norm{v}_{C_{\eta_2;T}\mcC^{\beta}}.
	\end{equation}
\end{lemma}
\begin{proof}
	Let $u\in\msL_{\eta_1;T}^{\kappa}\mcC^{\alpha}(\mbT^{d};\mbR)$ and $v\in C_{\eta_2;T}\mcC^{\beta}(\mbT^{d};\mbR)$, we first prove the bound~\eqref{eq:commutator_heat_paraproduct_bound} and then the regularity $\mcI[u\pa v]-u\pa\mcI[v]\in C_{\eta;T}\mcC^{\alpha+\beta+m}(\mbT^{d};\mbR)$. Let $t\in[0,T]$, by definition,
	\begin{equation*}
		\begin{split}
			\mcI[u\pa v]_t-u_{t}\pa\mcI[v]_{t}&=\int_{0}^{t}P_{t-s}(u(s)\pa v(s))-u(t)\pa P_{t-s}v(s)\dd s\\
			&=\int_{0}^{t}P_{t-s}(u(s)\pa v(s))-u(s)\pa P_{t-s}v(s)\dd s+\int_{0}^{t}(u(s)-u(t))\pa P_{t-s}v(s)\dd s.
		\end{split}
	\end{equation*}
	
	To bound the first summand we apply~\cite[Lem.~5.3.20]{perkowski_13} and use that $\alpha<1$ to estimate
	\begin{equation*}
		\norm{P_{t-s}(u(s)\pa v(s))-u(s)\pa P_{t-s}v(s)}_{\mcC^{\alpha+\beta+m}}\lesssim\abs{t-s}^{-m/2}(1\wedge s)^{-\eta}\norm{u}_{C_{\eta_1;s}\mcC^{\alpha}}\norm{v}_{C_{\eta_2;s}\mcC^{\beta}}.
	\end{equation*}
	Taking the supremum, we obtain
	\begin{equation*}
		\begin{split}
			&\sup_{t\in[0,T]}(1\wedge t)^{\eta}\norm*{\int_{0}^{t}P_{t-s}(u(s)\pa v(s))-u(s)\pa P_{t-s}v(s)\dd s}_{\mcC^{\alpha+\beta+m}}\\
			&\lesssim\Bigl(\sup_{t\in[0,T]}(1\wedge t)^{\eta}\int_{0}^{t}\abs{t-s}^{-m/2}(1\wedge s)^{-\eta}\dd s\Bigr)\norm{u}_{C_{\eta_1;T}\mcC^{\alpha}}\norm{v}_{C_{\eta_2;T}\mcC^{\beta}}.
		\end{split}
	\end{equation*}
	We can use a case distinction over $t\in[0,1]$ and $t\in(1,\infty)$; and the assumptions $\eta<1$, $m<2$, to bound
	\begin{equation*}
		\sup_{t\in[0,T]}(1\wedge t)^{\eta}\int_{0}^{t}\abs{t-s}^{-m/2}(1\wedge s)^{-\eta}\dd s\lesssim_{T}1.
	\end{equation*}
	\begin{details}
		To bound this expression, we distinguish the cases $t\in[0,1]$ and $t\in(1,\infty)$. Assume $t\in[0,1]$, then
		\begin{equation*}
			(1\wedge t)^{\eta}\int_{0}^{t}\abs{t-s}^{-m/2}(1\wedge s)^{-\eta}\dd s=t^{\eta}\int_{0}^{t}\abs{t-s}^{-m/2}s^{-\eta}\dd s\lesssim t^{1-m/2}.
		\end{equation*}
		Assume $t\in(1,\infty)$, then
		\begin{equation*}
			\begin{split}
				(1\wedge t)^{\eta}\int_{0}^{t}\abs{t-s}^{-m/2}(1\wedge s)^{-\eta}\dd s&=\int_{0}^{1}\abs{t-s}^{-m/2}s^{-\eta}\dd s+\int_{1}^{t}\abs{t-s}^{-m/2}\dd s\\
				&\leq\int_{0}^{t}\abs{t-s}^{-m/2}s^{-\eta}\dd s+\int_{0}^{t}\abs{t-s}^{-m/2}\dd s\\
				&\lesssim t^{1-m/2-\eta}+t^{1-m/2}.
			\end{split}
		\end{equation*}
	\end{details}
	
	To bound the second summand, we apply Lemma~\ref{lem:heat_flow} and use that $\alpha-2\kappa<0$ to estimate
	\begin{align*}
		\norm{(u(s)-u(t))\pa P_{t-s}v(s)}_{\mcC^{\alpha+\beta+m}}&\lesssim\norm{u(s)-u(t)}_{\mcC^{\alpha-2\kappa}}\norm{P_{t-s}v(s)}_{\mcC^{\beta+2\kappa+m}}\\
		&\lesssim\abs{t-s}^{\kappa}(1\vee\abs{t-s}^{-\kappa-m/2})(1\wedge s)^{-\eta}\norm{u}_{C_{\eta_1;t}^{\kappa}\mcC^{\alpha-2\kappa}}\norm{v}_{C_{\eta_2;s}\mcC^{\beta}}.
	\end{align*}
	Taking the supremum, we obtain
	\begin{equation*}
		\begin{split}
			&\sup_{t\in[0,T]}(1\wedge t)^{\eta}\norm*{\int_{0}^{t}(u(s)-u(t))\pa P_{t-s}v(s)\dd s}_{\mcC^{\alpha+\beta+m}}\\
			&\lesssim\Bigl(\sup_{t\in[0,T]}(1\wedge t)^{\eta}\int_{0}^{t}\abs{t-s}^{\kappa}(1\vee\abs{t-s}^{-\kappa-m/2})(1\wedge s)^{-\eta}\dd s\Bigr)\norm{u}_{C_{\eta_1;T}^{\kappa}\mcC^{\alpha-2\kappa}}\norm{v}_{C_{\eta_2;T}\mcC^{\beta}}.
		\end{split}
	\end{equation*}
	We can use a case distinction over $t\in[0,1]$, $t\in(1,2]$, and $t\in(2,\infty)$; and the assumptions $\eta<1$, $m<2$, to bound
	\begin{equation*}
		\sup_{t\in[0,T]}(1\wedge t)^{\eta}\int_{0}^{t}\abs{t-s}^{\kappa}(1\vee\abs{t-s}^{-\kappa-m/2})(1\wedge s)^{-\eta}\dd s\lesssim_{T}1.
	\end{equation*}
	\begin{details}
		We distinguish the cases $t\in[0,1]$, $t\in(1,2]$ and $t\in(2,\infty)$. Assume $t\in[0,1]$, then
		\begin{equation*}
			(1\wedge t)^{\eta}\int_{0}^{t}\abs{t-s}^{\kappa}(1\vee\abs{t-s}^{-\kappa-m/2})(1\wedge s)^{-\eta}\dd s=t^{\eta}\int_{0}^{t}\abs{t-s}^{-m/2} s^{-\eta}\dd s\lesssim t^{1-m/2}.
		\end{equation*}
		Assume $t\in(1,2]$, then
		\begin{equation*}
			\begin{split}
				&(1\wedge t)^{\eta}\int_{0}^{t}\abs{t-s}^{\kappa}(1\vee\abs{t-s}^{-\kappa-m/2})(1\wedge s)^{-\eta}\dd s\\
				&=\int_{0}^{t-1}\abs{t-s}^{\kappa}s^{-\eta}\dd s+\int_{t-1}^{1}\abs{t-s}^{-m/2} s^{-\eta}\dd s+\int_{1}^{t}\abs{t-s}^{-m/2}\dd s\\
				&\leq\int_{0}^{t}\abs{t-s}^{\kappa}s^{-\eta}\dd s+\int_{0}^{t}\abs{t-s}^{-m/2} s^{-\eta}\dd s+\int_{0}^{t}\abs{t-s}^{-m/2}\dd s\\
				&\lesssim t^{1+\kappa-\eta}+t^{1-m/2-\eta}+t^{1-m/2}.
			\end{split}
		\end{equation*}
		Assume $t\in(2,\infty]$, then
		\begin{equation*}
			\begin{split}
				&(1\wedge t)^{\eta}\int_{0}^{t}\abs{t-s}^{\kappa}(1\vee\abs{t-s}^{-\kappa-m/2})(1\wedge s)^{-\eta}\dd s\\
				&=\int_{0}^{1}\abs{t-s}^{\kappa} s^{-\eta}\dd s+\int_{1}^{t-1}\abs{t-s}^{\kappa}\dd s+\int_{t-1}^{t}\abs{t-s}^{-m/2}\dd s\\
				&\leq\int_{0}^{t}\abs{t-s}^{\kappa} s^{-\eta}\dd s+\int_{0}^{t}\abs{t-s}^{\kappa}\dd s+\int_{0}^{t}\abs{t-s}^{-m/2}\dd s\\
				&\lesssim t^{1+\kappa-\eta}+t^{1+\kappa}+t^{1-m/2}.
			\end{split}
		\end{equation*}
	\end{details}
	It follows that
	\begin{equation*}
		\norm{\mcI[u\pa v]-u\pa\mcI[v]}_{C_{\eta;T}\mcC^{\alpha+\beta+m}}\lesssim_{T}\norm{u}_{\msL_{\eta_1;T}^{\kappa}\mcC^{\alpha}}\norm{v}_{C_{\eta_2;T}\mcC^{\beta}},
	\end{equation*}
	which yields~\eqref{eq:commutator_heat_paraproduct_bound}.
	
	To establish $\mcI[u\pa v]-u\pa\mcI[v]\in C_{\eta;T}\mcC^{\alpha+\beta+m}(\mbT^{d};\mbR)$, we need to show that $(0,T]\ni t\mapsto(1\wedge t)^{\eta}(\mcI[u\pa v]_{t}-u_{t}\pa\mcI[v]_{t})$ is continuous and that $\lim_{t\to0}(1\wedge t)^{\eta}(\mcI[u\pa v]_{t}-u_{t}\pa\mcI[v]_{t})$ exists in $\mcC^{\alpha+\beta+m}(\mbT^{d};\mbR)$. The continuity in $(0,T]$ follows by the completeness of the continuous functions under the supremum norm and by taking limits along a smooth approximating sequence of $v$.
	\begin{details}
		
		Let $(v_{n})_{n\in\mbN}$ be such that $\lim_{n\to\infty}\norm{v-v_{n}}_{C_{\eta_{2};T}\mcC^{\beta}}=0$ and $v_{n}\in C_{\eta_{2};T}C^{\infty}(\mbT^{d};\mbR)$ for every $n\in\mbN$. It follows by~\eqref{eq:commutator_heat_paraproduct_bound} that 
		\begin{equation*}
			\lim_{n\to\infty}\norm{\mcI[u\pa v]-u\pa\mcI[v]-(\mcI[u\pa v_{n}]-u\pa\mcI[v_{n}])}_{C_{\eta;T}\mcC^{\alpha+\beta+m}}=0,
		\end{equation*}
		hence by the completeness of the continuous functions in the supremum norm, it suffices to show that $\mcI[u\pa v_{n}]-u\pa\mcI[v_{n}]\in C_{\eta;T}\mcC^{\alpha+\beta+m}(\mbT^{d};\mbR)$.
		
		Using Lemma~\ref{lem:Bony} and that $v_{n}\in C_{\eta_{2};T}C^{\infty}(\mbT^{d};\mbR)\subset C_{\eta_{2};T}\mcC^{\abs{\alpha}+\beta+2}(\mbT^{d};\mbR)$, we can show $u\pa v_{n}\in C_{\eta;T}\mcC^{\alpha+\beta+m}(\mbT^{d};\mbR)$. We can then apply Lemma~\ref{lem:Schauder} to control $\mcI[u\pa v_{n}]\in C_{\eta;T}\mcC^{\alpha+\beta+m}(\mbT^{d};\mbR)$. Similarly, by Lemma~\ref{lem:Schauder}, $\mcI[v_{n}]\in C_{\eta_{2};T}\mcC^{\abs{\alpha}+\beta+2}(\mbT^{d};\mbR)$ and by Lemma~\ref{lem:Bony}, $u\pa\mcI[v_{n}]\in C_{\eta;T}\mcC^{\alpha+\beta+m}(\mbT^{d};\mbR)$. Therefore, $\mcI[u\pa v_{n}]-u\pa\mcI[v_{n}]\in C_{\eta;T}\mcC^{\alpha+\beta+m}(\mbT^{d};\mbR)$, which yields the claim.
		
	\end{details}
	To show the continuity at $0$, we can use the same bounds as in the derivation of~\eqref{eq:commutator_heat_paraproduct_bound} to control for every $t\in(0,1\wedge T]$,
	\begin{equation*}
		(1\wedge t)^{\eta}\norm{\mcI[u\pa v]_{t}-u_{t}\pa\mcI[v]_{t}}_{\mcC^{\alpha+\beta+m}}\lesssim t^{1-m/2},
	\end{equation*}
	which implies $\lim_{t\to0}(1\wedge t)^{\eta}(\mcI[u\pa v]_{t}-u_{t}\pa\mcI[v]_{t})=0\in\mcC^{\alpha+\beta+m}(\mbT^{d};\mbR)$. Therefore, $\mcI[u\pa v]-u\pa\mcI[v]\in C_{\eta;T}\mcC^{\alpha+\beta+m}(\mbT^{d};\mbR)$, which yields the claim.
\end{proof}
Finally, we present a commutator result between the operators $\pa$ and $\re$.
\begin{lemma}\label{lem:commutator_paraproduct_resonant}
	Let $T>0$, $\eta,\eta_1,\eta_2,\eta_3\geq0$ be such that $\eta=\eta_1+\eta_2+\eta_3$, $\alpha\in(0,1)$ and $\beta,\gamma\in\mbR$ such that $\beta+\gamma<0$ and $\alpha+\beta+\gamma>0$. We define
	 \begin{equation*}
	 	\msC(f,g,h)=(f\pa g)\re h-f(g\re h),\qquad (f,g,h)\in C_{\eta_1;T}C^{\infty}(\mbT^{d};\mbR)\times C_{\eta_2;T}C^{\infty}(\mbT^{d};\mbR)\times C_{\eta_3;T}C^{\infty}(\mbT^{d};\mbR).
	 \end{equation*}
	Then $\msC$ extends to a bounded, trilinear operator 
	\begin{equation*}
		\msC\from C_{\eta_1;T}\mcC^{\alpha}(\mbT^{d};\mbR)\times C_{\eta_2;T}\mcC^{\beta}(\mbT^{d};\mbR)\times C_{\eta_3;T}\mcC^{\gamma}(\mbT^{d};\mbR)\to C_{\eta;T}\mcC^{\alpha+\beta+\gamma}(\mbT^{d};\mbR)
	\end{equation*}
\end{lemma}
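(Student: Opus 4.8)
The statement is the standard paracontrolled commutator estimate, adapted to the time-weighted spaces $C_{\eta;T}\mcC^{\cdot}$, so the proof follows the template of~\cite{gubinelli_15_GIP,perkowski_13}. The plan is to first reduce to a \emph{pointwise in time} inequality: by the very definition of $C_{\eta;T}\mcC^{\alpha}$ it suffices to show that for each fixed $t\in(0,T]$ one has
\begin{equation*}
	\norm{\msC(f,g,h)(t)}_{\mcC^{\alpha+\beta+\gamma}} \lesssim (1\wedge t)^{-\eta}\norm{f}_{C_{\eta_1;T}\mcC^{\alpha}}\norm{g}_{C_{\eta_2;T}\mcC^{\beta}}\norm{h}_{C_{\eta_3;T}\mcC^{\gamma}},
\end{equation*}
since the time weights simply multiply: $(1\wedge t)^{-\eta_1}(1\wedge t)^{-\eta_2}(1\wedge t)^{-\eta_3} = (1\wedge t)^{-\eta}$. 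Thus the time weights play no role beyond bookkeeping, and the heart of the matter is the fixed-time commutator bound.

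Second, for the fixed-time bound I would carry out the usual Littlewood--Paley analysis. Writing $f\pa g = \sum_i \Delta_{<i-1}f\,\Delta_i g$ and expanding the resonant product, one decomposes $\msC(f,g,h)$ into a sum over dyadic blocks and groups the terms so that the ``diagonal'' singular part, namely $\Delta_{<i-1}f$ acting on $\Delta_i g\,\Delta_j h$ with $|i-j|\le 1$ minus $f\cdot(\Delta_i g\,\Delta_j h)$, telescopes into a remainder controlled by $\Delta_{<i-1}f - f = -\Delta_{\ge i-1}f$ plus genuinely off-diagonal interactions. The key algebraic identity is the same one used in~\cite[Lem.~5.3.20]{perkowski_13}: the commutator $\msC$ only retains the contributions where the frequency of $f$ is not negligible compared to that of $g$, so the gain of one derivative $\alpha$ from $f$ survives. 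Each resulting piece is estimated by the elementary bounds $\norm{\Delta_k f}_{L^\infty}\lesssim 2^{-k\alpha}\norm{f}_{\mcC^\alpha}$ (and similarly for $g$, $h$), together with the fact that a product of three dyadic blocks with frequencies $2^i,2^j,2^k$ is Fourier-supported in a ball of radius $\sim 2^{\max(i,j,k)}$, so its $\mcC^{\alpha+\beta+\gamma}$-block at scale $\ell$ is nonzero only for $\ell\lesssim\max(i,j,k)$. Summing the geometric series requires exactly the hypotheses $\alpha\in(0,1)$ (to sum in the ``$f$'' index both above and below the resonant scale — $\alpha>0$ controls the low modes of $f$ while $\alpha<1$, more precisely $\alpha+\beta+\gamma < $ the other exponents, ensures convergence of the block sum), $\beta+\gamma<0$ (so that the resonant product $g\re h$ is the relevant singular object and the sum over the resonant scale converges from above), and $\alpha+\beta+\gamma>0$ (so the final Besov norm is of positive-or-convergent regularity and the block sum over $\ell$ converges).

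Third, once the fixed-time bound is established for smooth $f,g,h$, the extension to a bounded trilinear operator on $C_{\eta_1;T}\mcC^\alpha\times C_{\eta_2;T}\mcC^\beta\times C_{\eta_3;T}\mcC^\gamma$ follows by density: smooth functions are dense in each Besov space (up to passing to a slightly weaker exponent, which is harmless since all the estimates are open in $\alpha,\beta,\gamma$), and $\msC$ being multilinear and bounded on the dense subset extends uniquely and continuously. Continuity in $t$ of $\msC(f,g,h)$, needed for membership in $C_{\eta;T}$, is inherited from the smooth approximants by uniform (weighted) convergence, exactly as in the proof of Lemma~\ref{lem:Schauder}.

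\textbf{Main obstacle.} The genuinely delicate point is the precise bookkeeping in the block decomposition: one must split $\msC(f,g,h)$ into the three or four standard families of terms and verify in each that the surviving derivative gain lands on the correct factor, so that the summation constraints $\alpha>0$, $\beta+\gamma<0$, $\alpha+\beta+\gamma>0$ are \emph{exactly} what is needed — no more, no less. This is where all the hypotheses get used and where an error would be easy to make; the time-weight aspect, by contrast, is purely mechanical since the weights factor through every estimate. Since this is a mild adaptation of~\cite[Lem.~5.3.20]{perkowski_13} and~\cite[Prop.~2.7]{catellier_chouk_18}, I would present the reduction to the fixed-time estimate in detail and then cite those references for the block-by-block computation, noting only the role of each exponent.
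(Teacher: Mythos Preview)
Your proposal is correct and follows essentially the same approach as the paper: reduce to the fixed-time estimate by noting that the weights $(1\wedge t)^{-\eta_i}$ simply multiply, and then invoke the standard GIP commutator lemma (the paper cites~\cite[Lem.~2.4]{gubinelli_15_GIP} rather than~\cite[Lem.~5.3.20]{perkowski_13}, but these are the same underlying result). The paper's proof is in fact a single sentence citing that reference, so your more detailed description of the block decomposition and the role of each hypothesis is entirely consistent with, though more expansive than, what appears there.
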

\begin{proof}
	The result is a direct consequence of~\cite[Lem.~2.4]{gubinelli_15_GIP}.
\end{proof}
\section{Shape Coefficient Estimates}\label{sec:shape_coefficient_estimates}
In this appendix we bound our shape coefficients in terms of $\abs{t-s}^{\gamma}$ and $\abs{\om_k}^{\beta}$ for some $\gamma\in[0,1]$ and $\beta\in\mbR$. We first consider the shape coefficient for $\Ypsilon{10}$ (cf.\ Definition~\ref{def:shape_coefficient_ypsilon}).

\begin{details}
	A case distinction yields an explicit expression for the shape coefficient.
	\begin{lemma}\label{lem:shape_ypsilon}
		Let $s,t\geq0$ and $\om_1,\om_2\in2\uppi\mbZ^2\setminus\{0\}$ such that $\om_1+\om_2\not=0$. Then there exists a leading term $\mathsf{SL}_{s,t}\SYpsilon(\om_1,\om_2)$ and an error term $\mathsf{SE}_{s,t}\SYpsilon(\om_1,\om_2)$, such that
		\begin{equation*}
			\mathsf{S}_{s,t}\SYpsilon(\om_1,\om_2)=\mathsf{SL}_{s,t}\SYpsilon(\om_1,\om_2)+\mathsf{SE}_{s,t}\SYpsilon(\om_1,\om_2).
		\end{equation*}
		In the case $(\om_1\perp\om_2)$, we obtain
		\begin{equation*}
			\mathsf{SL}_{s,t}\SYpsilon(\om_1,\om_2)=\frac{1}{2^3}\abs{\om_1}^{-2}\abs{\om_2}^{-2}\abs{\om_1+\om_2}^{-4}\Bigl(\abs{t-s}\abs{\om_1+\om_2}^2\euler^{-\abs{t-s}\abs{\om_1+\om_2}^2}+\euler^{-\abs{t-s}\abs{\om_1+\om_2}^2}\Bigr)
		\end{equation*}
		and
		\begin{equation*}
			\mathsf{SE}_{s,t}\SYpsilon(\om_1,\om_2)=\frac{1}{2^3}\abs{\om_1}^{-2}\abs{\om_2}^{-2}\abs{\om_1+\om_2}^{-4}\Bigl(-(t+s)\abs{\om_1+\om_2}^2\euler^{-(t+s)\abs{\om_1+\om_2}^2}-\euler^{-(t+s)\abs{\om_1+\om_2}^2}\Bigr).
		\end{equation*}
		In the case $\lnot(\om_1\perp\om_2)$, we obtain
		\begin{equation*}
			\begin{split}
				&\mathsf{SL}_{s,t}\SYpsilon(\om_1,\om_2)\\	&=\frac{1}{2^3}\abs{\om_1}^{-2}\abs{\om_2}^{-2}\abs{\om_1+\om_2}^{-2}\Bigl(\frac{1}{\abs{\om_1}^2+\abs{\om_2}^2-\abs{\om_1+\om_2}^2}(\euler^{-\abs{t-s}\abs{\om_1+\om_2}^2}-\euler^{-\abs{t-s}(\abs{\om_1}^2+\abs{\om_2}^2)})\\	&\multiquad[13]+\frac{1}{\abs{\om_1}^2+\abs{\om_2}^2+\abs{\om_1+\om_2}^2}(\euler^{-\abs{t-s}\abs{\om_1+\om_2}^2}+\euler^{-\abs{t-s}(\abs{\om_1}^2+\abs{\om_2}^2)})\Bigr)
			\end{split}
		\end{equation*}
		and
		\begin{equation*}
			\begin{split}
				&\mathsf{SE}_{s,t}\SYpsilon(\om_1,\om_2)\\
				&=\frac{1}{2^2}\abs{\om_1}^{-2}\abs{\om_2}^{-2}\Bigl(\frac{1}{\abs{\om_1}^2+\abs{\om_2}^2-\abs{\om_1+\om_2}^2}\frac{1}{\abs{\om_1}^2+\abs{\om_2}^2+\abs{\om_1+\om_2}^2}\\
				&\multiquad[8]\times (\euler^{t(\abs{\om_1+\om_2}^2-\abs{\om_1}^2-\abs{\om_2}^2)}+\euler^{s(\abs{\om_1+\om_2}^2-\abs{\om_1}^2-\abs{\om_2}^2)})\euler^{-(t+s)\abs{\om_1+\om_2}^2}\\
				&\multiquad[12]-\frac{1}{\abs{\om_1}^2+\abs{\om_2}^2-\abs{\om_1+\om_2}^2}\frac{1}{\abs{\om_1+\om_2}^2}\euler^{-(t+s)\abs{\om_1+\om_2}^2}\Bigr).
			\end{split}
		\end{equation*}
	\end{lemma}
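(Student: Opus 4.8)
The plan is to evaluate the fourfold integral in \eqref{eq:shape_coefficient_ypsilon} by iterated elementary integration, innermost variables first, and then to sort the resulting exponentials into the two announced groups. I would first record the symmetry $\mathsf{S}_{s,t}\SYpsilon(\om_1,\om_2)=\mathsf{S}_{t,s}\SYpsilon(\om_1,\om_2)$, obtained by relabelling $(u_3,t)\leftrightarrow(u_3',s)$ in both the integrand and the domain of integration. Since the claimed formulas for $\mathsf{SL}_{s,t}\SYpsilon$ and $\mathsf{SE}_{s,t}\SYpsilon$ are manifestly symmetric under $s\leftrightarrow t$, it then suffices to prove them under the assumption $s\leq t$.

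Next I would integrate out $u_1$ and $u_2$. On the domain $u_1,u_2<u_3\wedge u_3'$ one has $u_3+u_3'-2u_k>0$ for $k=1,2$, so the absolute values in the corresponding exponentials may be dropped and $\int_{-\infty}^{u_3\wedge u_3'}\euler^{-(u_3+u_3'-2u_k)\abs{\om_k}^2}\dd u_k=\tfrac{1}{2}\abs{\om_k}^{-2}\euler^{-\abs{u_3-u_3'}\abs{\om_k}^2}$. Writing $a^2=\abs{\om_1+\om_2}^2$ and $b^2=\abs{\om_1}^2+\abs{\om_2}^2$, and using $u_3+u_3'\leq t+s$ to drop the first absolute value, this reduces the shape coefficient to
\[
\mathsf{S}_{s,t}\SYpsilon(\om_1,\om_2)=\frac{1}{4\abs{\om_1}^2\abs{\om_2}^2}\int_0^t\dd u_3\int_0^s\dd u_3'\,\euler^{-(t+s-u_3-u_3')a^2}\,\euler^{-\abs{u_3-u_3'}b^2}.
\]

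I would then split this two-dimensional integral over the regions $\{u_3>u_3'\}$ and $\{u_3<u_3'\}$; under $s\leq t$ each becomes an iterated integral of a product of exponentials (the outer range in the first region breaking at $u_3=s$). Carrying out these integrations is where the case distinction in the statement enters: when $\lnot(\om_1\perp\om_2)$ one has $b^2\neq a^2$ (while $a^2+b^2>0$ always), so every antiderivative is a pure exponential with denominator $b^2-a^2$ or $b^2+a^2$; grouping the surviving terms by whether they carry $\euler^{-\abs{t-s}(\cdot)}$ or $\euler^{-(t+s)(\cdot)}$ yields exactly $\mathsf{SL}_{s,t}\SYpsilon$ and $\mathsf{SE}_{s,t}\SYpsilon$ respectively, after matching the prefactor $\tfrac14\abs{\om_1}^{-2}\abs{\om_2}^{-2}$ with the accumulated $\pm$-denominators. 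When $\om_1\perp\om_2$, so $b^2=a^2$, the term that would have carried $\tfrac{1}{b^2-a^2}$ instead integrates to produce the factor $\abs{t-s}\abs{\om_1+\om_2}^2$, which is precisely the first summand of $\mathsf{SL}_{s,t}\SYpsilon$ in the orthogonal case; the remaining terms are handled as before. As a byproduct one sees that $\mathsf{SE}_{s,t}\SYpsilon$ is assembled from $\euler^{-(t+s)(\cdot)}$-type terms, which gives the nonnegativity of $\mathsf{D}_{s,t}\SYpsilon$ used in Lemma~\ref{lem:ypsilon_upper_bound}, since such a term contributes $\euler^{-2t(\cdot)}+\euler^{-2s(\cdot)}-2\euler^{-(t+s)(\cdot)}=(\euler^{-t(\cdot)}-\euler^{-s(\cdot)})^2\geq0$, and the leading terms behave likewise after a short computation.

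The main obstacle is entirely the bookkeeping: keeping track of the outer-range break at $u_3=s$ in the region split under $s\leq t$, handling the orthogonal degeneration of $\tfrac{1}{b^2-a^2}$ into a linear factor, and finally reconciling the exact constants (the $\tfrac{1}{2^3}$ versus $\tfrac{1}{2^2}$ prefactors and the $\pm$-denominators) with the stated closed forms in both cases.
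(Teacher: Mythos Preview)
Your proposal is correct and follows essentially the same route as the paper: integrate out $u_1,u_2$ to reduce to the two-dimensional integral $\tfrac{1}{4}\abs{\om_1}^{-2}\abs{\om_2}^{-2}\int_0^t\int_0^s\euler^{-(t+s-u_3-u_3')a^2}\euler^{-\abs{u_3-u_3'}b^2}\dd u_3'\dd u_3$, assume $s\leq t$ by the $(u_3,t)\leftrightarrow(u_3',s)$ symmetry, split into $\{u_3'\leq u_3\}$ and $\{u_3<u_3'\}$ (the paper writes these as $\int_0^s\dd u_3'\int_{u_3'}^t\dd u_3$ and $\int_0^s\dd u_3\int_{u_3}^s\dd u_3'$ respectively), and then distinguish the degenerate case $a^2=b^2$ from the generic one when carrying out the remaining integrations. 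The only difference is cosmetic notation; your use of the shorthands $a^2,b^2$ and your side remark on the nonnegativity of $\mathsf{D}_{s,t}\SYpsilon$ are not in the paper's presentation but do not change the argument.
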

	\begin{proof}
		We obtain by computing the integrals over $u_1$, $u_2$,
		\begin{equation*}
			\begin{split}
				&\mathsf{S}_{s,t}\SYpsilon(\om_1,\om_2)\\
				&=\frac{1}{2^2}\abs{\om_1}^{-2}\abs{\om_2}^{-2}\euler^{-(t+s)\abs{\om_1+\om_2}^2}\int_{0}^{t}\dd u_3\int_{0}^{s}\dd u_3' \euler^{-\abs{u_3-u_3'}\abs{\om_1}^2}\euler^{-\abs{u_3-u_3'}\abs{\om_2}^2}\euler^{(u_3+u_3')\abs{\om_1+\om_2}^2}.
			\end{split}
		\end{equation*}
		Our next step is to compute
		\begin{equation*}
			\int_{0}^{t}\dd u_3\int_{0}^{s}\dd u_3' \euler^{-\abs{u_3-u_3'}\abs{\om_1}^2}\euler^{-\abs{u_3-u_3'}\abs{\om_2}^2}\euler^{(u_3+u_3')\abs{\om_1+\om_2}^2}.
		\end{equation*}
		We assume without loss of generality that $s\leq t$. Indeed, if $s>t$, we can exchange the roles of $u_3$, $u_3'$ and derive the same expression. We decompose with Fubini's theorem,
		\begin{equation*}
			\begin{split}
				&\int_{0}^{t}\dd u_3\int_{0}^{s}\dd u_3' \euler^{-\abs{u_3-u_3'}\abs{\om_1}^2}\euler^{-\abs{u_3-u_3'}\abs{\om_2}^2}\euler^{(u_3+u_3')\abs{\om_1+\om_2}^2}\\
				&=\int_{0}^{s}\dd u_3'\int_{u_3'}^{t}\dd u_3 \euler^{-(u_3-u_3')\abs{\om_1}^2}\euler^{-(u_3-u_3')\abs{\om_2}^2}\euler^{(u_3+u_3')\abs{\om_1+\om_2}^2}\\
				&\quad+\int_{0}^{s}\dd u_3\int_{u_3}^{s}\dd u_3' \euler^{(u_3-u_3')\abs{\om_1}^2}\euler^{(u_3-u_3')\abs{\om_2}^2}\euler^{(u_3+u_3')\abs{\om_1+\om_2}^2}.
			\end{split}
		\end{equation*}
		This leads to the cases $(\om_1\perp\om_2)\lor\lnot(\om_1\perp\om_2)$ and $(u_3'\leq u_3)\lor( u_3< u_3')$.
		
		Assume $(\om_1\perp\om_2)$. \emph{Case $u_3'\leq u_3$.} We compute
		\begin{equation*}
			\begin{split}
				&\int_{0}^{s}\dd u_3'\int_{u_3'}^{t}\dd u_3 \euler^{-(u_3-u_3')(\abs{\om_1}^2+\abs{\om_2}^2)}\euler^{(u_3+u_3')\abs{\om_1+\om_2}^2}\\
				&=\int_{0}^{s}\dd u_3' \euler^{u_3'2\abs{\om_1+\om_2}^2}(t-u_3')\\
				&=t\frac{1}{2\abs{\om_1+\om_2}^2}(\euler^{s2\abs{\om_1+\om_2}^2}-1)-\frac{1}{(2\abs{\om_1+\om_2}^2)^2}(\euler^{s2\abs{\om_1+\om_2}^2}(s2\abs{\om_1+\om_2}^2-1)+1)\\
				&=\abs{t-s}\frac{1}{2\abs{\om_1+\om_2}^2}\euler^{s2\abs{\om_1+\om_2}^2}+\frac{1}{(2\abs{\om_1+\om_2}^2)^2}\euler^{s2\abs{\om_1+\om_2}^2}-t\frac{1}{2\abs{\om_1+\om_2}^2}-\frac{1}{(2\abs{\om_1+\om_2}^2)^2}.
			\end{split}
		\end{equation*}
		\emph{Case $u_3<u_3'$.} We obtain
		\begin{equation*}
			\begin{split}
				&\int_{0}^{s}\dd u_3\int_{u_3}^{s}\dd u_3' \euler^{(u_3-u_3')(\abs{\om_1}^2+\abs{\om_2}^2)}\euler^{(u_3+u_3')\abs{\om_1+\om_2}^2}\\
				&=\int_{0}^{s}\dd u_3 \euler^{u_32\abs{\om_1+\om_2}^2}(s-u_3)\\
				&=s\frac{1}{2\abs{\om_1+\om_2}^2}(\euler^{s2\abs{\om_1+\om_2}^2}-1)-\frac{1}{(2\abs{\om_1+\om_2}^2)^2}(\euler^{s2\abs{\om_1+\om_2}^2}(s2\abs{\om_1+\om_2}^2-1)+1)\\
				&=\frac{1}{(2\abs{\om_1+\om_2}^2)^2}\euler^{s2\abs{\om_1+\om_2}^2}-s\frac{1}{2\abs{\om_1+\om_2}^2}-\frac{1}{(2\abs{\om_1+\om_2}^2)^2}.
			\end{split}
		\end{equation*}
		All in all, if $(\om_1\perp\om_2)$,
		\begin{equation*}
			\begin{split}
				&\int_{0}^{t}\dd u_3\int_{0}^{s}\dd u_3' \euler^{-\abs{u_3-u_3'}(\abs{\om_1}^2+\abs{\om_2}^2)}\euler^{-(t+s-(u_3+u_3'))\abs{\om_1+\om_2}^2}\\
				&=\frac{1}{2}\frac{1}{\abs{\om_1+\om_2}^2}\abs{t-s}\euler^{-\abs{t-s}\abs{\om_1+\om_2}^2}+\frac{1}{2}\frac{1}{\abs{\om_1+\om_2}^4}\euler^{-\abs{t-s}\abs{\om_1+\om_2}^2}\\
				&\quad-(t+s)\frac{1}{2}\frac{1}{\abs{\om_1+\om_2}^2}\euler^{-(t+s)\abs{\om_1+\om_2}^2}-\frac{1}{2}\frac{1}{\abs{\om_1+\om_2}^4}\euler^{-(t+s)\abs{\om_1+\om_2}^2}.
			\end{split}
		\end{equation*}
		The first claim follows. Next assume $\lnot(\om_1\perp\om_2)$. \emph{Case $u_3'\leq u_3$.} We compute
		\begin{equation*}
			\begin{split}
				&\int_{0}^{s}\dd u_3'\int_{u_3'}^{t}\dd u_3 \euler^{-(u_3-u_3')(\abs{\om_1}^2+\abs{\om_2}^2)}\euler^{(u_3+u_3')\abs{\om_1+\om_2}^2}\\
				&=\frac{1}{\abs{\om_1+\om_2}^2-\abs{\om_1}^2-\abs{\om_2}^2}\int_{0}^{s}\dd u_3'\euler^{u_3'(\abs{\om_1}^2+\abs{\om_2}^2+\abs{\om_1+\om_2}^2)}\\
				&\quad\times(\euler^{t(\abs{\om_1+\om_2}^2-\abs{\om_1}^2-\abs{\om_2}^2)}-\euler^{u_3'(\abs{\om_1+\om_2}^2-\abs{\om_1}^2-\abs{\om_2}^2)})\\
				&=-\frac{1}{\abs{\om_1}^2+\abs{\om_2}^2-\abs{\om_1+\om_2}^2}\frac{1}{\abs{\om_1}^2+\abs{\om_2}^2+\abs{\om_1+\om_2}^2}\\
				&\quad\times (\euler^{s(\abs{\om_1+\om_2}^2+\abs{\om_1}^2+\abs{\om_2}^2)}-1)\euler^{t(\abs{\om_1+\om_2}^2-\abs{\om_1}^2-\abs{\om_2}^2)}\\
				&\quad+\frac{1}{\abs{\om_1}^2+\abs{\om_2}^2-\abs{\om_1+\om_2}^2}\frac{1}{2\abs{\om_1+\om_2}^2}(\euler^{s2\abs{\om_1+\om_2}^2}-1)\\
				&=-\frac{1}{\abs{\om_1}^2+\abs{\om_2}^2-\abs{\om_1+\om_2}^2}\frac{1}{\abs{\om_1}^2+\abs{\om_2}^2+\abs{\om_1+\om_2}^2}\euler^{s(\abs{\om_1+\om_2}^2+\abs{\om_1}^2+\abs{\om_2}^2)}\euler^{t(\abs{\om_1+\om_2}^2-\abs{\om_1}^2-\abs{\om_2}^2)}\\
				&\quad+\frac{1}{\abs{\om_1}^2+\abs{\om_2}^2-\abs{\om_1+\om_2}^2}\frac{1}{2\abs{\om_1+\om_2}^2}\euler^{s2\abs{\om_1+\om_2}^2}\\
				&\quad+\frac{1}{\abs{\om_1}^2+\abs{\om_2}^2-\abs{\om_1+\om_2}^2}\frac{1}{\abs{\om_1}^2+\abs{\om_2}^2+\abs{\om_1+\om_2}^2}\euler^{t(\abs{\om_1+\om_2}^2-\abs{\om_1}^2-\abs{\om_2}^2)}\\
				&\quad-\frac{1}{\abs{\om_1}^2+\abs{\om_2}^2-\abs{\om_1+\om_2}^2}\frac{1}{2\abs{\om_1+\om_2}^2}.
			\end{split}
		\end{equation*}
		\emph{Case $u_3<u_3'$.} We compute
		\begin{equation*}
			\begin{split}
				&\int_{0}^{s}\dd u_3\int_{u_3}^{s}\dd u_3' \euler^{(u_3-u_3')(\abs{\om_1}^2+\abs{\om_2}^2)}\euler^{(u_3+u_3')\abs{\om_1+\om_2}^2}\\
				&=\frac{1}{\abs{\om_1+\om_2}^2-\abs{\om_1}^2-\abs{\om_2}^2}\int_{0}^{s}\dd u_3\euler^{u_3(\abs{\om_1+\om_2}^2+\abs{\om_1}^2+\abs{\om_2}^2)}\\
				&\quad\times(\euler^{s(\abs{\om_1+\om_2}^2-\abs{\om_1}^2-\abs{\om_2}^2)}-\euler^{u_3(\abs{\om_1+\om_2}^2-\abs{\om_1}^2-\abs{\om_2}^2)})\\
				&=-\frac{1}{\abs{\om_1}^2+\abs{\om_2}^2-\abs{\om_1+\om_2}^2}\frac{1}{\abs{\om_1}^2+\abs{\om_2}^2+\abs{\om_1+\om_2}^2}\\
				&\quad\times (\euler^{s(\abs{\om_1+\om_2}^2+\abs{\om_1}^2+\abs{\om_2}^2)}-1)\euler^{s(\abs{\om_1+\om_2}^2-\abs{\om_1}^2-\abs{\om_2}^2)}\\
				&\quad+\frac{1}{\abs{\om_1}^2+\abs{\om_2}^2-\abs{\om_1+\om_2}^2}\frac{1}{2\abs{\om_1+\om_2}^2}(\euler^{s2\abs{\om_1+\om_2}^2}-1)\\
				&=-\frac{1}{\abs{\om_1}^2+\abs{\om_2}^2-\abs{\om_1+\om_2}^2}\frac{1}{\abs{\om_1}^2+\abs{\om_2}^2+\abs{\om_1+\om_2}^2}\euler^{s2\abs{\om_1+\om_2}^2}\\
				&\quad+\frac{1}{\abs{\om_1}^2+\abs{\om_2}^2-\abs{\om_1+\om_2}^2}\frac{1}{2\abs{\om_1+\om_2}^2}\euler^{s2\abs{\om_1+\om_2}^2}\\
				&\quad+\frac{1}{\abs{\om_1}^2+\abs{\om_2}^2-\abs{\om_1+\om_2}^2}\frac{1}{\abs{\om_1}^2+\abs{\om_2}^2+\abs{\om_1+\om_2}^2}\euler^{s(\abs{\om_1+\om_2}^2-\abs{\om_1}^2-\abs{\om_2}^2)}\\
				&\quad-\frac{1}{\abs{\om_1}^2+\abs{\om_2}^2-\abs{\om_1+\om_2}^2}\frac{1}{2\abs{\om_1+\om_2}^2}.
			\end{split}
		\end{equation*}
		We define the leading term,
		\begin{equation*}
			\begin{split}
				&\mathsf{SL}_{s,t}\SYpsilon(\om_1,\om_2)\\
				&\defeq\frac{1}{2^2}\abs{\om_1}^{-2}\abs{\om_2}^{-2}\Bigl(-\frac{1}{\abs{\om_1}^2+\abs{\om_2}^2-\abs{\om_1+\om_2}^2}\frac{1}{\abs{\om_1}^2+\abs{\om_2}^2+\abs{\om_1+\om_2}^2}\\
				&\multiquad[9]\times(\euler^{-\abs{t-s}(\abs{\om_1}^2+\abs{\om_2}^2)}+\euler^{-\abs{t-s}\abs{\om_1+\om_2}^2})\\
				&\multiquad[10]+\frac{1}{\abs{\om_1}^2+\abs{\om_2}^2-\abs{\om_1+\om_2}^2}\frac{1}{\abs{\om_1+\om_2}^2}\euler^{-\abs{t-s}\abs{\om_1+\om_2}^2}\Bigr)
			\end{split}
		\end{equation*}
		and the error term,
		\begin{equation*}
			\begin{split}
				&\mathsf{SE}_{s,t}\SYpsilon(\om_1,\om_2)\\
				&\defeq\frac{1}{2^2}\abs{\om_1}^{-2}\abs{\om_2}^{-2}\Bigl(\frac{1}{\abs{\om_1}^2+\abs{\om_2}^2-\abs{\om_1+\om_2}^2}\frac{1}{\abs{\om_1}^2+\abs{\om_2}^2+\abs{\om_1+\om_2}^2}\\
				&\multiquad[8]\times \euler^{t(\abs{\om_1+\om_2}^2-\abs{\om_1}^2-\abs{\om_2}^2)}\euler^{-(t+s)\abs{\om_1+\om_2}^2}\\
				&\multiquad[8]-\frac{1}{\abs{\om_1}^2+\abs{\om_2}^2-\abs{\om_1+\om_2}^2}\frac{1}{2\abs{\om_1+\om_2}^2}\euler^{-(t+s)\abs{\om_1+\om_2}^2}\\
				&\multiquad[8]+\frac{1}{\abs{\om_1}^2+\abs{\om_2}^2-\abs{\om_1+\om_2}^2}\frac{1}{\abs{\om_1}^2+\abs{\om_2}^2+\abs{\om_1+\om_2}^2}\\
				&\multiquad[8]\times \euler^{s(\abs{\om_1+\om_2}^2-\abs{\om_1}^2-\abs{\om_2}^2)}\euler^{-(t+s)\abs{\om_1+\om_2}^2}\\
				&\multiquad[10]-\frac{1}{\abs{\om_1}^2+\abs{\om_2}^2-\abs{\om_1+\om_2}^2}\frac{1}{2\abs{\om_1+\om_2}^2}\euler^{-(t+s)\abs{\om_1+\om_2}^2}\Bigr).
			\end{split}
		\end{equation*}
		All in all, if $\lnot(\om_1\perp\om_2)$,
		\begin{equation*}
			\mathsf{SL}_{s,t}\SYpsilon(\om_1,\om_2)=\mathsf{SL}_{s,t}\SYpsilon(\om_1,\om_2)+\mathsf{SE}_{s,t}\SYpsilon(\om_1,\om_2).
		\end{equation*}
		We rewrite the leading term,
		\begin{equation*}
			\begin{split}
				&\mathsf{SL}_{s,t}\SYpsilon(\om_1,\om_2)\\
				&=\frac{1}{2^2}\abs{\om_1}^{-2}\abs{\om_2}^{-2}\Bigl(-\frac{1}{\abs{\om_1}^2+\abs{\om_2}^2-\abs{\om_1+\om_2}^2}\frac{1}{\abs{\om_1}^2+\abs{\om_2}^2+\abs{\om_1+\om_2}^2}\\
				&\multiquad[8]\times(\euler^{-\abs{t-s}(\abs{\om_1}^2+\abs{\om_2}^2)}+\euler^{-\abs{t-s}\abs{\om_1+\om_2}^2})\\
				&\multiquad[8]+\frac{1}{2}\frac{1}{\abs{\om_1}^2+\abs{\om_2}^2-\abs{\om_1+\om_2}^2}\frac{1}{\abs{\om_1+\om_2}^2}(\euler^{-\abs{t-s}\abs{\om_1+\om_2}^2}-\euler^{-\abs{t-s}(\abs{\om_1}^2+\abs{\om_2}^2)})\\
				&\multiquad[8]+\frac{1}{2}\frac{1}{\abs{\om_1}^2+\abs{\om_2}^2-\abs{\om_1+\om_2}^2}\frac{1}{\abs{\om_1+\om_2}^2}(\euler^{-\abs{t-s}\abs{\om_1+\om_2}^2}+\euler^{-\abs{t-s}(\abs{\om_1}^2+\abs{\om_2}^2)})\Bigr).
			\end{split}
		\end{equation*}
		To combine the first and third summand in $\mathsf{SL}_{s,t}\SYpsilon(\om_1,\om_2)$, we note that
		\begin{equation*}
			\begin{split}
				&\frac{1}{\abs{\om_1}^2+\abs{\om_2}^2-\abs{\om_1+\om_2}^2}\Bigl(-\frac{1}{\abs{\om_1}^2+\abs{\om_2}^2+\abs{\om_1+\om_2}^2}+\frac{1}{2}\frac{1}{\abs{\om_1+\om_2}^2}\Bigr)\\
				&=\frac{1}{2}\frac{1}{\abs{\om_1}^2+\abs{\om_2}^2+\abs{\om_1+\om_2}^2}\frac{1}{\abs{\om_1+\om_2}^2}.
			\end{split}
		\end{equation*}
		Hence if $\lnot(\om_1\perp\om_2)$,
		\begin{equation*}
			\begin{split}
				&\mathsf{SL}_{s,t}\SYpsilon(\om_1,\om_2)\\
				&=\frac{1}{2^3}\abs{\om_1}^{-2}\abs{\om_2}^{-2}\abs{\om_1+\om_2}^{-2}\Bigl(\frac{1}{\abs{\om_1}^2+\abs{\om_2}^2-\abs{\om_1+\om_2}^2}(\euler^{-\abs{t-s}\abs{\om_1+\om_2}^2}-\euler^{-\abs{t-s}(\abs{\om_1}^2+\abs{\om_2}^2)})\\
				&\multiquad[13]+\frac{1}{\abs{\om_1}^2+\abs{\om_2}^2+\abs{\om_1+\om_2}^2}(\euler^{-\abs{t-s}\abs{\om_1+\om_2}^2}+\euler^{-\abs{t-s}(\abs{\om_1}^2+\abs{\om_2}^2)})\Bigr).
			\end{split}
		\end{equation*}
		Furthermore,
		\begin{equation*}
			\begin{split}
				&\mathsf{SE}_{s,t}\SYpsilon(\om_1,\om_2)\\
				&=\frac{1}{2^2}\abs{\om_1}^{-2}\abs{\om_2}^{-2}\Bigl(\frac{1}{\abs{\om_1}^2+\abs{\om_2}^2-\abs{\om_1+\om_2}^2}\frac{1}{\abs{\om_1}^2+\abs{\om_2}^2+\abs{\om_1+\om_2}^2}\\
				&\multiquad[8]\times (\euler^{t(\abs{\om_1+\om_2}^2-\abs{\om_1}^2-\abs{\om_2}^2)}+\euler^{s(\abs{\om_1+\om_2}^2-\abs{\om_1}^2-\abs{\om_2}^2)})\euler^{-(t+s)\abs{\om_1+\om_2}^2}\\
				&\multiquad[12]-\frac{1}{\abs{\om_1}^2+\abs{\om_2}^2-\abs{\om_1+\om_2}^2}\frac{1}{\abs{\om_1+\om_2}^2}\euler^{-(t+s)\abs{\om_1+\om_2}^2}\Bigr).
			\end{split}
		\end{equation*}
		The second claim follows.
	\end{proof}
	Using Lemma~\ref{lem:shape_ypsilon}, we can derive an explicit expression for $\mathsf{D}_{s,t}\SYpsilon(\om_1,\om_2)$.
	\begin{lemma}
		Let $s,t\geq0$ and $\om_1,\om_2\in2\uppi\mbZ^2\setminus\{0\}$ such that $\om_1+\om_2\not=0$. Then there exist some $\mathsf{DL}_{s,t}\SYpsilon(\om_1,\om_2)$ and $\mathsf{DE}_{s,t}\SYpsilon(\om_1,\om_2)$, such that
		\begin{equation*}
			\mathsf{D}_{s,t}\SYpsilon(\om_1,\om_2)=\mathsf{DL}_{s,t}\SYpsilon(\om_1,\om_2)+\mathsf{DE}_{s,t}\SYpsilon(\om_1,\om_2).
		\end{equation*}
		In the case $(\om_1\perp\om_2)$, we obtain
		\begin{equation*}
			\mathsf{DL}_{s,t}\SYpsilon(\om_1,\om_2)=\frac{1}{2^2}\abs{\om_1}^{-2}\abs{\om_2}^{-2}\abs{\om_1+\om_2}^{-4}\Bigl(1-\euler^{-\abs{t-s}\abs{\om_1+\om_2}^2}-\abs{t-s}\abs{\om_1+\om_2}^2\euler^{-\abs{t-s}\abs{\om_1+\om_2}^2}\Bigr)
		\end{equation*}
		and
		\begin{equation*}
			\begin{split}
				&\mathsf{DE}_{s,t}\SYpsilon(\om_1,\om_2)\\
				&=\frac{1}{2^3}\abs{\om_1}^{-2}\abs{\om_2}^{-2}\abs{\om_1+\om_2}^{-4}\Bigl(2t\abs{\om_1+\om_2}^2(\euler^{-(t+s)\abs{\om_1+\om_2}^2}-\euler^{-2t\abs{\om_1+\om_2}^2})\\
				&\multiquad[13]+2s\abs{\om_1+\om_2}^2(\euler^{-(t+s)\abs{\om_1+\om_2}^2}-\euler^{-2s\abs{\om_1+\om_2}^2})\\
				&\multiquad[20]-(\euler^{-t\abs{\om_1+\om_2}^2}-\euler^{-s\abs{\om_1+\om_2}^2})^2\Bigr).
			\end{split}
		\end{equation*}
		In the case $\lnot(\om_1\perp\om_2)$, we obtain
		\begin{equation*}
			\begin{split}
				&\mathsf{DL}_{s,t}\SYpsilon(\om_1,\om_2)\\
				&=\frac{1}{2^2}\abs{\om_1}^{-2}\abs{\om_2}^{-2}\abs{\om_1+\om_2}^{-2}\Bigl(\frac{1}{\abs{\om_1}^2+\abs{\om_2}^2-\abs{\om_1+\om_2}^2}(\euler^{-\abs{t-s}(\abs{\om_1}^2+\abs{\om_2}^2)}-\euler^{-\abs{t-s}\abs{\om_1+\om_2}^2})\\
				&\multiquad[12]+\frac{1}{\abs{\om_1}^2+\abs{\om_2}^2+\abs{\om_1+\om_2}^2}(2-\euler^{-\abs{t-s}\abs{\om_1+\om_2}^2}-\euler^{-\abs{t-s}(\abs{\om_1}^2+\abs{\om_2}^2)})\Bigr)
			\end{split}
		\end{equation*}
		and
		\begin{equation*}
			\begin{split}
				&\mathsf{DE}_{s,t}\SYpsilon(\om_1,\om_2)\\
				&=\frac{1}{2^2}\abs{\om_1}^{-2}\abs{\om_2}^{-2}\Bigl(\frac{1}{\abs{\om_1}^2+\abs{\om_2}^2-\abs{\om_1+\om_2}^2}\frac{1}{\abs{\om_1}^2+\abs{\om_2}^2+\abs{\om_1+\om_2}^2}\\
				&\multiquad[8]\times(2(\euler^{t(\abs{\om_1+\om_2}^2-\abs{\om_1}^2-\abs{\om_2}^2)}-1)(\euler^{-2t\abs{\om_1+\om_2}^2}-\euler^{-(t+s)\abs{\om_1+\om_2}^2})\\
				&\multiquad[10]+2(\euler^{s(\abs{\om_1+\om_2}^2-\abs{\om_1}^2-\abs{\om_2}^2)}-1)(\euler^{-2s\abs{\om_1+\om_2}^2}-\euler^{-(t+s)\abs{\om_1+\om_2}^2}))\\
				&\multiquad[9]-\frac{1}{\abs{\om_1+\om_2}^2}\frac{1}{\abs{\om_1}^2+\abs{\om_2}^2+\abs{\om_1+\om_2}^2}(\euler^{-t\abs{\om_1+\om_2}^2}-\euler^{-s\abs{\om_1+\om_2}^2})^2\Bigr).
			\end{split}
		\end{equation*}
	\end{lemma}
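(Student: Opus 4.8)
The plan is to obtain the formula by pure algebra from the explicit expressions for $\mathsf{S}_{s,t}\SYpsilon$ established in Lemma~\ref{lem:shape_ypsilon}, so that no further integration is required. First I would recall from~\eqref{eq:increment_shape_coefficient_ypsilon} that
\begin{equation*}
	\mathsf{D}_{s,t}\SYpsilon=\mathsf{S}_{t,t}\SYpsilon+\mathsf{S}_{s,s}\SYpsilon-\mathsf{S}_{s,t}\SYpsilon-\mathsf{S}_{t,s}\SYpsilon ,
\end{equation*}
and that Lemma~\ref{lem:shape_ypsilon} provides the splitting $\mathsf{S}_{s,t}\SYpsilon=\mathsf{SL}_{s,t}\SYpsilon+\mathsf{SE}_{s,t}\SYpsilon$. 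Defining $\mathsf{DL}_{s,t}\SYpsilon$ and $\mathsf{DE}_{s,t}\SYpsilon$ by applying the same alternating combination separately to $\mathsf{SL}$ and to $\mathsf{SE}$, linearity immediately gives $\mathsf{D}_{s,t}\SYpsilon=\mathsf{DL}_{s,t}\SYpsilon+\mathsf{DE}_{s,t}\SYpsilon$, so the task reduces to evaluating $\mathsf{DL}$ and $\mathsf{DE}$.

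The next step is to observe that every expression produced by Lemma~\ref{lem:shape_ypsilon} is symmetric under $s\leftrightarrow t$: $\mathsf{SL}_{s,t}\SYpsilon$ depends on $(s,t)$ only through $\abs{t-s}$, and $\mathsf{SE}_{s,t}\SYpsilon$ is manifestly symmetric in its two arguments (consistent with the symmetry of $\mathsf{S}_{s,t}\SYpsilon$ noted in the proof of that lemma, coming from exchanging the roles of $u_3$ and $u_3'$). Consequently $\mathsf{S}_{s,t}\SYpsilon=\mathsf{S}_{t,s}\SYpsilon$ and
\begin{equation*}
	\mathsf{DL}_{s,t}\SYpsilon=\mathsf{SL}_{t,t}\SYpsilon+\mathsf{SL}_{s,s}\SYpsilon-2\mathsf{SL}_{s,t}\SYpsilon ,\qquad \mathsf{DE}_{s,t}\SYpsilon=\mathsf{SE}_{t,t}\SYpsilon+\mathsf{SE}_{s,s}\SYpsilon-2\mathsf{SE}_{s,t}\SYpsilon .
\end{equation*}
Since $\om_1+\om_2\neq0$, and in the non-orthogonal case also $\abs{\om_1}^{2}+\abs{\om_2}^{2}\neq\abs{\om_1+\om_2}^{2}$, all reciprocals appearing in these formulae are finite, so substituting $s=t$ and $t=s$ is unproblematic.

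The remaining work is bookkeeping, carried out separately in the two regimes. In the orthogonal case, setting $s=t$ collapses every exponential to $1$, so $\mathsf{SL}_{t,t}\SYpsilon=\mathsf{SL}_{s,s}\SYpsilon=\tfrac{1}{2^{3}}\abs{\om_1}^{-2}\abs{\om_2}^{-2}\abs{\om_1+\om_2}^{-4}$, and subtracting $2\mathsf{SL}_{s,t}\SYpsilon$ reproduces the stated $\mathsf{DL}_{s,t}\SYpsilon$; the same substitution into $\mathsf{SE}$ produces the three exponential groups of $\mathsf{DE}_{s,t}\SYpsilon$, after rewriting $\euler^{-2t\abs{\om_1+\om_2}^{2}}-2\euler^{-(t+s)\abs{\om_1+\om_2}^{2}}+\euler^{-2s\abs{\om_1+\om_2}^{2}}=(\euler^{-t\abs{\om_1+\om_2}^{2}}-\euler^{-s\abs{\om_1+\om_2}^{2}})^{2}$ to recognise the square. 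In the non-orthogonal case, writing $D_{\pm}\coloneqq\abs{\om_1}^{2}+\abs{\om_2}^{2}\pm\abs{\om_1+\om_2}^{2}$, the evaluation at $s=t$ annihilates the $D_{-}^{-1}$-term inside $\mathsf{SL}$, so the two diagonal contributions reduce to a single $D_{+}^{-1}$-term, and combining with $-2\mathsf{SL}_{s,t}\SYpsilon$ via $\tfrac{2}{D_{+}}-\tfrac{1}{D_{+}}(a+b)=\tfrac{1}{D_{+}}(2-a-b)$ yields the claimed $\mathsf{DL}_{s,t}\SYpsilon$; the error term follows by the identical cancellation pattern applied termwise to $\mathsf{SE}$, the $(D_{-}D_{+})^{-1}$-pieces assembling into the two bracketed products and the $(D_{-}\abs{\om_1+\om_2}^{2})^{-1}$-piece into the final squared difference. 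I do not expect any conceptual obstacle; the only mild nuisance is tracking signs and exponents through the non-orthogonal error term, which carries the largest number of summands, but all of it is elementary given Lemma~\ref{lem:shape_ypsilon}.
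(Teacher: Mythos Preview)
Your approach is exactly that of the paper: split $\mathsf{S}_{s,t}\SYpsilon$ via Lemma~\ref{lem:shape_ypsilon}, use the symmetry $\mathsf{S}_{s,t}\SYpsilon=\mathsf{S}_{t,s}\SYpsilon$ to reduce to $\mathsf{SL}_{t,t}+\mathsf{SL}_{s,s}-2\mathsf{SL}_{s,t}$ (and likewise for $\mathsf{SE}$), and then do the algebra. The paper too singles out the non-orthogonal $\mathsf{DE}$ as the only case needing real work.

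One small inaccuracy in your bookkeeping sketch: in the non-orthogonal $\mathsf{DE}$ the pieces do \emph{not} assemble as you describe. The raw combination of the $(D_{-}\abs{\om_1+\om_2}^{2})^{-1}$-terms gives a prefactor $(D_{-}\abs{\om_1+\om_2}^{2})^{-1}$ in front of the squared difference, whereas the claimed formula has $(\abs{\om_1+\om_2}^{2}D_{+})^{-1}$. To arrive there you must first add and subtract suitable exponentials inside the $(D_{-}D_{+})^{-1}$-bracket so that part of it migrates to the second slot, and then use the identity
\begin{equation*}
	-\frac{1}{D_{-}}\Big(\frac{1}{\abs{\om_1+\om_2}^{2}}-\frac{2}{D_{+}}\Big)=-\frac{1}{\abs{\om_1+\om_2}^{2}}\frac{1}{D_{+}},
\end{equation*}
which is where the $D_{-}$ disappears. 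This is the step the paper calls ``We simplify the second term''; it is still elementary, but it is not a termwise matching of the two $\mathsf{SE}$-pieces to the two displayed groups.
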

	\begin{proof}
		We discuss the case $\mathsf{DE}_{s,t}\SYpsilon(\om_1,\om_2)$, $\lnot(\om_1\perp\om_2)$, which is the only one that requires additional manipulations. By the definition of $\mathsf{DE}_{s,t}\SYpsilon(\om_1,\om_2)$,
		\begin{equation*}
			\begin{split}
				&\mathsf{DE}_{s,t}\SYpsilon(\om_1,\om_2)\\
				&=\frac{1}{2^2}\abs{\om_1}^{-2}\abs{\om_2}^{-2}\\
				&\quad\times\Bigl(\frac{1}{\abs{\om_1}^2+\abs{\om_2}^2-\abs{\om_1+\om_2}^2}\frac{1}{\abs{\om_1}^2+\abs{\om_2}^2+\abs{\om_1+\om_2}^2}\\
				&\qquad\quad\times (2\euler^{t(-\abs{\om_1+\om_2}^2-\abs{\om_1}^2-\abs{\om_2}^2)}+2\euler^{s(-\abs{\om_1+\om_2}^2-\abs{\om_1}^2-\abs{\om_2}^2)}\\
				&\multiquad[5]-2(\euler^{t(\abs{\om_1+\om_2}^2-\abs{\om_1}^2-\abs{\om_2}^2)}+\euler^{s(\abs{\om_1+\om_2}^2-\abs{\om_1}^2-\abs{\om_2}^2)})\euler^{-(t+s)\abs{\om_1+\om_2}^2})\\
				&\qquad\quad-\frac{1}{\abs{\om_1}^2+\abs{\om_2}^2-\abs{\om_1+\om_2}^2}\frac{1}{\abs{\om_1+\om_2}^2}(\euler^{-2t\abs{\om_1+\om_2}^2}+\euler^{-2s\abs{\om_1+\om_2}^2}-2\euler^{-(t+s)\abs{\om_1+\om_2}^2})\Bigr).
			\end{split}
		\end{equation*}
		We rewrite
		\begin{equation*}
			\begin{split}
				&\mathsf{DE}_{s,t}\SYpsilon(\om_1,\om_2)\\
				&=\frac{1}{2^2}\abs{\om_1}^{-2}\abs{\om_2}^{-2}\\
				&\quad\times\Bigl(\frac{1}{\abs{\om_1}^2+\abs{\om_2}^2-\abs{\om_1+\om_2}^2}\frac{1}{\abs{\om_1}^2+\abs{\om_2}^2+\abs{\om_1+\om_2}^2}\\
				&\qquad\quad\times (2\euler^{t(-\abs{\om_1+\om_2}^2-\abs{\om_1}^2-\abs{\om_2}^2)}-2\euler^{-2t\abs{\om_1+\om_2}^2}\\
				&\multiquad[5]+2\euler^{s(-\abs{\om_1+\om_2}^2-\abs{\om_1}^2-\abs{\om_2}^2)}-2\euler^{-2s\abs{\om_1+\om_2}^2}\\
				&\multiquad[5]-2(\euler^{t(\abs{\om_1+\om_2}^2-\abs{\om_1}^2-\abs{\om_2}^2)}-1)\euler^{-(t+s)\abs{\om_1+\om_2}^2}\\
				&\multiquad[5]-2(\euler^{s(\abs{\om_1+\om_2}^2-\abs{\om_1}^2-\abs{\om_2}^2)}-1)\euler^{-(t+s)\abs{\om_1+\om_2}^2})\\
				&\qquad\quad-\frac{1}{\abs{\om_1}^2+\abs{\om_2}^2-\abs{\om_1+\om_2}^2}\Bigl(\frac{1}{\abs{\om_1+\om_2}^2}-\frac{2}{\abs{\om_1}^2+\abs{\om_2}^2+\abs{\om_1+\om_2}^2}\Bigr)\\
				&\multiquad[14]\times (\euler^{-2t\abs{\om_1+\om_2}^2}+\euler^{-2s\abs{\om_1+\om_2}^2}-2\euler^{-(t+s)\abs{\om_1+\om_2}^2})\Bigr).
			\end{split}
		\end{equation*}
		We simplify the second term,
		\begin{equation*}
			\begin{split}
				&-\frac{1}{\abs{\om_1}^2+\abs{\om_2}^2-\abs{\om_1+\om_2}^2}\Bigl(\frac{1}{\abs{\om_1+\om_2}^2}-\frac{2}{\abs{\om_1}^2+\abs{\om_2}^2+\abs{\om_1+\om_2}^2}\Bigr)\\
				&\quad\times (\euler^{-2t\abs{\om_1+\om_2}^2}+\euler^{-2s\abs{\om_1+\om_2}^2}-2\euler^{-(t+s)\abs{\om_1+\om_2}^2})\\
				&=-\frac{1}{\abs{\om_1+\om_2}^2}\frac{1}{\abs{\om_1}^2+\abs{\om_2}^2+\abs{\om_1+\om_2}^2}(\euler^{-t\abs{\om_1+\om_2}^2}-\euler^{-s\abs{\om_1+\om_2}^2})^2.
			\end{split}
		\end{equation*}
		For the first term,
		\begin{equation*}
			\begin{split}
				&2\euler^{t(-\abs{\om_1+\om_2}^2-\abs{\om_1}^2-\abs{\om_2}^2)}-2\euler^{-2t\abs{\om_1+\om_2}^2}\\
				&\quad+2\euler^{s(-\abs{\om_1+\om_2}^2-\abs{\om_1}^2-\abs{\om_2}^2)}-2\euler^{-2s\abs{\om_1+\om_2}^2}\\
				&\quad-2(\euler^{t(\abs{\om_1+\om_2}^2-\abs{\om_1}^2-\abs{\om_2}^2)}-1)\euler^{-(t+s)\abs{\om_1+\om_2}^2}\\
				&\quad-2(\euler^{s(\abs{\om_1+\om_2}^2-\abs{\om_1}^2-\abs{\om_2}^2)}-1)\euler^{-(t+s)\abs{\om_1+\om_2}^2}\\
				&=2(\euler^{t(\abs{\om_1+\om_2}^2-\abs{\om_1}^2-\abs{\om_2}^2)}-1)(\euler^{-2t\abs{\om_1+\om_2}^2}-\euler^{-(t+s)\abs{\om_1+\om_2}^2})\\
				&\quad+2(\euler^{s(\abs{\om_1+\om_2}^2-\abs{\om_1}^2-\abs{\om_2}^2)}-1)(\euler^{-2s\abs{\om_1+\om_2}^2}-\euler^{-(t+s)\abs{\om_1+\om_2}^2}).
			\end{split}
		\end{equation*}
		This yields the claim.
	\end{proof}
\end{details}
\begin{lemma}\label{lem:difference_ypsilon_bound}
	Let $s,t\geq0$, $\gamma\in[0,1]$ and $\om_1,\om_2\in2\uppi\mbZ^2\setminus\{0\}$ be such that $\om_1+\om_2\not=0$.
	\begin{enumerate}
		\item In the case $(\om_1\perp\om_2)$, we obtain
		\begin{equation*}
			\mathsf{D}_{s,t}\SYpsilon(\om_1,\om_2)\lesssim\abs{t-s}^{\gamma}\abs{\om_1}^{-2}\abs{\om_2}^{-2}\abs{\om_1+\om_2}^{-4+2\gamma}.
		\end{equation*}
		\item In the case $\lnot(\om_1\perp\om_2)$, we obtain
		\begin{equation*}
			\mathsf{D}_{s,t}\SYpsilon(\om_1,\om_2)\lesssim\abs{t-s}^{\gamma}\abs{\om_1}^{-4+2\gamma}\abs{\om_2}^{-2}\abs{\om_1+\om_2}^{-2}+\abs{t-s}^{\gamma}\abs{\om_1}^{-4}\abs{\om_2}^{-2}\abs{\om_1+\om_2}^{-2+2\gamma}.
		\end{equation*}
	\end{enumerate}
	The implied constants are uniform in $\om_1$, $\om_2$.
\end{lemma}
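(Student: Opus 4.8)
The plan is to evaluate $\mathsf{S}_{s,t}\SYpsilon$ in closed form and then read off the bound on $\mathsf{D}_{s,t}\SYpsilon$ from the algebraic identity~\eqref{eq:increment_shape_coefficient_ypsilon}. Since $u_1,u_2\le u_3\wedge u_3'$ in~\eqref{eq:shape_coefficient_ypsilon}, the absolute values in $\euler^{-\abs{u_3+u_3'-2u_1}\abs{\om_1}^2}$ and $\euler^{-\abs{u_3+u_3'-2u_2}\abs{\om_2}^2}$ may be dropped and the two innermost integrals are elementary, reducing to
\begin{equation*}
	\mathsf{S}_{s,t}\SYpsilon(\om_1,\om_2)=\frac{1}{4\abs{\om_1}^2\abs{\om_2}^2}\int_0^t\dd u_3\int_0^s\dd u_3'\,\euler^{-\abs{t+s-(u_3+u_3')}\abs{\om_1+\om_2}^2}\euler^{-\abs{u_3-u_3'}(\abs{\om_1}^2+\abs{\om_2}^2)}.
\end{equation*}
Assuming without loss of generality $s\le t$ (the coefficient is symmetric in $s,t$), I would split the $(u_3,u_3')$-domain into $\{u_3'\le u_3\}$ and $\{u_3<u_3'\}$, apply Fubini, and integrate. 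This is where the case distinction enters: when $\abs{\om_1}^2+\abs{\om_2}^2=\abs{\om_1+\om_2}^2$ one integrates $\euler^{c(u_3+u_3')}$ against an affine factor, producing denominators $\abs{\om_1+\om_2}^2$ and $\abs{\om_1+\om_2}^4$; otherwise one gets geometric terms with denominators $\abs{\om_1}^2+\abs{\om_2}^2\pm\abs{\om_1+\om_2}^2$. In either case I would record $\mathsf{S}_{s,t}\SYpsilon$ as a principal part $\mathsf{SL}$, which depends on $(s,t)$ only through $\abs{t-s}$, plus an error part $\mathsf{SE}$ carrying a factor $\euler^{-(t+s)\abs{\om_1+\om_2}^2}$.

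Next I would assemble $\mathsf{D}_{s,t}\SYpsilon=\mathsf{S}_{t,t}\SYpsilon+\mathsf{S}_{s,s}\SYpsilon-\mathsf{S}_{s,t}\SYpsilon-\mathsf{S}_{t,s}\SYpsilon$. Because $\mathsf{SL}$ depends only on $\abs{t-s}$, the principal contributions telescope to $\mathsf{DL}=2(\mathsf{SL}|_{\abs{t-s}=0}-\mathsf{SL}|_{\abs{t-s}})$; in the orthogonal case this is exactly $\tfrac14\abs{\om_1}^{-2}\abs{\om_2}^{-2}\abs{\om_1+\om_2}^{-4}(1-(1+x)\euler^{-x})$ with $x=\abs{t-s}\abs{\om_1+\om_2}^2$, and the elementary inequality $0\le1-(1+x)\euler^{-x}\le1\wedge x^2\lesssim x^\gamma$ (valid for $\gamma\in[0,1]$) gives the claimed estimate. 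The error part $\mathsf{DE}$ collects products of exponentials in which every summand retains a rapidly decaying Gaussian ($\euler^{-(t+s)\abs{\om_1+\om_2}^2}$, $\euler^{-2t\abs{\om_1+\om_2}^2}$ or $\euler^{-2s\abs{\om_1+\om_2}^2}$) and whose $(s,t)$-dependence factors through differences $\euler^{-t\abs{\om_1+\om_2}^2}-\euler^{-s\abs{\om_1+\om_2}^2}$ and their squares, which vanish at $s=t$. These I would bound using $\abs{\euler^{-a}-\euler^{-b}}\le2\wedge\abs{a-b}\lesssim\abs{a-b}^\gamma$, $y^r\euler^{-y}\lesssim_r1$, and $x\euler^{-x}\lesssim x^\gamma$, extracting one power of $\abs{t-s}^\gamma$ and the weight $\abs{\om_1+\om_2}^{-4+2\gamma}$.

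For the non-orthogonal case the key structural observations are twofold. First, the sub-term of $\mathsf{DL}$ carrying the prefactor $(\abs{\om_1}^2+\abs{\om_2}^2-\abs{\om_1+\om_2}^2)^{-1}$ is a divided difference $\euler^{-xP}-\euler^{-xQ}$ over $P-Q$ with $x=\abs{t-s}\ge0$, hence $\le0$, so it may simply be discarded for an upper bound; what remains is $\lesssim\abs{\om_1}^{-2}\abs{\om_2}^{-2}\abs{\om_1+\om_2}^{-2}\big((1-\euler^{-x(\abs{\om_1}^2+\abs{\om_2}^2)})+(1-\euler^{-x\abs{\om_1+\om_2}^2})\big)/(\abs{\om_1}^2+\abs{\om_2}^2+\abs{\om_1+\om_2}^2)$, and using $1-\euler^{-y}\lesssim y^\gamma$ together with $\abs{\om_1}^2+\abs{\om_2}^2+\abs{\om_1+\om_2}^2\gtrsim\abs{\om_1}^2+\abs{\om_2}^2\ge\max\{\abs{\om_1}^2,\abs{\om_2}^2\}$ produces precisely the two target terms $\abs{\om_1}^{-4+2\gamma}\abs{\om_2}^{-2}\abs{\om_1+\om_2}^{-2}+\abs{\om_1}^{-4}\abs{\om_2}^{-2}\abs{\om_1+\om_2}^{-2+2\gamma}$. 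Second, in $\mathsf{DE}$ the prefactors $(\abs{\om_1}^2+\abs{\om_2}^2-\abs{\om_1+\om_2}^2)^{-1}$ are harmless, because $\om_1,\om_2\in2\uppi\mbZ^2$ and $\om_1\not\perp\om_2$ force $\big|\abs{\om_1}^2+\abs{\om_2}^2-\abs{\om_1+\om_2}^2\big|=2\abs{\inner{\om_1}{\om_2}}\ge2(2\uppi)^2$, so this factor is bounded, and the remaining Gaussian and difference factors are estimated as above after regrouping so that the $(s,t)$-dependence once more vanishes at $s=t$.

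The main obstacle I anticipate is exactly this last step: in the non-orthogonal case one must regroup the products of exponentials in $\mathsf{DE}$ so that each summand simultaneously keeps a genuine decaying Gaussian, yields precisely one power of $\abs{t-s}^\gamma$, and retains enough spatial decay to be absorbed into the asymmetric target bound rather than the weaker $\abs{\om_1}^{-2}\abs{\om_2}^{-2}\abs{\om_1+\om_2}^{-2}$-type estimate; achieving this will likely require a further case split on the sign of $\inner{\om_1}{\om_2}$ (acute versus obtuse, i.e.\ $\abs{\om_1+\om_2}^2\gtrless\abs{\om_1}^2+\abs{\om_2}^2$) and on which of $\abs{\om_1}$, $\abs{\om_2}$, $\abs{\om_1+\om_2}$ is largest. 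The orthogonal case and the principal part in the non-orthogonal case are, by contrast, essentially immediate once the closed forms of Step 1–2 are in hand.
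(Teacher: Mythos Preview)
Your plan is essentially the paper's approach: compute $\mathsf{S}_{s,t}\SYpsilon$ explicitly, split into a principal part $\mathsf{DL}$ depending only on $\abs{t-s}$ and an error $\mathsf{DE}$ carrying $\euler^{-(t+s)\abs{\om_1+\om_2}^2}$, and then handle the orthogonal case and the non-orthogonal $\mathsf{DL}$ exactly as you describe (the divided-difference term in $\mathsf{DL}$ is indeed nonpositive and can be dropped).

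The one point where your proposal drifts from the paper is the treatment of $\mathsf{DE}$ in the non-orthogonal case. Your first instinct---that the prefactor $(\abs{\om_1}^2+\abs{\om_2}^2-\abs{\om_1+\om_2}^2)^{-1}$ is ``harmless'' because $2\abs{\inner{\om_1}{\om_2}}\ge2(2\uppi)^2$---is exactly what does \emph{not} work, and you correctly flag this in your last paragraph: bounding that factor by a constant leaves you with only $\abs{\om_1}^{-2}\abs{\om_2}^{-2}\abs{\om_1+\om_2}^{-2+2\gamma}$, missing a full $\abs{\om_1}^{-2}$. The paper does not treat this prefactor as a constant. Instead, after the sign split you anticipate (on $\abs{\om_1}^2+\abs{\om_2}^2\gtrless\abs{\om_1+\om_2}^2$), it keeps the prefactor paired with the factor $(\euler^{t(\abs{\om_1+\om_2}^2-\abs{\om_1}^2-\abs{\om_2}^2)}-1)$ and reads their product as a divided difference bounded by $t$ (via $\abs{\euler^{x}-1}\le\abs{x}\euler^{\abs{x}}$ and the accompanying Gaussian, or equivalently $(1-\euler^{-y})/y\le1$); that free factor of $t$ is then combined with the surviving Gaussian $\euler^{-(t+s)(\abs{\om_1}^2+\abs{\om_2}^2)}$ through~\eqref{eq:rapid_decay} to produce the missing $(\abs{\om_1}^2+\abs{\om_2}^2)^{-1}\le\abs{\om_1}^{-2}$. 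No case split on which of $\abs{\om_1},\abs{\om_2},\abs{\om_1+\om_2}$ is largest is needed. Once you see this pairing, the non-orthogonal $\mathsf{DE}$ is no harder than the rest.
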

\begin{proof}
	To bound our shape coefficients, we first derive explicit expressions, a step that was inspired by~\cite[Proof of Thm.~2.1]{tsatsoulis_weber_18}. To do so, we evaluate the exponential integrals in~\eqref{eq:shape_coefficient_ypsilon} and compute $\mathsf{D}_{s,t}\SYpsilon(\om_1,\om_2)$ through~\eqref{eq:increment_shape_coefficient_ypsilon}. We can then further decompose 
	\begin{equation*}
		\mathsf{D}_{s,t}\SYpsilon(\om_1,\om_2)=\mathsf{DL}_{s,t}\SYpsilon(\om_1,\om_2)+\mathsf{DE}_{s,t}\SYpsilon(\om_1,\om_2),
	\end{equation*}
	into the \emph{leading term} $\mathsf{DL}_{s,t}\SYpsilon(\om_1,\om_2)$ and the \emph{error term} $\mathsf{DE}_{s,t}\SYpsilon(\om_1,\om_2)$. The error term is generated by the zero initial condition of the noise, i.e.\ the remaining restriction $u_3,u_3'\geq0$ in~\eqref{eq:shape_coefficient_ypsilon}.
	
	Assume $(\om_1\perp\om_2)$, then
	\begin{equation*}
		\mathsf{DL}_{s,t}\SYpsilon(\om_1,\om_2)=\frac{1}{2^2}\abs{\om_1}^{-2}\abs{\om_2}^{-2}\abs{\om_1+\om_2}^{-4}\Bigl(1-\euler^{-\abs{t-s}\abs{\om_1+\om_2}^2}-\abs{t-s}\abs{\om_1+\om_2}^2\euler^{-\abs{t-s}\abs{\om_1+\om_2}^2}\Bigr)
	\end{equation*}
	and
	\begin{equation*}
		\begin{split}
			&\mathsf{DE}_{s,t}\SYpsilon(\om_1,\om_2)=\frac{1}{2^3}\abs{\om_1}^{-2}\abs{\om_2}^{-2}\abs{\om_1+\om_2}^{-4}\Bigl(2t\abs{\om_1+\om_2}^2(\euler^{-(t+s)\abs{\om_1+\om_2}^2}-\euler^{-2t\abs{\om_1+\om_2}^2})\\
			&\multiquad[20]+2s\abs{\om_1+\om_2}^2(\euler^{-(t+s)\abs{\om_1+\om_2}^2}-\euler^{-2s\abs{\om_1+\om_2}^2})\\
			&\multiquad[27]-(\euler^{-t\abs{\om_1+\om_2}^2}-\euler^{-s\abs{\om_1+\om_2}^2})^2\Bigr).
		\end{split}
	\end{equation*}
	We first consider $\mathsf{DL}_{s,t}\SYpsilon(\om_1,\om_2)$ and estimate by~\eqref{eq:interpolation} for every $\gamma\in[0,1]$,
	\begin{equation*}
		\mathsf{DL}_{s,t}\SYpsilon(\om_1,\om_2)\lesssim\abs{t-s}^{\gamma}\abs{\om_1}^{-2}\abs{\om_2}^{-2}\abs{\om_1+\om_2}^{-4+2\gamma}.
	\end{equation*}
	Next we estimate the error term $\mathsf{DE}_{s,t}\SYpsilon(\om_1,\om_2)$. By symmetry it is enough to consider $s\leq t$, for which
	\begin{equation*}
		\euler^{-(t+s)\abs{\om_1+\om_2}^2}-\euler^{-2s\abs{\om_1+\om_2}^2}\leq0.
	\end{equation*}
	\begin{details}
		The remaining term can be bounded with~\eqref{eq:rapid_decay},
		\begin{equation*}
			\begin{split}
				2t\abs{\om_1+\om_2}^{2}(\euler^{-(t+s)\abs{\om_1+\om_2}^2}-\euler^{-2t\abs{\om_1+\om_2}^2})&=2t\abs{\om_1+\om_2}^2\euler^{-t\abs{\om_1+\om_2}^2}(\euler^{-s\abs{\om_1+\om_2}^2}-\euler^{-t\abs{\om_1+\om_2}^2})\\
				&\lesssim(1-\euler^{-\abs{t-s}\abs{\om_1+\om_2}^2}),
			\end{split}
		\end{equation*}
		which can be controlled by~\eqref{eq:interpolation}. Consequently,
		\begin{equation*}
			\mathsf{DE}_{s,t}\SYpsilon(\om_1,\om_2)\lesssim\abs{t-s}^{\gamma}\abs{\om_1}^{-2}\abs{\om_2}^{-2}\abs{\om_1+\om_2}^{-4+2\gamma}.
		\end{equation*}
		This yields the first claim.
	\end{details}
	We can then proceed to bound the remaining non-negative term of $\mathsf{DE}_{s,t}\SYpsilon(\om_1,\om_2)$ by~\eqref{eq:interpolation} and~\eqref{eq:rapid_decay}, giving the result in the case $(\om_1\perp\om_2)$.
	
	Next assume $\lnot(\om_1\perp\om_2)$. We obtain the expressions
	\begin{equation}\label{eq:ypsilon_DL_non_orthogonal}
		\begin{split}
			&\mathsf{DL}_{s,t}\SYpsilon(\om_1,\om_2)\\
			&=\frac{1}{2^2}\abs{\om_1}^{-2}\abs{\om_2}^{-2}\abs{\om_1+\om_2}^{-2}\Bigl(\frac{1}{\abs{\om_1}^2+\abs{\om_2}^2-\abs{\om_1+\om_2}^2}(\euler^{-\abs{t-s}(\abs{\om_1}^2+\abs{\om_2}^2)}-\euler^{-\abs{t-s}\abs{\om_1+\om_2}^2})\\
			&\multiquad[13]+\frac{1}{\abs{\om_1}^2+\abs{\om_2}^2+\abs{\om_1+\om_2}^2}(2-\euler^{-\abs{t-s}\abs{\om_1+\om_2}^2}-\euler^{-\abs{t-s}(\abs{\om_1}^2+\abs{\om_2}^2)})\Bigr)
		\end{split}
	\end{equation}
	and
	\begin{equation}\label{eq:ypsilon_DE_non_orthogonal}
		\begin{split}
			&\mathsf{DE}_{s,t}\SYpsilon(\om_1,\om_2)\\
			&=\frac{1}{2^2}\abs{\om_1}^{-2}\abs{\om_2}^{-2}\Bigl(\frac{1}{\abs{\om_1}^2+\abs{\om_2}^2-\abs{\om_1+\om_2}^2}\frac{1}{\abs{\om_1}^2+\abs{\om_2}^2+\abs{\om_1+\om_2}^2}\\
			&\multiquad[8]\times(2(\euler^{t(\abs{\om_1+\om_2}^2-\abs{\om_1}^2-\abs{\om_2}^2)}-1)(\euler^{-2t\abs{\om_1+\om_2}^2}-\euler^{-(t+s)\abs{\om_1+\om_2}^2})\\
			&\multiquad[10]+2(\euler^{s(\abs{\om_1+\om_2}^2-\abs{\om_1}^2-\abs{\om_2}^2)}-1)(\euler^{-2s\abs{\om_1+\om_2}^2}-\euler^{-(t+s)\abs{\om_1+\om_2}^2}))\\
			&\multiquad[9]-\frac{1}{\abs{\om_1+\om_2}^2}\frac{1}{\abs{\om_1}^2+\abs{\om_2}^2+\abs{\om_1+\om_2}^2}(\euler^{-t\abs{\om_1+\om_2}^2}-\euler^{-s\abs{\om_1+\om_2}^2})^2\Bigr).
		\end{split}
	\end{equation}
	The first term in~\eqref{eq:ypsilon_DL_non_orthogonal},
	\begin{equation*}
		\frac{1}{\abs{\om_1}^2+\abs{\om_2}^2-\abs{\om_1+\om_2}^2}(\euler^{-\abs{t-s}(\abs{\om_1}^2+\abs{\om_2}^2)}-\euler^{-\abs{t-s}\abs{\om_1+\om_2}^2}),
	\end{equation*}
	is non-positive so that by~\eqref{eq:interpolation},
	\begin{equation*}
		\mathsf{DL}_{s,t}\SYpsilon(\om_1,\om_2)\lesssim\abs{t-s}^{\gamma}\abs{\om_1}^{-4+2\gamma}\abs{\om_2}^{-2}\abs{\om_1+\om_2}^{-2}.
	\end{equation*}
	\begin{details}
		We arrive at
		\begin{equation*}
			\begin{split}
				&\mathsf{DL}_{s,t}\SYpsilon(\om_1,\om_2)\\
				&\lesssim\abs{\om_1}^{-2}\abs{\om_2}^{-2}\abs{\om_1+\om_2}^{-2}\frac{1}{\abs{\om_1}^2+\abs{\om_2}^2+\abs{\om_1+\om_2}^2}(2-\euler^{-\abs{t-s}\abs{\om_1+\om_2}^2}-\euler^{-\abs{t-s}(\abs{\om_1}^2+\abs{\om_2}^2)}).
			\end{split}
		\end{equation*}
		By interpolation for $\gamma\in[0,1]$,
		\begin{equation*}
			\begin{split}
				&\mathsf{DL}_{s,t}\SYpsilon(\om_1,\om_2)\\
				&\lesssim\abs{\om_1}^{-2}\abs{\om_2}^{-2}\abs{\om_1+\om_2}^{-2}\frac{1}{\abs{\om_1}^2+\abs{\om_2}^2+\abs{\om_1+\om_2}^2}\abs{t-s}^{\gamma}(\abs{\om_1+\om_2}^{2\gamma}+(\abs{\om_1}^2+\abs{\om_2}^2)^{\gamma}).
			\end{split}
		\end{equation*}
		We estimate
		\begin{equation*}
			\frac{\abs{\om_1+\om_2}^{2\gamma}+(\abs{\om_1}^2+\abs{\om_2}^2)^{\gamma}}{\abs{\om_1}^2+\abs{\om_2}^2+\abs{\om_1+\om_2}^2}\leq2\frac{(\abs{\om_1}^2+\abs{\om_2}^2+\abs{\om_1+\om_2}^2)^{\gamma}}{\abs{\om_1}^2+\abs{\om_2}^2+\abs{\om_1+\om_2}^2}\lesssim\abs{\om_1}^{-2+2\gamma}.
		\end{equation*}
		This concludes the estimate of $\mathsf{DL}_{s,t}\SYpsilon(\om_1,\om_2)$.
	\end{details}
	In~\eqref{eq:ypsilon_DE_non_orthogonal}, we bound the second, non-positive term by $0$. In the first term, we distinguish cases to fix the sign of the prefactor $(\abs{\om_1}^2+\abs{\om_2}^2-\abs{\om_1+\om_2}^2)^{-1}$. Assume first $\abs{\om_1}^2+\abs{\om_2}^2<\abs{\om_1+\om_2}^2$ and by symmetry $s\leq t$.
	\begin{details}
		We rewrite
		\begin{equation*}
			\begin{split}
				&\frac{1}{\abs{\om_1}^2+\abs{\om_2}^2-\abs{\om_1+\om_2}^2}\frac{1}{\abs{\om_1}^2+\abs{\om_2}^2+\abs{\om_1+\om_2}^2}\\
				&\quad\times\Bigl((\euler^{t(\abs{\om_1+\om_2}^2-\abs{\om_1}^2-\abs{\om_2}^2)}-1)(\euler^{-2t\abs{\om_1+\om_2}^2}-\euler^{-(t+s)\abs{\om_1+\om_2}^2})\\
				&\qquad\quad+(\euler^{s(\abs{\om_1+\om_2}^2-\abs{\om_1}^2-\abs{\om_2}^2)}-1)(\euler^{-2s\abs{\om_1+\om_2}^2}-\euler^{-(t+s)\abs{\om_1+\om_2}^2})\Bigr)\\
				&=\frac{1}{\abs{\om_1+\om_2}^2-\abs{\om_1}^2-\abs{\om_2}^2}\frac{1}{\abs{\om_1}^2+\abs{\om_2}^2+\abs{\om_1+\om_2}^2}\\
				&\quad\times\Bigl(\euler^{-t(\abs{\om_1}^2+\abs{\om_2}^2)}(1-\euler^{-t(\abs{\om_1+\om_2}^2-\abs{\om_1}^2-\abs{\om_2}^2)})\euler^{-s\abs{\om_1+\om_2}^2}(1-\euler^{-(t-s)\abs{\om_1+\om_2}^2})\\
				&\qquad\quad+\euler^{-s(\abs{\om_1}^2+\abs{\om_2}^2)}(1-\euler^{-s(\abs{\om_1+\om_2}^2-\abs{\om_1}^2-\abs{\om_2}^2)})\euler^{-t\abs{\om_1+\om_2}^2}(1-\euler^{-(s-t)\abs{\om_1+\om_2}^2})\Bigr).
			\end{split}
		\end{equation*}
		By our assumption $s\leq t$,
		\begin{equation*}
			(1-\euler^{-s(\abs{\om_1+\om_2}^2-\abs{\om_1}^2-\abs{\om_2}^2)})(1-\euler^{-(s-t)\abs{\om_1+\om_2}^2})\leq0.
		\end{equation*}
		Furthermore,
		\begin{equation*}
			\euler^{-s\abs{\om_1+\om_2}^2}\leq \euler^{-s(\abs{\om_1}^2+\abs{\om_2}^2)}.
		\end{equation*}
	\end{details}
	We obtain by~\eqref{eq:interpolation} and~\eqref{eq:rapid_decay},
	\begin{equation}\label{eq:ypsilon_DE_non_orthogonal_bound_1}
		\begin{split}
			&\frac{1}{\abs{\om_1}^2+\abs{\om_2}^2-\abs{\om_1+\om_2}^2}\frac{1}{\abs{\om_1}^2+\abs{\om_2}^2+\abs{\om_1+\om_2}^2}\\
			&\quad\times\Bigl((\euler^{t(\abs{\om_1+\om_2}^2-\abs{\om_1}^2-\abs{\om_2}^2)}-1)(\euler^{-2t\abs{\om_1+\om_2}^2}-\euler^{-(t+s)\abs{\om_1+\om_2}^2})\\
			&\qquad\quad+(\euler^{s(\abs{\om_1+\om_2}^2-\abs{\om_1}^2-\abs{\om_2}^2)}-1)(\euler^{-2s\abs{\om_1+\om_2}^2}-\euler^{-(t+s)\abs{\om_1+\om_2}^2})\Bigr)\\
			&\leq\frac{1}{\abs{\om_1+\om_2}^2-\abs{\om_1}^2-\abs{\om_2}^2}\frac{1}{\abs{\om_1}^2+\abs{\om_2}^2+\abs{\om_1+\om_2}^2}\\
			&\quad\times \euler^{-(t+s)(\abs{\om_1}^2+\abs{\om_2}^2)}(1-\euler^{-t(\abs{\om_1+\om_2}^2-\abs{\om_1}^2-\abs{\om_2}^2)})(1-\euler^{-(t-s)\abs{\om_1+\om_2}^2})\\
			&\lesssim\abs{t-s}^{\gamma}\frac{\abs{\om_1+\om_2}^{2\gamma}}{\abs{\om_1}^2+\abs{\om_2}^2+\abs{\om_1+\om_2}^2}\frac{1}{\abs{\om_1}^2+\abs{\om_2}^2}\frac{t}{t+s}\\
			&\lesssim\abs{t-s}^{\gamma}\abs{\om_1+\om_2}^{-2+2\gamma}\abs{\om_1}^{-2}.
		\end{split}
	\end{equation}
	If instead $\abs{\om_1+\om_2}^2<\abs{\om_1}^2+\abs{\om_2}^2$, $s\leq t$, then
	\begin{details}
		\begin{equation*}
			\begin{split}
				&\frac{1}{\abs{\om_1}^2+\abs{\om_2}^2-\abs{\om_1+\om_2}^2}\frac{1}{\abs{\om_1}^2+\abs{\om_2}^2+\abs{\om_1+\om_2}^2}\\
				&\quad\times\Bigl((\euler^{t(\abs{\om_1+\om_2}^2-\abs{\om_1}^2-\abs{\om_2}^2)}-1)(\euler^{-2t\abs{\om_1+\om_2}^2}-\euler^{-(t+s)\abs{\om_1+\om_2}^2})\\
				&\qquad\quad+(\euler^{s(\abs{\om_1+\om_2}^2-\abs{\om_1}^2-\abs{\om_2}^2)}-1)(\euler^{-2s\abs{\om_1+\om_2}^2}-\euler^{-(t+s)\abs{\om_1+\om_2}^2})\Bigr)\\
				&=\frac{1}{\abs{\om_1}^2+\abs{\om_2}^2-\abs{\om_1+\om_2}^2}\frac{1}{\abs{\om_1}^2+\abs{\om_2}^2+\abs{\om_1+\om_2}^2}\\
				&\quad\times\Bigl((1-\euler^{-t(\abs{\om_1}^2+\abs{\om_2}^2-\abs{\om_1+\om_2}^2)})(\euler^{-(t+s)\abs{\om_1+\om_2}^2}-\euler^{-2t\abs{\om_1+\om_2}^2})\\
				&\qquad\quad+(1-\euler^{-s(\abs{\om_1}^2+\abs{\om_s}^2-\abs{\om_1+\om_2}^2)})(\euler^{-(t+s)\abs{\om_1+\om_2}^2}-\euler^{-2s\abs{\om_1+\om_2}^2})\Bigr)
			\end{split}
		\end{equation*}
	\end{details}
	\begin{equation}\label{eq:ypsilon_DE_non_orthogonal_bound_2}
		\begin{split}
			&\frac{1}{\abs{\om_1}^2+\abs{\om_2}^2-\abs{\om_1+\om_2}^2}\frac{1}{\abs{\om_1}^2+\abs{\om_2}^2+\abs{\om_1+\om_2}^2}\\
			&\quad\times\Bigl((\euler^{t(\abs{\om_1+\om_2}^2-\abs{\om_1}^2-\abs{\om_2}^2)}-1)(\euler^{-2t\abs{\om_1+\om_2}^2}-\euler^{-(t+s)\abs{\om_1+\om_2}^2})\\
			&\qquad\quad+(\euler^{s(\abs{\om_1+\om_2}^2-\abs{\om_1}^2-\abs{\om_2}^2)}-1)(\euler^{-2s\abs{\om_1+\om_2}^2}-\euler^{-(t+s)\abs{\om_1+\om_2}^2})\Bigr)\\
			&\leq\frac{1}{\abs{\om_1}^2+\abs{\om_2}^2-\abs{\om_1+\om_2}^2}\frac{1}{\abs{\om_1}^2+\abs{\om_2}^2+\abs{\om_1+\om_2}^2}\\
			&\quad\times(1-\euler^{-t(\abs{\om_1}^2+\abs{\om_2}^2-\abs{\om_1+\om_2}^2)})\euler^{-(t+s)\abs{\om_1+\om_2}^2}(1-\euler^{-(t-s)\abs{\om_1+\om_2}^2})\\
			&\lesssim\abs{t-s}^{\gamma}\abs{\om_1+\om_2}^{-2}\frac{\abs{\om_1+\om_2}^{2\gamma}}{\abs{\om_1}^2+\abs{\om_2}^2+\abs{\om_1+\om_2}^2}\frac{t}{t+s}\\
			&\leq\abs{t-s}^{\gamma}\abs{\om_1+\om_2}^{-2}\abs{\om_1}^{-2+2\gamma}.
		\end{split}
	\end{equation}
	By combining~\eqref{eq:ypsilon_DE_non_orthogonal} with~\eqref{eq:ypsilon_DE_non_orthogonal_bound_1} and~\eqref{eq:ypsilon_DE_non_orthogonal_bound_2}, we arrive at
	\begin{equation*}
		\mathsf{DE}_{s,t}\SYpsilon(\om_1,\om_2)\lesssim\abs{t-s}^{\gamma}\abs{\om_1}^{-2}\abs{\om_2}^{-2}(\abs{\om_1+\om_2}^{-2+2\gamma}\abs{\om_1}^{-2}+\abs{\om_1+\om_2}^{-2}\abs{\om_1}^{-2+2\gamma}).
	\end{equation*}
	This yields the claim.
\end{proof}
\begin{details}
	Next we consider the shape coefficient for $\PreThree{10}$ and $\PreThree{20}$. We obtain an explicit expression for $\mathsf{D}_{s,t}\SPreThree$.
	\begin{lemma}\label{lem:difference_three}
		Let $s,t\geq0$ and $\om_1,\om_2,\om_4\in2\uppi\mbZ^2\setminus\{0\}$ such that $\om_1+\om_2\not=0$. Then there exist $\mathsf{DL}_{s,t}\SPreThree(\om_1,\om_2,\om_4)$ and $\mathsf{DE}_{s,t}\SPreThree(\om_1,\om_2,\om_4)$, such that
		\begin{equation*}
			\mathsf{D}_{s,t}\SPreThree(\om_1,\om_2,\om_4)=\mathsf{DL}_{s,t}\SPreThree(\om_1,\om_2,\om_4)+\mathsf{DE}_{s,t}\SPreThree(\om_1,\om_2,\om_4).
		\end{equation*}
		In the case $(\om_1\perp\om_2)$, we obtain
		\begin{equation*}
			\begin{split}
				\mathsf{DL}_{s,t}\SPreThree(\om_1,\om_2,\om_4)&=\frac{1}{2^3}\abs{\om_1}^{-2}\abs{\om_2}^{-2}\abs{\om_1+\om_2}^{-4}\abs{\om_4}^{-2}\\
				&\quad\times\Bigl(1-\euler^{-\abs{t-s}(\abs{\om_1+\om_2}^2+\abs{\om_4}^2)}-\abs{t-s}\abs{\om_1+\om_2}^2\euler^{-\abs{t-s}(\abs{\om_1+\om_2}^2+\abs{\om_4}^2)}\Bigr)
			\end{split}
		\end{equation*}
		and
		\begin{equation*}
			\begin{split}
				&\mathsf{DE}_{s,t}\SPreThree(\om_1,\om_2,\om_4)\\
				&=\frac{1}{2^4}\abs{\om_1}^{-2}\abs{\om_2}^{-2}\abs{\om_1+\om_2}^{-4}\abs{\om_4}^{-2}\Bigl(2t\abs{\om_1+\om_2}^2(\euler^{-(t+s)\abs{\om_1+\om_2}^2}\euler^{-\abs{t-s}\abs{\om_4}^2}-\euler^{-2t\abs{\om_1+\om_2}^2})\\
				&\multiquad[15]+2s\abs{\om_1+\om_2}^2(\euler^{-(t+s)\abs{\om_1+\om_2}^2}\euler^{-\abs{t-s}\abs{\om_4}^2}-\euler^{-2s\abs{\om_1+\om_2}^2})\\
				&\multiquad[15]-\euler^{-2t\abs{\om_1+\om_2}^2}-\euler^{-2s\abs{\om_1+\om_2}^2}+2\euler^{-(t+s)\abs{\om_1+\om_2}^2}\euler^{-\abs{t-s}\abs{\om_4}^2}\Bigr).
			\end{split}
		\end{equation*}
		In the case $\lnot(\om_1\perp\om_2)$, we obtain 
		\begin{equation*}
			\begin{split}
				&\mathsf{DL}_{s,t}\SPreThree(\om_1,\om_2,\om_4)\\
				&=\frac{1}{2^3}\abs{\om_1}^{-2}\abs{\om_2}^{-2}\abs{\om_1+\om_2}^{-2}\abs{\om_4}^{-2}\\
				&\quad\times\Bigl(\frac{1}{\abs{\om_1}^2+\abs{\om_2}^2-\abs{\om_1+\om_2}^2}(\euler^{-\abs{t-s}(\abs{\om_1}^2+\abs{\om_2}^2+\abs{\om_4}^2)}-\euler^{-\abs{t-s}(\abs{\om_1+\om_2}^2+\abs{\om_4}^2)})\\
				&\qquad\quad+\frac{1}{\abs{\om_1}^2+\abs{\om_2}^2+\abs{\om_1+\om_2}^2}(1-\euler^{-\abs{t-s}(\abs{\om_1+\om_2}^2+\abs{\om_4}^2)}+1-\euler^{-\abs{t-s}(\abs{\om_1}^2+\abs{\om_2}^2+\abs{\om_4}^2)})\Bigr)
			\end{split}
		\end{equation*}
		and
		\begin{equation*}
			\begin{split}
				&\mathsf{DE}_{s,t}\SPreThree(\om_1,\om_2,\om_4)\\
				&=\frac{1}{2^3}\abs{\om_1}^{-2}\abs{\om_2}^{-2}\abs{\om_4}^{-2}\\
				&\quad\times\Bigl(\frac{1}{\abs{\om_1}^2+\abs{\om_2}^2-\abs{\om_1+\om_2}^2}\frac{1}{\abs{\om_1}^2+\abs{\om_2}^2+\abs{\om_1+\om_2}^2}\\
				&\qquad\quad\times(2(\euler^{t(\abs{\om_1+\om_2}^2-\abs{\om_1}^2-\abs{\om_2}^2)}-1)(\euler^{-2t\abs{\om_1+\om_2}^2}-\euler^{-(t+s)\abs{\om_1+\om_2}^2}\euler^{-\abs{t-s}\abs{\om_4}^2})\\
				&\multiquad[5]+2(\euler^{s(\abs{\om_1+\om_2}^2-\abs{\om_1}^2-\abs{\om_2}^2)}-1)(\euler^{-2s\abs{\om_1+\om_2}^2}-\euler^{-(t+s)\abs{\om_1+\om_2}^2}\euler^{-\abs{t-s}\abs{\om_4}^2}))\\
				&\qquad\quad+\frac{1}{\abs{\om_1+\om_2}^2}\frac{1}{\abs{\om_1}^2+\abs{\om_2}^2+\abs{\om_1+\om_2}^2}\\
				&\multiquad[11]\times(2\euler^{-(t+s)\abs{\om_1+\om_2}^2}\euler^{-\abs{t-s}\abs{\om_4}^2}-\euler^{-2t\abs{\om_1+\om_2}^2}-\euler^{-2s\abs{\om_1+\om_2}^2})\Bigr).
			\end{split}
		\end{equation*}
	\end{lemma}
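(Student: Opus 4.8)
The plan is to reduce everything to the closed-form evaluations of $\mathsf{S}_{s,t}\SYpsilon$ and $\mathsf{S}_{s,t}\SLolli$ and then read off the alternating sum. By definition $\mathsf{S}_{s,t}\SPreThree(\om_1,\om_2,\om_4)=\mathsf{S}_{s,t}\SYpsilon(\om_1,\om_2)\,\mathsf{S}_{s,t}\SLolli(\om_4)$, with $\mathsf{S}_{s,t}\SLolli(\om_4)=\tfrac12\abs{\om_4}^{-2}\euler^{-\abs{t-s}\abs{\om_4}^2}$ and $\mathsf{S}_{s,t}\SYpsilon$ given explicitly by Lemma~\ref{lem:shape_ypsilon} (equivalently, by evaluating the exponential integrals in~\eqref{eq:shape_coefficient_ypsilon}). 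The structural fact I would use is that all three of $\mathsf{S}_{s,t}\SLolli$, $\mathsf{SL}_{s,t}\SYpsilon$ and $\mathsf{SE}_{s,t}\SYpsilon$ are symmetric under $s\leftrightarrow t$ (for $\mathsf{S}_{s,t}\SYpsilon=\mathsf{S}_{t,s}\SYpsilon$ one swaps $s\leftrightarrow t$ and $u_3\leftrightarrow u_3'$ in~\eqref{eq:shape_coefficient_ypsilon}; the leading part $\mathsf{SL}$ depends on $(s,t)$ only through $\abs{t-s}$ and the error part $\mathsf{SE}$ only through $t+s$), and that $\mathsf{S}_{t,t}\SLolli(\om_4)=\mathsf{S}_{s,s}\SLolli(\om_4)=\tfrac12\abs{\om_4}^{-2}$.

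Feeding these symmetries into the defining relation $\mathsf{D}_{s,t}\SPreThree=\mathsf{S}_{t,t}\SPreThree+\mathsf{S}_{s,s}\SPreThree-\mathsf{S}_{s,t}\SPreThree-\mathsf{S}_{t,s}\SPreThree$, the two mixed terms coincide and we obtain
\[
	\mathsf{D}_{s,t}\SPreThree=\tfrac12\abs{\om_4}^{-2}\bigl(\mathsf{S}_{t,t}\SYpsilon+\mathsf{S}_{s,s}\SYpsilon-2\,\euler^{-\abs{t-s}\abs{\om_4}^2}\,\mathsf{S}_{s,t}\SYpsilon\bigr).
\]
The one point that needs care is the factor $\euler^{-\abs{t-s}\abs{\om_4}^2}$ multiplying only the mixed term: this is exactly what makes $\mathsf{D}_{s,t}\SPreThree$ differ from the naive product $\mathsf{D}_{s,t}\SYpsilon\,\mathsf{S}_{s,t}\SLolli$. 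Splitting $\mathsf{S}_{s,t}\SYpsilon=\mathsf{SL}_{s,t}\SYpsilon+\mathsf{SE}_{s,t}\SYpsilon$, I would set
\[
	\mathsf{DL}_{s,t}\SPreThree\coloneqq\tfrac12\abs{\om_4}^{-2}\bigl(\mathsf{SL}_{t,t}\SYpsilon+\mathsf{SL}_{s,s}\SYpsilon-2\,\euler^{-\abs{t-s}\abs{\om_4}^2}\,\mathsf{SL}_{s,t}\SYpsilon\bigr),
\]
and $\mathsf{DE}_{s,t}\SPreThree$ likewise with $\mathsf{SE}$ in place of $\mathsf{SL}$; by linearity $\mathsf{D}_{s,t}\SPreThree=\mathsf{DL}_{s,t}\SPreThree+\mathsf{DE}_{s,t}\SPreThree$.

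It then remains to simplify $\mathsf{DL}_{s,t}\SPreThree$ and $\mathsf{DE}_{s,t}\SPreThree$ in each of the cases $(\om_1\perp\om_2)$ and $\lnot(\om_1\perp\om_2)$, which is a routine computation with the explicit formulas of Lemma~\ref{lem:shape_ypsilon}. For $\mathsf{DL}_{s,t}\SPreThree$ one uses $\mathsf{SL}_{t,t}\SYpsilon=\mathsf{SL}_{s,s}\SYpsilon$ (the value of $\mathsf{SL}_{s,t}\SYpsilon$ at $\abs{t-s}=0$) and then distributes $\euler^{-\abs{t-s}\abs{\om_4}^2}$ over $\mathsf{SL}_{s,t}\SYpsilon$, so that every exponential $\euler^{-\abs{t-s}\abs{\cdot}^2}$ present in $\mathsf{SL}_{s,t}\SYpsilon$ (with $\abs{\cdot}^2=\abs{\om_1+\om_2}^2$ in the orthogonal case, and also $\abs{\cdot}^2=\abs{\om_1}^2+\abs{\om_2}^2$ in the non-orthogonal case) becomes $\euler^{-\abs{t-s}(\abs{\cdot}^2+\abs{\om_4}^2)}$; this is precisely the shape of the claimed formulas. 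For $\mathsf{DE}_{s,t}\SPreThree$ the diagonal terms $\mathsf{SE}_{t,t}\SYpsilon,\mathsf{SE}_{s,s}\SYpsilon$ are only multiplied by the constant $\tfrac12\abs{\om_4}^{-2}$, whereas in the mixed term $\euler^{-(t+s)\abs{\om_1+\om_2}^2}$ acquires the factor $\euler^{-\abs{t-s}\abs{\om_4}^2}$; collecting the monomials and carrying the prefactors through (those of Lemma~\ref{lem:shape_ypsilon} times the extra $\tfrac12$ from $\mathsf{S}_{s,t}\SLolli$, with a compensating factor $2$ from the two equal mixed terms in the leading case) produces the four displayed expressions. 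There is no conceptual obstacle here; the only real work is the algebraic bookkeeping, and in particular the sign of the prefactor $(\abs{\om_1}^2+\abs{\om_2}^2-\abs{\om_1+\om_2}^2)^{-1}$ in the non-orthogonal error term never has to be resolved, since we only want an identity — that sign is handled later when these expressions are bounded in Lemma~\ref{lem:difference_three_bound}.
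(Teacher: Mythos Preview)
Your proposal is correct and follows essentially the same route as the paper: exploit the factorization $\mathsf{S}_{s,t}\SPreThree=\mathsf{S}_{s,t}\SYpsilon\,\mathsf{S}_{s,t}\SLolli$ together with the $s\leftrightarrow t$ symmetry to reduce $\mathsf{D}_{s,t}\SPreThree$ to the known closed forms for $\mathsf{S}_{s,t}\SYpsilon$, then split along $\mathsf{SL}+\mathsf{SE}$ and simplify. The paper's proof (given only for the non-orthogonal error term) starts directly from the expression you called ``by definition'' and performs the same algebra. One small inaccuracy in your parenthetical: in the non-orthogonal case $\mathsf{SE}_{s,t}\SYpsilon$ is symmetric in $(s,t)$ but does \emph{not} depend on $t+s$ alone (the factor $\euler^{-tA}+\euler^{-sA}$ prevents this); since you only use symmetry, the argument is unaffected.
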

	\begin{proof}
		We only discuss the case $\mathsf{DE}_{s,t}\SPreThree(\om_1,\om_2,\om_4)$, $\lnot(\om_1\perp\om_2)$. By definition,
		\begin{equation*}
			\begin{split}
				&\mathsf{DE}_{s,t}\SPreThree(\om_1,\om_2,\om_4)\\
				&=\frac{1}{2^3}\abs{\om_1}^{-2}\abs{\om_2}^{-2}\abs{\om_4}^{-2}\\
				&\quad\times\Bigl(\frac{1}{\abs{\om_1}^2+\abs{\om_2}^2-\abs{\om_1+\om_2}^2}\frac{1}{\abs{\om_1}^2+\abs{\om_2}^2+\abs{\om_1+\om_2}^2}\\
				&\qquad\quad\times (2\euler^{t(-\abs{\om_1+\om_2}^2-\abs{\om_1}^2-\abs{\om_2}^2)}+2\euler^{s(-\abs{\om_1+\om_2}^2-\abs{\om_1}^2-\abs{\om_2}^2)}\\
				&\multiquad[5]-2(\euler^{t(\abs{\om_1+\om_2}^2-\abs{\om_1}^2-\abs{\om_2}^2)}+\euler^{s(\abs{\om_1+\om_2}^2-\abs{\om_1}^2-\abs{\om_2}^2)})\euler^{-(t+s)\abs{\om_1+\om_2}^2}\euler^{-\abs{t-s}\abs{\om_4}^2})\\
				&\qquad\quad-\frac{1}{\abs{\om_1}^2+\abs{\om_2}^2-\abs{\om_1+\om_2}^2}\frac{1}{\abs{\om_1+\om_2}^2}\\
				&\multiquad[13]\times(\euler^{-2t\abs{\om_1+\om_2}^2}+\euler^{-2s\abs{\om_1+\om_2}^2}-2\euler^{-(t+s)\abs{\om_1+\om_2}^2}\euler^{-\abs{t-s}\abs{\om_4}^2})\Bigr).
			\end{split}
		\end{equation*}
		We rewrite
		\begin{equation*}
			\begin{split}
				&\mathsf{DE}_{s,t}\SPreThree(\om_1,\om_2,\om_4)\\
				&=\frac{1}{2^3}\abs{\om_1}^{-2}\abs{\om_2}^{-2}\abs{\om_4}^{-2}\\
				&\quad\times\Bigl(\frac{1}{\abs{\om_1}^2+\abs{\om_2}^2-\abs{\om_1+\om_2}^2}\frac{1}{\abs{\om_1}^2+\abs{\om_2}^2+\abs{\om_1+\om_2}^2}\\
				&\qquad\quad\times (2\euler^{t(-\abs{\om_1+\om_2}^2-\abs{\om_1}^2-\abs{\om_2}^2)}-2\euler^{-2t\abs{\om_1+\om_2}^2}\\
				&\multiquad[5]+2\euler^{s(-\abs{\om_1+\om_2}^2-\abs{\om_1}^2-\abs{\om_2}^2)}-2\euler^{-2s\abs{\om_1+\om_2}^2}\\
				&\multiquad[5]-2(\euler^{t(\abs{\om_1+\om_2}^2-\abs{\om_1}^2-\abs{\om_2}^2)}-1)\euler^{-(t+s)\abs{\om_1+\om_2}^2}\euler^{-\abs{t-s}\abs{\om_4}^2}\\
				&\multiquad[5]-2(\euler^{s(\abs{\om_1+\om_2}^2-\abs{\om_1}^2-\abs{\om_2}^2)}-1)\euler^{-(t+s)\abs{\om_1+\om_2}^2}\euler^{-\abs{t-s}\abs{\om_4}^2})\\
				&\qquad\quad-\frac{1}{\abs{\om_1}^2+\abs{\om_2}^2-\abs{\om_1+\om_2}^2}\Bigl(\frac{1}{\abs{\om_1+\om_2}^2}-\frac{2}{\abs{\om_1}^2+\abs{\om_2}^2+\abs{\om_1+\om_2}^2}\Bigr)\\
				&\multiquad[9]\times (\euler^{-2t\abs{\om_1+\om_2}^2}+\euler^{-2s\abs{\om_1+\om_2}^2}-2\euler^{-(t+s)\abs{\om_1+\om_2}^2}\euler^{-\abs{t-s}\abs{\om_4}^2})\Bigr).
			\end{split}
		\end{equation*}
		We simplify the second term,
		\begin{equation*}
			\begin{split}
				&-\frac{1}{\abs{\om_1}^2+\abs{\om_2}^2-\abs{\om_1+\om_2}^2}\Bigl(\frac{1}{\abs{\om_1+\om_2}^2}-\frac{2}{\abs{\om_1}^2+\abs{\om_2}^2+\abs{\om_1+\om_2}^2}\Bigr)\\
				&\quad\times (\euler^{-2t\abs{\om_1+\om_2}^2}+\euler^{-2s\abs{\om_1+\om_2}^2}-2\euler^{-(t+s)\abs{\om_1+\om_2}^2}\euler^{-\abs{t-s}\abs{\om_4}^2})\\
				&=\frac{1}{\abs{\om_1+\om_2}^2}\frac{1}{\abs{\om_1}^2+\abs{\om_2}^2+\abs{\om_1+\om_2}^2}(2\euler^{-(t+s)\abs{\om_1+\om_2}^2}\euler^{-\abs{t-s}\abs{\om_4}^2}-\euler^{-2t\abs{\om_1+\om_2}^2}-\euler^{-2s\abs{\om_1+\om_2}^2}).
			\end{split}
		\end{equation*}
		We simplify the first term,
		\begin{equation*}
			\begin{split}
				&2\euler^{t(-\abs{\om_1+\om_2}^2-\abs{\om_1}^2-\abs{\om_2}^2)}-2\euler^{-2t\abs{\om_1+\om_2}^2}\\
				&\quad+2\euler^{s(-\abs{\om_1+\om_2}^2-\abs{\om_1}^2-\abs{\om_2}^2)}-2\euler^{-2s\abs{\om_1+\om_2}^2}\\
				&\quad-2(\euler^{t(\abs{\om_1+\om_2}^2-\abs{\om_1}^2-\abs{\om_2}^2)}-1)\euler^{-(t+s)\abs{\om_1+\om_2}^2}\euler^{-\abs{t-s}\abs{\om_4}^2}\\
				&\quad-2(\euler^{s(\abs{\om_1+\om_2}^2-\abs{\om_1}^2-\abs{\om_2}^2)}-1)\euler^{-(t+s)\abs{\om_1+\om_2}^2}\euler^{-\abs{t-s}\abs{\om_4}^2}\\
				&=2(\euler^{t(\abs{\om_1+\om_2}^2-\abs{\om_1}^2-\abs{\om_2}^2)}-1)(\euler^{-2t\abs{\om_1+\om_2}^2}-\euler^{-(t+s)\abs{\om_1+\om_2}^2}\euler^{-\abs{t-s}\abs{\om_4}^2})\\
				&\quad+2(\euler^{s(\abs{\om_1+\om_2}^2-\abs{\om_1}^2-\abs{\om_2}^2)}-1)(\euler^{-2s\abs{\om_1+\om_2}^2}-\euler^{-(t+s)\abs{\om_1+\om_2}^2}\euler^{-\abs{t-s}\abs{\om_4}^2}).
			\end{split}
		\end{equation*}
		This yields the claim.
	\end{proof}
	We can now compute and subsequently bound explicit expressions for $\mathsf{D}_{s,t}\SPreThree$.
	\begin{lemma}\label{lem:difference_three_bound}
		Let $s,t\geq0$, $\gamma\in[0,1)$, $\eps\in(0,1-\gamma)$, $C\geq1$ and $\om_1,\om_2,\om_4\in2\uppi\mbZ^2\setminus\{0\}$ such that $\om_1+\om_2\not=0$ and $C^{-1}\abs{\om_1+\om_2}\leq\abs{\om_4}\leq C\abs{\om_1+\om_2}$.
		\begin{enumerate}
			\item In the case $(\om_1\perp\om_2)$, we obtain
			\begin{equation*}
				\mathsf{D}_{s,t}\SPreThree(\om_1,\om_2,\om_4)\lesssim\abs{t-s}^{\gamma}\abs{\om_1}^{-2}\abs{\om_2}^{-2}\abs{\om_4}^{-6+2\gamma}.
			\end{equation*}
			\item In the case $\lnot(\om_1\perp\om_2)$, we obtain
			\begin{equation*}
				\begin{split}
					\mathsf{D}_{s,t}\SPreThree(\om_1,\om_2,\om_4)&\lesssim\abs{t-s}^{\gamma}\abs{\om_1}^{-2-2\eps}\abs{\om_2}^{-2}\abs{\om_4}^{-6+2\gamma+2\eps}+\abs{t-s}^{\gamma}\abs{\om_1}^{-4}\abs{\om_2}^{-2}\abs{\om_4}^{-4+2\gamma}\\
					&\quad+\abs{t-s}^{\gamma}\abs{\om_1}^{-4+2\gamma}\abs{\om_2}^{-2}\abs{\om_4}^{-4}.
				\end{split}
			\end{equation*}
		\end{enumerate}
		The implied constants are uniform in $\om_1,\om_2,\om_4$.
	\end{lemma}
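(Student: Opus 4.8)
The plan is to mirror exactly the strategy used for $\mathsf{D}_{s,t}\SYpsilon$ in Lemma~\ref{lem:difference_ypsilon_bound}, exploiting the product structure $\mathsf{S}_{s,t}\SPreThree = \mathsf{S}_{s,t}\SYpsilon\,\mathsf{S}_{s,t}\SLolli$. First I would evaluate the exponential integrals defining $\mathsf{S}_{s,t}\SPreThree$ explicitly, distinguishing $(\om_1\perp\om_2)$ from $\lnot(\om_1\perp\om_2)$ as before, and then assemble $\mathsf{D}_{s,t}\SPreThree$ via the defining formula $\mathsf{D} = \mathsf{S}_{t,t}+\mathsf{S}_{s,s}-\mathsf{S}_{s,t}-\mathsf{S}_{t,s}$. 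As in the $\Ypsilon$ case, this splits into a \emph{leading term} $\mathsf{DL}_{s,t}\SPreThree$ and an \emph{error term} $\mathsf{DE}_{s,t}\SPreThree$, the latter produced by the $u_3,u_3',u_4\geq0$ restrictions (the zero initial condition). The crucial new feature is the extra factor $\euler^{-\abs{t-s}\abs{\om_4}^2}$ (and its partners at times $t,s$) coming from $\mathsf{S}\SLolli$, which supplies the additional $\abs{\om_4}^{-2}$ decay over $\mathsf{S}\SYpsilon$ and will be where the constraint $C^{-1}\abs{\om_1+\om_2}\leq\abs{\om_4}\leq C\abs{\om_1+\om_2}$ is used to trade between $\abs{\om_1+\om_2}$ and $\abs{\om_4}$ powers.

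In the orthogonal case, I would use $\abs{\inner{\om_2}{\om_1+\om_2}}^2 = \abs{\om_2}^4$ implicitly (that is handled in the existence proof, not here) and bound the leading term by applying the interpolation inequality~\eqref{eq:interpolation} with parameter $\gamma\in[0,1]$ to $1-\euler^{-\abs{t-s}(\abs{\om_1+\om_2}^2+\abs{\om_4}^2)}-\abs{t-s}\abs{\om_1+\om_2}^2\euler^{-\abs{t-s}(\abs{\om_1+\om_2}^2+\abs{\om_4}^2)}$, producing $\abs{t-s}^\gamma(\abs{\om_1+\om_2}^2+\abs{\om_4}^2)^\gamma$ which, under the comparability hypothesis, is $\lesssim\abs{t-s}^\gamma\abs{\om_4}^{2\gamma}$; combined with the prefactors $\abs{\om_1}^{-2}\abs{\om_2}^{-2}\abs{\om_1+\om_2}^{-4}\abs{\om_4}^{-2}\lesssim\abs{\om_1}^{-2}\abs{\om_2}^{-2}\abs{\om_4}^{-6}$ this gives $\abs{t-s}^\gamma\abs{\om_1}^{-2}\abs{\om_2}^{-2}\abs{\om_4}^{-6+2\gamma}$. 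The error term is controlled by the same sign-analysis as in Lemma~\ref{lem:difference_ypsilon_bound}: assuming $s\le t$ by symmetry, one rewrites the difference of exponentials so that a manifestly non-positive factor appears, discards it, and bounds the remaining piece using~\eqref{eq:interpolation} and the rapid-decay bound~\eqref{eq:rapid_decay} (for the $t\abs{\om_1+\om_2}^2\euler^{-\cdots}$ terms). The extra $\euler^{-\abs{t-s}\abs{\om_4}^2}\le 1$ factors only help here.

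In the non-orthogonal case I would again split $\mathsf{DL}_{s,t}\SPreThree$ into the piece with prefactor $(\abs{\om_1}^2+\abs{\om_2}^2-\abs{\om_1+\om_2}^2)^{-1}$ — which is non-positive and therefore discarded — and the piece with prefactor $(\abs{\om_1}^2+\abs{\om_2}^2+\abs{\om_1+\om_2}^2)^{-1}$, to which I apply~\eqref{eq:interpolation} to extract $\abs{t-s}^\gamma$; the denominator absorbs two powers and, balancing $\abs{\om_1+\om_2}$ against $\abs{\om_4}$, yields the three reported terms, including the $\eps$-loss term $\abs{\om_1}^{-2-2\eps}\abs{\om_2}^{-2}\abs{\om_4}^{-6+2\gamma+2\eps}$ which arises exactly as in the $\Ypsilon$ computation when one needs to convert a factor $(\abs{\om_1}^2+\abs{\om_2}^2+\abs{\om_1+\om_2}^2)^{\gamma-1}$ type expression into clean powers of $\abs{\om_1}$. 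For $\mathsf{DE}_{s,t}\SPreThree$ one does the same two-case sub-split on the sign of $\abs{\om_1}^2+\abs{\om_2}^2-\abs{\om_1+\om_2}^2$ that appears in the proof of Lemma~\ref{lem:difference_ypsilon_bound} (equations~\eqref{eq:ypsilon_DE_non_orthogonal_bound_1}--\eqref{eq:ypsilon_DE_non_orthogonal_bound_2}), again with the harmless extra $\euler^{-\abs{t-s}\abs{\om_4}^2}$ factors, and using the comparability of $\abs{\om_4}$ and $\abs{\om_1+\om_2}$ wherever one wants to move decay from one to the other.

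The main obstacle is purely bookkeeping: writing out the explicit closed forms of $\mathsf{S}_{s,t}\SPreThree$ and $\mathsf{D}_{s,t}\SPreThree$ in both the orthogonal and non-orthogonal cases and checking, term by term, that every exponential combination admits either a sign that lets it be dropped or a factorization $(1-\euler^{-x})\euler^{-y}$ amenable to~\eqref{eq:interpolation}/\eqref{eq:rapid_decay}. Conceptually nothing new happens beyond Lemma~\ref{lem:difference_ypsilon_bound}, since the $\mathsf{S}\SLolli$ factor is itself a clean closed form $\tfrac12\abs{\om_4}^{-2}\euler^{-\abs{t-s}\abs{\om_4}^2}$ and only multiplies through; the only genuine care needed is to ensure the $\abs{\om_4}$-power and $\abs{\om_1+\om_2}$-power exponents add up correctly in each of the three non-orthogonal terms, which is where one uses $C^{-1}\abs{\om_1+\om_2}\le\abs{\om_4}\le C\abs{\om_1+\om_2}$ to absorb constants depending on $C$ into $\lesssim$.
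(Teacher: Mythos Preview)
Your proposal is correct and follows essentially the same approach as the paper: decompose $\mathsf{D}_{s,t}\SPreThree=\mathsf{DL}_{s,t}\SPreThree+\mathsf{DE}_{s,t}\SPreThree$, discard non-positive pieces, and apply~\eqref{eq:interpolation} and~\eqref{eq:rapid_decay} together with the comparability $\abs{\om_1+\om_2}\sim\abs{\om_4}$. One small correction of attribution: in the non-orthogonal case the leading term $\mathsf{DL}_{s,t}\SPreThree$ produces only the $\eps$-loss term $\abs{t-s}^{\gamma}\abs{\om_1}^{-2-2\eps}\abs{\om_2}^{-2}\abs{\om_4}^{-6+2\gamma+2\eps}$ (via the bound $(\abs{\om_1}^2+\abs{\om_2}^2+\abs{\om_4}^2)^{-1+\gamma}\le\abs{\om_4}^{-2+2\gamma+2\eps}\abs{\om_1}^{-2\eps}$), while the remaining two terms $\abs{\om_1}^{-4}\abs{\om_2}^{-2}\abs{\om_4}^{-4+2\gamma}$ and $\abs{\om_1}^{-4+2\gamma}\abs{\om_2}^{-2}\abs{\om_4}^{-4}$ arise from the two sign sub-cases of $\mathsf{DE}_{s,t}\SPreThree$ that you already describe.
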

	\begin{proof}
		We decompose as in Lemma~\ref{lem:difference_three},
		\begin{equation*}
			\mathsf{D}_{s,t}\SPreThree(\om_1,\om_2,\om_4)=\mathsf{DL}_{s,t}\SPreThree(\om_1,\om_2,\om_4)+\mathsf{DE}_{s,t}\SPreThree(\om_1,\om_2,\om_4).
		\end{equation*}
		Assume $(\om_1\perp\om_2)$. We estimate for the leading term, using~\eqref{eq:interpolation} with $\gamma\in[0,1]$ and $(\om_1+\om_2)\sim\om_4$,
		\begin{equation*}
			\mathsf{DL}_{s,t}\SPreThree(\om_1,\om_2,\om_4)\lesssim\abs{t-s}^{\gamma}\abs{\om_1}^{-2}\abs{\om_2}^{-2}\abs{\om_4}^{-6+2\gamma}.
		\end{equation*}
		Next we consider $\mathsf{DE}_{s,t}\SPreThree(\om_1,\om_2,\om_4)$. By symmetry, we may assume $s\leq t$. We estimate
		\begin{equation*}
			\euler^{-(t+s)\abs{\om_1+\om_2}^2}\euler^{-\abs{t-s}\abs{\om_4}^2}-\euler^{-2s\abs{\om_1+\om_2}^2}\leq0,
		\end{equation*}
		and
		\begin{equation*}
			-\euler^{-2t\abs{\om_1+\om_2}^2}-\euler^{-2s\abs{\om_1+\om_2}^2}+2\euler^{-(t+s)\abs{\om_1+\om_2}^2}\euler^{-\abs{t-s}\abs{\om_4}^2}\leq-(\euler^{-t\abs{\om_1+\om_2}^2}-\euler^{-s\abs{\om_1+\om_2}^2})^2\leq0.
		\end{equation*}
		We bound the remaining term,
		\begin{equation*}
			\euler^{-(t+s)\abs{\om_1+\om_2}^2}\euler^{-\abs{t-s}\abs{\om_4}^2}-\euler^{-2t\abs{\om_1+\om_2}^2}\leq \euler^{-(t+s)\abs{\om_1+\om_2}^2}(1-\euler^{-\abs{t-s}\abs{\om_1+\om_2}^2}).
		\end{equation*}
		By~\eqref{eq:interpolation}, \eqref{eq:rapid_decay} and using that $(\om_1+\om_2)\sim\om_4$,
		\begin{equation*}
			2t\abs{\om_1+\om_2}^2\euler^{-(t+s)\abs{\om_1+\om_2}^2}(1-\euler^{-\abs{t-s}\abs{\om_1+\om_2}^2})\lesssim\abs{t-s}^{\gamma}\abs{\om_4}^{2\gamma}.
		\end{equation*}
		This yields the bound on $\mathsf{DE}_{s,t}\SPreThree(\om_1,\om_2,\om_4)$.
		
		Assume $\lnot(\om_1\perp\om_2)$. We first consider $\mathsf{DL}_{s,t}\SPreThree(\om_1,\om_2,\om_4)$. We can bound the non-positive correction term,
		\begin{equation*}
			\frac{1}{\abs{\om_1}^2+\abs{\om_2}^2-\abs{\om_1+\om_2}^2}(\euler^{-\abs{t-s}(\abs{\om_1}^2+\abs{\om_2}^2+\abs{\om_4}^2)}-\euler^{-\abs{t-s}(\abs{\om_1+\om_2}^2+\abs{\om_4}^2)})\leq0.
		\end{equation*}
		We obtain by~\eqref{eq:interpolation} for $\gamma\in[0,1]$, and by using that $(\om_1+\om_2)\sim\om_4$,
		\begin{equation*}
			\mathsf{DL}_{s,t}\SPreThree(\om_1,\om_2,\om_4)\lesssim\abs{t-s}^{\gamma}\abs{\om_1}^{-2}\abs{\om_2}^{-2}\abs{\om_4}^{-4}(\abs{\om_1}^2+\abs{\om_2}^2+\abs{\om_4}^2)^{-1+\gamma}.
		\end{equation*}
		Let $\gamma\in(0,1)$ and $\eps\in(0,1-\gamma)$, we obtain
		\begin{equation*}
			(\abs{\om_1}^2+\abs{\om_2}^2+\abs{\om_4}^2)^{-1+\gamma}\leq\abs{\om_4}^{-2+2\gamma+2\eps}\abs{\om_1}^{-2\eps}.
		\end{equation*}
		Consequently,
		\begin{equation*}
			\mathsf{DL}_{s,t}\SPreThree(\om_1,\om_2,\om_4)\lesssim\abs{t-s}^{\gamma}\abs{\om_1}^{-2-2\eps}\abs{\om_2}^{-2}\abs{\om_4}^{-6+2\gamma+2\eps}.
		\end{equation*}
		Next we consider $\mathsf{DE}_{s,t}\SPreThree(\om_1,\om_2,\om_4)$. We estimate the second summand,
		\begin{equation*}
			2\euler^{-(t+s)\abs{\om_1+\om_2}^2}\euler^{-\abs{t-s}\abs{\om_4}^2}-\euler^{-2t\abs{\om_1+\om_2}^2}-\euler^{-2s\abs{\om_1+\om_2}^2}\leq-(\euler^{-t\abs{\om_1+\om_2}^2}-\euler^{-s\abs{\om_1+\om_2}^2})^2\leq0.
		\end{equation*}
		For the first summand, assume $\abs{\om_1}^2+\abs{\om_2}^2<\abs{\om_1+\om_2}^2$. We write
		\begin{equation*}
			\begin{split}
				&\frac{1}{\abs{\om_1}^2+\abs{\om_2}^2-\abs{\om_1+\om_2}^2}\frac{1}{\abs{\om_1}^2+\abs{\om_2}^2+\abs{\om_1+\om_2}^2}\\
				&\quad\times\Bigl((\euler^{t(\abs{\om_1+\om_2}^2-\abs{\om_1}^2-\abs{\om_2}^2)}-1)(\euler^{-2t\abs{\om_1+\om_2}^2}-\euler^{-(t+s)\abs{\om_1+\om_2}^2}\euler^{-\abs{t-s}\abs{\om_4}^2})\\
				&\qquad\quad+(\euler^{s(\abs{\om_1+\om_2}^2-\abs{\om_1}^2-\abs{\om_2}^2)}-1)(\euler^{-2s\abs{\om_1+\om_2}^2}-\euler^{-(t+s)\abs{\om_1+\om_2}^2}\euler^{-\abs{t-s}\abs{\om_4}^2})\Bigr)\\
				&=\frac{1}{\abs{\om_1+\om_2}^2-\abs{\om_1}^2-\abs{\om_2}^2}\frac{1}{\abs{\om_1}^2+\abs{\om_2}^2+\abs{\om_1+\om_2}^2}\\
				&\quad\times\Bigl((\euler^{-t(\abs{\om_1}^2+\abs{\om_2}^2)}-\euler^{-t\abs{\om_1+\om_2}^2})\euler^{-s\abs{\om_1+\om_2}^2}(\euler^{-\abs{t-s}\abs{\om_4}^2}-\euler^{-(t-s)\abs{\om_1+\om_2}^2})\\
				&\qquad\quad+(\euler^{-s(\abs{\om_1}^2+\abs{\om_2}^2)}-\euler^{-s\abs{\om_1+\om_2}^2})\euler^{-t\abs{\om_1+\om_2}^2}(\euler^{-\abs{t-s}\abs{\om_4}^2}-\euler^{-(s-t)\abs{\om_1+\om_2}^2})\Bigr).
			\end{split}
		\end{equation*}
		By symmetry, we may assume $s\leq t$, so that
		\begin{equation*}
			(\euler^{-s(\abs{\om_1}^2+\abs{\om_2}^2)}-\euler^{-s\abs{\om_1+\om_2}^2})(\euler^{-\abs{t-s}\abs{\om_4}^2}-\euler^{-(s-t)\abs{\om_1+\om_2}^2})\leq0.
		\end{equation*}
		Furthermore,
		\begin{equation*}
			\begin{split}
				&(\euler^{-t(\abs{\om_1}^2+\abs{\om_2}^2)}-\euler^{-t\abs{\om_1+\om_2}^2})\euler^{-s\abs{\om_1+\om_2}^2}(\euler^{-\abs{t-s}\abs{\om_4}^2}-\euler^{-(t-s)\abs{\om_1+\om_2}^2})\\
				&\leq\euler^{-t(\abs{\om_1}^2+\abs{\om_2}^2)}(1-\euler^{-t(\abs{\om_1+\om_2}^2-\abs{\om_1}^2-\abs{\om_2}^2)})\euler^{-s(\abs{\om_1}^2+\abs{\om_2}^2)}(1-\euler^{-\abs{t-s}\abs{\om_1+\om_2}^2}).
			\end{split}
		\end{equation*}
		Hence we can bound by~\eqref{eq:interpolation} and~\eqref{eq:rapid_decay},
		\begin{equation*}
			\begin{split}
				&\frac{1}{\abs{\om_1+\om_2}^2-\abs{\om_1}^2-\abs{\om_2}^2}\frac{1}{\abs{\om_1}^2+\abs{\om_2}^2+\abs{\om_1+\om_2}^2}\\
				&\quad\times\Bigl((\euler^{-t(\abs{\om_1}^2+\abs{\om_2}^2)}-\euler^{-t\abs{\om_1+\om_2}^2})\euler^{-s\abs{\om_1+\om_2}^2}(\euler^{-\abs{t-s}\abs{\om_4}^2}-\euler^{-(t-s)\abs{\om_1+\om_2}^2})\\
				&\qquad\quad+(\euler^{-s(\abs{\om_1}^2+\abs{\om_2}^2)}-\euler^{-s\abs{\om_1+\om_2}^2})\euler^{-t\abs{\om_1+\om_2}^2}(\euler^{-\abs{t-s}\abs{\om_4}^2}-\euler^{-(s-t)\abs{\om_1+\om_2}^2})\Bigr)\\
				&\lesssim\frac{\abs{\om_1+\om_2}^{2\gamma}}{\abs{\om_1}^2+\abs{\om_2}^2+\abs{\om_1+\om_2}^2}\frac{t}{t+s}\frac{1}{\abs{\om_1}^2+\abs{\om_2}^2}\abs{t-s}^{\gamma}\\
				&\leq\abs{t-s}^{\gamma}\abs{\om_1}^{-2}\abs{\om_1+\om_2}^{-2+2\gamma}.
			\end{split}
		\end{equation*}
		Now assume $\abs{\om_1+\om_2}^2<\abs{\om_1}^{2}+\abs{\om_2}^{2}$, $s\leq t$. Then
		\begin{equation*}
			\begin{split}
				&\frac{1}{\abs{\om_1}^2+\abs{\om_2}^2-\abs{\om_1+\om_2}^2}\frac{1}{\abs{\om_1}^2+\abs{\om_2}^2+\abs{\om_1+\om_2}^2}\\
				&\quad\times\Bigl((\euler^{t(\abs{\om_1+\om_2}^2-\abs{\om_1}^2-\abs{\om_2}^2)}-1)(\euler^{-2t\abs{\om_1+\om_2}^2}-\euler^{-(t+s)\abs{\om_1+\om_2}^2}\euler^{-\abs{t-s}\abs{\om_4}^2})\\
				&\qquad\quad+(\euler^{s(\abs{\om_1+\om_2}^2-\abs{\om_1}^2-\abs{\om_2}^2)}-1)(\euler^{-2s\abs{\om_1+\om_2}^2}-\euler^{-(t+s)\abs{\om_1+\om_2}^2}\euler^{-\abs{t-s}\abs{\om_4}^2})\Bigr)\\
				&=\frac{1}{\abs{\om_1}^2+\abs{\om_2}^2-\abs{\om_1+\om_2}^2}\frac{1}{\abs{\om_1}^2+\abs{\om_2}^2+\abs{\om_1+\om_2}^2}\\
				&\quad\times\Bigl((1-\euler^{-t(\abs{\om_1}^2+\abs{\om_2}^2-\abs{\om_1+\om_2}^2)})(\euler^{-(t+s)\abs{\om_1+\om_2}^2}\euler^{-\abs{t-s}\abs{\om_4}^2}-\euler^{-2t\abs{\om_1+\om_2}^2})\\
				&\qquad\quad+(1-\euler^{-s(\abs{\om_1}^2+\abs{\om_2}^2-\abs{\om_1+\om_2}^2)})(\euler^{-(t+s)\abs{\om_1+\om_2}^2}\euler^{-\abs{t-s}\abs{\om_4}^2}-\euler^{-2s\abs{\om_1+\om_2}^2})\Bigr)\\
				&\leq\frac{1}{\abs{\om_1}^2+\abs{\om_2}^2-\abs{\om_1+\om_2}^2}\frac{1}{\abs{\om_1}^2+\abs{\om_2}^2+\abs{\om_1+\om_2}^2}\\
				&\quad\times(1-\euler^{-t(\abs{\om_1}^2+\abs{\om_2}^2-\abs{\om_1+\om_2}^2)})\euler^{-(t+s)\abs{\om_1+\om_2}^2}(1-\euler^{-\abs{t-s}\abs{\om_1+\om_2}^2})\\
				&\lesssim\frac{t}{t+s}\abs{\om_1+\om_2}^{-2}\frac{\abs{\om_1+\om_2}^{2\gamma}}{\abs{\om_1}^2+\abs{\om_2}^2+\abs{\om_1+\om_2}^2}\abs{t-s}^{\gamma}\\
				&\leq\abs{t-s}^{\gamma}\abs{\om_1+\om_2}^{-2}\abs{\om_1}^{-2+2\gamma}.
			\end{split}
		\end{equation*}
		This yields the claim.
	\end{proof}
\end{details}
Next we bound the shape coefficient $\mathsf{A}\STriangle$, which appears in $\PreCocktail{50}$, $\PreCocktail{30}$, $\PreCocktail{40}$ and $\Triangle{3}$ (cf.\ Definition~\ref{def:shape_coefficient_triangle}).
\begin{lemma}\label{lem:triangle_regularity}
	Let $s,t\geq0$, $k=1,2$, $\gamma\in[0,1]$ and $C\geq1$. Then uniformly in $\om_1,\om_2,\om_3\in\mbZ^2\setminus\{0\}$ such that $C^{-1}\abs{\om_1}\leq\abs{\om_2}\leq C\abs{\om_1}$, it holds that
	\begin{equation*}
		\mathsf{A}_{s,t}^{k}\STriangle(\om_1,\om_2,\om_3)\lesssim\abs{t-s}^{\gamma}\abs{\om_2}^{2\gamma}\abs{\om_3}^{-1}.
	\end{equation*}
\end{lemma}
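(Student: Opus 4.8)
\textbf{Proof plan for Lemma~\ref{lem:triangle_regularity}.} Recall that
\begin{equation*}
	\mathsf{A}_{s,t}^{k}\STriangle(\om_1,\om_2,\om_3)=\sum_{j=1}^{2}\int_{-\infty}^{t}\dd u_1\int_{-\infty}^{u_1}\dd u_2\abs{(H^{k}_{t-u_1}(\om_1)H^{j}_{t-u_2}(\om_2)-H^{k}_{s-u_1}(\om_1)H^{j}_{s-u_2}(\om_2))H^{j}_{u_1-u_2}(\om_3)},
\end{equation*}
where $H^l_u(\om)=2\uppi\upi\om^l\euler^{-u\abs{\om}^2}\mathds{1}_{u\geq0}$ after passing to the $2\uppi\mbZ^2$-rescaled variables (alternatively one keeps the $\abs{2\uppi\om}^2$ throughout; since $\abs{2\uppi\om}\simeq\abs{\om}$ this changes nothing). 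The first step is to bound the difference $H^{k}_{t-u_1}(\om_1)H^{j}_{t-u_2}(\om_2)-H^{k}_{s-u_1}(\om_1)H^{j}_{s-u_2}(\om_2)$ pointwise. Without loss of generality assume $s\leq t$. I would add and subtract $H^{k}_{t-u_1}(\om_1)H^{j}_{s-u_2}(\om_2)$ to split the difference into $H^{k}_{t-u_1}(\om_1)(H^{j}_{t-u_2}(\om_2)-H^{j}_{s-u_2}(\om_2))$ plus $(H^{k}_{t-u_1}(\om_1)-H^{k}_{s-u_1}(\om_1))H^{j}_{s-u_2}(\om_2)$, and then use the interpolation inequality~\eqref{eq:interpolation}, in the form $\abs{\euler^{-a\abs{\om}^2}-\euler^{-b\abs{\om}^2}}\leq\euler^{-(a\wedge b)\abs{\om}^2}(1-\euler^{-\abs{a-b}\abs{\om}^2})\leq\euler^{-(a\wedge b)\abs{\om}^2}\abs{a-b}^{\gamma}\abs{\om}^{2\gamma}$, to extract the factor $\abs{t-s}^{\gamma}$. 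Concretely, each term is bounded by $\abs{t-s}^{\gamma}\abs{\om_1}\abs{\om_2}^{1+2\gamma}$ times exponentials of the form $\euler^{-(t-u_1)\abs{\om_1}^2}\euler^{-(s-u_2)\abs{\om_2}^2}$ (and the symmetric variant), while the remaining multiplier contributes $\abs{H^{j}_{u_1-u_2}(\om_3)}=\abs{\om_3}\euler^{-(u_1-u_2)\abs{\om_3}^2}$.

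The second step is to carry out the two time integrals. Using that the exponents are all non-positive on the integration domain (and that $\abs{t-s}^{\gamma}$ has already been pulled out), the integral $\int_{-\infty}^{u_1}\euler^{-c_2(s-u_2)\abs{\om_2}^2}\euler^{-(u_1-u_2)\abs{\om_3}^2}\dd u_2$ is at most $(\abs{\om_2}^2+\abs{\om_3}^2)^{-1}\lesssim\abs{\om_2}^{-2}$ uniformly (here $c_2\in\{1\}$; one checks the elementary bound $\int_{-\infty}^{u_1}\euler^{-\alpha(s-u_2)}\euler^{-\beta(u_1-u_2)}\dd u_2=\euler^{-\alpha(s-u_1)}/(\alpha+\beta)\leq 1/(\alpha+\beta)$ for $\alpha,\beta>0$, $u_1\leq s$, and a similar bound for $u_1>s$). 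Then $\int_{-\infty}^{t}\euler^{-(t-u_1)\abs{\om_1}^2}\dd u_1=\abs{\om_1}^{-2}$. Multiplying the prefactor $\abs{\om_1}\abs{\om_2}^{1+2\gamma}\abs{\om_3}$ by $\abs{\om_2}^{-2}\cdot\abs{\om_1}^{-2}$ gives $\abs{\om_1}^{-1}\abs{\om_2}^{-1+2\gamma}\abs{\om_3}$, which is \emph{not} yet the claimed bound — the factor $\abs{\om_3}$ is still floating around and there is no decay in $\om_3$.

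This points to where the real content lies: one cannot integrate out $u_1,u_2$ independently in the crude way above, because the $\abs{\om_3}$ in $H^{j}_{u_1-u_2}(\om_3)$ must be absorbed by the $\euler^{-(u_1-u_2)\abs{\om_3}^2}$ factor together with one of the other exponentials. The fix is to be careful about which exponential pays for the $\om_3$-multiplier: after the difference estimate one has a factor like $\euler^{-(t-u_1)\abs{\om_1}^2}\euler^{-(s-u_2)\abs{\om_2}^2}\cdot\abs{\om_3}\euler^{-(u_1-u_2)\abs{\om_3}^2}$, and since $u_2\leq u_1$ one can write $(s-u_2)=(s-u_1)+(u_1-u_2)$ (when $u_1\le s$) so the $\om_2$-exponential already carries an $\euler^{-(u_1-u_2)\abs{\om_2}^2}$; but a cleaner route is to first do the $u_2$-integral as $\int_{-\infty}^{u_1}\euler^{-(s-u_2)\abs{\om_2}^2}\abs{\om_3}\euler^{-(u_1-u_2)\abs{\om_3}^2}\dd u_2\leq\abs{\om_3}(\abs{\om_2}^2+\abs{\om_3}^2)^{-1}\lesssim\abs{\om_3}^{-1}$ (using $\abs{\om_3}\leq\abs{\om_2}^2+\abs{\om_3}^2$ divided appropriately — more precisely $\abs{\om_3}/(\abs{\om_2}^2+\abs{\om_3}^2)\leq 1/\abs{\om_3}$ when $\abs{\om_3}\geq 1$, which holds since $\om_3\in\mbZ^2\setminus\{0\}$, and also $\leq\abs{\om_3}/\abs{\om_2}^2$). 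Combining $\abs{\om_3}^{-1}$ with the $u_1$-integral giving $\abs{\om_1}^{-2}$ and the prefactor $\abs{\om_1}\abs{\om_2}^{1+2\gamma}$, and then using the hypothesis $\abs{\om_1}\simeq\abs{\om_2}$ to trade $\abs{\om_1}^{-1}$ for $\abs{\om_2}^{-1}$, yields $\abs{\om_2}^{-1}\cdot\abs{\om_2}^{1+2\gamma}\cdot\abs{\om_2}^{-1}\abs{\om_3}^{-1}=\abs{\om_2}^{2\gamma-1}\abs{\om_3}^{-1}$. Hmm, that is off by $\abs{\om_2}^{-1}$ from the claim $\abs{\om_2}^{2\gamma}\abs{\om_3}^{-1}$; the discrepancy is exactly the point where one must \emph{not} waste the second factor of $\abs{\om_1}^{-1}$ from the $u_1$-integral, i.e.\ the prefactor $\abs{\om_1}$ from $H^k_{t-u_1}(\om_1)$ should be cancelled against one power of the $u_1$-integral, leaving $\abs{\om_1}^{-1}$, times $\abs{\om_2}^{1+2\gamma}$ from the $\om_2$-difference, times $\abs{\om_3}^{-1}$ from the $u_2$-integral as above; then $\abs{\om_1}^{-1}\abs{\om_2}^{1+2\gamma}\abs{\om_3}^{-1}\simeq\abs{\om_2}^{2\gamma}\abs{\om_3}^{-1}$, as desired. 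So the bookkeeping — keeping track of exactly one prefactor power of each frequency and matching it against one integration — is the crux, and the symmetric term (where the $\om_1$-multiplier carries the difference) is handled identically, using $\abs{\om_1}\simeq\abs{\om_2}$ again to convert the resulting $\abs{\om_1}^{2\gamma}$ into $\abs{\om_2}^{2\gamma}$.

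\textbf{Summary of the plan and the main obstacle.} First, reduce to $s\leq t$ by symmetry of the definition. Second, telescope the difference $H^kH^j-H^kH^j$ into two single-difference terms and bound each via~\eqref{eq:interpolation}, producing the factor $\abs{t-s}^{\gamma}$ at the cost of $\abs{\om_i}^{2\gamma}$ for the frequency whose heat factor is being differenced. Third, perform the $u_2$-integral, using non-positivity of the exponents and the elementary bound $\int_{-\infty}^{u_1}\euler^{-\alpha(\cdot-u_2)}\euler^{-\beta(u_1-u_2)}\dd u_2\lesssim(\alpha+\beta)^{-1}$, choosing the grouping so that the multiplier $\abs{\om_3}$ is absorbed and a factor $\abs{\om_3}^{-1}$ survives (using $\abs{\om_3}\geq 1$). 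Fourth, perform the $u_1$-integral to get $\abs{\om_1}^{-2}$, cancel one power against the prefactor $\abs{\om_1}$, and invoke $\abs{\om_1}\simeq\abs{\om_2}$ to land on $\abs{t-s}^{\gamma}\abs{\om_2}^{2\gamma}\abs{\om_3}^{-1}$. The main obstacle is not any single estimate but the careful allocation of the polynomial prefactors $\abs{\om_1}$, $\abs{\om_2}$, $\abs{\om_3}$ coming from the three multipliers: each must be paired with the right exponential or the right time integration, and the hypothesis $\abs{\om_1}\simeq\abs{\om_2}$ must be used precisely once to symmetrize the two telescoped terms; a naive application of the triangle inequality followed by independent integration loses a power of $\abs{\om_2}$ and fails, so one has to keep the couplings between the integration variables visible until the last step.
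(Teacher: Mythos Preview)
Your plan is correct and matches the paper's approach: telescope the difference, apply the interpolation bound~\eqref{eq:interpolation} to extract $\abs{t-s}^{\gamma}$, integrate $u_2$ to obtain $(\abs{\om_2}^2+\abs{\om_3}^2)^{-1}$ and use $\abs{\om_3}/(\abs{\om_2}^2+\abs{\om_3}^2)\leq\abs{\om_3}^{-1}$, integrate $u_1$, and finally use $\abs{\om_1}\simeq\abs{\om_2}$. The paper's proof is the one-line ``triangle inequality and repeated applications of~\eqref{eq:interpolation}'', and your writeup correctly fleshes this out; the meandering in the middle (where you first lose a power and then recover it) is just bookkeeping noise rather than a genuine obstacle, and in a clean write-up you can go straight to the correct allocation of prefactors without the detour.
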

\begin{proof}
	The claim follows by the triangle inequality and repeated applications of~\eqref{eq:interpolation}.
\end{proof}
\begin{details}
	\textbf{Detailed Proof.}
	\begin{proof}
		We estimate by the triangle inequality,
		\begin{equation}\label{eq:triangle_regularity_terms}
			\begin{split}
				&\abs{(H^{k}_{t-u_1}(\om_1)H^{j}_{t-u_2}(\om_2)-H^{k}_{s-u_1}(\om_1)H^{j}_{s-u_2}(\om_2))H^{j}_{u_1-u_2}(\om_3)}\\
				&\leq\abs{H^{k}_{t-u_1}(\om_1)(H^{j}_{t-u_2}(\om_2)-H^{j}_{s-u_2}(\om_2))H^{j}_{u_1-u_2}(\om_3)}\\
				&\quad+\abs{(H^{k}_{t-u_1}(\om_1)-H^{k}_{s-u_1}(\om_1))H^{j}_{s-u_2}(\om_2)H^{j}_{u_1-u_2}(\om_3)}.
			\end{split}
		\end{equation}
		We first bound the integrals over $u_2$. We obtain for the first summand of~\eqref{eq:triangle_regularity_terms},
		\begin{equation*}
			\begin{split}
				&\sum_{j=1}^{2}\int_{-\infty}^{u_1}\dd u_2\abs{(H^{j}_{t-u_2}(\om_2)-H^{j}_{s-u_2}(\om_2))H^{j}_{u_1-u_2}(\om_3)}\\
				&\lesssim\abs{\om_2}\abs{\om_3}\int_{-\infty}^{u_1\wedge s}\dd u_2(\euler^{-\abs{s-u_2}\abs{2\uppi\om_2}^2}-\euler^{-\abs{t-u_2}\abs{2\uppi\om_2}^2})\euler^{-\abs{u_1-u_2}\abs{2\uppi\om_3}^2}\\
				&\quad+\abs{\om_2}\abs{\om_3}\int_{s}^{u_1}\dd u_2\euler^{-\abs{t-u_2}\abs{2\uppi\om_2}^2}\euler^{-\abs{u_1-u_2}\abs{2\uppi\om_3}^2}.
			\end{split}
		\end{equation*}
		It suffices to control each term. By~\eqref{eq:interpolation} for $\gamma\in[0,1]$,
		\begin{equation*}
			\begin{split}
				&\int_{-\infty}^{t}\dd u_1\int_{-\infty}^{u_1\wedge s}\dd u_2 \euler^{-\abs{t-u_1}\abs{2\uppi\om_1}^2}(\euler^{-\abs{s-u_2}\abs{2\uppi\om_2}^2}-\euler^{-\abs{t-u_2}\abs{2\uppi\om_2}^2})\euler^{-\abs{u_1-u_2}\abs{2\uppi\om_3}^2}\\
				&\lesssim\abs{t-s}^{\gamma}\abs{\om_2}^{2\gamma}\int_{-\infty}^{t}\dd u_1\euler^{-\abs{t-u_1}\abs{2\uppi\om_1}^2}\int_{-\infty}^{u_1\wedge s}\dd u_2 \euler^{-\abs{s-u_2}\abs{2\uppi\om_2}^2}\euler^{-\abs{u_1-u_2}\abs{2\uppi\om_3}^2}\\
				&\lesssim\abs{t-s}^{\gamma}\abs{\om_2}^{2\gamma}(\abs{\om_2}^2+\abs{\om_3}^2)^{-1}\int_{-\infty}^{t}\dd u_1\euler^{-\abs{t-u_1}\abs{2\uppi\om_1}^2}\\
				&\lesssim\abs{t-s}^{\gamma}\abs{\om_2}^{2\gamma}(\abs{\om_2}^2+\abs{\om_3}^2)^{-1}\abs{\om_1}^{-2}
			\end{split}
		\end{equation*}
		and
		\begin{equation*}
			\begin{split}
				&\int_{s}^{t}\dd u_1\int_{s}^{u_1}\dd u_2 	\euler^{-\abs{t-u_1}\abs{2\uppi\om_1}^2}\euler^{-\abs{t-u_2}\abs{2\uppi\om_2}^2}\euler^{-\abs{u_1-u_2}\abs{2\uppi\om_3}^2}\\
				&\lesssim(\abs{\om_2}^2+\abs{\om_3}^2)^{-1}\int_{s}^{t}\dd u_1\euler^{-\abs{t-u_1}\abs{2\uppi\om_1}^2}\\
				&\lesssim\abs{t-s}^{\gamma}(\abs{\om_2}^2+\abs{\om_3}^2)^{-1}\abs{\om_1}^{2\gamma-2}.
			\end{split}
		\end{equation*}
		For the second term of~\eqref{eq:triangle_regularity_terms},
		\begin{equation*}
			\sum_{j=1}^{2}\int_{-\infty}^{u_1\wedge s}\dd u_2\abs{H^{j}_{s-u_2}(\om_2)H^{j}_{u_1-u_2}(\om_3)}\lesssim\abs{\om_2}\abs{\om_3}(\abs{\om_2}^{2}+\abs{\om_3}^{2})^{-1}.
		\end{equation*}
		By~\eqref{eq:interpolation},
		\begin{equation*}
			\begin{split}
				&\sum_{j=1}^{2}\int_{-\infty}^{t}\dd u_1\int_{-\infty}^{u_1\wedge s}\dd u_2\abs{(H^k_{t-u_1}(\om_1)-H^k_{s-u_1}(\om_1))H^{j}_{s-u_2}(\om_2)H^{j}_{u_1-u_2}(\om_3)}\\
				&\lesssim\abs{\om_2}\abs{\om_3}(\abs{\om_2}^{2}+\abs{\om_3}^{2})^{-1}\int_{-\infty}^{t}\dd u_1\abs{H^k_{t-u_1}(\om_1)-H^k_{s-u_1}(\om_1)}\\
				&\lesssim\abs{t-s}^{\gamma}\abs{\om_2}\abs{\om_3}(\abs{\om_2}^{2}+\abs{\om_3}^{2})^{-1}\abs{\om_1}^{-1+2\gamma}.
			\end{split}
		\end{equation*}
		Using that $C^{-1}\abs{\om_1}\leq\abs{\om_2}\leq C\abs{\om_1}$, we can simplify our estimates to
		\begin{equation*}
			\begin{split}
				&\sum_{j=1}^{2}\int_{-\infty}^{t}\dd u_1\int_{-\infty}^{u_1}\dd u_2\abs{(H^k_{t-u_1}(\om_1)H^{j}_{t-u_2}(\om_2)-H^k_{s-u_1}(\om_1)H^{j}_{s-u_2}(\om_2))H^{j}_{u_1-u_2}(\om_3)}\\
				&\lesssim\abs{t-s}^{\gamma}\abs{\om_2}^{2\gamma}\abs{\om_3}^{-1}.
			\end{split}
		\end{equation*}
		This yields the claim.
	\end{proof}
\end{details}
The following lemma controls the shape coefficient $\mathsf{A}\SVee$, which appears in $\Checkmark{10}$, $\Checkmark{20}$ and $\PreThreeloop{10}$, $\PreThreeloop{20}$ (cf.\ Definition~\ref{def:shape_coefficient_vee}).
\begin{lemma}\label{lem:v_regularity}
	Let $s,t\geq0$, $k,k'=1,2$, $\gamma\in[0,1]$ and $C,C'\geq1$. Then uniformly in $\om_1,\om_1',\om_2\in\mbZ^2\setminus\{0\}$ such that $C^{-1}\abs{\om_1}\leq\abs{\om_2}\leq C\abs{\om_1}$ and $(C')^{-1}\abs{\om_1'}\leq\abs{\om_2}\leq C'\abs{\om_1'}$, it holds that
	\begin{equation*}
		\mathsf{A}^{k,k'}_{s,t}\SVee(\om_1,\om_1',\om_2)\lesssim\abs{t-s}^{\gamma}\abs{\om_1}^{-1+\gamma}\abs{\om_1'}^{-1+\gamma}.
	\end{equation*}
\end{lemma}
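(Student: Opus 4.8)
The plan is to reduce the estimate for $\mathsf{A}^{k,k'}_{s,t}\SVee$ to estimates on one-dimensional exponential integrals, exactly as in the proof of Lemma~\ref{lem:triangle_regularity}, only now with a triple product in which the factor indexed by $\om_2$ is shared between the two difference terms. First I would recall the definition
\begin{equation*}
	\mathsf{A}^{k,k'}_{s,t}\SVee(\om_1,\om_1',\om_2)=\sum_{j=1}^{2}\int_{-\infty}^{\infty}\dd u_2\int_{-\infty}^{\infty}\dd u_1\int_{-\infty}^{\infty}\dd u_1'\,\abs{F_{t,s}^{k,j}(u_1,u_2)}\abs{F_{t,s}^{k',j}(u_1',u_2)},
\end{equation*}
writing $F_{t,s}^{k,j}(u_1,u_2)\coloneqq H_{t-u_1}^{k}(\om_1)H_{t-u_2}^{j}(\om_2)-H_{s-u_1}^{k}(\om_1)H_{s-u_2}^{j}(\om_2)$ for brevity. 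The immediate difficulty is that the $u_2$-integral couples the two differences, so one cannot simply factor the whole expression into a product of two one-dimensional integrals. The remedy is to split each difference via the triangle inequality,
\begin{equation*}
	\abs{F_{t,s}^{k,j}(u_1,u_2)}\leq\abs{H_{t-u_1}^{k}(\om_1)}\abs{H_{t-u_2}^{j}(\om_2)-H_{s-u_2}^{j}(\om_2)}+\abs{H_{t-u_1}^{k}(\om_1)-H_{s-u_1}^{k}(\om_1)}\abs{H_{s-u_2}^{j}(\om_2)},
\end{equation*}
and similarly for $F^{k',j}$, leading to four cross terms. In each cross term the $u_1$- and $u_1'$-integrals decouple from the $u_2$-integral.

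Next I would bound each cross term. The $u_1$ (resp. $u_1'$) integrals are elementary: $\int_{-\infty}^{\infty}\abs{H_{t-u_1}^{k}(\om_1)}\dd u_1\lesssim\abs{\om_1}^{-1}$ and $\int_{-\infty}^{\infty}\abs{H_{t-u_1}^{k}(\om_1)-H_{s-u_1}^{k}(\om_1)}\dd u_1\lesssim\abs{t-s}^{\gamma}\abs{\om_1}^{-1+2\gamma}$ by interpolation~\eqref{eq:interpolation}, just as in the proof of Lemma~\ref{lem:existence_lolli}. The $u_2$-integral is the one carrying two factors attached to $\om_2$; after the split it is one of $\int\abs{H_{t-u_2}^{j}(\om_2)-H_{s-u_2}^{j}(\om_2)}^2\dd u_2$, $\int\abs{H_{t-u_2}^{j}(\om_2)-H_{s-u_2}^{j}(\om_2)}\abs{H_{s-u_2}^{j}(\om_2)}\dd u_2$, or $\int\abs{H_{s-u_2}^{j}(\om_2)}^2\dd u_2$. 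Using $\int\abs{H_{s-u_2}^{j}(\om_2)}^2\dd u_2\lesssim1$, $\int\abs{H_{t-u_2}^{j}(\om_2)-H_{s-u_2}^{j}(\om_2)}^2\dd u_2\lesssim\abs{t-s}^{\gamma}\abs{\om_2}^{2\gamma}$ (both are the computations already carried out in the `details' block of Lemma~\ref{lem:existence_lolli}, with the power $\abs{\om_2}^{2\gamma}$ coming from~\eqref{eq:interpolation}), and Cauchy--Schwarz for the mixed integral, every $u_2$-integral is bounded by $\abs{t-s}^{\gamma_2}\abs{\om_2}^{2\gamma_2}$ for the appropriate allocation $\gamma_2\in\{0,\gamma/2,\gamma\}$ of the regularity budget.

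The final step is bookkeeping: in each of the four cross terms one distributes the total time-regularity exponent $\gamma$ among the $u_1$-, $u_1'$- and $u_2$-integrals so that the exponents add up to $\gamma$, and then converts the resulting power of $\abs{\om_2}$ into powers of $\abs{\om_1}$ and $\abs{\om_1'}$ using the comparability hypotheses $C^{-1}\abs{\om_1}\leq\abs{\om_2}\leq C\abs{\om_1}$ and $(C')^{-1}\abs{\om_1'}\leq\abs{\om_2}\leq C'\abs{\om_1'}$. The worst contribution in each variable is $\abs{\om_1}^{-1+\gamma}$ (resp. $\abs{\om_1'}^{-1+\gamma}$), achieved by giving each of the $u_1$- and $u_1'$-integrals a fraction $\gamma/2$ of the budget so each produces $\abs{t-s}^{\gamma/2}\abs{\om_1}^{-1+\gamma}$, while the $u_2$-integral takes the remaining $0$; other allocations give the same or a better power once the comparabilities are used. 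Summing the four cross terms then yields
\begin{equation*}
	\mathsf{A}^{k,k'}_{s,t}\SVee(\om_1,\om_1',\om_2)\lesssim\abs{t-s}^{\gamma}\abs{\om_1}^{-1+\gamma}\abs{\om_1'}^{-1+\gamma},
\end{equation*}
with constants depending only on $C$, $C'$ and $\gamma$, which is the claim. I expect no genuine obstacle here beyond the careful allocation of the $\abs{t-s}^{\gamma}$ budget; the estimate is structurally a mild variant of Lemma~\ref{lem:triangle_regularity}, the only new feature being the shared $\om_2$-factor, which is handled cleanly by the triangle-inequality split followed by Cauchy--Schwarz.
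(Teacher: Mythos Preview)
Your proposal is correct and follows essentially the same approach as the paper: split each difference via the triangle inequality into four cross terms, integrate out $u_1$, $u_1'$, $u_2$ separately using the elementary bounds $\int|H_{t-u}^{k}(\om)|\dd u\lesssim|\om|^{-1}$, $\int|H_{t-u}^{k}(\om)-H_{s-u}^{k}(\om)|\dd u\lesssim|t-s|^{\gamma'}|\om|^{-1+2\gamma'}$ and the analogous $L^2$-bounds in $u_2$, then use the comparability hypotheses to convert powers of $|\om_2|$ into $|\om_1|$ and $|\om_1'|$. The only minor difference is that you use Cauchy--Schwarz for the mixed $u_2$-integral $\int|H_{t-u_2}^{j}(\om_2)-H_{s-u_2}^{j}(\om_2)||H_{s-u_2}^{j}(\om_2)|\dd u_2$, whereas the paper computes it directly to get $|t-s|^{\gamma}|\om_2|^{2\gamma}$; both work, and your explicit bookkeeping of the $\gamma$-budget across the three integrals is in fact slightly cleaner than the paper's presentation.
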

\begin{proof}
	The claim follows by the triangle inequality and repeated applications of~\eqref{eq:interpolation}.
\end{proof}
\begin{details}
	\textbf{Detailed Proof.}
	\begin{proof}
		We first apply the triangle inequality to estimate
		\begin{equation*}
			\begin{split}
				&\abs{H_{t-u_1}^{k}(\om_1)H_{t-u_2}^{j}(\om_2)-H_{s-u_1}^{k}(\om_1)H_{s-u_2}^{j}(\om_2)}\abs{H_{t-u_1'}^{k'}(\om_1')H_{t-u_2}^{j}(\om_2)-H_{s-u_1'}^{k'}(\om_1')H_{s-u_2}^{j}(\om_2)}\\
				&\leq\Bigl(\abs{H_{t-u_1}^{k}(\om_1)-H_{s-u_1}^{k}(\om_1)}\abs{H_{t-u_2}^{j}(\om_2)}+\abs{H_{s-u_1}^{k}(\om_1)}\abs{H_{t-u_2}^{j}(\om_2)-H_{s-u_2}^{j}(\om_2)}\Bigr)\\
				&\quad\times\Bigl(\abs{H_{t-u_1'}^{k'}(\om_1')-H_{s-u_1'}^{k'}(\om_1')}\abs{H_{t-u_2}^{j}(\om_2)}+\abs{H_{s-u_1'}^{k'}(\om_1')}\abs{H_{t-u_2}^{j}(\om_2)-H_{s-u_2}^{j}(\om_2)}\Bigr),
			\end{split}
		\end{equation*}
		which multiplies out into $4$ summands.
		\begin{equation}\label{eq:v_regularity_decomposition}
			\begin{split}
				&\abs{H_{t-u_1}^{k}(\om_1)H_{t-u_2}^{j}(\om_2)-H_{s-u_1}^{k}(\om_1)H_{s-u_2}^{j}(\om_2)}\abs{H_{t-u_1'}^{k'}(\om_1')H_{t-u_2}^{j}(\om_2)-H_{s-u_1'}^{k'}(\om_1')H_{s-u_2}^{j}(\om_2)}\\
				&\leq\abs{H_{t-u_1}^{k}(\om_1)-H_{s-u_1}^{k}(\om_1)}\abs{H_{t-u_2}^{j}(\om_2)}\abs{H_{t-u_1'}^{k'}(\om_1')-H_{s-u_1'}^{k'}(\om_1')}\abs{H_{t-u_2}^{j}(\om_2)}\\
				&\quad+\abs{H_{t-u_1}^{k}(\om_1)-H_{s-u_1}^{k}(\om_1)}\abs{H_{t-u_2}^{j}(\om_2)}\abs{H_{s-u_1'}^{k'}(\om_1')}\abs{H_{t-u_2}^{j}(\om_2)-H_{s-u_2}^{j}(\om_2)}\\
				&\quad+\abs{H_{s-u_1}^{k}(\om_1)}\abs{H_{t-u_2}^{j}(\om_2)-H_{s-u_2}^{j}(\om_2)}\abs{H_{t-u_1'}^{k'}(\om_1')-H_{s-u_1'}^{k'}(\om_1')}\abs{H_{t-u_2}^{j}(\om_2)}\\
				&\quad+\abs{H_{s-u_1}^{k}(\om_1)}\abs{H_{t-u_2}^{j}(\om_2)-H_{s-u_2}^{j}(\om_2)}\abs{H_{s-u_1'}^{k'}(\om_1')}\abs{H_{t-u_2}^{j}(\om_2)-H_{s-u_2}^{j}(\om_2)}.
			\end{split}
		\end{equation}
		Let $\gamma\in[0,1]$. We start with the first summand of~\eqref{eq:v_regularity_decomposition}. We apply~\eqref{eq:interpolation},
		\begin{equation*}
			\begin{split}
				&\sum_{j=1}^{2}\int_{-\infty}^{t}\dd u_2\int_{-\infty}^{t}\dd u_1\int_{-\infty}^{t}\dd u_1'\abs{H_{t-u_1}^{k}(\om_1)-H_{s-u_1}^{k}(\om_1)}\abs{H_{t-u_1'}^{k'}(\om_1')-H_{s-u_1'}^{k'}(\om_1')}\abs{H_{t-u_2}^{j}(\om_2)}^2\\
				&\lesssim\int_{-\infty}^{t}\dd u_1\int_{-\infty}^{t}\dd u_1'\abs{H_{t-u_1}^{k}(\om_1)-H_{s-u_1}^{k}(\om_1)}\abs{H_{t-u_1'}^{k'}(\om_1')-H_{s-u_1'}^{k'}(\om_1')}\\
				&\lesssim\abs{t-s}^{2\gamma}\abs{\om_1}^{-1+2\gamma}\abs{\om_1'}^{-1+2\gamma}.
			\end{split}
		\end{equation*}
		For the second summand of~\eqref{eq:v_regularity_decomposition}, we first compute the integral over $u_2$. We obtain
		\begin{equation}\label{eq:v_regularity_decomposition_second_summand_integral_u2}
			\begin{split}
				&\sum_{j=1}^{2}\int_{-\infty}^{t}\dd u_2\abs{H_{t-u_2}^{j}(\om_2)}\abs{H_{t-u_2}^{j}(\om_2)-H_{s-u_2}^{j}(\om_2)}\\
				&\lesssim\abs{\om_2}^{2}\int_{-\infty}^{s}\dd u_2\euler^{-\abs{t-u_2}\abs{2\uppi\om_2}^2}\euler^{-\abs{s-u_2}\abs{2\uppi\om_2}^2}(1-\euler^{-\abs{t-s}\abs{2\uppi\om_2}^2})+\abs{\om_2}^2\int_{s}^{t}\dd u_2\euler^{-2\abs{t-u_2}\abs{2\uppi\om_2}^2}\\
				&\lesssim\abs{t-s}^{\gamma}\abs{\om_2}^{2\gamma},
			\end{split}
		\end{equation}
		so,
		\begin{equation*}
			\begin{split}
				&\sum_{j=1}^{2}\int_{-\infty}^{t}\dd u_2\int_{-\infty}^{t}\dd u_1\int_{-\infty}^{t}\dd u_1'\\
				&\quad\abs{H_{t-u_1}^{k}(\om_1)-H_{s-u_1}^{k}(\om_1)}\abs{H_{t-u_2}^{j}(\om_2)}\abs{H_{s-u_1'}^{k'}(\om_1')}\abs{H_{t-u_2}^{j}(\om_2)-H_{s-u_2}^{j}(\om_2)}\\
				&\lesssim\abs{t-s}^{\gamma}\abs{\om_2}^{2\gamma}\int_{-\infty}^{t}\dd u_1\int_{-\infty}^{t}\dd u_1'\abs{H_{t-u_1}^{k}(\om_1)-H_{s-u_1}^{k}(\om_1)}\abs{H_{s-u_1'}^{k'}(\om_1')}\\
				&\lesssim\abs{t-s}^{2\gamma}\abs{\om_2}^{2\gamma}\abs{\om_1}^{-1+2\gamma}\abs{\om_1'}^{-1}.
			\end{split}
		\end{equation*}
		For the third summand of~\eqref{eq:v_regularity_decomposition} we apply~\eqref{eq:v_regularity_decomposition_second_summand_integral_u2} and~\eqref{eq:interpolation},
		\begin{equation*}
			\begin{split}
				&\sum_{j=1}^{2}\int_{-\infty}^{t}\dd u_2\int_{-\infty}^{t}\dd u_1\int_{-\infty}^{t}\dd u_1'\\
				&\quad\abs{H_{s-u_1}^{k}(\om_1)}\abs{H_{t-u_2}^{j}(\om_2)-H_{s-u_2}^{j}(\om_2)}\abs{H_{t-u_1'}^{k'}(\om_1')-H_{s-u_1'}^{k'}(\om_1')}\abs{H_{t-u_2}^{j}(\om_2)}\\
				&\lesssim\abs{t-s}^{\gamma}\abs{\om_2}^{2\gamma}\int_{-\infty}^{t}\dd u_1\int_{-\infty}^{t}\dd u_1'\abs{H_{s-u_1}^{k}(\om_1)}\abs{H_{t-u_1'}^{k'}(\om_1')-H_{s-u_1'}^{k'}(\om_1')}\\
				&\lesssim\abs{t-s}^{2\gamma}\abs{\om_2}^{2\gamma}\abs{\om_1}^{-1}\abs{\om_1'}^{-1+2\gamma}.
			\end{split}
		\end{equation*}
		For the fourth summand of~\eqref{eq:v_regularity_decomposition},
		\begin{equation*}
			\begin{split}
				&\sum_{j=1}^{2}\int_{-\infty}^{t}\dd u_2\abs{H_{t-u_2}^{j}(\om_2)-H_{s-u_2}^{j}(\om_2)}^2\\
				&\lesssim\abs{\om_2}^{2}\int_{-\infty}^{s}\dd u_2\euler^{-2\abs{s-u_2}\abs{2\uppi\om_2}^2}(1-\euler^{-\abs{t-s}\abs{2\uppi\om_2}^2})^2+\abs{\om_2}^{2}\int_{s}^{t}\dd u_2\euler^{-2\abs{t-u_2}\abs{2\uppi\om_2}^2}\\
				&\lesssim\abs{t-s}^{\gamma}\abs{\om_2}^{2\gamma},
			\end{split}
		\end{equation*}
		so,
		\begin{equation*}
			\begin{split}
				&\sum_{j=1}^{2}\int_{-\infty}^{t}\dd u_2\int_{-\infty}^{t}\dd u_1\int_{-\infty}^{t}\dd u_1'\\
				&\quad\abs{H_{s-u_1}^{k}(\om_1)}\abs{H_{t-u_2}^{j}(\om_2)-H_{s-u_2}^{j}(\om_2)}\abs{H_{s-u_1'}^{k'}(\om_1')}\abs{H_{t-u_2}^{j}(\om_2)-H_{s-u_2}^{j}(\om_2)}\\
				&\lesssim\abs{t-s}^{\gamma}\abs{\om_2}^{2\gamma}\int_{-\infty}^{t}\dd u_1\int_{-\infty}^{t}\dd u_1'\abs{H_{s-u_1}^{k}(\om_1)}\abs{H_{s-u_1'}^{k'}(\om_1')}\\
				&\lesssim\abs{t-s}^{\gamma}\abs{\om_2}^{2\gamma}\abs{\om_1}^{-1}\abs{\om_1'}^{-1}.
			\end{split}
		\end{equation*}
		By implementing the conditions $C^{-1}\abs{\om_1}\leq\abs{\om_2}\leq C\abs{\om_1}$ and $(C')^{-1}\abs{\om_1'}\leq\abs{\om_2}\leq C'\abs{\om_1'}$, the estimates above simplify to the claimed expression.
	\end{proof}
\end{details}
\section{Summation Estimates}\label{app:summation_estimates}
\subsection{Basic Estimates}
We prove a number of summation and discrete convolution estimates that are central to our bounds.

Recall that the Fourier multiplier of the elliptic operator $\partial_{j}\Phi$, denoted by $G^{j}(\om)=2\uppi\upi\om^{j}\abs{2\uppi\upi\om}^{-2}\mathds{1}_{\om\neq0}$ for all $\om\in\mbZ^{2}$ and $j=1,2$, frequently appears in the diagrams considered in Subsection~\ref{subsec:Feynman}. The following lemma shows that $\abs{G^{j}(\om+\om_1)-G^{j}(\om_1)}$ decays like $\abs{\om+\om_{1}}^{-2}$ for every $\om_{1}\in\mbZ^{2}\setminus\{0,-\om\}$, which is one order better than $\abs{G^{j}(\om+\om_1)}\lesssim\abs{\om+\om_{1}}^{-1}$.
\begin{lemma}\label{lem:elliptic_difference}
	Let $j=1,2$, then uniformly in $\om,\om_1\in\mbZ^2$ such that $\om_1,\om+\om_1\neq0$, it holds that
	\begin{equation*}
		\abs{G^{j}(\om+\om_1)-G^{j}(\om_1)}\lesssim\abs{\om}\abs{\om+\om_1}^{-2}(1+\abs{\om}\abs{\om_1}^{-1}).
	\end{equation*}
\end{lemma}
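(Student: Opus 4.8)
The plan is to exploit the explicit form of the multiplier $G^j(\om)=2\uppi\upi\om^j\abs{2\uppi\om}^{-2}\indic_{\om\neq0}$ and to estimate the difference $G^j(\om+\om_1)-G^j(\om_1)$ by a standard two-regime argument depending on whether $\abs{\om}$ is small or large relative to $\abs{\om_1}$. Writing $\nu \coloneqq \om+\om_1$, we want to bound $\abs{G^j(\nu)-G^j(\om_1)}$ by $\abs{\om}\abs{\nu}^{-2}(1+\abs{\om}\abs{\om_1}^{-1})$, uniformly over $\om_1,\nu\in\mbZ^2\setminus\{0\}$.

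First I would handle the regime $\abs{\om}\leq \tfrac12\abs{\om_1}$ (equivalently $\om_1$ and $\nu$ are comparable, say $\tfrac12\abs{\om_1}\leq\abs{\nu}\leq\tfrac32\abs{\om_1}$). Here $G^j$ is smooth away from the origin and one applies the mean value theorem: $\abs{G^j(\nu)-G^j(\om_1)}\leq \abs{\om}\sup_{z\in[\om_1,\nu]}\abs{\nabla G^j(z)}$, and on the segment $[\om_1,\nu]$ one has $\abs{z}\gtrsim\abs{\om_1}\gtrsim\abs{\nu}$ since $\abs{\om}\leq\tfrac12\abs{\om_1}$. Since $\abs{\nabla G^j(z)}\lesssim\abs{z}^{-2}$, this gives $\abs{G^j(\nu)-G^j(\om_1)}\lesssim\abs{\om}\abs{\nu}^{-2}$, which is even better than the claimed bound (the factor $1+\abs{\om}\abs{\om_1}^{-1}$ is then bounded by a constant in this regime anyway). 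The only mild care needed is to ensure the segment $[\om_1,\nu]\subset\mbR^2$ avoids the origin, which follows from $\abs{z}\geq\abs{\om_1}-\abs{\om}\geq\tfrac12\abs{\om_1}>0$.

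Next I would handle the complementary regime $\abs{\om}>\tfrac12\abs{\om_1}$. Here I estimate the two terms separately by the triangle inequality: $\abs{G^j(\nu)}\lesssim\abs{\nu}^{-1}$ and $\abs{G^j(\om_1)}\lesssim\abs{\om_1}^{-1}<2\abs{\om}\abs{\om_1}^{-1}\cdot\tfrac12 \lesssim \abs{\om}\abs{\om_1}^{-1}\cdot \abs{\om}^{-1}$. To match the target form, I use $\abs{\om}>\tfrac12\abs{\om_1}$ to write $\abs{\nu}^{-1}=\abs{\nu}^{-2}\abs{\nu}\leq\abs{\nu}^{-2}(\abs{\om}+\abs{\om_1})\lesssim\abs{\nu}^{-2}\abs{\om}$; similarly $\abs{\om_1}^{-1}\lesssim\abs{\om}\abs{\om_1}^{-1}\abs{\om_1}^{-1}$ and $\abs{\om_1}^{-1}\lesssim\abs{\nu}^{-2}\abs{\om_1}\cdot(\abs{\om_1}/\abs{\nu})^{?}$ — more carefully, since $\abs{\nu}\leq\abs{\om}+\abs{\om_1}\lesssim\abs{\om}$ we get $\abs{\om_1}^{-1}=\abs{\om}\abs{\om_1}^{-1}\abs{\om}^{-1}\lesssim\abs{\om}\abs{\om_1}^{-1}\abs{\nu}^{-1}\lesssim\abs{\om}\abs{\om_1}^{-1}\cdot\abs{\om}\abs{\nu}^{-2}$. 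Combining the two bounds yields $\abs{G^j(\nu)-G^j(\om_1)}\lesssim\abs{\om}\abs{\nu}^{-2}+\abs{\om}^2\abs{\om_1}^{-1}\abs{\nu}^{-2}=\abs{\om}\abs{\nu}^{-2}(1+\abs{\om}\abs{\om_1}^{-1})$, as desired.

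I do not expect any genuine obstacle here: the statement is an elementary kernel estimate. The only points requiring a little attention are (i) choosing the split threshold consistently so that the mean value theorem regime is exactly where $G^j$ is smooth on the relevant segment, and (ii) in the large-$\abs{\om}$ regime, rewriting each power of $\abs{\nu}$, $\abs{\om_1}$ in terms of $\abs{\om}$ and $\abs{\nu}$ so the final expression has precisely the claimed shape; both are routine. I would present the argument as the two-case estimate above, with constants absorbed into $\lesssim$.
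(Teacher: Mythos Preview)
Your argument is correct: the mean value theorem handles the regime $\abs{\om}\leq\tfrac12\abs{\om_1}$ cleanly, and in the complementary regime the triangle inequality together with $\abs{\nu}\lesssim\abs{\om}$ gives both terms the right shape (your presentation of the second regime is a bit meandering, but the final chain $\abs{\om_1}^{-1}=\abs{\om}\abs{\om_1}^{-1}\abs{\om}^{-1}\lesssim\abs{\om}\abs{\om_1}^{-1}\abs{\nu}^{-1}\lesssim\abs{\om}^{2}\abs{\om_1}^{-1}\abs{\nu}^{-2}$ is sound).

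The paper takes a shorter, purely algebraic route: it simply puts the difference over a common denominator,
\[
\frac{\om^{j}+\om_1^{j}}{\abs{\om+\om_1}^{2}}-\frac{\om_1^{j}}{\abs{\om_1}^{2}}
=\frac{\om^{j}\abs{\om_1}^{2}-\om_1^{j}\abs{\om}^{2}-2\om_1^{j}\inner{\om}{\om_1}}{\abs{\om+\om_1}^{2}\abs{\om_1}^{2}},
\]
and then bounds the numerator termwise by $\abs{\om}\abs{\om_1}^{2}+\abs{\om}^{2}\abs{\om_1}$, giving exactly $\abs{\om}\abs{\om+\om_1}^{-2}+\abs{\om}^{2}\abs{\om_1}^{-1}\abs{\om+\om_1}^{-2}$ in one line. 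Your two-regime approach is the standard general-purpose technique and would transfer immediately to more complicated multipliers, but for this particular $G^{j}$ the paper's direct computation is more economical and avoids the case distinction entirely.
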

\begin{proof}
	We compute
	\begin{equation*}
		G^{j}(\om+\om_1)-G^{j}(\om_1)=\frac{\upi}{2\uppi}\Bigl(\frac{\om^j+\om_1^j}{\abs{\om+\om_1}^2}-\frac{\om_1^j}{\abs{\om_1}^2}\Bigr),
	\end{equation*}
	which can be bounded in absolute value by
	\begin{equation*}
		\abs*{\frac{\om^j+\om_1^j}{\abs{\om+\om_1}^2}-\frac{\om_1^j}{\abs{\om_1}^2}}=\frac{\abs{\om^j\abs{\om_1}^2-\om_1^j\abs{\om}^2-\om_1^j2\inner{\om}{\om_1}}}{\abs{\om+\om_1}^2\abs{\om_1}^2}\lesssim \abs{\om}\abs{\om+\om_1}^{-2}+\abs{\om}^2\abs{\om_1}^{-1}\abs{\om+\om_1}^{-2}.
	\end{equation*}
	This yields the claim.
\end{proof}
We apply the following summation estimates repeatedly to establish the regularities of our diagrams.
\begin{lemma}\label{lem:summation_estimates}
	It holds that uniformly in $\delta>0$,
	\begin{equation}\label{eq:summation_estimates_0}
		\sum_{\substack{k\in\mbZ^2\\\abs{k}\leq\delta^{-1}}}1\lesssim(1\vee\delta^{-2})
	\end{equation}
	and
	\begin{equation}\label{eq:summation_estimates_-2}
	 	\sum_{\substack{k\in\mbZ^2\setminus\{0\}\\\abs{k}\leq\delta^{-1}}}\abs{k}^{-2}\lesssim(1\vee\log(\delta^{-1})).
	\end{equation}
	What is more, for any $\alpha>2$,
	\begin{equation}\label{eq:summation_estimates_-alpha}
		\sum_{k\in\mbZ^2\setminus\{0\}}\abs{k}^{-\alpha}\lesssim_{\alpha}1.
	\end{equation}
\end{lemma}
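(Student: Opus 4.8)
The three estimates in Lemma~\ref{lem:summation_estimates} are all elementary comparisons of lattice sums with integrals over $\mbR^2$, exploiting the radial structure of the summands. The plan is to dominate each lattice sum by a corresponding (explicitly computable) integral in polar coordinates, keeping careful track of the constants only where the logarithmic divergence is concerned, and otherwise simply observing convergence.

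\textbf{Step 1: the counting bound in~\eqref{eq:summation_estimates_0}.} For $\#\{k\in\mbZ^2:\abs{k}\leq\delta^{-1}\}$ I would use the standard device of associating to each lattice point $k$ the closed unit square $Q_k\coloneqq k+[-1/2,1/2]^2$. These squares tile $\mbR^2$ with disjoint interiors, each has area $1$, and if $\abs{k}\leq\delta^{-1}$ then $Q_k\subset B(0,\delta^{-1}+\sqrt{2}/2)$ since the diameter of a unit square is $\sqrt2$. Hence the number of such $k$ is at most the area of $B(0,\delta^{-1}+\sqrt2/2)$, which is $\uppi(\delta^{-1}+\sqrt2/2)^2\lesssim\delta^{-2}$ (using $\delta^{-1}\geq1$, or simply absorbing the lower-order terms into the implicit constant uniformly for all $\delta>0$, noting the bound is vacuous-free since the left side is $\geq1$).

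\textbf{Step 2: the logarithmic bound in~\eqref{eq:summation_estimates_0} and~\eqref{eq:summation_estimates_-2}.} Here I would again use the squares $Q_k$. For $k\neq0$ the function $x\mapsto\abs{x}^{-2}$ is not monotone on $Q_k$ in general, but one always has $\abs{k}\geq\sqrt2/2$ (the minimal nonzero lattice norm being $1$, in fact), and for $x\in Q_k$ one has $\abs{x}\geq\abs{k}-\sqrt2/2$, so on $Q_k$ with $\abs{k}\geq1$ we get $\abs{k}^{-2}\leq (1-\sqrt2/2)^{-2}\abs{x}^{-2}$ after a crude comparison $\abs{k}\geq (1-\sqrt2/2)^{-1}(\abs{k}-\sqrt2/2)\geq(1-\sqrt2/2)^{-1}\inf_{x\in Q_k}\abs{x}$; wait, more carefully: since $\abs{k}\geq 1$, $\abs{k}-\sqrt2/2\geq (1-\sqrt2/2)\abs{k}$, hence $\abs{k}^{-2}\leq (1-\sqrt2/2)^{-2}(\abs{k}-\sqrt2/2)^{-2}\leq (1-\sqrt2/2)^{-2}\inf_{x\in Q_k}\abs{x}^{-2}$, so $\abs{k}^{-2}\leq (1-\sqrt2/2)^{-2}\int_{Q_k}\abs{x}^{-2}\dd x$. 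Summing over $k\neq0$ with $\abs{k}\leq\delta^{-1}$ and using $\bigcup Q_k\subset B(0,\delta^{-1}+\sqrt2/2)\setminus B(0,1-\sqrt2/2)$ (the inner exclusion since the squares $Q_k$ for $\abs{k}\geq1$ avoid $B(0,1-\sqrt2/2)$), I obtain
\begin{equation*}
	\sum_{\substack{k\in\mbZ^2\setminus\{0\}\\\abs{k}\leq\delta^{-1}}}\abs{k}^{-2}\leq (1-\sqrt2/2)^{-2}\int_{1-\sqrt2/2\leq\abs{x}\leq\delta^{-1}+\sqrt2/2}\abs{x}^{-2}\dd x = 2\uppi(1-\sqrt2/2)^{-2}\bigl(\log(\delta^{-1}+\sqrt2/2)-\log(1-\sqrt2/2)\bigr),
\end{equation*}
which gives~\eqref{eq:summation_estimates_0}. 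For~\eqref{eq:summation_estimates_-2}, when $\delta\leq 1-\sqrt2/2$ one has $\delta^{-1}\geq(1-\sqrt2/2)^{-1}>1$, so $\delta^{-1}+\sqrt2/2\leq\delta^{-1}(1+\sqrt2/2)$ and $\log(\delta^{-1}+\sqrt2/2)\leq\log\delta^{-1}+\log(1+\sqrt2/2)\lesssim\log\delta^{-1}$ (the constant term being absorbed since $\log\delta^{-1}\geq\log((1-\sqrt2/2)^{-1})>0$), and the $-\log(1-\sqrt2/2)$ term is likewise a fixed positive constant absorbed into $\lesssim$.

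\textbf{Step 3: convergence in~\eqref{eq:summation_estimates_-alpha}.} This is the easiest: for $\alpha>2$ and $k\neq0$ the same square comparison gives $\abs{k}^{-\alpha}\leq (1-\sqrt2/2)^{-\alpha}\int_{Q_k}\abs{x}^{-\alpha}\dd x$ for $\abs{k}\geq1$, hence $\sum_{k\neq0}\abs{k}^{-\alpha}\leq(1-\sqrt2/2)^{-\alpha}\int_{\abs{x}\geq 1-\sqrt2/2}\abs{x}^{-\alpha}\dd x=(1-\sqrt2/2)^{-\alpha}\cdot 2\uppi\cdot\frac{(1-\sqrt2/2)^{2-\alpha}}{\alpha-2}<\infty$, with the bound depending only on $\alpha$, as claimed.

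\textbf{Main obstacle.} None of the three steps is genuinely difficult; the only point requiring care is making the square-to-ball comparison uniform and getting the precise constants $\sqrt2/2$ and $1-\sqrt2/2$ to appear (rather than some other absolute constants), since these are exactly the constants quoted in the statement and used downstream in Lemma~\ref{lem:existence_canonical} and Theorem~\ref{thm:enhancement_existence}. The cleanest route is to fix once and for all the tiling by unit squares $Q_k=k+[-1/2,1/2]^2$, note $\operatorname{diam} Q_k=\sqrt2$ so that $\abs{x}\in[\abs{k}-\sqrt2/2,\abs{k}+\sqrt2/2]$ for $x\in Q_k$, and push every estimate through this single observation; the restriction $\delta\leq1-\sqrt2/2$ in~\eqref{eq:summation_estimates_-2} is then visibly just what is needed to convert the additive $\log(\delta^{-1}+\sqrt2/2)$ into a multiple of $\log(\delta^{-1})$.
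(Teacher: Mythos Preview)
Your approach is exactly the paper's: tile by unit squares $Q_k=k+[-1/2,1/2]^2$, compare each summand to the integral over $Q_k$, and then sum into an integral over an annulus. The domain of integration $\{1-\sqrt2/2\leq\abs{x}\leq\delta^{-1}+\sqrt2/2\}$ is correctly identified, and Steps~1 and~the deduction of~\eqref{eq:summation_estimates_-2} from~\eqref{eq:summation_estimates_0} are fine.

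However, your chain of inequalities in Step~2 contains a direction error. You claim $(\abs{k}-\sqrt2/2)^{-2}\leq\inf_{x\in Q_k}\abs{x}^{-2}$, which would require $\abs{x}\leq\abs{k}-\sqrt2/2$ for all $x\in Q_k$; this fails already at $x=k$. The lower bound $\abs{x}\geq\abs{k}-\sqrt2/2$ that you invoke goes the wrong way for a pointwise comparison of $\abs{x}^{-2}$ with $\abs{k}^{-2}$; it is only useful for locating the \emph{inner} radius of the annulus (which you do use correctly). For the pointwise bound you need instead the \emph{upper} bound $\abs{x}\leq\abs{k}+\sqrt2/2$ on $Q_k$, which gives $\abs{x}^{-2}\geq(\abs{k}+\sqrt2/2)^{-2}$ and hence
\begin{equation*}
	\abs{k}^{-2}=\frac{(\abs{k}+\sqrt2/2)^{2}}{\abs{k}^{2}}\,(\abs{k}+\sqrt2/2)^{-2}\leq(1+\sqrt2/2)^{2}\int_{Q_k}\abs{x}^{-2}\dd x,
\end{equation*}
using $\abs{k}\geq1$ for the first factor. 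This is precisely what the paper does. The same correction applies verbatim to Step~3 with exponent $\alpha$ in place of $2$. With this single fix your proof is complete and matches the paper's.
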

\begin{proof}
	The proof follows by treating the left-hand sides in~\eqref{eq:summation_estimates_0}--\eqref{eq:summation_estimates_-alpha} as Riemann sums and by bounding the associated Riemann integrals.
\end{proof}
\begin{details}
	\begin{proof}
		Let $\delta>0$. To establish the inequality~\eqref{eq:summation_estimates_0}, we estimate
		\begin{equation*}
			\sum_{\substack{k\in\mbZ^{2}\\\abs{k}\leq\delta^{-1}}}1=\sum_{\substack{k\in\mbZ^{2}\\\abs{k}\leq\delta^{-1}}}\int_{\{x:\abs{x-k}_{l^{\infty}}\leq1/2\}}1\dd x\leq\int_{\{x:0\leq\abs{x}\leq\delta^{-1}+\sqrt{2}/2\}}1\dd x\lesssim\Bigl(\delta^{-1}+\frac{\sqrt{2}}{2}\Bigr)^{2},
		\end{equation*}
		where we used that
		\begin{equation*}
			\bigcup_{\substack{k\in\mbZ^{2}\\\abs{k}\leq\delta^{-1}}}\{x:\abs{x-k}_{\ell^{\infty}}\leq1/2\}\subset\{x:0\leq\abs{x}\leq\delta^{-1}+\sqrt{2}/2\}.
		\end{equation*}
		We claim that for every $x\geq0$,
		\begin{equation}\label{eq:wedge_estimate}
			(1+x)\leq2(1\vee x).
		\end{equation}
		To prove~\eqref{eq:wedge_estimate}, it suffices to note
		\begin{align*}
			x\in[0,1]&\implies1+x\leq2=2(1\vee x),\\
			x\in(1,\infty)&\implies1+x\leq 2x=2(1\vee x).
		\end{align*}
		Hence, we can estimate
		\begin{equation*}
			\Bigl(\delta^{-1}+\frac{\sqrt{2}}{2}\Bigr)^{2}\leq(\delta^{-1}+1)^{2}\lesssim(1\vee\delta^{-1})^{2}=(1\vee\delta^{-2}),
		\end{equation*}
		which yields~\eqref{eq:summation_estimates_0}.
		
		To establish the inequality~\eqref{eq:summation_estimates_-2}, we estimate
		\begin{equation*}
			\begin{split}
				&\sum_{\substack{k\in\mbZ^2\setminus\{0\}\\\abs{k}\leq\delta^{-1}}}\frac{1}{\abs{k}^{2}}=\sum_{\substack{k\in\mbZ^2\setminus\{0\}\\\abs{k}\leq\delta^{-1}}}\frac{(\abs{k}+\frac{\sqrt{2}}{2})^{2}}{\abs{k}^{2}}\frac{1}{(\abs{k}+\frac{\sqrt{2}}{2})^{2}}\lesssim\sum_{\substack{k\in\mbZ^2\setminus\{0\}\\\abs{k}\leq\delta^{-1}}}\int_{\{x:\abs{x-k}_{\ell^{\infty}}\leq1/2\}}\frac{1}{(\abs{k}+\frac{\sqrt{2}}{2})^{2}}\dd x\\
				&\leq\sum_{\substack{k\in\mbZ^2\setminus\{0\}\\\abs{k}\leq\delta^{-1}}}\int_{\{x:\abs{x-k}_{\ell^{\infty}}\leq1/2\}}\frac{1}{\abs{x}^{2}}\dd x\leq\int_{\{x:1-\sqrt{2}/2<\abs{x}\leq\delta^{-1}+\sqrt{2}/2\}}\frac{1}{\abs{x}^{2}}\dd x\lesssim\log\Bigl(\delta^{-1}+\frac{\sqrt{2}}{2}\Bigr)-\log\Bigl(1-\frac{\sqrt{2}}{2}\Bigr).
			\end{split}
		\end{equation*}
		We claim that there exists some $C>0$ such that for every $x>0$,
		\begin{equation}\label{eq:log_estimate}
			\log\Bigl(x+\frac{\sqrt{2}}{2}\Bigr)+1\leq C(\log(x)\vee 1).
		\end{equation}
		To prove~\eqref{eq:log_estimate} with $C\geq\log\Bigl(\euler+\frac{\sqrt{2}}{2}\Bigr)+1$, we distinguish the cases $x\in(0,\euler]$ and $x>\euler$. For $x\in(0,\euler]$,
		\begin{equation*}
			\log\Bigl(x+\frac{\sqrt{2}}{2}\Bigr)+1\leq\log\Bigl(\euler+\frac{\sqrt{2}}{2}\Bigr)+1\leq C,
		\end{equation*}
		hence~\eqref{eq:log_estimate} holds for all $x\in[0,\euler]$. For $x>\euler$, we compare slopes
		\begin{equation*}
			\frac{1}{x+\frac{\sqrt{2}}{2}}\leq\frac{1}{x}\leq C\frac{1}{x},
		\end{equation*}
		where we used that $C\geq\log\Bigl(\euler+\frac{\sqrt{2}}{2}\Bigr)+1\geq1$, hence~\eqref{eq:log_estimate} holds for all $x>\euler$.
		
		With~\eqref{eq:log_estimate} established, we can bound for every $\delta>0$,
		\begin{equation*}
			\log\Bigl(\delta^{-1}+\frac{\sqrt{2}}{2}\Bigr)-\log\Bigl(1-\frac{\sqrt{2}}{2}\Bigr)\lesssim\log\Bigl(\delta^{-1}+\frac{\sqrt{2}}{2}\Bigr)+1\lesssim(\log(\delta^{-1})\vee 1),
		\end{equation*}
		which yields~\eqref{eq:summation_estimates_-2}.
		
		To establish the inequality~\eqref{eq:summation_estimates_-alpha}, we estimate
		\begin{equation*}
			\begin{split}
				&\sum_{k\in\mbZ^2\setminus\{0\}}\frac{1}{\abs{k}^{\alpha}}=\sum_{k\in\mbZ^2\setminus\{0\}}\frac{(\abs{k}+\frac{\sqrt{2}}{2})^{\alpha}}{\abs{k}^{\alpha}}\frac{1}{(\abs{k}+\frac{\sqrt{2}}{2})^{\alpha}}\lesssim_{\alpha}\sum_{k\in\mbZ^2\setminus\{0\}}\int_{\{x:\abs{x-k}_{\ell^\infty}\leq1/2\}}\frac{1}{(\abs{k}+\frac{\sqrt{2}}{2})^{\alpha}}\dd x\\
				&\leq\sum_{k\in\mbZ^2\setminus\{0\}}\int_{\{x:\abs{x-k}_{\ell^\infty}\leq1/2\}}\frac{1}{\abs{x}^{\alpha}}\dd x\leq\int_{\{x:\abs{x}>1-\sqrt{2}/2\}}\frac{1}{\abs{x}^{\alpha}}\dd x\lesssim_{\alpha}1.
			\end{split}
		\end{equation*}
		This yields the claim.
	\end{proof}
\end{details}
We make repeated use of the following convolution estimate to construct non-linear objects.
\begin{lemma}[{\cite[Lem.~3.10]{zhu_zhu_15},~\cite[Lem.~5 \& Lem.~6]{mourrat_weber_xu_16}}]\label{lem:convolution_estimates}
	Let $\alpha,\beta\in\mbR$ be such that $\alpha+\beta>2$. We have uniformly in $\om\in\mbZ^2$,
	\begin{equation*}
		\sum_{\substack{\om_1\in\mbZ^{2}\setminus\{0,\om\}\\\om_1\sim(\om-\om_1)}}\abs{\om_1}^{-\alpha}\abs{\om-\om_1}^{-\beta}\lesssim_{\alpha+\beta}(1\vee\abs{\om})^{-\alpha-\beta+2}.
	\end{equation*}
	If in addition $\alpha,\beta<2$, then we have uniformly in $\om\in\mbZ^2$,
	\begin{equation*}
		\sum_{\om_1\in\mbZ^{2}\setminus\{0,\om\}}\abs{\om_1}^{-\alpha}\abs{\om-\om_1}^{-\beta}\lesssim_{\alpha,\beta,\alpha+\beta}(1\vee\abs{\om})^{-\alpha-\beta+2}.
	\end{equation*}
\end{lemma}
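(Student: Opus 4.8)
The plan is to prove both statements by comparing the discrete sums with integrals over $\mbR^2$, in the spirit of the summation estimates in Lemma~\ref{lem:summation_estimates}. The first bound (the resonant case, where $\om_1 \sim (\om - \om_1)$, so that $\abs{\om_1}$ and $\abs{\om - \om_1}$ are comparable and each comparable to $\abs{\om}$ up to the forced lower bound) is simpler, and I would handle it first. On the support of the $\sim$-relation one has $\abs{\om_1} \gtrsim \abs{\om}$ and $\abs{\om - \om_1}\gtrsim \abs{\om}$; combined with $\abs{\om_1}+\abs{\om-\om_1}\geq \abs{\om}$ this gives $\abs{\om_1}^{-\alpha}\abs{\om-\om_1}^{-\beta} \lesssim (1\vee\abs{\om})^{-\alpha-\beta}\cdot(\text{something summable})$, more precisely one bounds by $\abs{\om_1}^{-(\alpha+\beta)}$ up to constants when $\abs{\om_1}\sim\abs{\om-\om_1}$, and the remaining sum $\sum_{\om_1\sim(\om-\om_1)}\abs{\om_1}^{-(\alpha+\beta)}$ over $\abs{\om_1}\gtrsim\abs{\om}$ is, by a dyadic decomposition and the comparison-with-integral argument used in Lemma~\ref{lem:summation_estimates}, of order $(1\vee\abs{\om})^{-(\alpha+\beta)+2}$ since $\alpha+\beta>2$. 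One small point is the case $\abs{\om}\lesssim 1$: then the whole sum is just a convergent sum of the form $\sum \abs{\om_1}^{-(\alpha+\beta)}$ which is finite by \eqref{eq:summation_estimates_-alpha}, matching the $1\vee\abs{\om}$ normalization.

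For the second (non-resonant) bound I would follow the classical three-region decomposition of the lattice $\mbZ^2\setminus\{0,\om\}$ according to which of $\abs{\om_1}$ or $\abs{\om-\om_1}$ is small: the region $\abs{\om_1}\leq\abs{\om}/2$ (where $\abs{\om-\om_1}\sim\abs{\om}$, so the sum is $\lesssim \abs{\om}^{-\beta}\sum_{\abs{\om_1}\leq\abs{\om}/2}\abs{\om_1}^{-\alpha} \lesssim \abs{\om}^{-\beta}\cdot\abs{\om}^{2-\alpha}$ using $\alpha<2$ and a comparison-with-integral estimate for $\sum_{0<\abs{\om_1}\leq R}\abs{\om_1}^{-\alpha}\lesssim R^{2-\alpha}$); the symmetric region $\abs{\om-\om_1}\leq\abs{\om}/2$ (handled by the change of variables $\om_1\mapsto\om-\om_1$ and using $\beta<2$); and the region where both $\abs{\om_1}\gtrsim\abs{\om}$ and $\abs{\om-\om_1}\gtrsim\abs{\om}$, where one bounds $\abs{\om_1}^{-\alpha}\abs{\om-\om_1}^{-\beta}\lesssim \abs{\om_1}^{-(\alpha+\beta)}$ on the part with $\abs{\om-\om_1}\gtrsim\abs{\om_1}$ (plus its mirror) and sums over $\abs{\om_1}\gtrsim\abs{\om}$ via a dyadic/integral argument using $\alpha+\beta>2$, again giving $\lesssim(1\vee\abs{\om})^{2-\alpha-\beta}$. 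Collecting the three contributions yields the claimed exponent $2-\alpha-\beta$, and the $1\vee\abs{\om}$ rather than $\abs{\om}$ accounts for the bounded-frequency regime, where one again just invokes \eqref{eq:summation_estimates_-alpha} together with the fact that $\abs{\om-\om_1}$ is bounded below away from $0$ on the relevant finitely many terms (or absorbs those into a constant).

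Since the statement cites \cite[Lem.~3.10]{zhu_zhu_15} and \cite[Lem.~5 \& Lem.~6]{mourrat_weber_xu_16}, I would in practice simply reduce to those references: the only adaptation needed is that here we are on the torus $\mbT^2$ (sums over $\mbZ^2$) rather than $\mbR^2$, but the comparison-with-integral passage is exactly the one already carried out in the proof of Lemma~\ref{lem:summation_estimates} (covering each lattice point by a unit cube and using monotonicity of $\abs{\cdot}^{-s}$ away from the origin), so the continuous estimates transfer verbatim up to harmless constants. The main obstacle, such as it is, is purely bookkeeping: keeping the excluded points $\{0,\om\}$ and the $1\vee\abs{\om}$ normalization consistent across all regions and across the dividing cases $\abs{\om}\lesssim 1$ versus $\abs{\om}\gg 1$, and making sure the dyadic sums that appear converge at exactly the borderline exponents dictated by the hypotheses $\alpha+\beta>2$ and (for the second part) $\alpha,\beta<2$ — there is no genuine analytic difficulty beyond what Lemma~\ref{lem:summation_estimates} already supplies.
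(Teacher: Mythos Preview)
Your approach is correct and is essentially the standard argument found in the cited references. The paper itself does not give a proof of this lemma: it simply states the result and attributes it to \cite[Lem.~3.10]{zhu_zhu_15} and \cite[Lem.~5 \& Lem.~6]{mourrat_weber_xu_16}, so there is nothing further to compare against.
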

\begin{details}
	Assume $\alpha=2$ and $\beta\in(0,2)$. We may still bound the convolution by losing an $\eps\in(0,\beta)$ of regularity
	\begin{equation*}
		\sum_{\om_1\in\mbZ^{2}\setminus\{0,\om\}}\abs{\om_1}^{-2}\abs{\om-\om_1}^{-\beta}\leq\sum_{\om_1\in\mbZ^{2}\setminus\{0,\om\}}\abs{\om_1}^{-2+\eps}\abs{\om-\om_1}^{-\beta}\lesssim(1\vee\abs{\om})^{-\beta+\eps}.
	\end{equation*}
\end{details}
The next convolution result is useful for estimating correlated frequencies $\om\not=\om'$.
\begin{lemma}\label{lem:convolution_twofold}
	Let $\alpha,\beta,\gamma\in(0,2)$ be such that $\alpha+\gamma>2$ and $\beta+\gamma>2$. Then for all $\om,\om'\in\mbZ^2\setminus\{0\}$ such that $\om\neq\om'$ it uniformly holds that
	\begin{equation*}
		\sum_{\om_1\in\mbZ^2\setminus\{0,\om,\om'\}}\abs{\om-\om_1}^{-\alpha}\abs{\om'-\om_1}^{-\beta}\abs{\om_1}^{-\gamma}\lesssim \abs{\om-\om'}^{-\beta}\abs{\om}^{-\alpha-\gamma+2}+\abs{\om-\om'}^{-\alpha}\abs{\om'}^{-\beta-\gamma+2}.
	\end{equation*}
\end{lemma}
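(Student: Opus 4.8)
The strategy is a standard ``divide the summation domain according to which factor is largest'' argument combined with Lemma~\ref{lem:convolution_estimates}. Since $\om\neq\om'$ we may partition $\mbZ^2\setminus\{0,\om,\om'\}$ into the three regions where $\abs{\om-\om_1}$, $\abs{\om'-\om_1}$, or $\abs{\om_1}$ is (comparable to) the smallest of the three. On each region, two of the three distances are bounded below by a constant multiple of $\abs{\om-\om'}$ (for instance, if $\abs{\om-\om_1}$ is smallest then $\abs{\om'-\om_1}\geq\abs{\om-\om'}-\abs{\om-\om_1}\gtrsim\abs{\om-\om'}$, using $\abs{\om-\om_1}\leq\abs{\om-\om'}/2$ up to a harmless relabelling of the threshold). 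This lets me pull out a factor of $\abs{\om-\om'}$ to an appropriate negative power and reduce each piece to a single two-fold convolution of the type handled by Lemma~\ref{lem:convolution_estimates}.

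Concretely, first I would treat the region $R_1=\{\om_1 : \abs{\om-\om_1}\le \tfrac12\abs{\om-\om'}\}$. There $\abs{\om'-\om_1}\gtrsim\abs{\om-\om'}$, so
\begin{equation*}
	\sum_{\om_1\in R_1\setminus\{0,\om\}}\abs{\om-\om_1}^{-\alpha}\abs{\om'-\om_1}^{-\beta}\abs{\om_1}^{-\gamma}\lesssim\abs{\om-\om'}^{-\beta}\sum_{\om_1\in\mbZ^2\setminus\{0,\om\}}\abs{\om-\om_1}^{-\alpha}\abs{\om_1}^{-\gamma}\lesssim\abs{\om-\om'}^{-\beta}(1\vee\abs{\om})^{-\alpha-\gamma+2},
\end{equation*}
where the last step is the second bound of Lemma~\ref{lem:convolution_estimates}, valid since $\alpha,\gamma\in(0,2)$ and $\alpha+\gamma>2$. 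Symmetrically, on $R_2=\{\om_1 : \abs{\om'-\om_1}\le\tfrac12\abs{\om-\om'}\}$ one has $\abs{\om-\om_1}\gtrsim\abs{\om-\om'}$ and the same lemma (with $\beta,\gamma$) gives a contribution $\lesssim\abs{\om-\om'}^{-\alpha}(1\vee\abs{\om'})^{-\beta-\gamma+2}$. Finally, on the remaining region $R_3$, where both $\abs{\om-\om_1}\gtrsim\abs{\om-\om'}$ and $\abs{\om'-\om_1}\gtrsim\abs{\om-\om'}$, I would bound one of the two, say $\abs{\om'-\om_1}^{-\beta}\lesssim\abs{\om-\om'}^{-\beta}$, then apply Lemma~\ref{lem:convolution_estimates} to $\sum\abs{\om-\om_1}^{-\alpha}\abs{\om_1}^{-\gamma}$ as before; this yields a term already dominated by the $R_1$ contribution. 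Summing the three pieces and using $(1\vee\abs{\om})\ge 1$ to absorb the $1\vee$ into $\abs{\om}$ (since $\om\neq0$) gives exactly the claimed estimate.

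\textbf{Main obstacle.} The only delicate point is bookkeeping at the ``boundary'' of the regions and ensuring the threshold constant ($\tfrac12$ versus some other fraction) is chosen so that the $\gtrsim\abs{\om-\om'}$ lower bounds genuinely hold via the triangle inequality, and so that $R_1\cup R_2\cup R_3$ covers $\mbZ^2\setminus\{0,\om,\om'\}$; none of this is conceptually hard. One should also double-check that the excluded points $0,\om,\om'$ do not cause trouble — e.g.\ $\om_1=\om'$ lies in $R_2$ and is simply omitted from the sum, which only decreases it, and similarly for $\om_1=0$ and $\om_1=\om$. I do not anticipate needing the hypothesis in any way beyond $\alpha,\beta,\gamma\in(0,2)$, $\alpha+\gamma>2$, $\beta+\gamma>2$ and $\om\neq\om'$, all of which are exactly what Lemma~\ref{lem:convolution_estimates} consumes on each subregion.
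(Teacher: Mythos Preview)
Your approach is correct and essentially the same as the paper's. The paper splits into only two regions, $\{\abs{\om-\om_1}\le\tfrac12\abs{\om-\om'}\}$ and its complement, applying Lemma~\ref{lem:convolution_estimates} once on each; your three-region version simply subdivides the complement into $R_2$ and $R_3$, and since you absorb the $R_3$ contribution back into the $R_1$ bound, the argument collapses to the paper's two-case split.
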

\begin{proof}
	The proof follows by two applications of Lemma~\ref{lem:convolution_estimates}, one in the case $\abs{\om-\om_1}\leq \abs{\om-\om'}/2$ and the other in the complement.
\end{proof}
\begin{details}
	\begin{proof}
		Assume $\abs{\om-\om_1}\leq \abs{\om-\om'}/2$. It follows by the triangle inequality,
		\begin{equation*}
			\abs{\om-\om'}\leq\abs{\om-\om_1}+\abs{\om'-\om_1}
		\end{equation*}
		and so
		\begin{equation*}
			\abs{\om-\om'}/2\leq\abs{\om'-\om_1}.
		\end{equation*}
		Therefore by Lemma~\ref{lem:convolution_estimates}, using that $\beta>0$, $\alpha<2$, $\gamma<2$ and $\alpha+\gamma>2$,
		\begin{equation*}
			\begin{split}
				&\sum_{\substack{\om_1\in\mbZ^{2}\setminus\{0,\om,\om'\}\\\abs{\om-\om_1}\leq \abs{\om-\om'}/2}}\abs{\om-\om_1}^{-\alpha}\abs{\om'-\om_1}^{-\beta}\abs{\om_1}^{-\gamma}\\
				&\lesssim \abs{\om-\om'}^{-\beta}	\sum_{\om_1\in\mbZ^{2}\setminus\{0,\om\}}\abs{\om-\om_1}^{-\alpha}\abs{\om_1}^{-\gamma}\lesssim_{\alpha,\gamma,\alpha+\gamma}\abs{\om-\om'}^{-\beta}\abs{\om}^{-\alpha-\gamma+2}.
			\end{split}
		\end{equation*}
		Assume $\abs{\om-\om'}/2<\abs{\om-\om_1}$. We estimate by Lemma~\ref{lem:convolution_estimates}, using $\alpha>0$, $\beta,\gamma<2$, $\beta+\gamma>2$,
		\begin{equation*}
			\begin{split}
				&\sum_{\substack{\om_1\in\mbZ^{2}\setminus\{0,\om,\om'\}\\\abs{\om-\om'}/2<\abs{\om-\om_1}}}\abs{\om-\om_1}^{-\alpha}\abs{\om'-\om_1}^{-\beta}\abs{\om_1}^{-\gamma}\\
				&\lesssim\abs{\om-\om'}^{-\alpha}\sum_{\om_1\in\mbZ^{2}\setminus\{0,\om'\}}\abs{\om'-\om_1}^{-\beta}\abs{\om_1}^{-\gamma}\lesssim_{\beta,\gamma,\beta+\gamma}\abs{\om-\om'}^{-\alpha}\abs{\om'}^{-\beta-\gamma+2}.
			\end{split}
		\end{equation*}
		This yields the claim.
	\end{proof}
\end{details}
To derive finer estimates, it is useful to introduce a discrete paraproduct analogue to extend Lemma~\ref{lem:convolution_estimates}, see~\eqref{eq:definition_precsim} for the relevant notation.
\begin{lemma}\label{lem:convolution_estimate_paraproduct}
	Let $\alpha,\beta\in\mbR$ be such that $\alpha>2$ and $\beta\geq0$. We have uniformly in $\om\in\mbZ^2\setminus\{0\}$,
	\begin{equation*}
		\sum_{\substack{\om_1\in\mbZ^{2}\setminus\{0,\om\}\\\om_1\precsim(\om-\om_1)}}\abs{\om_1}^{-\alpha}\abs{\om-\om_1}^{-\beta}\lesssim_{\alpha}\abs{\om}^{-\beta}.
	\end{equation*}
\end{lemma}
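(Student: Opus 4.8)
The statement asserts that under the paraproduct-type restriction $\om_1 \precsim (\om-\om_1)$, the convolution sum $\sum \abs{\om_1}^{-\alpha}\abs{\om-\om_1}^{-\beta}$ over $\om_1\in\mbZ^2\setminus\{0,\om\}$ is bounded by $\abs{\om}^{-\beta}$ (up to a constant depending on $\alpha$), provided $\alpha>2$ and $\beta\geq 0$. The key structural observation is that the restriction $\om_1\precsim(\om-\om_1)$ forces $\abs{\om_1}$ to be (a fixed fraction) smaller than $\abs{\om-\om_1}$; indeed, from the discussion following~\eqref{eq:definition_precsim}, $\om_1\precsim\om_2$ with $\om_2 = \om-\om_1$ implies $\abs{\om_1}\leq \tfrac{8}{9}\abs{\om-\om_1}$, and moreover $\abs{\om}\leq \abs{\om_1}+\abs{\om-\om_1}\leq \tfrac{17}{9}\abs{\om-\om_1}$, so that $\abs{\om-\om_1}\gtrsim \abs{\om}$ uniformly on the support of the sum.

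First I would record these two elementary consequences of the cut-off: on the relevant set of $\om_1$ we have $\abs{\om-\om_1}\gtrsim\abs{\om}$, hence $\abs{\om-\om_1}^{-\beta}\lesssim \abs{\om}^{-\beta}$ since $\beta\geq 0$. Pulling this factor out of the sum leaves
\begin{equation*}
	\sum_{\substack{\om_1\in\mbZ^{2}\setminus\{0,\om\}\\\om_1\precsim(\om-\om_1)}}\abs{\om_1}^{-\alpha}\abs{\om-\om_1}^{-\beta}\lesssim \abs{\om}^{-\beta}\sum_{\om_1\in\mbZ^{2}\setminus\{0\}}\abs{\om_1}^{-\alpha}.
\end{equation*}
Since $\alpha>2$, the remaining sum $\sum_{\om_1\in\mbZ^2\setminus\{0\}}\abs{\om_1}^{-\alpha}$ converges to a finite constant depending only on $\alpha$ by~\eqref{eq:summation_estimates_-alpha}. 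Combining these two bounds yields the claim.

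There is essentially no obstacle here: the proof is a two-line argument once the geometric content of $\precsim$ is unpacked. The only point requiring a modicum of care is making the inequality $\abs{\om-\om_1}\gtrsim\abs{\om}$ fully rigorous in the degenerate cases — e.g. when $\om-\om_1$ or $\om_1$ is small — but the restriction $\om_1,\,\om-\om_1\neq 0$ together with the explicit constants from the $\precsim$-relation (recorded in the \texttt{details} block after~\eqref{eq:definition_precsim}) handles this, since $\abs{\om}\leq\tfrac{17}{9}\abs{\om-\om_1}$ holds verbatim on the support. I would therefore present the proof exactly in the two-step form above, citing~\eqref{eq:definition_precsim} for the geometric bound and~\eqref{eq:summation_estimates_-alpha} for the convergence of the tail sum.
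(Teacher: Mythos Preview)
Your proof is correct and follows exactly the approach the paper indicates: the paper's own proof is the one-line remark ``immediate by~\eqref{eq:summation_estimates_-alpha} and the bound induced by~\eqref{eq:definition_precsim},'' which is precisely your two-step argument of using $\abs{\om-\om_1}\gtrsim\abs{\om}$ from the $\precsim$-relation to extract $\abs{\om}^{-\beta}$, then summing $\abs{\om_1}^{-\alpha}$ via~\eqref{eq:summation_estimates_-alpha}.
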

\begin{proof}
	The proof is immediate by~\eqref{eq:summation_estimates_-alpha} and the bound induced by~\eqref{eq:definition_precsim}.
\end{proof}
\subsection{Double Sum Estimates}
When we take the Fourier transform of the noise $\het\boldsymbol{\xi}$, it generates convolutions of $\hat{\het}(t,\om-m_1)$ against $\dd W^{j_1}(t,m_1)$ in $m_1\in\mbZ^{2}$. We also generate convolutions in $\om_k\in\mbZ^{2}$ by constructing non-linear objects such as $\vdiv\mcI[\ti\nabla\Phi_{\ti}]$. Those steps lead to double sums over $\om_k$ and $m_k$ that do not factorize. In this section, we estimate those sums.

We apply the following estimate in Subsection~\ref{sec:Wick_contractions} to construct $\PreCocktail{50}$.
\begin{lemma}\label{lem:double_sum_PreCocktail_renormalised}
	Let $\gamma\in[0,1/2)$ and $\eps\in(0,1-2\gamma)$. Then uniformly in $\om,\om_1\in\mbZ^{2}$,  it holds that
	\begin{equation*}
		\begin{split}
			&\sum_{\substack{\om_4\in\mbZ^{2}\setminus\{0,\om,\om-\om_1\}\\(\om-\om_4)\sim\om_4}}\abs{\om-\om_4}^{-2}(1+\abs{\om}\abs{\om_4}^{-1})\abs{\om_4}^{2\gamma}\abs{\om-\om_1-\om_4}^{-1}\\
			&\quad\times\sum_{m_2\in\mbZ^{2}}(1+\abs{\om-\om_1-\om_4-m_2}^2)^{-1}(1+\abs{\om_4+m_2}^2)^{-1}\\
			&\lesssim(1\vee\abs{\om-\om_1})^{-2+\eps}(1\vee\abs{\om})^{-1+2\gamma+\eps}.
		\end{split}
	\end{equation*}
\end{lemma}
\begin{proof}
	We decompose the sum over $m_2\in\mbZ^{2}$ into the regions $m_2=\om-\om_1-\om_4$, $m_2=-\om_4$ and $m_2\in\mbZ^{2}\setminus\{\om-\om_1-\om_4,-\om_4\}$. 
	\begin{details}
		\begin{align}
			&\sum_{\substack{\om_4\in\mbZ^{2}\setminus\{0,\om,\om-\om_1\}\\(\om-\om_4)\sim\om_4}}\abs{\om-\om_4}^{-2}(1+\abs{\om}\abs{\om_4}^{-1})\abs{\om_4}^{2\gamma}\abs{\om-\om_1-\om_4}^{-1}\notag\\
			&\quad\times\sum_{m_2\in\mbZ^{2}}(1+\abs{\om-\om_1-\om_4-m_2}^2)^{-1}(1+\abs{\om_4+m_2}^2)^{-1}\notag\\
			&=2(1+\abs{\om-\om_1}^2)^{-1}\sum_{\substack{\om_4\in\mbZ^{2}\setminus\{0,\om,\om-\om_1\}\\(\om-\om_4)\sim\om_4}}\abs{\om-\om_4}^{-2}(1+\abs{\om}\abs{\om_4}^{-1})\abs{\om_4}^{2\gamma}\abs{\om-\om_1-\om_4}^{-1}\label{eq:double_sum_PreCocktail_renormalised_1}\\
			&\quad+\sum_{\substack{\om_4\in\mbZ^{2}\setminus\{0,\om,\om-\om_1\}\\(\om-\om_4)\sim\om_4}}\abs{\om-\om_4}^{-2}(1+\abs{\om}\abs{\om_4}^{-1})\abs{\om_4}^{2\gamma}\abs{\om-\om_1-\om_4}^{-1}\notag\\
			&\qquad\times\sum_{m_2\in\mbZ^{2}\setminus\{\om-\om_1-\om_4,-\om_4\}}(1+\abs{\om-\om_1-\om_4-m_2}^{2})^{-1}(1+\abs{\om_4+m_2}^{2})^{-1}.\label{eq:double_sum_PreCocktail_renormalised_2}
		\end{align}
	\end{details}
	We only give the bound that involves the sum over $m_2\in\mbZ^{2}\setminus\{\om-\om_1-\om_4,-\om_4\}$ as the other ones are simpler. We estimate
	\begin{equation*}
		\begin{split}
			&\sum_{m_2\in\mbZ^{2}\setminus\{\om-\om_1-\om_4,-\om_4\}}(1+\abs{\om-\om_1-\om_4-m_2}^{2})^{-1}(1+\abs{\om_4+m_2}^{2})^{-1}\\
			&\leq\sum_{m_2\in\mbZ^{2}\setminus\{\om-\om_1-\om_4,-\om_4\}}\abs{\om-\om_1-\om_4-m_2}^{-2}\abs{\om_4+m_2}^{-2}.
		\end{split}
	\end{equation*}
	Let $\eps\in(0,1)$; we can then apply Lemma~\ref{lem:convolution_estimates} to bound
	\begin{equation*}
		\begin{split}
			&\sum_{\substack{\om_4\in\mbZ^{2}\setminus\{0,\om,\om-\om_1\}\\(\om-\om_4)\sim\om_4}}\abs{\om-\om_4}^{-2}(1+\abs{\om}\abs{\om_4}^{-1})\abs{\om_4}^{2\gamma}\abs{\om-\om_1-\om_4}^{-1}\\
			&\quad\times\sum_{m_2\in\mbZ^{2}\setminus\{\om-\om_1-\om_4,-\om_4\}}\abs{\om-\om_1-\om_4-m_2}^{-2}\abs{\om_4+m_2}^{-2}\\
			&\lesssim(1\vee\abs{\om-\om_1})^{-2+2\eps}\sum_{\substack{\om_4\in\mbZ^{2}\setminus\{0,\om,\om-\om_1\}\\(\om-\om_4)\sim\om_4}}\abs{\om-\om_4}^{-2}(1+\abs{\om}\abs{\om_4}^{-1})\abs{\om_4}^{2\gamma}\abs{\om-\om_1-\om_4}^{-1}.
		\end{split}
	\end{equation*}
	\begin{details}
		Hence, to bound~\eqref{eq:double_sum_PreCocktail_renormalised_1} and~\eqref{eq:double_sum_PreCocktail_renormalised_2}, it suffices to estimate
		\begin{equation*}
			\sum_{\substack{\om_4\in\mbZ^{2}\setminus\{0,\om,\om-\om_1\}\\(\om-\om_4)\sim\om_4}}\abs{\om-\om_4}^{-2}(1+\abs{\om}\abs{\om_4}^{-1})\abs{\om_4}^{2\gamma}\abs{\om-\om_1-\om_4}^{-1}.
		\end{equation*}
	\end{details}
	Let $p\in(1,\infty)$, $q=p/(p-1)$ and $\delta>0$. By H\"{o}lder's inequality,
	\begin{equation*}
		\begin{split}
			&\sum_{\substack{\om_4\in\mbZ^{2}\setminus\{0,\om,\om-\om_1\}\\(\om-\om_4)\sim\om_4}}\abs{\om-\om_4}^{-2}(1+\abs{\om}\abs{\om_4}^{-1})\abs{\om_4}^{2\gamma}\abs{\om-\om_1-\om_4}^{-1}\\
			&\leq\Bigl(\sum_{\substack{\om_4\in\mbZ^{2}\setminus\{0,\om,\om-\om_1\}\\(\om-\om_4)\sim\om_4}}\abs{\om-\om_4}^{-2p}\abs{\om_4}^{\delta p}(1+\abs{\om}\abs{\om_4}^{-1})^{p}\Bigr)^{1/p}\\
			&\quad\times\Bigl(\sum_{\om_4\in\mbZ^{2}\setminus\{0,\om,\om-\om_1\}}\abs{\om_4}^{-\delta q}\abs{\om_4}^{2\gamma q}\abs{\om-\om_1-\om_4}^{-q}\Bigr)^{1/q}.
		\end{split}
	\end{equation*}
	We assume $\gamma\in[0,1/2)$, $2<p$ and $1-2/p+2\gamma<\delta<2-2/p$. It follows that
	\begin{equation*}
		p(2-\delta)>2,\qquad q(\delta-2\gamma)<2,\qquad q<2,\qquad q(\delta-2\gamma+1)>2.
	\end{equation*}
	and consequently by Lemma~\ref{lem:convolution_estimates},
	\begin{equation*}
		\begin{split}
			&\Bigl(\sum_{\substack{\om_4\in\mbZ^{2}\setminus\{0,\om,\om-\om_1\}\\(\om-\om_4)\sim\om_4}}\abs{\om-\om_4}^{-2p}\abs{\om_4}^{\delta p}(1+\abs{\om}\abs{\om_4}^{-1})^{p}\Bigr)^{1/p}\\
			&\quad\times\Bigl(\sum_{\om_4\in\mbZ^{2}\setminus\{0,\om,\om-\om_1\}}\abs{\om_4}^{-\delta q}\abs{\om_4}^{2\gamma q}\abs{\om-\om_1-\om_4}^{-q}\Bigr)^{1/q}\\
			&\lesssim(1\vee\abs{\om})^{-2+\delta+2/p}(1\vee\abs{\om-\om_1})^{-\delta+2\gamma-1+2/q}.
		\end{split}
	\end{equation*}
	Let $\eps\in(0,1-2\gamma)$; we can now set $\delta=1-2/p+2\gamma+\eps$ to conclude
	\begin{equation*}
		\sum_{\substack{\om_4\in\mbZ^{2}\setminus\{0,\om,\om-\om_1\}\\(\om-\om_4)\sim\om_4}}\abs{\om-\om_4}^{-2}(1+\abs{\om}\abs{\om_4}^{-1})\abs{\om_4}^{2\gamma}\abs{\om-\om_1-\om_4}^{-1}\lesssim(1\vee\abs{\om})^{-1+2\gamma+\eps}(1\vee\abs{\om-\om_1})^{-\eps},
	\end{equation*}
	which yields the claim.
\end{proof}
\begin{details}
	We apply the following estimate in Subsection~\ref{sec:Wick_contractions} to construct $\PreCocktail{30}$ and $\PreCocktail{40}$.
	\begin{lemma}\label{lem:double_sum_PreCocktail}
		Let $\gamma\in[0,1/2)$ and $\eps\in(0,(1-2\gamma)/2)$. Then uniformly in $\om,\om_1\in\mbZ^{2}$, it holds that 
		\begin{equation*}
			\begin{split}
				&\sum_{\substack{\om_4\in\mbZ^{2}\setminus\{0,\om-\om_1\}\\(\om-\om_4)\sim\om_4}}\abs{\om_4}^{-1+2\gamma}\abs{\om-\om_1-\om_4}^{-2}\sum_{m_2\in\mbZ^{2}}(1+\abs{\om-\om_1-\om_4-m_2}^2)^{-1}(1+\abs{\om_4+m_2}^2)^{-1}\\
				&\lesssim(1\vee\abs{\om})^{-1+2\gamma+2\eps}(1\vee\abs{\om-\om_1})^{-2+\eps}.
			\end{split}
		\end{equation*}
	\end{lemma}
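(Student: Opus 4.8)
The statement to prove, Lemma~\ref{lem:double_sum_PreCocktail}, is the analogue of Lemma~\ref{lem:double_sum_PreCocktail_renormalized} for the diagrams $\PreCocktail{30}$ and $\PreCocktail{40}$: here the symmetrized elliptic multiplier $G^{j}(\om_{4})+G^{j}(\om_{1}+\om_{2})$ is \emph{not} available, so there is no decay gain from $\om_{4}$; instead one carries the full colouring $\abs{\om_{4}}^{-1+2\gamma}\abs{\om-\om_{1}-\om_{4}}^{-2}$ coming from $G^{j}(\om_{4})$ and the $\STriangle$-shaped subdiagram. The plan is to follow the two-step pattern of the proof of Lemma~\ref{lem:double_sum_PreCocktail_renormalized}: first dispose of the inner sum over $m_{2}\in\mbZ^{2}$ using the convolution estimate, then estimate the resulting single sum over $\om_{4}$ by H\"older's inequality plus Lemma~\ref{lem:convolution_estimates}.

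\textbf{Step 1: the inner $m_{2}$-sum.} First I would split $\sum_{m_{2}\in\mbZ^{2}}$ into the diagonal terms $m_{2}=\om-\om_{1}-\om_{4}$, $m_{2}=-\om_{4}$ and the remainder $m_{2}\in\mbZ^{2}\setminus\{\om-\om_{1}-\om_{4},-\om_{4}\}$. On the diagonal the factor $(1+\abs{\om-\om_{1}-\om_{4}-m_{2}}^{2})^{-1}(1+\abs{\om_{4}+m_{2}}^{2})^{-1}$ contributes $(1+\abs{\om-\om_{1}}^{2})^{-1}$, which is harmless and in fact better than what is needed (it behaves like $(1\vee\abs{\om-\om_{1}})^{-2}$, already absorbing the target exponent $-2+\eps$). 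On the off-diagonal part I would bound $(1+\abs{\cdot}^{2})^{-1}\leq\abs{\cdot}^{-2}$ and apply the second bound of Lemma~\ref{lem:convolution_estimates} with $\alpha=\beta=2-\eps$ (losing an $\eps\in(0,1)$ of regularity as in the renormalised case), which yields $\sum_{m_{2}}\lesssim(1\vee\abs{\om-\om_{1}})^{-2+2\eps}$. In both cases the remaining $\om_{4}$-dependent weight is untouched, so after Step 1 it suffices to control
\begin{equation*}
	(1\vee\abs{\om-\om_{1}})^{-2+2\eps}\sum_{\substack{\om_{4}\in\mbZ^{2}\setminus\{0,\om-\om_{1}\}\\(\om-\om_{4})\sim\om_{4}}}\abs{\om_{4}}^{-1+2\gamma}\abs{\om-\om_{1}-\om_{4}}^{-2}.
\end{equation*}

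\textbf{Step 2: the $\om_{4}$-sum via H\"older.} The sum over $\om_{4}$ is a convolution of $\abs{\om_{4}}^{-1+2\gamma}$ against $\abs{\om-\om_{1}-\om_{4}}^{-2}$, restricted by $(\om-\om_{4})\sim\om_{4}$. The second factor has exponent exactly $2$, so the raw convolution is log-divergent; as in the proof of Lemma~\ref{lem:double_sum_PreCocktail_renormalized} I would introduce a H\"older split with exponents $p\in(1,\infty)$, $q=p/(p-1)$ and a small parameter $\delta>0$, writing $\abs{\om-\om_{1}-\om_{4}}^{-2}=\abs{\om-\om_{1}-\om_{4}}^{-2+\delta}\cdot\abs{\om-\om_{1}-\om_{4}}^{-\delta}$ (or, symmetrically, moving a small power onto $\abs{\om_{4}}$), choosing the exponents so that the $\sim$-restricted factor satisfies the condition for the ``paraproduct'' bound ($p$ times its exponent exceeds $2$) and the other factor satisfies the two-sided condition of Lemma~\ref{lem:convolution_estimates}. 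The admissible window $\gamma\in[0,1/2)$ and $\eps\in(0,(1-2\gamma)/2)$ is precisely what makes such a choice of $p,\delta$ possible; one then collects the two outputs $(1\vee\abs{\om})^{\cdots}$ and $(1\vee\abs{\om-\om_{1}})^{\cdots}$ and checks the exponents add up to $-1+2\gamma$ on $\abs{\om}$ and, together with the $(1\vee\abs{\om-\om_{1}})^{-2+2\eps}$ from Step 1, to $-2+\eps$ on $\abs{\om-\om_{1}}$ after relabelling $\eps$. The colouring difference between $\PreCocktail{30}$ (factor $G^{j}(\om_{4})$) and $\PreCocktail{40}$ (factor $G^{j}(\om_{1}+\om_{2})=G^{j}(\om-\om_{4})$) only swaps which of $\abs{\om_{4}}^{-1}$, $\abs{\om-\om_{4}}^{-1}$ appears, and since $(\om-\om_{4})\sim\om_{4}$ these are comparable, so the same bound applies verbatim; I would remark this rather than redo the computation.

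\textbf{Main obstacle.} The only delicate point is the bookkeeping in Step 2: one must verify that the H\"older exponents $p,q$ and the shift $\delta$ can be chosen simultaneously so that (i) the $\sim$-constrained sum stays in the convergent range of the paraproduct estimate Lemma~\ref{lem:convolution_estimate_paraproduct} (condition $p(2-\delta)>2$ or its analogue), (ii) the free convolution stays in the two-sided range of Lemma~\ref{lem:convolution_estimates} (both exponents in $(0,2)$ with sum $>2$), and (iii) the final exponents match the claimed $(1\vee\abs{\om})^{-1+2\gamma+2\eps}(1\vee\abs{\om-\om_{1}})^{-2+\eps}$. This is the same balancing act carried out in the proof of Lemma~\ref{lem:double_sum_PreCocktail_renormalized}, and the constraints $\gamma<1/2$, $\eps<(1-2\gamma)/2$ are tailored to leave room for it; there is no genuinely new difficulty, only the need to track exponents carefully.
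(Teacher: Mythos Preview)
Your proposal is correct and follows essentially the same route as the paper: first dispose of the $m_{2}$-sum by separating the two diagonal contributions from the remainder and applying Lemma~\ref{lem:convolution_estimates} to the latter, then control the residual $\om_{4}$-sum by a H\"older split with auxiliary exponents $p,q,\delta$ chosen so that the $\sim$-restricted factor falls in the range of the resonant estimate in Lemma~\ref{lem:convolution_estimates} while the unrestricted factor stays in its two-sided range. The paper's concrete choice is to first weaken $\abs{\om-\om_{1}-\om_{4}}^{-2}\leq\abs{\om-\om_{1}-\om_{4}}^{-2+\eps}$ and then split $\abs{\om_{4}}^{-1+2\gamma}=\abs{\om_{4}}^{-1+\delta}\cdot\abs{\om_{4}}^{2\gamma-\delta}$ with $2/\eps<p$ and $\delta=-2/p+2\gamma+2\eps$, which is the ``moving a small power onto $\abs{\om_{4}}$'' variant you mention parenthetically; your remark on $\PreCocktail{40}$ via comparability under $\sim$ is also how the paper handles it.
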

	\begin{proof}
		Let $\eps\in(0,1)$. We first bound the sum over $m_2$. We distinguish the cases $m_2=\om-\om_1-\om_4$, $m_2=-\om_4$ and $m_2\in\mbZ^{2}\setminus\{\om-\om_1-\om_4,-\om_4\}$,
		\begin{align}
			&\sum_{\substack{\om_4\in\mbZ^{2}\setminus\{0,\om-\om_1\}\\(\om-\om_4)\sim\om_4}}\abs{\om_4}^{-1+2\gamma}\abs{\om-\om_1-\om_4}^{-2}\sum_{m_2\in\mbZ^{2}}(1+\abs{\om-\om_1-\om_4-m_2}^2)^{-1}(1+\abs{\om_4+m_2}^2)^{-1}\notag\\
			&=2(1+\abs{\om-\om_1}^2)^{-1}\sum_{\substack{\om_4\in\mbZ^{2}\setminus\{0,\om-\om_1\}\\(\om-\om_4)\sim\om_4}}\abs{\om_4}^{-1+2\gamma}\abs{\om-\om_1-\om_4}^{-2}\label{eq:double_sum_PreCocktail_1}\\
			&\quad+\sum_{\substack{\om_4\in\mbZ^{2}\setminus\{0,\om-\om_1\}\\(\om-\om_4)\sim\om_4}}\abs{\om_4}^{-1+2\gamma}\abs{\om-\om_1-\om_4}^{-2}\notag\\
			&\qquad\times\sum_{m_2\in\mbZ^{2}\setminus\{\om-\om_1-\om_4,-\om_4\}}(1+\abs{\om-\om_1-\om_4-m_2}^2)^{-1}(1+\abs{\om_4+m_2}^2)^{-1}.\label{eq:double_sum_PreCocktail_2}
		\end{align}
		Let $\eps\in(0,1)$. We first estimate~\eqref{eq:double_sum_PreCocktail_2} by Lemma~\ref{lem:convolution_estimates},
		\begin{equation*}
			\begin{split}
				&\sum_{\substack{\om_4\in\mbZ^{2}\setminus\{0,\om-\om_1\}\\(\om-\om_4)\sim\om_4}}\abs{\om_4}^{-1+2\gamma}\abs{\om-\om_1-\om_4}^{-2}\\
				&\quad\times\sum_{m_2\in\mbZ^{2}\setminus\{\om-\om_1-\om_4,-\om_4\}}(1+\abs{\om-\om_1-\om_4-m_2}^2)^{-1}(1+\abs{\om_4+m_2}^2)^{-1}\\
				&\lesssim(1\vee\abs{\om-\om_1})^{-2+2\eps}\sum_{\substack{\om_4\in\mbZ^{2}\setminus\{0,\om-\om_1\}\\(\om-\om_4)\sim\om_4}}\abs{\om_4}^{-1+2\gamma}\abs{\om-\om_1-\om_4}^{-2}.
			\end{split}
		\end{equation*}
		To bound~\eqref{eq:double_sum_PreCocktail_1} and~\eqref{eq:double_sum_PreCocktail_2}, it suffices to estimate
		\begin{equation*}
			\sum_{\substack{\om_4\in\mbZ^{2}\setminus\{0,\om-\om_1\}\\(\om-\om_4)\sim\om_4}}\abs{\om_4}^{-1+2\gamma}\abs{\om-\om_1-\om_4}^{-2}.
		\end{equation*}
		Assume $\gamma\in[0,1/2)$, $\eps\in(0,1-2\gamma)$, $p\in(1,\infty)$, $q=p/(p-1)$ and $\delta>0$. We estimate by H\"{o}lder's inequality,
		\begin{equation}\label{eq:PreCocktail_colour_difference}
			\begin{split}
				&\sum_{\substack{\om_4\in\mbZ^{2}\setminus\{0,\om-\om_1\}\\(\om-\om_4)\sim\om_4}}\abs{\om_4}^{-1+2\gamma}\abs{\om-\om_1-\om_4}^{-2}\leq\sum_{\substack{\om_4\in\mbZ^{2}\setminus\{0,\om-\om_1\}\\(\om-\om_4)\sim\om_4}}\abs{\om_4}^{-1+2\gamma}\abs{\om-\om_1-\om_4}^{-2+\eps}\\
				&\leq\Bigl(\sum_{\substack{\om_4\in\mbZ^{2}\setminus\{0,\om-\om_1\}\\(\om-\om_4)\sim\om_4}}\abs{\om_4}^{p(-1+\delta)}\Bigr)^{1/p}\Bigl(\sum_{\substack{\om_4\in\mbZ^{2}\setminus\{0,\om-\om_1\}}}\abs{\om_4}^{q(2\gamma-\delta )}\abs{\om-\om_1-\om_4}^{q(-2+\eps)}\Bigr)^{1/q}.
			\end{split}
		\end{equation}
		We assume $2/\eps<p$ and $-2/p+2\gamma+\eps<\delta<1-2/p$. It follows
		\begin{equation*}
			p(1-\delta)>2,\qquad q(\delta-2\gamma)<2,\qquad q(2-\eps)<2,\qquad q(\delta-2\gamma+2-\eps)>2.
		\end{equation*}
		We apply Lemma~\ref{lem:convolution_estimates} to obtain
		\begin{equation*}
			\begin{split}
				&\Bigl(\sum_{\substack{\om_4\in\mbZ^{2}\setminus\{0,\om-\om_1\}\\(\om-\om_4)\sim\om_4}}\abs{\om_4}^{p(-1+\delta)}\Bigr)^{1/p}\Bigl(\sum_{\substack{\om_4\in\mbZ^{2}\setminus\{0,\om-\om_1\}}}\abs{\om_4}^{q(2\gamma-\delta )}\abs{\om-\om_1-\om_4}^{q(-2+\eps)}\Bigr)^{1/q}\\
				&\lesssim(1\vee\abs{\om})^{-1+\delta+2/p}(1\vee\abs{\om-\om_1})^{2\gamma-\delta-2+\eps+2/q}.
			\end{split}
		\end{equation*}
		Assume $\eps\in(0,(1-2\gamma)/2)$. Upon choosing $\delta=-2/p+2\gamma+2\eps$,
		\begin{equation*}
			\sum_{\substack{\om_4\in\mbZ^{2}\setminus\{0,\om-\om_1\}\\(\om-\om_4)\sim\om_4}}\abs{\om_4}^{-1+2\gamma}\abs{\om-\om_1-\om_4}^{-2}\lesssim(1\vee\abs{\om})^{-1+2\gamma+2\eps}(1\vee\abs{\om-\om_1})^{-\eps}.
		\end{equation*}
		This yields the claim.
	\end{proof}
\end{details}
We apply the following estimate in Subsection~\ref{sec:canonical} to construct $\tl^{\delta}$, ${\PreThreeloop{10}\!}^{\delta}$ and ${\PreThreeloop{20}\!}^{\delta}$. We use the restriction $\abs{m_1}\leq\delta^{-1}$ induced by the cut-off $\varphi(\delta m_1)$ to establish a bound in terms of $\log(\delta^{-1})$.
\begin{lemma}\label{lem:sum_m_om_canonical}
	Let $\eps\in(0,1/2)$ and $\delta>0$. Then uniformly in $\om\in\mbZ^{2}\setminus\{0\}$ it holds that 
	\begin{equation}\label{eq:sum_m_om_canonical}
		\begin{split}
			&\sum_{\substack{m_1\in\mbZ^{2}\\\abs{m_1}\leq\delta^{-1}}}\sum_{\om_1\in\mbZ^{2}\setminus\{0,\om\}}(1+\abs{\om_1-m_1}^{2})^{-1}(1+\abs{\om-\om_1+m_1}^{2})^{-1}\abs{\om_1}^{-2}(1+\abs{\om}\abs{\om-\om_1}^{-1})\\
			&\lesssim\abs{\om}^{-2+3\eps}(1\vee\log(\delta^{-1})).
		\end{split}
	\end{equation}
\end{lemma}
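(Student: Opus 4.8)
The strategy is to split the sum over $m_1$ according to whether $m_1$ is close to $\om_1$, close to $\om-\om_1$ (equivalently $-m_1$ close to $\om-\om_1$), or far from both, and similarly to split the sum over $\om_1$ into the paraproduct regions $\om_1 \precsim (\om-\om_1)$, $(\om-\om_1)\precsim\om_1$ and $\om_1 \sim (\om-\om_1)$. The only source of a $\log(\delta^{-1})$ divergence must come from the ``far'' region where neither factor $(1+\abs{\om_1-m_1}^2)^{-1}$ nor $(1+\abs{\om-\om_1+m_1}^2)^{-1}$ can pin $m_1$ near a fixed frequency; there the constraint $\abs{m_1}\leq\delta^{-1}$ is what keeps the $m_1$-sum finite and contributes the logarithm via~\eqref{eq:summation_estimates_-2}.

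\textbf{Step 1: the inner $m_1$-sum.} For fixed $\om,\om_1$ I would bound
\begin{equation*}
	\sum_{\substack{m_1\in\mbZ^{2}\\\abs{m_1}\leq\delta^{-1}}}(1+\abs{\om_1-m_1}^{2})^{-1}(1+\abs{\om-\om_1+m_1}^{2})^{-1}.
\end{equation*}
Decompose $\mbZ^2$ into $\{m_1 = \om_1\}$, $\{m_1 = -(\om-\om_1)\}$, and $\mbZ^2\setminus\{\om_1,-(\om-\om_1)\}$. On the first two singleton sets one factor is $1$ and the other is $(1+\abs{\om}^2)^{-1}$, giving $\lesssim \abs{\om}^{-2}$. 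On the complement I replace $(1+x^2)^{-1}$ by $\abs{x}^{-2}$ and apply Lemma~\ref{lem:convolution_twofold} with exponents $(2,2,0)$ — but since the third exponent is $0$ the summability fails over all of $\mbZ^2$, which is exactly why the cutoff matters: restricting to $\abs{m_1}\leq\delta^{-1}$ and sacrificing an $\eps$ of decay from each factor, Lemma~\ref{lem:convolution_twofold} (or a direct splitting into the regions $\abs{\om_1-m_1}\leq\abs{\om}/2$ vs.\ its complement combined with Lemma~\ref{lem:convolution_estimates} and the crude bound $\sum_{\abs{m_1}\leq\delta^{-1}}\abs{m_1}^{-2}\lesssim\log(\delta^{-1})$ from~\eqref{eq:summation_estimates_-2}) yields
\begin{equation*}
	\sum_{\substack{m_1\in\mbZ^{2}\\\abs{m_1}\leq\delta^{-1}}}(1+\abs{\om_1-m_1}^{2})^{-1}(1+\abs{\om-\om_1+m_1}^{2})^{-1}\lesssim \log(\delta^{-1})\big(\abs{\om}^{-2+2\eps}+(\text{localised pieces})\big),
\end{equation*}
where the localised pieces carry extra decay in $\abs{\om_1}$ or $\abs{\om-\om_1}$ near where $m_1$ is pinned. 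The key point to extract here is: after summing over the admissible $m_1$, one is left with a quantity bounded by $\log(\delta^{-1})$ times a kernel in $(\om,\om_1)$ that behaves like $\abs{\om}^{-2+2\eps}$ plus terms decaying faster in one of the remaining arguments.

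\textbf{Step 2: the outer $\om_1$-sum.} Plugging the Step~1 bound back in, it remains to control
\begin{equation*}
	\sum_{\om_1\in\mbZ^{2}\setminus\{0,\om\}}\abs{\om_1}^{-2}(1+\abs{\om}\abs{\om-\om_1}^{-1})\cdot\big(\abs{\om}^{-2+2\eps}+\ldots\big).
\end{equation*}
The factor $(1+\abs{\om}\abs{\om-\om_1}^{-1})$ splits into a $1$-piece and an $\abs{\om}\abs{\om-\om_1}^{-1}$-piece; in the latter, $\abs{\om-\om_1}^{-1}$ is harmless away from $\om_1=\om$ and near $\om_1=\om$ one isolates that single frequency and a dyadic annulus. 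For the $1$-piece, $\sum_{\om_1\neq0}\abs{\om_1}^{-2}$ itself is only logarithmically divergent, so I would again spend an $\eps$: $\sum_{\om_1\in\mbZ^2\setminus\{0\},\,\abs{\om_1}\lesssim\abs{\om}}\abs{\om_1}^{-2+\eps}\lesssim\abs{\om}^{\eps}$ using~\eqref{eq:summation_estimates_0}, while the high-frequency part $\abs{\om_1}\gtrsim\abs{\om}$ is controlled by~\eqref{eq:summation_estimates_-alpha} after noting the residual $m_1$-localised pieces from Step~1 supply extra decay there. For the $\abs{\om}\abs{\om-\om_1}^{-1}$-piece one uses Lemma~\ref{lem:convolution_estimates} (with exponents summing just above $2$ thanks to the $\eps$-margins) to get $\abs{\om}\cdot\abs{\om}^{-1+\eps}=\abs{\om}^{\eps}$. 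Collecting, the $\om_1$-sum is $\lesssim \abs{\om}^{-2+3\eps}\log(\delta^{-1})$, which is~\eqref{eq:sum_m_om_canonical}.

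\textbf{Main obstacle.} The delicate point is the bookkeeping of $\eps$'s: both the $m_1$-sum and the $\om_1$-sum are individually at the borderline of logarithmic divergence (exponent exactly $-2$ in dimension $2$), and one must be careful that the single $\log(\delta^{-1})$ is produced exactly once — by the $m_1$-cutoff — and not spuriously a second time by a borderline $\om_1$-sum, nor lost by over-spending decay. Concretely, I must ensure that in every region the localisation of either $m_1$ or $\om_1$ to a near-resonant frequency genuinely converts a marginal sum into a convergent one (via Lemma~\ref{lem:convolution_estimates}/\ref{lem:convolution_twofold}), and that the truly non-localised region — which alone feeds the logarithm — contributes only the single factor $\log(\delta^{-1})$ claimed, with the final $\abs{\om}^{-2+3\eps}$ decay intact after using $\abs{\om-\om_1}\lesssim\abs{\om}+\abs{\om_1}$ and the constraint $\om=\om_1+\om_2+\ldots$ implicitly built into the summation. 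This is routine but must be done region-by-region.
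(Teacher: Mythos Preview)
Your order of summation --- $m_1$ first, then $\om_1$ --- is opposite to the paper's, and this reversal is where the argument slips. The inner $m_1$-sum in Step~1 actually converges \emph{without} the cutoff: the two factors $(1+\abs{\om_1-m_1}^2)^{-1}$ and $(1+\abs{\om-\om_1+m_1}^2)^{-1}$ are centred at $m_1=\om_1$ and $m_1=\om_1-\om$, a distance $\abs{\om}$ apart, so after isolating these two singletons the remainder is a convolution handled by Lemma~\ref{lem:convolution_estimates} with an $\eps$-loss, giving $\lesssim\abs{\om}^{-2+2\eps}$ \emph{uniformly in $\om_1$}, with no logarithm. Your Step~1 is thus too pessimistic in one sense (spurious $\log$) and too optimistic in another (the uniformity in $\om_1$ is the real problem). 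In Step~2 you are then left with $\abs{\om}^{-2+2\eps}\sum_{\om_1\neq 0,\om}\abs{\om_1}^{-2}$ for the main piece, and the region $\abs{\om_1}\gg\abs{\om}$ is not controlled: the ``localised pieces'' from Step~1 (the singletons $m_1=\om_1$ and $m_1=\om_1-\om$) each contribute $(1+\abs{\om}^2)^{-1}$, which carries no $\om_1$-decay whatsoever, so your appeal to~\eqref{eq:summation_estimates_-alpha} for the high-frequency part has no basis.

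The paper keeps the $m_1$-sum \emph{outermost} and, for each fixed $m_1\neq 0,-\om$, decomposes the $\om_1$-sum into paraproduct regions relative to $\om-\om_1+m_1$ (not $\om-\om_1$ as you write). In the region $\om_1\succsim(\om-\om_1+m_1)$ one has $\abs{\om+m_1}\lesssim\abs{\om_1}$, so $\abs{\om_1}^{-2}$ may be traded for $\abs{\om+m_1}^{-2}$; the $\om_1$-convolution then closes to $\abs{\om}^{-2+2\eps}$ via Lemma~\ref{lem:convolution_estimates}, and the remaining sum $\sum_{\abs{m_1}\leq\delta^{-1}}\abs{\om+m_1}^{-2}$ is exactly where the single $\log(\delta^{-1})$ enters via~\eqref{eq:summation_estimates_-2}. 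Your approach can be salvaged, but only by splitting on whether $\abs{\om_1}$ exceeds $2\delta^{-1}$ (not $\abs{\om}$) and observing that for $\abs{\om_1}>2\delta^{-1}$ the cutoff $\abs{m_1}\leq\delta^{-1}$ forces $\abs{\om_1-m_1}\gtrsim\abs{\om_1}$, recovering the missing $\om_1$-decay --- which re-couples the two sums and reproduces the paper's mechanism from a different angle.
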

\begin{proof}
	To bound~\eqref{eq:sum_m_om_canonical} it suffices to estimate the two parts
	\begin{equation}\label{eq:double_sum_bound_first_order}
		\sum_{\substack{m_1\in\mbZ^{2}\\\abs{m_1}\leq\delta^{-1}}}\sum_{\om_1\in\mbZ^{2}\setminus\{0,\om\}}(1+\abs{\om_1-m_1}^{2})^{-1}(1+\abs{\om-\om_1+m_1}^{2})^{-1}\abs{\om_1}^{-2}
	\end{equation}
	and
	\begin{equation}\label{eq:double_sum_bound_higher_order}
		\sum_{\substack{m_1\in\mbZ^{2}\\\abs{m_1}\leq\delta^{-1}}}\sum_{\om_1\in\mbZ^{2}\setminus\{0,\om\}}(1+\abs{\om_1-m_1}^{2})^{-1}(1+\abs{\om-\om_1+m_1}^{2})^{-1}\abs{\om_1}^{-2}\abs{\om-\om_1}^{-1}.
	\end{equation}
	
	Let us consider~\eqref{eq:double_sum_bound_first_order}. We decompose the sum over $m_1$ into the regions $m_1=0$, $m_1=-\om$ and $m_1\in\mbZ^{2}\setminus\{0,-\om\}$.
	\begin{details}
		\begin{align}
			&\sum_{\substack{m_1\in\mbZ^{2}\\\abs{m_1}\leq\delta^{-1}}}\sum_{\om_1\in\mbZ^{2}\setminus\{0,\om\}}(1+\abs{\om_1-m_1}^{2})^{-1}(1+\abs{\om-\om_1+m_1}^{2})^{-1}\abs{\om_1}^{-2}\notag\\
			&=\sum_{\om_1\in\mbZ^{2}\setminus\{0,\om\}}(1+\abs{\om_1}^{2})^{-1}(1+\abs{\om-\om_1}^{2})^{-1}\abs{\om_1}^{-2}\label{eq:double_sum_bound_first_order_special_cases_1}\\
			&\quad+\mathds{1}_{\abs{\om}\leq\delta^{-1}}\sum_{\om_1\in\mbZ^{2}\setminus\{0,\om\}}(1+\abs{\om+\om_1}^{2})^{-1}(1+\abs{\om_1}^{2})^{-1}\abs{\om_1}^{-2}\label{eq:double_sum_bound_first_order_special_cases_2}\\
			&\quad+\sum_{\substack{m_1\in\mbZ^{2}\setminus\{0,-\om\}\\\abs{m_1}\leq\delta^{-1}}}\sum_{\om_1\in\mbZ^{2}\setminus\{0,\om\}}(1+\abs{\om_1-m_1}^{2})^{-1}(1+\abs{\om-\om_1+m_1}^{2})^{-1}\abs{\om_1}^{-2}.\notag
		\end{align}
		Let $\eps\in(0,1)$. The first two terms~\eqref{eq:double_sum_bound_first_order_special_cases_1}, \eqref{eq:double_sum_bound_first_order_special_cases_2} can be estimated by Lemma~\ref{lem:convolution_estimates}. For~\eqref{eq:double_sum_bound_first_order_special_cases_1},
		\begin{align*}
			\sum_{\om_1\in\mbZ^{2}\setminus\{0,\om\}}(1+\abs{\om_1}^{2})^{-1}(1+\abs{\om-\om_1}^{2})^{-1}\abs{\om_1}^{-2}&\leq\sum_{\om_1\in\mbZ^{2}\setminus\{0,\om\}}\abs{\om-\om_1}^{-2}\abs{\om_1}^{-4}\\
			&\leq\sum_{\om_1\in\mbZ^{2}\setminus\{0,\om\}}\abs{\om-\om_1}^{-2+\eps}\abs{\om_1}^{-2+\eps}\lesssim\abs{\om}^{-2+2\eps},
		\end{align*}
		and for~\eqref{eq:double_sum_bound_first_order_special_cases_2},
		\begin{align*}
			\sum_{\om_1\in\mbZ^{2}\setminus\{0,\om\}}(1+\abs{\om+\om_1}^{2})^{-1}(1+\abs{\om_1}^{2})^{-1}\abs{\om_1}^{-2}&\leq\abs{\om}^{-4}+\sum_{\om_1\in\mbZ^{2}\setminus\{0,\om,-\om\}}\abs{\om+\om_1}^{-2}\abs{\om_1}^{-4}\\
			&\leq\abs{\om}^{-4}+\sum_{\om_1\in\mbZ^{2}\setminus\{0,\om,-\om\}}\abs{\om+\om_1}^{-2+\eps}\abs{\om_1}^{-2+\eps}\\
			&\lesssim\abs{\om}^{-2+2\eps}.
		\end{align*}
	\end{details}
	The sum over $m_1\in\mbZ^{2}\setminus\{0,-\om\}$ is given by
	\begin{equation*}
		\sum_{\substack{m_1\in\mbZ^{2}\setminus\{0,-\om\}\\\abs{m_1}\leq\delta^{-1}}}\sum_{\om_1\in\mbZ^{2}\setminus\{0,\om\}}(1+\abs{\om_1-m_1}^{2})^{-1}(1+\abs{\om-\om_1+m_1}^{2})^{-1}\abs{\om_1}^{-2}.
	\end{equation*}
	The sum over $\om_1\in\mbZ^{2}\setminus\{0,\om\}$ can then be further decomposed into the regions $\om_1=m_1$, $\om_1=\om+m_1$ and $\om_1\in\mbZ^{2}\setminus\{0,\om,m_1,\om+m_1\}$.
	\begin{details}
		\begin{align}
			&\sum_{\substack{m_1\in\mbZ^{2}\setminus\{0,-\om\}\\\abs{m_1}\leq\delta^{-1}}}\sum_{\om_1\in\mbZ^{2}\setminus\{0,\om\}}(1+\abs{\om_1-m_1}^{2})^{-1}(1+\abs{\om-\om_1+m_1}^{2})^{-1}\abs{\om_1}^{-2}\notag\\
			&=(1+\abs{\om}^{2})^{-1}\sum_{\substack{m_1\in\mbZ^{2}\setminus\{0,-\om\}\\\abs{m_1}\leq\delta^{-1}}}(\mathds{1}_{m_1\neq\om}\abs{m_1}^{-2}+\abs{\om+m_1}^{-2})\label{eq:double_sum_bound_first_order_general_cases_1}\\
			&\quad+\sum_{\substack{m_1\in\mbZ^{2}\setminus\{0,-\om\}\\\abs{m_1}\leq\delta^{-1}}}\sum_{\om_1\in\mbZ^{2}\setminus\{0,\om,m_1,\om+m_1\}}(1+\abs{\om_1-m_1}^{2})^{-1}(1+\abs{\om-\om_1+m_1}^{2})^{-1}\abs{\om_1}^{-2}.\notag
		\end{align}
		Let $\delta>0$. The first two sums~\eqref{eq:double_sum_bound_first_order_general_cases_1} can be estimated by~\eqref{eq:summation_estimates_-2},
		\begin{equation*}
			\sum_{\substack{m_1\in\mbZ^{2}\setminus\{0,-\om\}\\\abs{m_1}\leq\delta^{-1}}}(\abs{m_1}^{-2}+\abs{\om+m_1}^{-2})\lesssim(1\vee\log(\delta^{-1}))+(1\vee\log(\abs{\om}+\delta^{-1}))\lesssim\abs{\om}^{\eps}(1\vee\log(\delta^{-1})).
		\end{equation*}
		For the last inequality, we used that $\om\in\mbZ^{2}\setminus\{0\}$ and $1+\log(1+\delta^{-1})\lesssim(1\vee\log(\delta^{-1}))$ to estimate
		\begin{equation*}
			\log(\abs{\om}+\delta^{-1})\leq\log(\abs{\om}(1+\delta^{-1}))=\log(\abs{\om})+\log(1+\delta^{-1})\lesssim\abs{\om}^{\eps}(1+\log(1+\delta^{-1}))\lesssim\abs{\om}^{\eps}(1\vee\log(\delta^{-1})),
		\end{equation*}
		which yields
		\begin{equation*}
			(1\vee\log(\delta^{-1}))+(1\vee\log(\abs{\om}+\delta^{-1}))\lesssim(1+\abs{\om}^{\eps})(1\vee\log(\delta^{-1}))\lesssim\abs{\om}^{\eps}(1\vee\log(\delta^{-1})).
		\end{equation*}
		
	\end{details}
	We only give the bound that involves the sums over $m_1\in\mbZ^{2}\setminus\{0,-\om\}$ and $\om_1\in\mbZ^{2}\setminus\{0,\om,m_1,\om+m_1\}$. Using that $\om_1\in\mbZ^{2}\setminus\{0,\om,m_1,\om+m_1\}$, we may estimate
	\begin{equation*}
		\begin{split}
			&\sum_{\substack{m_1\in\mbZ^{2}\setminus\{0,-\om\}\\\abs{m_1}\leq\delta^{-1}}}\sum_{\om_1\in\mbZ^{2}\setminus\{0,\om,m_1,\om+m_1\}}(1+\abs{\om_1-m_1}^{2})^{-1}(1+\abs{\om-\om_1+m_1}^{2})^{-1}\abs{\om_1}^{-2}\\
			&\leq\sum_{\substack{m_1\in\mbZ^{2}\setminus\{0,-\om\}\\\abs{m_1}\leq\delta^{-1}}}\sum_{\om_1\in\mbZ^{2}\setminus\{0,\om,m_1,\om+m_1\}}\abs{\om_1-m_1}^{-2}\abs{\om-\om_1+m_1}^{-2}\abs{\om_1}^{-2}.
		\end{split}
	\end{equation*}
	Introducing the dyadic partition of unity $(\varrho_{q})_{q\in\mbN_{-1}}$ (cf.~\eqref{eq:definition_sim}~\&~\eqref{eq:definition_precsim}), we decompose this sum into
	\begin{align}
		&\sum_{\substack{m_1\in\mbZ^{2}\setminus\{0,-\om\}\\\abs{m_1}\leq\delta^{-1}}}\sum_{\om_1\in\mbZ^{2}\setminus\{0,\om,m_1,\om+m_1\}}\abs{\om_1-m_1}^{-2}\abs{\om-\om_1+m_1}^{-2}\abs{\om_1}^{-2}\notag\\
		&=\sum_{\substack{m_1\in\mbZ^{2}\setminus\{0,-\om\}\\\abs{m_1}\leq\delta^{-1}}}\sum_{\substack{\om_1\in\mbZ^{2}\setminus\{0,\om,m_1,\om+m_1\}\\\om_1\precsim(\om-\om_1+m_1)}}\abs{\om_1-m_1}^{-2}\abs{\om-\om_1+m_1}^{-2}\abs{\om_1}^{-2}\label{eq:double_sum_bound_first_order_1}\\
		&\quad+\sum_{\substack{m_1\in\mbZ^{2}\setminus\{0,-\om\}\\\abs{m_1}\leq\delta^{-1}}}\sum_{\substack{\om_1\in\mbZ^{2}\setminus\{0,\om,m_1,\om+m_1\}\\\om_1\succsim(\om-\om_1+m_1)}}\abs{\om_1-m_1}^{-2}\abs{\om-\om_1+m_1}^{-2}\abs{\om_1}^{-2}\label{eq:double_sum_bound_first_order_2}\\
		&\quad+\sum_{\substack{m_1\in\mbZ^{2}\setminus\{0,-\om\}\\\abs{m_1}\leq\delta^{-1}}}\sum_{\substack{\om_1\in\mbZ^{2}\setminus\{0,\om,m_1,\om+m_1\}\\\om_1\sim(\om-\om_1+m_1)}}\abs{\om_1-m_1}^{-2}\abs{\om-\om_1+m_1}^{-2}\abs{\om_1}^{-2}\label{eq:double_sum_bound_first_order_3}.
	\end{align}
	Assume $\eps<2/3$; the terms~\eqref{eq:double_sum_bound_first_order_1} and~\eqref{eq:double_sum_bound_first_order_3} can be estimated by two applications of Lemma~\ref{lem:convolution_estimates},
	\begin{equation*}
		\begin{split}
			&\sum_{\substack{m_1\in\mbZ^{2}\setminus\{0,-\om\}\\\abs{m_1}\leq\delta^{-1}}}\sum_{\substack{\om_1\in\mbZ^{2}\setminus\{0,\om,m_1,\om+m_1\}\\\abs{\om_1}\leq64/9\abs{\om-\om_1+m_1}}}\abs{\om_1-m_1}^{-2}\abs{\om-\om_1+m_1}^{-2}\abs{\om_1}^{-2}\\
			&\lesssim\sum_{\substack{m_1\in\mbZ^{2}\setminus\{0,-\om\}\\\abs{m_1}\leq\delta^{-1}}}\abs{\om+m_1}^{-2}\sum_{\substack{\om_1\in\mbZ^{2}\setminus\{0,\om,m_1,\om+m_1\}\\\abs{\om_1}\leq64/9\abs{\om-\om_1+m_1}}}\abs{\om_1-m_1}^{-2}\abs{\om_1}^{-2}\\
			&\lesssim\sum_{\substack{m_1\in\mbZ^{2}\setminus\{0,-\om\}\\\abs{m_1}\leq\delta^{-1}}}\abs{\om+m_1}^{-2}\abs{m_1}^{-2+2\eps}\lesssim\abs{\om}^{-2+3\eps}.
		\end{split}
	\end{equation*}
	The second term~\eqref{eq:double_sum_bound_first_order_2} can be estimated by Lemma~\ref{lem:convolution_estimates} and~\eqref{eq:summation_estimates_-2},
	\begin{equation*}
		\begin{split}
			&\sum_{\substack{m_1\in\mbZ^{2}\setminus\{0,-\om\}\\\abs{m_1}\leq\delta^{-1}}}\sum_{\substack{\om_1\in\mbZ^{2}\setminus\{0,\om,m_1,\om+m_1\}\\\om_1\succsim(\om-\om_1+m_1)}}\abs{\om_1-m_1}^{-2}\abs{\om-\om_1+m_1}^{-2}\abs{\om_1}^{-2}\\
			&\lesssim\sum_{\substack{m_1\in\mbZ^{2}\setminus\{0,-\om\}\\\abs{m_1}\leq\delta^{-1}}}\abs{\om+m_1}^{-2}\sum_{\substack{\om_1\in\mbZ^{2}\setminus\{0,\om,m_1,\om+m_1\}\\\om_1\succsim(\om-\om_1+m_1)}}\abs{\om_1-m_1}^{-2}\abs{\om-\om_1+m_1}^{-2}\\
			&\lesssim\abs{\om}^{-2+2\eps}\sum_{\substack{m_1\in\mbZ^{2}\setminus\{0,-\om\}\\\abs{m_1}\leq\delta^{-1}}}\abs{\om+m_1}^{-2}\lesssim\abs{\om}^{-2+3\eps}(1\vee\log(\delta^{-1})).
		\end{split}
	\end{equation*}
	
	Decomposing~\eqref{eq:double_sum_bound_first_order} as discussed and bounding the resulting terms yields
	\begin{equation*}
		\sum_{\substack{m_1\in\mbZ^{2}\\\abs{m_1}\leq\delta^{-1}}}\sum_{\om_1\in\mbZ^{2}\setminus\{0,\om\}}(1+\abs{\om_1-m_1}^{2})^{-1}(1+\abs{\om-\om_1+m_1}^{2})^{-1}\abs{\om_1}^{-2}\lesssim\abs{\om}^{-2+3\eps}(1\vee\log(\delta^{-1})).
	\end{equation*}
	Assume $\eps\in(0,1/2)$; the bound on~\eqref{eq:double_sum_bound_higher_order} then follows in a similar manner.
	\begin{details}
		We consider~\eqref{eq:double_sum_bound_higher_order}. We decompose the sum over $m_1\in\mbZ^{2}$ into the regions $m_1=0$, $m_1=-\om$, $m_1=\om$ and $m_1\in\mbZ^{2}\setminus\{0,-\om,\om\}$,
		\begin{align}
			&\sum_{\substack{m_1\in\mbZ^{2}\\\abs{m_1}\leq\delta^{-1}}}\sum_{\om_1\in\mbZ^{2}\setminus\{0,\om\}}(1+\abs{\om_1-m_1}^{2})^{-1}(1+\abs{\om-\om_1+m_1}^{2})^{-1}\abs{\om_1}^{-2}\abs{\om-\om_1}^{-1}\notag\\
			&=\sum_{\om_1\in\mbZ^{2}\setminus\{0,\om\}}(1+\abs{\om_1}^2)^{-1}(1+\abs{\om-\om_1}^{2})^{-1}\abs{\om_1}^{-2}\abs{\om-\om_1}^{-1}\label{eq:double_sum_bound_higher_order_special_cases_1}\\
			&\quad+\mathds{1}_{\abs{\om}\leq\delta^{-1}}\sum_{\om_1\in\mbZ^{2}\setminus\{0,\om\}}(1+\abs{\om+\om_1}^{2})^{-1}(1+\abs{\om_1}^{2})^{-1}\abs{\om_1}^{-2}\abs{\om-\om_1}^{-1}\label{eq:double_sum_bound_higher_order_special_cases_2}\\
			&\quad+\mathds{1}_{\abs{\om}\leq\delta^{-1}}\sum_{\om_1\in\mbZ^{2}\setminus\{0,\om\}}(1+\abs{\om-\om_1}^{2})^{-1}(1+\abs{2\om-\om_1}^{2})^{-1}\abs{\om_1}^{-2}\abs{\om-\om_1}^{-1}\label{eq:double_sum_bound_higher_order_special_cases_3}\\
			&\quad+\sum_{\substack{m_1\in\mbZ^{2}\setminus\{0,-\om,\om\}\\\abs{m_1}\leq\delta^{-1}}}\sum_{\om_1\in\mbZ^{2}\setminus\{0,\om\}}(1+\abs{\om_1-m_1}^{2})^{-1}(1+\abs{\om-\om_1+m_1}^{2})^{-1}\abs{\om_1}^{-2}\abs{\om-\om_1}^{-1}\label{eq:double_sum_bound_higher_order_general_cases}.
		\end{align}
		We estimate the first term~\eqref{eq:double_sum_bound_higher_order_special_cases_1} by
		\begin{equation*}
			\sum_{\om_1\in\mbZ^{2}\setminus\{0,\om\}}(1+\abs{\om_1}^2)^{-1}(1+\abs{\om-\om_1}^{2})^{-1}\abs{\om_1}^{-2}\abs{\om-\om_1}^{-1}\leq\sum_{\om_1\in\mbZ^{2}\setminus\{0,\om\}}\abs{\om_1}^{-4}\abs{\om-\om_1}^{-3}.
		\end{equation*}
		To decompose this sum, we introduce the dyadic partition of unity $(\varrho_{q})_{q\in\mbN_{-1}}$,
		\begin{equation*}
			\begin{split}
				&\sum_{\om_1\in\mbZ^{2}\setminus\{0,\om\}}\abs{\om_1}^{-4}\abs{\om-\om_1}^{-3}\\
				&=\sum_{\substack{\om_1\in\mbZ^{2}\setminus\{0,\om\}\\\om_1\precsim(\om-\om_1)}}\abs{\om_1}^{-4}\abs{\om-\om_1}^{-3}+\sum_{\substack{\om_1\in\mbZ^{2}\setminus\{0,\om\}\\\om_1\succsim(\om-\om_1)}}\abs{\om_1}^{-4}\abs{\om-\om_1}^{-3}+\sum_{\substack{\om_1\in\mbZ^{2}\setminus\{0,\om\}\\\om_1\sim(\om-\om_1)}}\abs{\om_1}^{-4}\abs{\om-\om_1}^{-3}\\
				&\lesssim\abs{\om}^{-3}\sum_{\substack{\om_1\in\mbZ^{2}\setminus\{0,\om\}\\\om_1\precsim(\om-\om_1)}}\abs{\om_1}^{-4}+\abs{\om}^{-4}\sum_{\substack{\om_1\in\mbZ^{2}\setminus\{0,\om\}\\\om_1\succsim(\om-\om_1)}}\abs{\om-\om_1}^{-3}+\sum_{\substack{\om_1\in\mbZ^{2}\setminus\{0,\om\}\\\om_1\sim(\om-\om_1)}}\abs{\om_1}^{-4}\abs{\om-\om_1}^{-3}\\
				&\lesssim\abs{\om}^{-3}+\abs{\om}^{-4}+\abs{\om}^{-5},
			\end{split}
		\end{equation*}
		where in the last line we applied~\eqref{eq:summation_estimates_-alpha} and Lemma~\ref{lem:convolution_estimates}. Let $\eps\in(0,1)$. We bound the term~\eqref{eq:double_sum_bound_higher_order_special_cases_2},
		\begin{equation*}
			\begin{split}
				&\sum_{\om_1\in\mbZ^{2}\setminus\{0,\om\}}(1+\abs{\om+\om_1}^{2})^{-1}(1+\abs{\om_1}^{2})^{-1}\abs{\om_1}^{-2}\abs{\om-\om_1}^{-1}\\
				&\leq\abs{\om}^{-5}+\sum_{\om_1\in\mbZ^{2}\setminus\{0,\om,-\om\}}\abs{\om+\om_1}^{-2}\abs{\om_1}^{-4}\abs{\om-\om_1}^{-1},
			\end{split}
		\end{equation*}
		which can be controlled by
		\begin{equation*}
			\begin{split}
				&\sum_{\om_1\in\mbZ^{2}\setminus\{0,\om,-\om\}}\abs{\om+\om_1}^{-2}\abs{\om_1}^{-4}\abs{\om-\om_1}^{-1}\\
				&=\sum_{\substack{\om_1\in\mbZ^{2}\setminus\{0,\om,-\om\}\\\om_1\precsim(\om-\om_1)}}\abs{\om+\om_1}^{-2}\abs{\om_1}^{-4}\abs{\om-\om_1}^{-1}+\sum_{\substack{\om_1\in\mbZ^{2}\setminus\{0,\om,-\om\}\\\om_1\succsim(\om-\om_1)}}\abs{\om+\om_1}^{-2}\abs{\om_1}^{-4}\abs{\om-\om_1}^{-1}\\
				&\quad+\sum_{\substack{\om_1\in\mbZ^{2}\setminus\{0,\om,-\om\}\\\om_1\sim(\om-\om_1)}}\abs{\om+\om_1}^{-2}\abs{\om_1}^{-4}\abs{\om-\om_1}^{-1}\\
				&\lesssim\abs{\om}^{-1}\sum_{\substack{\om_1\in\mbZ^{2}\setminus\{0,\om,-\om\}\\\om_1\precsim(\om-\om_1)}}\abs{\om+\om_1}^{-2}\abs{\om_1}^{-4}+\abs{\om}^{-4}\sum_{\substack{\om_1\in\mbZ^{2}\setminus\{0,\om,-\om\}\\\om_1\succsim(\om-\om_1)}}\abs{\om+\om_1}^{-2}\abs{\om-\om_1}^{-1}\\
				&\quad+\sum_{\substack{\om_1\in\mbZ^{2}\setminus\{0,\om,-\om\}\\\om_1\sim(\om-\om_1)}}\abs{\om+\om_1}^{-2}\abs{\om_1}^{-5}\\
				&\lesssim\abs{\om}^{-3+2\eps}+\abs{\om}^{-5+\eps}+\abs{\om}^{-5}.
			\end{split}
		\end{equation*}
		To estimate~\eqref{eq:double_sum_bound_higher_order_special_cases_3} we apply Lemma~\ref{lem:convolution_twofold},
		\begin{equation*}
			\begin{split}
				&\sum_{\om_1\in\mbZ^{2}\setminus\{0,\om\}}(1+\abs{\om-\om_1}^{2})^{-1}(1+\abs{2\om-\om_1}^{2})^{-1}\abs{\om_1}^{-2}\abs{\om-\om_1}^{-1}\\
				&\leq\abs{\om}^{-5}+\sum_{\om_1\in\mbZ^{2}\setminus\{0,\om,2\om\}}\abs{\om-\om_1}^{-3}\abs{2\om-\om_1}^{-2}\abs{\om_1}^{-2}\\
				&\leq\abs{\om}^{-5}+\sum_{\om_1\in\mbZ^{2}\setminus\{0,\om,2\om\}}\abs{\om-\om_1}^{-2+\eps}\abs{2\om-\om_1}^{-2+\eps}\abs{\om_1}^{-2+\eps}\\
				&\lesssim\abs{\om}^{-5}+\abs{\om}^{-4+3\eps}.
			\end{split}
		\end{equation*}
		We decompose~\eqref{eq:double_sum_bound_higher_order_general_cases} further into the cases $\om_1=m_1$, $\om_1=\om+m_1$ and $\om_1\in\mbZ^{2}\setminus\{0,\om,m_1,\om+m_1\}$,
		\begin{align}
			&\sum_{\substack{m_1\in\mbZ^{2}\setminus\{0,-\om,\om\}\\\abs{m_1}\leq\delta^{-1}}}\sum_{\om_1\in\mbZ^{2}\setminus\{0,\om\}}(1+\abs{\om_1-m_1}^{2})^{-1}(1+\abs{\om-\om_1+m_1}^{2})^{-1}\abs{\om_1}^{-2}\abs{\om-\om_1}^{-1}\notag\\
			&=(1+\abs{\om}^{2})^{-1}\sum_{\substack{m_1\in\mbZ^{2}\setminus\{0,-\om,\om\}\\\abs{m_1}\leq\delta^{-1}}}(\abs{m_1}^{-2}\abs{\om-m_1}^{-1}+\abs{\om+m_1}^{-2}\abs{m_1}^{-1})\label{eq:double_sum_bound_higher_order_general_cases_1}\\
			&\quad+\sum_{\substack{m_1\in\mbZ^{2}\setminus\{0,-\om,\om\}\\\abs{m_1}\leq\delta^{-1}}}\sum_{\om_1\in\mbZ^{2}\setminus\{0,\om,m_1,\om+m_1\}}(1+\abs{\om_1-m_1}^{2})^{-1}(1+\abs{\om-\om_1+m_1}^{2})^{-1}\abs{\om_1}^{-2}\abs{\om-\om_1}^{-1}.\label{eq:double_sum_bound_higher_order_general_cases_2}
		\end{align}
		We estimate~\eqref{eq:double_sum_bound_higher_order_general_cases_1} by Lemma~\ref{lem:convolution_estimates},
		\begin{equation*}
			\sum_{\substack{m_1\in\mbZ^{2}\setminus\{0,-\om,\om\}\\\abs{m_1}\leq\delta^{-1}}}(\abs{m_1}^{-2}\abs{\om-m_1}^{-1}+\abs{\om+m_1}^{-2}\abs{m_1}^{-1})\lesssim\abs{\om}^{-1+\eps}.
		\end{equation*}
		Using that $\om_1\in\mbZ^{2}\setminus\{0,\om,m_1,\om+m_1\}$, we may bound~\eqref{eq:double_sum_bound_higher_order_general_cases_2} by
		\begin{equation*}
			\begin{split}
				&\sum_{\substack{m_1\in\mbZ^{2}\setminus\{0,-\om,\om\}\\\abs{m_1}\leq\delta^{-1}}}\sum_{\om_1\in\mbZ^{2}\setminus\{0,\om,m_1,\om+m_1\}}(1+\abs{\om_1-m_1}^{2})^{-1}(1+\abs{\om-\om_1+m_1}^{2})^{-1}\abs{\om_1}^{-2}\abs{\om-\om_1}^{-1}\\
				&\leq\sum_{\substack{m_1\in\mbZ^{2}\setminus\{0,-\om,\om\}\\\abs{m_1}\leq\delta^{-1}}}\sum_{\om_1\in\mbZ^{2}\setminus\{0,\om,m_1,\om+m_1\}}\abs{\om_1-m_1}^{-2}\abs{\om-\om_1+m_1}^{-2}\abs{\om_1}^{-2}\abs{\om-\om_1}^{-1}.
			\end{split}
		\end{equation*}
		Using the dyadic partition of unity $(\varrho_{q})_{q\in\mbN_{-1}}$, we decompose this sum into
		\begin{align}
			&\sum_{\substack{m_1\in\mbZ^{2}\setminus\{0,-\om,\om\}\\\abs{m_1}\leq\delta^{-1}}}\sum_{\om_1\in\mbZ^{2}\setminus\{0,\om,m_1,\om+m_1\}}\abs{\om_1-m_1}^{-2}\abs{\om-\om_1+m_1}^{-2}\abs{\om_1}^{-2}\abs{\om-\om_1}^{-1}\notag\\
			&=\sum_{\substack{m_1\in\mbZ^{2}\setminus\{0,-\om,\om\}\\\abs{m_1}\leq\delta^{-1}}}\sum_{\substack{\om_1\in\mbZ^{2}\setminus\{0,\om,m_1,\om+m_1\}\\\om_1\precsim(\om-\om_1+m_1)}}\abs{\om_1-m_1}^{-2}\abs{\om-\om_1+m_1}^{-2}\abs{\om_1}^{-2}\abs{\om-\om_1}^{-1}\label{eq:double_sum_bound_higher_order_1}\\
			&\quad+\sum_{\substack{m_1\in\mbZ^{2}\setminus\{0,-\om,\om\}\\\abs{m_1}\leq\delta^{-1}}}\sum_{\substack{\om_1\in\mbZ^{2}\setminus\{0,\om,m_1,\om+m_1\}\\\om_1\succsim(\om-\om_1+m_1)}}\abs{\om_1-m_1}^{-2}\abs{\om-\om_1+m_1}^{-2}\abs{\om_1}^{-2}\abs{\om-\om_1}^{-1}\label{eq:double_sum_bound_higher_order_2}\\
			&\quad+\sum_{\substack{m_1\in\mbZ^{2}\setminus\{0,-\om,\om\}\\\abs{m_1}\leq\delta^{-1}}}\sum_{\substack{\om_1\in\mbZ^{2}\setminus\{0,\om,m_1,\om+m_1\}\\\om_1\sim(\om-\om_1+m_1)}}\abs{\om_1-m_1}^{-2}\abs{\om-\om_1+m_1}^{-2}\abs{\om_1}^{-2}\abs{\om-\om_1}^{-1}\label{eq:double_sum_bound_higher_order_3}.
		\end{align}
		Let $\eps<1/2$. We estimate~\eqref{eq:double_sum_bound_higher_order_1} and~\eqref{eq:double_sum_bound_higher_order_3} by Lemma~\ref{lem:convolution_twofold},
		\begin{equation*}
			\begin{split}
				&\sum_{\substack{m_1\in\mbZ^{2}\setminus\{0,-\om,\om\}\\\abs{m_1}\leq\delta^{-1}}}\sum_{\substack{\om_1\in\mbZ^{2}\setminus\{0,\om,m_1,\om+m_1\}\\\abs{\om_1}\leq64/9\abs{\om-\om_1+m_1}}}\abs{\om_1-m_1}^{-2}\abs{\om-\om_1+m_1}^{-2}\abs{\om_1}^{-2}\abs{\om-\om_1}^{-1}\\
				&\lesssim\sum_{\substack{m_1\in\mbZ^{2}\setminus\{0,-\om,\om\}\\\abs{m_1}\leq\delta^{-1}}}\abs{\om+m_1}^{-2}\sum_{\substack{\om_1\in\mbZ^{2}\setminus\{0,\om,m_1,\om+m_1\}\\\abs{\om_1}\leq64/9\abs{\om-\om_1+m_1}}}\abs{\om_1-m_1}^{-2}\abs{\om_1}^{-2}\abs{\om-\om_1}^{-1}\\
				&\lesssim\sum_{\substack{m_1\in\mbZ^{2}\setminus\{0,-\om,\om\}\\\abs{m_1}\leq\delta^{-1}}}\abs{\om+m_1}^{-2}(\abs{\om-m_1}^{-1}\abs{m_1}^{-2+2\eps}+\abs{\om-m_1}^{-2+\eps}\abs{\om}^{-1+\eps})\lesssim\abs{\om}^{-3+3\eps}.
			\end{split}
		\end{equation*}
		We estimate~\eqref{eq:double_sum_bound_higher_order_2} by
		\begin{equation*}
			\begin{split}
				&\sum_{\substack{m_1\in\mbZ^{2}\setminus\{0,-\om,\om\}\\\abs{m_1}\leq\delta^{-1}}}\sum_{\substack{\om_1\in\mbZ^{2}\setminus\{0,\om,m_1,\om+m_1\}\\\om_1\succsim(\om-\om_1+m_1)}}\abs{\om_1-m_1}^{-2}\abs{\om-\om_1+m_1}^{-2}\abs{\om_1}^{-2}\abs{\om-\om_1}^{-1}\\
				&\lesssim\sum_{\substack{m_1\in\mbZ^{2}\setminus\{0,-\om,\om\}\\\abs{m_1}\leq\delta^{-1}}}\abs{\om+m_1}^{-2}\sum_{\substack{\om_1\in\mbZ^{2}\setminus\{0,\om,m_1,\om+m_1\}\\\om_1\succsim(\om-\om_1+m_1)}}\abs{\om_1-m_1}^{-2}\abs{\om-\om_1+m_1}^{-2}\abs{\om-\om_1}^{-1}\\
				&\leq\sum_{\substack{m_1\in\mbZ^{2}\setminus\{0,-\om,\om\}\\\abs{m_1}\leq\delta^{-1}}}\abs{\om+m_1}^{-2}\sum_{\om_1\in\mbZ^{2}\setminus\{0,\om,-m_1,\om-m_1\}}\abs{\om-\om_1-m_1}^{-2}\abs{\om_1+m_1}^{-2}\abs{\om_1}^{-1}\\
				&\lesssim\abs{\om}^{-2+\eps}\sum_{\substack{m_1\in\mbZ^{2}\setminus\{0,-\om,\om\}\\\abs{m_1}\leq\delta^{-1}}}\abs{\om+m_1}^{-2}(\abs{m_1}^{-1+\eps}+\abs{\om-m_1}^{-1+\eps})\lesssim\abs{\om}^{-3+3\eps}.
			\end{split}
		\end{equation*}
		We obtain~\eqref{eq:sum_m_om_canonical} by summing all of our estimates above.
	\end{details}
\end{proof}
\begin{details}
	We use the following estimate to establish that $\tl^{\delta}\in\msL_{T}^{1-}\mcC^{1-}(\mbT^{2})$.
	\begin{lemma}\label{lem:sum_m_om_canonical_regular}
		Let $\eps\in(0,1)$ and $\delta>0$. Then uniformly in $\om\in\mbZ^{2}\setminus\{0\}$ it holds that 
		\begin{equation}\label{eq:sum_m_om_canonical_regular}
			\begin{split}
				&\sum_{\substack{m_1\in\mbZ^{2}\\\abs{m_1}\leq\delta^{-1}}}\sum_{\om_1\in\mbZ^{2}\setminus\{0,\om\}}\frac{\abs{\om_1}^{-1}}{(1+\abs{\delta m_1}^{2})^{1/2}(1+\abs{\om_1-m_1}^{2})(1+\abs{\om-\om_1+m_1}^{2})}\\
				&\lesssim_{\eps}\abs{\om}^{-2+3\eps}(1\vee(\delta^{-1}\log(\delta^{-1}))).
			\end{split}
		\end{equation}
	\end{lemma}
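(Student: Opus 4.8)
The plan is to mimic the argument already carried out for Lemma~\ref{lem:sum_m_om_canonical}, but keeping track of the extra factor $(1+\abs{\delta m_1}^2)^{-1/2}$ coming from the absence of symmetrisation in the ``regular'' diagram $\tl^{\delta}\in\msL_T^{\kappa}\mcC^{1-}$. The key observation is that this factor is harmless: it is bounded by $1$, so it can simply be discarded, and the price we pay is that there is no gain in the summation over $m_1$ from the Green's function symmetry; this is precisely why the bound degrades from $\log(\delta^{-1})$ to $\delta^{-1}\log(\delta^{-1})$, with the $\delta^{-1}$ accounting for the modes $\abs{m_1}\le\delta^{-1}$ that are no longer damped by a convergent $\abs{m_1}^{-2}$-type factor. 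So the first step is to bound $(1+\abs{\delta m_1}^2)^{-1/2}\le 1$ and reduce to controlling
\begin{equation*}
	\sum_{\substack{m_1\in\mbZ^{2}\\\abs{m_1}\leq\delta^{-1}}}\sum_{\om_1\in\mbZ^{2}\setminus\{0,\om\}}\frac{\abs{\om_1}^{-1}}{(1+\abs{\om_1-m_1}^{2})(1+\abs{\om-\om_1+m_1}^{2})}.
\end{equation*}

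Next I would carry out the same decomposition of the $m_1$-sum as in the proof of Lemma~\ref{lem:sum_m_om_canonical}: split off the degenerate modes $m_1=0$, $m_1=-\om$, $m_1=\om$ (where one of the two denominators degenerates to $(1+\abs{\om}^2)^{-1}$), and handle the generic region $m_1\in\mbZ^2\setminus\{0,-\om,\om\}$ by then decomposing the $\om_1$-sum into $\om_1=m_1$, $\om_1=\om+m_1$ and $\om_1\in\mbZ^2\setminus\{0,\om,m_1,\om+m_1\}$. On the generic region I would bound the denominators by $\abs{\om_1-m_1}^{-2}\abs{\om-\om_1+m_1}^{-2}$, insert the dyadic partition of unity $(\varrho_q)_{q\in\mbN_{-1}}$ to split into the cases $\om_1\precsim(\om-\om_1+m_1)$, $\om_1\succsim(\om-\om_1+m_1)$, $\om_1\sim(\om-\om_1+m_1)$, and evaluate the inner $\om_1$-sums via Lemma~\ref{lem:convolution_estimates} and Lemma~\ref{lem:convolution_twofold}. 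The key difference from Lemma~\ref{lem:sum_m_om_canonical} is the presence of $\abs{\om_1}^{-1}$ instead of $\abs{\om_1}^{-2}$: the inner convolution sums over $\om_1$ converge exactly as before but the resulting exponent in $\om$ is shifted, and crucially the residual outer sum over $m_1\in\mbZ^2$ with $\abs{m_1}\le\delta^{-1}$ now involves a factor like $\abs{\om+m_1}^{-2}\abs{m_1}^{-1}$ or similar, whose summation over the ball $\abs{m_1}\le\delta^{-1}$ produces $\delta^{-1}\log(\delta^{-1})$ rather than $\log(\delta^{-1})$ — here I would use~\eqref{eq:summation_estimates_0} (i.e.\ $\sum_{\abs{m_1}\le\delta^{-1}}1\lesssim\delta^{-2}$) combined with~\eqref{eq:summation_estimates_-2} on a suitable sub-sum, or a direct integral comparison giving $\sum_{\abs{m_1}\le\delta^{-1}}\abs{m_1}^{-1}\lesssim\delta^{-1}$ and $\sum_{\abs{m_1}\le\delta^{-1}}\abs{\om+m_1}^{-1}\lesssim\abs{\om}^{\eps}\delta^{-1}$.

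The degenerate terms $m_1\in\{0,-\om,\om\}$ reduce, after pulling out $(1+\abs{\om}^2)^{-1}\lesssim\abs{\om}^{-2}$, to single sums of the form $\sum_{\om_1\in\mbZ^2\setminus\{0,\om\}}\abs{\om_1}^{-1}\abs{\om\mp\om_1}^{-c}$ with $c\in\{1,2\}$, which I would again split dyadically and bound by~\eqref{eq:summation_estimates_-alpha}, Lemma~\ref{lem:convolution_estimate_paraproduct} and Lemma~\ref{lem:convolution_estimates}; these all yield powers of $\abs{\om}$ strictly better than the claimed $\abs{\om}^{-2+3\eps}$ and contribute no $\delta$-growth, hence are absorbed. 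Collecting the generic-region contribution (which is the dominant one, of order $\abs{\om}^{-2+3\eps}\delta^{-1}\log(\delta^{-1})$) with the subdominant degenerate contributions gives~\eqref{eq:sum_m_om_canonical_regular}.

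The main obstacle I anticipate is purely bookkeeping: one must choose the exponents $\eps$-losses in the various applications of Lemma~\ref{lem:convolution_estimates}/Lemma~\ref{lem:convolution_twofold} consistently so that (a) every inner $\om_1$-convolution stays in its range of validity despite the weaker $\abs{\om_1}^{-1}$ weight — in particular in the resonant case $\om_1\sim(\om-\om_1+m_1)$ one loses an $\eps$ to force summability, exactly as in Lemma~\ref{lem:sum_m_om_canonical} — and (b) the final outer $m_1$-sum over the ball of radius $\delta^{-1}$ is genuinely of order $\delta^{-1}\log(\delta^{-1})$ and not worse; the latter is where the precise shape of the surviving weight in $m_1$ matters, and one should double-check that no term produces $\delta^{-2}$. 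Since the structure is identical to Lemma~\ref{lem:sum_m_om_canonical} and that proof already handles all the genuinely delicate resonance cases, I expect no conceptual difficulty beyond tracking these exponents, and so in the write-up I would present this as ``the proof follows in a similar manner'' after indicating the one new point, namely bounding $(1+\abs{\delta m_1}^2)^{-1/2}\le 1$ and using $\sum_{\abs{m_1}\le\delta^{-1}}\abs{m_1}^{-1}\lesssim\delta^{-1}$.
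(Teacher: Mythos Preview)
Your overall strategy—mimic Lemma~\ref{lem:sum_m_om_canonical} via the same $m_1$/$\om_1$ decompositions and dyadic split—is exactly what the paper does. The one substantive difference is your handling of the factor $(1+\abs{\delta m_1}^2)^{-1/2}$, and here there is a gap.

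You propose to bound $(1+\abs{\delta m_1}^2)^{-1/2}\le 1$ at the outset and then claim the residual outer $m_1$-sum ``involves a factor like $\abs{\om+m_1}^{-2}\abs{m_1}^{-1}$''. But once you have discarded $(1+\abs{\delta m_1}^2)^{-1/2}$, there is no source for that extra $\abs{m_1}^{-1}$: in the $\om_1\precsim(\om-\om_1+m_1)$ region, the inner $\om_1$-sum yields $\lesssim\abs{m_1}^{-1+\eps}$ and the outer sum becomes $\sum_{\abs{m_1}\le\delta^{-1}}\abs{\om+m_1}^{-2}\abs{m_1}^{-1+\eps}$. This sum is actually convergent over all of $\mbZ^2$, of size $\lesssim\abs{\om}^{-1+2\eps}$, which is \emph{not} uniformly $\lesssim\abs{\om}^{-2+3\eps}\delta^{-1}\log(\delta^{-1})$ (it fails for $\abs{\om}\gg\delta^{-1}$ unless you invoke the cutoff $\abs{m_1}\le\delta^{-1}$ via a separate case split on $\abs{\om}$ versus $\delta^{-1}$, which you do not mention). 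So the mechanism you describe for producing $\delta^{-1}\log(\delta^{-1})$ does not match what actually happens after your first step.

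The paper's route is cleaner and avoids this. It does \emph{not} discard $(1+\abs{\delta m_1}^2)^{-1/2}$; instead, for $m_1\ne 0$ it uses $(1+\abs{\delta m_1}^2)^{-1/2}\lesssim\delta^{-1}\abs{m_1}^{-1}$. This pulls out the global $\delta^{-1}$ and supplies precisely the missing $\abs{m_1}^{-1}$ in the outer sum, so that in the $\precsim/\sim$ region one gets $\delta^{-1}\sum\abs{m_1}^{-2+\eps}\abs{\om+m_1}^{-2}\lesssim\delta^{-1}\abs{\om}^{-2+2\eps}$, and in the $\succsim$ region $\delta^{-1}\abs{\om}^{-2+2\eps}\sum\abs{m_1}^{-1}\abs{\om+m_1}^{-1}\lesssim\delta^{-1}\abs{\om}^{-2+3\eps}\log(\delta^{-1})$ (the last via Young's inequality $2ab\le a^2+b^2$ and~\eqref{eq:summation_estimates_-2}). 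For the degenerate case $\om_1\in\{m_1,\om+m_1\}$ the paper again keeps the factor and uses Young's inequality on $(1+\abs{\delta m_1}^2)^{-1/2}\abs{m_1}^{-1}$ to obtain $\delta^{-1}\log(\delta^{-1})$. In short: the extra cut-off factor is not ``harmless''—it is the \emph{source} of both the $\delta^{-1}$ and the additional $\abs{m_1}^{-1}$ damping that makes the argument a straight transcription of Lemma~\ref{lem:sum_m_om_canonical}.
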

	\begin{proof}
		We decompose the sum over $m_1\in\mbZ^{2}$, $\abs{m_1}\leq\delta^{-1}$, into the regions $m_1=0$, $m_1=-\om$ and $m_1\in\mbZ^{2}\setminus\{0,-\om\}$,
		\begin{align}
			&\sum_{\substack{m_1\in\mbZ^{2}\\\abs{m_1}\leq\delta^{-1}}}(1+\abs{\delta m_1}^{2})^{-1/2}\sum_{\om_1\in\mbZ^{2}\setminus\{0,\om\}}(1+\abs{\om_1-m_1}^{2})^{-1}(1+\abs{\om-\om_1+m_1}^{2})^{-1}\abs{\om_1}^{-1}\notag\\
			&=\sum_{\om_1\in\mbZ^{2}\setminus\{0,\om\}}(1+\abs{\om_1}^{2})^{-1}(1+\abs{\om-\om_1}^{2})^{-1}\abs{\om_1}^{-1}\label{eq:double_sum_bound_first_order_special_cases_regular_1}\\
			&\quad+\mathds{1}_{\abs{\om}\leq\delta^{-1}}(1+\abs{\delta\om}^{2})^{-1/2}\sum_{\om_1\in\mbZ^{2}\setminus\{0,\om\}}(1+\abs{\om+\om_1}^{2})^{-1}(1+\abs{\om_1}^{2})^{-1}\abs{\om_1}^{-1}\label{eq:double_sum_bound_first_order_special_cases_regular_2}\\
			&\quad+\sum_{\substack{m_1\in\mbZ^{2}\setminus\{0,-\om\}\\\abs{m_1}\leq\delta^{-1}}}(1+\abs{\delta m_1}^{2})^{-1/2}\sum_{\om_1\in\mbZ^{2}\setminus\{0,\om\}}(1+\abs{\om_1-m_1}^{2})^{-1}(1+\abs{\om-\om_1+m_1}^{2})^{-1}\abs{\om_1}^{-1}\label{eq:double_sum_bound_first_order_general_cases_regular}.
		\end{align}
		Let $\eps\in(0,1)$. We first estimate~\eqref{eq:double_sum_bound_first_order_special_cases_regular_1},
		\begin{equation*}
			\sum_{\om_1\in\mbZ^{2}\setminus\{0,\om\}}(1+\abs{\om_1}^{2})^{-1}(1+\abs{\om-\om_1}^{2})^{-1}\abs{\om_1}^{-1}\lesssim\sum_{\om_1\in\mbZ^{2}\setminus\{0,\om\}}\abs{\om-\om_1}^{-2+\eps}\abs{\om_1}^{-2+\eps}\lesssim\abs{\om}^{-2+2\eps}.
		\end{equation*}
		We estimate~\eqref{eq:double_sum_bound_first_order_special_cases_regular_2} by
		\begin{align*}
			\sum_{\om_1\in\mbZ^{2}\setminus\{0,\om\}}(1+\abs{\om+\om_1}^{2})^{-1}(1+\abs{\om_1}^{2})^{-1}\abs{\om_1}^{-1}&\leq\abs{\om}^{-3}+\sum_{\om_1\in\mbZ^{2}\setminus\{0,\om,-\om\}}\abs{\om+\om_1}^{-2}\abs{\om_1}^{-3}\\
			&\leq\abs{\om}^{-3}+\sum_{\om_1\in\mbZ^{2}\setminus\{0,\om,-\om\}}\abs{\om+\om_1}^{-2+\eps}\abs{\om_1}^{-2+\eps}\\
			&\lesssim\abs{\om}^{-2+2\eps}.
		\end{align*}
		The term~\eqref{eq:double_sum_bound_first_order_general_cases_regular} can be further decomposed into the regions $\om_1=m_1$, $\om_1=\om+m_1$ and $\om_1\in\mbZ^{2}\setminus\{0,\om,m_1,\om+m_1\}$,
		\begin{align}
			&\sum_{\substack{m_1\in\mbZ^{2}\setminus\{0,-\om\}\\\abs{m_1}\leq\delta^{-1}}}(1+\abs{\delta m_1}^{2})^{-1/2}\sum_{\om_1\in\mbZ^{2}\setminus\{0,\om\}}(1+\abs{\om_1-m_1}^{2})^{-1}(1+\abs{\om-\om_1+m_1}^{2})^{-1}\abs{\om_1}^{-1}\notag\\
			&=(1+\abs{\om}^{2})^{-1}\sum_{\substack{m_1\in\mbZ^{2}\setminus\{0,-\om\}\\\abs{m_1}\leq\delta^{-1}}}(1+\abs{\delta m_1}^{2})^{-1/2}(\mathds{1}_{m_1\neq\om}\abs{m_1}^{-1}+\abs{\om+m_1}^{-1})\label{eq:double_sum_bound_first_order_general_cases_regular_1}\\
			&\quad+\sum_{\substack{m_1\in\mbZ^{2}\setminus\{0,-\om\}\\\abs{m_1}\leq\delta^{-1}}}(1+\abs{\delta m_1}^{2})^{-1/2}\sum_{\om_1\in\mbZ^{2}\setminus\{0,\om,m_1,\om+m_1\}}(1+\abs{\om_1-m_1}^{2})^{-1}(1+\abs{\om-\om_1+m_1}^{2})^{-1}\abs{\om_1}^{-2}\label{eq:double_sum_bound_first_order_general_cases_regular_2}.
		\end{align}
		The first two sums in~\eqref{eq:double_sum_bound_first_order_general_cases_regular_1} can be estimated by the bound $(1+\abs{\delta m_{1}}^{2})^{-1/2}\lesssim\abs{\delta m_{1}}^{-1}$ (for $\abs{m_{1}}\leq\delta^{-1}$), Young's product inequality and~\eqref{eq:summation_estimates_-2}, 
		\begin{equation*}
			\begin{split}
				&\sum_{\substack{m_1\in\mbZ^{2}\setminus\{0,-\om\}\\\abs{m_1}\leq\delta^{-1}}}(1+\abs{\delta m_1}^{2})^{-1/2}(\abs{m_1}^{-1}+\abs{\om+m_1}^{-1})\\
				&\lesssim\delta^{-1}\sum_{\substack{m_1\in\mbZ^{2}\setminus\{0,-\om\}\\\abs{m_1}\leq\delta^{-1}}}(\abs{m_1}^{-2}+\abs{m_{1}}^{-1}\abs{\om+m_1}^{-1})\\
				&\lesssim\delta^{-1}\sum_{\substack{m_1\in\mbZ^{2}\setminus\{0,-\om\}\\\abs{m_1}\leq\delta^{-1}}}(\abs{m_1}^{-2}+\abs{\om+m_1}^{-2})\\
				&\lesssim\delta^{-1}((1\vee\log(\delta^{-1}))+(1\vee\log(\abs{\om}+\delta^{-1})))\lesssim\abs{\om}^{\eps}\delta^{-1}(1\vee\log(\delta^{-1})).
			\end{split}
		\end{equation*}
		Using that $\om_1\in\mbZ^{2}\setminus\{0,\om,m_1,\om+m_1\}$, we may bound the second sum in~\eqref{eq:double_sum_bound_first_order_general_cases_regular_2} by
		\begin{equation*}
			\begin{split}
				&\sum_{\om_1\in\mbZ^{2}\setminus\{0,\om,m_1,\om+m_1\}}(1+\abs{\om_1-m_1}^{2})^{-1}(1+\abs{\om-\om_1+m_1}^{2})^{-1}\abs{\om_1}^{-1}\\
				&\leq\sum_{\om_1\in\mbZ^{2}\setminus\{0,\om,m_1,\om+m_1\}}\abs{\om_1-m_1}^{-2}\abs{\om-\om_1+m_1}^{-2}\abs{\om_1}^{-1}.
			\end{split}
		\end{equation*}
		Introducing the dyadic partition of unity $(\varrho_{q})_{q\in\mbN_{-1}}$, we decompose this sum into
		\begin{align}
			&\sum_{\om_1\in\mbZ^{2}\setminus\{0,\om,m_1,\om+m_1\}}\abs{\om_1-m_1}^{-2}\abs{\om-\om_1+m_1}^{-2}\abs{\om_1}^{-1}\notag\\
			&=\sum_{\substack{\om_1\in\mbZ^{2}\setminus\{0,\om,m_1,\om+m_1\}\\\om_1\precsim(\om-\om_1+m_1)}}\abs{\om_1-m_1}^{-2}\abs{\om-\om_1+m_1}^{-2}\abs{\om_1}^{-1}\label{eq:double_sum_bound_first_order_regular_1}\\
			&\quad+\sum_{\substack{\om_1\in\mbZ^{2}\setminus\{0,\om,m_1,\om+m_1\}\\\om_1\succsim(\om-\om_1+m_1)}}\abs{\om_1-m_1}^{-2}\abs{\om-\om_1+m_1}^{-2}\abs{\om_1}^{-1}\label{eq:double_sum_bound_first_order_regular_2}\\
			&\quad+\sum_{\substack{\om_1\in\mbZ^{2}\setminus\{0,\om,m_1,\om+m_1\}\\\om_1\sim(\om-\om_1+m_1)}}\abs{\om_1-m_1}^{-2}\abs{\om-\om_1+m_1}^{-2}\abs{\om_1}^{-1}\label{eq:double_sum_bound_first_order_regular_3}.
		\end{align}
		The terms~\eqref{eq:double_sum_bound_first_order_regular_1} and~\eqref{eq:double_sum_bound_first_order_regular_3} can be estimated by two applications of Lemma~\ref{lem:convolution_estimates},
		\begin{equation*}
			\begin{split}
				&\sum_{\substack{m_1\in\mbZ^{2}\setminus\{0,-\om\}\\\abs{m_1}\leq\delta^{-1}}}(1+\abs{\delta m_1}^{2})^{-1/2}\sum_{\substack{\om_1\in\mbZ^{2}\setminus\{0,\om,m_1,\om+m_1\}\\\abs{\om_1}\leq64/9\abs{\om-\om_1+m_1}}}\abs{\om_1-m_1}^{-2}\abs{\om-\om_1+m_1}^{-2}\abs{\om_1}^{-1}\\
				&\lesssim\delta^{-1}\sum_{\substack{m_1\in\mbZ^{2}\setminus\{0,-\om\}\\\abs{m_1}\leq\delta^{-1}}}\abs{m_1}^{-1}\abs{\om+m_1}^{-2}\sum_{\substack{\om_1\in\mbZ^{2}\setminus\{0,\om,m_1,\om+m_1\}\\\abs{\om_1}\leq64/9\abs{\om-\om_1+m_1}}}\abs{\om_1-m_1}^{-2}\abs{\om_1}^{-1}\\
				&\lesssim\delta^{-1}\sum_{\substack{m_1\in\mbZ^{2}\setminus\{0,-\om\}\\\abs{m_1}\leq\delta^{-1}}}\abs{m_1}^{-2+\eps}\abs{\om+m_1}^{-2}\lesssim\delta^{-1}\abs{\om}^{-2+2\eps}.
			\end{split}
		\end{equation*}
		The second term~\eqref{eq:double_sum_bound_first_order_regular_2} can be estimated by Lemma~\ref{lem:convolution_estimates} and~\eqref{eq:summation_estimates_-2},
		\begin{equation*}
			\begin{split}
				&\sum_{\substack{m_1\in\mbZ^{2}\setminus\{0,-\om\}\\\abs{m_1}\leq\delta^{-1}}}(1+\abs{\delta m_1}^{2})^{-1/2}\sum_{\substack{\om_1\in\mbZ^{2}\setminus\{0,\om,m_1,\om+m_1\}\\\om_1\succsim(\om-\om_1+m_1)}}\abs{\om_1-m_1}^{-2}\abs{\om-\om_1+m_1}^{-2}\abs{\om_1}^{-1}\\
				&\lesssim\delta^{-1}\sum_{\substack{m_1\in\mbZ^{2}\setminus\{0,-\om\}\\\abs{m_1}\leq\delta^{-1}}}\abs{m_1}^{-1}\abs{\om+m_1}^{-1}\sum_{\substack{\om_1\in\mbZ^{2}\setminus\{0,\om,m_1,\om+m_1\}\\\om_1\succsim(\om-\om_1+m_1)}}\abs{\om_1-m_1}^{-2}\abs{\om-\om_1+m_1}^{-2}\\
				&\lesssim\delta^{-1}\abs{\om}^{-2+2\eps}\sum_{\substack{m_1\in\mbZ^{2}\setminus\{0,-\om\}\\\abs{m_1}\leq\delta^{-1}}}\abs{m_1}^{-1}\abs{\om+m_1}^{-1}\lesssim\abs{\om}^{-2+3\eps}\delta^{-1}(1\vee\log(\delta^{-1})).
			\end{split}
		\end{equation*}
		Decomposing~\eqref{eq:sum_m_om_canonical_regular} into~\eqref{eq:double_sum_bound_first_order_special_cases_regular_1}--\eqref{eq:double_sum_bound_first_order_regular_3} and bounding those terms as given above yields
		\begin{equation*}
			\begin{split}
				&\sum_{\substack{m_1\in\mbZ^{2}\\\abs{m_1}\leq\delta^{-1}}}(1+\abs{\delta m_1}^{2})^{-1/2}\sum_{\om_1\in\mbZ^{2}\setminus\{0,\om\}}(1+\abs{\om_1-m_1}^{2})^{-1}(1+\abs{\om-\om_1+m_1}^{2})^{-1}\abs{\om_1}^{-1}\\
				&\lesssim_{\eps}\abs{\om}^{-2+3\eps}(1+\delta^{-1}+\delta^{-1}(1\vee\log(\delta^{-1})))\\
				&\lesssim\abs{\om}^{-2+3\eps}(1\vee(\delta^{-1}\log(\delta^{-1}))).
			\end{split}
		\end{equation*}
		In the last line, we used that uniformly in $x\geq0$,
		\begin{equation}\label{eq:xlogx_estimate}
			1+x(1\vee\log(x))\lesssim1\vee(x\log(x)).
		\end{equation}
		Indeed, to show~\eqref{eq:xlogx_estimate}, let $W_{0}$ be the principal branch of the Lambert W function. We distinguish between $x\in[0,\euler^{W_{0}(1)}]$, $x\in(\euler^{W_{0}(1)},\euler]$ and $x\in(\euler,\infty)$. For $x\in[0,\euler^{W_{0}(1)}]$, we estimate
		\begin{equation*}
			1+x(1\vee\log(x))=1+x\lesssim1=1\vee(x\log(x)).
		\end{equation*}
		For $x\in(\euler^{W_{0}(1)},\euler]$, we estimate
		\begin{equation*}
			1+x(1\vee\log(x))=1+x\lesssim x\log(x)=1\vee(x\log(x)).
		\end{equation*}
		For $x\in(\euler,\infty)$, we estimate
		\begin{equation*}
			1+x(1\vee\log(x))=1+x\log(x)\lesssim x\log(x)=1\vee(x\log(x)),
		\end{equation*}
		which yields~\eqref{eq:xlogx_estimate}. This yields the claim.
	\end{proof}
\end{details}
\newpage
\section{Glossary}
%
%
In this glossary we collect our frequently-used symbols.
\begin{center}
	\renewcommand{\arraystretch}{1.2}
	\begin{tabular}{lll}
		\multicolumn{3}{c}{Table of distribution spaces}\\
		\toprule
		Space & Description & Reference\\
		\midrule
		$C^{\infty}(\mbT^{2})$ & The smooth functions on $\mbT^{2}$. & Subsec.~\ref{sec:notations}\\
		$\mcS'(\mbT^{2})$ & The distributions on $\mbT^{2}$. & Subsec.~\ref{sec:notations}\\
		$\mcB_{p,q}^{\alpha}(\mbT^{2})$ & The completion of $C^{\infty}(\mbT^{2})$ under the Besov-norm $\norm{\,\cdot\,}_{\mcB^{\alpha}_{p,q}}$. & Def.~\ref{def:Besov_space}\\
		$\mcC^{\alpha}(\mbT^{2})$ & The H\"{o}lder--Besov space $\mcC^{\alpha}(\mbT^{2})=\mcB_{\infty,\infty}^{\alpha}(\mbT^{2})$. & Def.~\ref{def:Besov_space}\\
		$\mcH^{\alpha}(\mbT^{2})$ & The Sobolev space $\mcH^{\alpha}(\mbT^{2})=\mcB_{2,2}^{\alpha}(\mbT^{2})$. & Subsec.~\ref{sec:notations}\\
		$C_{T}E$ & The continuous functions $f\from[0,T]\to E$. & Subsec.~\ref{sec:notations}\\
		$C_{T}^{\kappa}E$ & The $\kappa$-H\"{o}lder continuous functions $f\from[0,T]\to E$. & Subsec.~\ref{sec:notations}\\
		$C_{\eta;T}E$ & The continuous functions $f\from(0,T]\to E$ with blow-up of at most $t^{-\eta}$. & Subsec.~\ref{sec:notations}\\
		$C_{\eta;T}^{\kappa}E$ & The $\kappa$-H\"{o}lder functions $f\from(0,T]\to E$ with blow-up of at most $t^{-\eta}$. & Subsec.~\ref{sec:notations}\\
		$\msL_{\eta;T}^{\kappa}\mcC^{\alpha}(\mbT^{2})$ & The weighted interpolation space & \\
		&$\msL_{\eta;T}^{\kappa}\mcC^{\alpha}(\mbT^{2})=C_{\eta;T}^{\kappa}\mcC^{\alpha-2\kappa}(\mbT^{2})\cap C_{\eta;T}\mcC^{\alpha}(\mbT^{2})$. & Def.~\ref{def:interpolation_space}\\
		$\rksnoise{\alpha}{\kappa}_{T}$ & The space of enhanced rough noises. & Def.~\ref{def:noise_space}\\
		$\mathscr{D}_T$ & The space of paracontrolled distributions. & Def.~\ref{def:paracontrolled_space}\\
		$F^{\cem}$ & The normed vector space $F$ equipped with a cemetery state $\cem$. & Def.~\ref{def:space_with_cemetery}\\
		$F^{\sol}_{\eta;T}$ & The continuous $f\from(0,T]\to F^{\cem}$ that do not resurrect, with weight at $0$. & Def.~\ref{def:sol_space_weight}\\
		\bottomrule & & \\
		\multicolumn{3}{c}{Table of noise objects and Feynman diagrams}\\
		\toprule
		Diagram & Description & Reference\\
		\midrule
		$\boldsymbol{\xi}$ & The vector-valued space-time white-noise $\boldsymbol{\xi}=(\xi^1,\xi^2)$. &\eqref{eq:definition_white_noise}\\
		$\ti$ & $=\vdiv\mcI[\het\boldsymbol{\xi}]$. &\eqref{eq:lolli_example}\\
		$\tl$ & $=\mbE[\vdiv\mcI[\ti\nabla\Phi_{\ti}]]$. & Subsec.~\ref{subsec:Feynman}\\
		$\ty$ & $=\vdiv\mcI[\ti\nabla\Phi_{\ti}]-\tl=\Ypsilon{10}$. & Subsec.~\ref{subsec:Feynman}\\
		$\tp$ & $=\ty \re\nabla \Phi_{\ti}+\nabla\Phi_{\ty}\re\ti=\PreThree{10}+\PreThree{20}+\PreCocktail{30}+\PreCocktail{50}+\PreCocktail{40}$. & Subsec.~\ref{subsec:Feynman}\\
		$\PreCocktail{50}$ & $=\PreCocktail{10}+\PreCocktail{20}$. & Subsec.~\ref{subsec:Feynman}\\
		$\tc$ & $=\nabla\mcI[\ti]\re\nabla\Phi_{\ti}+\nabla^2\mcI[\Phi_{\ti}]\re \ti=\Checkmark{10}+\Checkmark{20}+\Triangle{3}$. & Subsec.~\ref{subsec:Feynman}\\
		$\Triangle{3}$ & $=\Triangle{1}+\Triangle{2}$. & Subsec.~\ref{subsec:Feynman}\\
		$\mbX$ & $=(\ti,\ty,\tp,\tc)$ the renormalised enhancement. & Thm.~\ref{thm:enhancement_existence}\\
		$\varphi$ & $\in C^{\infty}(\mbR^2)$ of compact support, $\supp(\varphi)\subset B(0,1)$, & \\
		& even and such that $\varphi(0)=1$. & \eqref{eq:def_cut_off} \\
		$\psi_{\delta}$ & $=\sum_{\om\in\mbZ^2}\euler^{2\uppi\upi\inner{\om}{x}}\varphi(\delta\om)$, where $\delta>0$.& \eqref{eq:def_mollifiers} \\
		$\ti^{\delta}$ & $=\vdiv\mcI[\het(\psi_{\delta}\ast\boldsymbol{\xi})]$. &\eqref{eq:lolli_approximation_example}\\
		$\mbX^{\delta}$ & $=(\ti^{\delta},\ty^{\delta},\tp^{\delta},\tc^{\delta})$ the prelimiting renormalised enhancement. & Thm.~\ref{thm:enhancement_existence}\\
		$\ty_{\can}^{\delta}$ & $=\vdiv\mcI[\ti^{\delta}\nabla\Phi_{\ti^{\delta}}]={\Ypsilon{10}\,}^{\delta}+\tl^{\delta}$. & Subsec.~\ref{subsec:Feynman}\\
		$\tp_{\can}^{\delta}$ & $=\ty_{\can}^{\delta}\re\nabla \Phi_{\ti^{\delta}}+\nabla\Phi_{\ty_{\can}^{\delta}}\re\ti^{\delta}=\tp^{\delta}+{\PreThreeloop{10}\!}^{\delta}+{\PreThreeloop{20}\!}^{\delta}$. & Subsec.~\ref{subsec:Feynman}\\
		$\mbX_{\can}^{\delta}$ & $=(\ti^{\delta},\ty_{\can}^{\delta},\tp_{\can}^{\delta},\tc^{\delta})$ the canonical enhancement. & Thm.~\ref{thm:enhancement_existence}\\
		%
		%
		$\Ypsilon{1}$ & \eqref{eq:isometry_bound_delta=0} applied to $\Ypsilon{10}$. & Subsec.~\ref{sec:diagrams_of_order_2_and_3}\\
		%
		%
		$\PreCocktail{5}$ & \eqref{eq:isometry_bound_delta=0} applied to $\PreCocktail{50}$. & Subsec.~\ref{sec:Wick_contractions}\\
		%
		%
		%
		\bottomrule
	\end{tabular}
\end{center} 
\newpage
\section*{Declarations}
\begin{itemize}
	\item \textbf{Data and Material:} N/A
	\item \textbf{Code:} N/A
	\item \textbf{Funding:} Please see the acknowledgements in Section~\ref{sec:introduction}. 
	\item \textbf{Competing Interests:} The authors have no relevant financial or non-financial interests to disclose.
\end{itemize}
\printbibliography[heading=bibintoc,title={References}]

\end{document}